\documentclass[a4paper]{article}

\usepackage[authoryear]{natbib}
\usepackage[left=3.6cm,right=3.6cm,bottom=3cm,top=3cm]{geometry}

\usepackage[utf8]{inputenc} \usepackage{lmodern} \usepackage[ngerman, english]{babel} \usepackage{csquotes}

\usepackage{emptypage} \usepackage{xcolor}
\usepackage{mdframed}
\usepackage{appendix}

\usepackage{url}            \usepackage{multicol,multirow,booktabs}       \usepackage{nicematrix} \usepackage{array}          \usepackage{amsfonts}       \usepackage{nicefrac}       \usepackage{microtype}      \usepackage{xcolor}    
\usepackage{subcaption}       \usepackage{pdflscape}      

\usepackage{wrapfig} \usepackage{tikz, tikz-cd, pgfplots} \usetikzlibrary{
    fit,
    backgrounds,
    positioning,
    arrows.meta,
    calc
}
\pgfplotsset{compat=1.18}

\usepackage[ruled,linesnumbered]{algorithm2e}


\usepackage{mathtools} \usepackage{amsthm, amssymb} \usepackage{keytheorems} \usepackage{aligned-overset} \usepackage{mathrsfs} 

\usepackage{cancel}

\usepackage{enumitem} \usepackage[unicode, colorlinks]{hyperref}  \hypersetup{
	pdftitle={User-friendly approximation theory for statistical regression},
	pdfauthor={Felix Benning},
}
\mdfdefinestyle{default}{
    innertopmargin=0pt,
}

\theoremstyle{plain}\newtheorem{prop}{Proposition}[section]
\newtheorem{lemma}[prop]{Lemma}
\newtheorem{corollary}[prop]{Corollary}
\newtheorem{theorem}[prop]{Theorem}

\theoremstyle{definition}
\newtheorem{definition}[prop]{Definition}
\newtheorem{example}[prop]{Example}

\newtheorem{remark}[prop]{Remark}

\newcommand{\ul}[1]{\underline{#1}}

\newcommand*{\dims}{d}
\DeclarePairedDelimiter{\abs}{\lvert}{\rvert} \DeclarePairedDelimiter{\set}{\{}{\}} \DeclarePairedDelimiter{\norm}{\|}{\|} \DeclarePairedDelimiter{\floor}{\lfloor}{\rfloor}
\DeclarePairedDelimiter{\ceil}{\lceil}{\rceil}

\newcommand*{\ones}{\mathbf{1}}
\NewDocumentCommand{\ind}{s o m}{\IfBooleanTF{#1}
    {\ones_{\set[#2]{#3}}} {\ones_{#3}}           }

\NewDocumentEnvironment{myproof}{o}{
  \pushQED{\qed}\medbreak\noindent \IfNoValueTF{#1}{\textit{Proof.}}{\textit{#1.}}}{\popQED \par\medskip}

\newcommand*{\qedHere}{\pushQED{\qed}\qedhere\popQED }

\NewDocumentCommand{\opnorm}{sO{}m}{\IfBooleanTF{#1}{\left|\opnormkern\left|\opnormkern\left|
    #3
    \right|\opnormkern\right|\opnormkern\right|
  }{
    \mathopen{#2|\opnormkern #2|\opnormkern #2|}
    #3
    \mathclose{#2|\opnormkern #2|\opnormkern #2|}
  }}
\newcommand{\opnormkern}{\mkern-2.7mu\relax}

\newcommand*{\funSpace}{\mathbb{F}}
\newcommand*{\domain}{\mathbb{X}}
\newcommand*{\real}{\mathbb{R}}
\newcommand*{\field}{\mathbb{K}}
\newcommand*{\nat}{\mathbb{N}}
\newcommand*{\integer}{\mathbb{Z}}
\newcommand*{\complex}{\mathbb{C}}

\newcommand*{\hilbert}{\mathcal{H}}
\newcommand*{\sphere}[1][\dims-1]{\mathbb{S}^{#1}}
\newcommand*{\Ball}[1][\dims]{\mathbb{B}^{#1}}
\newcommand*{\torus}{\mathbb{T}}
\DeclarePairedDelimiterXPP{\ball}[2]{\mathrm{B}}{(}{)}{}{#1, #2}

\newcommand*{\Lip}{\mathrm{Lip}}
\newcommand*{\TV}{\mathrm{TV}}
\newcommand*{\diam}{\mathrm{diam}}

\newcommand*{\metric}{\mathrm{d}}

\newcommand*{\metricProj}{\mathsf{proj}} \newcommand*{\linProj}{\mathrm{P}} \newcommand*{\regOp}[1][X]{\mathrm{L}_{#1}} \newcommand*{\regMin}{\mathrm{M}_X} \newcommand*{\eval}{\pi} 

\newcommand*{\error}{\mathcal{E}}
\newcommand*{\modCont}{\mathcal{\omega}}
 \DeclarePairedDelimiterXPP{\lebfun}[3]{\lambda_{#1}}{(}{)}{}{#2; #3}
\DeclarePairedDelimiterXPP{\lebconst}[2]{\Lambda_{#1}}{(}{)}{}{#2}
 \newcommand*{\Gram}{G}

\newcommand*{\lever}{k} \newcommand*{\reprK}{K} 

\newcommand*{\modelSpace}{\Phi}
\newcommand*{\modelSpaceW}{\Psi}
\newcommand*{\model}{\varphi}
\newcommand*{\mBasis}{\phi}
\newcommand*{\mBasisW}{\psi}
\newcommand*{\mBasisVec}{\phi}
\newcommand*{\mBasisVecW}{\psi}

\newcommand*{\modelVander}{\mathbf{\Phi}_X}

\newcommand*{\weight}{w}

\newcommand*{\pSym}{\mathsf{P}} \DeclarePairedDelimiterXPP{\prob}[1]{\pSym}{\{}{\}}{}{#1}
\newcommand{\eSym}{\mathsf{E}} \DeclarePairedDelimiterXPP{\E}[1]{\eSym}{[}{]}{}{#1}

\DeclareMathOperator{\supp}{supp}

\DeclarePairedDelimiterXPP{\normal}[2]{\mathcal{N}}{(}{)}{}{#1, #2}
\DeclarePairedDelimiterXPP{\unif}[1]{\mathrm{Unif}}{(}{)}{}{#1}
\DeclarePairedDelimiterXPP{\binomial}[2]{\mathrm{Bin}}{(}{)}{}{#1, #2}
\DeclarePairedDelimiterXPP{\poisson}[1]{\mathrm{Poi}}{(}{)}{}{#1}
\DeclarePairedDelimiterXPP{\bernoulli}[1]{\mathrm{Ber}}{(}{)}{}{#1}
\DeclarePairedDelimiterXPP{\exponential}[1]{\mathrm{Exp}}{(}{)}{}{#1}

\DeclarePairedDelimiterXPP{\subG}[1]{\mathrm{SGauss}}{(}{)}{}{#1}
\DeclarePairedDelimiterXPP{\subExp}[1]{\mathrm{SExp}}{(}{)}{}{#1}
\DeclarePairedDelimiterXPP{\subWeibull}[1]{\mathrm{SWeibull}}{(}{)}{}{#1}

\newcommand*{\varSym}{\mathsf{var}} \DeclarePairedDelimiterXPP{\var}[1]{\varSym}{[}{]}{}{#1}
\newcommand*{\covSym}{\mathsf{cov}} \DeclarePairedDelimiterXPP{\cov}[2]{\covSym}{[}{]}{}{#1, #2}
\newcommand*{\CovSym}{\mathsf{Cov}} \DeclarePairedDelimiterXPP{\Cov}[1]{\CovSym}{[}{]}{}{#1}
\newcommand*{\mSym}{\mathsf{M}} \DeclarePairedDelimiterXPP{\median}[1]{\mSym}{[}{]}{}{#1}

 \DeclareMathOperator{\phase}{phase} \DeclareMathOperator{\Span}{span} \DeclareMathOperator{\tr}{tr} \DeclareMathOperator{\rank}{rank} \DeclareMathOperator{\diag}{diag} \DeclareMathOperator{\Image}{im}  \DeclarePairedDelimiterXPP{\scp}[2]{}{\langle}{\rangle}{}{#1, #2} \newcommand{\identity}{\mathbb{I}}
\newcommand{\id}{\mathrm{id}}
\newcommand*{\transpose}{\mathsf T} \newcommand*{\conT}{\mathsf{H}} \newcommand*{\nullspace}{\mathscr{N}}

\DeclareMathOperator*{\argmin}{arg\,min}

\DeclareMathOperator*{\esssup}{ess\,sup}

\newcommand*{\noise}{\xi}
\newcommand*{\noiseIndep}{\varsigma}

\renewcommand*{\Re}{\operatorname{Re}}
\renewcommand*{\Im}{\operatorname{Im}}

\newcommand*{\bigO}{\mathcal{O}}

\newcommand*{\favConst}{\mathcal{K}}

\newcommand*{\ang}{\theta}

\newcommand*{\dirichlet}{D}
\newcommand*{\dct}[1]{\textnormal{DCT-#1}}
\newcommand*{\F}{\mathrm{F}}
\newcommand*{\Trig}{\mathrm{Trig}}

\newcommand*{\cheb}{T}

\newcommand*{\poly}{\Pi}

\newcommand*{\inter}{ \mathchoice{\mskip1.5mu{:}\mskip1.5mu}{\mskip1.5mu{:}\mskip1.5mu}{{:}}{{:}}
}

\newlist{steps}{enumerate}{3}
\setlist[steps]{
    wide=0pt,
}
\setlist[steps,1]{
    label=\textbf{Step~\arabic*:},
    ref={Step~\arabic*},
}
\setlist[steps,2]{
    labelindent=0.75em,
    leftmargin=1.5em,
    label=\emph{Step~\arabic{stepsi}.\arabic*:},
    ref={Step~\arabic{stepsi}.\arabic*},
}
\setlist[steps,3]{
    label=\emph{Step~\arabic{stepsi}.\arabic{stepsii}.\arabic*:},
    ref={Step~\arabic{stepsi}.\arabic{stepsii}.\arabic*},
}

\newlist{casebycase}{enumerate}{3}
\setlist[casebycase]{
    wide=0pt,
}
\setlist[casebycase,1]{
    label=\textbf{Case \arabic*:},
    ref={Case \arabic*},
}
\setlist[casebycase,2]{
    label=\emph{Case \arabic{casebycasei}.\arabic*:},
    ref={Case \arabic{casebycasei}.\arabic*},
}
\setlist[casebycase,3]{
    label=\emph{Case \arabic{casebycasei}.\arabic{casebycaseii}.\arabic*:},
    ref={Case \arabic{casebycasei}.\arabic{casebycaseii}.\arabic*},
}

\newenvironment{talign*}
 {\let\displaystyle\textstyle\csname align*\endcsname}
 {\endalign}

\makeatletter
\newcounter{constants}
 \newcommand*{\defConst}[1]{\stepcounter{constants}\edef\const@temp{\arabic{constants}}\hypertarget{const:#1}{}\immediate\write\@auxout{\string\newlabel{const: #1}{{\noexpand\ensuremath{c_{\const@temp}}}{\thepage}{\noexpand\ensuremath{c_{\const@temp}}}{const:#1}{}}}\ensuremath{c_{\const@temp}}}
\makeatother

\makeatletter
\newcommand\newtag[2]{#1\def\@currentlabel{#1}\label{#2}}
\makeatother

\definecolor{primary}{RGB}{0,48,86}

\colorlet{primary10}{primary!10}
\colorlet{primary25}{primary!25}
\colorlet{primary40}{primary!40}
\colorlet{primary55}{primary!55}
\colorlet{primary70}{primary!70}
\colorlet{primary85}{primary!85}

\definecolor{accent}{RGB}{179,182,185}

\definecolor{magenta}{HTML}{D81B66}
\definecolor{red}{HTML}{c71818}
\definecolor{blue}{HTML}{11548f}
\definecolor{yellow}{HTML}{FFC107}
\definecolor{green}{HTML}{04593a}

\newcommand*{\magenta}[1]{\textcolor{magenta}{#1}}
\newcommand*{\green}[1]{\textcolor{green}{#1}}
\newcommand*{\blue}[1]{\textcolor{blue}{#1}}

\title{User-friendly approximation theory for statistical regression}
\author{Felix Benning}

\begin{document}
	\maketitle 

	\begin{abstract}
	The error of regression using noisy function values naturally decomposes into
an approximation error from the choice of model space and stochastic error
from the noise on the observations. Statistical theory is mostly concerned with the stochastic error
while approximation theory focuses on the approximation error, often subject
to the assumption of noiseless observations. We combine the tools
from approximation theory and statistics into a framework for bounding both
errors in generalized least squares regression. The results cover weighted
sup-norms and \(L^2(\mu)\)-norms, real- and complex-valued functions, and
correlated noise under a range of tail and moment assumptions.
Weighted Lebesgue functions and constants control the amplification of the
best approximation error, while concentration inequalities provide bounds
on the stochastic error. We organize these results into reusable bounds
with explicit constants and establish sharpness where possible.

For the common strategy of randomized regression we prove concentration of
the Lebesgue function and constant around those of the corresponding
\(L^2(\mu)\)-projection.

For Fourier and Chebyshev regression on standard sampling grids, we obtain
explicit, asymptotically tight \(\bigO(\log m)\) bounds on the Lebesgue
constants where \(m\) is the model dimension. This improves upon
the previous \(\bigO(\sqrt{m})\) bound obtained for the regression setting and
matches the bounds known for interpolation.

A catalogue of approximation bounds and worked examples illustrates how to
turn these results into explicit regression guarantees.

 	\smallskip
    \noindent\textbf{Keywords:} regression,
approximation theory,
Lebesgue function,
Lebesgue constant,
Gram matrix,
concentration inequalities,
Chebyshev polynomials,
Fourier series 	\\
    \noindent\textbf{MSC2020:} 62J02, 62J99, 41A65, 41A10, 41A81, 30E10, 41A25, 42A10 %
 	\end{abstract}

	{ 
    \setcounter{tocdepth}{2}
    \hypersetup{hidelinks}
    \tableofcontents 
}
\setcounter{tocdepth}{3}

\section*{Introduction}
\label{sec: introduction}

Linear regression using feature functions from a model space is one of the most fundamental and widely
used techniques in statistics
\citep[e.g.][]{pukelsheimOptimalDesignExperiments1993}. Its error naturally separates
into the stochastic error due to noise on the data and the approximation error incurred
by the choice of model space. The main focus of statistical theory
is usually the stochastic error, whereas approximation theory has long studied
the approximation error with the help of the best approximation error and the
``Lebesgue constant''---the operator
norm of projections onto the model space
\citep[e.g.][]{jacksonTheoryApproximation1930,devoreConstructiveApproximation1993,trefethenApproximationTheoryApproximation2019,lubinskySurveyWeightedPolynomial2007}.

\paragraph*{Our contributions.}
The main contribution of this paper is to marry the techniques of approximation
theory with the statistical analysis of regression with the goal to build a
bridge between the two communities. Since we integrate results from both communities,
many of these results already exist in some form in the literature. This paper is
however not merely a literature review as many results require generalization.
Among other things, we generalize the setting from interpolation to regression,
ordinary least squares to generalized least squares, from unweighted sup-norms to
weighted sup-norms, real- to complex-valued functions, independent noise to
correlated noise and widen the menu of possible tail assumptions about the
noise. We aim for
sharp results, usually with explicit constants and sometimes matching lower
bounds. We also ensure that all these results are reusable, user-friendly,
modular building blocks and provide a flow chart for
easier navigation (Fig.~\ref{fig: flowchart uniform error bounds}).

\noindent Beyond the distillation and generalization of existing results, we also 
\begin{itemize}[nosep]
    \item explain why the \(L^2(\mu)\)-norm
    is more difficult to attack with approximation-theoretic techniques than the sup-norm
    with an impossibility result (Thm.~\ref{thm: Lp approximation is hopeless}),
    \item prove a concentration result with explicit constants for the Lebesgue
    function and Lebesgue constant of randomized regression around those of
    \(L^2\)-projection (Thm.~\ref{thm: lebesgue constant concentration}),
    \item prove explicit, asymptotically tight \(\bigO(\log m)\) bounds on the Lebesgue constants of Fourier and Chebyshev regression
    (Sec.~\ref{sec: lebesgue constants of regression} and \ref{sec: examples, trigonometric and chebyshev regression}).
\end{itemize}

\paragraph*{Related work.} 
With the observation that regression may be viewed as a type of projection, the
operator theoretic formulation of the Lebesgue constant is immediately
applicable \citetext{e.g.\ Appendix~\ref{sec: approximation theory} or
\citealp[Prop.~4.1]{devoreConstructiveApproximation1993}}.
This has already been observed by \citet{barelGoodPointSets,belloniNewAsymptoticTheory2015,maranLeastSquaresUniform2025}
but has remained fairly obscure in the statistical community.
Their results are different from ours for various reasons: they do not consider
generalized least squares or weighted sup-norms, most results are only
asymptotic with unknown constants and \citet{maranLeastSquaresUniform2025}
mostly consider finite domains.
That weighted sup-norms are necessary to obtain useful bounds for interpolation
problems with unbounded domains is well-known in approximation theory
\citep[e.g.][]{lubinskySurveyWeightedPolynomial2007}, which inspired
this particular generalization.

Bounds on the \(L^2(\mu)\) error using the best
approximation error in sup-norm were developed by
\citet{cohenStabilityAccuracyLeast2013,cohenOptimalWeightedLeastsquares2017,adcockOptimalSamplingLeastsquares2025}
and we generalize these mixed norm bounds to generalized least squares and weighted sup-norms
in Thm.~\ref{thm: mixed norm bound}. They also proved concentration of the
Gram matrix in the random regression setting, which we state as a reusable lemma
(Lem.~\ref{lem: gram concentration}). The spectral bound characterizations
of the Gram matrix (Lem. \ref{lem: gram bound equivalence} and Thm.~\ref{thm:
approximately identity gram matrix}) were also heavily inspired by their proofs.

For the special case of Chebyshev polynomials uniform bounds on the regression
error were developed by \citet{matsudaPolynomialApproximationNoisy2025}.
Their proofs inspired our uniform stochastic error bounds, in particular Lemma \ref{lem: pointwise to uniform conversion},
which allows for the conversion of pointwise stochastic error bounds to uniform bounds. 
As they remark themselves, their application of the Cauchy-Schwarz inequality
results in an overestimate of the approximation error of order \(\sqrt{m}\).
Translated to our framework they essentially use the leverage function bound on
the Lebesgue function (Thm.~\ref{thm: leverage function
bound}). We improve upon this for Fourier and Chebyshev regression by replacing the
\(\sqrt{m}\) bound with an asymptotically sharp
\(\bigO(\log m)\) bound (Section~\ref{sec: lebesgue constants of regression}).

\paragraph*{Reader's guide.} In Section \ref{sec: regression} we recapitulate the basics of regression
to establish notation and to make the paper accessible to readers with an
approximation theoretic background. In this section we also show how to separate the analysis
of the approximation error from the stochastic error of the regression.
These are then treated separately in the following sections
(Sec.~\ref{sec: bias bounds} and \ref{sec: noise bounds}). Both sections
are split into two parts: bounding the error in terms of the weighted-sup norm
and bounding it in terms of the \(L^2(\mu)\)-norm. The sup-norm approximation
error bounds (Sec.~\ref{sec: relative weighted sup-norm bounds}) draw heavily
from approximation theory and this is where we
introduce the Lebesgue function and constant and strategies to bound them. Readers
that want to understand the underlying approximation theory behind these
bounds should read the corresponding appendix (Sec.~\ref{sec: approximation
theory}), where we recapitulate the operator theoretic tools from approximation theory
that are used to prove these bounds. 
In Section \ref{sec: relative Lp bounds} we first establish the necessity of
mixed norm bounds for the \(L^2(\mu)\) approximation error before stating their
extension to generalized least squares. Section \ref{sec: noise bounds}
develops the corresponding bounds on the stochastic error.

In Section \ref{sec: strategies} we discuss the results from the previous
sections, including a flowchart on how to arrive at sup-norm bounds and
two common strategies for regression design. In Section \ref{sec: catalogue}
we catalogue known approximation bounds with a careful eye for constants
and list the new Fourier and Chebyshev regression bounds on the Lebesgue
constants (Section~\ref{sec: lebesgue constants of regression}). In Section
\ref{sec: examples} we apply our results to several examples.

 \section{Regression}
\label{sec: regression}

The classic (controlled) function estimation setting aims to estimate a function
\(f\colon \domain \to \field\) for \(\field \in \set{\real, \complex}\) by
choosing evaluation points \(x_1, \dots, x_n\) in the domain \(\domain\) and
observing noisy function values
\begin{equation}
    \label{eq: regression data}    
    y_i = f(x_i) + \noise_i,
\end{equation}
where \(\noise_i\) is centered noise on the \(i\)-th evaluation. In the uncontrolled
setting there simply exist datapoints \((x_i, y_i)_{i=1}^n\), and the goal is to estimate
the function \(f(x) \coloneq \E{y_i\mid x_i=x}\). By conditioning on the points \(x_i\) 
this problem reduces to the controlled setting except that the
points \(x_i\) cannot be chosen optimally.

\begin{definition}[Regression Setting]
    \label{def: regression setting}
    Let the \emph{model space} \(\modelSpace\) be a linear subspace of a function space \(\funSpace\) of
    functions from \(\domain\) to \(\field\), let \(X=(x_1, \dots, x_n)\) be the
    \emph{evaluation points} in the domain \(\domain\) and \(\eval_X\colon \funSpace \to \field^n\) the evaluation functional at the points \(X\),
    that is \(\eval_X[f] = (f(x_1), \dots, f(x_n))\). 
    Let \(\noiseIndep\coloneq (\noiseIndep_1,\dots, \noiseIndep_n)\) be centered, independent,
    \(\field\)-valued innovations such that
    \[
        y_i = f(x_i) + \noise_i, \qquad\text{with}\qquad \noise = A\noiseIndep
    \]
    for some \(f \in \funSpace\) and \(A \in \field^{n\times n}\).
    Equivalently, \(y=\eval_X[f] + \noise\).
    Finally, we define \(\Sigma \coloneq AA^\conT\).
\end{definition}

Throughout this paper we use many different types of (semi)-norms. While we will
introduce each norm as needed, we also provide an overview in Section~\ref{sec: notation for norms}. In general we use the notation \(\abs{\cdot}\) for
semi-norms in finite dimension like \(\field^\dims\), \(\norm{\cdot}\) for
semi-norms on function spaces and \(\opnorm{\cdot}\) for operator norms.
Recall that a matrix \(\Omega_n \in \field^{n\times n}\) is called
positive semi-definite denoted by \(\Omega_n\succeq0\) if \(a^\conT\Omega_n a\ge0\) for every \(a\in\field^n\) and in the real
case \(\Omega_n\) is assumed to be symmetric. In the complex case the Hermitian property of \(\Omega_n\)
follows from positive definiteness
\citep[Lem.~3]{berlinetReproducingKernelHilbert2004}.
Such a positive semi-definite matrix \(\Omega_n \in \field^{n\times n}\) defines the
following semi-norms
\begin{equation}
\label{eq: omega norms}
\begin{tabular}{l l l}
    \toprule
    space & semi-norm & induced by
    \\
    \midrule
    \(\field^n\) & \(\abs{\cdot}_{\Omega_n}\) & \(\scp{a}{b}_{\Omega_n} \coloneq \frac1n b^\conT \Omega_n a\)
    \\[0.3ex]
    \(\funSpace\) & \(\norm{\cdot}_{\Omega_n}\) & \(\scp{f}{g}_{\Omega_n} \coloneq \scp{\eval_X[f]}{\eval_X[g]}_{\Omega_n}\)
    \\
    \bottomrule
\end{tabular}
\end{equation}

\begin{definition}[Least squares]
    \label{def: least squares}
    Assume the standard regression setting (Def.~\ref{def: regression setting})
    let \(\Omega_n\) be a positive semi-definite matrix and \(\abs{\cdot}_{\Omega_n}\) as
    defined in \eqref{eq: omega norms}.
    \begin{enumerate}[label=(\roman*),noitemsep]
        \item\label{it: general least squares} \textbf{Generalized least squares (GLS)}
        defines the operator
        \begin{equation}
            \label{eq: definition general least squares}    
            \regMin(y) \coloneqq \argmin_{\model \in \modelSpace} \abs[\big]{\eval_X[\model] - y}_{\Omega_n}^2.
        \end{equation}
        
        \item\label{it: weighted least squares} \textbf{Weighted least squares (WLS)} is a special
        case of GLS, where \(\Omega_n\) is a diagonal matrix with weights \((\Omega_n)_{ii} = \omega_i \ge 0\).
        Note that since \(\Omega_n\) is Hermitian, its diagonal entries are always real
        and they are always non-negative due to positive semi-definiteness.
        In this case
        \[
            \regMin(y) = \argmin_{\model \in \modelSpace} \frac1n\sum_{i=1}^n \omega_i \abs{\model(x_i) - y_i}^2
            \qquad \forall y\in \field^n.
        \]
        \item \textbf{Ordinary least squares (OLS)} is a special case of WLS with \(\omega_i = 1\) for all \(i\).
    \end{enumerate}
\end{definition}

These minimizers have well known closed form solutions (Theorem \ref{thm: least
squares solution}). And the solutions are given in terms of a basis of the
model space. Let \(\mBasis_1, \dots, \mBasis_m\) be any basis of
\(\modelSpace\). Then
\[
    \modelVander \coloneq \begin{pmatrix}
        \mBasis_1(x_1) & \dots & \mBasis_m(x_1) \\
        \vdots & \ddots & \vdots \\
        \mBasis_1(x_n) & \dots & \mBasis_m(x_n)
    \end{pmatrix}
    \in \field^{n\times m}
\]
is called the \emph{design matrix}. The \emph{Gram matrix} \(\Gram \in \field^{m\times m}\) is defined entry-wise as
\[
    \Gram_{kl}
    \coloneq \overline{\scp{\mBasis_k}{\mBasis_l}_{\Omega_n}}
    = \scp{\mBasis_l}{\mBasis_k}_{\Omega_n}
    = \tfrac1n(\modelVander^\conT \Omega_n \modelVander)_{kl}.
\]

\begin{theorem}[Least squares characterization]
    \label{thm: least squares solution}
    The set of GLS minimizers \(\regMin(y)\) defined in \eqref{eq: definition
    general least squares} is never empty. Indeed, \(\regOp(y) \in
    \regMin(y)\), for the GLS linear operator \(\regOp\colon \field^n \to
    \modelSpace\) defined by 
    \begin{equation}
        \label{eq: definition least squares reg op}    
        \regOp(y) \coloneq \tfrac1n\mBasisVec(\cdot)^\transpose\Gram^\dagger \modelVander^\conT \Omega_n y,
    \end{equation}
    where \(\mBasisVec \coloneq (\mBasis_1, \dots, \mBasis_m)^\transpose\) and 
    \(\Gram^\dagger\) is the Moore-Penrose pseudoinverse \citep{penroseGeneralizedInverseMatrices1955}
    of \(\Gram\) 
    that coincides with \(\Gram^{-1}\) if \(\Gram\) is invertible.
    The linear operators \(\linProj \coloneq \regOp \circ \eval_X\colon
    \funSpace \to \modelSpace\) and \(\eval_X \circ \regOp\colon \field^n \to
    \field^n\) are linear projections,\footnote{
        We recall the definition of linear projections and other projection types in Definition \ref{def:
        projection types}.
} and \(\linProj\) is a metric projection with respect to
    \(\norm{\cdot}_{\Omega_n}\), that is
    \begin{equation}
        \label{eq: LS is metric projection}
        \linProj f \in \argmin_{\model \in \modelSpace} \norm{\model - f}_{\Omega_n}^2
        \qquad \text{for } f\in \funSpace.
    \end{equation}
    The following are further equivalent:
    \begin{enumerate}[label=(\roman*),noitemsep]
        \item\label{it: least squares unique}
        \(\regMin(y)=\{\regOp(y)\}\), that is \(\regMin(y)\) is a singleton for all \(y\in \field^n\),
        \item\label{it: norm positive definite}
        The semi-norm \(\norm{\cdot}_{\Omega_n}\) is a norm on the
        model space \(\modelSpace\) (positive definite),
        \item\label{it: gram positive definite}
        the Gram matrix \(\Gram\) is invertible (positive definite),
        \item\label{it: lin proj surjective} \(\linProj\) maps onto \(\modelSpace\) (is surjective).
    \end{enumerate}
    If any (all) of the above conditions hold then the minimizer in \eqref{eq:
    LS is metric projection} is unique.
\end{theorem}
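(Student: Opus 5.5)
I will reduce everything to coefficient space. Writing \(\model = \mBasisVec^\transpose c\) for \(c\in\field^m\), we have \(\eval_X[\model] = \modelVander c\), so the objective in \eqref{eq: definition general least squares} becomes the quadratic
\[
    J(c) \coloneq \tfrac1n(\modelVander c - y)^\conT \Omega_n(\modelVander c - y)
    = c^\conT \Gram c - 2\Re\bigl(c^\conT b\bigr) + \tfrac1n y^\conT\Omega_n y,
    \qquad b \coloneq \overline{\tfrac1n\modelVander^\conT\Omega_n y}.
\]
Here I use \(\Gram = \tfrac1n\modelVander^\conT\Omega_n\modelVander\) (taking \(c^\conT\Gram c\) in the Hermitian sense \(c^\conT \Gram c\) appropriately conjugated; I will carefully fix the conjugation convention so that the normal equations read \(\Gram c = \tfrac1n\modelVander^\conT\Omega_n y\)).

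Since \(\Gram\succeq0\), \(J\) is convex and \(c\) is a minimizer iff the normal equations hold. The key point for nonemptiness is that \(\tfrac1n\modelVander^\conT\Omega_n y\) lies in \(\Image \Gram\). To see this, write \(\Omega_n = B^\conT B\) (square root), so that \(\Gram = \tfrac1n (B\modelVander)^\conT(B\modelVander)\) and \(\modelVander^\conT\Omega_n y = (B\modelVander)^\conT (By)\). Then use \(\Image M^\conT = \Image M^\conT M\). Hence \(c^\ast = \Gram^\dagger \tfrac1n\modelVander^\conT\Omega_n y\) solves the normal equations via \(\Gram\Gram^\dagger|_{\Image\Gram}=\id\), which gives \(\regOp(y)\in\regMin(y)\). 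I expect this range argument, together with the conjugation bookkeeping in the complex case, to be the main technical obstacle.

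For the projection claims, \(\linProj f = \regOp(\eval_X f)\) is a minimizer of \(\norm{\model - f}_{\Omega_n}^2 = \abs{\eval_X[\model]-\eval_X[f]}_{\Omega_n}^2\), which gives \eqref{eq: LS is metric projection} directly. For idempotence, I plan to show \(\linProj\model=\model\) on \(\modelSpace\) up to the \(\Omega_n\)-null part. More concretely, for \(\model=\mBasisVec^\transpose c\) we get \(\linProj\model=\mBasisVec^\transpose\Gram^\dagger\Gram c\). Since \(\Gram^\dagger\Gram\) is idempotent, \(\linProj^2 = \linProj\) follows from \(\linProj\linProj f = \mBasisVec^\transpose \Gram^\dagger\Gram\Gram^\dagger(\dots) = \linProj f\) using \(\Gram^\dagger\Gram\Gram^\dagger=\Gram^\dagger\). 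Similarly, \((\eval_X\regOp)^2 = \eval_X\linProj\regOp = \eval_X\regOp\).

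For the equivalences, I will work on \(\modelSpace\) with basis \(\mBasisVec\):
\begin{itemize}
    \item \ref{it: norm positive definite}\(\Leftrightarrow\)\ref{it: gram positive definite}, since \(\norm{\mBasisVec^\transpose c}_{\Omega_n}^2 = c^\conT \Gram c\) (up to conjugation).
    \item \ref{it: gram positive definite}\(\Rightarrow\)\ref{it: lin proj surjective}, since then \(\linProj\model=\model\).
    \item \ref{it: lin proj surjective}\(\Rightarrow\)\ref{it: gram positive definite}, since \(\Image\linProj\subseteq\mBasisVec^\transpose\Image\Gram^\dagger\) has dimension \(\rank\Gram\), which must equal \(m\).
    \item \ref{it: gram positive definite}\(\Rightarrow\)\ref{it: least squares unique} by strict convexity of \(J\).
    \item \ref{it: least squares unique}\(\Rightarrow\)\ref{it: gram positive definite}: if \(\Gram c=0\) with \(c\neq0\), then \(\regOp(y)+\mBasisVec^\transpose c\) is another minimizer, and it is distinct because the \(\mBasis_k\) are linearly independent.
\end{itemize}
Finally, uniqueness in \eqref{eq: LS is metric projection} under these conditions follows from the same strict convexity argument, applied with \(y=\eval_X f\).
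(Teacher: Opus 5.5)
Your proposal is correct. The coefficient reduction, the idempotence computation via $\Gram^\dagger\Gram\Gram^\dagger=\Gram^\dagger$, the metric-projection claim and the null-vector argument for (i)$\Rightarrow$(iii) are essentially what the paper does.

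The real difference is how you obtain existence and the description of the minimizers. The paper sets $H=\Omega_n^{1/2}\modelVander$ and $b=\Omega_n^{1/2}y$, and cites the textbook result $\argmin_\beta\abs{H\beta-b}_2^2=H^\dagger b+\{(\identity-H^\dagger H)z : z\in\field^m\}$. It then translates this with the Penrose identities $H^\dagger=(H^\conT H)^\dagger H^\conT$ and $H^\dagger H=\Gram^\dagger\Gram$. That delivers at once the full solution set $\hat\beta+\nullspace(\Gram)$ and the minimum-Euclidean-norm property of $\hat\beta$, which the paper uses in its remark on basis-free minimum-norm solutions.

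You instead show directly that the normal equations $\Gram c=v$, with $v=\tfrac1n\modelVander^\conT\Omega_n y$, are consistent. You do this via $\Image M^\conT=\Image M^\conT M$ and the fact that $\Gram\Gram^\dagger$ is the orthogonal projector onto $\Image\Gram$. This is self-contained and avoids the external pseudoinverse theory. If you want the minimum-norm property, it follows from $\Gram^\dagger v\in\Image\Gram=\nullspace(\Gram)^\perp$.

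For (iv)$\Rightarrow$(iii) you count dimensions, $\dim\Image\linProj\le\rank\Gram$. The paper instead argues that a nonzero $\model$ with $\norm{\model}_{\Omega_n}=0$ satisfies $\linProj\model=0$, so it lies outside the image of a projection. Both work.

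To settle your conjugation bookkeeping: with the paper's conventions, $J(c)=c^\conT\Gram c-2\Re(c^\conT v)+\tfrac1n y^\conT\Omega_n y$, with no extra conjugate on $v$. Rather than appealing to convexity and complex derivatives, you can use the identity $J(c)-J(c^*)=(c-c^*)^\conT\Gram(c-c^*)$ for any $c^*$ with $\Gram c^*=v$. This gives the characterization of minimizers, nonemptiness, and uniqueness under (iii) in one line.
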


\begin{remark}[Basis dependence]
    Observe that \ref{it: least squares unique}, \ref{it: norm positive
    definite} and \ref{it: lin proj surjective} are inherently independent of the basis. The Gram matrix in
    \ref{it: gram positive definite} depends on the choice of basis, but since
    the equivalence holds for any choice, it is sufficient to have
    \ref{it: gram positive definite} for one choice of basis to get it
    for all choices of basis.
\end{remark}

Let \(y = \eval_X[f] + \noise\) be the observed data as introduced 
Definition \ref{def: regression setting}. Linearity
of \(\linProj = \regOp \circ \eval_X\) defined in Theorem \ref{thm: least squares solution}
thus implies the following error decomposition of regression into approximation, or misspecification, error and
stochastic error.
\begin{mdframed}[style=default]
\begin{corollary}[Error decomposition of regression]
\label{cor: bias noise decomposition} Let \(\regOp\) be a regression operator as
defined in \eqref{eq: definition least squares reg op} or more generally in
Definition \ref{def: regression}, then
    \[
        \regOp(y) = f + \underbrace{(\linProj f-f)}_{\mathrm{approx.\ error}} + \underbrace{\regOp(\noise)}_{\mathrm{stoch.\ }\mathrlap{\mathrm{error}}}.
    \]
\end{corollary}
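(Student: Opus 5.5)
The identity follows from linearity of \(\regOp\) and the data model of Definition~\ref{def: regression setting}. I will first note that \(\regOp\colon \field^n \to \modelSpace\) as defined in \eqref{eq: definition least squares reg op} is linear. It is the composition of the matrix \(\tfrac1n \Gram^\dagger \modelVander^\conT \Omega_n \in \field^{m\times n}\) with the linear map \(c \mapsto \mBasisVec(\cdot)^\transpose c\) from \(\field^m\) into \(\modelSpace\). For the more general regression operators of Definition~\ref{def: regression}, I take linearity as part of the definition, since the statement only requires that \(\regOp\) be linear and that \(\linProj \coloneq \regOp\circ\eval_X\).

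Next I substitute the observation model \(y = \eval_X[f] + \noise\) and apply linearity:
\[
    \regOp(y) = \regOp(\eval_X[f]) + \regOp(\noise) = \linProj f + \regOp(\noise),
\]
using the definition \(\linProj = \regOp\circ\eval_X\) from Theorem~\ref{thm: least squares solution}. Adding and subtracting \(f\) in the function space \(\funSpace\) gives
\[
    \regOp(y) = f + (\linProj f - f) + \regOp(\noise).
\]
This is the claimed decomposition. It requires \(\modelSpace\subseteq\funSpace\), which holds by Definition~\ref{def: regression setting}, so that \(\linProj f - f\) makes sense in \(\funSpace\).

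No step here poses a real obstacle. The only point needing care is that \(\regOp\) is well defined and linear even when \(\Gram\) is singular. The pseudoinverse \(\Gram^\dagger\) is a fixed matrix, so linearity holds regardless of whether \(\Gram\) is invertible. The decomposition therefore holds without the uniqueness conditions of Theorem~\ref{thm: least squares solution}.
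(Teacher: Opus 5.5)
Your proof is correct and is essentially the paper's argument: substitute \(y=\eval_X[f]+\noise\), use linearity of \(\regOp\) together with \(\linProj=\regOp\circ\eval_X\), then add and subtract \(f\). Your remark that the decomposition holds even for singular \(\Gram\), since \(\Gram^\dagger\) is a fixed matrix and \(\regOp\) stays linear, is accurate and harmless.
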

\end{mdframed}
Linearity of \(\regOp\) and \(\linProj\) is the key structural assumption
for the error decomposition and thereby motivates the following abstract
definition of regression. This definition only enforces
the practicality of the linear projection \(\linProj\). That is, \(\linProj\)
must be feasible to implement and may therefore only depend on finitely
many evaluations of the target function \(f\).

\begin{definition}[Abstract regression]
    \label{def: regression}
    The linear map \(\regOp\colon \field^n \to \modelSpace\) associated to evaluation points
    \(X=(x_1, \dots, x_n) \in \domain^n\)
    is called a \emph{regression operator} if
    the linear map \(\linProj \colon \funSpace\to \modelSpace\) defined as
    \begin{equation}
        \label{eq: least squares projection}    
        \linProj \coloneq \regOp \circ \eval_X
    \end{equation}
    is a linear projection (cf.~Def.~\ref{def: projection types}) \underline{onto} the model
    space \(\modelSpace\). Then 
    \(\linProj\) is called a \emph{regression projection}.
\end{definition}

\begin{corollary}[Least squares is an abstract regression]
    \label{cor: least squares is an abstract regression}
    If the Gram matrix \(\Gram\) is invertible, then the GLS operator \(\regOp\) defined in \eqref{eq: definition least squares reg op} is a regression operator.
\end{corollary}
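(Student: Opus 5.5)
The plan is to check the two requirements of Definition~\ref{def: regression} directly: that \(\regOp\colon\field^n\to\modelSpace\) is linear, and that \(\linProj=\regOp\circ\eval_X\) is a linear projection onto \(\modelSpace\). Linearity of \(\regOp\) is immediate from \eqref{eq: definition least squares reg op}, which is a composition of matrix multiplications followed by the linear map \(c\mapsto \mBasisVec(\cdot)^\transpose c\) from \(\field^m\) into \(\modelSpace\). So \(\regOp\) indeed maps into \(\modelSpace\). Since \(\Gram\) is invertible, \(\Gram^\dagger=\Gram^{-1}\).

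The main step is to show \(\linProj\model=\model\) for every \(\model\in\modelSpace\), which gives both idempotence and surjectivity. Write \(\model=\mBasisVec(\cdot)^\transpose c\) for some \(c\in\field^m\), so that \(\eval_X[\model]=\modelVander c\). Then
\[
\linProj\model=\tfrac1n\mBasisVec(\cdot)^\transpose\Gram^{-1}\modelVander^\conT\Omega_n\modelVander c=\mBasisVec(\cdot)^\transpose\Gram^{-1}\Gram c=\model,
\]
using \(\Gram=\tfrac1n\modelVander^\conT\Omega_n\modelVander\). Consequently \(\linProj\) restricts to the identity on \(\modelSpace\). Because \(\linProj\) maps \(\funSpace\) into \(\modelSpace\), we get \(\linProj^2=\linProj\), and its image is exactly \(\modelSpace\).

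Alternatively, one could cite Theorem~\ref{thm: least squares solution}. It already says that \(\linProj\) is a linear projection, and its equivalence \ref{it: gram positive definite}\(\Leftrightarrow\)\ref{it: lin proj surjective} gives surjectivity onto \(\modelSpace\) when \(\Gram\) is invertible. I expect no real obstacle here. The only point needing care is that the projection is \emph{onto} \(\modelSpace\), and invertibility of \(\Gram\) (rather than the pseudoinverse) is exactly what delivers this.
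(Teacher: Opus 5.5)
Your proposal is correct and matches the paper, whose proof simply cites Theorem~\ref{thm: least squares solution} (idempotence together with the equivalence \ref{it: gram positive definite}\(\Leftrightarrow\)\ref{it: lin proj surjective}); this is exactly the alternative you mention. Your direct computation \(\linProj\model=\mBasisVec(\cdot)^\transpose\Gram^{-1}\Gram c=\model\) is the same argument the paper uses to prove \ref{it: gram positive definite}\(\Rightarrow\)\ref{it: lin proj surjective} in that theorem, so your main route just inlines the cited step.
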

\begin{proof}
    Theorem \ref{thm: least squares solution} shows that \(\linProj\) is a linear projection onto \(\modelSpace\). 
\end{proof}

\begin{remark}[Non-invertible case]
    \label{rem: non-invertible gram matrix}
    If \(\Gram\) is not invertible, then \(\regOp\) is not surjective by Theorem
    \ref{thm: least squares solution} \ref{it: gram positive definite}
    \(\Leftrightarrow\) \ref{it: lin proj surjective}.
    It therefore does not satisfy the definition of a regression operator.
    Redefining \(\modelSpace\) as the image of this linear map results in a
    regression operator. A Gram matrix constructed with a basis of this
    image is then invertible by Thm.~\ref{thm: least squares solution}
    due to surjectivity
    while the regression operator remains unchanged.  In other words: The
    over-parametrized case \(m > n\) reduces to the classic setting with
    \(m=\rank \Gram \le n\) after selecting a (\(\rank \Gram\))-dimensional
    subspace of \(\modelSpace\) that is the image of the linear map \(\regOp\).
\end{remark}
 	\section{Bounds on the approximation error}
\label{sec: bias bounds}

To quantify the magnitude of the approximation error \(\linProj f - f\) in Corollary \ref{cor:
bias noise decomposition}, it is necessary to introduce a notion of magnitude.
For this purpose let \(\norm{\cdot}_*\) be a (semi-)norm on the function
space \(\funSpace\). The minimum error that is achievable using functions from
the model space \(\modelSpace\)
is thus given by the \emph{best approximation error} (or \emph{metric projection error})
\begin{equation}
    \label{eq: metric projection error}
    \error_\modelSpace^*[f] \coloneq \inf_{\model\in \modelSpace} \norm{\model-f}_*.
\end{equation}
Clearly, the approximation error must be at least as large as the metric projection error. Since
investigating all pairs of regression operators and function spaces is
a daunting task, it turns out to be fruitful to split this problem
into two tasks:
\begin{enumerate}[nosep]
    \item Bound the metric projection error \(\error_\modelSpace^*[f]\) for various function spaces \(\funSpace\), model spaces \(\modelSpace\) and norms \(\norm{\cdot}_*\)
    (Section \ref{sec: metric projection error}).
    \item Bound the approximation error of a specific regression operator relative to the best approximation
    error.
\end{enumerate}
Since the best approximation error remains the same whether one uses
regression or interpolation all the results on the metric projection error
from approximation theory are directly applicable. These results
are reviewed in Section \ref{sec: metric projection error}. Otherwise
we focus on \emph{relative} approximation error bounds.

\subsection{Relative weighted sup-norm bounds}
\label{sec: relative weighted sup-norm bounds}

In this section we will consider the approximation error in the weighted
sup-norm
\[
    \norm{f}_\weight \coloneq \norm{f\weight}_\infty = \sup_{x\in \domain} \abs{f(x)} \weight(x)
\]
for weight functions \(\weight \colon \domain \to (0, \infty)\), which we always implicitly
assume to be measurable. Observe that
\(\weight \equiv 1\) reduces to the usual sup-norm. To emulate \(\weight(x)=0\)
one may drop \(x\) from the domain \(\domain\).
On unbounded domains weights are vital for approximation to succeed as
it is typically impossible to approximate functions uniformly. For example, bounded
continuous functions cannot be approximated uniformly by polynomials on
unbounded domains, as nonconstant polynomials are unbounded on unbounded domains. 
For \(\domain \subseteq \field^\dims\) the typical form of such weight functions
are the ``Freud weights'' \citep{lubinskySurveyWeightedPolynomial2007}
\[
    \weight(x) = \exp(-Q(x))
\quad \text{with}\quad Q(x) \propto \abs{x}_2^\alpha
\]
for \(\abs{\cdot}_p\) the \(p\)-norm and
\(\alpha\in[1,\infty)\).\footnote{
Convergence in the norms induced by Freud weights implies convergence in the
    `compact-open' topology which requires uniform convergence on all compact
    sets. Assuming \(\domain\) to be locally compact, this topology is the
    unique `acceptable' topology to topologists \citep[Prop.~2.6.12 (iv) and
    Thm.~3.4.3]{engelkingGeneralTopologyRevised1989} due to its `proper' and `admissible'
    interaction with evaluation mappings. The coarser topology of pointwise
    convergence is not metrizable for uncountable domains \(\domain\) and generally not
    `admissible', since \((f,x) \mapsto f(x)\) is generally not continuous.
    Convergence in the finer topology of uniform convergence is often impossible
    to achieve on unbounded
    domains and generally not `proper', since \(z\mapsto f(z, \cdot)\) is not necessarily
    continuous for continuous \(f\). It may therefore be unsurprising to
    topologists that the unweighted sup-norm is typically only useful for
    approximation on compact domains \(\domain\) where the compact-open topology
    coincides with the uniform topology.
}
For the optimal bounds on the approximation error relative to the best approximation
error \(\error_\modelSpace^w[f]\), we require a few concepts inspired by
approximation theory \citep[e.g.][]{trefethenApproximationTheoryApproximation2019,devoreConstructiveApproximation1993}: `Lagrange functions' motivated by the Lagrange
polynomials (cf.~Example~\ref{ex: lagrange polynomial}), 
`Lebesgue functions' and `Lebesgue constants'.

\begin{definition}[Lagrange function]
    \label{def: lagrange function}
    Let \(\regOp\) be a regression operator.
    The \emph{Lagrange function} \(\ell\) maps \(x\in \domain\) to the unique
    vector \(\ell(x) \in \field^n\) such that
    \[
         \ell(x)^\conT y = \pi_x \circ \regOp(y) \qquad \forall y\in \field^n,
    \]
    where \(\pi_x(f) = f(x)\). In other words, \(\ell(x)\) is the unique Riesz
    representation of the linear functional \(\pi_x \circ \regOp\colon
    \field^n \to \field\).
\end{definition}

Observe that the above definition of the Lagrange function is independent of the
choice of basis of the model space. For a concrete choice of basis
Theorem~\ref{thm: least squares solution} immediately implies the
following explicit representation for least squares regression operators.

\begin{example}[Lagrange function of least squares]
    \label{ex: lagrange function of least squares}
    The Lagrange function of the GLS operator \(\regOp\) defined in \eqref{eq:
    definition least squares reg op} is given by 
    \begin{equation}
        \label{eq: lagrange function least squares}    
        \ell(x) = \tfrac1n\Omega_n \modelVander \Gram^\dagger \overline{\mBasisVec(x)}.
    \end{equation}
    \textbf{Warning:} Recall that a regression projection must map \underline{onto} \(\modelSpace\)
    (Definition~\ref{def: regression}).
    If \(\Gram\) is not invertible \(\modelSpace\) must therefore be redefined to be the image of \(\linProj =
    \regOp \circ \eval_X\) (cf.~Remark~\ref{rem: non-invertible gram
    matrix}). This changes
    the metric projection error \(\error_\modelSpace^\weight[f]\) defined in \eqref{eq: metric projection error}.
    And after this redefinition, the Gram matrix \(\Gram\) is invertible (Thm.~\ref{thm: least squares solution}).
\end{example}

\begin{definition}[Lebesgue function and constant]
    \label{def: lebesgue function and constant}
    Let \(\ell\colon \domain \to \field^n\) be the Lagrange function of a
    regression projection \(\linProj\), then the \emph{Lebesgue function} and
    \emph{Lebesgue constant} are given by
    \[
        \lebfun{\weight}{\linProj}{x} \coloneq \weight(x)\sum_{i=1}^n \frac{\abs{\ell_i(x)}}{\weight(x_i)}
        \qquad\text{and}\qquad
        \lebconst{\weight}{\linProj} \coloneq \sup_{x\in \domain} \lambda_\weight(\linProj; x).
    \]
\end{definition}
In particular for \(\weight \equiv 1\), the Lebesgue function is simply the
\(1\)-norm \(\abs{\ell(x)}_1\) of the Lagrange function.
In this unweighted case, we write \(\lebfun{\infty}{\linProj}{x}\) and
\(\lebconst{\infty}{\linProj}\), using the sup-norm index \(\infty\).

\begin{mdframed}[style=default]
\begin{theorem}[Optimal approximation error bound in weighted sup-norm]
    \label{thm: optimal bias bounds in the weighted sup-norm}
    Let \(\regOp\) be a regression operator and \(\linProj\) be the corresponding regression projection.
    \begin{enumerate}[label=(\roman*)]
        \item\label{it: (weighted) pointwise bound}\textbf{Pointwise bound.} Let \(\lebfun{\weight}{\linProj}{x}\) be the Lebesgue function, then
        \[
            \abs{\linProj f(x) - f(x)}\weight(x)
            \le (1+\lebfun{\weight}{\linProj}{x}) \error_\modelSpace^\weight[f],
        \]
        \underline{Optimal:} If the function space \(\funSpace\) is
        sufficiently rich (Def.~\ref{def: sufficiently rich function space}) and
        the \(x_i\) in \(X=(x_1, \dots, x_n)\) distinct, then 
        for any \(x \notin \set{x_1, \dots, x_n}\) there exists \(f\in
        \funSpace\) such that the bound holds with equality. In particular,
        there is no constant smaller than \(1+\lebfun{\weight}{\linProj}{x}\)
        that holds for all \(f\in \funSpace\).

        \item\label{it: (weighted) uniform bound}\textbf{Uniform bound.}
        Let \(\lebconst{\weight}{\linProj}\) be the Lebesgue constant,
        then
        \[
            \norm{f - \linProj f}_\weight
            \le (1+\lebconst{\weight}{\linProj}) \error_\modelSpace^\weight[f],
        \]
        \underline{Optimal:} If \(\funSpace\) is sufficiently rich
        (Def.~\ref{def: sufficiently rich function space}), the \(x_i\) in
        \(X=(x_1, \dots, x_n)\) are distinct and not isolated points of \(\domain\), \(\modelSpace \subseteq C(\domain;
        \field)\) and \(\weight \in C(\domain; \real)\), then there is no
        constant smaller than \(1+\lebconst{\weight}{\linProj}\) that holds for all
        \(f\in \funSpace\).
    \end{enumerate}
\end{theorem}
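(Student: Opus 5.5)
The upper bounds follow from the standard Lebesgue-constant argument. Fix \(f\in\funSpace\) and any \(\model\in\modelSpace\). Because \(\linProj\) is a linear projection onto \(\modelSpace\) (Definition~\ref{def: regression}), we have \(\linProj\model=\model\), and therefore
\[
    \linProj f - f = \linProj(f-\model) - (f-\model).
\]
By Definition~\ref{def: lagrange function}, \(\linProj g(x) = \ell(x)^\conT\eval_X[g]\). Writing \(g=f-\model\), we get
\[
    \abs{\linProj g(x)}\weight(x) \le \weight(x)\sum_i \frac{\abs{\ell_i(x)}}{\weight(x_i)}\,\abs{g(x_i)}\weight(x_i) \le \lebfun{\weight}{\linProj}{x}\,\norm{g}_\weight .
\]
Adding \(\abs{g(x)}\weight(x)\le\norm{g}_\weight\) and taking the infimum over \(\model\) gives \ref{it: (weighted) pointwise bound}. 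Taking the supremum over \(x\) gives \ref{it: (weighted) uniform bound}.

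For pointwise optimality at \(x\notin\set{x_1,\dots,x_n}\), I would build \(f\) so that the best approximation is \(\model=0\) and every inequality above is tight. Set \(\sigma_i\coloneq\overline{\phase(\ell_i(x))}\) and define \(g\) by
\[
    g(x_i)=\sigma_i/\weight(x_i),\qquad g(x)=-\overline{\phase\bigl(\ell(x)^\conT \eval_X[g]\bigr)}/\weight(x)\cdot(\text{sign chosen so the two terms align}),
\]
with \(\abs{g(z)}\weight(z)\le1\) everywhere. Distinctness of the points allows these values to be prescribed independently. I expect Definition~\ref{def: sufficiently rich function space} to supply exactly such an \(f=g\in\funSpace\), with prescribed values at finitely many distinct points and weighted sup-norm \(1\). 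With this choice, \(\linProj f(x)\weight(x)=\lebfun{\weight}{\linProj}{x}\) and \(f(x)\weight(x)\) has the opposite phase, so \(\abs{\linProj f(x)-f(x)}\weight(x)=1+\lebfun{\weight}{\linProj}{x}\). Since \(\error_\modelSpace^\weight[f]\le\norm{f}_\weight=1\), the upper bound forces equality, and in particular \(\error_\modelSpace^\weight[f]=1\). If the richness definition is phrased differently, for instance as ``functions vanishing at given points are dense'', I would instead use an approximate version that attains equality up to \(\varepsilon\).

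For uniform optimality the obstacle is that the supremum defining \(\lebconst{\weight}{\linProj}\) may be approached only at or near the nodes \(x_i\), where the pointwise construction is unavailable. I would proceed in two steps.
\begin{itemize}
    \item Given \(\varepsilon>0\), pick \(x\) with \(\lebfun{\weight}{\linProj}{x}>\lebconst{\weight}{\linProj}-\varepsilon\).
    \item If \(x\) is a node, perturb it. Since \(\modelSpace\subseteq C(\domain;\field)\), the Lagrange function \(\ell\) is continuous (it is a finite combination of the model functions evaluated at the data), and \(\weight\) is continuous, so \(\lebfun{\weight}{\linProj}{\cdot}\) is continuous. Because the nodes are not isolated points of \(\domain\), there is a nearby non-node \(x'\) with \(\lebfun{\weight}{\linProj}{x'}>\lebconst{\weight}{\linProj}-2\varepsilon\).
\end{itemize}
Applying the pointwise optimality at \(x'\) yields \(f\) with \(\norm{f-\linProj f}_\weight\ge(1+\lebconst{\weight}{\linProj}-2\varepsilon)\error_\modelSpace^\weight[f]\). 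Letting \(\varepsilon\to0\) rules out any smaller constant.

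The main technical care lies in matching the construction to the exact wording of Definition~\ref{def: sufficiently rich function space} and in handling complex phases (zero entries of \(\ell(x)\) are harmless).
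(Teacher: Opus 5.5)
Your route is the paper's: the Hölder estimate \(\abs{\linProj g(x)}\weight(x)\le\lebfun{\weight}{\linProj}{x}\norm{g}_\weight\) gives the upper bounds. Phase-adversarial data, extended by sufficient richness to the \(n+1\) distinct points \((x_1,\dots,x_n,x)\), gives pointwise sharpness. The continuity and non-isolated-node perturbation gives the uniform case (Lemma~\ref{lem: lebesgue function is operator norm} and Corollary~\ref{cor: lebesgue constant is operator norm}).

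There is one concrete slip, in the complex case. Since \(\ell(x)^\conT y=\sum_i\overline{\ell_i(x)}\,y_i\), the adversarial values must be \(g(x_i)=\phase(\ell_i(x))/\weight(x_i)\), not their conjugates. With your \(\sigma_i\) you get \(\linProj g(x)=\sum_i\overline{\ell_i(x)\phase(\ell_i(x))}/\weight(x_i)\). When \(\field=\complex\), its modulus is in general strictly smaller than \(\sum_i\abs{\ell_i(x)}/\weight(x_i)\), so your claimed identity \(\linProj f(x)\weight(x)=\lebfun{\weight}{\linProj}{x}\) fails.

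With the corrected phases, \(\linProj g(x)=\lebfun{\weight}{\linProj}{x}/\weight(x)\ge 0\) is real. Your hedged choice at \(x\) then becomes simply \(g(x)=-1/\weight(x)\), exactly as in the paper. Definition~\ref{def: sufficiently rich function space} applied to \((x_1,\dots,x_n,x)\) gives \(\norm{g}_\weight\le 1\) directly, so no approximate variant is needed.

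Two minor points:
\begin{itemize}
\item For an abstract regression operator, continuity of \(\ell\) follows from \(\overline{\ell_i(\cdot)}=\regOp(e_i)(\cdot)\in\modelSpace\subseteq C(\domain;\field)\), not from a least-squares formula.
\item If \(\lebconst{\weight}{\linProj}=\infty\), pick \(x\) with \(\lebfun{\weight}{\linProj}{x}>N\) instead of \(\lebfun{\weight}{\linProj}{x}>\lebconst{\weight}{\linProj}-\varepsilon\).
\end{itemize}
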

\end{mdframed}

The intuition behind this theorem lies in the operator theoretic view
of approximation theory discussed in Appendix~\ref{sec: approximation theory}.

Observe that the bounds are uniform in the functions \(f\in \funSpace\).
Selecting \(\funSpace\) to be tiny would therefore open the door to better bounds.
The requirement that the function space is sufficiently rich is therefore
natural, which is formally defined in the following definition.

\begin{definition}[Sufficiently rich]
    \label{def: sufficiently rich function space}
    The function space \(\funSpace\) is called \emph{sufficiently rich} if for
    all \(n\in \nat\), all \(X=(x_1, \dots, x_n) \in \domain^n\) such that the
    \(x_i\) are distinct and all \(y\in \field^n\) there exists \(f\in
    \funSpace\) such that \(\eval_X[f] = y\) and \(\norm{f}_\weight \le
    \abs{\eval_X[f\weight]}_\infty\) (i.e.\ interpolation without overshoot).
\end{definition}

\begin{example}
    If \(\weight\in C^\infty(\real^\dims; (0,\infty))\), then the function space
    \(\funSpace = C_c^\infty(\real^\dims; \field)\) of infinitely differentiable functions
    with compact support is sufficiently rich.
\end{example}
\begin{proof}
    Using bump functions, construct a function
    \(g \in C^\infty_c(\real^\dims; \field)\) which interpolates \(y\odot
    \eval_X[\weight] = (y_1\weight(x_1), \dots, y_n \weight(x_n))\) and assumes
    its maximum at these points such that \(\norm{g}_\infty \le
    \abs{\eval_X[g]}_\infty\). Then \(f \coloneq g/\weight
    \in C^\infty_c(\real^\dims;\field)\) does the job.
\end{proof}

\subsubsection{Reproducing kernel representation}

The definition of the Lagrange function is fairly abstract and the concrete representation
of the Lagrange function for least squares regression (Example \ref{ex: lagrange function of least squares})
appears to be basis dependent. The representation of the Lagrange function in
terms of the `reproducing kernel' of the model space \(\modelSpace\), will make
it much easier to derive bounds on the Lebesgue function and constant.

\begin{lemma}[Reproducing kernel]
    \label{lem: reproducing kernel}
    For any inner product \(\scp{\cdot}{\cdot}_*\) over \(\field\) on the
    \(m\)-dimensional model space \(\modelSpace\), there exists a unique
    \emph{reproducing kernel} \(K_\modelSpace^*\colon \domain
    \times \domain \to \field\) with the defining properties:
    \begin{enumerate}[label=(\roman*), noitemsep]
        \item \(K_\modelSpace^*(\cdot, y) \in \modelSpace\) for all \(y\in \domain\),
        \item the kernel has the reproducing property, that is for all \(y\in \domain\) and all \(\model \in \modelSpace\)
        \[
            \scp{\model}{K_\modelSpace^*(\cdot, y)}_* = \model(y).
        \]
    \end{enumerate}
    Let \(\mBasis_1, \dots, \mBasis_{m}\) be any orthonormal basis of
    \(\modelSpace\) with respect to the inner product \(\scp{\cdot}{\cdot}_*\).
    Then the reproducing kernel has the representation
    \[
        K_\modelSpace^*(x, y) = \sum_{k=1}^{m} \mBasis_k(x)\overline{\mBasis_k(y)}.
    \]
    In particular the kernel is positive semi-definite.
\end{lemma}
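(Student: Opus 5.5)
The plan is to build the kernel from the Riesz representation theorem in the finite-dimensional inner product space \((\modelSpace, \scp{\cdot}{\cdot}_*)\), and then to read off the basis formula from that construction.

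\textbf{Existence.} Fix \(y\in\domain\). The evaluation map \(\model\mapsto\model(y)\) is a linear functional on the \(m\)-dimensional space \(\modelSpace\), and every linear functional on a finite-dimensional inner product space is continuous. The Riesz representation theorem therefore gives a unique element \(k_y\in\modelSpace\) with \(\scp{\model}{k_y}_* = \model(y)\) for all \(\model\in\modelSpace\). We define \(K_\modelSpace^*(x,y)\coloneq k_y(x)\). Property (i) holds because \(k_y\in\modelSpace\), and property (ii) is exactly the defining identity of \(k_y\).

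\textbf{Uniqueness.} Suppose \(\tilde K\) also satisfies (i) and (ii). Then for each \(y\) the function \(d_y\coloneq K_\modelSpace^*(\cdot,y)-\tilde K(\cdot,y)\) lies in \(\modelSpace\). It satisfies \(\scp{\model}{d_y}_*=0\) for every \(\model\in\modelSpace\). Choosing \(\model=d_y\) gives \(\norm{d_y}_*=0\), and positive definiteness of the inner product then forces \(d_y=0\). Note that this uses \(\scp{\cdot}{\cdot}_*\) being a genuine inner product and not a semi-inner product.

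\textbf{Representation.} Let \(\mBasis_1,\dots,\mBasis_m\) be orthonormal. Expand \(k_y=\sum_k c_k\mBasis_k\) with \(c_k=\scp{k_y}{\mBasis_k}_*\). By conjugate symmetry,
\[
c_k=\overline{\scp{\mBasis_k}{k_y}_*}=\overline{\mBasis_k(y)},
\]
where the last step applies the reproducing property to \(\model=\mBasis_k\). This yields \(K_\modelSpace^*(x,y)=\sum_k\mBasis_k(x)\overline{\mBasis_k(y)}\).

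As a sanity check, this function lies in \(\modelSpace\) as a function of \(x\). It also reproduces directly:
\[
\scp{\model}{\textstyle\sum_k\overline{\mBasis_k(y)}\mBasis_k}_*=\sum_k \mBasis_k(y)\scp{\model}{\mBasis_k}_*=\model(y).
\]
Here we use the paper's convention that the inner product is linear in the first argument and conjugate-linear in the second, consistent with \(\scp{a}{b}_{\Omega_n}=\frac1n b^\conT\Omega_n a\).

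\textbf{Positive semi-definiteness.} For points \(z_1,\dots,z_N\in\domain\) and \(a\in\field^N\),
\[
\sum_{i,j}\overline{a_i}\,a_j\,K_\modelSpace^*(z_i,z_j)=\sum_k\Bigl|\sum_i \overline{a_i}\,\mBasis_k(z_i)\Bigr|^2\ge0.
\]
In the real case the kernel is symmetric because all conjugations are trivial.

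The only genuinely delicate point is the bookkeeping of complex conjugates, so that the formula produces \(\overline{\mBasis_k(y)}\) rather than \(\mBasis_k(y)\). This is settled by fixing the linearity convention of \(\scp{\cdot}{\cdot}_*\) as above. Everything else is routine.
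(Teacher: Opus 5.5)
Your proof is correct, but it runs in the opposite direction from the paper's. The paper deliberately avoids the Riesz representation theorem. It \emph{defines} \(K_\modelSpace^*\) by the basis formula and checks the reproducing property directly from orthonormality; this is exactly your ``sanity check''. It then proves uniqueness by playing the two reproducing properties against each other,
\[
K(x,y)=\scp{K(\cdot,y)}{K_\modelSpace^*(\cdot,x)}_*=\overline{\scp{K_\modelSpace^*(\cdot,x)}{K(\cdot,y)}_*}=\overline{K_\modelSpace^*(y,x)}=K_\modelSpace^*(x,y),
\]
which uses the Hermitian symmetry that the explicit formula provides for free. You instead proceed in three steps:
\begin{itemize}
\item you obtain the kernel basis-free from Riesz;
\item you prove uniqueness by observing that \(d_y\in\modelSpace\) is orthogonal to all of \(\modelSpace\), hence to itself;
\item only then do you read off the formula by expanding \(k_y\) in an orthonormal basis.
\end{itemize}
Your route makes it conceptually immediate that the formula does not depend on which orthonormal basis is chosen. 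Your uniqueness argument also needs only positive definiteness, not any symmetry of the kernel. The paper's route is more elementary and self-contained. In finite dimension, Riesz is itself proved by essentially this basis formula, so your black box secretly contains the paper's existence argument. You also verify positive semi-definiteness explicitly via \(\sum_k\abs{\sum_i\overline{a_i}\,\mBasis_k(z_i)}^2\ge 0\), which the paper asserts but does not spell out.
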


\begin{remark}[Name and relationship to the Christoffel function]
    In approximation theory, \(\reprK_\modelSpace^*\) is sometimes referred to as the Christoffel-Darboux
    kernel due to the Christoffel-Darboux formula
    \citep[Theorem~1.32]{gautschiOrthogonalPolynomialsComputation2004} that
    efficiently computes \(\reprK_\modelSpace^*\) when \(\modelSpace\) is the space of polynomials.
    The well-studied \emph{Christoffel function} is defined as
    \(\eta_\modelSpace^*(x) \coloneq 1/{\reprK_\modelSpace^*(x,x)}\).
    Lower bounds on the Christoffel function \citep[e.g.][]{levinChristoffelFunctionsOrthogonal1992} thus
    translate to upper bounds on the reproducing kernel.
\end{remark}

\begin{theorem}[Reproducing kernel representation]
    \label{thm: reproducing kernel representation}
    Let \(\regOp\) be a GLS regression operator as defined in \eqref{eq: definition least squares reg op}
    using the matrix \(\Omega_n\), with regression projection \(\linProj\) mapping onto the model space \(\modelSpace\).
    Then the semi-norm \(\norm{\cdot}_{\Omega_n}\) is a norm on \(\modelSpace\) and
    the regression operator \(\regOp\) and Lebesgue function have the representations
    \begin{equation}
        \label{eq: regression operator reproducing kernel representation}
        \regOp(y)
        = \scp[\Bigg]{y}{
            \begin{pmatrix}
                \reprK_\modelSpace^{\Omega_n}(x_1,\cdot)\\
                \vdots\\
                \reprK_\modelSpace^{\Omega_n}(x_n,\cdot)
            \end{pmatrix}}_{\Omega_n}
        \qquad\text{and}\qquad
        \ell(x) = \frac1n\Omega_n
            \begin{pmatrix}
            \reprK_\modelSpace^{\Omega_n}(x_1, x)\\
            \vdots \\
            \reprK_\modelSpace^{\Omega_n}(x_n, x)
        \end{pmatrix}.
    \end{equation}
    In particular, in the WLS case \(\Omega_n = \diag(\omega_1, \dots, \omega_n)\), the Lebesgue function is given by
    \[
        \lebfun{\weight}{\linProj}{x} = \weight(x)\frac1n\sum_{i=1}^n \frac{\omega_i}{\weight(x_i)}\abs{K_\modelSpace^{\Omega_n}(x_i, x)}.
    \]
\end{theorem}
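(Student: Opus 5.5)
First, $\norm{\cdot}_{\Omega_n}$ is a norm on $\modelSpace$: since $\linProj$ maps onto $\modelSpace$, Theorem~\ref{thm: least squares solution} \ref{it: lin proj surjective}$\Leftrightarrow$\ref{it: norm positive definite} gives positive definiteness, and \ref{it: gram positive definite} gives invertibility of $\Gram$. So $\scp{\cdot}{\cdot}_{\Omega_n}$ is an inner product on $\modelSpace$, and Lemma~\ref{lem: reproducing kernel} supplies the kernel $\reprK_\modelSpace^{\Omega_n}$. The plan is to work in an orthonormal basis, where the closed-form operator \eqref{eq: definition least squares reg op} collapses to the kernel.

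Choose a basis $\mBasis_1,\dots,\mBasis_m$ of $\modelSpace$ that is orthonormal with respect to $\scp{\cdot}{\cdot}_{\Omega_n}$. Then $\Gram_{kl}=\scp{\mBasis_l}{\mBasis_k}_{\Omega_n}=\delta_{kl}$, so $\Gram^\dagger=\Gram^{-1}=I$. Substituting into \eqref{eq: definition least squares reg op} gives
\[
\regOp(y)(x)=\tfrac1n\mBasisVec(x)^\transpose\modelVander^\conT\Omega_n y .
\]
Here $(\modelVander^\conT)_{ki}=\overline{\mBasis_k(x_i)}$, so the vector $\modelVander\overline{\mBasisVec(x)}$ has $i$-th entry $\sum_k\mBasis_k(x_i)\overline{\mBasis_k(x)}=\reprK_\modelSpace^{\Omega_n}(x_i,x)$ by Lemma~\ref{lem: reproducing kernel}. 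Write $k(x)\coloneq(\reprK_\modelSpace^{\Omega_n}(x_1,x),\dots,\reprK_\modelSpace^{\Omega_n}(x_n,x))^\transpose$. Since $\mBasisVec(x)^\transpose\modelVander^\conT=(\modelVander\overline{\mBasisVec(x)})^\conT=k(x)^\conT$, we obtain $\regOp(y)(x)=\tfrac1n k(x)^\conT\Omega_n y=\scp{y}{k(x)}_{\Omega_n}$, which is the first identity in \eqref{eq: regression operator reproducing kernel representation}. Because the Gram matrix is basis dependent, I will state explicitly that $\regOp$ is basis independent: it always lies in the set of minimizers, and by Theorem~\ref{thm: least squares solution} \ref{it: least squares unique} that set is a singleton. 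So computing $\regOp$ in the orthonormal basis is legitimate.

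For the Lagrange function, $\ell(x)^\conT y=\tfrac1n k(x)^\conT\Omega_n y$ for all $y$. Using $\Omega_n^\conT=\Omega_n$ (Hermitian by positive semi-definiteness), this gives $\ell(x)=\tfrac1n\Omega_n k(x)$. As a consistency check, \eqref{eq: lagrange function least squares} with $\Gram=I$ gives the same expression. In the WLS case $\Omega_n=\diag(\omega_1,\dots,\omega_n)$ with $\omega_i\ge0$, so $\ell_i(x)=\tfrac1n\omega_i\reprK_\modelSpace^{\Omega_n}(x_i,x)$ and $\abs{\ell_i(x)}=\tfrac1n\omega_i\abs{\reprK_\modelSpace^{\Omega_n}(x_i,x)}$. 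Plugging this into Definition~\ref{def: lebesgue function and constant} yields the stated Lebesgue function.

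The only delicate points are these. The conjugation and transpose conventions must be tracked carefully, in particular the convention $\scp{a}{b}_{\Omega_n}=\tfrac1n b^\conT\Omega_n a$, which is conjugate-linear in the second argument. The basis independence of $\regOp$ must also be justified, as done above. Everything else is direct substitution.
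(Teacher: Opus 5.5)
Your proposal is correct and follows the paper's proof essentially step by step. Surjectivity of \(\linProj\) gives the norm via Theorem~\ref{thm: least squares solution}, an \(\Omega_n\)-orthonormal basis makes \(\Gram=\identity\), and substituting into \eqref{eq: definition least squares reg op} yields the kernel representation; \(\ell\) and the WLS Lebesgue function then follow from their definitions. Your explicit argument that \(\regOp\) is basis independent, via uniqueness of the minimizer, is what the paper uses implicitly when it calls \(\regOp(y)\) ``the unique metric projection''.
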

\begin{remark}[Non-negative reproducing kernels]
    \label{rem: positive reproducing kernels}
    In the WLS setting assume \(\ones \in \modelSpace\) and
    \(\reprK_\modelSpace^{\Omega_n}(x,y) \ge 0\) for all \(x,y\in \domain\) (in particular \(\reprK_\modelSpace^{\Omega_n}(x,y)\in \real\)), then 
    \[
        \lebfun{\infty}{\linProj}{x} = \frac1n\sum_{i=1}^n \omega_i\abs{K_\modelSpace^{\Omega_n}(x_i, x)}
        = \scp{\ones}{\reprK_\modelSpace^{\Omega_n}(\cdot, x)}_{\Omega_n}
        \overset{\text{repr.}}= \ones(x) = 1.
    \]
    This setting will be satisfied for piecewise constant functions for example (cf.~Section~\ref{sec: lebesgue constants of regression}).
\end{remark}

\subsubsection{Leverage function bound}

The following `leverage function' measures how big the change of \(\model(x)\) may be
per change of \(\model\) in the norm \(\norm{\cdot}_*\) (cf.~Lemma \ref{lem:
variational characterization}).  That is, how much leverage the norm
\(\norm{\cdot}_*\) has at point \(x\). This function is also closely related
to statistical leverage scores \citep[p.~2003]{adcockOptimalSamplingLeastsquares2025}.
\citet{adcockOptimalSamplingLeastsquares2025} refers to the square leverage as the (reciprocal) Christoffel function.

\begin{definition}[Leverage function]
    \label{def: leverage function}
    The root of the diagonal of the reproducing kernel
    \(\lever_\modelSpace^*(x) \coloneq \sqrt{K_\modelSpace^*(x,x)}\) is the
    \emph{leverage function}.
\end{definition}

\begin{lemma}[Variational characterization] 
    \label{lem: variational characterization}
    The leverage function is equal to the operator norm of
    the evaluation map \(\eval_x[f] = f(x)\) restricted to the model space \(\modelSpace\), that is
    \begin{equation}
        \label{eq: pointwise variational repro kernel bound} 
        \lever^*_\modelSpace(x)
        = \sup_{\model\in \modelSpace, \norm{\model}_* \le 1} \abs{\model(x)}
        \overset{\mathrm{def.}}= \opnorm{{\eval_x}_{\restriction\modelSpace}}_{*}.
    \end{equation}
    The weighted supremum over \(x\) is simply the operator norm
    of the identity embedding the space \((\modelSpace, \scp{\cdot}{\cdot}_*)\)
    into the space \((\modelSpace, \norm{\cdot}_\weight)\), that is
    \begin{equation}
        \label{eq: variational weighted sup bound}        
        \norm{\lever_\modelSpace^*}_\weight
        \overset{\mathrm{def.}}= \sup_{x\in \domain} \weight(x)\lever^*_\modelSpace(x)
        = \sup_{\model \in \modelSpace, \norm{\model}_* \le 1}  \norm{\model}_\weight
        \overset{\mathrm{def.}}= \opnorm{\id_{\restriction\modelSpace}}_{*}^\weight.
    \end{equation}
\end{lemma}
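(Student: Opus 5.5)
We prove \eqref{eq: pointwise variational repro kernel bound} by a Cauchy--Schwarz argument in the finite-dimensional inner product space \((\modelSpace, \scp{\cdot}{\cdot}_*)\), using the reproducing property from Lemma~\ref{lem: reproducing kernel}. Then \eqref{eq: variational weighted sup bound} follows by multiplying with \(\weight(x)\) and interchanging two suprema.

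For the pointwise identity, fix \(x\in\domain\) and write \(k_x \coloneq \reprK_\modelSpace^*(\cdot, x)\in\modelSpace\). By the reproducing property applied to \(\model = k_x\) itself, \(\norm{k_x}_*^2 = \scp{k_x}{k_x}_* = k_x(x) = \reprK_\modelSpace^*(x,x)\). Hence \(\norm{k_x}_* = \lever_\modelSpace^*(x)\); this also shows \(\reprK^*_\modelSpace(x,x)\ge 0\), so the square root is well defined.

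\emph{Upper bound.} For any \(\model\in\modelSpace\) with \(\norm{\model}_*\le1\), the reproducing property and Cauchy--Schwarz give
\[
\abs{\model(x)} = \abs{\scp{\model}{k_x}_*} \le \norm{\model}_*\norm{k_x}_* \le \lever^*_\modelSpace(x).
\]

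\emph{Attainment.} If \(\lever^*_\modelSpace(x) = 0\), then \(k_x = 0\), so \(\model(x) = 0\) for all \(\model\in\modelSpace\) and both sides vanish. Otherwise, take \(\model \coloneq k_x/\norm{k_x}_*\). Then \(\norm{\model}_* = 1\) and
\[
\model(x) = \frac{\reprK^*_\modelSpace(x,x)}{\lever^*_\modelSpace(x)} = \lever^*_\modelSpace(x).
\]
So the supremum is attained and equals \(\lever^*_\modelSpace(x)\). It coincides with the operator norm of \({\eval_x}_{\restriction\modelSpace}\) by the definition of the operator norm as a supremum over the unit ball.

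For the weighted statement, multiply the pointwise identity by \(\weight(x) > 0\) and take the supremum over \(x\):
\[
\sup_{x\in\domain}\weight(x)\lever^*_\modelSpace(x)
= \sup_{x\in\domain}\,\sup_{\norm{\model}_*\le 1}\weight(x)\abs{\model(x)}
= \sup_{\norm{\model}_*\le1}\,\sup_{x\in\domain}\weight(x)\abs{\model(x)}
= \sup_{\norm{\model}_*\le1}\norm{\model}_\weight .
\]
The two suprema may be exchanged freely, as they are suprema of the same family of nonnegative extended reals indexed by pairs \((x,\model)\); both sides may equal \(+\infty\) when \(\weight\) is unbounded. The last expression is by definition the operator norm \(\opnorm{\id_{\restriction\modelSpace}}_*^\weight\).

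There is no real obstacle here. The only points needing care are the degenerate case \(k_x = 0\) and the fact that the suprema may be infinite, which the interchange handles without any finiteness assumption.
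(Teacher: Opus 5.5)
Your proof is correct and follows the paper's argument: the reproducing property plus Cauchy--Schwarz give the upper bound, the (normalized) kernel section \(\reprK_\modelSpace^*(\cdot,x)\) attains it, and the weighted identity comes from multiplying by \(\weight(x)\) and taking the supremum over \(x\). Your explicit normalization and your treatment of the case \(\lever_\modelSpace^*(x)=0\) make precise the step where the paper simply says the bound is an equality at \(\reprK_\modelSpace^*(\cdot,x)\).
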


If a small change of the model \(\model\) measured by \(\norm{\cdot}_{\Omega_n}\)
causes a big change of \(\model(x)\), it is intuitive that the approximation may
not be very good at \(x\), since least squares regression is a minimizer in
\(\norm{\cdot}_{\Omega_n}\). And indeed, the Lebesgue function may be bounded
using the weighted leverage function \(\lever_\modelSpace^{\Omega_n}\weight\).

\begin{mdframed}[style=default]
\begin{theorem}[Leverage function bounds]
    \label{thm: leverage function bound}
    The Lebesgue function is bounded in terms of the weighted leverage function, that is
    \begin{equation}
        \lebfun{\weight}{\linProj}{x}
        \label{eq: rk bound Omega}
        \le A_\weight(\Omega_n) \lever_\modelSpace^{\Omega_n}(x)\weight(x),
        \qquad
        A_\weight(\Omega_n)
        \coloneq \sqrt{
            \frac1n\sum_{i,j=1}^n \frac{\abs{(\Omega_n)_{ij}}}{\weight(x_i)\weight(x_j)}
        }.
    \end{equation}
    Let \((\mBasis_k^*)_{k=1}^m\) be an orthonormal basis of \(\modelSpace\) with respect to an inner
    product \(\scp{\cdot}{\cdot}_*\) on \(\modelSpace\) that also induces the reproducing kernel \(K_\modelSpace^*\)
    and leverage function \(\lever_\modelSpace^*\). Then,
    \[
        \lever_\modelSpace^{\Omega_n}(x) \le \sqrt{\opnorm{\Gram_*^{-1}}_2}
        \lever_\modelSpace^*(x)
        \qquad\text{for}\qquad
        (\Gram_*)_{ij} \coloneq \scp{\mBasis_j^*}{\mBasis_i^*}_{\Omega_n},
    \]
    where \(\opnorm{\Gram_*^{-1}}_2\) is the spectral norm of the inverse Gram
    matrix, and consequently
    \begin{equation}
        \label{eq: rk bound general}
        \lebfun{\weight}{\linProj}{x}
        \le A_\weight(\Omega_n)\sqrt{\opnorm{\Gram_*^{-1}}_2} \lever_\modelSpace^*(x)\weight(x).
    \end{equation}
\end{theorem}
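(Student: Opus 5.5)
The plan is to prove the three displayed inequalities in order. The first is a duality argument combined with Cauchy--Schwarz in the semi-inner product \(\scp{\cdot}{\cdot}_{\Omega_n}\). The second is a norm comparison on \(\modelSpace\) via the variational characterization (Lem.~\ref{lem: variational characterization}). The third, \eqref{eq: rk bound general}, then follows by inserting the second bound into \eqref{eq: rk bound Omega}.

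\emph{Step 1: the bound \eqref{eq: rk bound Omega}.} By Thm.~\ref{thm: reproducing kernel representation}, \(\ell(x) = \frac1n \Omega_n k_x\) with \(k_x \coloneq (\reprK_\modelSpace^{\Omega_n}(x_1,x),\dots,\reprK_\modelSpace^{\Omega_n}(x_n,x))^\transpose\). I would write the weighted \(1\)-norm dually:
\[
\sum_{i=1}^n \frac{\abs{\ell_i(x)}}{\weight(x_i)} = \sup_{\abs{s_i}\le 1} \Re \sum_{i=1}^n \frac{\overline{s_i}}{\weight(x_i)}\ell_i(x) = \sup_{\abs{s_i}\le1}\Re\, \tfrac1n u_s^\conT \Omega_n k_x = \sup_{\abs{s_i}\le 1} \Re \scp{k_x}{u_s}_{\Omega_n},
\]
where \(u_s \coloneq (s_i/\weight(x_i))_{i=1}^n\). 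Since \(\Omega_n\succeq 0\), Cauchy--Schwarz holds for the semi-inner product, so the right-hand side is at most \(\abs{u_s}_{\Omega_n}\abs{k_x}_{\Omega_n}\). I then bound the two factors separately.
\begin{itemize}
\item For the first factor, the triangle inequality on \(\frac1n\sum_{i,j}\overline{(u_s)_i}(\Omega_n)_{ij}(u_s)_j\) gives \(\abs{u_s}_{\Omega_n}^2\le \frac1n\sum_{i,j}\abs{(\Omega_n)_{ij}}/(\weight(x_i)\weight(x_j)) = A_\weight(\Omega_n)^2\).
\item For the second factor, \(k_x=\eval_X[\reprK_\modelSpace^{\Omega_n}(\cdot,x)]\), so the reproducing property gives \(\abs{k_x}_{\Omega_n}^2=\norm{\reprK_\modelSpace^{\Omega_n}(\cdot,x)}_{\Omega_n}^2=\reprK_\modelSpace^{\Omega_n}(x,x)=\lever_\modelSpace^{\Omega_n}(x)^2\).
\end{itemize}
Multiplying by \(\weight(x)\) yields \eqref{eq: rk bound Omega}. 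I expect this step to be the main obstacle. The obvious entrywise Cauchy--Schwarz on \(\sum_{i,j}\abs{(\Omega_n)_{ij}}\abs{(k_x)_j}/\weight(x_i)\) produces \(\abs{\Omega_n}\) in the wrong place, and splitting \(\Omega_n=\Omega_n^{1/2}\Omega_n^{1/2}\) gives a constant larger than \(A_\weight\). The duality trick avoids both problems because it applies the triangle inequality only to the bounded test vector \(u_s\).

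\emph{Step 2: comparing leverages.} Thm.~\ref{thm: reproducing kernel representation} shows that \(\norm{\cdot}_{\Omega_n}\) is a norm on \(\modelSpace\), so \(\Gram_*\) is positive definite. For \(\model=\sum_k c_k\mBasis_k^*\) the entry definition gives \(\norm{\model}_{\Omega_n}^2 = c^\conT \Gram_* c\) (up to the harmless conjugation convention of \(c\) versus \(\overline c\)), which is at least \(\abs{c}_2^2/\opnorm{\Gram_*^{-1}}_2\). Orthonormality gives \(\norm{\model}_*=\abs{c}_2\). Hence \(\norm{\model}_*\le\sqrt{\opnorm{\Gram_*^{-1}}_2}\,\norm{\model}_{\Omega_n}\), i.e.\ the \(\Omega_n\)-unit ball is contained in the \(*\)-ball of radius \(\sqrt{\opnorm{\Gram_*^{-1}}_2}\). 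Taking \(\sup\abs{\model(x)}\) over both balls in Lem.~\ref{lem: variational characterization} gives \(\lever_\modelSpace^{\Omega_n}(x)\le\sqrt{\opnorm{\Gram_*^{-1}}_2}\,\lever_\modelSpace^*(x)\).

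\emph{Step 3: conclusion.} Substituting the inequality of Step~2 into \eqref{eq: rk bound Omega} yields \eqref{eq: rk bound general}.
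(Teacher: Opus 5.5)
Your proposal is correct and follows the paper's proof essentially step for step. Like you, the paper writes the weighted \(1\)-norm of \(\ell(x)\) as a supremum of \(\scp{\reprK_\modelSpace^{\Omega_n}(X,x)}{D\sigma}_{\Omega_n}\) over unimodular \(\sigma\), with \(D=\diag[\tfrac1{\weight(x_1)},\dots,\tfrac1{\weight(x_n)}]\). It then applies Cauchy--Schwarz in the \(\Omega_n\)-semi-inner product, bounds \(\abs{D\sigma}_{\Omega_n}\le A_\weight(\Omega_n)\) entrywise, and handles the other factor with the reproducing property. Finally, it gets the leverage comparison from the variational characterization combined with the induced-norm bound of Lem.~\ref{lem: gram bound equivalence}.

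The differences are cosmetic. You take \(\Re\) over \(\abs{s_i}\le 1\), which is slightly more careful than the paper's supremum of a complex-valued expression. You also reprove the norm comparison directly instead of citing Lem.~\ref{lem: gram bound equivalence}.
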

\end{mdframed}

\begin{remark}[Notes on \(A_\weight(\Omega_n)\)]
    The absolute value around \((\Omega_n)_{ij}\) in \(A_\weight(\Omega_n)\) is necessary in the GLS setting
    (cf.~Section~\ref{sec: absolute value around Omega is necessary}). In the WLS setting:
    \begin{itemize}[nosep]
        \item \(A_\weight(\Omega_n) = \norm{\tfrac1\weight}_{\Omega_n}\), since
        the diagonal entries of \(\Omega_n\) are non-negative, and
        \item \(\omega_i = \weight^2(x_i)\) implies \(A_\weight(\Omega_n) = 1\) (e.g.\ OLS with \(\weight\equiv 1\)).
    \end{itemize}
\end{remark}

\begin{remark}[Scale invariance]
    \label{rem: scale invariance}
    The bound \eqref{eq: rk bound general} is invariant to scaling of \(\Omega_n\) (contained
    in both \(A_\weight(\Omega_n)\) and \(\Gram\)), scaling of the inner product
    \(\scp{\cdot}{\cdot}_*\) (influencing \(\Gram_*\) and \(\lever_\modelSpace^*\) proportionally),
    and scaling of the weight function \(\weight\).
\end{remark}

Observe that \eqref{eq: rk bound Omega} is a special case of \eqref{eq: rk bound
general} with \(\scp{\cdot}{\cdot}_* = \scp{\cdot}{\cdot}_{\Omega_n}\) and \(\Gram_*
= \identity\). This suggests that a well-conditioned Gram matrix is desirable.
However, rather than using \(\scp{\cdot}{\cdot}_{\Omega_n}\) directly, it is generally easier to use a well studied
inner product that is \emph{similar} to \(\scp{\cdot}{\cdot}_{\Omega_n}\) in the
sense of Lemma~\ref{lem: gram bound equivalence}. Specifically, the following lemma shows that the similarity
of the inner product is intimately related to the spectral norm of the inverse Gram matrix
\(\opnorm{\Gram_*^{-1}}_2\). In particular, the bound only depends on the
choice of inner product and not the selection of the orthonormal basis that
induces the Gram matrix.

\begin{lemma}[Equivalent lower spectral bounds]
    \label{lem: gram bound equivalence}
    Using the Gram matrix \(\Gram_*\) from Theorem~\ref{thm: leverage function bound}
    and \(\gamma \in (0,\infty)\) the following are equivalent:
    \begin{enumerate}[label=(\roman*),noitemsep]
        \item\label{it: (gram bound) inverse}
        Inverse Gram bound: \(\opnorm{\Gram_*^{-1}}_2 \le \gamma\)
        \item\label{it: (gram bound) spectral}
        Spectral Gram bound: \(\identity \precsim \gamma\Gram_*\)
        \item\label{it: (gram bound) induced norm}
        Induced norm bound: \(\norm{\model}_* \le \sqrt{\gamma} \norm{\model}_{\Omega_n}\)
        for all \(\model \in \modelSpace\).
    \end{enumerate}
\end{lemma}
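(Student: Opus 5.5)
The plan is to work in the coordinates of the $\scp{\cdot}{\cdot}_*$-orthonormal basis $(\mBasis_k^*)_{k=1}^m$ from Theorem~\ref{thm: leverage function bound}. Every $\model\in\modelSpace$ is then $\model=\sum_k a_k\mBasis_k^*$ with coefficient vector $a\in\field^m$. First we translate both norms into quadratic forms in $a$. Orthonormality gives $\norm{\model}_*^2=\abs{a}_2^2=a^\conT\identity a$. For the other norm, sesquilinearity of $\scp{\cdot}{\cdot}_{\Omega_n}$ together with the definition $(\Gram_*)_{ij}=\scp{\mBasis_j^*}{\mBasis_i^*}_{\Omega_n}$ should give
\[
\norm{\model}_{\Omega_n}^2=\sum_{i,j}\overline{a_i}\,a_j\scp{\mBasis_j^*}{\mBasis_i^*}_{\Omega_n}=a^\conT\Gram_* a .
\]
The index and conjugation conventions must be checked against \eqref{eq: omega norms}. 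If they come out transposed, we only replace $a$ by $\bar a$ (or pass to $\overline{\Gram_*}$, which has the same spectrum), so nothing essential changes. With this dictionary, \ref{it: (gram bound) induced norm} becomes $a^\conT a\le\gamma\,a^\conT\Gram_*a$ for all $a\in\field^m$, which is exactly the Loewner inequality $\identity\precsim\gamma\Gram_*$ of \ref{it: (gram bound) spectral}. This settles \ref{it: (gram bound) spectral}$\Leftrightarrow$\ref{it: (gram bound) induced norm}. The map $a\mapsto\model$ is a bijection $\field^m\to\modelSpace$, so quantifying over $a$ is the same as quantifying over $\model$.

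For \ref{it: (gram bound) inverse}$\Leftrightarrow$\ref{it: (gram bound) spectral}, note that $\Gram_*$ is Hermitian positive semi-definite, being a Gram matrix of a semi-inner product. It therefore has a unitary diagonalization $\Gram_*=U\diag(\lambda_1,\dots,\lambda_m)U^\conT$ with all $\lambda_k\ge0$.
\begin{itemize}
\item Condition \ref{it: (gram bound) spectral} says $1\le\gamma\lambda_k$ for all $k$, i.e.\ $\lambda_{\min}\ge1/\gamma>0$. In particular it forces $\Gram_*$ to be invertible.
\item Condition \ref{it: (gram bound) inverse} presupposes invertibility, and for a Hermitian positive definite matrix $\opnorm{\Gram_*^{-1}}_2=1/\lambda_{\min}$. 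So \ref{it: (gram bound) inverse} reads $1/\lambda_{\min}\le\gamma$, which is the same condition.
\end{itemize}

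The main obstacle is the edge case where $\Gram_*$ is singular, which arises if $\norm{\cdot}_{\Omega_n}$ fails to be a norm on $\modelSpace$. We handle it by reading \ref{it: (gram bound) inverse} as false there, or $\opnorm{\Gram_*^{-1}}_2=\infty$. Then \ref{it: (gram bound) spectral} fails, since some $\lambda_k=0$ gives $1\le0$. Likewise \ref{it: (gram bound) induced norm} fails: a nonzero $\model$ with $\norm{\model}_{\Omega_n}=0$ has $\norm{\model}_*>0$. So all three statements fail together, and the equivalence still holds.

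The remaining care is the conjugation convention in the quadratic form, because the paper defines $\Gram$ with a complex conjugate and a transpose.
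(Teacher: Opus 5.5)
Your proposal is correct and follows the paper's proof essentially step for step: the paper also writes \(\norm{\model}_*^2=\abs{a}_2^2\) and \(\norm{\model}_{\Omega_n}^2=a^\conT\Gram_*a\) in the \(*\)-orthonormal coordinates to get (ii)\(\Leftrightarrow\)(iii), and uses \(\opnorm{\Gram_*^{-1}}_2=1/\lambda_{\min}(\Gram_*)\) for (i)\(\Leftrightarrow\)(ii). Your explicit treatment of a singular \(\Gram_*\), where all three conditions fail together, is a small improvement over the paper, which tacitly assumes invertibility; your worry about the conjugation convention is unnecessary, because with \((\Gram_*)_{ij}=\scp{\mBasis_j^*}{\mBasis_i^*}_{\Omega_n}\) and the inner product linear in its first argument the identity comes out exactly as \(a^\conT\Gram_*a\).
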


If one strengthens the lower bound on the spectrum to a symmetric upper and lower bound
on the spectrum, the norms become equivalent on the model space and similarity
of the inner products also becomes an equivalent condition (cf.~Theorem \ref{thm: approximately identity gram matrix}).

\paragraph*{Discussion of the bounds.} 

Bounds using the leverage function are often easier to work with.
For example, a stationary kernel always has a constant leverage function as can be seen
directly from its definition. The bound \eqref{eq: rk bound general}
further decouples the choice of leverage function from the choice of evaluation
points, which then only influence the Gram matrix and the weights in
\(A_\weight(\Omega_n)\). The bounds \eqref{eq: rk bound Omega} and \eqref{eq: rk
bound general} are essentially
Cauchy-Schwarz bounds and we will see in Lemma~\ref{lem: optimality of constant weighted leverage bound} that the price to pay is
roughly of order \(\sqrt{m}\). An intuitive explanation for the \(\sqrt{m}\)-order
of the constant is that for \(x\in \field^m\) we have \(\abs{x}_1 \le \sqrt{m}\abs{x}_2\) by Cauchy-Schwarz,
and the model space \(\modelSpace\) is \(m\)-dimensional.

To understand Lemma~\ref{lem: optimality of constant weighted leverage bound} recall that the Lebesgue constant
is defined as the supremum of the Lebesgue function, that is  \(\lebconst{\weight}{\linProj}= \sup_{x\in \domain}\lebfun{\weight}{\linProj}{x}\).
Consequently, the bounds \eqref{eq: rk bound Omega} and \eqref{eq: rk bound general} are typically maximized over \(x\in \domain\)
to obtain
\begin{equation}
    \label{eq: reproducing kernel bound on the lebesgue constant}    
    \lebconst{\weight}{\linProj}
    \le A_\weight(\Omega_n) \sqrt{\opnorm{\Gram_*^{-1}}_2} \sup_{x\in \domain} \weight(x) \lever_\modelSpace^*(x)
    = A_\weight(\Omega_n) \sqrt{\opnorm{\Gram_*^{-1}}_2} \norm{\lever_\modelSpace^*}_\weight.
\end{equation}
The following well known result \citep[e.g.][]{narayanChristoffelFunctionWeighted2016} shows
that \(\norm{\lever_\modelSpace^*}_\weight\) is at least of order \(\sqrt{m}\).
As the Gram matrix may be selected to be of order \(1\) just like
\(A_\weight(\Omega_n)\) (e.g.\ \(A_\weight(\Omega_n) = 1\) in the OLS setting
with \(\weight \equiv 1\)) this implies the upper bound \eqref{eq:
reproducing kernel bound on the lebesgue constant} on the Lebesgue constant is
of the same order. That is, the upper bound on the Lebesgue constant \eqref{eq: reproducing kernel bound on the lebesgue constant} using the leverage
function is at best of order \(\sqrt{m}\) and this order is achieved by constant weighted leverage functions.

\begin{lemma}[Leverage lower bound]
    \label{lem: optimality of constant weighted leverage bound}
    Let \(\mu\) be a finite measure on \(\domain\) inducing
    the inner product \(\scp{\cdot}{\cdot}_\mu\) on \(\modelSpace\) and thereby the
    reproducing kernel \(K^\mu_\modelSpace\). Then for \(\nu(dx) \coloneq \frac1{\weight^2(x)}\mu(dx)\) and \(m=\dim \modelSpace\)
    \[
        \norm{\lever_\modelSpace^\mu}_\weight
        \ge \sqrt{\frac{m}{\nu(\domain)}}.
    \]
    \begin{itemize}[nosep]
        \item If \(\lever^\mu_\modelSpace(x)\weight(x)\) is constant in \(x\), then the above is an equality,
        \item if \(\lever^\mu_\modelSpace(x)\weight(x)\) is not \(\nu\)-almost surely constant in \(x\), then the
        above is a strict inequality
    \end{itemize}
    For continuous kernels \(K^\mu_\modelSpace\) and weights \(\weight\) where \(\supp \nu = \domain\), the
    inequality is therefore an equality if and only if the weighted leverage function is
    constant.
\end{lemma}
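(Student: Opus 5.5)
The idea is to integrate the squared leverage function against \(\mu\), where the answer is exactly \(m\), and then compare that integral with the weighted supremum. By Lemma~\ref{lem: reproducing kernel}, with an orthonormal basis \(\mBasis_1,\dots,\mBasis_m\) of \(\modelSpace\) in \(\scp{\cdot}{\cdot}_\mu\), we have \(\lever_\modelSpace^\mu(x)^2 = K^\mu_\modelSpace(x,x) = \sum_{k=1}^m \abs{\mBasis_k(x)}^2\). Integrating against \(\mu\) and using orthonormality gives
\[
    \int_\domain \lever_\modelSpace^\mu(x)^2\,\mu(dx) = \sum_{k=1}^m \norm{\mBasis_k}_\mu^2 = m .
\]
If \(\nu(\domain)=\infty\) the claimed bound is trivial, so I assume \(\nu(\domain)<\infty\). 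I then rewrite the integral as
\[
    m = \int_\domain \bigl(\lever_\modelSpace^\mu(x)\weight(x)\bigr)^2 \,\nu(dx) \le \norm{\lever_\modelSpace^\mu}_\weight^2\,\nu(\domain).
\]
Rearranging and taking square roots yields \(\norm{\lever^\mu_\modelSpace}_\weight \ge \sqrt{m/\nu(\domain)}\).

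For the equality cases, set \(h(x) \coloneq \lever^\mu_\modelSpace(x)\weight(x)\) and \(S \coloneq \norm{h}_\infty\).
\begin{itemize}
    \item If \(h \equiv c\), then \(S=c\) and the display gives \(m = c^2\nu(\domain)\), which is equality.
    \item If \(h\) is not \(\nu\)-a.s.\ constant, I want strict inequality, i.e.\ \(\int (S^2-h^2)\,d\nu>0\). This fails only if \(h=S\) holds \(\nu\)-a.s., which would make \(h\) \(\nu\)-a.s.\ constant, a contradiction. If \(S=\infty\) the inequality is strict trivially.
\end{itemize}

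For the final claim, assume \(K^\mu_\modelSpace\) and \(\weight\) are continuous and \(\supp\nu=\domain\). Then \(h\) is continuous. Suppose equality holds. By the second bullet, \(h=c\) holds \(\nu\)-a.s. The set \(\set{h\neq c}\) is open and \(\nu\)-null, so it lies outside the support and is therefore empty. Hence \(h\) is constant, and the converse direction is the first bullet.

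The main technical care is in the measure-theoretic details. The basis functions must be square-integrable representatives, so that the \(\mu\)-norm is a genuine norm on \(\modelSpace\); the lemma presupposes this by calling \(\scp{\cdot}{\cdot}_\mu\) an inner product. The supremum defining \(\norm{\cdot}_\weight\) is a pointwise sup rather than an essential sup, so it dominates the integral as used above. I expect no real obstacle beyond these points.
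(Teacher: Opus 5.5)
Your proof is correct and follows the paper's argument. You integrate \(K^\mu_\modelSpace(x,x)=\lever^\mu_\modelSpace(x)^2\) against \(\mu\) to obtain \(m\) by orthonormality, rewrite this as \(\int (\lever^\mu_\modelSpace\weight)^2\,d\nu\), and bound it by \(\norm{\lever^\mu_\modelSpace}_\weight^2\,\nu(\domain)\); strictness then comes from the nonnegative integrand \(S^2-h^2\) being positive on a set of positive \(\nu\)-measure. Your edge cases and your open-null-set argument for the continuous case fill in details the paper leaves implicit. The one case you skip, strictness when \(\nu(\domain)=\infty\), is immediate, since \(\int h^2\,d\nu=m>0\) forces \(\norm{\lever^\mu_\modelSpace}_\weight>0\) while the right-hand side is \(0\).
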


\subsubsection{Restricted range inequalities for unbounded domains}
\label{sec: restricted range inqualities}

Restricted range inequalities are a very useful tool from polynomial
approximation theory \citep[e.g.][]{lubinskySurveyWeightedPolynomial2007}, which may be generalized to arbitrary model spaces. The
core idea is that if the weight \(\weight\) decays sufficiently quickly, then
it is plausible that the weighted norm may be determined on a compact set. 
And if that is the case, one may reduce approximation problems on unbounded
domains to problems on compact domains, which are generally more tractable.

\begin{definition}[Restricted range inequality]
    \label{def: restricted range inequality}
    A semi-norm \(\norm{\cdot}_*\) obeys a \emph{restricted range inequality}
    with constant \(C>0\), if there exists a compact set \(S_\modelSpace\subseteq \domain\) that
    may depend on the model space \(\modelSpace\) such
    that
    \[
        \norm{\model}_* \le C \norm{\model \ind{S_\modelSpace}}_{*}
        \quad \forall \model \in \modelSpace.
    \]
    A restricted range inequality is called \emph{exact} if \(C=1\).
\end{definition}

Restricted range inequalities are already available for polynomials model spaces
combined with exponential weights (cf.~Theorem~\ref{thm: restricted range inequalities for polynomials} or \citep[Thm.~4.1]{levinOrthogonalPolynomialsExponential2001}).
They have proven useful for bounding the metric projection
error of polynomial spaces \citep[see e.g.][]{lubinskySurveyWeightedPolynomial2007}.
In the following we show that it is possible to calculate the Lebesgue constant
by maximizing the Lebesgue function only over the compact set \(S_\modelSpace\)
of the restricted range inequality.

\begin{mdframed}[style=default]
\begin{lemma}[Restricted Lebesgue constant]
    \label{lem: restricted lebesgue constant}
    Let \(\weight\colon \domain \to (0,\infty)\) be a weight function and \(\modelSpace\)
    a model space such that the weighted sup-norm satisfies a restricted range
    inequality with constant \(C>0\) and compact set \(S_\modelSpace\subseteq
    \domain\) (Definition
    \ref{def: restricted range inequality}).
    For any \(\linProj\colon \funSpace \to \modelSpace\) assume either
    \begin{itemize}[nosep]
        \item \(\linProj\) is an (abstract) regression projection onto \(\modelSpace\), or
        \item \(\linProj\) is an \(L^2(\mu)\)-projection onto \(\modelSpace\) for some \(\sigma\)-finite measure \(\mu\) and \(\funSpace=L^2(\mu)\).
    \end{itemize}
    Then we have
    \[
        \lebconst{\weight}{\linProj}
        \overset{\mathrm{def.}}= \sup_{x\in\domain} \lebfun{\weight}{\linProj}{x}
        \le C \sup_{x\in S_\modelSpace} \lebfun{\weight}{\linProj}{x}.
    \]
    If the restricted range inequality is exact (i.e.\ \(C=1\)), then the inequality is an equality.
\end{lemma}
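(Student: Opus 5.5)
The approach is to write the Lebesgue function as the value at \(x\) of a single model function, and then apply the restricted range inequality to that function. Fix \(x\in\domain\) and let \(\ell\) be the Lagrange function of \(\linProj\). For each fixed \(x\) we choose phases \(s_i(x)\in\field\) with \(\abs{s_i(x)}=1\) and \(s_i(x)\ell_i(x) = \abs{\ell_i(x)}\). With these we form the vector \(y^{(x)}\in\field^n\) given by \(y^{(x)}_i \coloneq s_i(x)/\weight(x_i)\) and set \(\model_x \coloneq \regOp(y^{(x)})\in\modelSpace\).

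By Definition~\ref{def: lagrange function},
\[
\weight(x)\model_x(x) = \weight(x)\,\ell(x)^\conT y^{(x)} = \weight(x)\sum_{i=1}^n \frac{\abs{\ell_i(x)}}{\weight(x_i)} = \lebfun{\weight}{\linProj}{x}.
\]
(If \(\ell(x)^\conT\) involves conjugation, we choose the phases accordingly.) For any other point \(z\), the triangle inequality gives \(\weight(z)\abs{\model_x(z)}\le \weight(z)\sum_i \abs{\ell_i(z)}/\weight(x_i) = \lebfun{\weight}{\linProj}{z}\).

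Applying the restricted range inequality (Definition~\ref{def: restricted range inequality}) to \(\model_x\in\modelSpace\) then yields
\[
\lebfun{\weight}{\linProj}{x} \le \norm{\model_x}_\weight \le C\norm{\model_x\ind{S_\modelSpace}}_\weight = C\sup_{z\in S_\modelSpace}\weight(z)\abs{\model_x(z)} \le C\sup_{z\in S_\modelSpace}\lebfun{\weight}{\linProj}{z}.
\]
Taking the supremum over \(x\) proves the inequality. When \(C=1\), the reverse inequality is trivial because \(S_\modelSpace\subseteq\domain\), so equality holds.

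For the \(L^2(\mu)\)-projection case the argument is identical, once we use the appropriate continuous analogue of the Lagrange function. Here the Lebesgue function is presumably defined, analogously to Theorem~\ref{thm: reproducing kernel representation}, as \(\weight(x)\int \abs{\reprK_\modelSpace^\mu(t,x)}/\weight(t)\,\mu(dt)\). We would therefore take
\[
\model_x \coloneq \int \reprK_\modelSpace^\mu(\cdot,t)\,\frac{s(t)}{\weight(t)}\,\mu(dt),
\]
where \(s(t)\) is a measurable phase of \(\reprK_\modelSpace^\mu(t,x)\). This function lies in \(\modelSpace\) because the kernel is a finite sum \(\sum_k\mBasis_k(\cdot)\overline{\mBasis_k(t)}\). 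By construction \(\weight(x)\model_x(x)\) equals the Lebesgue function at \(x\), and \(\weight(z)\abs{\model_x(z)}\) is bounded by the Lebesgue function at \(z\).

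The main obstacle is this second case. We have to check measurability of the phase and integrability of \(s/\weight\) against \(\mu\), so that \(\model_x\) is well defined. If \(\int\abs{\reprK_\modelSpace^\mu(t,x)}/\weight(t)\,\mu(dt)=\infty\), then the Lebesgue function at \(x\) is infinite. In that situation I would truncate: replace \(s/\weight\) by \(s\min(1/\weight,N)\ind{B_N}\) with \(\mu(B_N)<\infty\), run the same argument, and let \(N\to\infty\) by monotone convergence. This also handles \(\sigma\)-finiteness. I would take the discrete case first, since there every step is purely algebraic.
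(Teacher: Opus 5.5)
Your argument is correct and is essentially the paper's. In the regression case the paper likewise applies the restricted range inequality to the model functions \(\sum_i \sigma_i\ell_i/\weight(x_i)\); it takes a supremum over unimodular \(\sigma\) where you fix the optimal phases for each \(x\). In the \(L^2(\mu)\) case it invokes the operator-norm identity of Theorem~\ref{thm: L2 proj Lebesgue function and constant are operator norms}, whose proof uses exactly your truncated phase functions \(\ind{E_n}\overline{\phase(\reprK_\modelSpace^\mu(x,\cdot))}/\weight\) together with monotone convergence. Your \(\min\set{1/\weight,N}\ind{B_N}\) truncation even spares you the paper's separate check that \(\nu=\mu/\weight^2\) is \(\sigma\)-finite.

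The one adjustment is to truncate in every case, not only when \(\lebfun{\weight}{\linProj_\mu}{x}=\infty\). The coefficients \(\int\overline{\mBasis_k}\,s/\weight\,d\mu\) of your untruncated \(\model_x\) can diverge even when the Lebesgue function at \(x\) is finite, because finiteness only controls the combination \(\reprK_\modelSpace^\mu(x,\cdot)\). The truncated function, by contrast, lies in \(L^2(\mu)\), so its projection is a bona fide element of \(\modelSpace\).
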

\end{mdframed}

It is therefore a natural strategy to identify a minimal compact set
\(S_\modelSpace\) such that a restricted range inequality holds and
focus the attention on this set. For example it is usually enough to choose
\(\mu\) with support on \(S_\modelSpace\) to get good approximation properties
for the \(L^2(\mu)\)-projection, which may be translated to regression with
exact quadrature or random sampling (cf. Section \ref{sec: strategies}).

\subsection{Relative \texorpdfstring{\(L^p(\mu)\)}{Lp(µ)}-norm bounds}
\label{sec: relative Lp bounds}

We denote the \(L^p(\mu)\)-norm by
\(\norm{f}_{L^p(\mu)} \coloneq \bigl(\int \abs{f(x)}^p
\mu(dx)\bigr)^{\frac1p}\), which defines a semi-norm on the space of
functions \(\mathcal{L}^p(\mu)\), where this norm is finite,
and a norm on the space of equivalence classes \(L^p(\mu)\).
In the case \(p=2\) this norm is induced by the inner product
\(\scp{\cdot}{\cdot}_\mu\) and we denote it by \(\norm{\cdot}_\mu\).
The best approximation error \(\error_\modelSpace^{L^p(\mu)}[f]\) with respect to
the \(L^p(\mu)\)-norms is unfortunately much less useful. Indeed the following theorem shows it is impossible to bound the
approximation error in \(L^p(\mu)\) using \(\error_\modelSpace^{L^q(\mu)}[f]\) if the evaluation points
\(x_i\) are not atoms of \(\mu\).

\begin{mdframed}[style=default]
\begin{theorem}[Adversarial \(L^p(\mu)\)-approximation is hopeless]
    \label{thm: Lp approximation is hopeless}
    Let \(\mu\) be a finite measure on \(\domain\subseteq \real^\dims\) with
    \(\mu(\set{x_1,\dots, x_n}) = 0\) for the distinct evaluation points \(x_1, \dots, x_n\)
    of a regression projection \(\linProj\) onto \(\modelSpace\) and for \(\green{p},\blue{q}\in [1, \infty)\) let \(C_c^\infty(\domain; \field)
    \subseteq \funSpace \subseteq \mathcal{L}^p(\mu)\cap \mathcal{L}^q(\mu)\). Then we have either
    \[
        \inf_{c>0} \set[\Big]{\forall f\in \funSpace:\green{\norm{\linProj f - f}_{L^p(\mu)}} \le c\, \blue{\error_\modelSpace^{L^q(\mu)}[f]}}
        =\infty
    \]
    or \(\green{\norm{\model}_{L^p(\mu)}} = 0\) for all \(\model \in \modelSpace\)
    and
    \[
        \green{\norm{\linProj f - f}_{L^p(\mu)}}
        = \green{\norm{f}_{L^p(\mu)}}
        = \green{\error_\modelSpace^{L^p(\mu)}[f]}.
    \]
    For \(p=q\) the above remains true for \(\mu\) that are 
    finite on bounded measurable sets.
\end{theorem}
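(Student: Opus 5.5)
The key point is that \(\linProj f\) depends on \(f\) only through the values \(\eval_X[f]\), while the \(L^q(\mu)\) best approximation error does not see modifications of \(f\) on sets of small measure around the evaluation points. The plan is a dichotomy on whether \(\modelSpace\) contains a model with nonzero \(L^p(\mu)\)-norm.

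\textbf{Case 1: there is \(\model_0\in\modelSpace\) with \(\norm{\model_0}_{L^p(\mu)}>0\).} Since \(\linProj\) maps onto \(\modelSpace\), pick \(g\) with \(\linProj g=\model_0\). By linearity \(\linProj g=\regOp(\eval_X[g])\), so it suffices to realize the data vector \(y\coloneq\eval_X[g]\in\field^n\) by small functions.

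Because the \(x_i\) are distinct, I will choose bump functions \(h_\varepsilon\in C_c^\infty(\domain;\field)\) with \(h_\varepsilon(x_i)=y_i\), supported in \(\bigcup_i\ball{x_i}{\varepsilon}\) and bounded by \(\abs{y}_\infty\). This requires the \(x_i\) to be interior points so that the bumps live in \(\domain\); I would handle that via relative neighbourhoods, or by taking restrictions of \(C_c^\infty(\real^\dims)\) functions.

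Then \(\linProj h_\varepsilon=\model_0\) for every \(\varepsilon\). Since \(\mu\) is finite and \(\mu(\set{x_1,\dots,x_n})=0\), continuity from above gives \(\mu\bigl(\bigcup_i\ball{x_i}{\varepsilon}\bigr)\to0\). Hence \(\norm{h_\varepsilon}_{L^r(\mu)}\le\abs{y}_\infty\,\mu\bigl(\bigcup_i\ball{x_i}{\varepsilon}\bigr)^{1/r}\to0\) for \(r\in\set{p,q}\). Taking \(0\in\modelSpace\) as a competitor, \(\error^{L^q(\mu)}_\modelSpace[h_\varepsilon]\le\norm{h_\varepsilon}_{L^q(\mu)}\to0\). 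On the other side,
\[
\norm{\linProj h_\varepsilon-h_\varepsilon}_{L^p(\mu)}\ge\norm{\model_0}_{L^p(\mu)}-\norm{h_\varepsilon}_{L^p(\mu)}\to\norm{\model_0}_{L^p(\mu)}>0 .
\]
So the ratio blows up and no finite constant \(c\) works, giving infimum \(\infty\).

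If the error is exactly \(0\) for some \(\varepsilon\) (it is not, as the limit is positive), this is still fine. I must make sure the \(h_\varepsilon\) lie in \(\funSpace\); this holds since \(C_c^\infty\subseteq\funSpace\).

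\textbf{Case 2: \(\norm{\model}_{L^p(\mu)}=0\) for all \(\model\in\modelSpace\).} Every model is a \(\mu\)-null function, so by the triangle inequality for the seminorm, \(\norm{\model-f}_{L^p(\mu)}=\norm{f}_{L^p(\mu)}\) for all \(\model\in\modelSpace\). Taking the infimum gives \(\error^{L^p(\mu)}_\modelSpace[f]=\norm f_{L^p(\mu)}\), and taking \(\model=\linProj f\) gives \(\norm{\linProj f-f}_{L^p(\mu)}=\norm f_{L^p(\mu)}\).

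\textbf{Extension (\(p=q\), \(\mu\) only finite on bounded sets).} The construction is unchanged, because the \(h_\varepsilon\) are supported in a fixed bounded set \(B=\bigcup_i\ball{x_i}{1}\), where \(\mu(B)<\infty\). So continuity from above still applies. The only place full finiteness might matter is when \(p\neq q\), where I use that the \(L^q\)-norms of \(h_\varepsilon\) vanish; this is still true locally. The restriction to \(p=q\) is presumably needed because \(\model_0\) and general \(f\) need only lie in \(\mathcal L^p\cap\mathcal L^q\), and for \(p=q\) nothing beyond that is needed.

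\textbf{Main obstacle.} The delicate step is the boundary issue: the bumps must interpolate the data while remaining in \(C_c^\infty(\domain;\field)\). I would first try to interpret that space as restrictions of \(C_c^\infty(\real^\dims)\) functions to \(\domain\).
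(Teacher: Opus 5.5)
Your proof is correct, and it takes a more direct route than the paper. The paper first proves an auxiliary lemma saying that the operator norm of $\linProj$ measured in $L^p(\mu)$ (and from $L^q(\mu)$ to $L^p(\mu)$) is infinite. For $p=q$ it then concludes via the lower bound in the Lebesgue lemma. For $p\neq q$ it splits into $p\le q$ and $p>q$, comparing the two norms through $\norm{f}_{L^p(\mu)}\le\mu(\domain)^{1/p-1/q}\norm{f}_{L^q(\mu)}$ and its reverse.

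You instead freeze the image, $\linProj h_\varepsilon=\model_0$, and make $h_\varepsilon$ small in both norms simultaneously. Then $\error_\modelSpace^{L^q(\mu)}[h_\varepsilon]\to 0$ while $\norm{\linProj h_\varepsilon-h_\varepsilon}_{L^p(\mu)}\to\norm{\model_0}_{L^p(\mu)}>0$. This handles all $p,q$ at once, with no norm comparison and no appeal to the Lebesgue lemma.

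A consequence is that your argument only uses $\mu\bigl(\bigcup_i \mathrm{B}(x_i,1)\bigr)<\infty$. It therefore proves the extension to measures that are finite on bounded sets for \emph{all} $p,q\in[1,\infty)$, not just $p=q$. Your guess about why the paper restricts the extension to $p=q$ is off: the restriction is an artifact of the paper's $L^p$-inclusion step, which needs $\mu(\domain)<\infty$, not of any integrability issue with $\model_0$ or $f$.

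What the paper's route buys is modularity. The infinite-operator-norm lemma is reusable: it also covers pointwise evaluation, and the remark after it extends it to atomic evaluation points. It also ties the impossibility result to the Lebesgue lemma framework.

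Your boundary concern is legitimate, but the paper glosses over it the same way: it simply asserts that the bumps lie in $C_c^\infty(\domain;\field)$, implicitly using restrictions. Two minor simplifications are available. You do not need the preimage $g$: take $y=\eval_X[\model_0]$ directly, since $\model_0\in\modelSpace\subseteq\funSpace$ and $\linProj\model_0=\model_0$. And your parenthetical about the error being exactly zero is muddled but harmless, since a zero right-hand side with a positive left-hand side only helps.
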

\end{mdframed}

\begin{remark}[Discussion]
    The case \(\norm{\cdot}_{L^p(\mu)} = 0\) on the entire model space is 
    a degenerate case since then the zero map \(\linProj \equiv 0\) would do
    just as well. The assumption \(C_c^\infty(\domain;\field)
    \subseteq \funSpace\) is necessary for the construction of adversarial
    functions using bump functions. Other spaces that are sufficiently rich
    may be considered. The structural assumption is that the \(x_i\) may not be
    atoms. This assumption is violated by \(\mu_n = \frac1n\sum_{i=1}^n \omega_i \delta_{x_i}\)
    for example, which corresponds to the weighted least squares setting. Since the
    WLS regression projection is the metric projection with respect to the
    \(L^2(\mu_n)\)-norm (Theorem \ref{thm: least squares solution} \eqref{eq: LS is
    metric projection}) one would trivially obtain
    \[
        \norm{\linProj f - f}_{\mu_n} = \error_\modelSpace^{\mu_n}[f],
    \]
    which is a bound with constant \(c=1\) for all \(f\in \funSpace\). 
    Adding the evaluation points \(x_i\) as atoms to \(\mu\) could therefore
    ``fix'' the impossibility of Theorem \ref{thm: Lp approximation is hopeless}.
    However, the goal of bounding the approximation error in terms of the best approximation
    error was to avoid considering all pairs of regression operators and
    function spaces. If the measure \(\mu\) depends on the
    evaluation points \(x_i\) of the regression, this ``fix'' sacrifices the very reason this bound is desirable in
    the first place.
\end{remark}

What we have not ruled out in the Theorem above is the case \(q=\infty\). And indeed,
it is possible to bound the approximation error in the \(L^p(\mu)\)-norm using
the best approximation error \(\error_\modelSpace^\weight[f]\) of the 
(weighted) sup-norm. These mixed norm bounds, where the \(L^2(\mu)\)-norm
is bounded using the sup-norm have received a lot of attention in the past
decade \citep[e.g.][]{adcockOptimalSamplingLeastsquares2025,cohenOptimalWeightedLeastsquares2017,cohenStabilityAccuracyLeast2013}.
The following theorem is such a mixed bound 
extracted from a proof of \citet{cohenOptimalWeightedLeastsquares2017}
with a few generalizations.

\begin{theorem}[Mixed norm bound]
    \label{thm: mixed norm bound}
    Let \(\mu\) be a finite measure on \(\domain\) and \(\linProj\) a GLS
    regression projection onto the model space \(\modelSpace\) using matrix
    \(\Omega_n\). If there exists a constant \(\gamma>0\) such that
    \begin{equation}
        \label{eq: mixed norm bound assumption}
        \norm{\model}_{L^p(\mu)} \le \sqrt{\gamma} \norm{\model}_{\Omega_n}
        \qquad \forall\model\in \modelSpace,
    \end{equation}
    then with \(A_\weight(\Omega_n) = \sqrt{\frac1n\sum_{i,j=1}^n \frac{\abs{(\Omega_n)_{ij}}}{\weight(x_i)\weight(x_j)}}\)
    \begin{equation}
        \label{eq: mixed norm bound}
        \green{\norm{\linProj f - f}_{L^p(\mu)}}
        \le \Bigl(\norm{\tfrac1\weight}_{L^p(\mu)} + \sqrt{\gamma} A_\weight(\Omega_n) \Bigr) \magenta{\error_\modelSpace^\weight[f]}
        \qquad \forall f\in \funSpace.
    \end{equation}
\end{theorem}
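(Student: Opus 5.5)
Fix \(f\in\funSpace\) and an arbitrary \(\model\in\modelSpace\); at the end we take the infimum over \(\model\), which gives \(\error_\modelSpace^\weight[f]\). Since \(\linProj\) is a linear projection onto \(\modelSpace\), we have \(\linProj\model=\model\), and hence
\[
    \linProj f - f = (\model - f) + \linProj(f-\model).
\]
The triangle inequality in \(L^p(\mu)\) reduces the claim to bounding the two terms \(\norm{f-\model}_{L^p(\mu)}\) and \(\norm{\linProj(f-\model)}_{L^p(\mu)}\) by multiples of \(\norm{f-\model}_\weight\).

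\textbf{First term.} Use \(\abs{f(x)-\model(x)} = \abs{f(x)-\model(x)}\weight(x)\cdot\frac1{\weight(x)} \le \norm{f-\model}_\weight\,\frac1{\weight(x)}\) pointwise. Taking the \(L^p(\mu)\)-norm gives \(\norm{f-\model}_{L^p(\mu)}\le \norm{\tfrac1\weight}_{L^p(\mu)}\norm{f-\model}_\weight\).

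\textbf{Second term.} The function \(\linProj(f-\model)\) lies in \(\modelSpace\), so assumption \eqref{eq: mixed norm bound assumption} applies:
\[
    \norm{\linProj(f-\model)}_{L^p(\mu)}\le\sqrt\gamma\,\norm{\linProj(f-\model)}_{\Omega_n}.
\]
Next, \(\linProj\) is the GLS projection. It is the \(\norm{\cdot}_{\Omega_n}\)-orthogonal projection onto \(\modelSpace\), by Theorem~\ref{thm: least squares solution} \eqref{eq: LS is metric projection} together with the inner-product structure. Hence it is non-expansive: \(\norm{\linProj g}_{\Omega_n}\le\norm{g}_{\Omega_n}\) for \(g=f-\model\).

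To justify non-expansiveness carefully, note that the metric projection property gives \(\norm{\linProj g-g}_{\Omega_n}\le\norm{0-g}_{\Omega_n}\). The first-order condition of the minimization gives the Pythagorean identity \(\norm{g}_{\Omega_n}^2=\norm{\linProj g}_{\Omega_n}^2+\norm{g-\linProj g}_{\Omega_n}^2\). This identity is valid for semi-inner products as well.

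\textbf{Main step: \(\norm{g}_{\Omega_n}\le A_\weight(\Omega_n)\norm{g}_\weight\) for any \(g\).} Write \(g(x_i)=\frac{a_i}{\weight(x_i)}\) with \(\abs{a_i}\le\norm{g}_\weight\). Then
\[
    \norm{g}_{\Omega_n}^2=\tfrac1n\sum_{i,j}\overline{g(x_j)}(\Omega_n)_{ji}\,g(x_i)
    \le\tfrac1n\sum_{i,j}\frac{\abs{(\Omega_n)_{ij}}\abs{a_i}\abs{a_j}}{\weight(x_i)\weight(x_j)}
    \le A_\weight(\Omega_n)^2\norm{g}_\weight^2 .
\]
Here \(\abs{(\Omega_n)_{ji}}=\abs{(\Omega_n)_{ij}}\) holds by Hermitian symmetry. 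This is exactly where the absolute values in \(A_\weight\) enter.

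Combining the two terms and taking the infimum over \(\model\) yields \eqref{eq: mixed norm bound}. The only subtle point is the non-expansiveness of \(\linProj\) in the semi-norm \(\norm{\cdot}_{\Omega_n}\) on all of \(\funSpace\), not only on \(\modelSpace\). I would handle it through the orthogonality \(\scp{g-\linProj g}{\model'}_{\Omega_n}=0\) for all \(\model'\in\modelSpace\), which follows from the minimization in Theorem~\ref{thm: least squares solution}. Everything else is elementary.
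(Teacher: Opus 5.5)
Your proposal is correct and is essentially the paper's proof. The paper also splits $\norm{\linProj f - f}_{L^p(\mu)}$ by the triangle inequality into $\norm{\linProj f-\model}_{L^p(\mu)}$, handled by the assumption on $\modelSpace$, and $\norm{\model-f}_{L^p(\mu)}\le\norm{\tfrac1\weight}_{L^p(\mu)}\norm{\model-f}_\weight$. It controls the first term via the Pythagorean identity $\norm{f-\model}_{\Omega_n}^2=\norm{f-\linProj f}_{\Omega_n}^2+\norm{\linProj f-\model}_{\Omega_n}^2$, which is exactly your non-expansiveness since $\linProj(f-\model)=\linProj f-\model$, followed by the same entrywise bound $\norm{f-\model}_{\Omega_n}\le A_\weight(\Omega_n)\norm{f-\model}_\weight$.
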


Recall that for \(p=2\), \eqref{eq: mixed norm bound assumption} is equivalent to a spectral
lower bound on the Gram matrix (Lem.~\ref{lem: gram bound equivalence}). This yields
the following corollary.

\begin{mdframed}[style=default]
\begin{corollary}[Mixed norm bound for \(p=2\)]
    \label{cor: mixed norm bound for p=2}
    Let \(\mu\) be a finite measure on \(\domain\) and \(\linProj\) a GLS
    regression projection onto the model space \(\modelSpace\) using matrix
    \(\Omega_n\). Let \(\mBasis_1, \dots, \mBasis_m\) be an orthonormal basis of
    \(\modelSpace\) with respect to \(\scp{\cdot}{\cdot}_\mu\) and
    \((\Gram_\mu)_{kl} = \scp{\mBasis_l}{\mBasis_k}_{\Omega_n}\) the associated
    Gram matrix. Then with \(A_\weight(\Omega_n)\) as in Theorem~\ref{thm: mixed norm bound}
    \[
        \green{\norm{\linProj f - f}_{\mu}}
        \le \Bigl(\norm{\tfrac1\weight}_{\mu} + A_\weight(\Omega_n)\sqrt{\opnorm{\Gram_\mu^{-1}}_2}  \Bigr) \magenta{\error_\modelSpace^\weight[f]}.
    \]
    In the WLS setting \(A_\weight(\Omega_n) =
    \norm{\tfrac1\weight}_{\Omega_n}\). If further \(\nu(dx)=\frac1{\weight^2(x)}\mu(dx)\)  
    is a probability measure and \(\omega_i = \weight^2(x_i)\)
    for all \(i\) then the bound simplifies to
    \[
        \green{\norm{\linProj f - f}_{\mu}}
        \le \Bigl(1 + \sqrt{\opnorm{\Gram_\mu^{-1}}_2}\Bigr) \magenta{\error_\modelSpace^\weight[f]}.
    \]
\end{corollary}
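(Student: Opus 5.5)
This corollary is a direct specialization of Theorem~\ref{thm: mixed norm bound} with \(p=2\). First I verify the hypothesis \eqref{eq: mixed norm bound assumption} with \(\gamma \coloneq \opnorm{\Gram_\mu^{-1}}_2\). Since \(\mBasis_1,\dots,\mBasis_m\) is an orthonormal basis for \(\scp{\cdot}{\cdot}_\mu\), the matrix \(\Gram_\mu\) is exactly the Gram matrix \(\Gram_*\) of Theorem~\ref{thm: leverage function bound} with \(\scp{\cdot}{\cdot}_*=\scp{\cdot}{\cdot}_\mu\).

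The matrix \(\Gram_\mu\) is invertible. Because \(\linProj\) maps onto \(\modelSpace\), Theorem~\ref{thm: least squares solution} (equivalently Theorem~\ref{thm: reproducing kernel representation}) makes \(\norm{\cdot}_{\Omega_n}\) a norm on \(\modelSpace\). Hence the Gram matrix in any basis is positive definite. This justifies using \(\opnorm{\Gram_\mu^{-1}}_2\).

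The direction (i)\(\Rightarrow\)(iii) of Lemma~\ref{lem: gram bound equivalence} then yields \(\norm{\model}_\mu \le \sqrt{\gamma}\norm{\model}_{\Omega_n}\) for all \(\model\in\modelSpace\). This is \eqref{eq: mixed norm bound assumption} for \(p=2\), since \(\norm{\cdot}_\mu=\norm{\cdot}_{L^2(\mu)}\). Plugging this \(\gamma\) into \eqref{eq: mixed norm bound} gives the first displayed inequality.

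For the WLS statement I argue as in the remark on \(A_\weight(\Omega_n)\). With \(\Omega_n=\diag(\omega_1,\dots,\omega_n)\) and \(\omega_i\ge0\), the double sum has only diagonal terms, so
\[
A_\weight(\Omega_n)^2=\tfrac1n\sum_i \omega_i/\weight(x_i)^2=\norm{\tfrac1\weight}_{\Omega_n}^2 .
\]

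For the final simplification:
\begin{itemize}
\item If \(\omega_i=\weight^2(x_i)\), each summand equals \(1\), so \(A_\weight(\Omega_n)=1\).
\item We have \(\norm{\tfrac1\weight}_\mu^2=\int \weight^{-2}\,d\mu=\nu(\domain)=1\) because \(\nu\) is a probability measure.
\end{itemize}
Substituting both values gives the constant \(1+\sqrt{\opnorm{\Gram_\mu^{-1}}_2}\).

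There is no real obstacle. The only points needing care are the invertibility of \(\Gram_\mu\) and the identification of \(\norm{\cdot}_*\) with \(\norm{\cdot}_\mu\) in Lemma~\ref{lem: gram bound equivalence}. The heavy lifting was already done in Theorem~\ref{thm: mixed norm bound}.
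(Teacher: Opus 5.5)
Your proof is correct and follows the paper's route. You specialize Theorem~\ref{thm: mixed norm bound} at \(p=2\) with \(\gamma=\opnorm{\Gram_\mu^{-1}}_2\) via Lemma~\ref{lem: gram bound equivalence}, with invertibility of \(\Gram_\mu\) coming from surjectivity of \(\linProj\). In the WLS case you then read off \(A_\weight(\Omega_n)=\norm{\tfrac1\weight}_{\Omega_n}=1\) and \(\norm{\tfrac1\weight}_\mu=\sqrt{\nu(\domain)}=1\), exactly as in the paper's remark on \(A_\weight(\Omega_n)\).
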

\end{mdframed}

\begin{remark}[On the probability measure assumption]
    For \(\norm{\frac1\weight}_\mu < \infty\) it is necessary that
    \[
        \nu(dx) = \tfrac1{\weight^2(x)}\mu(dx)
    \]
    is a finite measure. By scaling the weight
    function \(\weight\) it is therefore possible to turn \(\nu\) into a probability measure.  This
    scaling does not influence the mixed norm bound as
    \(\error_\modelSpace^\weight[f]\) scales proportionally to \(\weight\) and
    \(\norm{\frac1\weight}_\mu\) scales inversely proportional to \(\weight\).
\end{remark}

\begin{remark}[Invariance to scaling]
    The bounds of Thm.~\ref{thm: mixed norm bound} and Cor.~\ref{cor: mixed norm
    bound for p=2} are invariant to scaling of \(\Omega_n\), since positive
    scaling of \(\Omega_n\) decreases \(\gamma\) (resp.\
    \(\opnorm{\Gram_\mu^{-1}}_2\)) proportionally.
\end{remark}

Another way to work around the impossibility of adversarial \(L^p(\mu)\) bounds 
is to relax the requirement that the bound needs to hold uniformly for all \(f\in \funSpace\).
For example, if for a fixed \(f\in \funSpace\) the evaluation points \(x_i\) are
selected randomly, then the \underline{expected} \(L^2(\mu)\)-norm approximation
error can be bounded using the \(L^2(\mu)\)-metric projection error.
Specifically, for a randomized regression projection \(\linProj\)
(cf.~Definition \ref{def: randomized regression})
\citet{cohenOptimalWeightedLeastsquares2017,belloniNewAsymptoticTheory2015}
provide bounds of the form
\[
    \E{\norm{f - \linProj f}_\mu^2}
    \lesssim (\error_\modelSpace^{\mu}[f])^2
    + \epsilon.
\]

 	\section{Bounds on the stochastic error}
\label{sec: noise bounds}

Obtaining bounds on the stochastic error is comparatively simpler than obtaining bounds on the
approximation error. The following Theorem \ref{thm: pointwise concentration} lists a
selection of pointwise inequalities depending on the assumptions on the noise.
These pointwise inequalities may be combined with Lemma~\ref{lem: pointwise to
uniform conversion} to obtain uniform bounds on the stochastic error.
We will also provide \(L^2(\mu)\)-bounds in a later section.
We write \(\pSym\) for the underlying probability measure and
\(\norm{Z}_{L^p(\pSym)} \coloneq \E{\abs{Z}^p}^{1/p}\) for
the moment norm of a random variable \(Z\), where \(1\le p<\infty\).

\begin{mdframed}[style=default]
\begin{theorem}[Pointwise concentration]
    \label{thm: pointwise concentration}
    In the standard regression setting (Definition \ref{def: regression setting}) with \(y = \eval_X[f] + \noise\)
    and \(\noise = A \noiseIndep\) for some \(A \in \field^{n\times n}\) with \(\Sigma= AA^\conT\),
    let \(\regOp\) be a GLS regression operator (Thm.~\ref{thm: least squares solution} \eqref{eq: definition least squares reg op}).
    Let \(\mBasis_1,\dots, \mBasis_m\)
    be an orthonormal basis of \(\modelSpace\) with respect to \(\scp{\cdot}{\cdot}_*\)
    and \((\Gram_*)_{kl} = \scp{\mBasis_k}{\mBasis_l}_{\Omega_n}\) the associated invertible Gram matrix.
    Then for any \(x\in \domain\) we have with probability at least \(1-\delta(\varepsilon)\)
    the following inequalities
    \begin{align*}
        \abs[\big]{\regOp(\noise)(x) \weight(x)}
        &\le \tfrac{\varepsilon}{\sqrt{n}} \sqrt{\opnorm{\Gram_*^{-1}}_2 \opnorm{\Omega_n^{\frac12}\Sigma \Omega_n^{\frac12}}_2} \lever_\modelSpace^*(x) \weight(x)
        \tag{pointwise error bound}
        \\
        &\le \tfrac{\varepsilon}{\sqrt{n}} \sqrt{\opnorm{\Gram_*^{-1}}_2 \opnorm{\Omega_n^{\frac12}\Sigma \Omega_n^{\frac12}}_2} \norm{\lever_\modelSpace^*}_\weight
        \tag{uniform error bound}
    \end{align*}
    with \(\delta(\varepsilon)\) listed below subject to various assumptions (cf.~Definition \ref{def: concentration families}). Note that \(\opnorm{\Omega_n^{\frac12}\Sigma \Omega_n^{\frac12}}_2 \le \opnorm{\Omega_n\Sigma}_2\)
    (Lemma \ref{lem: bound on operator norm of Omega Sigma}).
    \begin{center}
        \begin{NiceTabular}{l p{5.65cm} l}[tabularnote={
    \footnotesize    
    \(c_\alpha>0\) is an unoptimized constant stated before \eqref{eq: sub-Weibull constant}, 
    \(C_{p,\real}\coloneq \frac{14.7p}{\ln p}\) and
    \(C_{p,\complex}\coloneq\sqrt2\,C_{p,\real}\)
    }]
    \toprule
    & Assumption (decreasing strength \(\downarrow\)) & \(\delta(\varepsilon)\)
    \\
    \midrule
    \newtag{\normalfont{(i)}}{it: (concentration) subgaussian}\tabularnote{Assume \(\field = \real\).\label{footnote: K=R}}
    & \(\noiseIndep_i \sim \subG{\sigma^2}\)
    & \(\displaystyle 2\exp\bigl(-\tfrac{\varepsilon^2}{2\sigma^2}\bigr)\)
    \\[1ex]
    \newtag{\normalfont{(ii)}}{it: (concentration) subexponential}\ref*{footnote: K=R}
    & \(\noiseIndep_i \sim \subExp{\nu, \alpha}\)
    & \(\displaystyle 2\exp\bigl(-\tfrac12\min\set{\tfrac{\varepsilon^2}{\nu^2}, \tfrac\varepsilon{\alpha}}\bigr)\)
    \\[1ex]
    \newtag{\normalfont{(iii)}}{it: (concentration) sub-Weibull}\ref*{footnote: K=R}
    & \(\noiseIndep_i \sim \subWeibull{\alpha}\), \(\norm{\noiseIndep_i}_{\psi_\alpha}\le K\), \(\alpha\le 2\)
    & \(\displaystyle 2 \exp\bigl(-c_\alpha\min\set{(\tfrac{\varepsilon}{K})^2, (\tfrac{\varepsilon}{K})^\alpha}\bigr)\)
    \\[1ex]
    \newtag{\normalfont{(iv)}}{it: (concentration) finite p-th moment}
    & \(\E{\abs{\noiseIndep_i}^2}\le \sigma^2\) and \(\E{\abs{\noiseIndep_i}^p} \le \kappa^p\), \(p> 2\)
    & \(\displaystyle \bigl(C_{p,\field} \tfrac{\max\set{\kappa, \sigma}}{\varepsilon}\bigr)^{p}\)
    \\[1ex]
    \newtag{\normalfont{(v)}}{it: (concentration) finite second moment}
    & \(\E{\abs{\noiseIndep_i}^2} \le \sigma^2\)
    & \(\displaystyle\tfrac{\sigma^2}{\varepsilon^2}\)
    \\
    \bottomrule
\end{NiceTabular}
     \end{center}
\end{theorem}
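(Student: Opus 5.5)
The plan is to write the stochastic error at a fixed point \(x\) as a single linear form in the independent innovations \(\noiseIndep\). Then one scalar concentration inequality per tail assumption finishes the proof, once the Euclidean norm of the coefficient vector is bounded by the claimed deterministic quantity. By Definition~\ref{def: lagrange function}, \(\regOp(\noise)(x) = \ell(x)^\conT \noise = \ell(x)^\conT A\noiseIndep = b^\conT \noiseIndep\) with \(b\coloneq A^\conT \ell(x)\in\field^n\). Hence \(\regOp(\noise)(x)\) is a weighted sum of centered independent variables, and every row of the table follows once we know
\[
\abs{b}_2^2 = \ell(x)^\conT \Sigma\, \ell(x) \le \tfrac1n \opnorm{\Gram_*^{-1}}_2 \opnorm{\Omega_n^{\frac12}\Sigma\Omega_n^{\frac12}}_2\, \lever_\modelSpace^*(x)^2 .
\]
We then apply each inequality to the normalized sum \(b^\conT\noiseIndep/\abs{b}_2\) at level \(\varepsilon\) and multiply through by \(\weight(x)\). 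The uniform line is then immediate from \(\lever_\modelSpace^*(x)\weight(x)\le\norm{\lever_\modelSpace^*}_\weight\).

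\emph{The deterministic bound.} We work in the \(\scp{\cdot}{\cdot}_*\)-orthonormal basis, which is legitimate because \(\ell(x)\) is basis independent. Example~\ref{ex: lagrange function of least squares} gives \(\ell(x) = \tfrac1n \Omega_n\modelVander \Gram_*^{-1}\overline{\mBasisVec(x)}\). Write \(M\coloneq \Omega_n^{\frac12}\Sigma\Omega_n^{\frac12}\). Then \(\Omega_n\Sigma\Omega_n = \Omega_n^{\frac12} M\Omega_n^{\frac12} \preceq \opnorm{M}_2\,\Omega_n\), so
\[
\ell(x)^\conT\Sigma\ell(x) \le \tfrac{\opnorm{M}_2}{n}\, \mBasisVec(x)^\transpose \Gram_*^{-1}\bigl(\tfrac1n\modelVander^\conT\Omega_n\modelVander\bigr)\Gram_*^{-1}\overline{\mBasisVec(x)} .
\]
The middle factor is \(\Gram_*\) up to transposition or conjugation, which is harmless because \(\Gram_*\) is Hermitian. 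One inverse therefore cancels, leaving \(\tfrac{\opnorm{M}_2}{n}\mBasisVec(x)^\transpose\Gram_*^{-1}\overline{\mBasisVec(x)} \le \tfrac{\opnorm{M}_2}{n}\opnorm{\Gram_*^{-1}}_2\abs{\mBasisVec(x)}_2^2\). By Lemma~\ref{lem: reproducing kernel}, \(\abs{\mBasisVec(x)}_2^2 = K^*_\modelSpace(x,x) = \lever^*_\modelSpace(x)^2\). The side remark \(\opnorm{M}_2\le\opnorm{\Omega_n\Sigma}_2\) is Lemma~\ref{lem: bound on operator norm of Omega Sigma}, which I take as given.

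\emph{The tail inequalities.} Each case is a standard inequality applied to \(b^\conT\noiseIndep\):
\begin{itemize}
\item[(i)] Subgaussian: the sum is \(\subG{\sigma^2\abs{b}_2^2}\), and the Chernoff bound gives the stated tail.
\item[(ii)] Subexponential: Bernstein's inequality applies with parameters \((\nu\abs{b}_2, \alpha\abs{b}_\infty)\), and we use \(\abs{b}_\infty\le\abs{b}_2\).
\item[(iii)] Sub-Weibull with \(\alpha\le2\): we quote a known concentration bound for weighted sums of sub-Weibull variables, whose tail depends on \(\abs{b}_2\) and \(\abs{b}_\infty\le\abs{b}_2\); this is where the unspecified constant \(c_\alpha\) enters.
\item[(v)] Finite second moment: Chebyshev's inequality, since \(\E{\abs{b^\conT\noiseIndep}^2}\le\sigma^2\abs{b}_2^2\).
\item[(iv)] Finite \(p\)-th moment: a Rosenthal-type inequality with the sharp \(p/\ln p\) constant, namely \(\norm{\sum_i \bar b_i\noiseIndep_i}_{L^p(\pSym)}\le C_{p,\real}\max\bigl\{\bigl(\sum_i\abs{b_i}^p\kappa^p\bigr)^{1/p}, \sigma\abs{b}_2\bigr\}\), combined with \(\abs{b}_p\le\abs{b}_2\) for \(p>2\) and Markov's inequality.
\end{itemize}
For \(\field=\complex\) in case (iv), I split into real and imaginary parts, apply the real inequality to each, and recombine; this produces the factor \(\sqrt2\) in \(C_{p,\complex}\).

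\emph{Main obstacle.} The linear-algebra bound is routine. I expect case (iv) to need the most care: finding a Rosenthal/Latała-type inequality with explicit constant \(14.7p/\ln p\) in exactly the normalization above, and handling the complex case so that only a \(\sqrt2\) is lost. If the literature constant is stated for symmetric variables only, the fallback is a symmetrization step, at the cost of a factor \(2\) that would have to be absorbed into the constant.
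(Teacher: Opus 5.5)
Your proposal is correct and is essentially the paper's proof. The paper likewise writes \(\weight(x)\regOp(\noise)(x)=\sum_i\overline{\beta_i(x)}\noiseIndep_i\) with \(\beta(x)=\weight(x)A^\conT\ell(x)\) and bounds \(\abs{\beta(x)}_2\); it does this via \(\reprK_\modelSpace^{\Omega_n}\) and Theorem~\ref{thm: leverage function bound} rather than your direct computation in the \(\scp{\cdot}{\cdot}_*\)-basis, with the same result. It then applies Hoeffding, Bernstein, the Kuchibhotla--Chakrabortty bound, Rosenthal with Markov, and Chebyshev. For Rosenthal it takes \(14.7p/\ln p\) from Johnson--Schechtman--Zinn's Theorem~4.1 together with their symmetrization remark, so the constant already covers centered variables and nothing needs absorbing.

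Two small corrections. First, for \(\alpha\in(1,2]\) the sub-Weibull bound involves \(\abs{\beta}_{\alpha/(\alpha-1)}\) rather than \(\abs{\beta}_\infty\). The hypothesis \(\alpha\le 2\) is exactly what gives \(\alpha/(\alpha-1)\ge 2\) and hence \(\abs{\beta}_{\alpha/(\alpha-1)}\le\abs{\beta}_2\). Second, in the complex case the factor \(\sqrt2\) (rather than \(2\)) requires recombining via \(\abs{z}^p\le 2^{p/2-1}(\abs{\Re z}^p+\abs{\Im z}^p)\), not via the triangle inequality in \(L^p\).
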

\end{mdframed}

\begin{remark}
    \(\opnorm{\Omega_n^{\frac12}\Sigma \Omega_n^{\frac12}}_2\)
    encourages \(\Omega_n = \Sigma^{-1}\) up to scaling. And since \(\Omega_n\)
    is both contained in \(\Gram_*\) and this term, the bound is invariant to
    scaling of \(\Omega_n\) such that \(\Omega_n = \Sigma^{-1}\) is encouraged.
    This choice of \(\Omega_n\) is generally a nuisance for the approximation error
    however resulting in a bias-noise tradeoff if the noise is correlated.
\end{remark}

The proof of the individual cases is fairly short. All of them use Lemma \ref{lem: bounds
on beta}, which may be easily reused for other assumptions on the noise.
By linearity of the regression operator \(\regOp\) it is further possible to
decompose the noise \(\xi = \xi_1 + \xi_2\) and analyze the two components
separately. This may be helpful to decorrelate the noise for example.

Below we list the definitions of the concentration families used in Theorem \ref{thm: pointwise concentration}.
As they are usually stated in terms of real valued random variables, we assume \(\field=\real\) for
simplicity. In the complex case, similar results follow from identifying \(\complex\)
with \(\real^2\).
Observe that the sub-Weibull family is a fairly recent generalization \citep{vladimirovaSubWeibullDistributionsGeneralizing2020} of the sub-Gaussian and sub-exponential families
with \(\alpha=2\) and \(\alpha=1\) respectively.  Since the constant
\(c_\alpha\) in the sub-Weibull case is not optimized, it only makes sense to
use the sub-Weibull family for \(\alpha<1\) in practice, which explains its position in the table.

\begin{definition}[Concentration families]
    \label{def: concentration families}
    A real valued, centered random variable \(X\) is said to be
    \begin{enumerate}[label=(\roman*)]
        \item
        \textbf{Sub-Gaussian}, denoted by \(X\sim \subG{\sigma^2}\) \citep[Def.~2.2]{wainwrightHighDimensionalStatisticsNonAsymptotic2019} for \(\sigma^2 > 0\), if 
        \[
            \E{\exp(tX)} \le \exp\bigl(\tfrac{\sigma^2 t^2}{2}\bigr) \quad \forall t\in \real.
        \]
        The parameter \(\sigma^2\) is known as the variance proxy.

        \item
        \textbf{Sub-exponential}, denoted by \(X\sim \subExp{\nu, \alpha}\) \citep[Def.~2.7]{wainwrightHighDimensionalStatisticsNonAsymptotic2019} for \(\nu, \alpha > 0\), if
        \[
            \E{\exp(tX)} \le \exp\bigl(\tfrac{\nu^2 t^2}{2}\bigr) \quad \forall \abs{t} < \tfrac1\alpha.
        \]
        \item
        \textbf{Sub-Weibull} denoted by
        \(X\sim \subWeibull{\alpha}\)
        for \(\alpha>0\)
        \citep[Def.~2]{kuchibhotlaMovingSubGaussianityHighdimensional2022}
        (aka \(\alpha\)-sub-exponential
        \citep[e.g.][]{gotzeConcentrationInequalitiesPolynomials2021}), if
        \begin{equation}
            \label{eq: sub-Weibull orlicz norm}    
            \norm{X}_{\psi_\alpha} \coloneq \inf\set[\Big]{t>0 : \E[\big]{\psi_\alpha\bigl(\tfrac{\abs{X}}t\bigr)} \le 1} < \infty
        \end{equation}
        with \(\psi_\alpha(x) \coloneq \exp(x^\alpha) - 1\) for \(x\ge 0\).
    \end{enumerate}
\end{definition}

The sub-Weibull family naturally arises from products/powers of sub-Gaussian or
sub-exponential random variables and consists of distributions with infinitely
many finite moments, but not necessarily a finite moment generating function.
Observe that `Orlicz norm' defined in \eqref{eq: sub-Weibull orlicz norm} is
only a norm for \(\alpha\ge 1\) and only a quasi-norm for \(0 < \alpha < 1\) \citep[Prop.~A.5]{kuchibhotlaMovingSubGaussianityHighdimensional2022}.

\subsection{Uniform bounds on the stochastic error}

The following lemma, based on a proof by \citet{matsudaPolynomialApproximationNoisy2025}
for the special case of Chebyshev regression, allows for the simple conversion
of pointwise to uniform bounds.
Indeed, using the uniform error bound from Theorem \ref{thm: pointwise concentration}
as \(\epsilon\)
in the following Lemma~\ref{lem: pointwise to uniform conversion} yields a uniform bound on the stochastic error.

\begin{lemma}[Pointwise to uniform conversion]
    \label{lem: pointwise to uniform conversion}
    Let \(\regOp\colon \field^n \to \modelSpace\) be an (abstract) regression operator
    (Def.~\ref{def: regression})
    and \(\epsilon, \delta > 0\), then
    \begin{equation*}
\prob[\big]{\abs{\regOp(\noise)(x)\weight(x)} > \epsilon} \le \delta
        \quad \forall x\in\domain
        \quad \implies\quad
        \prob[\Big]{\norm{\regOp(\noise)}_\weight > \inf_{\linProj_\star} \lebconst{\weight}{\linProj_\star} \epsilon} \le m\delta,
    \end{equation*}
    where the infimum is taken over all regression projections \(\linProj_\star
    \colon \funSpace \to \modelSpace\) with \(m\) distinct regression points.
\end{lemma}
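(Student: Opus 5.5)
The approach is to take any regression projection \(\linProj_\star\) onto \(\modelSpace\) with \(m\) distinct regression points \(z_1,\dots,z_m\in\domain\) and reconstruct the random function \(g\coloneq\regOp(\noise)\in\modelSpace\) from its values at these points. Since \(g\in\modelSpace\) and \(\linProj_\star\) is a linear projection onto \(\modelSpace\), we have \(\linProj_\star g = g\). Writing \(\ell^\star\) for the Lagrange function of \(\linProj_\star\) (Def.~\ref{def: lagrange function}), this gives for every \(x\in\domain\)
\[
    g(x)\weight(x) = \weight(x)\sum_{j=1}^m \ell^\star_j(x)\, g(z_j)
    = \weight(x)\sum_{j=1}^m \frac{\ell^\star_j(x)}{\weight(z_j)}\, g(z_j)\weight(z_j).
\]
Taking absolute values and the triangle inequality yields
\[
    \abs{g(x)}\weight(x) \le \lebfun{\weight}{\linProj_\star}{x}\max_{j}\abs{g(z_j)\weight(z_j)},
\]
and taking the supremum over \(x\) gives \(\norm{g}_\weight \le \lebconst{\weight}{\linProj_\star}\max_j \abs{g(z_j)\weight(z_j)}\). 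This step is deterministic and holds for every realization of \(\noise\). The only place where the regression structure of \(\regOp\) is used is that \(g\) lies in \(\modelSpace\).

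Next comes a union bound over the \(m\) points. The event \(\set{\max_j \abs{g(z_j)\weight(z_j)}>\epsilon}\) is the union of the \(m\) events \(\set{\abs{\regOp(\noise)(z_j)\weight(z_j)}>\epsilon}\). By hypothesis each of these has probability at most \(\delta\), so the union has probability at most \(m\delta\). Combined with the deterministic bound, this gives \(\prob{\norm{\regOp(\noise)}_\weight > \lebconst{\weight}{\linProj_\star}\epsilon}\le m\delta\) for each admissible \(\linProj_\star\).

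It remains to pass to the infimum over \(\linProj_\star\), which I expect to be the main technical obstacle. If the infimum \(\Lambda^\ast\) is attained, we are done. Otherwise, pick a sequence \(\linProj_k\) with \(\lebconst{\weight}{\linProj_k}\downarrow\Lambda^\ast\). The events \(E_k=\set{\norm{g}_\weight>\lebconst{\weight}{\linProj_k}\epsilon}\) then increase to \(\set{\norm{g}_\weight>\Lambda^\ast\epsilon}\), because the inequality is strict. By continuity of measure from below, the limiting event still has probability at most \(m\delta\).

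Two minor points need checking along the way. First, measurability of \(\norm{g}_\weight\): since \(g=\sum_j g(z_j)\ell^\star_j\)-type finite expansions show \(g\) depends linearly on the finite-dimensional vector \(\noise\), the norm is a (lower semicontinuous) function of \(\noise\). Second, one should remark that if no such \(\linProj_\star\) exists the infimum is \(+\infty\) and the statement is trivial.
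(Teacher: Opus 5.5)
Your proposal is correct and follows the paper's proof essentially step by step. The steps are the deterministic bound \(\norm{\model}_\weight\le\lebconst{\weight}{\linProj_\star}\max_i\abs{\model(z_i)\weight(z_i)}\) for \(\model\in\modelSpace\) (obtained from \(\linProj_\star\model=\model\) and the Lagrange function of \(\linProj_\star\)), then a union bound over the \(m\) points, then continuity of measure to pass to the infimum. Your remarks on measurability and on an infinite infimum are extras the paper omits, and the missing complex conjugate on \(\ell^\star_j(x)\) is harmless because only absolute values enter the bound.
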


While we usually assume \(m=\dim \modelSpace\) by convention, this is not necessary
for the above lemma. However, a regression projection must be surjective by definition,
which requires \(m\ge \dim\modelSpace\). As larger \(m\) loosen the probability bound, it
is therefore still natural to assume \(m=\dim\modelSpace\). Observe that this
implies the number of regression points \(m\) of \(\linProj_\star\) coincide
with the dimension,
which implies that \(\linProj_\star\) is an interpolation operator. When
the Lebesgue constant of interpolation is much larger than the Lebesgue constant
of regression it may be beneficial to deviate from this convention and choose \(m > \dim\modelSpace\).

Lemma \ref{lem: pointwise to uniform conversion} combined with Theorem \ref{thm: pointwise concentration} 
then directly implies the following Corollary.

\begin{mdframed}[style=default]
\begin{corollary}[Uniform stochastic error bound]
    \label{cor: uniform noise bound}
    Assume the setting of Theorem \ref{thm: pointwise concentration}.
    Then with probability at least \(1-\delta\)
    \[
        \norm{\regOp(\noise)}_\weight
        \le \varepsilon(\delta)\inf_{\linProj_\star} \lebconst{\weight}{\linProj_\star}\sqrt{\opnorm{\Gram_*^{-1}}_2 \opnorm{\Omega_n^{\frac12}\Sigma \Omega_n^{\frac12}}_2} \frac{\norm{\lever_\modelSpace^*}_\weight}{\sqrt{n}}  ,
    \]
    where the infimum is taken over all regression projections \(\linProj_\star\colon \funSpace \to \modelSpace\) with \(m\) distinct regression points
    and \(\varepsilon(\delta)\) is listed below subject to various assumptions (cf.~Def.~\ref{def: concentration families}).
    \begin{center}
    \makebox[\textwidth][c]{
    \begin{NiceTabular}{l m{6cm} m{5cm}}[tabularnote={
        \footnotesize 
    \(C_\alpha, C_{\alpha}'>0\) are unoptimized constants given in \eqref{eq: sub-Weibull constants}, \(C_{p,\real}\coloneq \frac{14.7p}{\ln p}\) and
    \(C_{p,\complex}=\sqrt2\,C_{p,\real}\).
    }]
        \toprule
        & Assumption (decreasing strength \(\downarrow\)) &  \(\varepsilon(\delta)\)
        \\
        \midrule
        \newtag{\normalfont{(i)}}{it: (uniform concentration) subgaussian}\tabularnote{\footnotesize Assume \(\field=\real\).\label{footnote: K=R, corollary}}\!\!\!
        & \(\noiseIndep_i \sim \subG{\sigma^2}\)
        & \(
            \sigma\sqrt{2\ln(\frac{2m}\delta)}
        \)
        \\[1ex]
        \newtag{\normalfont{(ii)}}{it: (uniform concentration) exponential}\ref*{footnote: K=R, corollary}\!\!\!
        & \(\noiseIndep_i \sim \subExp{\nu, \alpha}\)
        & \(
            \max\set[\Big]{\sqrt{2\nu^2\ln(\frac{2m}\delta)}, 2\alpha\ln(\frac{2m}\delta)}
        \)
        \\[1ex]
        \newtag{\normalfont{(iii)}}{it: (uniform concentration) sub-Weibull}\ref*{footnote: K=R, corollary}\!\!\!
        & \(\noiseIndep_i \sim \subWeibull{\alpha}\), \(\norm{\noiseIndep_i}_{\psi_\alpha}\le K\), \(\alpha\le 2\)
        & \(
            K\Bigl(C_\alpha \sqrt{\ln(\tfrac{2m}{\delta})} + C_{\alpha}' \ln(\tfrac{2m}{\delta})^{\frac1\alpha}\Bigr)
        \)
        \\[1ex]
        \newtag{\normalfont{(iv)}}{it: (uniform concentration) finite p-th moment}
        & \(\E{\abs{\noiseIndep_i}^2}\le \sigma^2\) and \(\E{\abs{\noiseIndep_i}^p} \le \kappa^p\), \(p> 2\)
        & \(
            C_{p, \field}\max\set{\sigma, \kappa}\bigl(\frac{m}{\delta}\bigr)^{\frac1p}
        \)
        \\[1ex]
        \newtag{\normalfont{(v)}}{it: (uniform concentration) finite second moment}
        & \(\E{\abs{\noiseIndep_i}^2} \le \sigma^2\)
        & \(
            \sigma\sqrt{\frac{m}{\delta}}
        \)
        \\
        \bottomrule
    \end{NiceTabular}
    }
    \end{center}
\end{corollary}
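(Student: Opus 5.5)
The proof combines Lemma~\ref{lem: pointwise to uniform conversion} with the (pointwise error bound) of Theorem~\ref{thm: pointwise concentration}. For each row of the table we choose \(\varepsilon\) so that \(\delta(\varepsilon)\) from Theorem~\ref{thm: pointwise concentration} equals \(\delta/m\). Set
\[
    \epsilon \coloneq \tfrac{\varepsilon}{\sqrt n}\sqrt{\opnorm{\Gram_*^{-1}}_2\opnorm{\Omega_n^{\frac12}\Sigma\Omega_n^{\frac12}}_2}\,\norm{\lever_\modelSpace^*}_\weight .
\]
The uniform error bound of Theorem~\ref{thm: pointwise concentration} then gives \(\prob{\abs{\regOp(\noise)(x)\weight(x)}>\epsilon}\le \delta/m\) for every \(x\in\domain\). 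This \(\epsilon\) does not depend on \(x\), which is exactly the hypothesis of Lemma~\ref{lem: pointwise to uniform conversion}.

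The lemma yields \(\prob{\norm{\regOp(\noise)}_\weight > \inf_{\linProj_\star}\lebconst{\weight}{\linProj_\star}\,\epsilon}\le m\cdot\delta/m=\delta\), which is the claimed bound. The lemma needs \(\regOp\) to be an abstract regression operator. This holds because the Gram matrix \(\Gram_*\) is invertible by assumption, so Corollary~\ref{cor: least squares is an abstract regression} applies.

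The remaining work is to invert the tail functions at level \(\delta/m\).
\begin{itemize}
    \item \textbf{(i) sub-Gaussian.} Solving \(2\exp(-\varepsilon^2/(2\sigma^2))=\delta/m\) gives \(\varepsilon=\sigma\sqrt{2\ln(2m/\delta)}\).
    \item \textbf{(ii) sub-exponential.} It suffices that both \(\varepsilon^2/\nu^2\ge 2\ln(2m/\delta)\) and \(\varepsilon/\alpha\ge 2\ln(2m/\delta)\) hold. Then the minimum in the exponent is at least \(2\ln(2m/\delta)\), and the maximum in the table achieves this.
    \item \textbf{(iii) sub-Weibull.} We need \(\min\{(\varepsilon/K)^2,(\varepsilon/K)^\alpha\}\ge c_\alpha^{-1}\ln(2m/\delta)\). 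This holds if \(\varepsilon/K\) exceeds both \(\sqrt{c_\alpha^{-1}\ln(2m/\delta)}\) and \((c_\alpha^{-1}\ln(2m/\delta))^{1/\alpha}\). Their sum has the stated form, with \(C_\alpha=c_\alpha^{-1/2}\) and \(C'_\alpha=c_\alpha^{-1/\alpha}\); these are presumably the constants in \eqref{eq: sub-Weibull constants}.
    \item \textbf{(iv) \(p\)-th moment.} Solving \((C_{p,\field}\max\{\kappa,\sigma\}/\varepsilon)^p=\delta/m\) gives \(\varepsilon=C_{p,\field}\max\{\sigma,\kappa\}(m/\delta)^{1/p}\).
    \item \textbf{(v) second moment.} Solving \(\sigma^2/\varepsilon^2=\delta/m\) gives \(\varepsilon=\sigma\sqrt{m/\delta}\).
\end{itemize}
Monotonicity of \(\delta(\cdot)\) ensures that a larger \(\varepsilon\) only helps, so using sufficient rather than exact inversions is fine.

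There is no real obstacle here. The only point needing care is the sub-Weibull constants, which must be matched to the paper's \eqref{eq: sub-Weibull constants}. If those are defined differently, the min/max inversion above still gives a valid \(\varepsilon(\delta)\) of the same form.
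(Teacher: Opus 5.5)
Your overall structure is exactly the paper's. You take the \(x\)-independent threshold from the (uniform error bound) line of Theorem~\ref{thm: pointwise concentration}, push the pointwise failure probability down to \(\delta/m\), and apply Lemma~\ref{lem: pointwise to uniform conversion}. Rows (i), (ii), (iv) and (v) are inverted correctly and agree with the paper.

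The gap is in row (iii). The statement fixes \(C_\alpha, C_\alpha'\) to be the constants in \eqref{eq: sub-Weibull constants}, namely \(C_\alpha = 2eC(\alpha)\) and \(C_\alpha' = 2eL_n^*(\alpha)\), with \(C(\alpha), L_n^*(\alpha)\) defined after \eqref{eq: sub-Weibull concentration}. Your guess that they equal \(c_\alpha^{-1/2}\) and \(c_\alpha^{-1/\alpha}\) is wrong.

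\begin{itemize}
\item Since \(2C_2(\alpha) > 1\) and \(\alpha \le 2\), one has \(c_\alpha = (2C_2(\alpha))^{-2}\).
\item Your first constant is therefore \(c_\alpha^{-1/2} = 2C_2(\alpha) = 2^{2/\alpha+1/2}\cdot 2eC(\alpha)\), strictly larger than \(C_\alpha\).
\item Your second constant satisfies \(c_\alpha^{-1/\alpha} = (2C_2(\alpha))^{2/\alpha} \ge 2C_2(\alpha) \ge 2C_\alpha'\), using \(L_n^*(\alpha) \le \tfrac{4^{1/\alpha}}{\sqrt2}C(\alpha)\).
\end{itemize}

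So your \(\varepsilon(\delta)\) is strictly larger than the stated one, and your bound does not imply the corollary for this row. Your fallback remark that the inversion ``still gives a valid \(\varepsilon(\delta)\) of the same form'' concedes exactly this weaker result.

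The loss is structural. The \(\delta(\varepsilon)\) form in Theorem~\ref{thm: pointwise concentration}\,(iii) was itself obtained from the Kuchibhotla--Chakrabortty bound by a lossy step: \(\tfrac{4^{1/\alpha}}{\sqrt2}\) is absorbed into \(C_2(\alpha)\), and \(u\) is halved inside a minimum. Inverting that form back compounds the loss.

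The fix, which is what the paper does, is not to invert \(\delta(\varepsilon)\) in this row. Instead, return to \eqref{eq: sub-Weibull concentration}, which is already in \(\varepsilon(\delta)\) form, and proceed as follows.

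\begin{itemize}
\item Use \(\abs{b}_{p(\alpha)} \le \abs{b}_2 \le Kc(x)\), and bound \(c(x)\) by its \(x\)-independent majorant \(\tfrac{1}{\sqrt n}\sqrt{\opnorm{\Gram_*^{-1}}_2\opnorm{\Omega_n^{\frac12}\Sigma\Omega_n^{\frac12}}_2}\,\norm{\lever_\modelSpace^*}_\weight\).
\item Choose \(t = \ln(\tfrac{2m}{\delta})\). This gives, for every \(x\) and with probability at least \(1-\delta/m\), \(\abs{\weight(x)\regOp(\noise)(x)}\) bounded by that majorant times \(K\bigl(2eC(\alpha)\sqrt{\ln(\tfrac{2m}{\delta})} + 2eL_n^*(\alpha)\ln(\tfrac{2m}{\delta})^{1/\alpha}\bigr)\).
\item Lemma~\ref{lem: pointwise to uniform conversion} then yields row (iii) with the stated constants.
\end{itemize}
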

\end{mdframed}

The bound above is tightest for \(\ul{\weight} \coloneq
\frac{\sqrt{m}}{\lever_\modelSpace^*}\), as
\(\norm{\lever_\modelSpace^*}_{\ul{\weight}} = \sqrt{m}\) is minimal by Lemma \ref{lem:
optimality of constant weighted leverage bound}. Even if \(\ul{\weight}\) is not the
desired weight function, one can obtain the bound in any other weight function
from this bound without loss of sharpness. Indeed, we have
\[
    \norm{f}_\weight = \sup_{x\in\domain} \abs{f(x)\weight(x)}
    \le \sup_{x\in \domain} \frac{\weight(x)}{\ul{\weight}(x)} \sup_{y\in \domain} f(y) \ul{\weight}(y)
    = \tfrac{\norm{\lever_\modelSpace^*}_\weight}{\sqrt{m}} \norm{f}_{\ul{\weight}}.
\]
And consequently \(\norm{\regOp(\noise)}_\weight \le 
\frac{\norm{\lever_\modelSpace^*}_\weight}{\norm{\lever_\modelSpace^*}_{\ul{\weight}}}
\norm{\regOp(\noise)}_{\ul{\weight}}\). In this sense, \(\ul{\weight}\) is the natural
weight function adapted to the regression problem, choosing any other weight incurs the cost
of \(\sup_{x\in \domain}\frac{\weight(x)}{\ul{\weight}(x)} = \frac{\norm{\lever_\modelSpace^*}_\weight}{\sqrt{m}} \ge 1\).

\subsection{\texorpdfstring{\(L^2(\mu)\)}{L2(µ)}-bounds on the stochastic error}

Finally, we provide \(L^2(\mu)\)-bounds on the stochastic error. Since the results
are not obtained through a pointwise to uniform conversion, we can
also provide moment bounds.

\begin{mdframed}[style=default]
\begin{theorem}[\(L^2(\mu)\)-bounds on the stochastic error]
    \label{thm: L2 concentration}
    Assume the standard regression setting (Definition \ref{def: regression setting}) with \(y = \eval_X[f] + \noise\)
    and \(\noise = A \noiseIndep\) for some \(A \in \field^{n\times n}\) with \(\Sigma= AA^\conT\),
    and let \(\regOp\) be the GLS regression operator in Thm.~\ref{thm: least squares solution} \eqref{eq: definition least squares reg op}.
    Let \(\mBasis_1, \dots, \mBasis_m\) be an orthonormal basis of
    \(\modelSpace\) with respect to \(\scp{\cdot}{\cdot}_\mu\) and 
    \((\Gram_\mu)_{ij} = \scp{\mBasis_i}{\mBasis_j}_{\Omega_n}\) the associated Gram matrix.
    Define
    \[
        \rho_\Sigma^2 \coloneq \opnorm{\Omega_n^{\frac12}\Sigma \Omega_n^{\frac12}}_2
        \overset{\mathrm{Lem.~\ref{lem: bound on operator norm of Omega Sigma}}}\le \opnorm{\Omega_n\Sigma}_2 \le \opnorm{\Omega_n}_2 \opnorm{\Sigma}_2.
    \]
    Then for \(\delta \in (0,1)\) we have the following bounds on the stochastic error:
    \begin{center}
    \makebox[\textwidth][c]{
    \begin{NiceTabular}{l m{5.65cm} m{5.8cm}}[tabularnote={
        \footnotesize
        \(C_\alpha>0\) is a constant that only depends on \(\alpha\) and \(C_{p,\real}=\frac{14.7p}{\ln p}\), \(C_{p,\complex}=\sqrt2\,C_{p,\real}\).
    }]
        \toprule
        & Assumption (decreasing strength \(\downarrow\)) & With probability at least \(1-\delta\)\newline \(\norm{\regOp(\noise)}_\mu \le \dots\)
        \\
        \midrule
        \newtag{\normalfont{(i)}}{it: (L2 concentration) subgaussian}\tabularnote{\footnotesize Assume \(\field=\real\).\label{footnote: K=R, L2}}\!\!
        & \(\noiseIndep_i \sim \subG{\sigma^2}\)
        & \(\sigma\frac{\rho_\Sigma}{\sqrt{n}}\Bigl(\sqrt{\tr(\Gram_\mu^\dagger)} + \sqrt{2\opnorm{\Gram_\mu^\dagger}_2 \ln(\tfrac1\delta)}\Bigr)\)
        \\[1ex]
        \newtag{\normalfont{(ii)}}{it: (L2 concentration) sub-Weibull}\ref*{footnote: K=R, L2}\!\!
        & \(\noiseIndep_i \sim \subWeibull{\alpha}\), \(\norm{\noiseIndep_i}_{\psi_\alpha}\le K\), \(\alpha\le 2\) 
        & \(C_\alpha K \frac{\rho_\Sigma}{\sqrt{n}}\Bigl(\sqrt{\tr(\Gram_\mu^\dagger)} + \sqrt{\opnorm{\Gram_\mu^\dagger}_2} \ln(\frac2\delta)^{\frac1\alpha}\Bigr)\)
        \\[1ex]
        \newtag{\normalfont{(iii)}}{it: (L2 concentration) finite p-th moment}
        & \(\E{\abs{\noiseIndep_i}^2}\le \sigma^2\) and \(\E{\abs{\noiseIndep_i}^p} \le \kappa^p\), \(p> 2\)
        & \(C_{p,\field} \max\set{\kappa, \sigma}\frac{\rho_\Sigma}{\sqrt{n}}\bigl(\frac{1}{\delta}\bigr)^{\frac1p}\sqrt{\tr(\Gram_\mu^\dagger)}\)
        \\[1ex]
        \newtag{\normalfont{(iv)}}{it: (L2 concentration) finite second moment}
        & \(\E{\abs{\noiseIndep_i}^2} \le \sigma^2\)
        & \(\sigma\rho_\Sigma\sqrt{\tfrac{1}{n\delta}\tr(\Gram_\mu^\dagger)}\).
        \\
        \bottomrule
    \end{NiceTabular}
    }
    \end{center}
    Moreover, the following moment bounds hold
    \begin{align}
        \label{eq: (L2 concentration) p-th moment}
        \mathrm{\ref{it: (L2 concentration) finite p-th moment}}
        &\quad\Rightarrow \qquad
        \E{\norm{\regOp(\noise)}_\mu^p}
        \le C_{p,\field}^p \max\set{\kappa^p, \sigma^p}\rho_\Sigma^p\bigl(\tfrac1n\tr(\Gram_\mu^\dagger)\bigr)^{\frac{p}2},
        \\
        \mathrm{\ref{it: (L2 concentration) finite second moment}}
        &\quad\Rightarrow \qquad
        \E{\norm{\regOp(\noise)}_\mu^2}
        \le \sigma^2 \rho_\Sigma^2 \frac{\tr(\Gram_\mu^\dagger)}n.
    \end{align}
    Finally, note that \(\tr(\Gram_\mu^\dagger) \le m \opnorm{\Gram_\mu^\dagger}_2\).
\end{theorem}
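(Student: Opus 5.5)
The plan is to reduce $\norm{\regOp(\noise)}_\mu$ to the Euclidean norm of a linear image of the independent innovations $\noiseIndep$, and then apply standard concentration results for norms of such vectors.

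\textbf{Reduction to a vector norm.} Since $\mBasis_1,\dots,\mBasis_m$ is $\mu$-orthonormal, any $\model=\mBasisVec^\transpose c$ satisfies $\norm{\model}_\mu=\abs{c}_2$. By Theorem~\ref{thm: least squares solution}, \eqref{eq: definition least squares reg op} gives
\[
    \norm{\regOp(\noise)}_\mu=\bigl|\tfrac1n\Gram_\mu^\dagger\modelVander^\conT\Omega_n A\noiseIndep\bigr|_2 = \abs{B\noiseIndep}_2,
    \qquad B\coloneq \tfrac1n\Gram_\mu^\dagger\modelVander^\conT\Omega_n A .
\]
The Gram matrix in the statement carries the indices transposed relative to the convention of Theorem~\ref{thm: least squares solution}, i.e.\ it is a complex conjugate. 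I will handle this by checking which convention makes $\Gram_\mu=\tfrac1n\modelVander^\conT\Omega_n\modelVander$. Since traces and spectral norms are invariant under conjugation, this does not affect the bounds.

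\textbf{Key matrix identity.} I will show
\[
    BB^\conT=\tfrac1{n^2}\Gram_\mu^\dagger\modelVander^\conT\Omega_n^{\frac12}\bigl(\Omega_n^{\frac12}\Sigma\Omega_n^{\frac12}\bigr)\Omega_n^{\frac12}\modelVander\Gram_\mu^\dagger\preceq\tfrac{\rho_\Sigma^2}{n}\Gram_\mu^\dagger\Gram_\mu\Gram_\mu^\dagger=\tfrac{\rho_\Sigma^2}{n}\Gram_\mu^\dagger .
\]
This uses $\Omega_n^{\frac12}\Sigma\Omega_n^{\frac12}\preceq\rho_\Sigma^2\identity$ together with $\tfrac1n\modelVander^\conT\Omega_n\modelVander=\Gram_\mu$. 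Consequently
\[
    \norm{B}_F^2=\tr(BB^\conT)\le\tfrac{\rho_\Sigma^2}{n}\tr(\Gram_\mu^\dagger)
    \qquad\text{and}\qquad
    \opnorm{B}_2^2\le\tfrac{\rho_\Sigma^2}{n}\opnorm{\Gram_\mu^\dagger}_2 .
\]
The final remark $\tr(\Gram_\mu^\dagger)\le m\opnorm{\Gram_\mu^\dagger}_2$ is immediate, since there are at most $m$ eigenvalues.

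\textbf{Cases.}
\begin{itemize}
    \item[(iv)] Independence and centering give $\E{\abs{B\noiseIndep}_2^2}=\sum_i\E{\abs{\noiseIndep_i}^2}\norm{B e_i}^2\le\sigma^2\norm{B}_F^2$. This is the second moment bound, and Markov's inequality then yields the tail bound.
    \item[(i)] The function $\noiseIndep\mapsto\abs{B\noiseIndep}_2$ is $\opnorm{B}_2$-Lipschitz. I will use the sub-Gaussian quadratic-form/norm concentration (Hsu--Kakade--Zhang type) $\abs{B\noiseIndep}_2\le\sigma(\norm{B}_F+\opnorm{B}_2\sqrt{2\ln(1/\delta)})$. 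Since the $\noiseIndep_i$ are only sub-Gaussian rather than Gaussian, Lipschitz concentration does not apply directly. I therefore plan to use the Hsu--Kakade--Zhang bound for $\abs{B\noiseIndep}_2^2$ and take square roots, checking that the constants collapse to the stated form.
    \item[(ii)] I will invoke a sub-Weibull Hanson--Wright type inequality (e.g.\ Götze--Sambale--Sinulis or Kuchibhotla--Chakrabortty) for $\abs{B\noiseIndep}_2$, giving $C_\alpha K(\norm{B}_F+\opnorm{B}_2\ln(2/\delta)^{1/\alpha})$ with unoptimized constants.
    \item[(iii)] A Rosenthal/Marcinkiewicz--Zygmund inequality with the constant $C_{p,\real}=14.7p/\ln p$, presumably the same one used for Lemma~\ref{lem: bounds on beta} in Theorem~\ref{thm: pointwise concentration}, applied to the vector-valued sum $\sum_i\noiseIndep_iBe_i$ in Hilbert space, yields $\E{\abs{B\noiseIndep}_2^p}\le C_{p}^p\max\{\kappa,\sigma\}^p\norm{B}_F^p$. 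Markov's inequality at power $p$ then gives the tail bound. The complex case follows by splitting into real and imaginary parts, which costs the factor $\sqrt2$.
\end{itemize}

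\textbf{Main obstacle.} I expect case (iii) to be the hardest: obtaining a Hilbert-space-valued Rosenthal bound with the explicit constant that depends only on the Frobenius norm. Using $\max\{\kappa,\sigma\}$ with $\kappa\ge$ the second-moment scale, the Rosenthal sum $\bigl(\sum_i\norm{Be_i}^p\kappa^p\bigr)^{1/p}\le\kappa\norm{B}_F$ is absorbed into the Frobenius term. I will try to transfer the paper's scalar constant via a symmetrization plus Khintchine--Kahane argument in $\ell^2$.
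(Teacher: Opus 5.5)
The reduction $\norm{\regOp(\noise)}_\mu=\abs{B\noiseIndep}_2$, the domination $BB^\conT\preceq\frac{\rho_\Sigma^2}{n}\Gram_\mu^\dagger$, and cases (i), (ii), (iv) are essentially the paper's proof. Those cases use Hsu--Kakade--Zhang together with $\tr(Q^2)\le\tr(Q)\opnorm{Q}_2$ and completing the square, a sub-Weibull Hanson--Wright inequality, and Markov's inequality. In (ii) you still have to bound the centering term $\sum_i\E{\noiseIndep_i^2}Q_{ii}\lesssim K^2\tr(Q)$ via a second-moment bound for sub-Weibull variables. You also need a Hanson--Wright version valid up to $\alpha=2$. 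Both are routine.

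The gap is in (iii). The constant $C_{p,\real}=14.7p/\ln p$ is the Johnson--Schechtman--Zinn constant for \emph{real-valued} sums. There is no off-the-shelf Hilbert-space Rosenthal inequality with that explicit constant. The route you sketch, symmetrization followed by Khintchine--Kahane in $\ell^2$, picks up a symmetrization factor and further $p$-dependent Kahane constants. It therefore would not reproduce the $C_{p,\field}$ in the tail bound or in the moment bound \eqref{eq: (L2 concentration) p-th moment}.

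The missing observation is that no vector-valued inequality is needed. Since $p/2\ge 1$, the triangle inequality in $L^{p/2}(\pSym)$ reduces everything to the scalar bound \eqref{eq: Rosenthal bound on noise}, which holds for an arbitrary coefficient vector. With $b_k^\conT$ the rows of $B$,
\[
\E{\abs{B\noiseIndep}_2^p}^{2/p}=\norm[\Big]{\sum_{k=1}^m\abs{b_k^\conT\noiseIndep}^2}_{L^{p/2}(\pSym)}\le\sum_{k=1}^m\norm{b_k^\conT\noiseIndep}_{L^p(\pSym)}^2\le C_{p,\field}^2\max\set{\kappa,\sigma}^2\sum_{k=1}^m\abs{b_k}_2^2=C_{p,\field}^2\max\set{\kappa,\sigma}^2\norm{B}_F^2 .
\]
The paper does the continuous analogue. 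It applies the scalar Rosenthal bound pointwise to $\regOp(\noise)(x)=\beta(x)^\conT\noiseIndep$ with $\beta(x)=B_\mu^\conT\overline{\mBasisVec(x)}$. It then uses Minkowski's integral inequality with exponent $p/2$ and $\int\abs{\beta(x)}_2^2\,\mu(dx)=\tr(B_\mu B_\mu^\conT)$, which follows from orthonormality. Either way the complex case needs no separate splitting, since the factor $\sqrt2$ already sits inside the scalar constant $C_{p,\complex}$.
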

\end{mdframed}

The core of the proof of Theorem \ref{thm: L2 concentration} is the observation
that \(\norm{\regOp(\noise)}_\mu^2\) may be expressed as a quadratic form, see \eqref{eq: parseval identity}.
This leads to a sharper bound on the \(L^2\)-norm using the trace and operator norm
of the matrix defining the quadratic form (Thm.~\ref{thm: L2 concentration sharper}).
This trace and operator norm may in turn be bounded using the term
\(\rho_\Sigma = \opnorm{\Omega_n^{\frac12} \Sigma \Omega_n^{\frac12}}_2\) and the Gram matrix
(Lemma \ref{lem: (L2-noise bound) trace and op-norm bounds}). The individual cases
are simply applications of available concentration inequalities for quadratic forms
in the literature.

\begin{remark}[Optimizing \(p\) in the finite \(p\)-th moment case]
    Even if \(p\) moments are available, it may be beneficial to use a smaller \(p\) to get
    concentration bounds from the Markov inequality using the moments in \eqref{eq: (L2 concentration) p-th moment}.
    To see this observe that \(C_{p, \field} \propto p\) up to
    logarithmic factors. The \(L^p\)-norm bound \(\kappa\)
    may also grow with \(p\) depending on the distribution of the noise.
    For example in the sub-Weibull case, \(\kappa = \norm{\noiseIndep_i}_{L^p(\pSym)} \le K p^{\frac1\alpha}\) for some constant \(K\) \citep[Thm.~2.1]{vladimirovaSubWeibullDistributionsGeneralizing2020}.
    Collecting all the \(p\)-dependent terms in the bound we therefore have
    \[
        C_{p,\field} \max\set{\kappa, \sigma} \delta^{-\frac1p}
        \lesssim p^{1 + \frac1\alpha} \delta^{-\frac1p}.
    \]
    Optimizing over \(p\) yields \(p_* \asymp \frac{\log(\frac1\delta)}{1+\frac1\alpha}\),
    which grows only logarithmically with \(\frac1\delta\).
\end{remark}
 	\section{Strategies for regression design}
\label{sec: strategies}

This section is concerned with strategies for regression design that minimize
the bounds on the approximation and stochastic error that we derived in the previous sections.
Any such strategy needs to start by setting a goal: What type of bound is desired?
An \(L^2(\mu)\)-bound, a weighted sup-norm bound or simply a pointwise bound at
a specific \(x\in \domain\)?

\begin{enumerate}
    \item \textbf{\(L^2(\mu)\)-bounds:} The stochastic error bound of Theorem \ref{thm: L2 concentration}
    contains two terms that are influenced by the design decision, specifically
    \(\opnorm{\Omega_n^{\frac12}\Sigma\Omega_n^{\frac1{2}}}_2\)
    and \(\opnorm{\Gram_\mu^\dagger}_2\).
    Minimizing the former encourages \(\Omega_n\) to be proportional to \(\Sigma^{-1}\),
    but the structure of the noise \(\Sigma\) both heavily depends on the
    problem and is also typically unknown in practice. So we are going to ignore
    this term and focus on the Gram term \(\opnorm{\Gram_\mu^\dagger}_2\).  This
    term also appears in the mixed norm approximation error bound of Corollary
    \ref{cor: mixed norm bound for p=2}. And in the discussion following that
    corollary we explained how the other terms in that bound may be simplified
    such that everything boils down to control of the Gram term.
    We will see later that the choice of the weight function can help
    control this Gram term.

    \item \textbf{Weighted sup-norm bounds and pointwise bounds:}
    The Gram term \(\opnorm{\Gram_*^{-1}}_2\) may also appear in the pointwise
    or uniform stochastic error bound (Theorem \ref{thm: pointwise concentration},
    Cor.~\ref{cor: uniform noise bound}) if a leverage function other than
    \(\lever_\modelSpace^{\Omega_n}\) is used. This combination of the Gram
    term and the leverage function may also be used
    to bound the approximation error (Theorem \ref{thm: leverage function bound}).
    Tight approximation error bounds, however, require tight bounds on the Lebesgue function
    (Theorem \ref{thm: optimal
    bias bounds in the weighted sup-norm}) which may be represented using the
    reproducing kernel (Theorem \ref{thm: reproducing kernel representation}).
    Since there are multiple strategies to control the uniform error, we
    provide an overview in Figure~\ref{fig: flowchart uniform error bounds}.
\end{enumerate}
\begin{figure}
    \makebox[\textwidth][c]{
    \begin{tikzpicture}[
    >=Stealth,
    node distance=8mm and 12mm,
    every node/.style={font=\small},
    box/.style={
        draw,
        rounded corners,
        align=center,
        inner sep=1.5mm,
        minimum height=9mm,
        text width=20mm
    },
    noise/.style={box, fill=blue!7},
    bias/.style={box, fill=green!10},
    rand/.style={fill=orange!14, rounded corners, text=orange!70!black, draw=orange!80!black, inner sep=1.5mm},
    quad/.style={box, fill=violet!10!white, text=violet!70!black, draw=violet!70!black},
    result/.style={box, fill=red!10, align=center, text width=35mm, font=\bfseries\small},
    aux/.style={box, fill=gray!10, align=center},
    line/.style={->, thick, fill=none},
    lab/.style={midway,inner sep=1mm, font=\scriptsize, text width=20mm, thin}
]

\node[result] (goal) {Uniform error bound};

\node[noise, above left=5mm and -4mm of goal, anchor=south, font=\bfseries\small, text width=21mm] (noise) {Stochastic error};
\draw[line] ([xshift=5mm]noise.south -| goal.north west) -- (goal);

\node[above right=6.5mm and -2mm of goal, minimum height=6mm, anchor=south, font=\bfseries\small] (bias) {Approx. error:};
\node[bias, right=0mm of bias, minimum height=6mm] (rel-error) {Relative error};
\node[bias, right=5mm of rel-error, text width=37mm,  minimum height=6mm] (metric-projection) {Best approximation error};
\node[font=\bfseries\small] at ($(rel-error.east)!0.5!(metric-projection.west)$) {\(\times\)};

\node[below=4mm of metric-projection] (metric-proj-ref) {Section \ref{sec: metric projection error}};
\draw[line] (metric-proj-ref.north) -- (metric-projection.south);

\begin{scope}[on background layer]
  \node[
    bias,
    draw,
    rounded corners,
    fit=(metric-projection)(rel-error)(bias),
    inner sep=4pt,
  ] (bias-group) {};
\end{scope}

\node[font=\bfseries\small] (sum) at ($(noise.east)!0.5!(bias-group.west)$) {\(+\)};

\draw[line] ([xshift=-5mm]bias-group.south -| goal.north east) -- (goal);
\node[lab, below=-8.5mm of sum, align=center] {Cor.~\ref{cor: bias noise decomposition}};

\node[noise, xshift=7mm, above=21mm of noise, text width=41mm, align=left]
(pointwise) {
    Pointwise concentration, depends on noise assumption:
    \begin{enumerate}[label=(\roman*)]
        \item Finite second moment
        \item Sub-gaussian
        \item Sub-exponential
        \item Sub-Weibull
        \item Finite \(p\)-th moment
    \end{enumerate}
};

\draw[line] ([xshift=-2mm]noise.north |- pointwise.south)
-- node[noise, lab, pos=0.55, xshift=-4.5mm, text width=15mm] (pt-to-uniform) {Pointwise to uniform Lem.~\ref{lem: pointwise to uniform conversion}} 
([xshift=-2mm]noise.north);

\node[bias, xshift=-10.5mm, above=6.5mm of rel-error, text width=45mm] (lebesgue) {Lebesgue function/constant};
\draw[line] (rel-error.north |- lebesgue.south) -- node[lab, pos=0.4, left=0.5mm, text width=11mm] {Thm.~\ref{thm: optimal bias bounds in the weighted sup-norm}} (rel-error.north);

\draw[line] (lebesgue)
-- node[lab, pos=0.4, text width=13mm] {Lebesgue constant of \ul{some} regression}
(pt-to-uniform);

\node[bias, xshift=3mm, above=7mm of lebesgue, text width=25mm] (reprK) {Reproducing kernel \(\reprK^{\Omega_n}_\modelSpace(x,y)\)};
\draw[line] ([xshift=-3mm]reprK.south) -- node[lab, pos=0.5, left=1mm, text width=18mm, align=right] {Representation Thm.~\ref{thm: reproducing kernel representation}} ([xshift=-3mm]reprK.south |- lebesgue.north);

\node[bias, xshift=3mm, above=7mm of reprK, text width=10mm] (lever-omega) {\(\lever_\modelSpace^{\Omega_n}(x)\)};
\node[bias, right=6mm of lever-omega, text width=23mm] (lever-mu) {\(\lever_\modelSpace^\mu(x)\sqrt{\opnorm{\Gram_\mu^{-1}}}\)};
\node[font=\bfseries\small] at ($(lever-omega.east)!0.5!(lever-mu.west)$) {\(\le\)};
\node[font=\bfseries\small,above=1mm of lever-omega.north west, anchor=south west, text width=30mm] (lever) {Leverage function};

\draw[line] (lever-omega.south |- reprK.north) -- node[lab, pos=0.3, left=0.5mm, text width=11mm]{Def.~\ref{def: leverage function}} ([yshift=2mm]lever-omega.south);

\begin{scope}[on background layer]
  \node[
    bias,
    draw,
    rounded corners,
    fit=(lever-omega)(lever-mu)(lever),
    inner sep=4pt,
  ] (lever-wrapper) {};
\end{scope}

\draw[line] ([xshift=-5mm] lebesgue.east |- lever-wrapper.south)
  -- node[lab,pos=0.85,left=1mm, align=right, text width=13mm]{Bound Thm.~\ref{thm: leverage function bound}}
([xshift=-5mm] lebesgue.east |- lebesgue.north);

\draw[line] (lever-wrapper.west) -- node[lab, below=0.5mm, pos=0.45, align=left,text width=5.7mm] {Thm.\ \ref{thm: pointwise concentration}} (lever-wrapper.west -| pointwise.east);

\node[quad, inner sep=1mm, font=\scriptsize, above right=0mm and 7mm of lever-mu, text width=26mm] (exact-quadrature) {\textbf{Exact quadrature} \(\Rightarrow\Gram_\mu=\identity\) (Cor.~\ref{cor: exact quadrature})};
\draw[line] (exact-quadrature) -- ([xshift=-2mm, yshift=0mm]lever-mu.east);

\node[
  aux, xshift=3mm, below=7mm of lever-mu, text width=24mm
] (reprK-mu) {Reproducing kernel \(\reprK^\mu_\modelSpace(x,y)\)};
\draw[line] ([xshift=-6mm] lever-mu.south |- reprK-mu.north) 
-- node[lab, pos=0.3, right=0.5mm, text width=11mm]{Def.~\ref{def: leverage function}}
([xshift=-6mm, yshift=2mm] lever-mu.south);

\node[aux, below right=6.5mm and 0mm of reprK-mu, text width=28mm] (lebesgue-mu) {Lebesgue function of \(L^2(\mu)\)-projection};
\draw[line] ([xshift=-2mm]reprK-mu.south east)
-- node[lab, left=-1mm, text width=12mm]{Def.~\ref{def: lebesgue function and constant of L2 projection}}
([xshift=2mm]lebesgue-mu.north west);

\node[rand, font=\bfseries\small, right=5mm of reprK-mu, text width=20mm] {Randomized regression (Def.~\ref{def: randomized regression})};

\node[
  rand, inner sep=1mm, font=\scriptsize, right=7mm of lever-mu, text width=26mm, yshift=-2mm, minimum height=9mm
] (gram-concentration)
{Concentration of \(\opnorm{\Gram_\mu^{-1}}_2\) (Lem.~\ref{lem: gram concentration})};

\draw[rand, line] (gram-concentration) -- ([xshift=-2mm, yshift=-1mm]lever-mu.east);

\draw[rand, line] ([yshift=2mm]lebesgue.east -| lebesgue-mu.west)
-- node[rand, lab, pos=0.48, text width=18mm, below=1mm] {Concentration Thm.~\ref{thm: lebesgue constant concentration}}
([yshift=2mm]lebesgue.east);

\end{tikzpicture}     }
\caption{Routes to uniform error bounds for regression. Colors distinguish the
stochastic error (blue) from the approximation error (green) and the strategies to control them in violet and yellow.}
\label{fig: flowchart uniform error bounds}
\end{figure}

In both cases, the Gram matrix is a central object to control. While
one could technically avoid it for the pointwise and uniform bounds,
the leverage function \(\lever_\modelSpace^\mu\) of a measure \(\mu\) is
typically easier to control than the leverage function 
\(\lever_{\modelSpace}^{\Omega_n}\) of a specific regression design.
Since the leverage function is a simpler object, it is generally easier to find
a regression design that has approximately constant weighted leverage and then
bound the Lebesgue function for this specific design, rather than directly
optimizing the Lebesgue function over designs.

Generally, existing regression design strategies boil down to the following two steps:
\begin{enumerate}[noitemsep]
    \item Select a measure \(\mu\) where the \(L^2(\mu)\)-projection
    \(\linProj_\mu\) has good approximation properties. If the error
    is measured by the \(L^2\) norm, then the choice of measure \(\mu\) is obvious.
    In the case of the weighted sup-norm, which measure \(\mu\) to choose
    becomes an optimization problem. The quantity to optimize will be the Lebesgue
    function/constant of projection (cf.~Def.~\ref{def: lebesgue function and constant of L2 projection} and Section~\ref{sec: approximation theory of Lp projection}).

    \item Choose evaluation points \(x_1,\dots, x_n\) and \(\Omega_n\) such that
    \(\scp{\cdot}{\cdot}_{\Omega_n}\) is approximately equal to
    \(\scp{\cdot}{\cdot}_\mu\) on the model space \(\modelSpace\), which
    essentially ensures that the performance of the regression projection
    \(\linProj\) is close to that of the \(L^2(\mu)\)-projection
    \(\linProj_\mu\).
\end{enumerate}

In the following we will see that this similarity of inner products
is directly related to symmetric spectral bounds of the Gram matrix.

\begin{theorem}[Symmetric spectral bounds]
    \label{thm: approximately identity gram matrix}
    Let \(\scp{\cdot}{\cdot}_*\) be an inner product on the model space \(\modelSpace\), \(\mBasis_1, \dots, \mBasis_m\)
    an orthonormal basis of \(\modelSpace\) with respect to this inner product \(\scp{\cdot}{\cdot}_*\)
    and \(\Gram_*\) the associated Gram matrix with entries \((\Gram_*)_{ij} = \scp{\mBasis_j}{\mBasis_i}_{\Omega_n}\).
    For a given \(\epsilon \in (0, 1)\) the following are equivalent:
    \begin{enumerate}[label=(\roman*)]
        \item\label{it: approximately identity Gram matrix}
        \textbf{Approximately identity Gram matrix}
        \(\opnorm{\Gram_* - \identity}_2 \le \epsilon\),
        \item\label{it: approximately equal inner product}
        \textbf{Approximately equal inner product}
        on the model space \(\modelSpace\), that is
        \[
            \abs{\scp{\model}{\tilde\model}_{\Omega_n} - \scp{\model}{\tilde\model}_*}
            \le \epsilon \norm{\model}_* \norm{\tilde\model}_*
            \qquad \forall \model,\tilde\model \in \modelSpace,
        \]
        \item\label{it: approximately equal norms}
        \textbf{Approximately equal norms}
        on the model space \(\modelSpace\), that is
        \[
            \abs[\big]{\norm{\model}_{\Omega_n}^2 - \norm{\model}_*^2} \le \epsilon \norm{\model}_*^2
            \qquad \forall\model \in \modelSpace,
        \]
        or equivalently
        \(
            (1-\epsilon) \norm{\model}_*^2 \le \norm{\model}_{\Omega_n}^2 \le (1+\epsilon) \norm{\model}_*^2
        \) for all \(\model \in \modelSpace\).
    \end{enumerate}
\end{theorem}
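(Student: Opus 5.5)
The plan is to pass everything through coordinates in the orthonormal basis, so that all three statements become statements about the Hermitian matrix \(\Gram_*-\identity\). For \(\model=\sum_k a_k\mBasis_k\) and \(\tilde\model=\sum_l b_l\mBasis_l\), sesquilinearity of the inner product together with orthonormality gives \(\scp{\model}{\tilde\model}_* = b^\conT a\). Likewise, \(\scp{\model}{\tilde\model}_{\Omega_n}=\sum_{k,l} a_k\overline{b_l}\scp{\mBasis_k}{\mBasis_l}_{\Omega_n} = b^\conT \Gram_* a\), because \((\Gram_*)_{lk}=\scp{\mBasis_k}{\mBasis_l}_{\Omega_n}\). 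Since \(a\mapsto\sum_k a_k\mBasis_k\) is an isometric isomorphism \((\field^m,\abs{\cdot}_2)\to(\modelSpace,\norm{\cdot}_*)\), every statement over \(\model,\tilde\model\in\modelSpace\) becomes a statement over \(a,b\in\field^m\). Write \(D\coloneq\Gram_*-\identity\), which is Hermitian because \(\Gram_*\) is.

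With these coordinates, (ii) reads \(\abs{b^\conT D a}\le\epsilon\abs{a}_2\abs{b}_2\) for all \(a,b\), and (iii) reads \(\abs{a^\conT D a}\le\epsilon\abs{a}_2^2\) for all \(a\). The equivalence (i)\(\Leftrightarrow\)(ii) is the variational characterization of the spectral norm, \(\opnorm{D}_2=\sup_{\abs{a}_2=\abs{b}_2=1}\abs{b^\conT D a}\). The bound \(\le\) follows from Cauchy–Schwarz, and equality is attained by taking \(b=Da/\abs{Da}_2\). The implication (ii)\(\Rightarrow\)(iii) is the special case \(b=a\).

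The only step with content is (iii)\(\Rightarrow\)(i), and I expect it to be the main obstacle, because here Hermitian symmetry is genuinely needed. By the spectral theorem for the Hermitian matrix \(D\), \(\opnorm{D}_2=\max_i\abs{\lambda_i(D)}=\sup_{\abs{a}_2=1}\abs{a^\conT D a}\), and the last equality is obtained by taking \(a\) to be an eigenvector of an extremal eigenvalue. Hence (iii) gives \(\opnorm{D}_2\le\epsilon\). As an alternative that avoids eigenvalues, one could use polarization in \(\field\). In the real case this is \(4\Re\, b^\conT D a = q(a+b)-q(a-b)\) with \(q(a)=a^\conT D a\), which yields \(\abs{\Re\, b^\conT D a}\le\epsilon\) for unit vectors, and rotating \(b\) by a phase removes the real part. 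Either route requires that \(D\) be Hermitian, which holds because \(\Gram_*\) is a Gram matrix of a positive semi-definite form.

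Finally, the equivalent two-sided form of (iii), \((1-\epsilon)\norm{\model}_*^2\le\norm{\model}_{\Omega_n}^2\le(1+\epsilon)\norm{\model}_*^2\), is just a rewriting of the absolute value inequality. The assumption \(\epsilon\in(0,1)\) is not needed for the equivalences. It only ensures, for later use, that \(\norm{\cdot}_{\Omega_n}\) is then a norm on \(\modelSpace\), consistent with Lemma~\ref{lem: gram bound equivalence} with \(\gamma=1/(1-\epsilon)\).
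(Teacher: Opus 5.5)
Your proof is correct and follows essentially the same route as the paper: coordinates in the \(\scp{\cdot}{\cdot}_*\)-orthonormal basis, Cauchy--Schwarz for (i)\(\Rightarrow\)(ii), the specialization \(\tilde\model=\model\) for (ii)\(\Rightarrow\)(iii), and, for (iii)\(\Rightarrow\)(i), the fact that the Hermitian matrix \(\Gram_*-\identity\) has spectral norm \(\sup_{\abs{a}_2=1}\abs{a^\conT(\Gram_*-\identity)a}\). Your polarization variant and your proof of the converse (ii)\(\Rightarrow\)(i) are both valid, but neither is needed to close the cycle of implications.
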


Observe that the equivalent properties \ref{it: approximately equal inner product}
and \ref{it: approximately equal norms} show that \ref{it: approximately identity Gram matrix}
is not basis dependent despite the definition of the Gram matrix \(\Gram_*\).

We will see in the following sections how to obtain such symmetric bounds.
But before let us convince ourselves that this allows for
bounds on the inverse:

\begin{lemma}[Bounds on the inverse Gram matrix]
    \label{lem: bounds on inverse gram matrix}
    With the same notation as in Theorem \ref{thm: approximately identity gram matrix}, if \(\opnorm{\Gram_* - \identity}_2 \le \epsilon\) for some \(\epsilon \in [0, 1)\), then
    \[
        \opnorm{\Gram_*^{-1}}_2 \le \tfrac1{1-\epsilon}
        \qquad \text{and} \qquad
        \opnorm{\Gram_*^{-1} - \identity}_2 \le \tfrac{\epsilon}{1-\epsilon}.
    \]
\end{lemma}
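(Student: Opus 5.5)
The plan is a Neumann series argument. The Gram matrix \(\Gram_*\) is Hermitian, since its entries are \(\scp{\mBasis_j}{\mBasis_i}_{\Omega_n}\) for the Hermitian form \(\scp{\cdot}{\cdot}_{\Omega_n}\). So everything reduces to the spectrum of \(\Gram_*\).

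\emph{Step 1 (eigenvalue bounds).} The assumption \(\opnorm{\Gram_* - \identity}_2 \le \epsilon\) means every eigenvalue \(\lambda\) of the Hermitian matrix \(\Gram_*\) satisfies \(\abs{\lambda - 1} \le \epsilon\). Hence \(\lambda \in [1-\epsilon, 1+\epsilon]\). Since \(\epsilon<1\), all eigenvalues are at least \(1-\epsilon>0\), so \(\Gram_*\) is invertible. Equivalently, one can read this off Theorem~\ref{thm: approximately identity gram matrix}\ref{it: approximately equal norms}: the lower bound \((1-\epsilon)\norm{\model}_*^2 \le \norm{\model}_{\Omega_n}^2\) gives \(\identity \precsim \frac1{1-\epsilon}\Gram_*\). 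Lemma~\ref{lem: gram bound equivalence} with \(\gamma = \frac1{1-\epsilon}\) then yields the first claim \(\opnorm{\Gram_*^{-1}}_2 \le \frac1{1-\epsilon}\).

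\emph{Step 2 (distance of the inverse to the identity).} Diagonalize \(\Gram_* = U D U^\conT\) with \(U\) unitary. Then \(\Gram_*^{-1} - \identity = U(D^{-1} - \identity)U^\conT\), whose spectral norm is
\[
\max_\lambda \abs{\lambda^{-1} - 1} = \max_\lambda \frac{\abs{1-\lambda}}{\lambda}.
\]
Over \(\lambda\in[1-\epsilon,1+\epsilon]\), the numerator is at most \(\epsilon\) and the denominator is at least \(1-\epsilon\). This gives the bound \(\frac{\epsilon}{1-\epsilon}\).

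Alternatively, write \(\Gram_*^{-1} - \identity = \Gram_*^{-1}(\identity - \Gram_*)\) and apply submultiplicativity together with Step 1. This also gives \(\frac{\epsilon}{1-\epsilon}\) and does not need the Hermitian structure.

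\emph{Step 3 (the case \(\epsilon = 0\)).} This case is trivial: \(\Gram_* = \identity\).

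There is no real obstacle. The only point needing care is to note explicitly that \(\Gram_*\) is Hermitian, so that the spectral norm of \(\Gram_*-\identity\) controls its eigenvalues. The submultiplicativity route avoids even this.
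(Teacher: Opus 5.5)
Your proposal is correct and matches the paper's proof. The paper obtains the first bound exactly via your ``equivalently'' route, namely the lower norm bound from Theorem~\ref{thm: approximately identity gram matrix}\ref{it: approximately equal norms} combined with Lemma~\ref{lem: gram bound equivalence} for \(\gamma=\frac1{1-\epsilon}\), and the second via the factorization \(\Gram_*^{-1}-\identity=\Gram_*^{-1}(\identity-\Gram_*)\) with submultiplicativity; your direct eigenvalue computation is an unpacked version of the same fact. The only mismatch is cosmetic: you announce a Neumann series argument but never use one, which is harmless because the Hermitian spectral argument already gives invertibility and both bounds.
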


In the following we will see two approaches to obtain a well conditioned
Gram matrix for weighted least squared regression (WLS).

\subsection{Exact quadrature/cubature}

Why settle for approximately equal inner products when exactly equal
inner products are available? This exact equality is characterized by exact quadrature
and explains why the roots of orthogonal polynomials are often used
for interpolation. We start with this characterization. Then we consider the
feasibility of such exact quadrature and finally provide examples of how exact
quadrature simplifies the approximation and stochastic error bounds we derived
in the previous sections.

\begin{corollary}[Exact quadrature]
    \label{cor: exact quadrature}
    Let \(\mu\) be a finite measure on \(\domain\) inducing an inner product
    \(\scp{\cdot}{\cdot}_\mu\) on \(\modelSpace\) and assume the WLS setting, where \(\Omega_n\) is diagonal with weights \((\Omega_n)_{ii} = \omega_i\)
    inducing the discrete measure \(\mu_n = \frac1n\sum_{i=1}^n \omega_i \delta_{x_i}\).
    Then the following are equivalent
    \begin{enumerate}[label=(\roman*)]
        \item\label{it: identity gram matrix} 
        \textbf{Identity Gram matrix} \(\Gram_\mu = \identity\).
        \item\label{it: equal inner product}
        \textbf{Equal inner product} of \(\mu\) and \(\mu_n\) on the model space \(\modelSpace\), that is
        \[
            \scp{\model}{\tilde\model}_{\mu_n} = \scp{\model}{\tilde\model}_\mu
            \qquad \forall\model,\tilde\model \in \modelSpace.
        \] 
        \item\label{it: exact quadrature}\textbf{Exact quadrature}
        on the product space \(\modelSpace \cdot \overline{\modelSpace} = \Span\set{\model_1 \cdot \overline{\model_2} : \model_1,\model_2 \in \modelSpace}\). That is
        \[
            \frac1n\sum_{i=1}^n \omega_i h(x_i)  \overset{\mathrm{def.}}= \int h(x) d\mu_n(x)= \int h(x) d\mu(x)
            \qquad \forall h\in \modelSpace \cdot \overline{\modelSpace}.
        \]
    \end{enumerate}
\end{corollary}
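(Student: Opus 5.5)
The plan is to treat this as the \(\epsilon\to0\) endpoint of Theorem~\ref{thm: approximately identity gram matrix}, with one extra step that turns the sesquilinear identity into a statement about integrals of products. Throughout, take \(\scp{\cdot}{\cdot}_* = \scp{\cdot}{\cdot}_\mu\), which is an inner product on \(\modelSpace\) by assumption. Let \(\mBasis_1,\dots,\mBasis_m\) be an orthonormal basis for it, so \(\Gram_\mu\) is the associated Gram matrix. In the WLS setting, the definition in \eqref{eq: omega norms} gives
\[
    \scp{f}{g}_{\Omega_n} = \tfrac1n\sum_{i=1}^n \omega_i f(x_i)\overline{g(x_i)} = \int f\,\overline{g}\,d\mu_n = \scp{f}{g}_{\mu_n},
\]
so \((\Gram_\mu)_{kl} = \scp{\mBasis_l}{\mBasis_k}_{\mu_n}\).

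For \ref{it: identity gram matrix}\(\Leftrightarrow\)\ref{it: equal inner product}, the cleanest route is sesquilinearity, since Theorem~\ref{thm: approximately identity gram matrix} is stated only for \(\epsilon\in(0,1)\). Orthonormality gives \(\identity_{kl} = \scp{\mBasis_l}{\mBasis_k}_\mu\), so \(\Gram_\mu=\identity\) says exactly that the two inner products agree on all pairs of basis vectors. Expanding \(\model=\sum_k a_k\mBasis_k\) and \(\tilde\model=\sum_l b_l\mBasis_l\) then gives \(\scp{\model}{\tilde\model}_{\mu_n} = b^\conT \Gram_\mu a\). Likewise \(\scp{\model}{\tilde\model}_\mu = b^\conT a\). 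Equality for all \(a,b\) is equivalent to \(\Gram_\mu=\identity\). As a sanity check, the same conclusion follows from Theorem~\ref{thm: approximately identity gram matrix} by letting \(\epsilon\downarrow0\).

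For \ref{it: equal inner product}\(\Leftrightarrow\)\ref{it: exact quadrature}, note that \(\scp{\model}{\tilde\model}_\nu = \int \model\overline{\tilde\model}\,d\nu\) for both \(\nu=\mu\) and \(\nu=\mu_n\). Hence \ref{it: exact quadrature} applied to \(h=\model\overline{\tilde\model}\) gives \ref{it: equal inner product} directly. Conversely, any \(h\in\modelSpace\cdot\overline{\modelSpace}\) is a finite linear combination \(\sum_j c_j \model_j\overline{\tilde\model_j}\). Both integrals are linear in \(h\), so \ref{it: equal inner product} yields \ref{it: exact quadrature} by summing.

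The only point needing care is well-definedness of the integrals on the span. Finiteness of \(\mu\) together with \(\modelSpace\subseteq L^2(\mu)\) (implicit in \(\scp{\cdot}{\cdot}_\mu\) being an inner product on \(\modelSpace\)) gives \(\model\overline{\tilde\model}\in L^1(\mu)\) by Cauchy--Schwarz. Integration against \(\mu_n\) is a finite sum, so it is always defined. I do not expect any further obstacle; the argument is essentially bookkeeping.
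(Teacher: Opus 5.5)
Your proof is correct and follows the paper's: the paper obtains (i)\(\Leftrightarrow\)(ii) as the \(\epsilon=0\) case of Theorem~\ref{thm: approximately identity gram matrix}, and (ii)\(\Leftrightarrow\)(iii) from the definition of \(\modelSpace\cdot\overline{\modelSpace}\) together with linearity. Your direct expansion \(\scp{\model}{\tilde\model}_{\mu_n} = b^\conT\Gram_\mu a\) is exactly the isometry used in that theorem's proof. It has the small merit of not invoking the theorem outside its stated range \(\epsilon\in(0,1)\).
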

\begin{proof}
    \ref{it: identity gram matrix} \(\Leftrightarrow\) \ref{it: equal inner product}:
    Special case of Theorem \ref{thm: approximately identity gram matrix}.
    \ref{it: equal inner product} \(\Leftrightarrow\) \ref{it: exact quadrature}:
    Definition of \(\modelSpace \cdot \overline{\modelSpace}\).
\end{proof}

\begin{remark}[Leverage and reproducing kernel under exact quadrature]
    \label{rem: (exact quadrature) leverage and reproducing kernel}
    Since the inner products coincide on the model space, an orthonormal basis with
    respect to one is also an orthonormal basis with respect to the other. This also
    implies the reproducing kernel and leverage functions coincide.
\end{remark}

\subsubsection{Feasibility of exact quadrature}

Before we compose error bounds for exact quadrature we
discuss its feasibility.

\begin{theorem}[Feasibility of exact quadrature {[Richter-Tchakaloff]}]
    \label{thm: feasibility of exact quadrature}
    Let \(\norm{\cdot}_\mu\) be a norm on \(\modelSpace\) induced by the measure \(\mu\).
    Let \(n_\mathrm{exact}\) be the minimum number of evaluation points
    needed for exact quadrature of \(\mu\) with non-negative weights to be possible on the product space \(\modelSpace \cdot \overline{\modelSpace}\).
    Then
    \[
        m\coloneq \dim\modelSpace
        \le n_\mathrm{exact}
        \le \dim(\modelSpace \cdot \overline{\modelSpace})
        \le \begin{cases}
            m^2 & \field = \complex
            \\
            \frac{m(m+1)}2 & \field = \real
        \end{cases}.
    \]
\end{theorem}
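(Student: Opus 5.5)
We prove the chain of three inequalities from left to right. The first two are the substantive parts; the last is a dimension count.

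\textbf{Lower bound \(m \le n_\mathrm{exact}\).}
Suppose weights \(\omega_i\ge0\) and points \(x_1,\dots,x_n\) give exact quadrature on \(\modelSpace\cdot\overline{\modelSpace}\). By Corollary~\ref{cor: exact quadrature}, \(\scp{\cdot}{\cdot}_{\mu_n}=\scp{\cdot}{\cdot}_\mu\) on \(\modelSpace\). Since \(\norm{\cdot}_\mu\) is a norm on \(\modelSpace\) by assumption, \(\norm{\cdot}_{\Omega_n}\) is also a norm on \(\modelSpace\). By Theorem~\ref{thm: least squares solution} this makes the Gram matrix invertible. Concretely, \(\Gram=\tfrac1n\modelVander^\conT\Omega_n\modelVander\) is an \(m\times m\) matrix of rank at most \(n\). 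Hence \(m\le n\), and taking the minimum over all such rules gives \(m\le n_\mathrm{exact}\).

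\textbf{Middle bound \(n_\mathrm{exact}\le N\coloneq\dim(\modelSpace\cdot\overline{\modelSpace})\).}
This is Tchakaloff's theorem, which we prove via Carathéodory. Let \(V=\modelSpace\cdot\overline{\modelSpace}\).
\begin{itemize}
\item \(V\) is closed under conjugation, so it has a basis \(h_1,\dots,h_N\) of real-valued functions. For \(\field=\complex\), take real and imaginary parts of \(\model_1\overline{\model_2}\); the space \(V\) is spanned by these. Consequently the real span \(V_\real\) of the \(h_j\) has real dimension \(N\).
\item Exact quadrature on \(V\) is then equivalent to matching the real moment vector \(\mathbf m\coloneq\int H\,d\mu\in\real^N\), where \(H(x)\coloneq(h_1(x),\dots,h_N(x))\).
\item \(\mathbf m\) lies in the closed convex cone generated by \(H(\domain)\). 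Carathéodory's theorem for cones then writes \(\mathbf m\) as a non-negative combination of at most \(N\) vectors \(H(x_i)\).
\end{itemize}

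\textbf{Main obstacle.} The delicate step is showing that \(\mathbf m\) lies in the cone \(C\) generated by \(H(\domain)\) itself, not merely in its closure. The approach I would try first is a separating-hyperplane argument using the relative interior.

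Suppose \(\mathbf m\notin \operatorname{ri} C\). Then there is a linear functional \(a\) with \(a^\transpose H\ge 0\) on \(\domain\) and \(a^\transpose\mathbf m=0\), while \(a^\transpose H\) is not identically zero on \(\domain\). The function \(g=a^\transpose H\) is non-negative and \(\int g\,d\mu=0\), so \(g=0\) \(\mu\)-a.e.

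If the span of \(H(\domain)\) had dimension less than \(N\), some nonzero \(h\in V_\real\) would vanish on \(\domain\). That is impossible since the \(h_j\) are independent functions, so \(H(\domain)\) spans \(\real^N\). We can therefore restrict to the subset \(\domain'\subseteq\domain\) where \(g=0\), which carries the full measure \(\mu\). On \(\domain'\) the functions \(h_j\) span a space of strictly smaller dimension. Induct on this dimension, working with the restricted cone and the span of \(H(\domain')\) in place of \(\real^N\).

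In the final case \(\mathbf m\) lies in the relative interior of the cone generated by \(H(\domain')\). It therefore belongs to the cone itself, and Carathéodory gives at most \(N\) points. If the integral of \(H\) fails to exist, finiteness of \(\mu\) and integrability of \(\model_1\overline{\model_2}\) (which holds because \(\norm{\cdot}_\mu\) is finite on \(\modelSpace\)) settle it.

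\textbf{Dimension count.}
With a basis \(\mBasis_1,\dots,\mBasis_m\) of \(\modelSpace\), the space \(V\) is spanned by the \(m^2\) products \(\mBasis_k\overline{\mBasis_l}\). For \(\field=\real\) we have \(\mBasis_k\mBasis_l=\mBasis_l\mBasis_k\), leaving at most \(\tfrac{m(m+1)}2\) distinct products. This gives the stated upper bounds on \(\dim(\modelSpace\cdot\overline{\modelSpace})\).
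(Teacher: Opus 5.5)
Your proof is correct and follows the same three-part structure as the paper's. Your lower bound is the Gram-rank version of the paper's argument: for \(n<m\) the paper picks \(0\neq\model\in\modelSpace\) in the kernel of \(\eval_X\), so exact quadrature gives the contradiction \(0=\norm{\model}_{\Omega_n}^2=\norm{\model}_\mu^2>0\). The dimension count is identical to the paper's.

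The real difference is the middle inequality. The paper simply cites the Richter--Tchakaloff theorem (Schm\"udgen, Thm.~1.24). You instead reprove it by the standard induction on \(\dim\Span H(\domain')\): a proper supporting hyperplane at relative-boundary points cuts the dimension down, and Carath\'eodory's theorem for cones finishes once \(\mathbf m\in\operatorname{ri}C'\subseteq C'\). This makes the argument self-contained at the cost of length. The citation is shorter and inherits the full generality of the cited theorem.

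Two details each deserve a line. First, the claim \(\mathbf m\in\overline C\) needs its justification. A separating \(a\) with \(a^\transpose H\ge 0\) on \(\domain\) and \(a^\transpose\mathbf m<0\) would contradict \(a^\transpose\mathbf m=\int a^\transpose H\,d\mu\ge 0\). The same identity is what upgrades the supporting functional's \(a^\transpose\mathbf m\le 0\) to equality in the relative-boundary case. Second, finiteness of \(\mu\) is neither assumed in the statement nor needed. Integrability of \(V\) already follows from \(\abs{\model_1\overline{\model_2}}\le\frac12\bigl(\abs{\model_1}^2+\abs{\model_2}^2\bigr)\) and \(\modelSpace\subseteq L^2(\mu)\).
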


In the following we will list a few examples that show that these bounds are tight.
Proofs can be found in the appendix as always.

\begin{example}[\(m= n_\mathrm{exact} = \dim(\modelSpace \cdot \overline{\modelSpace})\)]
    The space of piecewise constant functions on disjoint sets \(A_i\subseteq \domain\)
    with \(0<\mu(A_i)<\infty\) for \(i=1,\dots, m\) given by
    \[
        \modelSpace \coloneq \Span\set{\ind{A_1}, \dots, \ind{A_m}}
    \]
    is an example of a space with \(\modelSpace \cdot \overline{\modelSpace} = \modelSpace\)
    and consequently \(n_\mathrm{exact} = \dim(\modelSpace)\).
\end{example}

\begin{example}[\(m=n_\mathrm{exact}\) and  \(\dim(\modelSpace\cdot\overline{\modelSpace}) = m^2\)]
    \label{ex: exact quadrature for monomials of degree one}
    Let the domain be \(\domain = \complex^m\) and the model space be the space
    of monomials of degree exactly one, that is
    \begin{equation}
        \label{eq: model space of monomials of degree one}    
        \modelSpace \coloneq \Span\set{x\mapsto x^{(i)}: i=1,\dots,m}.
    \end{equation}
    Assume that \(\mu\) induces a norm \(\norm{\cdot}_\mu\) on \(\modelSpace\).
    Then we have \(\dim(\modelSpace\cdot \overline{\modelSpace}) = m^2\) and 
    \(n_\mathrm{exact} = m\).
\end{example}
\begin{example}[\(n_\mathrm{exact} = \dim(\modelSpace\cdot\overline{\modelSpace}) = m^2\)]
    \label{ex: exact quadrature for monomials of degree one, reduced domain}
    We will use the same model space of monomials of degree one as
    in the previous example \eqref{eq: model space of monomials of degree one} but
    with the reduced domain
    \[
        \domain = \set[\big]{e_k, e_k + e_l, e_k + i e_l: k,l=1,\dots,m, k < l} \subseteq \complex^m
    \]
    Let \(\mu\) be a measure with positive weight on each \(x \in \domain\),
    then \(n_\mathrm{exact} = \dim(\modelSpace\cdot\overline{\modelSpace}) = m^2\).
\end{example}

Recent works go beyond the existence of exact quadrature and propose
\textbf{constructive algorithms} to find such quadrature points and weights. \citet[Thm.~3 and 6]{miglioratiStableHighorderRandomized2022}
show that modifying the weights of randomized regression (Section \ref{sec: randomized regression})
can yield exact quadrature with positive weights with high probability. 
While concentration results for approximate quadrature only require \(
    n\succeq \norm{\lever_\modelSpace^\mu}_\weight^2\log(m) \ge m \log(m)
\)
as we will see in
Section \ref{sec: randomized regression}, the ability to strengthen this to exact quadrature
with positive weights requires
\(
    n \succeq m^2
\)
up to logarithmic factors with the assumption of
\(\norm{\lever_\modelSpace^\mu}_\weight^2 = m\)
\citet[Thm.~6]{miglioratiStableHighorderRandomized2022}. Subsequently, \citet{glaubitzConstructionApplicationProvable2023}
proposed a deterministic algorithm to obtain exact quadrature
and also observed that \(n \asymp m^2\) is necessary experimentally.
Given that in the worst case, \(n_\mathrm{exact} = m^2\), it is perhaps not
surprising that general algorithms for exact quadrature require a
quadratic number of evaluation points.

In the following we will discuss special cases that are exceptions to this rule.
We begin with a general characterization of the interpolation case
that becomes relevant if \(n_\mathrm{exact} = m\) as will be the case
for the univariate polynomials.

\begin{prop}[The interpolation case \(n=m\)]
    \label{prop: interpolation case}
    Let \(\modelSpace\) be an \(m\)-dimensional model space,
    \(X=(x_1,\dots, x_n)\in \domain\) be evaluation points and \(\omega_1, \dots, \omega_n \ge 0\)
    the diagonal entries of the weight matrix \(\Omega_n\) of the WLS regression
    operator \(\regOp\).
    Then for \(n=m\) the restriction of the evaluation operator \(\eval_X\) to \(\modelSpace\), denoted by \(\eval_X^\modelSpace\), is bijective and the components of the Lagrange function \(\ell_i\) (Def.~\ref{def: lagrange function}) satisfy
    the interpolation properties
    \[
        \ell_i \overset{\mathrm{def.}}=\regOp(e_i) = (\eval_X^\modelSpace)^{-1}[e_i]
        \qquad \text{and}\qquad
        \ell_i(x_j) = \delta_{ij}.
    \]
    For a measure \(\mu\) on \(\domain\) inducing a positive definite inner product on \(\modelSpace\), the following are further equivalent:
    \begin{enumerate}[label={(\roman*)}, noitemsep]
        \item\label{it: (interpolation) exact quadrature} \textbf{Exact quadrature}, that is \(\scp{\cdot}{\cdot}_\mu = \scp{\cdot}{\cdot}_{\Omega_n}\) on \(\modelSpace\).
        \item\label{it: (interpolation) lagrange} The lagrange functions \(\ell_i\) are orthogonal with respect to \(\scp{\cdot}{\cdot}_\mu\) and \(\norm{\ell_i}_\mu = \sqrt{\frac{\omega_i}{m}}\).
        \item\label{it: (interpolation) kernel, weights} We have
        \[
            \reprK_\modelSpace^\mu(x_i,x_j) = 0 \qquad \text{for all } i\neq j
            \qquad\text{and}\qquad
            \omega_i = \tfrac{m}{\lever_\modelSpace^\mu(x_i)^2}.
        \]
        \item\label{it: (interpolation) orthonormal kernel} The functions \(\mBasis_i \coloneq \sqrt{\frac{\omega_i}{m}}\reprK_\modelSpace^\mu(\cdot, x_i)\)
        form an orthonormal basis of \(\modelSpace\) w.r.t.\ \(\scp{\cdot}{\cdot}_\mu\).
    \end{enumerate}
    Moreover, if any (all) of the above conditions hold, then \(\ell_i = \frac{\omega_i}{m} \reprK_\modelSpace^\mu(\cdot, x_i)\)
\end{prop}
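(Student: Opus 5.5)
The plan is to first use the definition of a regression operator to pin down the interpolation structure. Then I prove the equivalences through the cycle (i)\(\Leftrightarrow\)(ii), (i)\(\Rightarrow\)(iv), (iv)\(\Leftrightarrow\)(iii), (iv)\(\Rightarrow\)(i), with the explicit formula for \(\ell_i\) as a by-product of (i)\(\Rightarrow\)(iv).

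\textbf{Interpolation structure.} Since \(\regOp\) is a regression operator (Def.~\ref{def: regression}), \(\linProj=\regOp\circ\eval_X\) acts as the identity on \(\modelSpace\). Hence \(\eval_X^\modelSpace\) is injective, and because \(n=m=\dim\modelSpace\) it is bijective. Next, \(\eval_X\circ\regOp\) is a projection (Thm.~\ref{thm: least squares solution}) whose image contains \(\eval_X(\modelSpace)=\field^n\), so it is the identity. Therefore \(\regOp=(\eval_X^\modelSpace)^{-1}\), which gives \(\ell_i=\regOp(e_i)\) and \(\ell_i(x_j)=\delta_{ij}\). Here one has to track the conjugation convention in Def.~\ref{def: lagrange function}; I will follow the paper's identification of the \(i\)-th component with \(\regOp(e_i)\).

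I also record that every \(\omega_i>0\). By Thm.~\ref{thm: reproducing kernel representation}, \(\norm{\cdot}_{\Omega_n}\) is a norm on \(\modelSpace\). If some \(\omega_i\) were \(0\), then \(\norm{\ell_i}_{\Omega_n}^2=\frac1m\sum_k\omega_k\abs{\ell_i(x_k)}^2=\frac{\omega_i}{m}=0\), which contradicts \(\ell_i\neq0\).

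\textbf{(i)\(\Leftrightarrow\)(ii).} The functions \(\ell_1,\dots,\ell_m\) form a basis of \(\modelSpace\). Using \(\ell_i(x_k)=\delta_{ik}\),
\[
\scp{\ell_i}{\ell_j}_{\Omega_n}=\tfrac1m\sum_k\omega_k\ell_i(x_k)\overline{\ell_j(x_k)}=\tfrac{\omega_i}{m}\delta_{ij}.
\]
Two inner products agree on \(\modelSpace\) if and only if they agree on all pairs of basis elements. So (i) holds exactly when \(\scp{\ell_i}{\ell_j}_\mu=\frac{\omega_i}{m}\delta_{ij}\), which is (ii).

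\textbf{(i)\(\Rightarrow\)(iv) and the formula for \(\ell_i\).} Under exact quadrature, \(\reprK_\modelSpace^\mu=\reprK_\modelSpace^{\Omega_n}\) by Remark~\ref{rem: (exact quadrature) leverage and reproducing kernel}. The representation \eqref{eq: regression operator reproducing kernel representation}, with \(\Omega_n\) diagonal, then gives \(\ell_i=\frac{\omega_i}{m}\reprK_\modelSpace^\mu(\cdot,x_i)\); this uses Hermitian symmetry of the kernel to move the conjugate. Combining this with (ii), the functions \(\mBasis_i=\sqrt{m/\omega_i}\,\ell_i=\sqrt{\omega_i/m}\,\reprK_\modelSpace^\mu(\cdot,x_i)\) are orthonormal. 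Since there are \(m\) of them, they form an orthonormal basis.

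\textbf{(iv)\(\Leftrightarrow\)(iii).} By the reproducing property,
\[
\scp{\reprK_\modelSpace^\mu(\cdot,x_i)}{\reprK_\modelSpace^\mu(\cdot,x_j)}_\mu=\reprK_\modelSpace^\mu(x_j,x_i).
\]
Hence orthonormality of the \(\mBasis_i\) is equivalent to two conditions: \(\reprK_\modelSpace^\mu(x_i,x_j)=0\) for \(i\neq j\), and \(\frac{\omega_i}{m}\lever_\modelSpace^\mu(x_i)^2=1\). Any \(m\) orthonormal vectors in \(\modelSpace\) automatically span it, so no separate spanning argument is needed.

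\textbf{(iv)\(\Rightarrow\)(i).} Condition (iii) gives
\[
\mBasis_i(x_k)=\sqrt{\omega_i/m}\,\reprK_\modelSpace^\mu(x_k,x_i)=\delta_{ik}\sqrt{m/\omega_i}.
\]
Plugging this in yields \(\scp{\mBasis_i}{\mBasis_j}_{\Omega_n}=\frac1m\omega_i\frac{m}{\omega_i}\delta_{ij}=\delta_{ij}\). So \(\Gram_\mu=\identity\), and Cor.~\ref{cor: exact quadrature} gives (i).

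The step I expect to be the main obstacle is the bookkeeping rather than any single argument. This means getting the conjugation conventions of the Lagrange function and the kernel consistent, and justifying \(\omega_i>0\) so that the normalizations \(\sqrt{m/\omega_i}\) make sense. In (iii) positivity is immediate because \(\omega_i=m/\lever_\modelSpace^\mu(x_i)^2\). In (i) and (ii) it follows from the norm argument above.
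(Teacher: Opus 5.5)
Your proof is correct, but it organizes the equivalences differently from the paper. The paper runs the single cycle (i)\(\Rightarrow\)(ii)\(\Rightarrow\)(iii)\(\Rightarrow\)(iv)\(\Rightarrow\)(i), and its key step is (ii)\(\Rightarrow\)(iii):
\begin{itemize}
\item it expands \(\model=\sum_i\model(x_i)\ell_i\) and uses orthogonality to get \(\scp{\model}{\tfrac{m}{\omega_j}\ell_j}_\mu=\model(x_j)\);
\item uniqueness of the Riesz representer then gives \(\tfrac{m}{\omega_j}\ell_j=\reprK_\modelSpace^\mu(\cdot,x_j)\), which yields (iii) and the formula for \(\ell_i\) at once;
\item (iv)\(\Rightarrow\)(i) is a Parseval expansion in the basis \(\mBasis_i\).
\end{itemize}
Your route differs as follows:
\begin{itemize}
\item you obtain (i)\(\Leftrightarrow\)(ii) in one stroke from the Lagrange basis;
\item you get the formula for \(\ell_i\) and (iv) by combining the kernel coincidence of Remark~\ref{rem: (exact quadrature) leverage and reproducing kernel} with the kernel representation of Theorem~\ref{thm: reproducing kernel representation};
\item you close the loop by computing \(\mBasis_i(x_k)=\delta_{ik}\sqrt{m/\omega_i}\) and checking the \(\Omega_n\)-Gram matrix.
\end{itemize}

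The paper's argument is more self-contained: it derives the kernel formula from orthogonality of the \(\ell_i\) alone, using only the reproducing property. Yours is shorter because it reuses results already proved. It also has two merits the paper lacks. First, it establishes \(\omega_i>0\) a priori from \(\norm{\cdot}_{\Omega_n}\) being a norm on \(\modelSpace\); the paper only extracts this inside (ii)\(\Rightarrow\)(iii), from \(\norm{\ell_i}_\mu^2=\omega_i/m>0\). Second, it flags the conjugation issue, which the paper glosses over when it writes \(\ell_i(x)=\ell(x)^\conT e_i\).

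One minor point: in (iv)\(\Rightarrow\)(i) you do not need Corollary~\ref{cor: exact quadrature}, which is stated for finite \(\mu\). The same ``agree on a basis'' argument you used for (i)\(\Leftrightarrow\)(ii) suffices.
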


\begin{remark}[Uniqueness of weights \(\omega_i\)]
    Since \ref{it: (interpolation) exact quadrature} does not make
    any assumptions about the weights \(\omega_i\ge 0\), the explicit weights in \ref{it: (interpolation) kernel, weights}
    are unique. In other words, if, for a given set of evaluation points \(x_1, \dots, x_m\), exact quadrature is possible
    at all, then the weights \(\omega_i\) \emph{must} be the weights given in \ref{it: (interpolation) kernel, weights}.
\end{remark}

Exact quadrature for \(x_1, \dots, x_m\) is therefore possible iff
\(\reprK_\modelSpace^\mu(x_i,x_j) = 0\) for all \(i\neq j\).

\paragraph*{Exact quadrature for polynomials}

Observe that if \(\modelSpace\) is the space of univariate polynomials up to degree \(k\) denoted
by \(\poly_M\), then \(\overline{\modelSpace} = \modelSpace\) and \(\modelSpace
\cdot \overline{\modelSpace} = \poly_{2M}\). 
The reason that the roots of orthogonal polynomials are such good candidates for
exact quadrature is the following: Let \(p(x) = \prod_{i=1}^n (x-x_i)\) be the polynomial of evaluation nodes,
which is zero at all evaluation points \(x_i\) and
let \(r=2M-n\). Then \(pq\in \poly_{n+r} = \poly_{2M}\) for any \(q\in \poly_{r}\) and therefore
\[
    \scp{p}{q}_\mu = \int p(x) q(x) \,\mu(dx)
    \overset{\substack{\text{exact}\\ \text{quadrature}}}
    = \sum_{i=1}^n \omega_i p(x_i) q(x_i) \overset{q(x_i)=0}{=} 0.
\]
In other words: the node polynomial \(p\) must be orthogonal to \(\poly_{r}\) with respect to \(\scp{\cdot}{\cdot}_\mu\) if we want exact quadrature 
on \(\poly_{2M}\). \citet[Thm.~1.45]{gautschiOrthogonalPolynomialsComputation2004}
essentially shows that this is not only a necessary but also a sufficient condition for exact quadrature.
And since \(r=2M-n\) increases with \(M\), we generally want the node polynomial \(p\)
to be orthogonal to as many polynomials of lower degree as possible. 
Observe that \(m = \dim \poly_M = M+1\) such that for \(n=m\)
we have \(r=2(m-1) -m = m - 2\). This shows exact quadrature for \(n=m\) is
possible if the node polynomial (of degree \(m\)) is orthogonal to all
polynomials of degree \(\poly_{m-2}\). In principle the node polynomial
may therefore be any mixture of the \((m-1)\)-th and \(m\)-th orthogonal polynomial
that has \(m\) roots---the simplest choice are the \(m\) roots of the
\(m\)-th polynomial.

Once the polynomials become multivariate, exact quadrature for \(n = m\)
is typically impossible \citep[Corollary 3.8.8]{dunklOrthogonalPolynomialsSeveral2014}
and \(n>m\) becomes necessary to achieve exact quadrature. 
The study of exact quadrature in the case of multivariate polynomials on the sphere
is referred to as spherical design \citep[e.g.][]{anSurveySphericalDesigns2026}
and an active area of research.

\subsubsection{Uniform bounds subject to exact quadrature}

In the following example you can see how exact quadrature simplifies the uniform
approximation and stochastic error bounds we derived in the previous sections.

\begin{example}[Uniform bounds for exact quadrature]
    \label{ex: uniform bounds for exact quadrature}
    Let \(\regOp\) be a WLS regression operator in the standard regression
    setting (Def.~\ref{def: regression setting}) and assume an identity Gram
    matrix \(\Gram_\mu=\identity\) for some measure \(\mu\) on \(\domain\)
    (exact quadrature).

    \begin{itemize}[noitemsep]
        \item If \(\ones \in \modelSpace\) we have with probability at least \(1-\delta\)
        \begin{align}
            \label{eq: (exact quadrature) uniform bound}
            \norm{\regOp(y) -f}_\infty
            &\le \green{\norm{\linProj f - f}_\infty} + \blue{\norm{\regOp(\noise)}_\infty}
            \\
            \nonumber
            &\le \green{(1+\sqrt{\mu(\domain)}\norm{\lever_\modelSpace^\mu}_\infty) \error_\modelSpace^\infty[f]} +
            \blue{\varepsilon(\delta)\inf_{\linProj_\star} \lebconst{\infty}{\linProj_\star}\sqrt{\opnorm{\Omega_n^{\frac12}\Sigma \Omega_n^{\frac12}}_2} \tfrac{\norm{\lever_\modelSpace^\mu}_\infty}{\sqrt{n}}},
        \end{align}

        \item For \(\weight(x) = \frac{\sqrt{m}}{\lever_\modelSpace^\mu(x)}\) we have with probability at least \(1-\delta\)
        \begin{equation}
            \label{eq: (exact quadrature) inverse leverage uniform bound}
            \norm{\regOp(y) -f}_\weight
            \le \green{(1+ \sqrt{m}) \error_\modelSpace^\weight[f]} + \blue{\varepsilon(\delta)\inf_{\linProj_\star} \lebconst{\weight}{\linProj_\star}\sqrt{\opnorm{\Omega_n^{\frac12}\Sigma \Omega_n^{\frac12}}_2\tfrac{m}{n}}},
        \end{equation}
    \end{itemize}
    The quantity \(\varepsilon(\delta)\) is listed in Corollary \ref{cor: uniform noise bound} for different assumptions on the noise,
    and the infimum is taken over regression projections \(\linProj_\star\colon \funSpace \to \modelSpace\) with \(m\) distinct regression points.
\end{example}
\begin{proof}
    Using \(\Gram_\mu = \identity\), the approximation error bounds follow from the leverage
    function bounds on the Lebesgue function (Theorem \ref{thm: leverage
    function bound}) combined with Lemma \ref{lem: A_w and A_1 for exact
    quadrature} to simplify \(A_\weight(\Omega_n)\). The stochastic error bound follows
    from Corollary \ref{cor: uniform noise bound}  and
    \(\norm{\lever_\modelSpace^{\mu}}_\weight = \sqrt{m}\)
    in the case of \(\weight(x) = \frac{\sqrt{m}}{\lever_\modelSpace^\mu(x)}\).
\end{proof}

\begin{remark}[Inverse leverage as weight]
    Observe that for \(\weight(x) = \sqrt{m}/\lever_\modelSpace^\mu(x)\), we have
    \begin{align*}
        \norm{\regOp(y) -f}_\infty
        &= \sup_{x\in \domain}\abs{\regOp(y)(x) -f(x)}\tfrac{\weight(x)}{\weight(x)}
        \\
        &\le \norm{\regOp(y) -f}_\weight \sup_{x\in \domain}\tfrac{1}{\weight(x)}
        = \norm{\regOp(y) -f}_\weight \tfrac{\norm{\lever_\modelSpace^\mu}_\infty}{\sqrt{m}}.
    \end{align*}
    If we now insert the bound \eqref{eq: (exact quadrature) inverse leverage uniform bound},
    we obtain the same stochastic error bound as in \eqref{eq: (exact quadrature) uniform
    bound} up to the change of \(\inf_{\linProj_\star} \lebconst{\infty}{\linProj_\star}\) to \(\inf_{\linProj_\star} \lebconst{\weight}{\linProj_\star}\),
    and the approximation error bound is also identical up to the factor
    \[
        \frac{\error_\modelSpace^\weight[f]}{\sqrt{\mu(\domain)}\error_\modelSpace^\infty[f]}.
    \]
    We conjecture that this factor is typically smaller than one based on this polynomial
    example: As polynomials of degree greater zero always diverge to infinity, they
    cannot uniformly approximate bounded functions on unbounded domains. This changes,
    if we consider the weighted sup-norm with the inverse leverage as weight. This is because the
    reproducing kernel contains all basis functions and the leverage function
    therefore grows like the highest order polynomial as \(x\to \infty\). This
    allows for the use of higher order polynomials for approximation in weighted
    sup-norm.

    If that conjecture is true, then better unweighted sup-norm bounds can be obtained using \(\error_\modelSpace^\weight[f]\)
    instead of \(\error_\modelSpace^\infty[f]\). It therefore seems that the
    inverse leverage weight---which depends on \(\mu\)---is in some sense the
    `natural' weight if the regression design is based on exact quadrature with
    respect to \(\mu\).
\end{remark}

If the inverse leverage weight is indeed the natural weight function, then
for a desired weight \(\weight\), the measure \(\mu\) must be chosen such that
\(\weight(x) = \sqrt{m}/\lever_\modelSpace^\mu(x)\). This is a difficult inverse
problem that is partially addressed by the theory on equilibrium measures 
\citetext{see \citealp{narayanChristoffelFunctionWeighted2016} and references therein}

\begin{remark}[Sample complexity]
    \label{rem: sample complexity discussion uniform bounds}
    In the setting \(\weight(x) = \sqrt{m}/\lever_\modelSpace^\mu(x)\)
    assume \(\error_\modelSpace^\weight[f] \in \bigO(m^{-\alpha})\) for some
    \(\alpha>0\). If \(\funSpace\) is the space of Lipschitz continuous functions
    we often have \(\alpha=1\) for example. Then the approximation error is of order \(\bigO(m^{\frac12-\alpha})\).
    And since \(\varepsilon(\delta) \in \bigO(m^{\frac1p})\) if the noise has \(p\)-th moments (cf.~Corollary \ref{cor: uniform noise bound}),
    the stochastic error is of order \(\bigO(m^{\frac12+\frac1p} n^{-\frac12})\) up to a logarithmic
    factor assuming \(\inf_{\linProj_\star} \lebconst{\weight}{\linProj_\star} \in \bigO(\log(m))\). A logarithmic
    Lebesgue constant is feasible in the case of polynomial regression for example. Balancing
    these two terms gives (up to logarithmic terms)
    \[
        m \asymp n^{\frac1{2(\alpha+\frac1p)}}
        \quad\implies\quad 
        \norm{\regOp(y) -f}_\weight \in \bigO(n^\gamma) \quad\text{with} \quad \gamma = \tfrac{\frac12 - \alpha}{2(\alpha+\frac1p)}
    \]
    For \(\alpha=1\) and \(p=2\) this gives \(\gamma= - \frac18\) and for \(\alpha, p \to \infty\) we get \(\gamma \to -\frac12\).
    The rate for large \(\alpha\) and \(p\) is optimal, since the stochastic error has this rate even for
    constant \(m\). The rate \(\gamma\) for small \(\alpha\) and \(p\) is however suboptimal in
    general. This is because 
    \begin{enumerate}[noitemsep]
        \item we used the leverage function bound on the Lebesgue constant, which
        is lower bounded by \(\sqrt{m}\) (Lem.~\ref{lem: optimality of constant
        weighted leverage bound}) while the Lebesgue constant may be constant or
        logarithmic in \(m\) (see Section~\ref{sec: lebesgue constants of regression}).
        \item the dependence on \(p\) is due to the union bound in the
        pointwise to uniform conversion (Lemma~\ref{lem: pointwise to uniform
        conversion}). It is unclear to us if this is unavoidable or an artifact of the proof.
    \end{enumerate}
\end{remark}

\begin{remark}[Towards tighter bounds on the Lebesgue constant]
    The example above uses the leverage function bound on the Lebesgue function.
    In the following we discuss a potential strategy to obtain tighter bounds on the Lebesgue function.
    Recall that due to exact quadrature, the reproducing kernel and leverage function of
    \(\mu\) and \(\mu_n\) coincide (Rem.~\ref{rem: (exact quadrature) leverage and reproducing kernel}).
    The Lebesgue function of the WLS regression
    is therefore given by (Thm.~\ref{thm: reproducing kernel representation})
    \[
        \lebfun{\weight}{\linProj}{x}
        = \weight(x) \frac1n \sum_{i=1}^n \frac{\omega_i}{\weight(x_i)} \abs{\reprK_\modelSpace^\mu(x, x_i)}
        = \int f_x(y) \,\mu_n(dy),
    \]
    for \(f_x(y) = \frac{\weight(x)}{\weight(y)} \abs{\reprK_\modelSpace^\mu(x, y)}\). Now if we had \(f_x \in \modelSpace\overline{\modelSpace}\),
    then the exact quadrature property would ensure that the Lebesgue function of regression
    coincides with the Lebesgue function of the \(L^2(\mu)\)-projection \(\linProj_\mu\), which is given by
    (Definition \ref{def: lebesgue function and constant of L2 projection})
    \[
        \lebfun{\weight}{\linProj_\mu}{x}
        = \weight(x) \int \frac1{\weight(y)} \abs{\reprK_\modelSpace^\mu(x, y)} \,\mu(dy).
    \]
    However, \(f_x\) is typically not in \(\modelSpace\overline{\modelSpace}\). If \(\ones\in\modelSpace\) such that \(\modelSpace\subseteq \modelSpace\overline{\modelSpace}\) a way to attack this problem may be
    to approximate \(f_x\) by some \(\model \in \modelSpace\) for which the integrals coincide such that
    \begin{align*}
        \abs{\lebfun{\weight}{\linProj}{x}
        - \lebfun{\weight}{\linProj_\mu}{x}}
        &\le \inf_{\model \in \modelSpace} \int \abs{f_x(y) - \model(y)} \,\mu_n(dy) + \int \abs{\model(y) - f_x(y)} \,\mu(dy)
        \\
        &\le (\mu_n(\domain) + \mu(\domain)) \underbrace{\inf_{\model \in \modelSpace} \norm{f_x - \model}_\infty}_{\error_\modelSpace^\infty[f_x]}.
    \end{align*}
    It is unclear how to bound \(\error_\modelSpace^\infty[f_x]\) in general however. The absolute value contained
    in \(f_x\) also destroys smoothness properties of the reproducing kernel beyond Lipschitz continuity.
\end{remark}

\subsubsection{\texorpdfstring{\(L^2\)}{L2}-bounds subject to exact quadrature}

Exact quadrature similarly simplifies the \(L^2\)-bounds as
shown in the next example.

\begin{example}[\(L^2\)-bounds for exact quadrature]
    \label{ex: L2 bounds for exact quadrature}
    Let \(\regOp\) be a WLS regression operator in the standard
    regression setting (Def.~\ref{def: regression setting}) and assume an identity Gram
    matrix \(\Gram_\mu=\identity\) for some measure \(\mu\) on \(\domain\) (exact quadrature).
    Then we have the mixed-norm \(L^2\) approximation error bounds
    \begin{center}
    \begin{tabular}{l l}
        \toprule
        Assumption & Bound 
        \\
        \midrule
        \(\ones \in \modelSpace\)
        &  \(\green{\norm{\linProj f - f}_\mu}\le 2\sqrt{\mu(\domain)} \blue{\error_\modelSpace^\infty[f]}\)
        \\
        \(\weight(x) = \frac{\sqrt{m}}{\lever_\modelSpace^\mu(x)}\)
        & \(
            \green{\norm{\linProj f - f}_\mu}
            \le 2\magenta{\error_\modelSpace^\weight[f]}
        \)
        \\
        \bottomrule
    \end{tabular}
    \end{center}
    and the \(L^2\) stochastic error bounds using \(\rho_\Sigma^2 \coloneq \opnorm{\Omega_n^{\frac12}\Sigma \Omega_n^{\frac12}}_2\) given by
    \begin{center}
    \makebox[\textwidth][c]{
    \begin{NiceTabular}{l m{5.65cm} m{4.8cm}}[tabularnote={\footnotesize \(C_\alpha >0\) is some constant,
    \(C_{p,\real}=\frac{14.7p}{\ln p}\), and
    \(C_{p,\complex}=\sqrt2\,C_{p,\real}\).
    }]
        \toprule
        & Assumption (decreasing strength \(\downarrow\)) & With probability at least \(1-\delta\)\newline \(\norm{\regOp(\noise)}_\mu \le \dots\)
        \\
        \midrule
        \newtag{\normalfont{(i)}}{it: (L2 exact quadrature) subgaussian}\tabularnote{Assume \(\field=\real\).\label{footnote: K=R, exact quadrature}}\!
        & \(\noiseIndep_i \sim \subG{\sigma^2}\)
        & \(\sigma\rho_\Sigma\frac{\sqrt{m} + \sqrt{2\ln(\frac1\delta)}}{\sqrt{n}}\)
        \\[1ex]
        \newtag{\normalfont{(ii)}}{it: (L2 exact quadrature) sub-Weibull}\ref*{footnote: K=R, exact quadrature}\!
        & \(\noiseIndep_i \sim \subWeibull{\alpha}\), \(\norm{\noiseIndep_i}_{\psi_\alpha}\le K\), \(\alpha\le 2\)
        & \(C_\alpha K \rho_\Sigma\frac{\sqrt{m} + \ln(\frac2\delta)^{\frac1\alpha}}{\sqrt{n}}\)
        \\[1ex]
        \newtag{\normalfont{(iii)}}{it: (L2 exact quadrature) finite p-th moment}
        & \(\E{\abs{\noiseIndep_i}^2}\le \sigma^2\) and \(\E{\abs{\noiseIndep_i}^p} \le \kappa^p\), \(p> 2\)
        & \(C_{p,\field} \max\set{\kappa, \sigma}\rho_\Sigma\sqrt{\frac{m}{n}}\bigl(\frac{1}{\delta}\bigr)^{\frac1p}\)
        \\[1ex]
        \newtag{\normalfont{(iv)}}{it: (L2 exact quadrature) finite second moment}
        & \(\E{\abs{\noiseIndep_i}^2} \le \sigma^2\)
        & \(\sigma\rho_\Sigma\sqrt{\tfrac{m}{n\delta}}\).
        \\
        \bottomrule
    \end{NiceTabular}
    }
    \end{center}
\end{example}
\begin{proof}
    Using \(\Gram_\mu = \identity\), the approximation error bounds follow from Corollary \ref{cor: mixed norm bound for p=2} combined with
    Lemma~\ref{lem: A_w and A_1 for exact quadrature} to simplify
    \(A_\weight(\Omega_n)\) and \(\norm{\frac1\weight}_\mu\). The stochastic error bound follows from Theorem \ref{thm: L2 concentration}.
\end{proof}

Observe that for a fixed \(\delta \in (0,1)\), the stochastic error bounds are of the form
\(\bigO(\sqrt{\frac{m}n})\). Balancing this rate with the convergence rate of the
approximation error term yields the following corollary.

\begin{corollary}[Optimal model dimension to minimize \(L^2\)-error bounds]
    Consider the regression setting of Example \ref{ex: L2 bounds for exact
    quadrature} or Theorem \ref{thm: specialized L2 noise bounds for exact
    quadrature}.  If the best approximation error
    \(\error_\modelSpace^\infty[f]\) or
    \(\error_\modelSpace^\weight[f]\) with
    \(\weight(x)=\frac{\sqrt{m}}{\lever_\modelSpace^\mu(x)}\) is of the order
    \(\bigO(m^{-\alpha})\) for some \(\alpha>0\) and \(\frac1\delta\in
    \bigO(1)\) and \(\rho_\Sigma \in \bigO(1)\), then
    the optimal choice of model dimension \(m=\dim \modelSpace\) is
    \[
        m \asymp n^{\frac1{2\alpha+1}},
        \qquad \text{which leads to} \qquad
        \norm{\regOp(y) -f}_\mu \in \bigO(n^{-\frac{\alpha}{2\alpha+1}}).
    \]
\end{corollary}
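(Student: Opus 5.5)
The plan is to combine the error decomposition (Cor.~\ref{cor: bias noise decomposition}) with the two $L^2$ bounds of Example~\ref{ex: L2 bounds for exact quadrature}, and then balance the resulting rates in $m$. By the triangle inequality,
\[
    \norm{\regOp(y)-f}_\mu \le \norm{\linProj f - f}_\mu + \norm{\regOp(\noise)}_\mu .
\]

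For the approximation term, Example~\ref{ex: L2 bounds for exact quadrature} gives $\norm{\linProj f - f}_\mu \le 2\sqrt{\mu(\domain)}\,\error_\modelSpace^\infty[f]$, or $\norm{\linProj f - f}_\mu \le 2\,\error_\modelSpace^\weight[f]$ for the inverse leverage weight. Here $\mu$ is fixed and does not depend on $m$. Under the hypothesis $\error_\modelSpace^{*}[f]\in\bigO(m^{-\alpha})$, both variants therefore give an approximation error in $\bigO(m^{-\alpha})$.

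For the stochastic term, every row of the noise table in Example~\ref{ex: L2 bounds for exact quadrature} has the form $\rho_\Sigma\, c(\delta)\,\sqrt{m/n}$ or $\rho_\Sigma\,(\sqrt m + c(\delta))/\sqrt n$. Since $\rho_\Sigma\in\bigO(1)$ and $1/\delta\in\bigO(1)$, the quantity $c(\delta)$ is bounded, because $\ln(1/\delta)$, $\delta^{-1/p}$ and $\delta^{-1/2}$ are all bounded. Using $m\ge1$, every row is therefore $\bigO(\sqrt{m/n})$. The same holds for the specialized bounds of Theorem~\ref{thm: specialized L2 noise bounds for exact quadrature}, which I would quote in the same way.

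With probability at least $1-\delta$ this gives
\[
    \norm{\regOp(y)-f}_\mu \lesssim m^{-\alpha} + \sqrt{m/n}.
\]
The first term decreases in $m$ and the second increases, so the minimizer of the sum is attained, up to constants, where the two terms are equal. Solving $m^{-\alpha}=m^{1/2}n^{-1/2}$ gives $m^{\alpha+1/2}=n^{1/2}$, that is $m\asymp n^{1/(2\alpha+1)}$. Substituting back gives $m^{-\alpha}=n^{-\alpha/(2\alpha+1)}$, which is the claimed rate.

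To justify "optimal", I would note that for $m\ll n^{1/(2\alpha+1)}$ the term $m^{-\alpha}$ is larger than this rate, and for $m\gg n^{1/(2\alpha+1)}$ the term $\sqrt{m/n}$ is. Optimality here refers to minimizing the bound, not the true error. I would also check that the choice is admissible: exact quadrature requires $n\ge m$ (Thm.~\ref{thm: feasibility of exact quadrature}), and $n^{1/(2\alpha+1)}\le n$ holds.

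The only delicate point is the uniformity of the constants in $m$. The constants in the approximation bound must not hide $m$-dependence, and this holds because $\mu(\domain)$ is fixed and, with the inverse leverage weight, the constant is exactly $2$. Likewise the noise constants $C_\alpha$ and $C_{p,\field}$ are independent of $m$ and $n$. I expect this bookkeeping to be the main (mild) obstacle; the remainder is elementary rate balancing.
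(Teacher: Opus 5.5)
Your proposal is correct and is essentially the paper's own argument. The paper notes that, for fixed $\delta$, every stochastic error bound in Example~\ref{ex: L2 bounds for exact quadrature} is $\bigO(\sqrt{m/n})$, and balances this against the $\bigO(m^{-\alpha})$ approximation term exactly as you do.

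The one caveat concerns your claim that the bounds of Theorem~\ref{thm: specialized L2 noise bounds for exact quadrature} can be quoted in the same way. Those rows are stated in terms of $\opnorm{\Sigma}_2$, and rows (iii) and (iv) contain no $\rho_\Sigma$ at all. In general $\opnorm{\Sigma}_2\in\bigO(1)$ does not follow from $\rho_\Sigma\in\bigO(1)$, so that case tacitly also requires $\opnorm{\Sigma}_2\in\bigO(1)$. The paper has the same imprecision when it says that constant factors are ignored.
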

    
In the case of polynomial approximation, the best approximation
error is of order \(\bigO(m^{-k})\) where \(k\) is the number of
bounded derivatives of the function \(f\) (see Section \ref{sec: metric projection error}).
Since Lipschitz continuity is already sufficient for \(\alpha=1\), the
error bound is then of order \(\bigO(n^{-\frac13})\) and it improves
towards \(\bigO(n^{-\frac12})\) as the smoothness of \(f\) increases.

Since constant factors are ignored by the asymptotic statements of the corollary above,
it also applies to the following theorem. This theorem
replaces some occurrences of \(\rho_\Sigma^2\) by the term \(\opnorm{\Sigma}_2\).
This may be desirable if \(\Omega_n\) cannot be easily adapted to \(\Sigma\), which is typically unknown.
In this case \(\opnorm{\Sigma}_2\) is generally smaller than the bound
\(
    \rho_\Sigma^2 
\le \opnorm{\Sigma}_2 \opnorm{\Omega_n}_2.
\)
This replacement of \(\rho_\Sigma^2\) by \(\opnorm{\Sigma}_2\) is however only
possible for a specific choice of \(\Omega_n\). This \(\Omega_n\) already
made an appearance in Proposition \ref{prop: interpolation case} as the
unique choice of weights \(\omega_i\ge0\) for which exact quadrature is possible
in the interpolation setting \(n=m\).

\begin{theorem}[Specialized \(L^2\) stochastic error bounds for exact quadrature]
    \label{thm: specialized L2 noise bounds for exact quadrature}
    In the setting of Example \ref{ex: L2 bounds for exact quadrature}, 
    further assume \(\Omega_n = \diag[\omega_1, \dots, \omega_n]\) with
    \(\omega_i = \frac{m}{\lever_\modelSpace^\mu(x_i)^2}\), \(\ones \in
    \modelSpace\) and \(\mu(\domain)=1\). Then we have the alternative bounds on the stochastic error term \(\norm{\regOp(\noise)}_\mu\):
    \begin{center}
    \makebox[\textwidth][c]{
    \begin{NiceTabular}{l m{6.15cm} m{4.8cm}}[tabularnote={\footnotesize \(C_\alpha >0\) is some constant,
    \(C_{p,\real}=\frac{14.7p}{\ln p}\), and
    \(C_{p,\complex}=\sqrt2\,C_{p,\real}\).
    }]
        \toprule
        & Assumption (decreasing strength \(\downarrow\)) & With probability at least \(1-\delta\)\newline \(\norm{\regOp(\noise)}_\mu \le \dots\)
        \\
        \midrule
        \newtag{\normalfont{(i)}}{it: (L2 exact quadrature 2) subgaussian}\tabularnote{Assume \(\field=\real\).\label{footnote: K=R, exact quadrature, L2 omega}}\!
        & \(\noiseIndep_i \sim \subG{\sigma^2}\)
        & \(\frac{\sigma}{\sqrt{n}}\Bigl(\sqrt{\opnorm{\Sigma}_2m} + \rho_\Sigma\sqrt{2\ln(\tfrac1\delta)}\Bigr)\)
        \\[1ex]
        \newtag{\normalfont{(ii)}}{it: (L2 exact quadrature 2) sub-Weibull}\ref*{footnote: K=R, exact quadrature, L2 omega}\!
        & \(\noiseIndep_i \sim \subWeibull{\alpha}\), \(\alpha\le 2\), \(\norm{\noiseIndep_i}_{\psi_\alpha}\le K\)
        & \(\frac{C_\alpha K}{\sqrt{n}}\Bigl(\sqrt{\opnorm{\Sigma}_2m} + \rho_\Sigma\ln(\frac2\delta)^{\frac1\alpha}\Bigr)\)
        \\[1ex]
        \newtag{\normalfont{(iii)}}{it: (L2 exact quadrature 2) finite p-th moment}
        & \(\E{\abs{\noiseIndep_i}^2}\le \sigma^2\) and \(\E{\abs{\noiseIndep_i}^p} \le \kappa^p\) for \(p> 2\)
        & \(C_{p,\field} \max\set{\kappa, \sigma}\sqrt{\opnorm{\Sigma}_2\frac{m}{n}}\bigl(\frac{1}{\delta}\bigr)^{\frac1p}\)
        \\[1ex]
        \newtag{\normalfont{(iv)}}{it: (L2 exact quadrature 2) finite second moment}
        & \(\E{\abs{\noiseIndep_i}^2} \le \sigma^2\)
        & \(\sigma\sqrt{\opnorm{\Sigma}_2\tfrac{m}{n\delta}}\).
        \\
        \bottomrule
    \end{NiceTabular}
    }
    \end{center}
\end{theorem}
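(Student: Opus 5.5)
The plan is to reuse the quadratic-form machinery behind Theorem~\ref{thm: L2 concentration} and change only the trace estimate, since the trace is the only place where $\rho_\Sigma^2$ is replaced by $\opnorm{\Sigma}_2$. Choose an orthonormal basis $\mBasis_1,\dots,\mBasis_m$ of $\modelSpace$ with respect to $\scp{\cdot}{\cdot}_\mu$. Exact quadrature gives $\Gram_\mu=\identity$, so by \eqref{eq: definition least squares reg op} we have $\regOp(\noise)=\tfrac1n\mBasisVec(\cdot)^\transpose\modelVander^\conT\Omega_n\noise$. Parseval in $L^2(\mu)$ then gives
\[
\norm{\regOp(\noise)}_\mu^2=\tfrac1{n^2}\abs{\modelVander^\conT\Omega_n A\noiseIndep}_2^2=\noiseIndep^\conT M\noiseIndep,
\qquad M\coloneq\tfrac1{n^2}A^\conT\Omega_n\modelVander\modelVander^\conT\Omega_n A .
\]
This is the identity \eqref{eq: parseval identity} specialised to $\Gram_\mu=\identity$. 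As in Theorem~\ref{thm: L2 concentration sharper}, every tabulated bound is the corresponding quadratic-form concentration bound with $\tr(M)$ in place of the "$\sqrt{\tr}$" term and $\opnorm{M}_2$ in place of the "$\sqrt{\text{op-norm}}$" term. So it suffices to show
\[
\tr(M)\le\opnorm{\Sigma}_2\tfrac mn
\qquad\text{and}\qquad
\opnorm{M}_2\le\tfrac{\rho_\Sigma^2}{n}.
\]

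For the operator norm I will invoke Lemma~\ref{lem: (L2-noise bound) trace and op-norm bounds} unchanged. Concretely, $\opnorm{M}_2=\tfrac1{n^2}\opnorm{\Omega_n^{\frac12}\modelVander\modelVander^\conT\Omega_n^{\frac12}\cdot\Omega_n^{\frac12}\Sigma\Omega_n^{\frac12}}_2$ in the relevant sense. Moreover, $\tfrac1n\opnorm{\Omega_n^{\frac12}\modelVander\modelVander^\conT\Omega_n^{\frac12}}_2=\opnorm{\Gram_\mu}_2=1$, so $\opnorm{M}_2\le\rho_\Sigma^2/n$, with $\Gram_\mu^\dagger=\identity$.

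The trace is the new step. By cyclicity,
\[
\tr(M)=\tfrac1{n^2}\tr\bigl(\modelVander^\conT\Omega_n\Sigma\Omega_n\modelVander\bigr)\le\tfrac{\opnorm{\Sigma}_2}{n^2}\tr\bigl(\modelVander^\conT\Omega_n^2\modelVander\bigr),
\]
because $\Omega_n\modelVander\modelVander^\conT\Omega_n\succeq0$. Since $\Omega_n$ is diagonal,
\[
\tr(\modelVander^\conT\Omega_n^2\modelVander)=\sum_i\omega_i^2\sum_k\abs{\mBasis_k(x_i)}^2=\sum_i\omega_i^2\lever_\modelSpace^\mu(x_i)^2=m\sum_i\omega_i ,
\]
using Lemma~\ref{lem: reproducing kernel} for $\reprK_\modelSpace^\mu(x,x)=\sum_k\abs{\mBasis_k(x)}^2$ and the specific choice $\omega_i=m/\lever_\modelSpace^\mu(x_i)^2$. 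Finally, $\ones\in\modelSpace$ gives $\ones=\ones\cdot\overline{\ones}\in\modelSpace\cdot\overline{\modelSpace}$. Exact quadrature (Cor.~\ref{cor: exact quadrature}\ref{it: exact quadrature}) then yields $\tfrac1n\sum_i\omega_i=\mu(\domain)=1$. Hence $\tr(M)\le\opnorm{\Sigma}_2\,m/n$.

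Substituting these two estimates into cases (i)--(iv) of the concentration inequalities for quadratic forms gives the table. The sub-Gaussian case uses $\sigma(\sqrt{\tr M}+\sqrt{2\opnorm{M}_2\ln(1/\delta)})$, and the moment cases use only $\tr(M)$ via the moment and Markov bounds. The main obstacle is bookkeeping rather than analysis: Theorem~\ref{thm: L2 concentration sharper} must indeed be stated in terms of $\tr(M)$ and $\opnorm{M}_2$ of the $\noiseIndep$-quadratic form, not in already-simplified $\rho_\Sigma$ terms. If it is stated in simplified form, I would re-derive each case directly from the cited quadratic-form inequalities (Hanson--Wright type for (i)--(ii), Rosenthal-type moment bounds for (iii), and $\E{\noiseIndep^\conT M\noiseIndep}\le\sigma^2\tr M$ with Markov for (iv)), as in the proof of Theorem~\ref{thm: L2 concentration}.
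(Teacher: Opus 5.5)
Your proposal is correct and follows the paper's proof essentially verbatim. Theorem~\ref{thm: L2 concentration sharper} is indeed stated in terms of $\tr(H_\mu)=\tr(M)$ and $\opnorm{H_\mu}_2=\opnorm{M}_2$, and the paper feeds it two inputs: the unchanged operator-norm bound $\opnorm{H_\mu}_2\le\rho_\Sigma^2/n$ from Lemma~\ref{lem: (L2-noise bound) trace and op-norm bounds}, and that lemma's WLS trace bound $\tr(H_\mu)\le\frac{\opnorm{\Sigma}_2}{n^2}\sum_i\omega_i^2\lever_\modelSpace^\mu(x_i)^2=\opnorm{\Sigma}_2\frac mn\cdot\frac1n\sum_i\omega_i$, with $\frac1n\sum_i\omega_i=\norm{\ones}_{\Omega_n}^2=\mu(\domain)=1$ by exact quadrature. (Your ``concretely'' formula for $\opnorm{M}_2$ holds only for spectral radii, not operator norms, but the spectral domination $H_\mu\precsim\frac{\rho_\Sigma^2}{n}\Gram_\mu^\dagger$ in the lemma you invoke already gives the needed bound.)
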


\subsection{Randomized regression}
\label{sec: randomized regression}

Since the search for exact quadrature is a difficult, problem-specific
task, it is often easier to settle for approximately equal inner products, which
is the topic of this section. There are two reasons to consider randomized
regression. First, it is a common and general approach to obtain approximately
identity Gram matrices, that is easier to work with than exact quadrature
\citep[e.g.][]{cohenStabilityAccuracyLeast2013,cohenOptimalWeightedLeastsquares2017,narayanChristoffelFunctionWeighted2016}.
The second reason is that randomized regression naturally models the
uncontrolled setting, where the evaluation points \(x_i\) are random samples
with sample distribution \(\nu\) out of the user's control.

In the previous section, we have already seen that 
\begin{itemize}[noitemsep]
    \item the regression weights
    \(\omega_i = \frac{m}{\lever_\modelSpace^\mu(x_i)^2}\) have special properties
    (Prop.~\ref{prop: interpolation case} and Thm.~\ref{thm: specialized L2 noise bounds for exact quadrature})
    \item the weight function \(\weight(x) = \frac{\sqrt{m}}{\lever_\modelSpace^\mu(x)}\)
    minimizes \(\norm{\lever_\modelSpace^\mu}_\weight^2\) (Lemma~\ref{lem: optimality of constant weighted leverage bound}).
\end{itemize}
The relationship \(\omega_i = \weight^2(x_i)\) suggested by these two observations has nice
properties by itself, for example the quantity \(A_\weight(\Omega_n)\) used in Thm.~\ref{thm: leverage function bound} and Cor.~\ref{cor: mixed norm bound for p=2}
simplifies to
\[
\textstyle
    A_\weight(\Omega_n) = \norm{\frac1\weight}_{\Omega_n}^2 =  \frac1n\sum_{i=1}^n \frac{\omega_i}{\weight^2(x_i)} = 1.
\]
In this section we will typically assume this relationship. 
In the following definition there is no mention of a sup-norm yet, such
that one could in principle view this as an odd collision of notation and simply
treat \(\weight^2\) as an arbitrary density. However, we will soon see that this
choice is not an accident.

\begin{definition}[Randomized regression]
    \label{def: randomized regression}
    For a measure \(\mu\), sampling probability measure \(\nu\) on \(\domain\) and a weight function \(\weight\colon \domain \to (0,\infty)\)
    assume that two out of the three parameters are given and the third
    determined by the relation
    \[
    \mu(dx) = \weight^2(x) \nu(dx).
    \]
    Randomized regression samples the evaluation points \(x_1, \dots, x_n\)
    iid from \(\nu\) and applies WLS to the obtained data with weights \(\omega_i =
    \weight^2(x_i)\).
\end{definition}

The following Lemma was first shown by \citet{cohenStabilityAccuracyLeast2013} for
the unweighted OLS case and later generalized to the WLS case \citet{cohenOptimalWeightedLeastsquares2017}.
Both papers contained a small mistake, which was only corrected in the first paper \citep{cohenCorrectionStabilityAccuracy2019}.
Here \(\weight\) first appears as the weight function of a
sup-norm.

\begin{lemma}[Gram concentration {\citep[Thm.~1]{cohenStabilityAccuracyLeast2013}}]
    \label{lem: gram concentration} 
    Assume we are in the randomized regression setting of Definition \ref{def: randomized regression}
    producing a diagonal \(\Omega_n\) (the WLS case). Let \((\Gram_\mu)_{ij} =
    \scp{\mBasis_j^\mu}{\mBasis_i^\mu}_{\Omega_n}\) be the Gram
    matrix of an orthonormal basis \((\mBasis_k^\mu)_{k=1}^m\) of \(\modelSpace\)
    with respect to an inner product \(\scp{\cdot}{\cdot}_\mu\). Then
    for \(\epsilon \in [0,1)\)
    \[
        \prob{\opnorm{\Gram_\mu - \identity}_2 > \epsilon}
        \le 2m \exp\bigl( - \tfrac{c(\epsilon) }{\norm{\lever_\modelSpace^\mu}_\weight^2}n\bigr)
        \le 2m \exp\bigl( - C \tfrac{\epsilon^2}{\norm{\lever_\modelSpace^\mu}_\weight^2}n\bigr)
    \]
    with \(c(\epsilon) \coloneq (1+\epsilon)\ln(1+\epsilon) - \epsilon\) and \(C \coloneq 2\ln 2 - 1 \approx 0.38\).
\end{lemma}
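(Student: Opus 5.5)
The plan is to write the Gram matrix as a sum of independent random rank-one matrices and apply the matrix Chernoff inequality. Let \(\mBasisVec^\mu = (\mBasis_1^\mu,\dots,\mBasis_m^\mu)^\transpose\). For WLS with \(\omega_i = \weight^2(x_i)\),
\[
    \Gram_\mu = \frac1n\sum_{i=1}^n X_i,
    \qquad
    X_i \coloneq \weight^2(x_i)\,\overline{\mBasisVec^\mu(x_i)}\,\mBasisVec^\mu(x_i)^\transpose .
\]
Each \(X_i\) is Hermitian, positive semi-definite and has rank one. The \(X_i\) are i.i.d.\ because the \(x_i\) are i.i.d.\ from \(\nu\).

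First I compute the mean. Using \(\mu(dx) = \weight^2(x)\nu(dx)\),
\[
    \E{X_i}_{kl} = \int \weight^2 \overline{\mBasis_k^\mu}\mBasis_l^\mu\, d\nu = \scp{\mBasis_l^\mu}{\mBasis_k^\mu}_\mu = \delta_{kl},
\]
so \(\E{\Gram_\mu} = \identity\).

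Second, I bound each summand uniformly. The only nonzero eigenvalue of \(X_i\) is \(\weight^2(x_i)\abs{\mBasisVec^\mu(x_i)}_2^2\). By the orthonormal-basis representation in Lemma~\ref{lem: reproducing kernel}, this equals \(\weight^2(x_i)\reprK_\modelSpace^\mu(x_i,x_i) = (\lever_\modelSpace^\mu(x_i)\weight(x_i))^2\). Hence \(0 \preceq X_i/n \preceq R\,\identity\) with \(R \coloneq \norm{\lever_\modelSpace^\mu}_\weight^2/n\) almost surely.

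Third, I apply the matrix Chernoff bound (Tropp, ``User-friendly tail bounds'', Thm.~1.1) to \(\sum_i X_i/n\). Here \(\mu_{\min} = \mu_{\max} = 1\). It gives
\[
    \prob{\lambda_{\max}(\Gram_\mu) \ge 1+\epsilon} \le m\Bigl(\tfrac{e^{\epsilon}}{(1+\epsilon)^{1+\epsilon}}\Bigr)^{1/R},
    \qquad
    \prob{\lambda_{\min}(\Gram_\mu) \le 1-\epsilon} \le m\Bigl(\tfrac{e^{-\epsilon}}{(1-\epsilon)^{1-\epsilon}}\Bigr)^{1/R}.
\]
The first exponent is exactly \(-c(\epsilon)\,n/\norm{\lever_\modelSpace^\mu}_\weight^2\).

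The main obstacle is the lower tail, because we want it bounded by the same \(c(\epsilon)\). The elementary inequality \((1-\epsilon)\ln(1-\epsilon) + \epsilon \ge (1+\epsilon)\ln(1+\epsilon) - \epsilon\) holds for \(\epsilon \in [0,1)\). To see this, put \(g(\epsilon) = (1-\epsilon)\ln(1-\epsilon) - (1+\epsilon)\ln(1+\epsilon) + 2\epsilon\). Then \(g(0) = 0\) and \(g'(\epsilon) = -\ln(1-\epsilon^2) \ge 0\). This inequality is the step the correction of \citet{cohenCorrectionStabilityAccuracy2019} concerns, so I would verify it carefully.

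With both tails controlled, \(\opnorm{\Gram_\mu - \identity}_2 > \epsilon\) forces one of the two eigenvalue events, and a union bound gives the factor \(2m\).

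For the second inequality I need \(c(\epsilon) \ge C\epsilon^2\) on \([0,1)\). The function \(c(\epsilon)/\epsilon^2\) is decreasing, which follows by comparing the series \(c(\epsilon) = \sum_{k\ge2} (-1)^k \epsilon^k/(k(k-1))\) or by convexity arguments. It therefore attains its infimum on \([0,1)\) in the limit \(\epsilon \to 1\), where \(c(1) = 2\ln 2 - 1 = C\). Substituting gives the stated constant.
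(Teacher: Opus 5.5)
Your proposal is correct and follows the paper's proof essentially step for step. It uses the same rank-one PSD decomposition with \(\E{\Gram_\mu} = \identity\) and summand bound \(R = \norm{\lever_\modelSpace^\mu}_\weight^2/n\), the same matrix Chernoff bound, and the same comparison of the two tail exponents (your auxiliary function is exactly the paper's \(g(\epsilon) - g(-\epsilon)\), with derivative \(-\ln(1-\epsilon^2)\)). It also uses the same monotonicity of \(c(\epsilon)/\epsilon^2\) evaluated at \(\epsilon = 1\); the only cosmetic difference is that the paper proves this monotonicity by differentiating (\(h'(t) = -H(t)/t^3\) with \(H(t) = (t+2)\ln(1+t) - 2t > 0\)) rather than via the alternating power series, and both arguments work.
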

How may one apply this Lemma? Well, for \(\epsilon=\frac12\) we have \(c(\frac12) \approx 0.108 \ge 0.1\) 
and since \(\opnorm{\Gram_\mu-\identity}_2\le \frac12\) implies \(\opnorm{\Gram_\mu^{-1}} \le 2\) by Lemma~\ref{lem: bounds on inverse gram matrix}
we get the following corollary:
\begin{corollary}[Reasonably conditioned Gram matrix]
    \label{cor: reasonably conditioned Gram matrix}
    In the setting of Lemma \ref{lem: gram concentration}, if 
    \begin{equation}
        \label{eq: sufficient samples for well-conditioned gram matrix}
        n \ge 10\norm{\lever_\modelSpace^\mu}_\weight^2 \ln(\tfrac{2m}{\delta}),
    \end{equation}
    then
    \(
        \prob[\big]{\opnorm{\Gram_\mu - \identity}_2 \le \frac12, \opnorm{\Gram_\mu^{-1}} \le 2} \ge 1-\delta
    \).
\end{corollary}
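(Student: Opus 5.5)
The corollary follows by solving the tail bound of Lemma~\ref{lem: gram concentration} for \(n\) at the fixed level \(\epsilon=\frac12\), and then converting the event \(\opnorm{\Gram_\mu-\identity}_2\le\frac12\) into a bound on the inverse with Lemma~\ref{lem: bounds on inverse gram matrix}. No further probabilistic argument is needed.

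\textbf{Step 1: the Gram tail bound.} Apply the first inequality of Lemma~\ref{lem: gram concentration} with \(\epsilon=\frac12\). Write \(L\coloneq\norm{\lever_\modelSpace^\mu}_\weight^2\). This gives
\[
    \prob[\big]{\opnorm{\Gram_\mu-\identity}_2>\tfrac12}\le 2m\exp\bigl(-c(\tfrac12)\,n/L\bigr).
\]
We need a numerical lower bound on \(c(\frac12)\). We have \(c(\frac12)=\frac32\ln\frac32-\frac12\). Using \(\ln\frac32\ge 0.405\) gives \(c(\frac12)\ge 0.6075-0.5=0.1075\ge\frac1{10}\). This is the only numerical check in the proof, and it is the step to verify with a little care.

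\textbf{Step 2: the sample-size condition.} Under \eqref{eq: sufficient samples for well-conditioned gram matrix} we have \(n/L\ge 10\ln(\frac{2m}\delta)\). Combined with \(c(\frac12)\ge\frac1{10}\), this gives
\[
    c(\tfrac12)\,n/L\ge \ln(\tfrac{2m}\delta),
\]
so the tail probability is at most \(2m\cdot\frac{\delta}{2m}=\delta\). Hence \(\opnorm{\Gram_\mu-\identity}_2\le\frac12\) holds with probability at least \(1-\delta\).

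\textbf{Step 3: the inverse bound.} On this event, Lemma~\ref{lem: bounds on inverse gram matrix} with \(\epsilon=\frac12\in[0,1)\) shows that \(\Gram_\mu\) is invertible and \(\opnorm{\Gram_\mu^{-1}}_2\le\frac1{1-1/2}=2\). The second condition is therefore implied by the first, so the joint event has the same probability \(\ge 1-\delta\).

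There is no real obstacle. One must only remember that \(\opnorm{\cdot}\) in the statement means the spectral norm, and that \(c\) is evaluated via the first, sharper inequality of Lemma~\ref{lem: gram concentration}. The second inequality would give only \(C\epsilon^2\approx0.095<\frac1{10}\), so it would not yield the constant \(10\).
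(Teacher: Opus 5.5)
Your proposal is correct and is essentially the paper's own argument: the paper likewise evaluates the sharper tail bound of Lemma~\ref{lem: gram concentration} at \(\epsilon=\frac12\), using \(c(\frac12)\approx 0.108\ge\frac1{10}\), and then invokes Lemma~\ref{lem: bounds on inverse gram matrix} to get \(\opnorm{\Gram_\mu^{-1}}_2\le 2\) on the same event. Your remark that the cruder bound \(C\epsilon^2\approx 0.097\) would not yield the constant \(10\) is also accurate.
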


Recall that \(\weight(x) = \frac{\sqrt{m}}{\lever_\modelSpace^\mu(x)}\) results
in the minimal weighted leverage maximum
\(\norm{\lever_\modelSpace^\mu}_\weight^2 = m\) by Lemma \ref{lem: optimality of
constant weighted leverage bound} such that \(n \in \bigO(m \log(m/\delta))\).
If the goal is an \(L^2(\mu)\)-bound, then the choice of \(\weight\) is
a choice of the sampling measure \(\nu(dx) = \frac1{\weight^2(x)}\mu(dx)\)
and the inverse leverage weight minimizes the number of samples \(n\) necessary
to get good results. Beyond this sampling criterion, the weight function
\(\weight\) does not necessarily have to appear in the \(L^2(\mu)\)-norm bounds.
In the following example it appears in the approximation error, but we also state an
alternative unweighted approximation error bound that does not depend on \(\weight\).

\begin{mdframed}[style=default]
\begin{example}[\(L^2(\mu)\)-bounds for randomized regression]
    \label{ex: L2 bounds for randomized regression}
    In the randomized regression setting of Definition \ref{def: randomized regression} assume \eqref{eq: sufficient samples for well-conditioned gram matrix}
    holds. Then with probability at least \(1-2\delta\)
    \[
        \norm{\regOp(y) - f}_\mu
        \le \green{\underbrace{(1+\sqrt{2})\error_\modelSpace^\weight[f]}_{\text{approximation error}}}
        + \blue{\underbrace{\sqrt{\tfrac1n\opnorm{\Omega_n^\frac12 \Sigma \Omega_n^\frac12}_2}\epsilon(\delta)}_{\text{stochastic error}}}
    \]
    with \(\epsilon(\delta)\) listed in the following table depending on the assumptions on the noise.
        \begin{center}
    \makebox[\textwidth][c]{
    \begin{NiceTabular}{l m{5.65cm} m{5.8cm}}[tabularnote={
        \footnotesize
        \(C_\alpha>0\) is a constant that only depends on \(\alpha\) and \(C_{p,\real}=\frac{14.7p}{\ln p}\), \(C_{p,\complex}=\sqrt2\,C_{p,\real}\).
    }]
        \toprule
        & Assumption (decreasing strength \(\downarrow\)) & \(\epsilon(\delta)\)
        \\
        \midrule
        \newtag{\normalfont{(i)}}{it: (L2 concentration) subgaussian, randomized}\tabularnote{\footnotesize Assume \(\field=\real\).\label{footnote: K=R, L2, randomized}}\!\!
        & \(\noiseIndep_i \sim \subG{\sigma^2}\)
        & \(\sigma\Bigl(\sqrt{2m} + 2\sqrt{\ln(\tfrac1\delta)}\Bigr)\)
        \\[1ex]
        \newtag{\normalfont{(ii)}}{it: (L2 concentration) sub-Weibull, randomized}\ref*{footnote: K=R, L2, randomized}\!\!
        & \(\noiseIndep_i \sim \subWeibull{\alpha}\), \(\norm{\noiseIndep_i}_{\psi_\alpha}\le K\), \(\alpha\le 2\) 
        & \(C_\alpha K \Bigl(\sqrt{2m} + \sqrt{2} \ln(\frac2\delta)^{\frac1\alpha}\Bigr)\)
        \\[1ex]
        \newtag{\normalfont{(iii)}}{it: (L2 concentration) finite p-th moment, randomized}
        & \(\E{\abs{\noiseIndep_i}^2}\le \sigma^2\) and \(\E{\abs{\noiseIndep_i}^p} \le \kappa^p\), \(p> 2\)
        & \(C_{p,\field} \max\set{\kappa, \sigma}\bigl(\frac{1}{\delta}\bigr)^{\frac1p}\sqrt{2m}\)
        \\[1ex]
        \newtag{\normalfont{(iv)}}{it: (L2 concentration) finite second moment, randomized}
        & \(\E{\abs{\noiseIndep_i}^2} \le \sigma^2\)
        & \(\sigma\sqrt{\tfrac{2m}{\delta}}\).
        \\
        \bottomrule
    \end{NiceTabular}
    }
    \end{center}
    If we further assume \(\weight(x) = \frac{\sqrt{m}}{\lever_\modelSpace^\mu(x)}\) in the randomized regression setting of Def.~\ref{def: randomized regression}
    then for \(n \ge 10 m \ln(\frac{2m}{\delta})\) we have with probability at least \(1-2\delta\) for \(\rho_\Sigma^2 \coloneq \opnorm{\Omega_n^{\frac12}\Sigma \Omega_n^{\frac12}}_2\)
    \[
        \norm{\regOp(y) - f}_\mu
        \le \green{\underbrace{(1+\sqrt{2})\error_\modelSpace^\weight[f]}_{\text{approximation error}}}
        + \blue{\begin{cases}
            2\sigma\Bigl(\sqrt{\opnorm{\Sigma}_2 \frac{m}n} + \sqrt{\frac{\rho_\Sigma^2}n\ln(\frac1\delta)}\Bigr)
            & \text{if \ref{it: (L2 concentration) subgaussian, randomized}}
            \\
            C_\alpha K \Bigl(2\sqrt{\opnorm{\Sigma}_2\tfrac{m}n} + \sqrt{2\frac{\rho_\Sigma^2}n}\ln(\frac2\delta)^{\frac1\alpha}\Bigr)
            & \text{if \ref{it: (L2 concentration) sub-Weibull, randomized}}
            \\[1ex]
            2C_{p,\field} \max\set{\kappa, \sigma}(\frac1\delta)^{\frac1p}\sqrt{\opnorm{\Sigma}_2 \frac{m}n}
            & \text{if \ref{it: (L2 concentration) finite p-th moment, randomized}}
            \\[1ex]
            2\sigma \sqrt{\opnorm{\Sigma}_2 \frac{m}{n\delta}}
            & \text{if \ref{it: (L2 concentration) finite second moment, randomized}}
        \end{cases}
        }
    \]
    Alternatively, if \(\mu\) is a probability measure and \(\ones\in \modelSpace\), then subject to the same assumptions
    the approximation error bound may in both cases be replaced by the following bound in terms of the unweighted best approximation error 
    \[
        \tag{alternative approx. error bound}
        \norm{\linProj f -f}_\mu \le \green{(1+\sqrt{3})\error_\modelSpace^\infty[f]}.
    \]
\end{example}
\end{mdframed}
\begin{proof}
    The probability \(1-2\delta\) comes from the union bound that ensures we are
    on the \(1-\delta\) probability Gram concentration event of Corollary
    \ref{cor: reasonably conditioned Gram matrix} and the \(1-\delta\) probability noise
    concentration. 
    The weighted approximation error bound follows directly from Corollary \ref{cor: mixed norm
    bound for p=2}
    using \(\opnorm{\Gram_\mu^{-1}}_2 \le 2\).
    The alternative approximation error bound also follows from Cor.~\ref{cor: mixed norm bound for p=2}
    using \[
        A_\ones(\Omega_n) \le \sqrt{1+1/2} \norm{\ones}_\mu =\sqrt{3/2}
    \]
    that follows from \(\opnorm{\Gram-\identity}_2\le \frac12\), \(\norm{\ones}_\mu = \mu(\domain)=1\) and Lemma~\ref{lem: A_w and A_1 for exact quadrature} \ref{it: A_1 approximate Gram identity}.
    The first stochastic error bounds follow from Theorem \ref{thm: L2 concentration} using \(\opnorm{\Gram_\mu^{-1}}_2 \le 2\) and \(\tr(\Gram_\mu^{-1})\le m \opnorm{\Gram_\mu^{-1}}_2 \le 2m\).
    The second stochastic error bounds follow from the inequalities \(\tr(H_\mu) \le \frac{4}n\opnorm{\Sigma}_2\)  and
    \(\opnorm{H_\mu}_2\le \frac2n \rho_\Sigma^2\) by Lemma~\ref{lem: (L2-noise bound) trace and op-norm bounds} put
    into Theorem \ref{thm: L2 concentration sharper}.
\end{proof}

If the goal is to obtain uniform bounds, then the weight function \(\weight\) is
inherent to the problem and cannot be chosen freely. On the other hand, the
measure \(\mu\) is now a free parameter that can be optimized (equivalently the
sample distribution \(\nu\)). Again we would like to choose \(\weight(x) =
\frac{\sqrt{m}}{\lever_\modelSpace^\mu(x)}\) however, this is a much harder
inverse problem than in the \(L^2\) case as the dependence of the leverage
function on \(\mu\) is more complicated. For research in this direction see
\citet{narayanChristoffelFunctionWeighted2016} and references therein.

\begin{example}[Uniform bounds for randomized regression]
    \label{ex: uniform bounds for randomized regression}
    In the randomized regression setting of Definition \ref{def: randomized regression} assume 
    the sample assumption \eqref{eq: sufficient samples for well-conditioned gram matrix}
    holds. Then with probability at least \(1-2\delta\)
    \[
        \norm{\regOp(y) - f}_\weight
        \le \green{\underbrace{(1+\sqrt{2}\norm{\lever_\modelSpace^\mu}_\weight)\error_\modelSpace^\weight[f]}_{\text{approximation error}}}
        + \blue{\underbrace{\varepsilon(\delta)\inf_{\linProj_\star}\lebconst{\weight}{\linProj_\star}\sqrt{\opnorm{\Omega_n^\frac12 \Sigma \Omega_n^\frac12}_2\tfrac{\norm{\lever_\modelSpace^\mu}_\weight^2}n}}_{\text{noise bound}}},
    \]
    where the infimum is taken over all regression projections \(\linProj_\star\colon \funSpace \to \modelSpace\) with \(m\) distinct regression points
    and \(\varepsilon(\delta)\) listed in the following table depending on the
    assumptions on the noise
    \begin{center}
    \makebox[\textwidth][c]{
    \begin{NiceTabular}{l m{6cm} m{5cm}}[tabularnote={
        \footnotesize 
    \(C_\alpha, C_{\alpha}'>0\) are unoptimized constants given in \eqref{eq: sub-Weibull constants}, \(C_{p,\real}\coloneq \frac{14.7p}{\ln p}\) and
    \(C_{p,\complex}=\sqrt2\,C_{p,\real}\).
    }]
        \toprule
        & Assumption (decreasing strength \(\downarrow\)) &  \(\varepsilon(\delta)\)
        \\
        \midrule
        \newtag{\normalfont{(i)}}{it: (uniform concentration) subgaussian, randomized}\tabularnote{\footnotesize Assume \(\field=\real\).\label{footnote: K=R, randomized}}\!\!\!
        & \(\noiseIndep_i \sim \subG{\sigma^2}\)
        & \(
            \sigma\sqrt{2\ln(\frac{2m}\delta)}
        \)
        \\[1ex]
        \newtag{\normalfont{(ii)}}{it: (uniform concentration) exponential, randomized}\ref*{footnote: K=R, randomized}\!\!\!
        & \(\noiseIndep_i \sim \subExp{\nu, \alpha}\)
        & \(
            \max\set[\Big]{\sqrt{2\nu^2\ln(\frac{2m}\delta)}, 2\alpha\ln(\frac{2m}\delta)}
        \)
        \\[1ex]
        \newtag{\normalfont{(iii)}}{it: (uniform concentration) sub-Weibull, randomized}\ref*{footnote: K=R, randomized}\!\!\!
        & \(\noiseIndep_i \sim \subWeibull{\alpha}\), \(\norm{\noiseIndep_i}_{\psi_\alpha}\le K\), \(\alpha\le 2\)
        & \(
            K\Bigl(C_\alpha \sqrt{\ln(\tfrac{2m}{\delta})} + C_{\alpha}' \ln(\tfrac{2m}{\delta})^{\frac1\alpha}\Bigr)
        \)
        \\[1ex]
        \newtag{\normalfont{(iv)}}{it: (uniform concentration) finite p-th moment, randomized}
        & \(\E{\abs{\noiseIndep_i}^2}\le \sigma^2\) and \(\E{\abs{\noiseIndep_i}^p} \le \kappa^p\), \(p> 2\)
        & \(
            C_{p, \field}\max\set{\sigma, \kappa}\bigl(\frac{m}{\delta}\bigr)^{\frac1p}
        \)
        \\[1ex]
        \newtag{\normalfont{(v)}}{it: (uniform concentration) finite second moment, randomized}
        & \(\E{\abs{\noiseIndep_i}^2} \le \sigma^2\)
        & \(
            \sigma\sqrt{\frac{m}{\delta}}
        \)
        \\
        \bottomrule
    \end{NiceTabular}
    }
    \end{center}
\end{example}
\begin{proof}
    The proof follows from the Leverage function bound (Theorem~\ref{thm:
    leverage function bound}) using \(A_\weight(\Omega_n)=1\) and
    \(\opnorm{\Gram_\mu^{-1}}_2 \le 2\) on the \(1-\delta\) probability Gram
    concentration event of Corollary~\ref{cor: reasonably conditioned Gram
    matrix} together with the uniform noise concentration of Corollary~\ref{cor:
    uniform noise bound}.
\end{proof}

Similar pointwise bounds can be obtained from the mentioned Theorems. And the
same sample complexity discussion as in Remark~\ref{rem: sample complexity discussion uniform bounds}
applies. Again, the leverage function bound is suboptimal. But while we could only
sketch a path to tighter bounds on the Lebesgue constant in the case of
exact quadrature, we will prove an actual concentration result in the following section.

\subsubsection{Concentration toward the \texorpdfstring{\(L^2(\mu)\)}{L2(µ)}-projection}

In the following we will show that the Lebesgue function 
\(\lebfun{\weight}{\linProj}{x}\) of the randomized regression projection \(\linProj\)
concentrates on the Lebesgue function of the \(L^2(\mu)\)-projection
\(\linProj_\mu\) as the sample size \(n\) increases. We start with a
definition of the Lebesgue function and constant of the \(L^2(\mu)\)-projection

\begin{definition}[Lebesgue function and constant of the \(L^2(\mu)\)-projection]
    \label{def: lebesgue function and constant of L2 projection}
    Let \(\linProj_\mu\) be the \(L^2(\mu)\)-projection onto \(\modelSpace\) and
    \(\weight\colon \domain \to (0,\infty)\) a weight function. Then
    the Lebesgue function and constant of \(\linProj_\mu\) are defined as
    \[
        \lebfun{\weight}{\linProj_\mu}{x}
        \coloneq \weight(x) \int \frac{\abs{K_\modelSpace^\mu(x,y)}}{\weight(y)} \mu(dy)
        \qquad\text{and}\qquad
        \lebconst{\weight}{\linProj_\mu} \coloneq \sup_{x\in \domain} \lebfun{\weight}{\linProj_\mu}{x}.
    \]
\end{definition}
\begin{remark}[Non-negative kernel]
    \label{rem: non-negative kernel}
    If \(\reprK_\modelSpace^\mu(x,y) \ge 0\) and \(\ones\in \modelSpace\), then by
    the same argument as in Remark \ref{rem: positive reproducing kernels} we have \(\lebfun{\infty}{\linProj_\mu}{x} = 1 = \lebconst{\infty}{\linProj_\mu}\).
\end{remark}

In Section \ref{sec: approximation theory} we will outline a joint generalization
of the Lebesgue function for regression and \(L^2(\mu)\)-projection that justifies this
name collision.  The
above definition of the Lebesgue function is essentially the reproducing kernel
representation of Theorem \ref{thm: reproducing kernel representation} combined
with the definition of the Lebesgue function of regression (Def.~\ref{def: lebesgue function and constant}).

In the following theorem we prove concentration of the Lebesgue function of
randomized regression around that of \(L^2(\mu)\)-projection, which is one of
our main original contributions.

\begin{remark}[Related work]
During the finalization of this manuscript we became aware that
\citet[Prop~8; see Prop.~21 for its proof]{maranLeastSquaresUniform2025} state a similar concentration result for
Lebesgue constants in the real-valued setting. However the proof appears to bound the variance for a fixed \(x\in \domain\)
using Bernstein's inequality (p.~40), and they thereby bound the Lebesgue function.
The Lebesgue constant is however a supremum over all \(x\in \domain\).
In their setting of finite domains \(\domain\) this may be addressed with
a union bound over \(x\) at the cost of a factor \(\sqrt{\log\abs{\domain}}\) in
the constant. For infinite domains \(\domain\) a different approach is needed.
They also obtain slightly worse rates, essentially
\(\sqrt{m}\norm{\lever_\modelSpace^\mu}_\weight^2\) instead of
\(\norm{\lever_\modelSpace^\mu}_\weight^2\sqrt{\log m}\) and they only state the
asymptotic rate without explicit, optimized constants.
Their result is more general in that they allow for ridge regularization.
\end{remark}

\begin{mdframed}[style=default]
\begin{theorem}[Lebesgue function and constant concentration]
    \label{thm: lebesgue constant concentration}
    Let \(\linProj\) be a randomized regression projection (Definition \ref{def: randomized regression})
    and \(\linProj_\mu\) the \(L^2(\mu)\) projection to \(\modelSpace\). 
    \begin{enumerate}[label=(\roman*), noitemsep]
    \item \textbf{Lebesgue function concentration.}
    Let \(m\in \nat\) and \(\delta \in (0,1)\) and assume
    \[
        n \ge 9\norm{\lever_\modelSpace^\mu}_\weight^2 \ln(2\tfrac{m+1}\delta)
    \]
    then for every fixed \(x\in
    \domain\) with probability at least \(1-\delta\) we have
    \[
        \abs{\lebfun{\weight}{\linProj}{x} - \lebfun{\weight}{\linProj_\mu}{x}}
        \le 3\norm{\lever_\modelSpace^\mu}_\weight\sqrt{\tfrac{1}{n}\ln(2\tfrac{m+1}\delta)} \lever_\modelSpace^\mu(x)\weight(x).
    \]
    \item \textbf{Lebesgue constant concentration.}
    Let \(m\in \nat\) and \(\delta \in (0,1)\) and assume
    \[
        n \ge C_{\field}^2\norm{\lever_\modelSpace^\mu}_\weight^2 \ln(2\tfrac{m+1}\delta)
    \]
    for \(C_\real \coloneq 3\sqrt{3}\approx 5.2\) and \(C_\complex \coloneq 4\sqrt{3}\approx 6.9\). Then with probability at least \(1-\delta\) we have
    \[
        \abs{\lebconst{\weight}{\linProj} - \lebconst{\weight}{\linProj_\mu}}
        \le C_{\field} \norm{\lever_\modelSpace^\mu}_\weight^2 \sqrt{\tfrac1n\ln(2\tfrac{m+1}{\delta})},
    \]
\end{enumerate}
\end{theorem}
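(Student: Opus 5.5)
The plan is to reduce everything to the reproducing kernel $\reprK_\modelSpace^\mu$ of an $L^2(\mu)$-orthonormal basis $\mBasis=(\mBasis_1,\dots,\mBasis_m)^\transpose$, and to split the deviation into a ``Monte Carlo'' part and a ``Gram'' part.

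\textbf{Kernel representation.} Since $\omega_i=\weight^2(x_i)$, Theorem~\ref{thm: reproducing kernel representation} gives
\[
    \lebfun{\weight}{\linProj}{x}=\weight(x)\tfrac1n\textstyle\sum_i \weight(x_i)\abs{\reprK_\modelSpace^{\Omega_n}(x_i,x)}.
\]
Expressing the orthonormal basis of $\scp{\cdot}{\cdot}_{\Omega_n}$ through $\mBasis$ yields
\[
    \reprK_\modelSpace^{\Omega_n}(y,x)=\mBasis(y)^\transpose\Gram_\mu^{-1}\overline{\mBasis(x)},
    \qquad
    \reprK_\modelSpace^\mu(y,x)=\mBasis(y)^\transpose\overline{\mBasis(x)}.
\]
I then write $\lebfun{\weight}{\linProj}{x}-\lebfun{\weight}{\linProj_\mu}{x}=T_1(x)+T_2(x)$, where
\[
    T_1(x)\coloneq \weight(x)\tfrac1n\textstyle\sum_i \weight(x_i)\abs{\reprK_\modelSpace^\mu(x_i,x)}-\lebfun{\weight}{\linProj_\mu}{x}
\]
and $T_2$ collects the replacement of $\Gram_\mu^{-1}$ by $\identity$.

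\textbf{Bounding $T_2$.} By the reverse triangle inequality and Cauchy--Schwarz,
\[
    \abs{T_2(x)}\le \weight(x)\abs{\mBasis(x)}_2\,\opnorm{\Gram_\mu^{-1}-\identity}_2\,\tfrac1n\textstyle\sum_i\weight(x_i)\abs{\mBasis(x_i)}_2
    \le \lever_\modelSpace^\mu(x)\weight(x)\,\norm{\lever_\modelSpace^\mu}_\weight\,\tfrac{\epsilon}{1-\epsilon}.
\]
On the event $\opnorm{\Gram_\mu-\identity}_2\le\epsilon$, Lemma~\ref{lem: bounds on inverse gram matrix} controls $\opnorm{\Gram_\mu^{-1}-\identity}_2$. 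I take $\epsilon\approx\norm{\lever_\modelSpace^\mu}_\weight\sqrt{2\ln(2m/\delta')/n}$, which is admissible by Lemma~\ref{lem: gram concentration} in its matrix-Bernstein form. The sample-size assumption keeps $\epsilon\le 1/3$, say, which absorbs $1/(1-\epsilon)$.

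\textbf{Bounding $T_1$ pointwise.} $T_1(x)$ is a centred average of the iid variables $Z_i=\weight(x)\weight(x_i)\abs{\reprK_\modelSpace^\mu(x_i,x)}$. Their mean is correct because $\weight^2\nu=\mu$ gives $\int\weight\abs{\reprK}\,d\nu=\int\abs{\reprK}/\weight\,d\mu$. By Cauchy--Schwarz, $\abs{\reprK_\modelSpace^\mu(y,x)}\le\lever_\modelSpace^\mu(y)\lever_\modelSpace^\mu(x)$, so $0\le Z_i\le \lever_\modelSpace^\mu(x)\weight(x)\norm{\lever_\modelSpace^\mu}_\weight$. By the reproducing property, $\E{Z_i^2}=\weight(x)^2\int\abs{\reprK_\modelSpace^\mu(y,x)}^2\mu(dy)=(\lever_\modelSpace^\mu(x)\weight(x))^2$. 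Scalar Bernstein then gives
\[
    \abs{T_1(x)}\le \lever_\modelSpace^\mu(x)\weight(x)\Bigl(\sqrt{2\ln(2/\delta'')/n}+\norm{\lever_\modelSpace^\mu}_\weight\ln(2/\delta'')/(3n)\Bigr).
\]
Splitting $\delta$ proportionally between the $m$-dimensional Gram event and this one scalar event produces the $\ln(2\frac{m+1}\delta)$. The assumption $n\ge 9\norm{\lever_\modelSpace^\mu}_\weight^2\ln(\cdot)$ turns the linear Bernstein term and the $T_2$ term into multiples of $\norm{\lever_\modelSpace^\mu}_\weight\sqrt{\ln(\cdot)/n}$, using $\norm{\lever_\modelSpace^\mu}_\weight\ge1$ (a consequence of Lemma~\ref{lem: optimality of constant weighted leverage bound} after normalisation). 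Collecting the constants should give the factor $3$ in (i).

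\textbf{Lebesgue constant.} Here $\abs{\lebconst{\weight}{\linProj}-\lebconst{\weight}{\linProj_\mu}}\le\sup_x\abs{T_1(x)+T_2(x)}$. The $T_2$ bound is already uniform after replacing $\lever_\modelSpace^\mu(x)\weight(x)$ by $\norm{\lever_\modelSpace^\mu}_\weight$. The main obstacle is a \emph{uniform} bound on $T_1$ without a covering argument, since covering would cost $m\ln(1/\epsilon)$ rather than $\ln m$.

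I would set $u=\weight(x)\overline{\mBasis(x)}$, which lies in the ball of radius $\norm{\lever_\modelSpace^\mu}_\weight$, and $v_i=\weight(x_i)\mBasis(x_i)$. Then $T_1=\tfrac1n\sum_i\abs{v_i^\transpose u}-\E{\abs{v^\transpose u}}$, and I need its supremum over $u$. Rather than concentrating this nonlinear process directly, the plan is to linearise via $\abs{t}\le \frac{t^2}{2s}+\frac s2$ (optimised at $s=\abs{t}$) in one direction and via $\abs{t}\ge t^2/\max\abs{t}$ in the other. This compares both empirical and population quantities to the quadratic form $u^\conT(\tfrac1n\sum_i v_iv_i^\conT)u=u^\conT\Gram_\mu u$, whose uniform deviation is exactly $\opnorm{\Gram_\mu-\identity}_2\abs{u}_2^2$ and is already controlled on the Gram event.

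If this sandwich loses too much, my fallback is matrix Bernstein on an $(m+1)$-dimensional augmented (Hermitian-dilated) vector $(v_i,\,\cdot)$. That route would explain both the $m+1$ in the logarithm and the worse complex constant: identifying $\complex$ with $\real^2$, or using $\abs{z}\le\abs{\Re z}+\abs{\Im z}$, changes $C_\real=3\sqrt3$ into $C_\complex=4\sqrt3$. I expect this uniform step to require the most care.
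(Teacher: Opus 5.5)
Your split into $T_1$ and $T_2$ is the paper's decomposition (Monte Carlo error $\Delta^{\mathrm{MC}}$ plus Gram distortion $\Delta^{\Gram}$). Your pointwise Bernstein bound on $T_1$ is correct; the identity $\E{Z_i^2}=(\lever_\modelSpace^\mu(x)\weight(x))^2$ even makes it sharper than the paper's Hoeffding step. There are, however, two genuine gaps.

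\textbf{Your bound on $T_2$ is too weak by a factor $L\coloneqq\norm{\lever_\modelSpace^\mu}_\weight$.} Cauchy--Schwarz inside each summand gives $\abs{T_2(x)}\le L\,\frac{\epsilon}{1-\epsilon}\,\lever_\modelSpace^\mu(x)\weight(x)$ with $\epsilon\asymp L\sqrt{\ln(m/\delta)/n}$. That is a term of order $L^2\sqrt{\ln(m/\delta)/n}\,\lever_\modelSpace^\mu(x)\weight(x)$. The hypothesis $n\ge 9L^2\ln(\cdot)$ only keeps $\epsilon$ away from $1$; it cannot remove the extra $L\ge\sqrt{m}$. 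So (i) does not follow, and (ii) would come out as $L^3\sqrt{\ln(\cdot)/n}$. Replacing $\frac1n\sum_i\weight(x_i)\lever_\modelSpace^\mu(x_i)\le L$ by $\sqrt{\tr\Gram_\mu}$ still leaves a spurious $\sqrt{m}$.

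The remedy is to apply Cauchy--Schwarz over the sample index instead. Let $\psi\coloneqq\weight\mBasisVec$ and let $a_i$ be the $i$-th difference; then $\frac1n\sum_i\abs{a_i}\le(\frac1n\sum_i\abs{a_i}^2)^{1/2}$. Since $\frac1n\sum_i\psi(x_i)\psi(x_i)^\conT$ is $\Gram_\mu$ itself (up to conjugation conventions), the average of $\abs{a_i}^2$ equals $\psi(x)^\conT(\Gram_\mu^{-1}-\identity)\Gram_\mu(\Gram_\mu^{-1}-\identity)\psi(x)=\psi(x)^\conT(\identity-\Gram_\mu)(\Gram_\mu^{-1}-\identity)\psi(x)\le\frac{\epsilon^2}{1-\epsilon}\abs{\psi(x)}_2^2$. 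Hence $\abs{T_2(x)}\le\frac{\epsilon}{\sqrt{1-\epsilon}}\lever_\modelSpace^\mu(x)\weight(x)$, with no extra factor. With this correction your pointwise argument proves (i); the constant $3$ comes from optimising how the error budget is split between the two terms.

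\textbf{The uniform control of $T_1$ cannot come from $\Gram_\mu$.} Your sandwich bounds both sides through $u^\conT\Gram_\mu u$, so it can only yield a bound on $\sup_x\abs{T_1(x)}$ depending on $\opnorm{\Gram_\mu-\identity}_2$, $L$, $m$, $n$. No such bound can vanish at $\Gram_\mu=\identity$. Take OLS on $m=2M+1$ equispaced nodes of $\torus$ with $\modelSpace=\modelSpace_M^\Trig$ and $\mu=\unif{\torus}$, possibly with each node repeated $k$ times so that $n=km$ is arbitrary. Then $\Gram_\mu=\identity$ exactly and $T_2\equiv 0$, yet the Lebesgue constant grows like $\frac{2}{\pi}\ln m$, while that of Fourier projection grows like $\frac{4}{\pi^2}\ln m$. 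The inequalities $\abs{t}\le\frac{t^2}{2s}+\frac{s}{2}$ and $\abs{t}\ge t^2/\max\abs{t}$ compare first and second absolute moments only up to constant factors. Your matrix-Bernstein fallback fails for the same reason: it controls second moments, while $\abs{v_i^\conT u}$ is nonlinear in $v_i$. The $m+1$ in the logarithm is just the union bound of the $2m$-term Gram event with two scalar events.

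What is missing is an empirical-process argument, which also makes covering unnecessary:
\begin{itemize}
\item Set $F_\pm\coloneqq\sup_{\abs{u}_2\le L}\pm\bigl(\frac1n\sum_i\abs{v_i^\conT u}-\E{\abs{v^\conT u}}\bigr)$.
\item Symmetrise with a ghost sample and Rademacher signs to get $\E{F_\pm}\le 2\E{\sup_u\frac1n\sum_i\epsilon_i\abs{v_i^\conT u}}$.
\item Drop the absolute value by the contraction principle: the scalar version for $\field=\real$, Maurer's vector contraction for $\field=\complex$.
\item The resulting linear supremum is $L\abs{\frac1n\sum_i\epsilon_i v_i}_2$. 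Its expectation is at most $L\sqrt{m/n}\le L^2/\sqrt{n}$, because $\E{\abs{v_i}_2^2}=m$ and $L\ge\sqrt{m}$.
\item McDiarmid's inequality with bounded differences $2L^2/n$ then supplies the $L^2\sqrt{\ln(1/\delta)/n}$ fluctuation.
\end{itemize}
The factor $2$ versus $4$ in the expectation bound is what produces $C_\real=3\sqrt{3}$ versus $C_\complex=4\sqrt{3}$.
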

\end{mdframed}
\begin{remark}[Tail bounds]
    If you prefer tail bounds to high probability bounds, see \eqref{eq:
    pointwise lebesgue function concentration} and \eqref{eq: uniform leb
    constant concentration} respectively.
\end{remark}
\begin{remark}[Constants]
    The proof shows that the constant \(C_\field\) can be asymptotically improved
    to \(\sqrt{12}\) as \(m\to \infty\). While we tried our best to optimize
    constants, it is difficult to say how far this is still from optimal.
\end{remark}

Observe that we already need \(n\gtrsim \norm{\lever_\modelSpace^\mu}_\weight^2
\log(m/\delta)\) samples for the Gram concentration in Corollary~\ref{cor:
reasonably conditioned Gram matrix}. If \(n\) has exactly this
order, then
\[
    \abs{\lebfun{\weight}{\linProj}{x} - \lebfun{\weight}{\linProj_\mu}{x}}
    \lesssim \lever_\modelSpace^\mu(x)\weight(x)
    \qquad\text{and}\qquad
    \abs{\lebconst{\weight}{\linProj} - \lebconst{\weight}{\linProj_\mu}}
    \lesssim \norm{\lever_\modelSpace^\mu}_\weight.
\]
This therefore only yields the leverage function bounds. However if \(n\) is
increasing faster, then this concentration result can offer an improvement over
this leverage function bound. For example if we choose
\(n \asymp \norm{\lever_\modelSpace^\mu}_\weight^4 \log(m/\delta)\), then
\[
    \abs{\lebfun{\weight}{\linProj}{x} - \lebfun{\weight}{\linProj_\mu}{x}}
    \in \bigO(1) 
    \qquad\text{and}\qquad
    \abs{\lebconst{\weight}{\linProj} - \lebconst{\weight}{\linProj_\mu}}
    \in \bigO(1).
\]
Recall that in the optimal case of constant weighted leverage,
\(\norm{\lever_\modelSpace^\mu}_\weight^2 = m\), such that this is the difference
between \(n \asymp m \log(m/\delta)\) and \(n \asymp m^2 \log(m/\delta)\).
If \(\lebconst{\weight}{\linProj_\mu} \in \bigO(\log m)\), this is a significant improvement
over the leverage function bound for example, yielding
\[
    \norm{\linProj f - f}_\weight
    \le \bigl(1 + \lebconst{\weight}{\linProj}\bigr) \error_\modelSpace^\weight[f]
    \le \Bigl(1+ \underbrace{\lebconst{\weight}{\linProj_\mu}}_{\in \bigO(\log m)} + \underbrace{\abs[\big]{\lebconst{\weight}{\linProj} - \lebconst{\weight}{\linProj_\mu}}}_{\in \bigO(1)}\Bigr) \error_\modelSpace^\weight[f].
\]
Whereas the leverage function bound is at best of order \(\bigO(\sqrt{m})\)
(Lemma \ref{lem: optimality of constant weighted leverage bound}). 	\section{Catalogues}
\label{sec: catalogue}

\subsection{Metric projection errors}
\label{sec: metric projection error}

The metric projection error 
\(\error_\modelSpace^*[f]
    \coloneq \inf_{\model\in \modelSpace} \norm{f-\model}_*
\)
is a well-studied object in Approximation theory.
Bounds on this error are often referred to as `Jackson and Bernstein theorems' as they
proved the first bounds of this type for trigonometric and algebraic polynomials
\citep[e.g.][]{jacksonUeberGenauigkeitAnnaeherung1911,jacksonApproximationTrigonometricSums1912,bernstein1912recherches,bernsteinLordreMeilleureApproximation1912}.
Since the bounds are the same whether one considers regression or interpolation all
bounds found in the approximation theory literature
\citep[e.g.][]{lubinskySurveyWeightedPolynomial2007,devoreConstructiveApproximation1993,jacksonTheoryApproximation1930,achieserTheoryApproximation2013}
may be reused. To state a list of examples, we make use of the modulus of continuity
\[
    \modCont(f; \delta) \coloneq \sup_{\substack{\metric(x,y) \le \delta \\ x,y \in \domain}} \abs{f(x) - f(y)},
\]
where \(\metric\) is the metric on \(\domain\) and \(\delta > 0\). If \(f\) is
\(\alpha\)-Hölder continuous, then \(\modCont(f; \delta) \le c\delta^\alpha\) for example.

\begin{enumerate}[wide=0pt]
    \item \textbf{Piecewise constant functions.}
    Let \((\domain, \metric)\) be a metric space and 
    \((A_i)_{i\in I}\) a finite collection of disjoint subsets of \(\domain\) with \(\biguplus_{i\in
    I} A_i =\domain\) defining the space of piecewise constant functions \(\modelSpace =
    \Span\set{\ind{A_i} : i\in I}\). Choosing the midpoint between maximum and
    minimum values of \(f\) on each \(A_i\), i.e., \((\sup_{A_i} f +
    \inf_{A_i}f)/2\) as the value of the piecewise constant function on \(A_i\)
    immediately yields
    \[
        \tag{general decomposition}
        \error_\modelSpace^\infty[f]
        \le \tfrac12 \max_{i\in I} \Bigl(\sup_{A_i} f - \inf_{A_i}f\Bigr)
        \le \tfrac12 \modCont\Bigl(f; \max_{i\in I} \mathrm{diam}(A_i)\Bigr).
    \]
    For example let \(\domain = [a,b]^\dims\) equipped with metric
    \(\metric(x,y) = \abs{x-y}_\infty\) and let \(A_i\) be a square tiling of
    side-length \(\frac{b-a}{M}\) such that \(m\coloneq \dim \modelSpace = \# I =
    M^\dims\). Then
    \[
        \tag{square tiling}
        \error_\modelSpace^\infty[f]
        \le \tfrac12 \modCont\bigl(f; \tfrac{b-a}{m^{1/\dims}}\bigr).
    \]
    For example, if \(f\) is \(\alpha\)-Hölder continuous, then \(\error_\modelSpace^\infty[f] \in \bigO(m^{-\frac\alpha\dims})\),
    which shows a curse of dimensionality.

    To see that these bounds are sharp for the tiling of \(\domain=[a,b]^\dims\), consider
    linear functions, which are Lipschitz continuous with some constant \(L\) such that \(\omega(f; \delta) = L\delta\). 
    It is straightforward to show that the upper bound above is then attained by a linear
    function with maximum and minimum in opposite corners of the tiles. Since the worst-case
    examples are already linear functions, the bounds cannot be improved with the
    assumption of higher order smoothness than Lipschitz continuity. This is in
    contrast to the case of polynomials following below.

    \item\textbf{Algebraic polynomials.} 
    Let \(\domain \subseteq \real^\dims\) and let \(\modelSpace_M \coloneq
    \poly_{\set{\abs{\lambda}_1 \le M}}\) be the span of multivariate
    monomials \(x_1^{\lambda_1}\cdots x_\dims^{\lambda_\dims}\) for \(\lambda
    \in \nat_0^\dims\) with total degree \(\abs{\lambda}_1 \le M\).
    Let \(C^k\) denote the space of \(k\)-times continuously differentiable
    functions and \(f^{(k)}\) the \(k\)-th derivative of \(f\). Let
    \((x)_r \coloneq x(x-1)\cdots(x-r+1)\) denote the falling factorial, \(\sphere\)
    the unit sphere and \(\Ball\) the unit ball. Let \(\TV(f)\) be the total
    variation of \(f\); if \(f\) is absolutely continuous, then \(\TV(f) =
    \norm{f'}_{L^1}\). In the following table we always assume \(M\in \nat\) and \(k\in \nat_0\).
    \begin{center}\makebox[\linewidth][c]{
    \begin{NiceTabular}{c c m{3.4cm} m{6cm} c}[
        notes/label-in-tabular={\scriptsize #1},
        notes/style=\arabic{#1}
    ]
        \toprule
        \(\domain\) & \(\weight\) & Assumptions  & Bound \(\error_\modelSpace^\weight[f] \le\dots\) & Ref.
        \\
        \midrule
        \([a,b]\)
        & \(1\)
        & \(f\) analytic
        & \(o(\rho^{-M})\)\hfill for some \(\rho>1\)
        & \tabularnote{\footnotesize\citet[Table on p.~257]{bernstein1912recherches} quoting \citet{bernsteinLordreMeilleureApproximation1912}.
        See \citet[Thm.~8.2]{trefethenApproximationTheoryApproximation2019} with Rem.~\ref{rem: change of domain} for a quantitative result
        that also shows Chebyshev \(L^2\)-projection achieves this bound.
        }
        \\[2ex]
        \([a,b]\)
        & \(1\)
        & \(f^{(k)}\) exists, \(M>k \ge 1\)
        & \(\frac{2}{\pi k} \bigl(\frac{b-a}{2(M-k)}\bigr)^k \TV(f^{(k)})\)
        & \tabularnote{\footnotesize\citet[Thm.~7.2]{trefethenApproximationTheoryApproximation2019} with Rem.~\ref{rem: change of domain}.
        He moreover shows Chebyshev \(L^2\)-projection achieves this bound.
        }
        \\[2ex]
        \([a,b]\)
        & \(1\)
        & \(f\in C^{k}\), \(M \ge k\)
        & \(\frac{c_k}{2^k}\frac{(b-a)^k}{(m)_k} \modCont(f^{(k)}; \frac{b-a}{m-k})\)
        \hfill \(0.6< c_k \le 2.02\)
        & \tabularnote{
            \footnotesize 
            Corollary~\ref{cor: close to optimal jackson bound} based on \citet{sinwelUniformApproximationDifferentiable1981}
            and \citet{jacksonUeberGenauigkeitAnnaeherung1911}. The bound does not hold for \(c_k \le 0.6\).
        }
\\[2ex]
        \(\real\)
        & \(\exp(-\abs{x}^\alpha)\)
        & \(f^{(k-1)}\) abs.\ cont.,\par \(\alpha > 1\), \(M\ge k \ge 1\)
        & \(c_{\alpha,k} M^{k(\frac1\alpha - 1)}\norm{f^{(k)}}_\weight\)
        \hfill \(c_{\alpha,k} >0\)
        & \tabularnote{
            \footnotesize\citet[Cor.~3.2]{lubinskySurveyWeightedPolynomial2007} (+ Rem.~\ref{rem: MRS number} on the MRS number),
            Modulus of continuity estimates also exist but require an awkward redefinition of the modulus of continuity \citep[Thm.~3.5]{lubinskySurveyWeightedPolynomial2007}.
        }
        \\[2ex]
        \(\sphere\)
        & \(1\)
        & \(f\in C^0\)
        & \(c\, \modCont(f; \frac{\dims}M)\) \hfill \(c>0\)
        & \tabularnote{\footnotesize\citet[Thm.~2]{newmanJacksonsTheoremHigher1964}.}
        \\[2ex]
        \(\sphere\)
        & \(1\)
        & \(f\in C^k, \dims\ge 2\)
        & \(c_{\dims}^{k+1} M^{-k}\modCont(f^{(k)}; \frac1M)\) \hfill \(c_{\dims} > 0\)
        & \tabularnote{\footnotesize \citet[Thm.~3.3]{ragozinConstructivePolynomialApproximation1971} with inductively defined
            \(\modCont(f^{(k+1)};\delta) \coloneq \sum_{1\le i<j\le \dims} \modCont(D_{ij} f^{(k)}; \delta)\),
            where the operators \(D_{ij} = x_i\partial_j - x_j\partial_i\) span the Lie algebra of rotations.
            And the metric on \(\domain=\sphere\) is the geodesic distance.
        }
        \\[2ex]
        \(\Ball\)
        & \(1\)
        & \(f\in C^0\)
        & \(c\, \modCont(f; \frac{\dims}M)\) \hfill \(c>0\)
        & \tabularnote{\footnotesize\citet[Thm.~3]{newmanJacksonsTheoremHigher1964}, see also \citet[Thm.~3.4]{ragozinConstructivePolynomialApproximation1971} for a bound involving derivatives.}
        \\[2ex]
        \([-1,1]^\dims\)
        & \(1\)
        & \(f\in C^0\)
        & \(c\, \modCont(f; \frac{\dims^{3/2}}M)\) \hfill \(c>0\)
        & \tabularnote{\footnotesize\citet[Thm.~4]{newmanJacksonsTheoremHigher1964}.}
        \\[\defaultaddspace]
        \bottomrule
    \end{NiceTabular}
    }
    \end{center}
    \begin{remark}[Curse of dimensionality]
        In one dimension, \(\dims=1\), we have \(\dim \modelSpace_M = M+1\).
        For \(\domain = \sphere\) with \(\dims\ge 2\) however, we have
        \begin{equation}
            \label{eq: dimension of polynomials on the sphere} 
            \textstyle
            m \coloneq \dim \modelSpace_M = \binom{M+\dims}{\dims} - \binom{M+\dims -2}{\dims}\sim \frac{2M^{\dims-1}}{(\dims-1)!}
            \asymp M^{\dims-1},
        \end{equation}
        where the asymptotic regime assumes \(M\to \infty\) and \(\dims\) fixed.
        In particular, for \(f\in C^k\) with \(\alpha\)-Hölder continuous \(k\)-th derivative
        \[
            \error_\modelSpace^\infty[f] \in \bigO(M^{-(k+\alpha)}) = \bigO(m^{-\frac{k+\alpha}{\dims-1}}).
        \]
        That is, the polynomial growth \(m\asymp M^{\dims-1}\) of the dimension \(m=\dim \modelSpace_M\)
        in the degree cripples the approximation rate just as in the case of
        piecewise constant functions. The smoothness of the function
        \(f\) mitigates the curse of dimensionality however.
    \end{remark}

    \begin{remark}[Weighted norms]
        Weighted norms are a comparatively recent development and not as well
        explored as the unweighted case. To gain an intuition for the weighted
        bound, recall we already mentioned the idea of restricted range
        inequalities in Section~\ref{sec: restricted range inqualities}.
        Such a restricted range inequality is available for the univariate polynomials
        with weight \(\weight(x) = \exp(-\abs{x}^\alpha)\) (see Theorem~\ref{thm: restricted range inequalities for polynomials}). Specifically, the
        `relevant domain' is \([-a_M, a_M]\) with \(a_M\asymp M^{\frac1\alpha}\)
        the MRS number (see Rem.~\ref{rem: MRS number}). Best unweighted approximation on the
        interval \([-a_M, a_M]\) is of order \(\bigO(\frac{(2a_M)^k}{M^{k}} \modCont(f^{(k)}; \frac{2a_M}{M})) = \bigO(M^{k(\frac1\alpha-1)}\modCont(f^{(k)}; M^{\frac1\alpha-1}))\).
        This explains where the \(M^{k(\frac1\alpha - 1)}\) factor in the bound for weighted polynomials comes from.
        The actual proof is more complicated, since the restricted range inequality
        only applies to the functions in the model space and not the function \(f\).
    \end{remark}

    Many of these bounds are sharp and converse theorems exist that prove the
    convergence rate of approximation forces smoothness assumptions on \(f\) 
    \citep[see e.g.][Table on p.~257]{bernstein1912recherches}. Even the constants in the list above
    are fairly optimized. To obtain these bounds, approximation theorists identified
    a number of tricks to assume \(\domain=[-1,1]\) without loss of generality and that
    the function \(f\) is Lipschitz continuous. Many of the higher-order smoothness assumptions
    can then be bootstrapped from this case. The bounds for this simple case
    are often obtained constructively from Chebyshev projection (Fourier
    projection and transformation to algebraic polynomials). In the following we
    outline the tricks used to make the assumptions above. We also use them to
    prove the close to optimal modulus of continuity bound based on a result by
    \citet{sinwelUniformApproximationDifferentiable1981}.

    \begin{remark}[Trick 1: Change of domain]
        \label{rem: change of domain}
        Instead of the domain \([a,b]\) many authors consider the domain \([-1,1]\) without loss of generality.
        Let \(\modelSpace\) be the space of polynomials up to degree \(M\) restricted 
        to the domain \([a,b]\) and \(\modelSpace^T \coloneq \set{\model
        \circ T: \model \in \modelSpace}\) the space of polynomials on \([-1,1]\) with 
        \(T\colon [-1,1] \to [a,b]\) the linear bijection defined by \(T(x) = \frac{b-a}{2}x + \frac{a+b}{2}\). 
        Then
        \[
            \error_{\modelSpace}^\infty[f]
            = \inf_{\model\in \modelSpace}\norm{f-\model}_{\infty, [a,b]} 
            = \inf_{\model\in \modelSpace}\norm{f\circ T - \model\circ T}_{\infty, [-1,1]}
            = \error_{\modelSpace^T}^\infty[f\circ T].
        \]
        Using \((f\circ T)^{(k)} = (\frac{b-a}{2})^k
        f^{(k)}\circ T\), the observations \(\TV(cf\circ T) = c \TV(f)\) for \(c>0\)
        and \(\modCont(c f\circ T; \delta) = c \modCont(f; \frac{b-a}{2}\delta)\)
        yield the bounds on \([a,b]\) from those on \([-1,1]\).
    \end{remark}

    \begin{remark}[Trick 2: Derivative lifting]
        \label{rem: derivative lifting}
        Assume \(\modelSpace_M\coloneq \poly_M\) is the space of univariate
        polynomials.  Then the order of convergence for \(f\in C^k\)
        can be bootstrapped from a bound for Lipschitz continuous functions. 
        To see this, let \(\error_M^\infty \coloneq \error_{\modelSpace_M}^\infty\)
        and assume that we already have the bound
        \begin{equation}
            \label{eq: derivative lifting assumption}    
            \error_M^\infty[f] \le \eta_M \norm{f'}_\infty.
        \end{equation}
        Using the invariance of the best approximation error under translation by \(p\in \poly_M\) we get
        \[
            \error_M^\infty[f]
            = \inf_{p\in \poly_M}\error_M^\infty[f-p]
            \overset{\eqref{eq: derivative lifting assumption}}\le \eta_M \inf_{p\in \poly_M}\norm{f'-p'}_\infty
            = \eta_M \error_{M-1}^\infty[f'],
        \]
        where the last equality uses that any polynomial \(p'\in \poly_{M-1}\)
        has an anti-derivative \(p\) in \(\poly_M\) such that minimization over
        \(p\in \poly_M\) is equivalent to minimization over \(p'\in
        \poly_{M-1}\).
        Iterating this argument \(k\) times yields
        \(\error_M^\infty[f] \le \eta_M\cdots\eta_{M-k+1} \error_{M-k}^\infty[f^{(k)}]\).

        This recursive inequality is sometimes called the `Jackson-Favard inequality' and
        this approach also works for weighted norms \citep[e.g.][(3.11)]{lubinskySurveyWeightedPolynomial2007}.
        It breaks down for multivariate polynomials because not every vector
        field has an antiderivative. It also results in suboptimal constants
        whose disadvantage to the optimal constants grows exponentially in
        \(k\).  It is useful however, to gain an intuition where the falling
        products \((M)_k=M(M-1)\cdots(M-k+1)\) come from in the bounds for
        \(f\in C^k\) and to get the right order of convergence.
    \end{remark}

\citet[Theorem~1]{sinwelUniformApproximationDifferentiable1981} identified the optimal
    constants and shows (with application of Rem.~\ref{rem: change of domain})
    for \(\error_M^\infty \coloneq \error_{\poly_M}^\infty\) with \(m=\dim \poly_M=M+1\) that
    \begin{equation}
        \label{eq: sinwell favard bound}
        \error_M^\infty[f] \le \favConst_k \frac{(b-a)^k}{2^k(m)_k}\norm{f^{(k)}}_\infty
        \qquad\text{for}\qquad \favConst_k \coloneq \frac{4}{\pi}\sum_{j=0}^\infty \frac{(-1)^{j(k+1)}}{(2j+1)^{k+1}}.
    \end{equation}
    The Favard constants \(\favConst_k\)---named after \citet{favardMeilleursProcedesDapproximation1937}, who proved
    the corresponding tight result for trigonometric polynomials---cannot be reduced and satisfy
    \citep[p.~220]{favardMeilleursProcedesDapproximation1937}
\begin{equation}
        \label{eq: favard constants} 
        \tfrac{\pi^2}8 = \favConst_2 \le \favConst_4 \le \dots \le \tfrac{4}{\pi} < \dots \le \favConst_3 \le \favConst_1 = \tfrac{\pi}{2}
        \qquad \text{and}\qquad \lim_{k\to \infty} \favConst_k = \tfrac{4}{\pi}.
    \end{equation}

    \begin{remark}[Trick 3: Modulus of continuity from Lipschitz approximation]
        \label{rem: modulus of continuity from lipschitz approximation}
        A Lipschitz bound can always be translated into a modulus of continuity bound
        by approximation of \(f\colon [a,b] \to \real\) with a Lipschitz continuous function. 
        This trick was already used by
        \citet{jacksonUeberGenauigkeitAnnaeherung1911}, who used piecewise
        linear interpolation. For better constants, we use
        Lemma~\ref{lem: lipschitz approximation}
        to obtain a Lipschitz continuous function \(q\) with the properties
        \begin{equation}
            \label{eq: lipschitz approximation}
            \norm{f-q}_\infty \le \tfrac12\modCont(f; \delta)
            \qquad\text{and}\qquad
            \norm{q'}_\infty \le \tfrac{\modCont(f; \delta)}{\delta}.
        \end{equation}
        Assuming we already have the bound \(\error_M^\infty[q] \le \eta_M
        \norm{q'}_\infty\) for Lipschitz continuous functions \(q\), we may apply the
        triangle inequality to the approximation error to get
        \[
            \error_M^\infty[f]
            \le \error_M^\infty[q] + \norm{f-q}_\infty
            \overset{\text{assmpt.}}\le \eta_M \norm{q'}_\infty + \norm{f-q}_\infty
            \overset{\eqref{eq: lipschitz approximation}}\le (\tfrac12 + \tfrac{\eta_M}{\delta})\modCont(f; \delta).
        \]
        Observe that this modulus of continuity trick is not limited to polynomials and can be used for any model space.
    \end{remark}

    Using the tricks above, we can now prove a close to optimal Jackson bound
    for univariate polynomials using the optimal bounds from
    \citet{sinwelUniformApproximationDifferentiable1981} for Lipschitz
    continuous \(f^{(k)}\).

    \begin{corollary}[Close to optimal algebraic bound]
        \label{cor: close to optimal jackson bound}
        Assume \(\domain = [a,b]\), \(\error_M^\infty \coloneq \error_{\poly_M}^\infty\) with \(m=\dim \poly_M = M+1\). Then
        for \(f\in C^k\) with \(k\le M\)
        \[
            \error_M^\infty[f]
            \le \tfrac{c_k}{2^k} \tfrac{(b-a)^k}{(m)_k}\modCont(f^{(k)}; \tfrac{b-a}{m-k})
            \qquad\text{for}\qquad c_k \coloneq (\tfrac12 + \tfrac{\pi}{4})\favConst_k
            \le (\tfrac12 + \tfrac{\pi}{4})\tfrac{\pi}{2}
            \le 2.02.
        \]
        The inequality does not hold, if \(c_k\) is replaced by a value smaller than \(\frac{\favConst_{k+1}}{2} \ge \frac{\pi^2}{16} > 0.6\).
    \end{corollary}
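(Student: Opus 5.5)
The plan is to combine Trick~3 (Remark~\ref{rem: modulus of continuity from lipschitz approximation}) with Sinwel's sharp Favard-type bound \eqref{eq: sinwell favard bound}. Trick~3 is applied at the level of the $k$-th derivative rather than to $f$ itself. Set $\delta \coloneq \frac{b-a}{m-k}$, which is positive since $k\le M<m$. Lemma~\ref{lem: lipschitz approximation} applied to $f^{(k)}$ gives a Lipschitz function $q$ with
\[
\norm{f^{(k)}-q}_\infty\le \tfrac12\modCont(f^{(k)};\delta)
\qquad\text{and}\qquad
\norm{q'}_\infty\le \tfrac{\modCont(f^{(k)};\delta)}{\delta}.
\]
Let $g$ be any $k$-fold antiderivative of $q$ on $[a,b]$ (for $k=0$ take $g=q$). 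Then $g^{(k)}=q$ and $(f-g)^{(k)}=f^{(k)}-q$.

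Since $\error_M^\infty$ is subadditive, $\error_M^\infty[f]\le \error_M^\infty[g]+\error_M^\infty[f-g]$, and I bound the two terms separately with \eqref{eq: sinwell favard bound}.

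\textbf{Second term (order $k$).} Sinwel's bound at order $k$ gives
\[
\error_M^\infty[f-g]\le \favConst_k\frac{(b-a)^k}{2^k(m)_k}\cdot\tfrac12\modCont(f^{(k)};\delta).
\]

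\textbf{First term (order $k+1$).} Sinwel's bound at order $k+1$ (this needs $k+1\le m$, which holds because $k\le M$) together with $(m)_{k+1}=(m)_k(m-k)$ gives
\[
\error_M^\infty[g]\le \favConst_{k+1}\frac{(b-a)^{k+1}}{2^{k+1}(m)_k(m-k)}\cdot\frac{\modCont(f^{(k)};\delta)}{\delta}
=\frac{\favConst_{k+1}}{2}\,\frac{(b-a)^k}{2^k(m)_k}\,\modCont(f^{(k)};\delta),
\]
where the equality uses the choice $\delta=\frac{b-a}{m-k}$.

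\textbf{Combining and simplifying the constant.} Adding the two terms gives the claimed bound with constant $\frac12(\favConst_k+\favConst_{k+1})$ in place of $c_k$. To reach $c_k=(\frac12+\frac\pi4)\favConst_k$, I use the ordering \eqref{eq: favard constants}, which gives $\favConst_{k+1}\le\favConst_1=\frac\pi2$. I also use $\favConst_k\ge 1$ for every $k\ge0$: for $k\ge1$ this follows from \eqref{eq: favard constants}, and for $k=0$ from the Leibniz series $\favConst_0=\frac4\pi\cdot\frac\pi4=1$. Hence $\frac12\favConst_{k+1}\le\frac\pi4\le\frac\pi4\favConst_k$. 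Finally, the numerical bound $c_k\le(\frac12+\frac\pi4)\frac\pi2\le2.02$ follows from $\favConst_k\le\frac\pi2$.

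\textbf{Edge case, which I expect to be the main technical obstacle.} Whether \eqref{eq: sinwell favard bound} is valid at order $k+1=M+1=m$, which is needed when $k=M$. There the bound reduces to the classical Chebyshev estimate for $\norm{f^{(M+1)}}_\infty$, with $(m)_m=m!$ and $\favConst_{M+1}$. I would check this against Sinwel's hypotheses. If it is not covered, I would handle $k=M$ directly with the Chebyshev remainder estimate, whose constant $1\le\favConst_{M+1}$ is even better.

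\textbf{Lower bound.} For the claim that $c_k$ cannot be taken below $\frac{\favConst_{k+1}}2$, I would use Sinwel's sharpness at order $k+1$. Sharpness means there are functions $h$ with Lipschitz $h^{(k)}$ for which $\error_M^\infty[h]$ is arbitrarily close to $\favConst_{k+1}\frac{(b-a)^{k+1}}{2^{k+1}(m)_{k+1}}\norm{h^{(k+1)}}_\infty$. For such $h$, $\modCont(h^{(k)};\delta)\le\delta\norm{h^{(k+1)}}_\infty$. Substituting this into the right-hand side of the corollary shows it is at most $\frac{2c_k}{\favConst_{k+1}}$ times (approximately) the true error. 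Consequently any admissible constant must satisfy $c_k\ge\frac{\favConst_{k+1}}2$. The remaining inequality $\frac{\favConst_{k+1}}2\ge\frac{\pi^2}{16}>0.6$ follows from $\favConst_j\ge\favConst_2=\frac{\pi^2}8$ in \eqref{eq: favard constants}.
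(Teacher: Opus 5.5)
Your argument is correct and proves the statement, but it is organised differently from the paper's proof, and it gives a better constant.

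The paper first applies Trick~2 in the form \(\error_M^\infty[f]=\inf_{p\in\poly_M}\error_M^\infty[f-p]\), together with \eqref{eq: sinwell favard bound} at order \(k\). This reduces the problem to \(\favConst_k\frac{(b-a)^k}{2^k(m)_k}\,\error_{M-k}^\infty[f^{(k)}]\). It then applies Trick~3 to \(f^{(k)}\) in the smaller space \(\poly_{M-k}\), using \eqref{eq: sinwell favard bound} only at order \(1\). The Lipschitz part therefore picks up the product \(\favConst_k\favConst_1\), which is where \(c_k=(\frac12+\frac\pi4)\favConst_k\) comes from.

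You instead regularise \(f^{(k)}\) via Lemma~\ref{lem: lipschitz approximation} and lift the regularisation back to \(g\) by \(k\)-fold integration. You then use \eqref{eq: sinwell favard bound} twice on \(\poly_M\) itself, at orders \(k\) and \(k+1\). With \(\delta=\frac{b-a}{m-k}\), the identity \((m)_{k+1}=(m)_k(m-k)\) makes the two pieces combine to the constant \(\frac12(\favConst_k+\favConst_{k+1})\).

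For \(k=0\) your constant agrees with the paper's. For \(k\ge1\) it is strictly smaller, because \(\favConst_{k+1}\le\frac\pi2<\frac\pi2\favConst_k=\favConst_1\favConst_k\). Concretely, it is at most \(\frac12(\frac\pi2+\frac4\pi)<1.43\) instead of \(2.02\). Its ratio to the lower bound \(\frac{\favConst_{k+1}}2\) tends to \(2\) rather than \(1+\frac\pi2\). You avoid the kind of loss from chaining Favard bounds that Remark~\ref{rem: derivative lifting} describes.

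The paper's route, in turn, is a literal composition of Tricks~2 and~3 and needs \eqref{eq: sinwell favard bound} only at orders \(k\) and \(1\). Your route needs the order-\((k+1)\) bound for a function whose \(k\)-th derivative is merely Lipschitz. That is the same kind of regularity the paper already uses at order \(1\).

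The edge case \(k=M\) that you flag is harmless. There \((m)_m=m!\neq0\), and the inequality holds, for instance by the Chebyshev estimate you mention. The paper's own proof relies on the analogous instance, order \(1\) on \(\poly_0\), when \(k=M\).

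Your lower-bound argument coincides with the paper's.
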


    \item\textbf{Trigonometric polynomials.} We consider the space of
    trigonometric polynomials 
    \[
        \modelSpace_M^\Trig
        \coloneq \set[\Big]{a_0 + 2\sum_{k=1}^M a_k \cos(k\ang) + 2\sum_{k=1}^M b_k \sin(k\ang): a_k, b_k \in \real}
    \]
    of degree at most \(M\). The domain \(\domain\) is the circle
    \(\torus \coloneq \real/(2\pi\integer)\) such that
    the functions \(f\colon \torus \to \real\) may be viewed as \(2\pi\)-periodic
    functions \(f\colon \real\to \real\). Since weights are mostly used for unbounded
    domains, we only consider the unweighted case \(\weight\equiv 1\) here.
    \begin{center}\makebox[\linewidth][c]{
    \begin{NiceTabular}{m{5.5cm} m{6.5cm} c}[
        notes/label-in-tabular={\scriptsize #1},
        notes/style=\arabic{#1},
    ]
        \toprule
        Assumptions  & Bound \(\error_\modelSpace^\infty[f] \le\dots\) & Ref.
        \\
        \midrule
        \(f\) is analytic on \(S\) with \(\abs{f(z)}\le B\),
        \par \(S\coloneq\set{x+iy: \abs{y} \le \rho, \abs{x} \le \pi }\)
        & \(\frac{2B}{e^\rho -1} e^{-\rho M}\)
        & \tabularnote{\footnotesize
        \citet[Theorem~4.2]{wrightExtensionChebfunPeriodic2015}, 
        and the bound is achieved by \(L^2(\torus)\)-projection onto \(\modelSpace_M^\Trig\).
        }
        \\[3ex]
        \(f^{(k)}\) exists, \(M, k\in \nat\)
        & \(\frac{1}{\pi k} M^{-k} \TV(f^{(k)})\)
        & \tabularnote{\footnotesize \citet[Theorem~4.2]{wrightExtensionChebfunPeriodic2015}, 
        and the bound is achieved by \(L^2(\torus)\)-projection onto \(\modelSpace_M^\Trig\).
        }
        \\[2ex]
        \(f\in C^{k}\), \(M \ge 1\)
        & \(\frac{c_k}{(M+1)^k} \modCont(f^{(k)}; \frac\pi{M+1})\)
        \hfill \(\frac{\pi}{8} \le c_k \le \favConst_k \le \frac{\pi}{2}\)
        & \tabularnote{
            \footnotesize 
            Favard's inequality \citep[e.g.][(19.5)]{arestovExpositionLecturesSB2011} combined with
            \citet[between Eq.~(1) and (2)]{korneichukExactConstantJackson1962}, see Rem.~\ref{rem: favard's inequality} for details.
            The bound does not hold for \(c_k < \frac{\pi}{8}\).
        }
\\[\defaultaddspace]
        \bottomrule
    \end{NiceTabular}
    }
    \end{center}

    \begin{remark}[Favard's inequality]
        \label{rem: favard's inequality}
        For \(\error_M^\infty \coloneq \error_{\modelSpace_M^\Trig}^\infty\)
        \citet[Théorème, p.~219]{favardMeilleursProcedesDapproximation1937} proved
        \begin{equation}
            \label{eq: favard's inequality}
            \error_M^\infty[f] \le \tfrac{\favConst_k}{(M+1)^{k}}\norm{f^{(k)}}_\infty
        \end{equation}
        and showed the Favard constants \(\favConst_k\), cf.~\eqref{eq: favard
        constants}, to be optimal. Unfortunately, we can not apply Trick 2
        (Rem.~\ref{rem: derivative lifting}) since the antiderivative of the
        constant function included in \(\modelSpace_M^\Trig\) is not in
        \(\modelSpace_K^\Trig\) for any \(K\). It was later realized that the
        proof of Favard can be modified to yield the following `Favard's inequality' directly
        \citep[e.g][(19.5)]{arestovExpositionLecturesSB2011}
        \[
            \error_M^\infty[f] \le \tfrac{\favConst_k}{(M+1)^{k}}\error_M^\infty[f^{(k)}].
        \]
        Notice, that this inequality does not decrease \(M\) as in the case of algebraic polynomials,
        intuitively because the antiderivative of most trigonometric basis functions
        is contained in the same model space. With the optimal bound
        \(\error_M^\infty[q]  \le \modCont(q; \frac\pi{M+1})\) for
        continuous \(q\) by \citep[between Eq.~(1) and (2)]{korneichukExactConstantJackson1962}
        this implies the bound in the table. For the lower bound on \(c_k\)
        again consider \(f\) with \(\modCont(f^{(k)}; \delta) \le \delta \norm{f^{(k+1)}}_\infty\)
        and use the optimality of \eqref{eq: favard's inequality} to get \(c_k \pi \ge \favConst_{k+1} \ge \frac{\pi^2}{8}\).
    \end{remark}

\end{enumerate}

\subsection{Leverage function bounds}
\label{sec: christoffel function and reproducing kernel bounds}

\begin{enumerate}[wide=0pt]
    \item \textbf{Piecewise constant functions.} Let \((\domain, \metric)\) be a metric space and 
    \((A_i)_{i\in I}\) a finite collection of disjoint subsets of \(\domain\)
    with \(\biguplus_{i\in I} A_i =\domain\) defining the
    space of piecewise constant functions \(\modelSpace = \Span\set{\ind{A_i} :
    i\in I}\). Let \(\mu\) be a probability measure on \(\domain\). Since
    the function \(\ind{A_i}\) are orthogonal with respect to \(\mu\) with
    \(\norm{\ind{A_i}}_\mu^2 = \mu(A_i)\), the orthonormal basis is given by
    \(\mu(A_i)^{-\frac12}\ind{A_i}\) and thus
    \[
        \norm{\lever_\modelSpace^\mu}_\infty^2
        = \sup_{x\in \domain}\sum_{i\in I} \tfrac{\ind{A_i}(x)}{\mu(A_i)}
        = \sup_{i\in I} \tfrac{1}{\mu(A_i)}.
    \]
    This is clearly minimized to \(m\) by the uniform partition \(\mu(A_i) = \frac1{m}\) for all \(i\in I\) with \(m= \dim \modelSpace = \#I\).

\item \textbf{Polynomials.} The most studied model space is the space of univariate polynomials \(\modelSpace_m \coloneq \poly_M\) up to degree
\(M=m-1\) with \(\dim \modelSpace_m = m\). While the degrees \(M\in\nat_0\) of a univariate monomial \(x^M\)
have a natural total order, the vector valued degrees \(\lambda = (\lambda_1,\dots, \lambda_\dims) \in \nat_0^\dims\)
of a multivariate monomial \(x_1^{\lambda_1}\cdots x_\dims^{\lambda_\dims}\)
do not. To define `up to degree \(M\)', one could choose the maximum degree \(\abs{\lambda}_\infty\) or the
`total degree' \(\abs{\lambda}_1\).  A well known generalization of `up to
degree \(M\)', which can capture both of these notions is a downward closed set
of degrees.
\begin{definition}[Downward closed]
    \(\Lambda \subset \nat_0^\dims\) is downward closed, if for \(\lambda\in \Lambda\) and \(\tilde\lambda\in\nat_0^\dims\)
    the assumption \(\tilde{\lambda}_i \le \lambda_i\) for all \(i\in \set{1,\dots, \dims}\) implies 
    \(\tilde\lambda\in \Lambda\).
\end{definition}
In the following table we always assume
\[
    \modelSpace_\Lambda = \Span\set[\Big]{ x_1^{\lambda_1}\cdots x_\dims^{\lambda_\dims} : \lambda \in \Lambda},
    \qquad\text{with some \(\Lambda\subset \nat_0^\dims\) downward closed}
\]
and we use the notation \(m \coloneq \dim \modelSpace_{\Lambda} = \# \Lambda\) for the dimension, \(\lever_m^* \coloneq \lever_{\modelSpace_\Lambda}^*\) and \(\reprK_m^*\coloneq \reprK_{\modelSpace_\Lambda}^*\).

\begin{center}\makebox[\textwidth][c]{
\begin{NiceTabular}{m{1cm} m{5.5cm} m{4.7cm} c}[
        notes/label-in-tabular=\scriptsize{#1},
        notes/style=\arabic{#1}
    ]
    \toprule
    \(\domain\) & Assumptions &  Bound & Ref.
    \\
    \midrule
    \([-1,1]\)
    &``Legendre'': 
    \(\mu\sim \unif{[-1,1]}\)
    & \(\norm{\lever_m^\mu}_\infty = m\) 
    & \tabularnote{\citealp[Sec.~3, p.~830-831]{cohenStabilityAccuracyLeast2013}, their \(K(m)\) is \(\norm{\lever_m^\mu}_\infty^2\)}
    \\[3ex]
    \([-1,1]\)
    &``Chebyshev'':
    \(\mu(dx) = \frac{dx}{\pi\sqrt{1-x^2}}(x)\)
    & \(\norm{\lever_m^\mu}_\infty = \sqrt{2m-1}\) 
    & \tabularnote{\citealp[Sec.~3, p.~831]{cohenStabilityAccuracyLeast2013}}
    \\[3ex]
    \([-1,1]\)
    &\(\mu(dx) = W(x) dx\),\par + technical assmpt.\ on \(W\)
    & \(\reprK_m^\mu(x,x) \sim \frac{m}{W(x)}\frac{1}{\pi \sqrt{1-x^2}}\)
    & \tabularnote{\citealp[Thm. 4.5.2, 4.5.5]{nevaiGezaFreudOrthogonal1986}}
\\[3ex]
    \([-1,1]^\dims\)
    & ``Legendre'': \(\mu \sim \unif{[-1,1]^\dims}\)
    & \(\norm{\lever_m^\mu}_\infty \le m\)
    & \tabularnote{\citealp[Lem.~3.1]{chkifaDiscreteLeastSquares2015}, their \(K_L(\Lambda)\) is \(\norm{\lever_m^\mu}_\infty^2\)}
    \\[3ex]
    \([-1,1]^\dims\)
    & ``Cheb.'': \(\mu(dx) = \prod_{i=1}^\dims \frac{dx_i}{\pi\sqrt{1-x_i^2}}\)
    & \(\norm{\lever_m^\mu}_\infty \le \min\set{m^{\frac{\ln 3}{2\ln 2}}, 2^{\frac\dims2} \sqrt{m}}\)
    & \tabularnote{\citealp[Lem.~3.3 + Eq.\ following (3.29)]{chkifaDiscreteLeastSquares2015}, their \(K_T(\Lambda)\) is \(\norm{\lever_m^\mu}_\infty^2\)}
    \\[3ex]
    \([-1,1]^\dims\)
    &``Jacobi'':
    \(\mu(dx) = \prod_{i=1}^\dims \rho(x_i)dx_i\),\par
    \(\rho(x) \propto (1-x)^{\alpha}(1+x)^{\beta}\), \(\alpha, \beta \in \nat_0\)
    & \(\norm{\lever_m^\mu}_\infty \le m^{\max\set{\alpha,\beta}+1}\)
    & \tabularnote{\citealp[Lem.~3.4]{chkifaDiscreteLeastSquares2015}, their \(K_J(\Lambda)\) is \(\norm{\lever_m^\mu}_\infty^2\)}
    \\[\defaultaddspace]
    \bottomrule
    \end{NiceTabular}
}
\end{center}
    \item\textbf{Trigonometric polynomials.}
    Let \(\mu= \unif{\torus}\) be the uniform measure on the circle \(\torus\)
    and \(\modelSpace\) the \emph{trigonometric polynomials} up to degree \(M\), then
    \[
        \norm{\lever_m^\mu}_\infty = \sqrt{m}
        \qquad\text{with}\qquad m=\dim \modelSpace = 2M+1.
    \]
    See e.g. \citet[Sec.~3, p.~830]{cohenStabilityAccuracyLeast2013}. This is also easily
    seen from the fact that the reproducing kernel is the Dirichlet kernel (e.g.\
    Theorem~\ref{thm: ONBs and reproducing kernels}), which is stationary
    and therefore has constant diagonal such that Lemma~\ref{lem: optimality of constant weighted leverage bound} applies.
    \item\textbf{Other.} \citet{chardonSamplingReconstructionSolutions2014} investigate
    \emph{Fourier-Bessel functions} and \emph{plane waves} on the disk.
\end{enumerate}

\subsection{Lebesgue constants}

The upper bound on the Lebesgue constant using the leverage function is at best of
order \(\sqrt{m}\) (Lemma~\ref{lem: optimality of constant weighted leverage bound}) 
and can thus be very loose if the Lebesgue constant itself is very small. Sometimes, it is therefore
beneficial to exert more effort to bound the Lebesgue constant directly.  As we
explained in Section~\ref{sec: strategies} already, the usual strategy is to
find an \(L^2(\mu)\) projection with good Lebesgue constant first and then try to
emulate its behavior with a regression operator---either with exact quadrature or
with randomized regression (cf.~Theorem~\ref{thm: lebesgue constant concentration}).
To explain why we focus on Fourier and Chebyshev projection, we begin with a
summary of known results about the Lebesgue constants of projections. Then,
we present novel bounds on the Lebesgue constants of Fourier and Chebyshev
regression with optimal logarithmic growth.  To contextualize our results, we finally summarize known results
about the Lebesgue constants of interpolation. The asymptotic behavior of the
Lebesgue constant of equispaced interpolation and those using the roots of the Jacobi polynomials
explain why we expect the leverage function bound to be good enough for many
other types of regression.

\subsubsection{Lebesgue constants of projections}

\begin{enumerate}[wide=0pt]
    \item \textbf{Trigonometric polynomials.}
    Let the Trigonometric projection \(\linProj_{\unif{\torus}}\) be the
    \(L^2\)-projection from \(C(\torus)\) into the trigonometric polynomials
    \(\modelSpace_M^\Trig\) up to degree \(M\) with respect to the uniform
    measure on the circle \(\torus\). Then \(\linProj_{\unif{\torus}}\) is the \emph{unique}
    minimizer of the Lebesgue constant among all linear projections
    \(\linProj\colon C(\torus)\to
    \modelSpace_M^\Trig\)
    \citep[Thm.~5.1]{cheneyMinimalPropertyFourier1969,devoreConstructiveApproximation1993}
    and with Fejèr's characterization of the Lebesgue constant of Fourier
    projection \citep[see e.g.][Thm.~2.1]{devoreConstructiveApproximation1993} this
    implies
    \[
        \lebconst{\infty}{\linProj_{\unif{\torus}}}
        = \inf_{\linProj} \lebconst{\infty}{\linProj}
        = \tfrac{4}{\pi^2} \ln M + \bigO(1)
    \]
    \item \textbf{Algebraic polynomials.}
    Via reduction to Fourier projection, one obtains a lower bound on the Lebesgue
    constant of projections \(\linProj\colon C[-1,1]\to \poly_M\) into the
    algebraic polynomials \(\poly_M\) of degree \(M\)
    \citep[Cor.~5.4]{devoreConstructiveApproximation1993} and this
    lower bound is very close to the Lebesgue constant of Chebyshev projection \(\linProj_{\mu_C}\)
    \citep[Thm.~15.3]{trefethenApproximationTheoryApproximation2019}
    \[
        \tfrac{4}{\pi^2} \ln(M+1) + 3
        \overset{\text{Trefethen}}\ge
        \lebconst{\infty}{\linProj_\mu}
        \ge \inf_{\linProj} \lebconst{\infty}{\linProj}
        \overset{\text{DeVore}}\ge
        \tfrac{2}{\pi^2} \ln M + \bigO(1).
    \]
    Chebyshev projection \(\linProj_{\mu_C}\) is the \(L^2(\mu_C)\) projection with respect
    to the Chebyshev measure
    \[
        \mu_C(dx) = \tfrac{dx}{\pi\sqrt{1-x^2}} = \tfrac1\pi\abs{\arccos'(x)}dx.
    \]
    The Chebyshev measure is the pushforward of the uniform measure on the half-circle
    \([0,\pi]\) under the map \(\cos\colon [0,\pi] \to [-1,1]\) and thereby related
    to Fourier projection.

    \item \textbf{Polynomials on the sphere.} Assume \(\domain = \sphere\) for \(\dims \in \nat\setminus\set{1}\)
    and assume that \(\modelSpace\) is a rotation invariant subspace of the \(\dims\)-variate polynomials \(\poly^\dims\), that is
    \(\modelSpace = \set{\model \circ U : \model \in \modelSpace}\) for any rotation \(U\). For example,
    the space \(\poly_M^\dims\) of polynomials of total degree at most \(M\) is rotation invariant.
    Then it is well known \citep[Thm.~6.4 (Berman, Daugavet)]{reimerMultivariatePolynomialApproximation2003}
    and \citep[(6.16)+(6.17)]{reimerMultivariatePolynomialApproximation2003}
    \vspace{-1.2em}
    \[
        \inf_{\linProj} \lebconst{\infty}{\linProj}
        = \lebconst{\infty}{\linProj_{\unif{\sphere}}}
        \overset{\substack{\text{assuming}\\\modelSpace=\poly_M^\dims}}\sim \begin{cases}
            \frac{2}{\pi^{3/2}} \frac{\Gamma(\frac{\dims-2}4) \Gamma(\frac{\dims}4)}{\Gamma(\frac{\dims-1}2)}
            M^{\frac{\dims-2}2} & \dims \ge 3
            \\
            \frac{4}{\pi^2} \ln M & \dims = 2,
        \end{cases}
    \]
    where the infimum is taken over all linear projections \(\linProj\colon
    C(\sphere)\to \modelSpace\). Recall the dimension of the model space behaves like \(m = \dim
    \poly_M^\dims \asymp M^{\dims-1}\) by \eqref{eq: dimension of polynomials on
    the sphere} such that for \(\dims \ge 3\) 
    \[
        \lebconst{\infty}{\linProj_{\unif{\sphere}}} \asymp M^{\frac{\dims-2}{2}} \asymp m^{\frac12(1-\frac{1}{\dims-1})}.
    \]
    Consequently, as \(\dims \to \infty\) the optimal leverage function bound \(\sqrt{m}\) becomes tight.
\end{enumerate}

\subsubsection{Lebesgue constants of regression}
\label{sec: lebesgue constants of regression}

\begin{enumerate}[wide=0pt]
    \item \textbf{Piecewise constant functions.} 
    Let \((\domain, \metric)\) be a metric space and 
    \((A_j)_{j\in I}\) a finite collection of disjoint subsets of \(\domain\)
    with \(\biguplus_{j\in I} A_j =\domain\) defining the
    space of piecewise constant functions \(\modelSpace = \Span\set{\ind{A_j} :
    j\in I}\). Let \(\mu_n=\frac1n\sum_{i=1}^n \omega_i \delta_{x_i}\)
    be the empirical measure of a WLS regression projection \(\linProj_{\mu_n}\) with weights \(\omega_i\ge 0\) and
    regression points \(x_i\in \domain\). Assume \(\mu_n(A_j) > 0\) for all
    \(j\in I\). Since \(\mu_n(A_j)^{-\frac12} \ind{A_j}\) is then an orthonormal
    basis of \(\modelSpace\) we have by Lemma~\ref{lem: reproducing kernel} and
    Remark~\ref{rem: positive reproducing kernels}
    \[
        \reprK_{\modelSpace}^{\mu_n}(x,y)
        = \sum_{j\in I} \frac{\ind{A_j}(x)\ind{A_j}(y)}{\mu_n(A_j)}
        \ge 0
        \qquad
        \overset{\text{Rem.~\ref{rem: positive reproducing kernels}}}\implies
        \qquad
        \lebfun{\infty}{\linProj_{\mu_n}}{x}=1.
    \]
    
    \item \textbf{Fourier and Chebyshev regression.}
    In the following definitions of function spaces we use \(x\) as a placeholder for the variable of the function,
    assume \(k\in \integer\) and define \(\torus \coloneq \real/(2\pi\integer)\) to be the circle:
    \begin{align*}
        \modelSpace_M^\F
        &\coloneq \Span_\complex\set[\big]{e^{ikx} : \abs{k} \le M}
        && x\in \torus
        && \dim_\complex \modelSpace_M^\F = 2M+1
        \\
        \modelSpace_M^\Trig
        &\coloneq \Span_\real\set[\big]{1, \cos(kx), \sin(kx) : 1 \le k \le M}
        && x\in \torus
        && \dim_\real \modelSpace_M^\Trig = 2M+1
        \\
        \modelSpace_M^{\cos}
        &\coloneq \Span_\real\set[\big]{\cos(kx) : 0 \le k \le M}
        && x\in [0,\pi]
        && \dim_\real \modelSpace_M^{\cos} = M+1
        \\
        \poly_M
        &\coloneq \Span_\real\set[\big]{x^k : 0 \le k \le M}
        && x\in [-1,1]
        && \dim_\real \poly_M = M+1.
    \end{align*}
    The trigonometric polynomials \(\modelSpace_M^\Trig\) are the restriction of Fourier polynomials \(\modelSpace_M^\F\) to real-valued functions
    \[
        \modelSpace_M^\Trig
        = \set[\big]{f\in \modelSpace_M^\F : f \text{ is real-valued}}.
    \]
    And with the extension \(f(-x) \coloneq f(x)\) for \(x\in [0,\pi]\) of arbitrary functions \(f\colon [0, \pi] \to \real\)
    on the interval/half-circle \([0,\pi]\) to \(2\pi\)-periodic functions
    \(f\colon \torus \to \real\) we also get with some abuse of notation
    \[
        \modelSpace_M^{\cos}
        = \set[\big]{f \in \modelSpace_M^\Trig: f \text{ is even}}.
    \]
    Finally, the surprising fact that the Chebyshev polynomials \(T_k(x) = \cos(k\cos^{-1}(x))\)
    are indeed polynomials implies that
    \[
        \poly_M = \set[\big]{f\circ \cos^{-1} : f\in \modelSpace_M^{\cos}}.
    \]
    In the previous section on Lebesgue constants of projections, we have seen that
    \(L^2\)-projection with respect to the uniform measure on the circle is optimal.
    The natural discretization of this is the family of discrete measures
    \[
        \mu_n^\alpha \coloneq \frac1n \sum_{k=0}^{n-1} \delta_{\ang_k^\alpha} \quad\text{with equispaced}\quad
        \ang_k^\alpha \coloneq \alpha + \tfrac{2\pi k}{n} \in \torus.
    \]
    \(L^2(\mu_n^\alpha)\)-projection to \(\modelSpace_M^\F\) is OLS regression on
    the equispaced points \(\ang_k^\alpha\). With a slight abuse of terminology, we will
    refer to this regression as the discrete Fourier transform (DFT(\(\complex\))).
    If we restrict ourselves to real-valued functions, we get
    \(L^2(\mu_n^\alpha)\)-projection to \(\modelSpace_M^\Trig\), which we refer to as DFT(\(\real\)).
    Recall that \(\modelSpace_M^{\cos}\) may be viewed as the restriction of the
    trigonometric polynomials to even functions on \([-\pi, \pi]\). For this we
    reflected arbitrary functions from \([0,\pi]\) around zero.  For regression this
    means we need to reflect the regression points around zero
    and obtain equispaced points on the circle. Not every shift \(\alpha\) yields
    a symmetric set of points after reflection, which is why the discrete cosine transforms
    (DCT) use more specific regression points. We will focus on the DCT-1 and DCT-2 nodes given by
    \[
        \ang_k^{(1)} \coloneq k\tfrac{\pi}{n-1} \in [0, \pi]
        \qquad\text{and}\qquad
        \ang_k^{(2)} \coloneq \tfrac{\pi}{2n} + k\tfrac{\pi}{n} \in [0, \pi]
        \qquad\text{for}\qquad k=0,\dots, n-1.
    \]
    Observe that the first and last DCT-1 nodes are reflected to themselves (cf.~Figure~\ref{fig: DCT node reflection}), which
    requires them to be weighted by \(\frac12\) to get an approximation of a uniform measure
    on the circle. The pushforward of the resulting measures on \([0,\pi]\) via \(\cos\) yields the
    Chebyshev measures on \([-1,1]\) used for algebraic polynomial regression. The cosine of the DCT-2 nodes are the roots of the Chebyshev polynomial
    \(T_n\), known as the Chebyshev nodes of the \ul{first} kind (confusingly!), and the cosine of the
    DCT-1 nodes are the extrema of \(T_{n-1}\), known as the Chebyshev nodes of the \ul{second} kind.

    \begin{figure}
        \centering
        \begin{subfigure}[t]{0.33\textwidth}
            \centering
            
\begin{tikzpicture}[scale=1, every node/.style={font=\small}]
    \def\r{1.3}   \def\n{10}     

    \pgfmathtruncatemacro{\nmone}{\n-1}

\draw[thick] (0,0) circle (\r);

\draw[gray!60] (-1.5,0) -- (1.5,0);

    \foreach \k in {0,...,\nmone} {
        \pgfmathsetmacro{\ang}{180*\k/\nmone}

        \coordinate (U\k) at ({\r*cos(\ang)},{\r*sin(\ang)});
        \coordinate (L\k) at ({\r*cos(-\ang)},{\r*sin(-\ang)});

        \draw[dotted, thin] (U\k) -- (L\k); \fill[black] (U\k) circle (1.5pt); \fill[blue] (L\k) circle (1.5pt); }
    \fill[red] (L0) circle (1.5pt);
    \fill[red] (L\nmone) circle (1.5pt);

\end{tikzpicture}             \caption{\label{fig: DCT-1 nodes}The reflection of the {DCT-1} nodes. The red nodes are invariant.}
        \end{subfigure}
        \hspace{0.01\textwidth}
        \begin{subfigure}[t]{0.33\textwidth}
            \centering
            \begin{tikzpicture}[scale=1, every node/.style={font=\small}]
    \def\r{1.3}   \def\n{9}    

    \pgfmathtruncatemacro{\nmone}{\n-1}

\draw[thick] (0,0) circle (\r);

\draw[gray!60] (-1.5,0) -- (1.5,0);

    \foreach \k in {0,...,\nmone} {
        \pgfmathsetmacro{\ang}{180*(\k+0.5)/\n}

        \coordinate (U\k) at ({\r*cos(\ang)},{\r*sin(\ang)});
        \coordinate (L\k) at ({\r*cos(-\ang)},{\r*sin(-\ang)});

        \draw[dotted, thin] (U\k) -- (L\k); \fill[black] (U\k) circle (1.5pt);  \fill[blue]  (L\k) circle (1.5pt);  }
\end{tikzpicture}             \caption{\label{fig: DCT-2 nodes}The reflection of the {DCT-2} nodes.}
        \end{subfigure}
        \hspace{0.01\textwidth}
        \begin{minipage}[c]{0.27\textwidth}
        \caption{
            \label{fig: DCT node reflection}
            The reflection of the DCT-1 and DCT-2 nodes around zero modulo \(2\pi\)
            is an equidistant set of points on the circle.
        }
        \end{minipage}
        \vspace{-1ex}
    \end{figure}
    \begin{theorem}[Lebesgue constants of Fourier and Chebyshev regression]
        \label{thm: lebesgue constant bounds for fourier and chebyshev regression}
        In the following table we list Lebesgue constant bounds for Fourier and Chebyshev regression.
        The double prime \(\sum''\) indicates that the first and
        last term are weighted by \(\frac12\).  We always assume \(n\ge m\coloneq
        \dim\modelSpace\) and recall that sometimes \(m=2M+1\) and sometimes \(m=M+1\)
        depending on the model space.
    \begin{center}
    \begin{tabular}{l l l l}
        \toprule
        Name & \(\modelSpace\)  & WLS measure \(\mu_n\) & Bound on \(\lebconst{\infty}{\linProj_{\mu_n}}\)
        \\
        \midrule
        \(\mathrm{DFT}(\complex)\)
        & \(\modelSpace_M^\F\)
        & \(\frac1n\sum_{k=0}^{n-1} \delta_{\alpha + k\frac{2\pi}{n}}\)
        & \(\frac2\pi\ln(m) + \frac{m}n + 1.08\) 
        \\[1ex]
        \(\mathrm{DFT}(\real)\)
        & \(\modelSpace_M^\Trig\)
        & \(\frac1n\sum_{k=0}^{n-1} \delta_{\alpha + k\frac{2\pi}{n}}\)
        & \(\frac2\pi\ln(m) + \frac{m}n + 1.08\) 
        \\[1ex]
        \(\dct{1}\)
        & \(\modelSpace_M^{\cos}\)
        & \(\frac1{n-1} {\sum''}_{k=0}^{n-1} \delta_{k\frac{\pi}{n-1}}\)
        & \(\frac2\pi\ln(2m-1) + \frac{2m-1}{2n-2} + 1.08\) 
        \\[1ex]
        \(\dct{2}\)
        & \(\modelSpace_M^{\cos}\)
        & \(\frac1n \sum_{k=0}^{n-1} \delta_{\frac{\pi}{2n} + k\frac{\pi}{n}}\)
        & \(\frac2\pi\ln(2m-1) + \frac{2m-1}{2n} + 1.08\) 
        \\[1ex]
        \(T_{n-1}\) extr.
        & \(\poly_M\)
        & \(\frac1{n-1} {\sum''}_{k=0}^{n-1} \delta_{\cos(k\frac{\pi}{n-1})}\)
        & \(\frac2\pi\ln(2m-1) + \frac{2m-1}{2n-2} + 1.08\) 
        \\[1ex]
        \(T_n\) zeros
        & \(\poly_M\)
        & \(\frac1n \sum_{k=0}^{n-1} \delta_{\cos(\frac{\pi}{2n} + k\frac{\pi}{n})}\)
        & \(\frac2\pi\ln(2m-1) + \frac{2m-1}{2n} + 1.08\) 
        \\
        \bottomrule
    \end{tabular}
    \end{center}
    Thus in all cases \(\lebconst{\infty}{\linProj_{\mu_n}} \le \frac2\pi\ln(m) + \bigO(1)\).
    \end{theorem}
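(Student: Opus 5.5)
The core case is DFT(\(\complex\)); every other row follows from it by restriction and reflection. By Lemma~\ref{lem: reproducing kernel} and Theorem~\ref{thm: reproducing kernel representation} (OLS, \(\omega_i=1\), \(\weight\equiv1\)) I will compute the reproducing kernel of \(\modelSpace_M^\F\) with respect to \(\mu_n^\alpha\). Since \(n\ge m=2M+1\), the exponentials \(e^{ikx}\), \(\abs{k}\le M\), are orthonormal for \(\mu_n^\alpha\): their differences have frequency of absolute value at most \(2M<n\), so the discrete sums vanish. Hence the kernel is the Dirichlet kernel \(\dirichlet_M(x-y)=\sum_{\abs{k}\le M}e^{ik(x-y)}=\frac{\sin((M+\frac12)t)}{\sin(t/2)}\), and
\[
\lebfun{\infty}{\linProj}{x}=\frac1n\sum_{j=0}^{n-1}\abs{\dirichlet_M(x-\ang_j^\alpha)}.
\]
This is a Riemann sum for the Lebesgue constant of Fourier projection, \(\frac1{2\pi}\int_{\torus}\abs{\dirichlet_M}\). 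The main obstacle is to bound this discrete sum uniformly in the shift \(x-\alpha\) with explicit constants, and in particular to get \(\frac2\pi\ln m\) rather than the \(\frac4{\pi^2}\ln M\) of the continuous case. The discrete sum does not see the oscillation-induced cancellation. The factor \(\frac2\pi\) comes from bounding \(\abs{\sin((M+\frac12)t)}\le1\) and summing \(1/\abs{\sin(t/2)}\) over the grid.

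Concretely, for the shift \(u=x-\alpha\) I set \(t_j=u-\frac{2\pi j}{n}\) reduced to \((-\pi,\pi]\). I will treat the (at most two) nodes nearest \(0\) with the trivial bound \(\abs{\dirichlet_M}\le m\), which contributes at most \(\frac{m}{n}\) each. Actually I will split more carefully so that only one term of size \(m/n\) remains. For the others I use \(\abs{\sin(t/2)}\ge \abs{t}/\pi\), giving terms \(\le \frac1n\cdot\frac{\pi}{\abs{t_j}}\).

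These terms are still too large near the centre if taken bluntly. So in a window \(\abs{t}\le c/m\) I use \(\abs{\dirichlet_M}\le m\), and for \(\abs{t}\ge c/m\) the bound \(\pi/\abs{t}\). Comparing the sum over the grid (spacing \(2\pi/n\)) with an integral by monotonicity of \(1/t\) on each side gives
\[
\frac1n\sum \frac{\pi}{\abs{t_j}}\lesssim 2\cdot\frac{1}{2\pi}\int_{c/m}^{\pi}\frac{\pi}{t}\,dt+\text{boundary terms}
=\ln\frac{\pi m}{c}+\dots
\]
The constant here is off, so I will instead use the sharper estimate \(\abs{\dirichlet_M(t)}\le \min\{m,\,1/\abs{\sin(t/2)}\}\) and the exact sum over a symmetric grid. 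The plan is to bound \(\frac1n\sum_j\min\{m,1/\abs{\sin(t_j/2)}\}\) by \(\frac1{2\pi}\int_{\torus}\min\{m,1/\abs{\sin(t/2)}\}\,dt+\frac mn\). The extra \(\frac{m}{n}\) accounts for the one grid cell where monotone comparison fails, at the peak. Evaluating the integral explicitly gives \(\frac{2}{\pi}\ln m+\text{const}\), and the remaining constant is bounded numerically by \(1.08\). The delicate part is this monotone sum-to-integral comparison on each monotone branch around the peak, together with the tidy numerical constant.

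For the remaining rows, DFT(\(\real\)) is immediate. Its kernel is the same real-valued Dirichlet kernel, because the orthonormal bases \(\{1,\sqrt2\cos,\sqrt2\sin\}\) and \(\{e^{ik\cdot}\}\) span the same space with the same kernel. For DCT-1/DCT-2 I use the reflection of Figure~\ref{fig: DCT node reflection}. The reflected nodes form an equispaced grid of \(N=2n-2\) (respectively \(2n\)) points on \(\torus\), and the half weights of DCT-1 make the reflected measure uniform on this grid. For an even \(f\), OLS on this grid into \(\modelSpace_M^\F\) returns an even function in \(\modelSpace_M^{\cos}\) that agrees with the DCT regression, because even functions are orthogonal to odd ones under the symmetric measure. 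So the DCT Lebesgue function is bounded by that of DFT with \(m'=2M+1=2m-1\) and \(N\) points, which gives the stated bounds with \(\frac{2m-1}{2n-2}\) and \(\frac{2m-1}{2n}\). The Chebyshev rows follow from the change of variables \(x=\cos\ang\), which by \(\poly_M=\{f\circ\cos^{-1}\}\) is an isometric bijection of the model spaces. It maps the data points bijectively, so Lagrange functions and Lebesgue functions are just composed with \(\cos^{-1}\), and the suprema coincide.
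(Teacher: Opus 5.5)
For the DFT rows your final plan is the paper's proof of Lemma~\ref{lem: dirichlet kernel bound}. It has the same three parts: the rectified bound \(\abs{\dirichlet_M(\ang)}\le\min\set{\mathfrak m,1/\abs{\sin(\ang/2)}}\); a monotone sum-to-integral comparison that loses one cell of size \(\mathfrak m/n\); and the explicit integral \(\frac2\pi\ln\mathfrak m+\epsilon(\mathfrak m)\) with \(\epsilon(\mathfrak m)=\frac2\pi\bigl[\mathfrak m\arcsin\frac1{\mathfrak m}+\ln\bigl(1+\sqrt{1-\mathfrak m^{-2}}\bigr)\bigr]\). The paper does the comparison by sorting arc distances, using \(r_{(k)}\ge\pi k/n\) and integrating over \([-1,n-1]\), which is equivalent to your overlapping peak cell. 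Saying the constant is ``bounded numerically by \(1.08\)'' is not enough, since it must hold for every \(\mathfrak m\); the paper proves that \(\epsilon\) is increasing with limit \(\frac2\pi(1+\ln 2)\approx1.0779\). Your reflection/evenness argument for \(\dct{2}\), and for \(\dct{1}\) with \(n>m\), is a valid repackaging of the paper's kernel identity \(K=\frac12(\dirichlet_M(x-x')+\dirichlet_M(x+x'))\) followed by the triangle inequality.

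There is a genuine gap in the \(\dct{1}\) row, and hence the \(T_{n-1}\)-extrema row, in the case \(n=m\). This is interpolation at Chebyshev points of the second kind, and the theorem includes it.

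\begin{itemize}
\item The reflected grid then has \(N=2n-2=2M\) points, one fewer than \(\dim\modelSpace_M^\F=2M+1\).
\item Your reduction needs OLS on this grid into \(\modelSpace_M^\F\) to be a regression with kernel \(\dirichlet_M\), which requires \(N\ge 2M+1\).
\item With \(N=2M\), the functions \(e^{iMx}\) and \(e^{-iMx}\) coincide on the nodes \(\pi k/M\). The Gram matrix is singular, so neither your DFT bound nor the uniqueness step of the evenness argument is available.
\end{itemize}

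The \(\dct{1}\) kernel itself changes. Since \(\cos(Mx)=\pm1\) at the nodes, the orthonormal basis is \(\set{1}\cup\set{\sqrt2\cos(kx)}_{k=1}^{M-1}\cup\set{\cos(Mx)}\). The kernel is \(\frac12\bigl(\tilde\dirichlet_M(x-x')+\tilde\dirichlet_M(x+x')\bigr)\) with \(\tilde\dirichlet_M(\ang)=\sin(M\ang)/\tan(\ang/2)\), not the Dirichlet expression.

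Your own estimate repairs this. We have \(\abs{\tilde\dirichlet_M(\ang)}\le\min\set{2M,1/\abs{\tan(\ang/2)}}\le\min\set{2m-1,1/\abs{\sin(\ang/2)}}\). The same Riemann-sum bound on the \(2n-2\) reflected nodes then gives \(\frac2\pi\ln(2m-1)+\frac{2m-1}{2n-2}+1.08\). The paper handles this case with the modified-kernel part of Lemma~\ref{lem: dirichlet kernel bound}.
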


    Details about orthonormal bases, reproducing kernels and a sketch of the
    derivation of these bounds are given in Section~\ref{sec: fourier-chebyshev
    regression}. In the interpolation regime \(n=m\) of algebraic polynomials,
    \(\lebconst{\infty}{\linProj_{\mu_n}} \sim \frac{2}{\pi}\ln(m)\), by Example~\ref{ex: lebesgue constant list}
    \ref{it: any points}, so the bounds are asymptotically sharp in
    this regime. For \(n\gg m\) the bounds will likely approach the Lebesgue
    constant of the \(L^2\)-projection, which has slightly smaller constants
    in front of the logarithm.
\end{enumerate}

\subsubsection{Lebesgue constants of interpolation}

The Lebesgue constant is already a well-studied object in approximation
theory in the case of interpolation \(n=m\). To contextualize our results
for regression, we recapitulate well known bounds for interpolation in the
following.

\begin{example}[Lebesgue constants for polynomial interpolation]
    \label{ex: lebesgue constant list}
    Consider the space \(\funSpace= C([-1,1]; \real)\) equipped with the sup-norm \(\norm{\cdot}_\infty\) and let the
    model space be \(\modelSpace_m = \poly_{m-1}\), where \(\poly_{m-1}\) are the polynomials of degree at most \(m-1\).
    Let \(\linProj_m\) be the interpolation operator at \(m\) distinct points \(x_1,\dots, x_m \in [-1, 1]\).
    \begin{enumerate}[label=(\roman*)]
        \item\label{it: equispaced} 
        \textbf{Equispaced interpolation performs badly.}
        If the \(x_1, \dots, x_m\) are \textbf{equispaced}, then 
        for all \(m > 1\) 
        \[
            \lebconst{\infty}{\linProj_m} > \frac{2^{m-1}}{(m-1)^2}
            \quad \text{and} \quad
            \lebconst{\infty}{\linProj_m} \sim \frac{2^m}{e (m-1)\ln(m-1)},
        \]
        in other words the Lebesgue constant is extremely badly behaved and grows
        exponentially in \(m\). For equispaced points, Runge provided an explicit example for a function
        that is badly behaved as proved to exist in \eqref{eq: lebesgue lower bound},
        specifically the ``Runge function'' \(f(x) = \frac{1}{1+25x^2}\) \citep[cf.][p.~120]{trefethenApproximationTheoryApproximation2019}.

        \item\label{it: any points}
        \textbf{Lower bound on best interpolation.}
        For \textbf{any} points \(x_1, \dots, x_m\)
        \[
            \lebconst{\infty}{\linProj_m}
            \ge \tfrac{2}{\pi}\ln(m) + \underbrace{\tfrac{2}\pi(\gamma + \ln(\tfrac{4}\pi))}_{=0.52125...},
        \]
        where \(\gamma \approx 0.577\) is the Euler-Mascheroni constant.
        In other words the Lebesgue constant grows at least logarithmically in \(m\) for any choice of interpolation points.

        \item\label{it: chebyshev points 1st kind} Let \(x_1, \dots, x_m\)
        be the \textbf{Chebyshev points of the first kind} (the zeros of the
        \(m\)-th Chebyshev polynomial \(T_m\)), then the Lebesgue constant
        of the interpolation operator \(\linProj_m^{1\text{\tiny st}}\) satisfies 
        \[
            \lebconst{\infty}{\linProj_m^{1\text{\tiny st}}} = \tfrac{2}{\pi}\ln(m) + \tau_m
            \le \lebconst{\infty}{\linProj_{m+1}^{1\text{\tiny st}}}
            \qquad \forall m \ge 1,
        \]
        with \((\tau_m)_{m\in \nat}\) strictly monotonically \underline{decreasing}, \(0.96 \le \tau_m \le \tau_1=1\) and
        \[
            \lim_m \tau_m = \tfrac2\pi(\gamma + \ln(\tfrac8\pi)) = 0.9625....
        \]
        In other words, the Chebyshev points achieve the lower bound up to less than half a digit.

        \item\label{it: chebyshev points 2nd kind}
        Let \(x_1, \dots, x_m\) be the \textbf{Chebyshev points of the second kind} (the extrema of \(T_{m-1}\)), then
        the interpolation operator \(\linProj_m^{2\text{\tiny nd}}\) satisfies for \(m\ge 2\)
        \[
            \lebconst{\infty}{\linProj_m^{2\text{\tiny nd}}}
            = \tfrac2\pi\ln(m) + \tau_m'
            = \lebconst{\infty}{\linProj_{m-1}^{1\text{\tiny st}}} - \epsilon_m,
        \]
        where \(\tau_m' \in (0.52, 1)\), with the same limit \(\lim_m\tau_m'=0.9625...\) and
        \(\tau_{2k}'\) is strictly monotonically \underline{increasing}.
        Finally, \(\epsilon_m = 0\) for \(m\) even and \(\epsilon_m \in (0,
        \frac1{m^2})\) for \(m\) odd.
        
        For finite \(m\) the Chebyshev points of the second kind thus have slightly 
        smaller Lebesgue constants than the first kind. Asymptotically they are however equivalent.

        \item\label{it: jacobi polynomial roots} Let \(x_1, \dots, x_m\) be
        the zeros of the \textbf{Jacobi polynomial} \(p_m^{(\alpha, \beta)}\) for some
        \(\alpha, \beta > -1\), then the Lebesgue constant of the interpolation operator \(\linProj_m^{(\alpha, \beta)}\) satisfies
        \[
            \lebconst{\infty}{\linProj_m^{(\alpha, \beta)}}
            \asymp \begin{cases}
                m^{\Upsilon + \frac12} & \Upsilon > -\frac12
                \\
                \log(m) & \Upsilon \le -\frac12
            \end{cases}
            \qquad 
            \Upsilon \coloneq \max\set{\alpha, \beta}.
        \]
    \end{enumerate}
\end{example}
\begin{proof}
    The list is inspired by
    \citep[Thm.~15.2]{trefethenApproximationTheoryApproximation2019}.
    The bound in \ref{it: equispaced} was proved in \citep[Thm.~2]{trefethenTwoResultsPolynomial1991} and the
    asymptotic result was independently discovered by \citep{turetskiiBoundingPolynomialsPrescribed1940} and \citep[Satz 2]{schonhageFehlerfortpflanzungBeiInterpolation1961},
    \ref{it: any points} was proved in \citep[Eq.~(41)]{brutmanLebesgueFunctionPolynomial1978}.
    Most of \ref{it: chebyshev points 1st kind} is given by \citep[Theorem 1]{rivlinLebesgueConstantsPolynomial1974},
    the monotonic increase was proved in \citep[Satz 3, Eq.~(5.2)]{ehlichAuswertungNormenInterpolationsoperatoren1966}.
    \ref{it: chebyshev points 2nd kind} is assembled from \citep[Theorem 2]{rivlinLebesgueConstantsPolynomial1974} and
    \citep[Satz 4]{ehlichAuswertungNormenInterpolationsoperatoren1966} and the lower
    bound on \(\tau_m'\) follows from \ref{it: any points}. \citet[Eq.~(16)]{rivlinLebesgueConstantsPolynomial1974} observed that
    Szegő proved \ref{it: jacobi polynomial roots} in his proof of a different result
    \citep[Proof of Theorem~14.4, between (14.4.16) and (14.4.17)]{szegoOrthogonalPolynomials1939}.
\end{proof}

The takeaway from this list of examples is that the Lebesgue constant can be
extremely badly behaved with exponential growth in \(m\). On the other hand
there are also examples of interpolation operators where the Lebesgue constant is
almost constant in \(m\), with logarithmic growth.

In general, the zeros of orthogonal polynomials on bounded intervals are very well understood. The
Jacobi polynomials are a very general class of orthogonal polynomials that
include the
\begin{enumerate}[noitemsep]
    \item Chebyshev polynomials \(T_n\) with \((\alpha, \beta) = (-\frac12, -\frac12)\) 
    \item Legendre polynomials \(P_n\) with \((\alpha, \beta) = (0, 0)\)
    \item Gegenbauer (ultraspherical) polynomials \(C^\lambda_n\) with \((\alpha, \beta) = (\lambda - \frac12, \lambda - \frac12)\) with \(\lambda > -\tfrac12\).
\end{enumerate}

\subsection{Restricted range inequalities}
\label{sec: restricted range inequalities catalogue}

As discussed in Section~\ref{sec: restricted range inqualities}
restricted range inequalities can be used to turn an unbounded
regression problem into a bounded one. Such a restricted range inequality exists
for univariate polynomial spaces. The takeaway of the following well-known result
about them is that the restricted range \(S_\modelSpace\) which determines the
weighted uniform norm with weight \(\weight(x) = \exp(-\abs{x}^\alpha)\) is
given by
\[
    S_\modelSpace = [-C_\alpha m^{\frac1\alpha}, C_\alpha m^{\frac1\alpha}]\qquad m=\dim\modelSpace.
\]
That is, the faster the weight decays, the smaller is the relevant set \(S_\modelSpace\).
Except for the statement of optimality (which follows quickly from a 
result by \citet{szabadosWeightedLagrangeHermite1997}), the
following theorem is in the form of
\citep[Thm.~6.2]{lubinskySurveyWeightedPolynomial2007} and was proved
in \citep[Thm.~4.1]{levinOrthogonalPolynomialsExponential2001}. Similar
statements are in \citep[Thm.~6.1.6 and
6.2.4]{mhaskarIntroductionTheoryWeighted1996}.

\begin{theorem}[Restricted range inequalities for real polynomials]
    \label{thm: restricted range inequalities for polynomials}
    Assume \(\domain=\field=\real\).
    Let \(\weight(x) = \exp(-Q(x))\)
    with \(Q\colon \real \to [0,\infty)\) an even, convex function with
    the properties
    \[
        \lim_{\abs{x}\to\infty} Q(x) = \infty
        \qquad\text{and}\qquad 0=Q(0) < Q(x) < \infty \quad \forall \abs{x}>0.
    \]
    Let \(a_M\) be the Mhaskar-Rakhmanov-Saff (MRS) number uniquely defined by the equation\footnote{
        The derivative \(Q'\) only has to be defined almost everywhere, which it is
        by convexity of \(Q\).
    }
    \begin{equation}
        \label{eq: MRS number}     
        M = \frac2\pi \int_0^1 \frac{a_M t Q'(a_M t)}{\sqrt{1-t^2}} dt.
    \end{equation}
    Then for \(\poly_M\) the space of univariate (real) polynomials of degree at most \(M\) we have:
   \begin{enumerate}[label=(\roman*)]
       \item 
        \textbf{The weighted sup-norm} satisfies an exact restricted range inequality, specifically
        \begin{align}
            \norm{\model}_\weight
            &= \norm{\model \ind{[-a_M,a_M]}}_\weight
            &\forall \model \in \poly_M
            \\
            \norm{\model \ind{\real \setminus [-a_M,a_M]}}_\weight
            &< \norm{\model \ind{[-a_M,a_M]}}_\weight
            &\forall \model \in \poly_M : \model \neq 0
        \end{align}
        \ul{Approximate optimality of \(a_M\):} If \(Q\) is twice continuously differentiable
        on \((0,\infty)\) and there exist constants \(A, B>1\)  such that
        \(A\le \frac{(xQ'(x))'}{Q'(x)} \le B\) for all \(x>0\), then 
        there exists a constant \(c_1>0\) independent of \(M\in \nat\) such that
        \[
            a_M\bigl(1-c_1M^{-\frac23}\bigr)
            \le \inf\set[\Big]{a>0 : \norm{\model}_\weight = \norm{\model \ind{[-a,a]}}_\weight\ \forall \model \in \poly_M}
            \le a_M.
        \]

        \item \textbf{The \(L^p(\mu)\)-norm}
        for \(p\in [1,\infty)\) and \(\mu(dx) \coloneq \weight^p(x) dx\)
        satisfies a restricted range inequality, specifically
        \begin{align}
            \norm{\model}_{L^p(\mu)}
            &< 2^{1/p} \norm{\model \ind{[-a_M,a_M]}}_{L^p(\mu)}
            &\forall \model \in \poly_{\floor{M-\frac2p}}: \model \neq 0.
            \\
            \norm{\model \ind{\real \setminus [-a_M,a_M]}}_{L^p(\mu)}
            &< \norm{\model \ind{[-a_M,a_M]}}_{L^p(\mu)}
            &\forall \model \in \poly_{\floor{M-\frac2p}} : \model \neq 0
        \end{align}
        These equations also hold for \(p\in(0,1)\), where the
        \(L^p(\mu)\)-norm is a quasi-norm.
   \end{enumerate} 
\end{theorem}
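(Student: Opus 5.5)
The proof will mostly assemble results from the literature. Only the approximate optimality of \(a_M\) needs an actual argument. For (i) I would invoke \citep[Thm.~4.1]{levinOrthogonalPolynomialsExponential2001} in the form of \citep[Thm.~6.2]{lubinskySurveyWeightedPolynomial2007}. It gives the strict inequality \(\norm{\model\ind{\real\setminus[-a_M,a_M]}}_\weight<\norm{\model\ind{[-a_M,a_M]}}_\weight\) for \(\model\in\poly_M\setminus\set{0}\). The first identity then follows by splitting the supremum over \(\real\) into the supremum over \([-a_M,a_M]\) and over its complement:
\[
    \norm{\model}_\weight=\max\set{\norm{\model\ind{[-a_M,a_M]}}_\weight,\norm{\model\ind{\real\setminus[-a_M,a_M]}}_\weight}=\norm{\model\ind{[-a_M,a_M]}}_\weight .
\]
The case \(\model=0\) is trivial. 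The hypotheses on \(Q\) (even, convex, \(Q(0)=0\), \(Q\to\infty\)) are exactly those of the cited theorem. Convexity also guarantees that \(a_M\) is uniquely defined by \eqref{eq: MRS number}, because the right-hand side is continuous and strictly increasing in \(a_M\).

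For (ii) I would again quote \citep[Thm.~4.1]{levinOrthogonalPolynomialsExponential2001} for the \(L^p\) version of the strict tail inequality on \(\poly_{\floor{M-2/p}}\). The first line follows by adding \(\norm{\model\ind{[-a_M,a_M]}}_{L^p(\mu)}^p\) to both sides of the \(p\)-th power of the tail inequality, which gives the factor \(2^{1/p}\). This additivity works for every \(p\in(0,\infty)\), so the quasi-norm case is covered as well.

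For the optimality claim, the upper bound \(\inf\set{\dots}\le a_M\) is immediate from (i), since \(a_M\) belongs to the set. The lower bound is the main obstacle. I would use the orthonormal polynomial \(p_M\) with respect to \(\weight^2\), relying on the sharp bounds of \citet{szabadosWeightedLagrangeHermite1997} and Levin--Lubinsky, which are available under the stated regularity \(A\le (xQ')'/Q'\le B\).

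\emph{Global bound.} The first fact I need is
\[
    \norm{p_M\weight}_{L^\infty(\real)}\ge c\,M^{1/6}a_M^{-1/2},
\]
with the maximum attained near \(\pm a_M\).

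\emph{Interior bound.} The second fact I need is
\[
    \abs{p_M(x)}\weight(x)\le C\,a_M^{-1/2}\bigl(\abs{1-\abs{x}/a_M}+M^{-2/3}\bigr)^{-1/4}\qquad\text{for }\abs{x}\le a_M .
\]
For \(a=a_M(1-LM^{-2/3})\), this yields \(\norm{p_M\weight\ind{[-a,a]}}_\infty\le C L^{-1/4}M^{1/6}a_M^{-1/2}\).

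\emph{Conclusion.} Choose \(L\) so large that \(CL^{-1/4}<c\). Then \(p_M\in\poly_M\) violates the restricted range identity on \([-a,a]\). Since the set of admissible \(a\) is upward closed, every admissible \(a\) exceeds \(a_M(1-LM^{-2/3})\), and the claim holds with \(c_1=L\).

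The delicate part is checking that these two estimates really hold with constants independent of \(M\) under exactly the assumed condition on \(Q\). If Szabados only provides a lower bound for a weighted Lagrange fundamental polynomial rather than for \(p_M\), I would run the same comparison with that polynomial instead.
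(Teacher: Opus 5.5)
Your treatment of (i), (ii) and of the trivial upper bound \(\inf\{\dots\}\le a_M\) matches the paper, which also just quotes \citep[Thm.~6.2]{lubinskySurveyWeightedPolynomial2007}. The two routes differ only in the lower bound.

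\emph{The paper's route} locates the maximizer. If the identity holds on \([-a,a]\), compactness gives a global maximizer \(\xi_M\in[-a,a]\) of \(\abs{p_M}\weight\). Parity, \(p_M(-x)=(-1)^Mp_M(x)\), lets one take \(\xi_M\ge0\). Szabados' Lemma~2 then places every such maximizer in \([a_M(1-c_1M^{-2/3}),a_M]\), so \(a\ge\xi_M\ge a_M(1-c_1M^{-2/3})\). Small \(M\) are absorbed by enlarging \(c_1\).

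\emph{Your route} compares two envelopes: the global size \(\norm{p_M}_\weight\gtrsim M^{1/6}a_M^{-1/2}\) and the interior bound with factor \((\abs{1-\abs{x}/a_M}+M^{-2/3})^{-1/4}\). Together they show that \(\abs{p_M}\weight\) on \([-a_M(1-LM^{-2/3}),a_M(1-LM^{-2/3})]\) stays a fixed factor below its global maximum. Upward closedness of the admissible set then finishes the argument. In effect you re-derive, from the standard Levin--Lubinsky bounds for Freud-type weights, the half of Szabados' localization that is actually needed.

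Those bounds do apply under the stated hypotheses. Set \(T(x)\coloneq(xQ'(x))'/Q'(x)\in[A,B]\). Convexity with \(Q(0)=0\) gives \(xQ'(x)\ge Q(x)>0\). Hence \(Q'>0\), \(T\) is quasi-increasing, and \(Q''/Q'\le (B-1)Q'/Q\). Boundedness of \(T\) is also what makes \(M^{-2/3}\) the correct scale in your interior bound.

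The paper's argument is shorter and needs only one citation plus parity and compactness. Yours needs two quantitative estimates but no information about where the maximum is attained. It also gives a slightly stronger conclusion: a uniform constant-factor gap on the interior. As in the paper, if the cited bounds hold only for large \(M\), handle the finitely many remaining \(M\) by enlarging \(c_1\).
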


\begin{remark}[Weighted \(L^p\)-norms]
    \(\norm{f}_{L^p(\mu)} = \norm{f\weight}_{L^p(dx)}\) for \(\mu(dx) = \weight^p(x) dx\).
\end{remark}
\begin{remark}[Mhaskar-Rakhmanov-Saff number]
    \label{rem: MRS number}
    For \(Q(x) = \abs{x}^\alpha\) with \(\alpha\in [1,\infty)\) such that
    \(\weight(x) = \exp(-\abs{x}^\alpha)\), the Mhaskar-Rakhmanov-Saff (MRS)
    number \(a_M\) is given by
    \begin{equation} 
        a_M = C_\alpha M^{\frac1\alpha} \qquad\text{with}\qquad C_\alpha \coloneq \Bigl(2^{\alpha-2}\frac{\Gamma(\frac{\alpha}2)^2}{\Gamma(\alpha)}\Bigr)^{\frac1\alpha}.
    \end{equation}
    In particular for \(\alpha=2\) we have \(a_M = \sqrt{M}\).
    A simpler alternative to the almost optimal MRS number is the Freud
    number \(q_M\), given by the solution of the equation \(q_M Q'(q_M) = M\).
    It bounds the MRS number as follows \citep[Equation
    (6.1.21)]{mhaskarIntroductionTheoryWeighted1996}
    \[
        q_M \le a_M \le \sqrt{2}q_{2M}.
    \]
\end{remark}
 
	\subsection{Examples}
\label{sec: examples}

\subsubsection{Piecewise constant functions}

Let \(\domain=[0,1]^\dims\) and let \((A_i)_{i\in I}\) be a tiling of \(\domain\) with
side length \(\frac1M\) such that the diameter of each tile in the sup-norm is
\(\diam_\infty(A_i) = \frac1M\) . Let
\[
    \modelSpace_M=\Span_\real\set{\ind{A_i} : i\in I}
    \qquad \text{with}
    \qquad m = \dim\modelSpace_M=\#I = M^\dims
\]
be the space of piecewise constant functions on this tiling.

\begin{example}[Deterministic points]
    Let \(x_1, \dots, x_n\) be selected such that each tile \(A_i\) contains 
    an equal number, \(\frac{n}{m}\in \nat\), of evaluation points and let \(\regOp\) be the 
    OLS regression operator. Then with probability at least \(1-\delta\)
    \[
        \norm{\regOp(y) - f}_\infty
        \le \green{2\error_{\modelSpace_M}^\infty[f]} + \blue{\varepsilon(\delta)\sqrt{\opnorm{\Sigma}_2\tfrac{m}{n}}},
    \]
    with \(\error_{\modelSpace_M}^\infty[f] \le \frac12\modCont(f; \frac1M)\) and \(\varepsilon(\delta)\) as listed in the Table of Corollary~\ref{cor: uniform noise bound}
    depending on the assumptions on the noise.
\end{example}
\begin{proof}
    The approximation error bound follows from \(\lebfun{\infty}{\linProj}{x} =1\) by Section~\ref{sec: lebesgue constants of regression}
    and the stochastic error bound follows from Corollary~\ref{cor: uniform noise bound}
    with \(\inf_{\linProj_\star} \lebconst{\infty}{\linProj_\star} = 1\), \(\Omega_n=\identity\), \(\Gram_{\Omega_n} = \identity\),
    and \(\lever_{\modelSpace_M}^{\Omega_n}(x) = \sqrt{m}\). Indeed, the functions \(\sqrt{m}\ind{A_i}\) are orthonormal
    with respect to \(\scp{\cdot}{\cdot}_{\Omega_n}\) since each \(A_i\) contains \(\frac{n}{m}\)
    evaluation points and therefore
    \[
        \reprK_{\modelSpace_M}^{\Omega_n}(x,x) = \sum_{i\in I} m \ind{A_i}(x)\ind{A_i}(x)
        = m.
    \]
    A different way to see this is to consider \(\reprK_{\modelSpace_M}^{\mu}\)
    with \(\mu=\unif{[0,1]^\dims}\) and observe that we achieve exact quadrature
    with our evaluation point selection and unit weights to get \(\Gram_\mu = \identity\).
\end{proof}

\begin{remark}[Discussion of rates]
    If \(f\) is \(\alpha\)-Hölder continuous, that is \(\modCont(f; h) \in
    \bigO(h^\alpha)\), and the noise has \(p\)-th moments such that
    \(\varepsilon(\delta) \in \bigO((\frac{m}\delta)^{\frac1p})\), then for \(n\ge m\) we
    have with probability at least \(1-\delta\)
    \[
        \norm{\regOp(y) - f}_\infty
        \le \bigO(m^{-\frac{\alpha}{\dims}})
        + \bigO\Bigl((\tfrac{m}{\delta})^{\frac1p}\sqrt{\tfrac{m}n}\Bigr)
    \]
    Balancing the rates of the approximation and stochastic error yields \(n \asymp m^{1+\frac{2}{p} + \frac{2\alpha}{\dims}}\),
    which gives the overall rate
    \[
        \norm{\regOp(y) - f}_\infty \in \bigO((\tfrac1\delta)^{\frac1p} n^{-\gamma})
        \qquad\text{with}\qquad \gamma = \frac1{2+\frac{\dims}\alpha(1+\frac{2}p)}.
    \]
    It is not hard to see that sub-Gaussian, sub-exponential and sub-Weibull noise
    results in the same rate \(\tilde\bigO(n^{-\gamma})\) with \(p=\infty\)
    up to logarithmic factors hidden by \(\tilde\bigO\).  If all moments of the
    noise are available (\(p=\infty\)) and \(f\) is Lipschitz continuous
    (\(\alpha=1\)), we therefore have the rate
    \(\tilde\bigO(n^{-\frac1{2+\dims}})\).
\end{remark}

\subsubsection{Trigonometric and Chebyshev polynomial regression}
\label{sec: examples, trigonometric and chebyshev regression}

\begin{example}[Trigonometric polynomials]
    \label{ex: trigonometric polynomials}
    Let \(\modelSpace_M^\Trig\) be the space of trigonometric polynomials of
    degree \(M\) (cf.~\ref{sec: lebesgue constants of regression}) and let \(\regOp\)
    be the OLS regression operator with equispaced evaluation points
    \(X=(x_1,\dots, x_n)\) on the circle \(\torus = \real / 2\pi \integer\)
    \[
        x_k = \alpha + \tfrac{2\pi}{n} k, \quad k=0,\dots,n-1.
    \]
    Then for \(n \ge m \coloneq \dim \modelSpace_M^\Trig = 2M+1\) we have with probability at least \(1-\delta\)
    \[
        \norm{\regOp(y) - f}_\infty
        \le \green{\underbrace{\Bigl(\tfrac{2}{\pi}\ln(m)+ \tfrac{m}n + 2.08\Bigr) \error_{\modelSpace_M^\Trig}^\infty[f]}_{\text{approx. error}}}
        + \blue{\underbrace{\varepsilon(\delta)(\tfrac2\pi\ln(m) +2.08)\sqrt{\opnorm{\Sigma}_2 \tfrac{m}{n}}}_{\text{stochastic error}}}.
    \]
    where \(\error_{\modelSpace_M^\Trig}^\infty[f] \le \frac\pi{2}(M+1)^{-k}
    \modCont(f^{(k)}; \frac\pi{M+1})\) for \(f\in C^k\) and \(f^{(k)}\) the
    \(k\)-th derivative of \(f\) and \(\varepsilon(\delta)\) is given by:
    \begin{center}
    \makebox[\textwidth][c]{
    \begin{NiceTabular}{l m{6cm} m{5cm}}[tabularnote={
        \footnotesize 
    \(C_\alpha, C_{\alpha}'>0\) are unoptimized constants given in \eqref{eq: sub-Weibull constants}, \(C_{p,\real}\coloneq \frac{14.7p}{\ln p}\) and
    \(C_{p,\complex}=\sqrt2\,C_{p,\real}\).
    }]
        \toprule
        & Assumption (decreasing strength \(\downarrow\)) &  \(\varepsilon(\delta)\)
        \\
        \midrule
        \newtag{\normalfont{(i)}}{it: (uniform concentration) sub-Gaussian, trig example}\tabularnote{\footnotesize Assume \(\field=\real\).\label{footnote: K=R, trig example}}\!\!\!
        & \(\noiseIndep_i \sim \subG{\sigma^2}\)
        & \(
            \sigma\sqrt{2\ln(\frac{2m}\delta)}
        \)
        \\[1ex]
        \newtag{\normalfont{(ii)}}{it: (uniform concentration) exponential, trig example}\ref*{footnote: K=R, trig example}\!\!\!
        & \(\noiseIndep_i \sim \subExp{\nu, \alpha}\)
        & \(
            \max\set[\Big]{\sqrt{2\nu^2\ln(\frac{2m}\delta)}, 2\alpha\ln(\frac{2m}\delta)}
        \)
        \\[1ex]
        \newtag{\normalfont{(iii)}}{it: (uniform concentration) sub-Weibull, trig example}\ref*{footnote: K=R, trig example}\!\!\!
        & \(\noiseIndep_i \sim \subWeibull{\alpha}\), \(\norm{\noiseIndep_i}_{\psi_\alpha}\le K\), \(\alpha\le 2\)
        & \(
            K\Bigl(C_\alpha \sqrt{\ln(\tfrac{2m}{\delta})} + C_{\alpha}' \ln(\tfrac{2m}{\delta})^{\frac1\alpha}\Bigr)
        \)
        \\[1ex]
        \newtag{\normalfont{(iv)}}{it: (uniform concentration) finite p-th moment, trig example}
        & \(\E{\abs{\noiseIndep_i}^2}\le \sigma^2\) and \(\E{\abs{\noiseIndep_i}^p} \le \kappa^p\), \(p> 2\)
        & \(
            C_{p, \field}\max\set{\sigma, \kappa}\bigl(\frac{m}{\delta}\bigr)^{\frac1p}
        \)
        \\[1ex]
        \newtag{\normalfont{(v)}}{it: (uniform concentration) finite second moment, trig example}
        & \(\E{\abs{\noiseIndep_i}^2} \le \sigma^2\)
        & \(
            \sigma\sqrt{\frac{m}{\delta}}
        \)
        \\
        \bottomrule
    \end{NiceTabular}
    }
    \end{center}
\end{example}

\begin{proof}
    The approximation error bound follows from the bound on the Lebesgue
    constant in Section~\ref{sec: lebesgue constants of regression} and
    the best approximation error is bounded in Section~\ref{sec: metric projection error}.
    For the bound on the stochastic error we use Corollary~\ref{cor: uniform noise bound}
    to obtain
    \[
        \norm{\regOp(\noise)}_\weight
        \le \varepsilon(\delta)\inf_{\linProj_\star} \lebconst{\infty}{\linProj_\star}\sqrt{\opnorm{\Gram_{\Omega_n}^{-1}}_2 \opnorm{\Omega_n^{\frac12}\Sigma \Omega_n^{\frac12}}_2} \frac{\norm{\lever_\modelSpace^{\Omega_n}}_\infty}{\sqrt{n}},
    \]
    where \(\Gram_{\Omega_n} = \identity\) by definition
    and \(\inf_{\linProj_\star} \lebconst{\infty}{\linProj_\star} \le \frac2\pi\ln(m) + 2.08\)
    by our Lebesgue constant bounds in Section~\ref{sec: lebesgue constants of regression} again,
    and finally \(\norm{\lever_\modelSpace^{\Omega_n}}_\infty = \sqrt{m}\) since the
    reproducing kernel is stationary (the Dirichlet kernel, Thm.~\ref{thm: ONBs and reproducing kernels}) combined
    with Lemma~\ref{lem: optimality of constant weighted leverage bound}.
\end{proof}

\begin{remark}[Discussion of rates]
    If \(f^{(k)}\) is \(\alpha\)-Hölder continuous, that is \(\modCont(f^{(k)}; h) \in \bigO(h^\alpha)\),
    and the noise has \(p\)-th moments, then for \(n\ge m\) we have with
    probability at least \(1-\delta\)
    \[
        \norm{\regOp(y) - f}_\infty
        \in \bigO(\tfrac{\log(m)}{m^{k+\alpha}})
        + \bigO\Bigl(\log(m)(\tfrac{m}{\delta})^{\frac1p}\sqrt{\tfrac{m}n}\Bigr)
    \]
    Balancing the rates of the approximation and stochastic error yields \(n \asymp m^{1+ \frac2p + 2(k+\alpha)}\),
    which gives the overall rate
    \[
        \norm{\regOp(y) - f}_\infty
        \in \bigO\Bigl(n^{-\gamma} \log(n) (\tfrac1{\delta})^{\frac1p}\Bigr)
        = \tilde \bigO\bigl(n^{-\gamma}\bigr)
        \qquad\text{with}\qquad \gamma = \frac{1}{2+\frac{1+2/p}{k+\alpha}}.
    \]
    Here \(\tilde\bigO\) hides logarithmic factors and keeps \(\delta\) constant.
    It is not hard to see that sub-Gaussian, sub-exponential and sub-Weibull noise
    results in the same rate \(\tilde\bigO(n^{-\gamma})\) with \(p=\infty\).
    For \(p=2\) and Lipschitz continuous functions (\(k=0, \alpha=1\)) we thus have the
    rate \(\tilde \bigO(n^{-1/4})\) and as the smoothness of the function and
    the number of finite moments of the noise \(p\) increases, the rate
    approaches \(\tilde \bigO(n^{-1/2})\).

    We finally want to highlight, that most choices of \(p\), \(k\) and \(\alpha\)
    lead to \(n\) scaling much faster than \(m^2\), which is the regime
    where the Lebesgue constant of randomized regression concentrates around
    the Lebesgue constant of \(L^2\)-projection (Thm.~\ref{thm: lebesgue constant concentration}).
\end{remark}

This rate discussion also applies to the following example of Chebyshev regression.

\begin{example}[Chebyshev regression]
    \label{ex: chebyshev regression}
    Let \(\poly_M\) be the space of univariate algebraic polynomials of
    degree \(M\) and assume \(n \ge m \coloneq \dim \poly_M = M+1\) 
    and let \(\varepsilon(\delta)\) be the same as in the previous example.
    \begin{enumerate}
        \item\textbf{Chebyshev nodes of the first kind.}
        Let \(\regOp\) be the OLS regression on the zeros of \(T_n\) given by
        \[
            x_k^{(z)} = \cos\bigl(\tfrac{\pi}{2n}+k\tfrac{\pi}{n}\bigr)
        \]
        Then with probability at least \(1-\delta\)
        \[
            \norm{\regOp(y) - f}_\infty
            \le \green{\underbrace{\Bigl(\tfrac{2}{\pi}\ln(2m-1)+ 3.08\Bigr) \error_{\poly_M}^\infty[f]}_{\text{approx. error}}}
            + \blue{\underbrace{\varepsilon(\delta)(\tfrac2\pi\ln(m) + 1)\sqrt{\opnorm{\Sigma}_2 \tfrac{2m-1}{n}}}_{\text{stochastic error}}}.
        \]
        \item\textbf{Chebyshev nodes of the second kind.}
        Assume \(n\ge 2\) and let \(\regOp\) be the WLS regression on the
        extrema of \(T_{n-1}\), the Chebyshev nodes
        of the second kind, given by
        \[
            x_k^{(e)} = \cos\bigl(k\tfrac{\pi}{n-1}\bigr), \qquad 
            \omega_k = \begin{cases}
                \frac{n}{n-1} & k\in \set{1,\dots, n-2},\\
                \frac{n}{2(n-1)} & k\in \set{0, n-1}.
            \end{cases}
        \]
        such that \(\frac1n\sum_{k=0}^{n-1} \omega_k \delta_{x_k} = \frac1{n-1}{\sum''}^{n-1}_{k=0} \delta_{x_k}\)
        and \(\frac1n\sum_{k=0}^{n-1} \omega_k = 1\).
        Then with probability at least \(1-\delta\)
        \[
            \norm{\regOp(y) - f}_\infty
            \le \green{\underbrace{\Bigl(\tfrac{2}{\pi}\ln(2m-1)+ 4\Bigr) \error_{\poly_M}^\infty[f]}_{\text{approx. error}}}
            + \blue{\underbrace{\sqrt{2}\varepsilon(\delta)(\tfrac2\pi\ln(m) + 1)\sqrt{\opnorm{\Sigma}_2 \tfrac{2m-1}{n}}}_{\text{stochastic error}}}.
        \]
    \end{enumerate}
\end{example}
\begin{proof}
    The proof is analogous to the previous example, we only need to replace the Lebesgue constant
    and the Leverage function bounds (Sec.~\ref{sec: lebesgue constants of
    regression}, \ref{sec: christoffel function and reproducing kernel bounds}).
    We also use \(\frac{2m-1}{2n} \le 1\) and \(\frac{2m-1}{2n-2}\le 1.5\) for simplicity.
    In the WLS case, we also do not have \(\Omega_n=\identity\) which necessitates the bound
    \[
        \opnorm{\Omega_n^{\frac12}\Sigma \Omega_n^{\frac12}}_2 \le \opnorm{\Omega_n}_2\opnorm{\Sigma}_2
        \le \tfrac{n}{n-1}\opnorm{\Sigma}_2 \le 2 \opnorm{\Sigma}_2
        \qquad n\ge 2.
        \qedhere
    \]
\end{proof}

	\subsection*{Acknowledgements}
	ChatGPT has been an invaluable
	tool in the writing of this paper. Large language models are extremely capable at
	interpolating between two existing fields of mathematics such as
	approximation theory and statistical regression. ChatGPT consequently
	provided many of the ideas for generalizations and sources for related work
	in the literature. Most of the human effort went into generating
	conjectures, and polishing the proofs into human readable form.
	During this work we got the impression that language models are not yet able
	to strike a balance between simple yet sharp bounds or identify what may
	be a useful abstraction that can be reused in multiple proofs such as
	the leverage function. But given a conjecture, they are already extremely capable
	at finding a proof or counterexample.

	\bibliography{references}

	\clearpage \appendix
	\section{Approximation theory}
\label{sec: approximation theory}

\begin{wrapfigure}{r}{0.2\textwidth}
    \center
    \vspace{-1cm}
    \begin{tikzpicture}[scale=1.2]

\definecolor{lone}{RGB}{220,50,47}
\definecolor{ltwo}{RGB}{38,139,210}
\definecolor{linf}{RGB}{133,153,0}

\draw[thick, domain=-2.2:-0.6, smooth, variable=\x] plot ({\x}, {-2*\x-3});

\coordinate (x) at (0,0);

\coordinate (p2) at (-1.2, -0.6);
\fill[ltwo] (p2) circle (2pt) node[below left] {$\ell_{2}$};
\draw[ltwo, thick] (x) -- (p2);

\coordinate (p1) at (-1.5,0);
\fill[lone] (p1) circle (2pt) node[below left] {$\ell_{1}$};
\draw[lone, thick] (x) -- (p1);

\coordinate (pinf) at (-1,-1);
\fill[linf] (pinf) circle (2pt) node[below left] {$\ell_{\infty}$};
\draw[linf, thick] (x) -- (pinf);

\draw[ltwo, thick] (x) circle (1.3416);

\fill (x) circle (2pt) node[below right] {$x$};

\draw[lone, thick]
    ($(x)+(1.5,0)$) --
    ($(x)+(0,1.5)$) --
    ($(x)+(-1.5,0)$) --
    ($(x)+(0,-1.5)$) -- cycle;

\draw[linf, thick]
    ($(x)+(1,1)$) --
    ($(x)+(-1,1)$) --
    ($(x)+(-1,-1)$) --
    ($(x)+(1,-1)$) -- cycle;

\end{tikzpicture}     \caption{Metric projections onto a 1D subspace}
    \label{fig: metric projections}
\end{wrapfigure}

Much of approximation theory is concerned with quantifying the error made using
a much more tractable \emph{linear} projection (e.g.\ interpolation or regression)
instead of the \emph{metric} projection that minimizes the error in a given
norm. It may therefore be useful to recall the definition of these different
types of projections.

\begin{definition}[Projection types]
    \label{def: projection types}
    Let \(V\) be a vector space
    \begin{enumerate}[label=(\roman*),noitemsep]
        \item \(\linProj \colon V \to V\) is a \emph{linear projection} if \(\linProj\) is a linear map and \(\linProj^2 = \linProj\).
        \item Assume \(V\) is equipped with a semi-norm \(\norm{\cdot}\) and let
        \(U\) be a linear subspace of \(V\). Then the \emph{metric projection}
        of \(v\) onto \(U\) is defined as
        \[
            \metricProj^*_U[v] \coloneq \argmin_{u\in U} \norm{v-u}_*.
        \]
        \item Assume \(V\) is equipped with an inner product and \(U\) is a linear subspace of \(V\).
        Then \(u\) is an \emph{orthogonal projection} of \(v\) onto \(U\) if and only if \(v-u \in U^\perp\). 
    \end{enumerate} 
\end{definition}

\begin{remark}[Projections coincide in Hilbert spaces]
    \label{rem: projections coincide in hilbert spaces}
    Let \(\hilbert\) be a Hilbert space. Then the orthogonal projection of \(v\in \hilbert\)
    onto a closed linear subspace \(U\) of \(\hilbert\) exists and is unique for every \(v\in
    \hilbert\) and the map \(\linProj\colon \hilbert \to U\) that maps \(v\) to
    its orthogonal projection is a linear projection. Moreover the metric
    projection of the norm induced by the inner product is always a singleton
    and coincides with the orthogonal projection.
    See e.g. \citet[Section.~3.3, Theorem 1 \& 2]{luenbergerOptimizationVectorSpace1998}.
\end{remark}

The metric projection minimizes the error in the semi-norm \(\norm{\cdot}_*\)
resulting in the metric projection error as defined in \eqref{eq: metric
projection error}
\[
    \error_\modelSpace^*[f]
    = \inf_{\model\in \modelSpace} \norm{f-\model}_*
    = \norm{f - \metricProj_\modelSpace^*[f]}_*.
\]
Note that \(\metricProj_\modelSpace^*[f]\) is never empty for finite dimensional
model spaces \(\modelSpace\) \citep[Thm.~1.1]{devoreConstructiveApproximation1993}.
If the norm is induced by an inner product, the
metric projection happens to be a linear projection (Remark \ref{rem: projections
coincide in hilbert spaces}). But for most norms, in particular the sup-norm,
this is generally not the case (Figure \ref{fig: metric projections}).
It turns out that the operator norm
of the linear projection \(\linProj \colon \funSpace\to \modelSpace\), is the
quantity that controls exactly how much worse the approximation \(\linProj f\)
is compared to the metric projection. In fact, the operator norm defines
the generalization of Lebesgue constant and function to arbitrary linear projections.
\begin{definition}[Generalized Lebesgue function and constant]
    \label{def: generalized lebesgue function and constant}
    The \emph{generalized Lebesgue function} is at any \(x\in \domain\) given by the operator norm of \(\eval_x\linProj\)
    from the space \((\funSpace, \norm{\cdot}_\weight)\) into \((\field, \abs{\cdot}\weight(x))\),
    that is
    \begin{equation*}
        \tag{generalized Lebesgue function}
        \opnorm{\eval_x \linProj}_{\funSpace,\weight}^{\weight(x)}
        \coloneq \sup_{\substack{f\in \funSpace\\ \norm{f}_\weight \le 1}} \abs{\linProj f(x)} \weight(x)
    \end{equation*}
    And we write \(\opnorm{\eval_x \linProj}_{\funSpace,\weight}\) for \(\weight(x)=1\) and observe that \(\opnorm{\eval_x \linProj}_{\funSpace,\weight}^{\weight(x)}= \opnorm{\eval_x \linProj}_{\funSpace,\weight}\weight(x)\).
    The \emph{generalized Lebesgue constant} is given by the operator norm of \(\linProj\colon \funSpace \to \modelSpace\)
    in the space \((\funSpace, \norm{\cdot}_\weight)\), that is
    \begin{equation*}
        \tag{generalized Lebesgue constant}
        \opnorm{\linProj}_{\funSpace,\weight}
        \coloneq \sup_{\substack{f\in \funSpace\\ \norm{f}_\weight \le 1}} \norm{\linProj f}_\weight
    \end{equation*}
\end{definition}
While we usually omit the dependence of the operator norms on the function space \(\funSpace\)
in the notation, we highlight it here, since the Lebesgue function and constant
only coincide with their generalized versions if the function space \(\funSpace\) is
sufficiently rich (cf.\ Def.~\ref{def: sufficiently rich function space},
Lem.~\ref{lem: lebesgue function is operator norm}, Cor.~\ref{cor: lebesgue
constant is operator norm} and Thm.~\ref{thm: L2 proj Lebesgue function and constant are operator norms}).
Assuming the Lebesgue constant coincides with its generalized version, the
following Lemma  proves the upper bound of Theorem \ref{thm: optimal bias bounds
in the weighted sup-norm} \ref{it: (weighted) uniform bound}.

\begin{mdframed}[style=default]
\begin{lemma}[Lebesgue {\citep[e.g.][Prop.~4.1]{devoreConstructiveApproximation1993}}]
    \label{lem: lebesgue}
    Let \(\linProj\colon \funSpace \to \modelSpace\), be a linear projection
    \textbf{onto} the linear subspace \(\modelSpace\) of the vector space \(\funSpace\) equipped with the semi-norm \(\norm{\cdot}_*\). Then we have
    \begin{equation}
        \label{eq: lebesgue upper bound}
        \norm{\linProj f - f}_*
        \le (1 + \opnorm{\linProj}_*) \error_\modelSpace^*[f],
    \end{equation}
    Moreover, for any \(\epsilon>0\) there exists \(f \in \funSpace\)
    such that
    \begin{equation}
        \label{eq: lebesgue lower bound}        
        \norm{\linProj f - f}_* \ge (\opnorm{\linProj}_* -1 - \epsilon) \error_\modelSpace^*[f].
    \end{equation}
\end{lemma}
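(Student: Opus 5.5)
For the upper bound \eqref{eq: lebesgue upper bound} I would use the classical trick of inserting an arbitrary element of the model space. Fix \(f\in\funSpace\) and any \(\model\in\modelSpace\). Since \(\linProj\) is a linear projection onto \(\modelSpace\), we have \(\linProj\model=\model\). Hence
\[
\linProj f - f = \linProj(f-\model) - (f-\model).
\]
The triangle inequality together with the definition of the operator norm gives
\[
\norm{\linProj f - f}_* \le (1+\opnorm{\linProj}_*)\norm{f-\model}_*.
\]
Taking the infimum over \(\model\in\modelSpace\) yields \((1+\opnorm{\linProj}_*)\error_\modelSpace^*[f]\). If \(\opnorm{\linProj}_*=\infty\), the bound is trivial. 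No properties of the semi-norm beyond absolute homogeneity and the triangle inequality are needed.

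For the lower bound \eqref{eq: lebesgue lower bound} I would exploit near-maximisers of the operator norm. If \(\opnorm{\linProj}_*\le 1+\epsilon\), the right-hand side is nonpositive and any \(f\) works (e.g.\ \(f=0\)). Otherwise, choose \(g\in\funSpace\) with \(\norm{g}_*\le 1\) and \(\norm{\linProj g}_*\ge \opnorm{\linProj}_*-\epsilon\). If the operator norm is infinite, choose \(g\) with \(\norm{\linProj g}_*\) arbitrarily large.

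Then set \(f\coloneq g-\linProj g\). This choice gives \(\linProj f=\linProj g-\linProj^2 g=0\), so \(\norm{\linProj f - f}_*=\norm{f}_*\). The triangle inequality gives
\[
\norm{f}_* \ge \norm{\linProj g}_* - \norm{g}_* \ge \opnorm{\linProj}_*-1-\epsilon.
\]
On the other hand, \(\linProj g\in\modelSpace\), so \(\error_\modelSpace^*[f]\le\norm{f-(-\linProj g)}_*=\norm{g}_*\le 1\).

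Combining these two estimates, and using that the left factor is positive in this case, I get
\[
\norm{\linProj f - f}_* \ge \opnorm{\linProj}_*-1-\epsilon \ge (\opnorm{\linProj}_*-1-\epsilon)\,\error_\modelSpace^*[f].
\]
The infinite-norm case is the same argument with an arbitrarily large left-hand side.

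The only step I expect to need care is this sign and degeneracy bookkeeping. One has to separate the case \(\opnorm{\linProj}_*\le 1+\epsilon\) and make sure the inequality \(\error_\modelSpace^*[f]\le 1\) is used only when multiplied by a nonnegative factor. A further subtlety is that with a semi-norm \(\error_\modelSpace^*[f]\) might be \(0\); this is harmless, because the displayed inequality then holds trivially.
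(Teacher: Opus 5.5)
Your proof is correct and follows the paper's route. For the upper bound you insert a model-space element, use \(\linProj\model=\model\) and the triangle inequality; taking an arbitrary \(\model\) and then the infimum even avoids the paper's use of an attained best approximation \(\model^*\). For the lower bound you use, as the paper does, a near-maximiser of \(\opnorm{\linProj}_*\), the reverse triangle inequality, and the bound of \(\error_\modelSpace^*\) by the semi-norm of a model-space translate. Your shift \(f=g-\linProj g\) is harmless but unnecessary: \(\linProj f-f=\linProj g-g\) and \(\error_\modelSpace^*[f]=\error_\modelSpace^*[g]\), so it reduces to the paper's choice of testing with the near-maximiser itself.
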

\end{mdframed}

Observe that the Lebesgue lemma allows for the generalization of the
Lebesgue constant to semi-norms other than the weighted sup-norm, that is
\(\opnorm{\linProj}_* = \sup_{\norm{f}_* = 1} \norm{\linProj f}_*\).
However, the resulting bounds are not useful for \(L^p(\mu)\)-norms as we discuss
in Section \ref{sec: lebesgue constant for Lp norms}.

\begin{remark}[Gap between upper and lower bound]
    The gap between the upper bound \eqref{eq: lebesgue upper bound} and \eqref{eq: lebesgue lower bound}
    may be closed in the case of regression projections and \(L^2(\mu)\)-projections. Specifically, the lower
    bound may be raised to the upper bound (Cor.~\ref{cor: lebesgue
    constant is operator norm} and Thm.~\ref{thm: L2 proj Lebesgue function and constant are operator norms}).
\end{remark}

While this Lemma is very powerful, its proof is surprisingly simple and instructive and
we make an exception to our general rule of postponing proofs.

\begin{mdframed}[topline=false, bottomline=false, innertopmargin=0pt]
\begin{proof}
    Using \(\model^* \in \metricProj^*_\modelSpace[f]\) we have \(\linProj \model^* = \model^*\)
    as \(\linProj\) is a linear projection onto \(\modelSpace\) and therefore
    acts like the identity on \(\modelSpace\). This implies
    \[
        \norm{f - \linProj f}_* 
        \le \norm{f - \model^*}_* + \norm{\linProj(\model^* - f)}_*
        \le (1+\opnorm{\linProj}_*) \norm{f - \model^*}_*.
    \]
For a fixed \(\epsilon > 0\) pick \(f_\epsilon\in\funSpace\) such that \(\frac{\norm{\linProj f_\epsilon}_*}{\norm{f_\epsilon}_*} \ge \opnorm{\linProj}_* - \epsilon\).
    Then
    \begin{align*}
        \norm{f_\epsilon - \linProj f_\epsilon}_*
        &\ge \norm{\linProj f_\epsilon}_* - \norm{f_\epsilon}_*
        \\
        &\ge (\opnorm{\linProj}_* -\epsilon - 1)\norm{f_\epsilon}_*
        \ge (\opnorm{\linProj}_* -\epsilon - 1)\norm{f_\epsilon - \model^*}_*.
    \end{align*}
    The last inequality uses \(\norm{f_\epsilon - \model^*}_* \le \norm{f_\epsilon - 0}_*\)
    since \(0\in\modelSpace\) as a linear subspace and
    assumes \(\opnorm{\linProj}_* -\epsilon - 1 \ge 0\) without loss of generality
    as the bound is trivial otherwise.
\end{proof}
\end{mdframed}

While the Lebesgue Lemma may in principle be used with any semi-norm, the
corresponding pointwise bounds using the generalized Lebesgue function are only
possible since the norm of the function value \(\abs{f(x)}\weight(x)\) in the
space \((\field, \abs{\cdot}\weight(x))\) is dominated by the weighted sup-norm
\(\norm{f}_\weight\). This is a key property that \(L^p(\mu)\)-norms do not
have. Again, assuming the Lebesgue function coincides with its generalized version, the
following Lemma proves the upper bound of Theorem \ref{thm: optimal bias bounds
in the weighted sup-norm} \ref{it: (weighted) pointwise bound}.

\begin{lemma}[Pointwise bounds]
    \label{lem: pointwise bounds}
    Let \(\linProj\colon \funSpace \to \modelSpace\), be a linear projection
    \textbf{onto} the linear subspace \(\modelSpace\) and \(\error_\modelSpace^\weight[f]\) be the metric projection error from the function
    space \(\funSpace\) to the model space
    \(\modelSpace\) with respect to the weighted sup-norm
    \(\norm{\cdot}_\weight\), then
    \begin{equation}
        \label{eq: pointwise upper bound}
        \abs[\big]{\linProj f(x) - f(x)} \weight(x)
        \le \bigl(1+ \opnorm{\eval_x \linProj}_\weight\weight(x)\bigr)\error_\modelSpace^\weight[f],
    \end{equation}
    Moreover for all \(\epsilon>0\) there exists a function \(f\in \funSpace\) such that
    \begin{equation}
        \label{eq: pointwise lower bound}        
        \abs[\big]{\linProj f(x) - f(x)} \weight(x)
        \ge \bigl(\opnorm{\eval_x \linProj}_\weight\weight(x) - 1 -\epsilon\bigr)\error_\modelSpace^\weight[f],
    \end{equation}
\end{lemma}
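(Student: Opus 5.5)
We mirror the proof of the Lebesgue Lemma (Lem.~\ref{lem: lebesgue}), replacing the weighted sup-norm on the left by the pointwise quantity \(\abs{g(x)}\weight(x)\). The one structural fact we use is that this pointwise quantity is dominated by the norm: \(\abs{g(x)}\weight(x) \le \norm{g}_\weight\) for every \(g\in\funSpace\).

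\textbf{Upper bound.} Fix \(\model^*\in\metricProj^\weight_\modelSpace[f]\), which is nonempty since \(\modelSpace\) is finite dimensional. Because \(\linProj\) is a linear projection onto \(\modelSpace\), we have \(\linProj\model^*=\model^*\), and hence
\[
    \linProj f(x)-f(x) = \bigl(\model^*(x)-f(x)\bigr) + \eval_x\linProj(f-\model^*).
\]
Multiply by \(\weight(x)\) and apply the triangle inequality. The first term is at most \(\norm{f-\model^*}_\weight=\error_\modelSpace^\weight[f]\) by the domination property. By the definition of the operator norm (Def.~\ref{def: generalized lebesgue function and constant}), the second term is at most \(\opnorm{\eval_x\linProj}_\weight\weight(x)\norm{f-\model^*}_\weight\). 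Summing gives \eqref{eq: pointwise upper bound}.

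\textbf{Lower bound.} Given \(\epsilon>0\), we may assume \(\opnorm{\eval_x\linProj}_\weight\weight(x)-1-\epsilon\ge0\), since otherwise the bound holds trivially because the right-hand side is nonpositive. By the definition of the supremum, choose \(f_\epsilon\) with \(\norm{f_\epsilon}_\weight>0\) and
\[
    \abs{\linProj f_\epsilon(x)}\weight(x)\ge(\opnorm{\eval_x\linProj}_\weight\weight(x)-\epsilon)\norm{f_\epsilon}_\weight .
\]
The reverse triangle inequality together with domination gives
\[
    \abs{\linProj f_\epsilon(x)-f_\epsilon(x)}\weight(x)\ge\abs{\linProj f_\epsilon(x)}\weight(x)-\norm{f_\epsilon}_\weight\ge(\opnorm{\eval_x\linProj}_\weight\weight(x)-1-\epsilon)\norm{f_\epsilon}_\weight .
\]
Since \(0\in\modelSpace\), we have \(\norm{f_\epsilon}_\weight\ge\error_\modelSpace^\weight[f_\epsilon]\), and because the coefficient is nonnegative this yields \eqref{eq: pointwise lower bound}.

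\textbf{Degenerate cases.} The only delicate point is the choice of \(f_\epsilon\) when the operator norm is zero or infinite, or when every \(f\) with \(\linProj f(x)\ne0\) has \(\norm{f}_\weight=0\).
\begin{itemize}
    \item If the norm is infinite, any \(f_\epsilon\) with a sufficiently large ratio works.
    \item If the supremum is approached only along functions with \(\norm{f}_\weight=0\), then the seminorm condition forces \(\linProj f(x)=0\) for those functions unless the norm is infinite. This is handled by reading the supremum over \(\norm{f}_\weight\le1\) as the supremum of ratios.
    \item If the norm is zero, the coefficient is negative and the bound is trivial.
\end{itemize}
Apart from this bookkeeping, the argument is a direct transcription of the Lebesgue Lemma.
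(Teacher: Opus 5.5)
Your proof is correct and follows the paper's route. The upper bound is the same triangle-inequality split around a best approximation \(\model^*\), using \(\linProj\model^*=\model^*\), and your lower bound is precisely the transcription of the Lebesgue-Lemma argument, with \(\abs{g(x)}\weight(x)\le\norm{g}_\weight\), that the paper only describes as ``analogous''. Your degenerate-case bookkeeping can be trimmed: since \(\weight>0\), \(\norm{f}_\weight=0\) forces \(f\equiv0\), so the second case never arises.
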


\begin{mdframed}[topline=false, bottomline=false,innertopmargin=0pt]
\begin{proof}
We only prove the upper bound to show it is analogous to the proof of
Lemma \ref{lem: lebesgue}.
For \(\model^* \in \metricProj_\modelSpace^\weight[f]\) we have
\begin{align*}
    \abs[\big]{\linProj f(x) - f(x)}\weight(x)
    &\le \underbrace{\abs[\big]{\linProj (f - \model^*)(x)}}_{\le \opnorm{\eval_x \linProj}_\weight \mathrlap{\norm{f-\model^*}_\weight}}\weight(x)
    + \underbrace{\abs[\big]{\model^*(x) - f(x)}\weight(x)}_{\le \norm{\model^* - f}_\weight}
    \\
    &\le \bigl(\opnorm{\eval_x \linProj}_\weight\weight(x)+1\bigr)\error_\modelSpace^\weight[f].
\end{align*}
The proof of the lower bound is analogous to the one in Lemma \ref{lem: lebesgue}.
\end{proof}
\end{mdframed}

That the supremum over the generalized Lebesgue function is the generalized
Lebesgue constant also only holds for the weighted sup-norm.

\begin{lemma}
    \label{lem: operator norm relation}
    \(
        \opnorm{\linProj}_\weight = \sup_{x\in \domain}\opnorm{\eval_x\linProj}_\weight\weight(x)
    \)
\end{lemma}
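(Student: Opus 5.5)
This identity is an exchange of two suprema; I would prove it by establishing the two inequalities separately, working directly from Definition~\ref{def: generalized lebesgue function and constant}. The key observation is that the weighted sup-norm of any function \(g\) is \(\norm{g}_\weight = \sup_{x\in\domain}\abs{g(x)}\weight(x)\). Applied to \(g = \linProj f\), this gives
\[
    \opnorm{\linProj}_\weight
    = \sup_{\substack{f\in\funSpace\\ \norm{f}_\weight\le 1}} \sup_{x\in\domain} \abs{\linProj f(x)}\weight(x).
\]

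For the inequality ``\(\le\)'', fix \(f\) with \(\norm{f}_\weight\le1\) and fix \(x\in\domain\). Directly from the definition of the generalized Lebesgue function,
\[
    \abs{\linProj f(x)}\weight(x) \le \opnorm{\eval_x\linProj}_\weight\weight(x) \le \sup_{y\in\domain}\opnorm{\eval_y\linProj}_\weight\weight(y).
\]
Taking the supremum over \(x\) and then over \(f\) yields the claim.

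For the inequality ``\(\ge\)'', fix \(x\in\domain\) and \(f\) with \(\norm{f}_\weight\le1\). Since \(\abs{\linProj f(x)}\weight(x)\le\norm{\linProj f}_\weight\le\opnorm{\linProj}_\weight\), taking the supremum over \(f\) gives \(\opnorm{\eval_x\linProj}_\weight\weight(x)\le\opnorm{\linProj}_\weight\). Taking the supremum over \(x\) then finishes the proof.

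Equivalently, the whole argument is the interchange \(\sup_f\sup_x = \sup_x\sup_f\) of suprema over the product set, which is always valid, including when the value is \(+\infty\). I expect no real obstacle. The only points needing care are that \(\weight(x)>0\) is a fixed scalar that can be moved inside or outside the supremum over \(f\), and that the identity \(\opnorm{\eval_x\linProj}_{\funSpace,\weight}^{\weight(x)}=\opnorm{\eval_x\linProj}_{\funSpace,\weight}\weight(x)\) stated in the definition is exactly this move. No richness assumption on \(\funSpace\) is needed, because both sides are the generalized quantities.
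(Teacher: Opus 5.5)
Your proposal is correct and matches the paper's proof: the paper also writes \(\norm{\linProj f}_\weight = \sup_{x\in\domain}\abs{\linProj f(x)}\weight(x)\) and interchanges the suprema over \(f\) and \(x\). Your two-inequality version is a more explicit write-up of that same interchange.
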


\begin{mdframed}[topline=false, bottomline=false, innertopmargin=0pt]
\begin{proof}
    By definition
    \begin{align*}
        \opnorm{\linProj}_\weight
        \overset{\text{def.}}= \sup_{\substack{f\in \funSpace\\ \norm{f}_\weight \le 1}} \norm{\linProj f}_\weight
        &= \sup_{\substack{f\in \funSpace\\ \norm{f}_\weight \le 1}} \sup_{x\in \domain} \abs{\linProj f(x)}\weight(x)
        \\
        &= \sup_{x\in \domain} \sup_{\substack{f\in \funSpace\\ \norm{f}_\weight \le 1}} \abs{\linProj f(x)}\weight(x)
        \overset{\text{def.}}= \sup_{x\in \domain} \opnorm{\eval_x \linProj}_\weight\weight(x).
        \qedhere
    \end{align*}
\end{proof}
\end{mdframed}

\subsection{Regression projections}
\label{sec: approximation theory for regression}

In light of Lebesgue's Lemma \ref{lem: lebesgue} and the corresponding pointwise
bounds (Lem.~\ref{lem: pointwise bounds}), the proof of Theorem \ref{thm: optimal
bias bounds in the weighted sup-norm} only
requires us to show that the Lebesgue function and constant coincide with their generalized versions
and tighten the lower bounds. For the Lebesgue function this is done by the following 
Lemma.

\begin{mdframed}[style=default]
\begin{lemma}[The Lebesgue function is the operator norm]
    \label{lem: lebesgue function is operator norm}
    Let \(\linProj\) be a regression projection with Lagrange function \(\ell\). Then we have
    \begin{equation}
        \label{eq: lebesgue function regression upper bound}    
        \opnorm{\eval_x \linProj}_\weight\weight(x) \le \lebfun{\weight}{\linProj}{x}
        \qquad \forall x\in \domain,
    \end{equation}
    where \(\lebfun{\weight}{\linProj}{x}
        \overset{\mathrm{def.}}= \weight(x)\sum_{i=1}^n \frac{\abs{\ell_i(x)}}{\weight(x_i)}
    \) is the Lebesgue function (Def.~\ref{def: lebesgue function and constant}).
    Moreover if the \(x_1, \dots, x_n\) are distinct and the function space \(\funSpace\)
    is sufficiently rich (Def.~\ref{def: sufficiently rich function space}), then
    \begin{equation}
        \label{eq: lebesgue function regression equality}    
        \opnorm{\eval_x \linProj}_\weight \weight(x)
        = \lebfun{\weight}{\linProj}{x} \qquad \forall x\in \domain,
    \end{equation}
    and for any \(x\notin \set{x_1, \dots, x_n}\) there exists a function \(f\in
    \funSpace\) such that
    \begin{equation}
        \label{eq: approximation error regression lower bound}        
        \abs{f(x) - \linProj f(x)}\weight(x)
        = \bigl(1+ \lebfun{\weight}{\linProj}{x}\bigr)\error_\modelSpace^\weight[f].
    \end{equation}
\end{lemma}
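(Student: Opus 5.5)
The plan has three stages: an upper bound from the Lagrange representation, a matching lower bound from an extremal interpolant, and then the equality case via the equality version of the lower bound.

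\textbf{Upper bound.} Since \(\linProj = \regOp\circ\eval_X\), Definition~\ref{def: lagrange function} gives \(\linProj f(x) = \ell(x)^\conT \eval_X[f] = \sum_{i=1}^n \ell_i(x) f(x_i)\). For \(\norm{f}_\weight\le 1\) we have \(\abs{f(x_i)}\le 1/\weight(x_i)\). Hence
\[
\abs{\linProj f(x)}\weight(x) \le \weight(x)\sum_{i=1}^n \frac{\abs{\ell_i(x)}}{\weight(x_i)} = \lebfun{\weight}{\linProj}{x}.
\]
Taking the supremum over such \(f\) gives \eqref{eq: lebesgue function regression upper bound}.

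\textbf{Lower bound for \eqref{eq: lebesgue function regression equality}.} Choose target values \(y_i \coloneq \overline{\phase(\ell_i(x))}/\weight(x_i)\), with \(y_i\coloneq 1/\weight(x_i)\) if \(\ell_i(x)=0\), so that \(\ell_i(x)y_i = \abs{\ell_i(x)}/\weight(x_i)\). Because the \(x_i\) are distinct and \(\funSpace\) is sufficiently rich, there is \(f\in\funSpace\) with \(\eval_X[f]=y\) and \(\norm{f}_\weight\le \abs{\eval_X[f\weight]}_\infty = 1\). Then \(\linProj f(x)\weight(x) = \lebfun{\weight}{\linProj}{x}\), which attains the supremum.

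\textbf{Equality case \eqref{eq: approximation error regression lower bound}.} For \(x\notin\{x_1,\dots,x_n\}\), apply richness to the \(n+1\) distinct points \(x_1,\dots,x_n,x\). Prescribe the same \(y_i\) as above at the \(x_i\), and at \(x\) prescribe \(f(x) \coloneq -\phase(\linProj f(x))/\weight(x)\). Here \(\linProj f(x) = \sum_i \ell_i(x) y_i = \lebfun{\weight}{\linProj}{x}/\weight(x)\) is already determined by the \(y_i\), so this prescription is consistent. Concretely \(f(x) = -1/\weight(x)\), which equals \(-\phase(\linProj f(x))/\weight(x)\) whenever \(\lebfun{\weight}{\linProj}{x}>0\); if \(\lebfun{\weight}{\linProj}{x}=0\), any unit-modulus value works. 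Richness gives \(\norm{f}_\weight\le 1\), and we obtain
\[
\abs{f(x)-\linProj f(x)}\weight(x) = 1 + \lebfun{\weight}{\linProj}{x}.
\]
Since \(0\in\modelSpace\), \(\error_\modelSpace^\weight[f]\le\norm{f}_\weight\le 1\). Lemma~\ref{lem: pointwise bounds} together with the now established equality \eqref{eq: lebesgue function regression equality} gives the reverse inequality \(\abs{f(x)-\linProj f(x)}\weight(x)\le (1+\lebfun{\weight}{\linProj}{x})\,\error_\modelSpace^\weight[f]\). Chaining the two, \(1+\lebfun{\weight}{\linProj}{x} \le (1+\lebfun{\weight}{\linProj}{x})\,\error_\modelSpace^\weight[f]\), which forces \(\error_\modelSpace^\weight[f]=1\) and hence equality in \eqref{eq: approximation error regression lower bound}.

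The main obstacle I expect is this last step. One has to see that the error needs to be sandwiched using \(\error_\modelSpace^\weight[f]\le\norm{f}_\weight\) combined with the upper bound, instead of computing the best approximation error directly. One must also check that the extra point \(x\) can be added without changing \(\linProj f(x)\), which holds because \(\linProj f\) depends only on the values \(f(x_1),\dots,f(x_n)\).
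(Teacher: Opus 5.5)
Your proof is correct and follows the paper's argument step for step: the H\"older-type upper bound, the phase-adversarial interpolant obtained from richness, and the extra node \(x\) with prescribed value \(-1/\weight(x)\). Your explicit sandwich via Lemma~\ref{lem: pointwise bounds} also spells out the upper-bound direction that the paper's ``\(\ge\)'' chain leaves implicit when it claims equality.

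One small fix: \(\conT\) is the conjugate transpose, so \(\linProj f(x)=\sum_i\overline{\ell_i(x)}\,f(x_i)\). The adversarial labels must therefore be \(y_i=\phase(\ell_i(x))/\weight(x_i)\), as in the paper, not their conjugates. With your choice, \(f\) does not attain \(\lebfun{\weight}{\linProj}{x}\) when \(\field=\complex\).
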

\end{mdframed}

\begin{remark}[The assumptions are necessary]
    The assumption \(x\notin \set{x_1, \dots, x_n}\) for \eqref{eq:
    approximation error regression lower bound} is necessary. To see this,
    observe that for interpolation projections (\(n=m\)), we have \(f(x_i) = \linProj f(x_i)\)
    whereas typically \(\lebfun{\weight}{\linProj}{x_i} \ge 1\). Indeed,
    let \(g \in \funSpace\) be a function which
    assumes its weighted sup-norm at \(x_i\), that is \(g(x_i)\weight(x_i) =
    \norm{g}_\weight\) with \(\norm{g}_\weight = 1\) after possibly scaling, then
    \[
        \lebfun{\weight}{\linProj}{x_i}
        \overset{\eqref{eq: lebesgue function regression equality}}= \sup_{\substack{f\in \funSpace\\ \norm{f}_\weight \le 1}} \abs{\linProj f(x_i)}\weight(x_i)
        \ge \abs{\linProj g(x_i)}\weight(x_i)
        = \abs{g(x_i)}\weight(x_i) = 1.
    \]
\end{remark}

The following Corollary, which proves Theorem \ref{thm: optimal bias bounds in the weighted sup-norm} \ref{it: (weighted) uniform bound} with
the Lebesgue Lemma, essentially follows from Lemma \ref{lem: lebesgue
function is operator norm} and \ref{lem: operator norm relation} and the
definition of the Lebesgue constant as the supremum of the
Lebesgue function.

\begin{mdframed}[style=default]
\begin{corollary}[Lebesgue constant is operator norm]
    \label{cor: lebesgue constant is operator norm}
    Let \(\linProj\) be a regression projection and \(\lebconst{\weight}{\linProj}\) its Lebesgue constant, then
    \[
        \opnorm{\linProj}_\weight \le \lebconst{\weight}{\linProj}.
    \]
    If the \(x_i\) are distinct and the function space is sufficiently rich (Def.~\ref{def: sufficiently rich function space}),
    then
    \[
        \opnorm{\linProj}_\weight = \lebconst{\weight}{\linProj}.
    \]
    And if additionally, the \(x_i\) are not isolated points in \(\domain\), \(\modelSpace\subseteq C(\domain; \field)\) and \(\weight\) is continuous,
    then for any \(\epsilon>0\) there exists a function \(f\in \funSpace\) such that
    \begin{equation}
        \label{eq: lower bound tightening of Lebesgue Lemma}
        \norm{f - \linProj f}_\weight
        \ge (\opnorm{\linProj}_\weight + 1 -\epsilon)\error_\modelSpace^\weight[f].
    \end{equation}
\end{corollary}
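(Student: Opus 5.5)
The first two claims follow by taking suprema. By Lemma~\ref{lem: operator norm relation} we have \(\opnorm{\linProj}_\weight = \sup_{x\in\domain}\opnorm{\eval_x\linProj}_\weight\weight(x)\). Inserting the pointwise inequality \eqref{eq: lebesgue function regression upper bound} and taking the supremum over \(x\) gives \(\opnorm{\linProj}_\weight\le\lebconst{\weight}{\linProj}\). Under distinctness and sufficient richness, the pointwise equality \eqref{eq: lebesgue function regression equality} holds for every \(x\), so the two suprema coincide and \(\opnorm{\linProj}_\weight=\lebconst{\weight}{\linProj}\).

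For the tightened lower bound \eqref{eq: lower bound tightening of Lebesgue Lemma}, I would use the exact pointwise lower bound \eqref{eq: approximation error regression lower bound} of Lemma~\ref{lem: lebesgue function is operator norm}. That lemma only applies at points \(x\notin\set{x_1,\dots,x_n}\), so I need such points at which the Lebesgue function is almost \(\lebconst{\weight}{\linProj}\).

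\begin{enumerate}
\item Fix \(\epsilon>0\). By the definition of the supremum, choose \(x_0\) with \(\lebfun{\weight}{\linProj}{x_0}\ge\lebconst{\weight}{\linProj}-\epsilon/2\).
\item If \(x_0\) is one of the nodes, I move it to a nearby non-node. The Lagrange function satisfies \(\ell(x)=\frac1n\Omega_n\modelVander\Gram^{-1}\overline{\mBasisVec(x)}\), or more abstractly \(\ell_i=\regOp(e_i)\in\modelSpace\). Since \(\modelSpace\subseteq C(\domain;\field)\), each \(\ell_i\) is continuous. Since \(\weight\) is continuous and positive, \(x\mapsto\lebfun{\weight}{\linProj}{x}=\weight(x)\sum_i\abs{\ell_i(x)}/\weight(x_i)\) is continuous.
\item Because \(x_0\) is not isolated in \(\domain\), every neighbourhood of \(x_0\) contains infinitely many points of \(\domain\) (at least in a \(T_1\) setting such as a metric space). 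Only finitely many of these are nodes, so I can pick \(x\notin\set{x_1,\dots,x_n}\) with \(\lebfun{\weight}{\linProj}{x}\ge\lebconst{\weight}{\linProj}-\epsilon\).
\item Apply \eqref{eq: approximation error regression lower bound} at this \(x\). It yields \(f\in\funSpace\) with
\[
\norm{f-\linProj f}_\weight\ge\abs{f(x)-\linProj f(x)}\weight(x)=(1+\lebfun{\weight}{\linProj}{x})\error_\modelSpace^\weight[f].
\]
\item Bound the right-hand side below by \((1+\opnorm{\linProj}_\weight-\epsilon)\error_\modelSpace^\weight[f]\), using the equality of the second claim.
\end{enumerate}

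I expect the main obstacle to be the continuity and perturbation step. If nodes are isolated, the supremum of the Lebesgue function may be attained only at nodes, and there the error vanishes for interpolation. The non-isolatedness hypothesis is exactly what lets me approximate from outside the node set, so this is where I would check the topology carefully. The phrase "not isolated" needs to give a non-node within any neighbourhood; in a Hausdorff space, removing finitely many points from a neighbourhood leaves an open set still containing points near \(x_0\).
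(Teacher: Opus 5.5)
Your proposal is correct and follows the paper's proof essentially step for step: the first two claims come from Lemma~\ref{lem: operator norm relation} combined with the pointwise inequality and equality of Lemma~\ref{lem: lebesgue function is operator norm}, and the tightened bound comes from a near-maximizer, continuity of the Lebesgue function, non-isolatedness to move off the nodes, and the pointwise worst-case function \eqref{eq: approximation error regression lower bound}. Your remark that finding a non-node inside the neighbourhood needs a \(T_1\)-type separation (so that the finitely many nodes can be excluded from an open set) makes explicit a point the paper's proof uses only implicitly.
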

\end{mdframed}

The key for \eqref{eq: lower bound tightening of Lebesgue Lemma}
is the continuity of the Lebesgue function that follows from the continuity of
the model space, which implies continuity of the Lagrange function (Lemma
\ref{lem: lagrange function properties} \ref{it: lagrange continuity}).  With
this continuity, we can bridge over the points \(x \in \set{x_1, \dots, x_n}\)
where the lower bound of Lemma \ref{lem: lebesgue function is operator norm}
does not hold.

\subsection{\texorpdfstring{\(L^p(\mu)\)}{Lp(µ)}-projection}
\label{sec: approximation theory of Lp projection}

In the following we will show the Lebesgue function and constant for orthogonal
projections \(\linProj_\mu\) coincide with their generalized versions
if the function space is \(\funSpace = L^2(\mu)\).
First, we will prove a kernel representation of the \(L^2(\mu)\)-projection \(\linProj_\mu\).

\begin{mdframed}[style=default]
\begin{theorem}[\(L^2\)-projection Lebesgue function and constant]
    \label{thm: L2 proj Lebesgue function and constant are operator norms}
    For \(\weight\colon \domain \to (0, \infty)\) let \(\mu(dx) = \weight(x)^2 \nu(dx)\)
    for some \(\sigma\)-finite measure \(\nu\) on \(\domain\).
    Then the Lebesgue constant and function of the projection \(\linProj_\mu\)
    (Def.~\ref{def: lebesgue function and constant of L2 projection})
    coincide with their generalized versions on \(L^2(\mu)\). More specifically,
    for \(\modelSpace\subseteq \funSpace \subseteq L^2(\mu)\)
    \begin{alignat*}{3}
        \opnorm{\eval_x \linProj_\mu}_{\funSpace, \weight}^{\weight(x)}
        &\le \opnorm{\eval_x \linProj_\mu}_{L^2(\mu), \weight}^{\weight(x)}
        &&= \lebfun{\weight}{\linProj_\mu}{x}
\\
        \opnorm{\linProj_\mu}_{\funSpace, \weight}
        &\le \opnorm{\linProj_\mu}_{L^2(\mu), \weight}
        &&= \lebconst{\weight}{\linProj_\mu}
\end{alignat*}
    In particular, we have for all \(f\in \funSpace \subseteq L^2(\mu)\)
    \begin{align}
        \label{eq: L2 orth proj, pointwise bound}
        \abs{\linProj_\mu f(x) - f(x)} \weight(x)
        &\le \bigl(1+ \lebfun{\weight}{\linProj_\mu}{x}\bigr)\error_\modelSpace^\weight[f]
        \\
        \label{eq: L2 orth proj, uniform bound}
        \norm{\linProj_\mu f - f}_\weight
        &\le \bigl(1+ \lebconst{\weight}{\linProj_\mu}\bigr)\error_\modelSpace^\weight[f]
    \end{align}
    If \(\funSpace=L^2(\mu)\),
    then \eqref{eq: L2 orth proj, pointwise bound} is sharp if \(\mu(\set{x}) = 0\)
    in the sense that there exist no smaller constant than \(1+\lebfun{\weight}{\linProj_\mu}{x}\) such
    that \eqref{eq: L2 orth proj, pointwise bound} holds uniformly for all
    \(f\in \funSpace\). And \eqref{eq: L2 orth proj, uniform bound} is sharp if
    \(\funSpace=L^2(\mu)\) and \(\mu\) is non-atomic, that is \(\mu(\set{x}) =
    0\) for all \(x\in \domain\).
\end{theorem}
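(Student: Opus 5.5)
The plan is to write the \(L^2(\mu)\)-projection as an integral operator and then compute the relevant operator norm as a weighted \(L^1\)–\(L^\infty\) duality. Take an orthonormal basis \((\mBasis_k)\) of \(\modelSpace\) in \(L^2(\mu)\). Then \(\linProj_\mu f = \sum_k \scp{f}{\mBasis_k}_\mu \mBasis_k\), and Lemma~\ref{lem: reproducing kernel} gives
\[
    \linProj_\mu f(x) = \int f(y)\, \reprK_\modelSpace^\mu(x,y)\, \mu(dy).
\]
Here I use \(\sum_k \mBasis_k(x)\overline{\mBasis_k(y)} = \reprK_\modelSpace^\mu(x,y)\); the conjugation convention should be checked, but it is irrelevant after taking absolute values.

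For the upper bound, fix \(x\) and \(f\) with \(\norm{f}_\weight\le 1\), so \(\abs{f(y)}\le 1/\weight(y)\). Then
\[
    \abs{\linProj_\mu f(x)}\weight(x) \le \weight(x)\int \frac{\abs{\reprK_\modelSpace^\mu(x,y)}}{\weight(y)}\,\mu(dy) = \lebfun{\weight}{\linProj_\mu}{x}.
\]
The inequality \(\opnorm{\eval_x\linProj_\mu}_{\funSpace,\weight}\le \opnorm{\eval_x\linProj_\mu}_{L^2(\mu),\weight}\) is trivial, since the supremum is taken over a larger set. For the reverse inequality on \(L^2(\mu)\), define the extremal function \(f_x(y) \coloneq \overline{\sgn \reprK_\modelSpace^\mu(x,y)}/\weight(y)\), with the phase chosen so the integrand becomes \(\abs{\reprK_\modelSpace^\mu(x,y)}/\weight(y)\). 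We need \(f_x\in L^2(\mu)\). This holds because \(\int \weight^{-2}\,d\mu = \nu(\domain)\). That quantity is only \(\sigma\)-finite, however, so the potential obstacle is that \(f_x\) need not lie in \(L^2(\mu)\). I would handle it by truncation: take \(f_x\ind{E_j}\) for sets \(E_j\uparrow\domain\) of finite \(\nu\)-measure and apply monotone convergence. These truncations have \(\norm{\cdot}_\weight\le1\), and their values converge up to \(\lebfun{\weight}{\linProj_\mu}{x}\), which may be \(+\infty\). This gives equality for the Lebesgue function. The constant then follows by Lemma~\ref{lem: operator norm relation}, which takes the supremum over \(x\) of the pointwise operator norms. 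Combining with Lemma~\ref{lem: pointwise bounds} and Lemma~\ref{lem: lebesgue} yields \eqref{eq: L2 orth proj, pointwise bound} and \eqref{eq: L2 orth proj, uniform bound}, using that \(\linProj_\mu\) is a linear projection onto \(\modelSpace\).

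Sharpness is the main step. For the pointwise bound with \(\mu(\{x\})=0\), I would raise the lower bound of Lemma~\ref{lem: pointwise bounds} from \(\lambda-1-\epsilon\) to \(\lambda+1-\epsilon\). The idea mirrors Lemma~\ref{lem: lebesgue function is operator norm}. Because elements of \(L^2(\mu)\) are equivalence classes and \(\{x\}\) is \(\mu\)-null, we may freely redefine \(f\) at \(x\); this changes neither \(\linProj_\mu f\) nor \(\error_\modelSpace^\weight[f]\), provided the weighted sup-norm is read as an essential supremum. Take the truncated extremal \(g\) with \(\linProj_\mu g(x)\weight(x)\ge \lambda-\epsilon\) and \(\norm{g}_\weight\le1\), and set \(g(x) \coloneq -\sgn(\linProj_\mu g(x))/\weight(x)\). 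Then
\[
    \abs{\linProj_\mu g(x)-g(x)}\weight(x)\ge \lambda-\epsilon+1,
\]
while \(\error_\modelSpace^\weight[g]\le \norm{g}_\weight\le 1\).

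If instead the norm is a genuine supremum, I would approximate using a small neighbourhood. Alternatively, with \(\mu\) non-atomic, one can find a \(\mu\)-small set near \(x\) on which to place the opposite sign. Doing this over an \(x\) that nearly attains \(\lebconst{\weight}{\linProj_\mu}\) gives sharpness of the uniform bound; this is where non-atomicity is needed for every \(x\). I expect the most delicate point to be making precise how the weighted sup-norm interacts with \(\mu\)-equivalence classes in this step.
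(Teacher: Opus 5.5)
Your route is the paper's: the kernel representation of $\linProj_\mu$, the bound $\abs{f}\le 1/\weight$, truncated phase functions $\ind{E_n}\overline{\phase(\reprK_\modelSpace^\mu(x,\cdot))}/\weight$ with $\nu(E_n)<\infty$ plus monotone convergence, Lemma~\ref{lem: operator norm relation}, Lemmas~\ref{lem: pointwise bounds} and~\ref{lem: lebesgue}, and for sharpness a redefinition at the $\mu$-null point $x$. The one thing to correct is your diagnosis of that last step, which is backwards.

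You should reject the essential-supremum reading, not adopt it. Under that reading the left-hand side of \eqref{eq: L2 orth proj, pointwise bound} is not determined by the $\mu$-class of $f$, and the bound is false. Redefining $f$ at the null point $x$ makes the left side arbitrarily large, while $\linProj_\mu f$ and an essential-sup $\error_\modelSpace^\weight[f]$ do not move. Indeed, Lemma~\ref{lem: pointwise bounds} uses $\abs{\model^*(x)-f(x)}\weight(x)\le\norm{\model^*-f}_\weight$, which needs the genuine supremum.

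With the genuine supremum, which is the paper's setting, your construction already works verbatim. Setting $g(x)\coloneqq -1/\weight(x)$ leaves $\linProj_\mu g$ unchanged because $\mu(\set{x})=0$. It also keeps $\norm{g}_\weight\le 1$, because $\abs{g(x)}\weight(x)=1$ and $\abs{g(y)}\weight(y)\le 1$ for $y\ne x$. You never need $\error_\modelSpace^\weight[g]$ to be unchanged; you only need $\error_\modelSpace^\weight[g]\le\norm{g-0}_\weight\le 1$. So the neighbourhood approximation can be dropped entirely.

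For the uniform bound, also handle $\lebconst{\weight}{\linProj_\mu}=\infty$ explicitly by picking $x_k$ with $\lebfun{\weight}{\linProj_\mu}{x_k}\ge k$, as the paper does.
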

\end{mdframed}

The key to its proof is the following kernel representation of the
\(L^2(\mu)\)-projection \(\linProj_\mu\).

\begin{lemma}[Kernel representation of \(L^2\)-projection]
    \label{lem: kernel representation of L2 proj}
    Let \(\linProj_\mu \colon L^2(\mu) \to \modelSpace\)
    be the \(L^2(\mu)\)-orthogonal projection. Then, using the reproducing kernel
    \(\reprK_\modelSpace^\mu\) (Lem.~\ref{lem: reproducing kernel}), we have
    \[
        \linProj_\mu f(x) = \int \reprK_\modelSpace^\mu(x,y)f(y)\mu(dy).
    \]
\end{lemma}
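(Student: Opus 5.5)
The plan is to identify \(\linProj_\mu\) with the operator \(f\mapsto \int \reprK_\modelSpace^\mu(\cdot,y) f(y)\,\mu(dy)\). This needs three things: the integral is well defined for \(f\in L^2(\mu)\), its value lies in \(\modelSpace\), and the residual is orthogonal to \(\modelSpace\). By Remark~\ref{rem: projections coincide in hilbert spaces}, the orthogonal projection onto the finite-dimensional (hence closed) subspace \(\modelSpace\) is unique, so these three properties suffice.

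First I fix an orthonormal basis \(\mBasis_1,\dots,\mBasis_m\) of \(\modelSpace\) with respect to \(\scp{\cdot}{\cdot}_\mu\). By Lemma~\ref{lem: reproducing kernel},
\[
    \reprK_\modelSpace^\mu(x,y) = \sum_{k=1}^m \mBasis_k(x)\overline{\mBasis_k(y)}.
\]
For \(f\in L^2(\mu)\), each \(\overline{\mBasis_k} f\) is integrable by Cauchy--Schwarz, since \(\mBasis_k\in L^2(\mu)\). The integral is a finite sum, so I can interchange sum and integral:
\[
    \int \reprK_\modelSpace^\mu(x,y) f(y)\,\mu(dy) = \sum_{k=1}^m \mBasis_k(x)\int f\,\overline{\mBasis_k}\,d\mu = \sum_{k=1}^m \scp{f}{\mBasis_k}_\mu\,\mBasis_k(x).
\]
This is the familiar Fourier expansion in the orthonormal basis, and it clearly lies in \(\modelSpace\).

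Next I check orthogonality. Set \(g \coloneq \sum_k \scp{f}{\mBasis_k}_\mu \mBasis_k\). For each \(j\), orthonormality gives \(\scp{g}{\mBasis_j}_\mu = \scp{f}{\mBasis_j}_\mu\), hence \(f-g\perp \mBasis_j\) for all \(j\), and so \(f-g\in\modelSpace^\perp\). Definition~\ref{def: projection types}(iii) then identifies \(g\) as the orthogonal projection, and uniqueness gives \(g=\linProj_\mu f\).

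There is no real obstacle here. The only subtlety is that \(\modelSpace\) consists of functions while \(L^2(\mu)\) consists of equivalence classes. I would note that \(\scp{\cdot}{\cdot}_\mu\) is assumed to be an inner product on \(\modelSpace\), which is needed for the reproducing kernel to exist. As a result, each class \([\model]\) with \(\model\in\modelSpace\) has a unique representative in \(\modelSpace\). The formula therefore defines \(\linProj_\mu f(x)\) pointwise, for every \(x\), independently of the representative chosen for \(f\), because changing \(f\) on a \(\mu\)-null set does not change the integral.
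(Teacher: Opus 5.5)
Your proposal is correct and follows the paper's argument: expand in an \(L^2(\mu)\)-orthonormal basis \(\mBasis_1,\dots,\mBasis_m\), interchange the finite sum with the integral, and recognize the kernel \(\reprK_\modelSpace^\mu\) from Lemma~\ref{lem: reproducing kernel}. You additionally verify integrability and that the Fourier expansion really is the orthogonal projection; the paper takes both as standard.
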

\begin{proof}
    Let \(\mBasis_1, \dots, \mBasis_m\) be an orthonormal basis of \(\modelSpace\) with respect to the inner product of \(L^2(\mu)\).
    Then the \(L^2(\mu)\)-orthogonal projection is given by
    \[
        \linProj_\mu f(x) = \sum_{k=1}^m \scp{f}{\mBasis_k}_\mu \mBasis_k(x)
        = \int \sum_{k=1}^m \mBasis_k(x)\overline{\mBasis_k(y)} f(y) \mu(dy)
        = \int \reprK_\modelSpace^\mu(x,y)f(y)\mu(dy).
        \qedhere
    \]
\end{proof}

Similar to the regression setting, we can bound the Lebesgue function and constant using
the leverage function \(\lever_\modelSpace^\mu\) (Def.~\ref{def: leverage function}).

\begin{mdframed}[style=default]
\begin{prop}[Leverage function bound on \(L^2\)-projection]
    \label{prop: leverage function bound on L2 projection}
    For the \(L^2(\mu)\)-projection \(\linProj_\mu\), we have 
    \[
        \lebfun{\weight}{\linProj_\mu}{x}
        \le \norm{\tfrac1\weight}_\mu\lever_\modelSpace^\mu(x)\weight(x)
        \qquad
        \text{and}
        \qquad
        \lebconst{\weight}{\linProj_\mu}
        \le \norm{\tfrac1\weight}_\mu\norm{\lever_\modelSpace^\mu}_\weight.
    \]
\end{prop}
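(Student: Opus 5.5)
The plan is a single Cauchy--Schwarz step in \(L^2(\mu)\), applied directly to the integral in Definition~\ref{def: lebesgue function and constant of L2 projection}. For fixed \(x\in\domain\) we write
\[
    \lebfun{\weight}{\linProj_\mu}{x}
    = \weight(x)\int \frac{1}{\weight(y)}\,\abs{\reprK_\modelSpace^\mu(x,y)}\,\mu(dy)
    \le \weight(x)\,\norm{\tfrac1\weight}_\mu\,\norm{\reprK_\modelSpace^\mu(x,\cdot)}_\mu .
\]
Cauchy--Schwarz is applied to the pair of functions \(y\mapsto 1/\weight(y)\) and \(y\mapsto \abs{\reprK_\modelSpace^\mu(x,y)}\). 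If \(\norm{\tfrac1\weight}_\mu=\infty\), the bound is trivially true and nothing more needs to be shown.

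The remaining task is to identify \(\norm{\reprK_\modelSpace^\mu(x,\cdot)}_\mu\) with \(\lever_\modelSpace^\mu(x)\). By Lemma~\ref{lem: reproducing kernel}, the kernel is Hermitian: \(\reprK_\modelSpace^\mu(x,y)=\overline{\reprK_\modelSpace^\mu(y,x)}\). Hence \(y\mapsto\overline{\reprK_\modelSpace^\mu(x,y)}=\reprK_\modelSpace^\mu(y,x)\) lies in \(\modelSpace\), and its norm equals that of \(\reprK_\modelSpace^\mu(x,\cdot)\). The reproducing property then gives
\[
    \norm{\reprK_\modelSpace^\mu(\cdot,x)}_\mu^2
    = \scp{\reprK_\modelSpace^\mu(\cdot,x)}{\reprK_\modelSpace^\mu(\cdot,x)}_\mu
    = \reprK_\modelSpace^\mu(x,x)
    = \lever_\modelSpace^\mu(x)^2 .
\]
Alternatively, we can expand the kernel in an orthonormal basis and use Parseval to obtain \(\sum_k\abs{\mBasis_k(x)}^2\). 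Either route establishes the pointwise bound.

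The bound on the Lebesgue constant follows by taking the supremum over \(x\in\domain\) of the pointwise bound:
\[
    \sup_{x}\weight(x)\lever_\modelSpace^\mu(x)=\norm{\lever_\modelSpace^\mu}_\weight .
\]

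The argument has no real obstacle. The only point that needs care is the conjugation bookkeeping, that is, making sure the function integrated against is a genuine element of \(\modelSpace\) so that the reproducing property applies. The Hermitian symmetry from the orthonormal-basis representation in Lemma~\ref{lem: reproducing kernel} resolves this.
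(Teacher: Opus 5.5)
Your proof is correct, but it takes a different route from the paper's.

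\textbf{The paper's route.} It argues through the operator-norm characterization of Theorem~\ref{thm: L2 proj Lebesgue function and constant are operator norms}.
\begin{enumerate}
\item It first shows $\opnorm{\eval_x\linProj_\mu}_\mu\le\lever_\modelSpace^\mu(x)$. This uses Cauchy--Schwarz on the Fourier coefficients $\scp{f}{\mBasis_k}_\mu$ together with Bessel's inequality.
\item It then writes $\lebfun{\weight}{\linProj_\mu}{x}=\sup_{\norm{f}_\weight\le 1}\abs{\linProj_\mu f(x)}\weight(x)$.
\item Finally it uses $\norm{f}_\mu\le\norm{\tfrac1\weight}_\mu\norm{f}_\weight$ (Lemma~\ref{lem: Lp inclusion}).
\end{enumerate}
This factors the weighted point evaluation through $L^2(\mu)$. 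It mirrors the regression leverage bound (Theorem~\ref{thm: leverage function bound}) and yields the bound on $\opnorm{\eval_x\linProj_\mu}_\mu$ as a by-product. However, it relies on the harder direction ``Lebesgue function $\le$ operator norm'' of that theorem. That direction needs the approximating sequence $f_n$, monotone convergence, and $\sigma$-finiteness of $\nu=\weight^{-2}\mu$.

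\textbf{Your route.} You apply Cauchy--Schwarz directly to the defining integral. You then identify $\norm{\reprK_\modelSpace^\mu(x,\cdot)}_\mu=\lever_\modelSpace^\mu(x)$ via Hermitian symmetry and the reproducing property. This is shorter and self-contained, and needs only that $\norm{\cdot}_\mu$ be a norm on $\modelSpace$ (already required for $\reprK_\modelSpace^\mu$ to exist).

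\textbf{Relation between the two.} Both proofs rest on the same Cauchy--Schwarz inequality; they differ in when the supremum over $f$ is taken. The paper bounds $\abs{\linProj_\mu f(x)}$ first and takes the supremum afterwards. You start from the explicit formula for that supremum, which is approached by truncations of $f=\overline{\phase(\reprK_\modelSpace^\mu(x,\cdot))}/\weight$. That is why you never need the equality between the integral formula and the operator norm.
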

\end{mdframed}

Often, \(\norm{\tfrac1\weight}_\mu = \sqrt{\nu(\domain)} = 1\),
e.g.\ in the randomized regression setting (Def.~\ref{def: randomized
regression}).

	\section{Proofs}
\label{sec: proofs and further details}

\subsection{Least squares}

The following result generalizes well-known characterizations of the
solution of least squares problems
\citep[e.g.][Theorem~3.12]{albertRegressionMoorePenrosePseudoinverse1972} to the
GLS case. How to perform this generalization is also well-known folklore.
As we could not find a good reference we provide a proof.

\begin{lemma}[General solution of the least squares problem]
    \label{lem: general least squares solution}
    Let \(\mBasisVec = (\mBasis_1, \dots, \mBasis_m)^\transpose\) be a basis of
    \(\modelSpace\) and let \(\Omega_n\) be a positive semi-definite matrix, \(\modelVander\) the design and \(\Gram\) the
    Gram matrix as defined after Definition \ref{def: least squares}.
    Then
    \[
        \regMin(y) \overset{\mathrm{def.}}=
        \argmin_{\model \in \modelSpace} \abs[\big]{\eval_X[\model] - y}_{\Omega_n}^2
        = \set[\Big]{\mBasisVec^\transpose(\cdot)\beta : \beta \in  \nullspace(\Gram) + \hat\beta},
    \]
    where
    \(\hat\beta= \frac1n\Gram^\dagger \modelVander^\conT \Omega_n y\)
    is the minimizer with minimum Euclidean norm \(\abs{\beta}_2\) and
    \[
        \nullspace(\Gram)
        = \set{(\identity - \Gram^\dagger \Gram) z : z\in \field^m}
    \]
    is the null space of \(\Gram\). If \(\mBasis_1, \dots, \mBasis_m\) is an orthonormal
    basis of \(\modelSpace\) with respect to \(\scp{\cdot}{\cdot}_*\), then
    \(\model^* = \mBasisVec^\transpose(\cdot)\hat\beta \in \regMin(y)\) is the unique minimizer with minimal \(\norm{\cdot}_*\)-norm.
\end{lemma}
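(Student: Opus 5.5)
The plan is to reduce the problem to a finite-dimensional quadratic minimization in the coefficient vector \(\beta\in\field^m\). Writing \(\model=\mBasisVec^\transpose(\cdot)\beta\), we have \(\eval_X[\model]=\modelVander\beta\). The objective is therefore
\[
J(\beta)=\tfrac1n(\modelVander\beta-y)^\conT\Omega_n(\modelVander\beta-y)
=\beta^\conT\Gram\beta-2\Re\bigl(\beta^\conT b\bigr)+\tfrac1n y^\conT\Omega_n y,
\qquad b\coloneq \tfrac1n\modelVander^\conT\Omega_n y,
\]
with the appropriate conjugation conventions in the complex case. Since \(\Gram\succeq0\), \(J\) is convex and real-valued. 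Its minimizers are exactly the solutions of the normal equations \(\Gram\beta=b\). To see this in a way that also covers the complex case, I would expand \(J(\beta+h)-J(\beta)=h^\conT\Gram h+2\Re(h^\conT(\Gram\beta-b))\). If \(\Gram\beta=b\), this difference is \(\ge0\) for all \(h\). Conversely, if \(\Gram\beta\neq b\), then taking \(h=-t(\Gram\beta-b)\) for small \(t>0\) decreases \(J\).

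The key step is consistency: I need \(b\in\Image(\Gram)\). I would factor \(\Omega_n=\Omega_n^{1/2}\Omega_n^{1/2}\) and set \(B\coloneq n^{-1/2}\Omega_n^{1/2}\modelVander\), so that \(\Gram=B^\conT B\) (up to the conjugation convention) and \(b=n^{-1/2}B^\conT\Omega_n^{1/2}y\). The standard identity \(\Image(B^\conT B)=\Image(B^\conT)\), which follows from \(\nullspace(B^\conT B)=\nullspace(B)\) and orthogonal complements, then gives \(b\in\Image(\Gram)\). Hence \(\hat\beta=\Gram^\dagger b\) satisfies \(\Gram\hat\beta=\Gram\Gram^\dagger b=b\), because \(\Gram\Gram^\dagger\) is the orthogonal projector onto \(\Image(\Gram)\). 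The full solution set is therefore \(\hat\beta+\nullspace(\Gram)\). The Penrose identities give \(\nullspace(\Gram)=\Image(\identity-\Gram^\dagger\Gram)\), since \(\identity-\Gram^\dagger\Gram\) is the orthogonal projector onto \(\nullspace(\Gram)\). Mapping back via \(\beta\mapsto\mBasisVec^\transpose\beta\) yields the stated description of \(\regMin(y)\).

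For minimal Euclidean norm, I note that \(\hat\beta=\Gram^\dagger b\in\Image(\Gram^\dagger)=\Image(\Gram)=\nullspace(\Gram)^\perp\), using that \(\Gram\) is Hermitian. Pythagoras then gives \(\abs{\hat\beta+z}_2^2=\abs{\hat\beta}_2^2+\abs{z}_2^2\) for \(z\in\nullspace(\Gram)\), so \(\hat\beta\) is the unique minimum-norm minimizer. If the \(\mBasis_k\) are \(\scp{\cdot}{\cdot}_*\)-orthonormal, then \(\norm{\mBasisVec^\transpose\beta}_*=\abs{\beta}_2\), so the coordinate map is an isometry. Minimality and uniqueness therefore transfer to \(\model^*\).

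I expect the main obstacle to be the consistency step \(b\in\Image(\Gram)\). It is the only place where the semidefinite, possibly singular \(\Omega_n\) matters, and the square-root factorization is what handles it. Tracking conjugations and transposes in the complex case (\(\Gram_{kl}=\tfrac1n(\modelVander^\conT\Omega_n\modelVander)_{kl}\) versus the sesquilinear convention of \(\scp{\cdot}{\cdot}_{\Omega_n}\)) is a minor bookkeeping nuisance that I would resolve by working directly with \(\modelVander^\conT\Omega_n\modelVander\).
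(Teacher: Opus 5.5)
Your proof is correct. It shares the paper's reduction to the coefficient vector and the factorization through $\Omega_n^{1/2}$, but handles the central step differently.

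\textbf{The paper's route.} It sets $H=\Omega_n^{1/2}\modelVander$ and quotes the standard theorem that the minimizers of $\abs{H\beta-\Omega_n^{1/2}y}_2$ are $H^\dagger\Omega_n^{1/2}y+\set{(\identity-H^\dagger H)z : z\in\field^m}$, with the first term of minimal norm. It then translates this to the Gram matrix via the Penrose identities $H^\dagger=(H^\conT H)^\dagger H^\conT$ and $H^\dagger H=\Gram^\dagger\Gram$.

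\textbf{Your route.} You derive the normal equations $\Gram\beta=b$, with your $b=\tfrac1n\modelVander^\conT\Omega_n y$, directly from $J(\beta+h)-J(\beta)=h^\conT\Gram h+2\Re\bigl(h^\conT(\Gram\beta-b)\bigr)$. You prove solvability through $\Image(B^\conT B)=\Image(B^\conT)$. You obtain the minimum-norm property from $\Image(\Gram^\dagger)=\Image(\Gram)=\nullspace(\Gram)^\perp$.

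\textbf{Comparison.} The paper's version is shorter. However, it leans on an external result, and the paper itself notes that the more accessible of its two references covers only the real case. It also needs two pseudoinverse identities. Yours is self-contained and works verbatim over $\complex$. It also isolates exactly where the possibly singular $\Omega_n$ matters, namely the consistency $b\in\Image(\Gram)$, at the cost of a few more lines.

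Your hedging about conjugation conventions is unnecessary. With the paper's definitions, $\Gram=\tfrac1n\modelVander^\conT\Omega_n\modelVander$ and $B^\conT B=\Gram$ hold exactly.
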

\begin{remark}[Minimum norm solution]
    Since the coefficient vector \(\beta\) depends on the choice of basis,
    the minimum norm of \(\abs{\beta}_2\) depends on the choice of basis. If the
    solution is not a singleton (\(\dim \nullspace(\Gram) > 0\)), the minimum norm solution is thereby not
    invariant of the choice of basis. With an inner product
    \(\scp{\cdot}{\cdot}_*\) on \(\modelSpace\) it is however possible to define
    a unique basis-free minimum norm solution.
\end{remark}
\begin{proof}
    Since \(\mBasisVec = (\mBasis_1, \dots, \mBasis_m)^\transpose\) is a basis of \(\modelSpace\)
    \[
        \argmin_{\model\in\modelSpace} \abs[\big]{\eval_X[\model] - y}_{\Omega_n}^2
        = \set[\Big]{\mBasisVec^\transpose(\cdot)\beta : \beta \in \argmin_{\beta\in\field^m} \abs[\big]{\underbrace{\eval_X[\mBasisVec^\transpose(\cdot)\beta]}_{=\modelVander\beta} - y}_{\Omega_n}^2}.
    \]
    It is therefore sufficient to prove
    \[
        \argmin_{\beta\in\field^m} \abs[\big]{\modelVander\beta - y}_{\Omega_n}^2
        = \hat \beta + \nullspace(\Gram).
    \]
    To this end define \(H\coloneq\Omega_n^{1/2}\modelVander\) and \(b \coloneq \Omega_n^{1/2}y\)
    such that
    \begin{align*}
        \argmin_{\beta\in\field^m} \abs[\big]{\modelVander\beta - y}_{\Omega_n}^2
        &= \argmin_{\beta\in\field^m} \abs[\big]{\Omega_n^{1/2}(\modelVander\beta - y)}_2^2
        \\
        &= \argmin_{\beta\in\field^m} \abs{H\beta - b}_2^2,
        \\
        &= H^\dagger b + \set[\Big]{(I-H^\dagger H)z : z\in \field^m},
    \end{align*}
    where the last equality is a standard result \citep[e.g][Thm~3.12]{albertRegressionMoorePenrosePseudoinverse1972}
    or \citep[ch.~3, Cor.~1, p.~105]{ben-israelGeneralizedInverses2003}. The first source
    is a bit more accessible but does not cover the complex case.
    Combining \(H^\dagger H^{\dagger\conT} H^\conT = H^\dagger\) \citep[Eq.~(10)]{penroseGeneralizedInverseMatrices1955} with
    \(H^\dagger H^{\dagger\conT} = (H^\conT H)^\dagger\) \citep[Lem.~1\(\cdot\)5]{penroseGeneralizedInverseMatrices1955}
    we have
    \[
        H^\dagger = (H^\conT H)^\dagger H^\conT.
    \]
    This implies
    \[
        H^\dagger b
        =(H^\conT H)^\dagger H^\conT b
        = (\modelVander^\conT \Omega_n \modelVander)^\dagger \modelVander^\conT \Omega_n y
        = \tfrac1n\Gram^\dagger \modelVander^\conT \Omega_n y
        = \hat\beta,
    \]
    and \(H^\dagger H = (H^\conT H)^\dagger H^\conT H = \Gram^\dagger \Gram\)
    implies
    \[
        \set{( \identity - H^\dagger H) z : z\in \field^m}
        = \nullspace(\Gram).
    \]
    This proves the first claim. For \(\beta = \hat \beta + \tilde \beta\) with
    \(\tilde \in \nullspace(\Gram)\) we have
    \[
        \abs{\beta}_2^2
        = \abs{\hat\beta}_2^2 + \abs{\tilde\beta}_2^2
    \]
    since \(\hat\beta = H^\dagger b\) is orthogonal to any vector \((\identity - H^\dagger H)z \in \nullspace(\Gram)\).
    Consequently, \(\hat\beta\) is the minimum Euclidean norm solution.
    If \(\mBasis_1, \dots, \mBasis_m\) is an orthonormal basis of \(\modelSpace\) with respect to \(\scp{\cdot}{\cdot}_*\), then
    the norm of the coefficients coincides with the norm \(\norm{\cdot}_*\) of the corresponding function in \(\modelSpace\), that is
    \[
        \norm{\mBasisVec(\cdot)^\transpose \beta}_*^2
        = \sum_{i,j=1}^m \overline{\beta_i} \beta_j \scp{\mBasis_j}{\mBasis_i}_*
        = \abs{\beta}_2^2
    \]
    Consequently, \(\model^* = \mBasisVec(\cdot)^\transpose\hat\beta\) is the unique minimizer with minimal \(\norm{\cdot}_*\)-norm.
\end{proof}

\subsubsection{Proof of Theorem \ref{thm: least squares solution}: Least squares characterization}

    By Lemma \ref{lem: general least squares solution} the set \(\regMin(y)\)
    always contains the element
    \begin{equation*}
        \regOp(y)
        \coloneq \mBasisVec(\cdot)^\transpose \hat{\beta}
        = \tfrac1n\mBasisVec(\cdot)^\transpose \Gram^\dagger \modelVander^\conT \Omega_n y
    \end{equation*}
    and is therefore never empty. To prove that \(\linProj = \regOp \circ \eval_X\) is a 
    linear projection, we observe
    \begin{align*}
        \linProj^2 f
        &= \regOp \circ \eval_X \circ \regOp \circ \eval_X[f]
        \\
        &= \tfrac1{n^2}\mBasisVec(\cdot)^\transpose \Gram^\dagger \underbrace{\modelVander^\conT \Omega_n \modelVander}_{=n\Gram} \Gram^\dagger \modelVander^\conT \Omega_n \eval_X[f]
        = \tfrac1n\mBasisVec(\cdot)^\transpose \Gram^\dagger \modelVander^\conT \Omega_n \eval_X[f]
        = \linProj f.
    \end{align*}
    It is a metric projection with respect to \(\norm{\cdot}_{\Omega_n}\) almost by definition as
    \begin{equation}
        \label{eq: lin proj is metric projection}
        \linProj f = \regOp(\eval_X[f])
        \in \argmin_{\model \in \modelSpace} \abs[\big]{\eval_X[\model] - \eval_X[f]}_{\Omega_n}^2
        \overset{\text{def.}}= \argmin_{\model\in\modelSpace} \norm{f - \model}_{\Omega_n}^2.
    \end{equation}
    To prove that \(\eval_X \circ \regOp\) is a linear projection observe that
    \begin{align*}
        (\eval_X \circ \regOp)^2
        &= (\tfrac1n\modelVander \Gram^\dagger \modelVander^\conT \Omega_n)^2
        \\
        &= \tfrac1{n^2}\modelVander \Gram^\dagger \underbrace{\modelVander^\conT \Omega_n \modelVander}_{=n\Gram} \Gram^\dagger \modelVander^\conT \Omega_n
        = \tfrac1n\modelVander \Gram^\dagger \modelVander^\conT \Omega_n
        = \eval_X \circ \regOp,
    \end{align*}
    using the property \(\Gram^\dagger \Gram \Gram^\dagger = \Gram^\dagger\) of the Moore-Penrose pseudoinverse.
\begin{enumerate}[wide=0pt]
    \item[\ref{it: least squares unique} \(\Rightarrow\) \ref{it: norm positive definite}]
    Assume \(\norm{\cdot}_{\Omega_n}\) is not positive definite, then there exists
    \(0\neq \model_0\in \modelSpace\) with \(\norm{\model_0}_{\Omega_n} = 0\). We
    will show for \(\model^* \in \regMin(y)\) that \(\model^* + \model_0 \in
    \regMin(y)\), which implies that \(\regMin(y)\) is not a singleton. Indeed due to
    \(\abs{\eval_X(\model_0)}_{\Omega_n}^2 = \norm{\model_0}_{\Omega_n}^2 = 0\) we have
    \begin{align*}
        \abs[\big]{\eval_X[ \model^* + \model_0 ] - y}_{\Omega_n}^2
        &= \abs[\big]{\eval_X[ \model^* ] - y}_{\Omega_n}^2 + 2\Re\underbrace{
            \scp[\big]{\eval_X[ \model^* ] - y}{\eval_X[ \model_0 ]}_{\Omega_n}
        }_{
            \le \abs{\eval_X[ \model^* ] - y}_{\Omega_n}\norm{\model_0}_{\Omega_n}
        } + \norm{\model_0}_{\Omega_n}^2
        \\
        &\le \abs[\big]{\eval_X[ \model^* ] - y}_{\Omega_n}^2
    \end{align*}
    and since \(\model^*\) is a minimizer, \(\model^* + \model_0\) is also a
    minimizer. This contradicts the uniqueness of the minimizer and therefore
    \(\norm{\cdot}_{\Omega_n}\) must be positive definite.

    \item[\ref{it: norm positive definite} \(\Rightarrow\) \ref{it: gram positive definite}]
    Let \(0\neq a\in \field^m\), then by definition of \(\Gram_{kl} = \scp{\mBasis_l}{\mBasis_k}_{\Omega_n}\) we have
    \[
        a^\conT \Gram a
        = \sum_{k,l=0}^{m-1} \overline{a_k} a_l \scp{\mBasis_l}{\mBasis_k}_{\Omega_n}
        = \scp[\bigg]{\sum_{k=0}^{m-1} a_k \mBasis_k}{\sum_{l=0}^{m-1} a_l \mBasis_l}_{\Omega_n} > 0,
    \]
    where we use that \(\model \coloneq \sum_{k=0}^{m-1} a_k \mBasis_k \neq 0\).

    \item[\ref{it: gram positive definite} \(\Rightarrow\) \ref{it: least squares unique}]
    If \(\Gram\) is invertible, then \(\nullspace(\Gram)=\set{0}\) in Lemma \ref{lem: general least squares solution} and \(\regMin(y)\) is a singleton.

    \item[\ref{it: gram positive definite} \(\Rightarrow\) \ref{it: lin proj surjective}]
    Since \(\linProj\) is a linear projection, it is surjective if and only if it acts
    as the identity on \(\modelSpace\).  To prove \(\linProj\) is the identity
    on \(\modelSpace\) it is sufficient to show that \(\linProj\) acts like the
    identity on the basis \(\mBasis_1, \dots, \mBasis_m\). Observe that
    \(\eval_X[ \mBasis_k ]\) is the \(k\)-th column \((\modelVander)_k\) of
    \(\modelVander\). Since \(\frac1n\Gram^{-1}\modelVander^\conT \Omega_n
    \modelVander = \identity\) by definition
    of \(\Gram=\frac1n\modelVander^\conT \Omega_n \modelVander\) we thus have
    \[
        \linProj \mBasis_k = \regOp \circ \eval_X[ \mBasis_k ]
        = \regOp((\modelVander)_k)
        = \tfrac1n\mBasisVec(\cdot)^\transpose\Gram^{-1} \modelVander^\conT \Omega_n (\modelVander)_k = \mBasisVec(\cdot)^\transpose e_k = \mBasis_k.
    \]
    
    \item[\ref{it: lin proj surjective} \(\Rightarrow\) \ref{it: norm positive definite}]
    We will prove the contrapositive. Assume \(\norm{\cdot}_{\Omega_n}\) is not positive definite, then there exists
    \(0\neq \model \in \modelSpace\) with \(\norm{\model}_{\Omega_n} = 0\) and basis representation
    \(\model = a^\transpose\mBasisVec\). Observe that
    \[
        0 = \norm{\model}_{\Omega_n}^2
        = \scp{\eval_X[a^\transpose\mBasisVec]}{\eval_X[a^\transpose\mBasisVec]}_{\Omega_n}
        = \tfrac1n a^\conT \modelVander^\conT \Omega_n \modelVander a
        = a^\conT \Gram a,
    \]
    implies that \(\Gram a = 0\) since \(\Gram\) is positive semi-definite. But this then implies
    \[
        \linProj \model = \regOp \circ \eval_X[\model]
        = \tfrac1n\mBasisVec(\cdot)^\transpose \Gram^\dagger \modelVander^\conT \Omega_n \modelVander a
        = \mBasisVec(\cdot)^\transpose \Gram^\dagger \Gram a = 0.
    \]
    In particular \(\model \in \modelSpace\) is not in the image of the linear
    projection \(\linProj\) and therefore \(\linProj\) is not surjective.
\end{enumerate}
    That the minimizer in \eqref{eq: LS is metric projection} is unique if the equivalent conditions
    hold follows directly from \ref{it: least squares unique} applied to
    \(\eval_X[f]\) (cf.~\eqref{eq: lin proj is metric projection}).

\subsection{Approximation theory}

\subsubsection{Proof of Lemma \ref{lem: lebesgue function is operator norm}: The Lebesgue function is the operator norm}
\begin{steps}
    \item \textbf{Upper bound.}
    Let \(x \odot y = (x_1y_1, \dots, x_n y_n)\) denote the Hadamard product.
    For any \(f\in \funSpace\) we have
    \begin{align*}
        \abs{\linProj f(x)}
        \overset{\text{def.}}&= \abs{\pi_x \circ \regOp \circ \eval_X[f]}
        \\
        \overset{\text{def.}}&= \abs{\ell(x)^\conT \eval_X[f]}
        \\
        &= \abs[\big]{(\ell(x) \odot \eval_X[\tfrac1\weight])^\conT \eval_X[f\weight]}
        \\
        \overset{\text{Hölder}}&\le \abs[\big]{\ell(x)\odot \eval_X[\tfrac1\weight]}_1 \abs[\big]{\eval_{X}[fw]}_\infty
        \le \abs[\big]{\ell(x)\odot \eval_X[\tfrac1\weight]}_1 \norm[\big]{f\weight}_\infty
    \end{align*}
    Since by definition \(\norm{f}_\weight = \norm{fw}_\infty\)
    this implies
    \[
        \opnorm{\eval_x \linProj}_\weight
        = \sup_{\substack{f\in \funSpace\\ \norm{f}_\weight \le 1}} \abs{\linProj f(x)}
        \le \abs[\big]{\ell(x)\odot \eval_X[\tfrac1\weight]}_1 = \sum_{i=1}^n \frac{\abs{\ell_i(x)}}{\weight(x_i)}.
    \]
    and thus by definition of the Lebesgue function \(\opnorm{\eval_x \linProj}_\weight \weight(x) \le \lebfun{\weight}{\linProj}{x}\).
    \item \textbf{Lower bound.}
    Let us now consider the case where the \(x_i\) are distinct and the function
    space is sufficiently rich. For the lower bound on the Lebesgue function
    we use the complex extension of the sign function, the phase function
    \begin{equation}
        \label{eq: definition of phase}    
        \phase(z) \coloneq \begin{cases}
            z/\abs{z} & z\neq 0
            \\
            1 & z=0.
        \end{cases}
    \end{equation}
    This phase function is applied entry-wise to the lagrange function to obtain
    adversarial labels
    \[
        y \coloneq \phase\bigl(\ell(x)\bigr) \odot \eval_X[\tfrac1\weight]
        =  \bigl(\tfrac{\phase(\ell_1(x))}{\weight(x_1)},\dots, \tfrac{\phase(\ell_n(x))}{\weight(x_n)}\bigr) \in \field^n.
    \]
    Since \(\funSpace\) is sufficiently rich we may choose \(f\in \funSpace\) with \(\eval_X[f] = y\) and
    \[
        \norm{f}_\weight = \norm{f\weight}_\infty \le \abs{\eval_X[f\weight]}_\infty
        = \abs{y\odot \eval_X[\weight]}_\infty = \abs{\phase(\ell(x))}_\infty = 1.
    \]
    This implies
    \begin{equation}
        \label{eq: linear projection achieves Lebesgue function}    
        \linProj f(x) = \ell(x)^\conT \eval_X[f]
        = \ell(x)^\conT y = \abs[\big]{\ell(x) \odot \eval_X[\tfrac1\weight]}_1
    \end{equation}
    and consequently
    \[
        \opnorm{\eval_x \linProj}_\weight
        = \sup_{\norm{g}_{\weight} \le 1} \abs{\linProj g(x)} \ge \abs{\linProj f(x)}
        = \abs{\ell(x) \odot \eval_X[\tfrac1\weight]}_1.
    \]
    Together with the general upper bound this implies \(\opnorm{\eval_x \linProj}_\weight \weight(x) = \lebfun{\weight}{\linProj}{x}\).
    \item \textbf{Worst case function.}
    Finally, let \(x\notin \set{x_1, \dots, x_n}\). Then for
    \[
        z\coloneq
\bigl(\tfrac{\phase(\ell_1(x))}{\weight(x_1)},\dots, \tfrac{\phase(\ell_n(x))}{\weight(x_n)}, \tfrac{-1}{\weight(x)}\bigr) \in \field^{n+1}.
    \]
    and \(\tilde X = (x_1, \dots, x_n, x)\) there exists \(f\in \funSpace\)
    such that \(\eval_{\tilde X}[f] = z\) and
    \[
        \norm{f}_\weight \le \abs{\eval_{\tilde{X}}[f\weight]}_\infty
        = \abs{z \odot \eval_{\tilde X}[w]}_\infty = 1.
    \]
    Observe that \(\eval_{\tilde X}[f] = z\) implies \(f(x) =
    \frac{-1}{\weight(x)}\)  and \(\eval_X[f] = y\). But
    \(\eval_X[f] = y\) implies \(\linProj f(x) = \opnorm{\eval_x \linProj}_\weight\)
    by \eqref{eq: linear projection achieves Lebesgue function} and the following arguments. Therefore
    \begin{align*}
        \abs[\big]{\linProj f(x) - f(x)} \weight(x)
        &= \opnorm{\eval_x \linProj}_\weight\weight(x) + 1
        \\
        &\ge \bigl(\opnorm{\eval_x \linProj}_\weight\weight(x) + 1\bigr)\norm{f}_\weight
        \ge \bigl(\opnorm{\eval_x \linProj}_\weight\weight(x) + 1\bigr)\error_\modelSpace^\weight[f],
    \end{align*}
    where we used \(\norm{f}_\weight \le 1\) and \(\norm{f-0}_\weight \ge \error_\modelSpace^\weight[f]\)
    since \(0\in \modelSpace\). By \eqref{eq: lebesgue function regression equality}
    we are done.
    \qedhere
\end{steps}

\subsubsection{Proof of Corollary \ref{cor: lebesgue constant is operator norm}: Lebesgue constant is operator norm}

Lemma \ref{lem: operator norm relation} in combination with the relationship of the Lebesgue function with its
generalized version \eqref{eq: lebesgue function regression upper bound}
from Lemma \ref{lem: lebesgue function is operator norm} and the definition of the Lebesgue constant
yield
\[
    \opnorm{\linProj}_\weight
    \overset{\text{Lem.~\ref{lem: operator norm relation}}}=
    \sup_{x\in \domain}\opnorm{\eval_x \linProj}_\weight\weight(x)
    \overset{\eqref{eq: lebesgue function regression upper bound}}\le \sup_{x\in \domain} \lebfun{\weight}{\linProj}{x}
    \overset{\text{def.}}= \lebconst{\weight}{\linProj}.
\]
And if the additional assumptions that ensure the Lebesgue function coincides with its generalized version 
holds true (cf.~Lemma \ref{lem: lebesgue function is operator norm} \eqref{eq:
lebesgue function regression equality}), then the above is an equality.

The only thing left to show is the lower bound in \eqref{eq: lower bound tightening of Lebesgue Lemma}.
    For this, let \(\epsilon>0\) and choose \(\tilde x \in \domain\)
    such that
    \[
        \lebfun{\weight}{\linProj}{\tilde x} > \sup_{x\in \domain}\lebfun{\weight}{\linProj}{x} -\epsilon
        \overset{\mathrm{def.}}= \lebconst{\weight}{\linProj} - \epsilon.
    \]
    Since \(\modelSpace\subseteq C(\domain; \field)\) the Lagrange function \(\ell\) is continuous
    by Lemma \ref{lem: lagrange function properties} \ref{it: lagrange continuity} and therefore
    the Lebesgue function
    \[
        \lebfun{\weight}{\linProj}{x} = \weight(x)\sum_{i=1}^n \frac{\abs{\ell_i(x)}}{\weight(x_i)}
    \]
    is continuous in \(x\) since we also assume \(\weight\) is continuous. Consequently,
    there exists a neighborhood \(U\subseteq \domain\) of \(\tilde x\) such that for all \(x\in U\)
    \[
        \lebfun{\weight}{\linProj}{x} > \lebconst{\weight}{\linProj} - 2\epsilon.
    \]
    Since the points \(x_1, \dots, x_n\) are not isolated in \(\domain\) by assumption,
    it is possible to choose \(x^*\in U\) such that \(x^*\notin \set{x_1, \dots,
    x_n}\), possibly \(x^* = \tilde x\). then by the pointwise lower bound
    \eqref{eq: approximation error regression lower bound} in Lemma \ref{lem:
    lebesgue function is operator norm} there exists \(f\in \funSpace\) such
    that
    \begin{align*}
        \norm{\linProj f -f}_\weight
        \overset{\text{def.}}\ge \abs[\big]{\linProj f(x^*) - f(x^*)} \weight(x^*)
        \overset{\eqref{eq: approximation error regression lower bound}}&= \bigl(1+ \lebfun{\weight}{\linProj}{x^*}\bigr)\error_\modelSpace^\weight[f]
        \\
        &> \bigl(1 + \lebconst{\weight}{\linProj} - 2\epsilon\bigr)\error_\modelSpace^\weight[f].
    \end{align*}
    Since \(\epsilon>0\) was arbitrary this implies that there is no constant
    \(c < 1+\lebconst{\weight}{\linProj}\) such that \(\norm{\linProj f -f}_\weight \le
    c\error_\modelSpace^\weight[f]\) for all \(f\in \funSpace\).

\subsubsection{Proof of Theorem \ref{thm: optimal bias bounds in the weighted sup-norm}: Optimal approximation error bound in weighted sup-norm}

This follows essentially from the discussion of generalized Lebesgue functions
and constants in Appendix \ref{sec: approximation theory}.  Here is a formal
proof:

\begin{enumerate}[wide=0pt]
    \item[\ref{it: (weighted) pointwise bound}]
    The bound follows from \eqref{eq: lebesgue function regression upper bound}
    in Lemma \ref{lem: lebesgue function is operator norm} and \eqref{eq:
    pointwise upper bound} in Lemma \ref{lem: pointwise bounds}
    \[
        \abs[\big]{\linProj f(x) - f(x)} \weight(x)
        \overset{\eqref{eq: pointwise upper bound}}\le \bigl(\opnorm{\eval_x \linProj}_\weight\weight(x) + 1\bigr)\error_\modelSpace^\weight[f]
        \overset{\eqref{eq: lebesgue function regression upper bound}}\le \bigl(\lebfun{\weight}{\linProj}{x} + 1\bigr)\error_\modelSpace^\weight[f].
    \]
    The optimality follows from \eqref{eq: approximation error regression lower bound} in Lemma \ref{lem: lebesgue function is operator norm}.

    \item[\ref{it: (weighted) uniform bound}] This follows similarly from Corollary \ref{cor: lebesgue constant is operator norm} and
    Lebesgue's Lemma \ref{lem: lebesgue}.

\end{enumerate}

\subsubsection{Proof of Theorem \ref{thm: L2 proj Lebesgue function and constant are operator norms}: \texorpdfstring{\(L^2\)}{L2}-projection Lebesgue function and constant}
    To prove an upper bound on the general lebesgue function, let
    \(\norm{f}_\weight \le 1\).  Then by definition of \(\norm{\cdot}_\weight\)
    we have \(\abs{f(y)} \le \frac1{\weight(y)}\) and consequently
    \begin{equation}
        \label{eq: L2 orth proj, pointwise bound, proof}    
        \abs{\linProj_\mu f(x)} \weight(x)
        = \weight(x) \abs[\Big]{\int \reprK_\modelSpace^\mu(x,y)f(y)\mu(dy)}
        \le \weight(x)\!\! \int \!\frac{\abs{\reprK_\modelSpace^\mu(x,y)}}{\weight(y)}\mu(dy)
        = \lebfun{\weight}{\linProj_\mu}{x}.
    \end{equation}
    We note that if the right hand side is finite, then \(K_\modelSpace^\mu(x,y)f(y)\) is integrable
    as it is dominated by an integrable \(\abs{\reprK_\modelSpace^\mu(x,y)}/\weight(y)\).
    Whereas if the right hand side is infinite the inequality is trivial.
    Since the inequality holds for all \(f\in L^2(\mu)\) with \(\norm{f}_\weight \le 1\), we get the upper bound
    \begin{equation}
        \label{eq: upper bound on general lebesgue function}
        \opnorm{\eval_x\linProj_\mu}_{L^2(\mu),\weight}^{\weight(x)}
        = \sup_{\norm{f}_\weight \le 1} \abs{\linProj_\mu f(x)}\weight(x)
        \le \lebfun{\weight}{\linProj_\mu}{x}.
    \end{equation}
    To see that the inequality in \eqref{eq: upper bound on general lebesgue function} is an equality
    take a sequence of sets \((E_n)_{n\in \nat}\) with \(\nu(E_n) < \infty\) and \(E_n \uparrow \domain\),
    which is possible by \(\sigma\)-finiteness of \(\nu\).
    Recalling the definition of the phase function \eqref{eq: definition of phase} 
    the sequence of functions
    \begin{equation}
        \label{eq: (L2 Lebesgue function) sequence of f_n}    
        f_n(y) \coloneq \frac{\ind{E_n}(y)}{\weight(y)}\overline{\phase(\reprK_\modelSpace^\mu(x,y))},
    \end{equation}
    are in \(L^2(\mu)\) since \(\nu(dx) = \frac1{\weight(x)^2}\mu(dx)\) and
    \(\nu(E_n) < \infty\). These functions converge against the supremum in
    \eqref{eq: upper bound on general lebesgue function} by
    the kernel representation of \(L^2(\mu)\) projections (Lemma \ref{lem: kernel representation of L2 proj})
    and the monotone convergence theorem (MCT)
    \begin{align*}
        \lim_{n\to \infty} \abs{\linProj_\mu f_n(x)}\weight(x)
        \overset{\text{Lem.~\ref{lem: kernel representation of L2 proj}}}&
        = \lim_{n\to\infty} \weight(x) \abs[\Big]{\int \reprK_\modelSpace^\mu(x,y)f_n(y)\mu(dy)}
        \\
        \overset{\eqref{eq: (L2 Lebesgue function) sequence of f_n}}&= \lim_{n\to \infty} \weight(x) \int_{E_n} \frac{\abs{\reprK_\modelSpace^\mu(x,y)}}{\weight(y)}\mu(dy)
        \overset{\text{MCT}}= \lebfun{\weight}{\linProj_\mu}{x}.
    \end{align*}
    Since we have \(\norm{f_n}_\weight \le 1\), this implies the inequality in
    \eqref{eq: upper bound on general lebesgue function} is an equality.
    We then get \(\lebconst{\weight}{\linProj_\mu} = \opnorm{\linProj_\mu}_{L^2(\mu),
    \weight}\) directly by the representation of the general Lebesgue constant in terms of the
    Lebesgue function in Lemma \ref{lem: operator norm relation}.
    The general statements about the general Lebesgue function and constant,
    Lemma \ref{lem: pointwise bounds} and \ref{lem: lebesgue}, yield the upper bounds
    \eqref{eq: L2 orth proj, pointwise bound} and \eqref{eq: L2 orth proj,
    uniform bound} respectively.
    To get the sharpness of \eqref{eq: L2 orth proj, pointwise bound} we use the
    same sequence of sets \((E_n)_{n\in \nat}\) as above to define the functions
    \[
        g_n(y) \coloneq \begin{cases}
            \frac{\ind{E_n}(y)}{\weight(y)}\overline{\phase(\reprK_\modelSpace^\mu(x,y))} & y\neq x
            \\
            -\frac1{\weight(x)} & y=x.
        \end{cases}
    \]
    These functions are again in \(L^2(\mu)\), since \(\nu\) is \(\sigma\)-finite.
    And \(g_n = f_n\) \(\mu\)-almost everywhere since \(\mu(\set{x}) = 0\).
    Hence we have \(\linProj_\mu g_n = \linProj_\mu f_n\) by the Kernel
    representation of \(\linProj_\mu\) (Lemma \ref{lem: kernel representation of
    L2 proj}). In particular,
    \begin{align*}
        \liminf_{n\to \infty} \bigl(\linProj_\mu g_n(x) - g_n(x)\bigr)\weight(x)
        &= \liminf_{n\to \infty} \weight(x) \int_{E_n} \frac{\abs{\reprK_\modelSpace^\mu(x,y)}}{\weight(y)} \mu(dy) + 1
        \\
        \overset{\text{MCT}}&= 1+ \lebfun{\weight}{\linProj_\mu}{x}
    \end{align*}
    We thus have \(A_n \coloneq \abs{\linProj_\mu g_n(x) - g_n(x)} \weight(x) \to 1+ \lebfun{\weight}{\linProj_\mu}{x} \in [1,\infty]\).
    Since we also have
    \[
        1 = \norm{g_n}_\weight = \norm{g_n-0}_\weight \ge \error_\modelSpace^\weight[g_n]
    \]
    due to \(0\in \modelSpace\) this implies for any \(0\le c < 1 + \lebfun{\weight}{\linProj_\mu}{x}\) there exists \(n\in \nat\) such that
    \(A_n > c \ge c \error_\modelSpace^\weight[g_n]\). This proves 
    \begin{equation}
        \label{eq: L2 orth proj, pointwise bound, sharpness}    
        \inf\set[\Big]{c\ge 0: \abs[\big]{\linProj_\mu f(x) - f(x)}\weight(x) \le c \error_\modelSpace^\weight[f] \text{ for all } f\in \funSpace} \ge 1 + \lebfun{\weight}{\linProj_\mu}{x}
    \end{equation}
    and thereby sharpness of \eqref{eq: L2 orth proj, pointwise bound}. For the uniform bound first
    consider the case \(\lebconst{\weight}{\linProj_\mu} < \infty\) and choose
    \(x_\epsilon\) with \(\lebfun{\weight}{\linProj_\mu}{x_\epsilon} \ge
    \lebconst{\weight}{\linProj_\mu} - \epsilon\) and apply the previous
    argument to get for some \(f\in \funSpace=L^2(\mu)\)
    \[
        \abs{\linProj_\mu f(x_\epsilon) - f(x_\epsilon)}\weight(x_\epsilon)
        \overset{\eqref{eq: L2 orth proj, pointwise bound, sharpness}}\ge (1+\lebfun{\weight}{\linProj_\mu}{x_\epsilon}-\epsilon) \error_\modelSpace^\weight[f]
        \ge (1+\lebconst{\weight}{\linProj_\mu} - 2\epsilon) \error_\modelSpace^\weight[f].
    \]
    In particular, since \(x_\epsilon\) is a specific point, we have by definition
    of the weighted sup-norm
    \[
        \norm{\linProj_\mu f - f}_\weight
        \ge \abs{\linProj_\mu f(x_\epsilon) - f(x_\epsilon)}\weight(x_\epsilon)
        \ge (1+\lebconst{\weight}{\linProj_\mu} - 2\epsilon) \error_\modelSpace^\weight[f].
    \]
    Since \(\epsilon > 0\) was arbitrary, we get the optimality of \(1+\lebconst{\weight}{\linProj_\mu}\)
    in \eqref{eq: L2 orth proj, uniform bound}. For the case of
    \(\lebconst{\weight}{\linProj_\mu} = \infty\) we can choose \(x_n\) with
    \(\lebfun{\weight}{\linProj_\mu}{x_n} \ge n\) and apply the same arguments
    to prove that no finite constant \(c\) can satisfy \(\norm{\linProj_\mu f -
    f}_\weight \le c \error_\modelSpace^\weight[f]\) for all \(f\in \funSpace\).
    
\subsubsection{Proof of Proposition \ref{prop: leverage function bound on L2 projection}: Leverage function bound on \texorpdfstring{\(L^2\)}{L2} projection}

Let \(\mBasis_1,\dots, \mBasis_m\) be an orthonormal basis of \(\modelSpace\)
with respect to \(L^2(\mu)\). For \(f\in \funSpace\subseteq L^2(\mu)\)
let \(c_k \coloneq \scp{f}{\mBasis_k}_\mu\) be the Fourier coefficients of \(f\)
with respect to the basis \(\mBasis_k\). Then
\begin{align*}
    \opnorm{\eval_x \linProj_\mu}_{\mu}
    \overset{\text{def.}}&=
    \sup_{\norm{f}_{\mu} \le 1} \abs{\linProj_\mu f(x)}
    = \sup_{\norm{f}_{\mu} \le 1} \abs[\Big]{\sum_{k=1}^m c_k \mBasis_k(x)}
    \\
    \overset{\text{Cauchy-Schwarz}}&\le
    \sup_{\norm{f}_{\mu} \le 1}\sqrt{\sum_{k=1}^m \abs{c_k}^2}\underbrace{\sqrt{\sum_{k=1}^m \abs{\mBasis_k(x)}^2}}_{=\lever_\modelSpace^\mu(x)}
    \\[-2ex]
    \overset{\text{Bessel-inequality}}&\le
    \sup_{\norm{f}_{\mu} \le 1} \norm{f}_{\mu} \lever_\modelSpace^\mu(x)
    \\
    &\le \lever_\modelSpace^\mu(x).
\end{align*}
Consequently, we have \(\opnorm{\eval_x \linProj_\mu}_{\mu} \le \lever_\modelSpace^\mu(x)\) and
therefore
\begin{align*}
    \lebfun{\weight}{\linProj_\mu}{x}
    = \opnorm{\eval_x \linProj_\mu}_{L^2(\mu),\weight}^{\weight(x)}
    &= \sup_{\substack{f\in L^2(\mu)\\ \norm{f}_\weight \le 1}} \abs{\linProj_\mu f(x)}\weight(x)
    \\
    &\le \sup_{\substack{f\in L^2(\mu)\\ \norm{f}_\weight \le 1}} \opnorm{\eval_x \linProj_\mu}_{\mu}\norm{f}_\mu \weight(x)
    \\
    &\le \lever_\modelSpace^\mu(x) \sup_{\substack{f\in L^2(\mu)\\ \norm{f}_\weight \le 1}} \norm{f}_\mu \weight(x)
    \\
    &\le \norm{\tfrac1\weight}_{\mu} \lever_\modelSpace^\mu(x)\weight(x),
\end{align*}
using \(\norm{f}_\mu \le \norm{\tfrac1\weight}_\mu \norm{f}_\weight\) by Lemma \ref{lem: Lp inclusion}.
Taking the supremum over the bound on the Lebesgue function yields the bound on the Lebesgue constant.

\subsection{Reproducing kernel and leverage function}

\subsubsection{Proof of Theorem \ref{thm: reproducing kernel representation}: Reproducing kernel representation}
Since \(\linProj\) is surjective, \(\norm{\cdot}_{\Omega_n}\) is a norm by Theorem \ref{thm: least squares solution} \ref{it: norm positive definite} \(\Leftrightarrow\) \ref{it: lin proj surjective}.
As \(\norm{\cdot}_{\Omega_n}\) is a norm, there exists a corresponding reproducing kernel \(\reprK_\modelSpace^{\Omega_n}\) and orthonormal basis \(\mBasis_1, \dots, \mBasis_m\)
of the model space \(\modelSpace\) with respect to \(\scp{\cdot}{\cdot}_{\Omega_n}\) by the Gram-Schmidt
orthonormalization procedure. 
Since \(\mBasis_1, \dots, \mBasis_m\) are orthonormal with
respect to \(\scp{\cdot}{\cdot}_{\Omega_n}\), the Gram matrix is the identity
\[
    \Gram_{ij} = \scp{\mBasis_j}{\mBasis_i}_{\Omega_n} = \delta_{ij}.
\]
The unique metric projection \(\regOp(y)\) (Theorem \ref{thm: least squares solution})
is therefore given by
\[
    \regOp(y) = \frac1n\mBasisVec(\cdot)^\transpose \modelVander^\conT \Omega_n y
    = \frac1n\sum_{i=1}^n  \underbrace{\sum_{k=1}^m \mBasis_k(\cdot)\overline{\mBasis_k(x_i)}}_{=\reprK_\modelSpace^{\Omega_n}(\cdot, x_i)} (\Omega_n y)_i
    = \scp[\Bigg]{y}{
        \underbrace{
        \begin{pmatrix}
            \reprK_\modelSpace^{\Omega_n}(x_1,\cdot)\\
            \vdots\\
            \reprK_\modelSpace^{\Omega_n}(x_n,\cdot)
        \end{pmatrix}
    }_{\eqcolon \reprK_\modelSpace^{\Omega_n}(X,\cdot)}
    }_{\Omega_n},
\]
since \((\modelVander)_{ij} = \mBasis_j(x_i)\) and the reproducing kernel is
Hermitian. And by definition of the Lagrange function we thus obtain \(\ell(x)=\frac1n\Omega_n \reprK_\modelSpace^{\Omega_n}(X,x)\).
The representation of the Lebesgue function follows immediately from its
Definition~\ref{def: lebesgue function and constant} and
the representation of the Lagrange function.

\subsubsection{Proof of Lemma \ref{lem: variational characterization}: Variational characterization of leverage}

By the reproducing property of \(K^*_\modelSpace\) and Cauchy-Schwarz
\[
    \abs{\model(x)}
    = \langle \model, \reprK^*_\modelSpace(\cdot, x)\rangle_*
    \le \sqrt{\reprK^*_\modelSpace(x,x)}\norm{\model}_{*}
    = \lever^*_\modelSpace(x) \norm{\model}_{*}.
\]
Since this bound is an equality for \(\model^* = K^*_\modelSpace(\cdot, x)\in \modelSpace\)
we have \eqref{eq: pointwise variational repro kernel bound}.
Equation \eqref{eq: variational weighted sup bound} then follows directly
from \eqref{eq: pointwise variational repro kernel bound} and the definition
of the weighted sup-norm. 

\subsubsection{Proof of Theorem~\ref{thm: leverage function bound}: Leverage function bounds}

\begin{steps}
\item \textbf{First bound on Lebesgue function.}
Recall that we have \(\ell(x) = \frac1n\Omega_n
\reprK_\modelSpace^{\Omega_n}(X,x)\) by Theorem \ref{thm: reproducing kernel
representation} with \(\reprK_\modelSpace^{\Omega_n}(X,x) = (\reprK_\modelSpace^{\Omega_n}(x_i,x))_{i=1}^n\).
Using \(D\coloneq \diag[\frac1{\weight(x_1)}, \dots, \frac1{\weight(x_n)}]\)
and the definition of the Lebesgue function we thus have
\begin{align*}
    \lebfun{\weight}{\linProj}{x}
    &= \weight(x) \sum_{i=1}^n \frac{\abs{\ell_i(x)}}{\weight(x_i)}
    \\
    &= \weight(x) \sup_{\substack{\sigma_i\in\field\\\abs{\sigma_i}=1}}
       \sum_{i=1}^n \frac{\overline{\sigma_i}\ell_i(x)}{\weight(x_i)}
    \\
    &= \weight(x) \sup_{\abs{\sigma_i}=1} \scp[\Big]{\reprK_\modelSpace^{\Omega_n}(X,x)}{D\sigma}_{\Omega_n}
    \\
    \overset{\text{Cauchy-Schwarz}}&\le
    \weight(x)
    \abs[\big]{\reprK_\modelSpace^{\Omega_n}(X,x)}_{\Omega_n}
    \sup_{\abs{\sigma_i}=1} \abs{D\sigma}_{\Omega_n}.
\end{align*}
This yields the bound \eqref{eq: rk bound Omega}, since
\begin{equation}
    \label{eq: Dsigma bound with A_\weight(Omega_n)}
    \sup_{\abs{\sigma_i}=1} \abs{D\sigma}_{\Omega_n}^2
    = \sup_{\abs{\sigma_i}=1}
    \frac1n\sum_{i,j=1}^n \frac{\overline{\sigma_i}(\Omega_n)_{ij}\sigma_j}{\weight(x_i)\weight(x_j)}
    \le \frac1n\sum_{i,j=1}^n \frac{\abs{(\Omega_n)_{ij}}}{\weight(x_i)\weight(x_j)} = A_\weight(\Omega_n)^2,
\end{equation}
and 
\(\norm{f}_{\Omega_n} \overset{\text{def.}}= \abs{\eval_X[f]}_{\Omega_n}\) implies
\[
    \abs[\big]{\reprK_\modelSpace^{\Omega_n}(X,x)}_{\Omega_n}^2
    = \norm[\big]{\reprK_\modelSpace^{\Omega_n}(\cdot, x)}_{\Omega_n}^2
    = \scp{\reprK_\modelSpace^{\Omega_n}(\cdot, x)}{\reprK_\modelSpace^{\Omega_n}(\cdot, x)}_{\Omega_n}
    \overset{\text{reproducing}}= \reprK_\modelSpace^{\Omega_n}(x, x).
\]

\item \textbf{Second bound on Lebesgue function.}
The bound \eqref{eq: rk bound general} follows from \eqref{eq: rk bound Omega}
and the variational characterization of the diagonal of reproducing kernels
(Lemma \ref{lem: variational characterization}). Specifically, by Lemma \ref{lem: gram bound equivalence}
we have \(\norm{\model}_* \le \sqrt{\opnorm{\Gram^{-1}}_2}\norm{\model}_{\Omega_n}\) and therefore
\begin{align}
    \label{eq: leverage function bound}
    \lever_\modelSpace^{\Omega_n}(x)
    \overset{\text{Lem.~\ref{lem: variational characterization}}}&= 
    \sup_{\model\in \modelSpace, \norm{\model}_{\Omega_n} \le 1} \abs{\model(x)}
    \\
    \nonumber
    \overset{\text{Lem.~\ref{lem: gram bound equivalence}}}&\le \sqrt{\opnorm{\Gram^{-1}}_2} \sup_{\model\in \modelSpace, \norm{\model}_* \le 1} \abs{\model(x)}
    \overset{\text{Lem.~\ref{lem: variational characterization}}}= \sqrt{\opnorm{\Gram^{-1}}_2} \lever_\modelSpace^*(x).
\end{align}
\end{steps}

\subsubsection{Generalization of Theorem~\ref{thm: leverage function bound}: Bounds on the Lebesgue function}

While the proof of the general kernel bound \eqref{eq: rk bound general} via the kernel
bound using \(\reprK_\modelSpace^{\Omega_n}\) in \eqref{eq: rk bound Omega} is instructive
and also implies that \(\scp{\cdot}{\cdot}_{\Omega_n}\) minimizes \eqref{eq: rk bound general} over
all inner products \(\scp{\cdot}{\cdot}_*\), it is possible to avoid the
assumption that the Gram matrix is necessarily invertible if one proves
\eqref{eq: rk bound general} directly. The following Theorem takes such a
direct approach and \eqref{eq: trace-like bound on lebesgue function} is a
generalization of
\eqref{eq: rk bound general} with the observation that
\[
    \reprK^*_\modelSpace(x, x) = \sum_{k=1}^m \abs{\mBasis_k(x)}^2= \abs{\mBasisVec(x)}_2^2
\]
by definition of \(\mBasisVec(x) = (\mBasis_1(x), \dots, \mBasis_m(x))^\transpose\) for orthonormal \(\mBasis_k\)
with respect to \(\scp{\cdot}{\cdot}_*\).

\begin{theorem}
    \label{thm: L2 bounds on Lebesgue function}
    Let \(\linProj\) be a GLS regression projection induced by \(\Omega_n\) and \(\lebfun{\weight}{\linProj}{x}\) its Lebesgue function.
    Let \(\mBasisVec(x) = (\mBasis_1(x), \dots, \mBasis_m(x))^\transpose\) be the vector of the basis functions \(\mBasis_1, \dots, \mBasis_m\) of
    \(\modelSpace\) evaluated at \(x\) and \(\Gram_{ij} = \scp{\mBasis_j}{\mBasis_i}_{\Omega_n}\) be the Gram matrix.
    Then for all \(x\in \domain\)
    \begin{align}
        \label{eq: trace-like bound on lebesgue function}
        \lebfun{\weight}{\linProj}{x}
        &\le \weight(x)
        A_\weight(\Omega_n)
        \sqrt{\opnorm{\Gram^\dagger}_2}\abs{\mBasisVec(x)}_2.
        & A_\weight(\Omega_n)\coloneq \sqrt{
            \frac1n\sum_{i,j=1}^n \frac{\abs{(\Omega_n)_{ij}}}{\weight(x_i)\weight(x_j)}
        }
        \\
        \label{eq: operator norm bound on lebesgue function}
        \lebfun{\weight}{\linProj}{x}
        &\le \weight(x)
        A_\weight(\opnorm{\Omega_n}_2\identity)
        \sqrt{\opnorm{\Gram^\dagger}_2}\abs{\mBasisVec(x)}_2.
    \end{align}
\end{theorem}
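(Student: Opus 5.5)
We work directly with the explicit Lagrange function of Example~\ref{ex: lagrange function of least squares}, \(\ell(x) = \tfrac1n\Omega_n\modelVander\Gram^\dagger\overline{\mBasisVec(x)}\). This formula does not require \(\Gram\) to be invertible, so we avoid the reproducing kernel of \(\scp{\cdot}{\cdot}_{\Omega_n}\) altogether. As in the first step of the proof of Theorem~\ref{thm: leverage function bound}, we write the \(\ell^1\)-type sum as a supremum over phases. With \(D\coloneq\diag[\frac1{\weight(x_1)},\dots,\frac1{\weight(x_n)}]\) this gives
\[
\lebfun{\weight}{\linProj}{x} = \weight(x)\sup_{\abs{\sigma_i}=1} \abs[\big]{\sigma^\conT D\,\ell(x)}
= \weight(x)\sup_{\abs{\sigma_i}=1} \abs[\big]{\tfrac1n (\Omega_n D\sigma)^\conT \modelVander\Gram^\dagger\overline{\mBasisVec(x)}}.
\]
The aim is to split this bilinear expression by Cauchy--Schwarz in the \(\Omega_n\)-semi-inner product.

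\textbf{Key step.} Set \(u\coloneq D\sigma\) and \(v\coloneq\modelVander\Gram^\dagger\overline{\mBasisVec(x)}\). The quantity inside the supremum is then \(\abs{\scp{v}{u}_{\Omega_n}}\), and positive semi-definiteness of \(\Omega_n\) gives \(\abs{\scp{v}{u}_{\Omega_n}}\le\abs{u}_{\Omega_n}\abs{v}_{\Omega_n}\). The first factor is handled by \eqref{eq: Dsigma bound with A_\weight(Omega_n)}, which yields \(\sup_\sigma\abs{D\sigma}_{\Omega_n}\le A_\weight(\Omega_n)\). For the second factor we compute, with \(b\coloneq\overline{\mBasisVec(x)}\),
\[
\abs{v}_{\Omega_n}^2 = \tfrac1n b^\conT\Gram^{\dagger}\modelVander^\conT\Omega_n\modelVander\Gram^\dagger b = b^\conT\Gram^\dagger\Gram\Gram^\dagger b = b^\conT\Gram^\dagger b \le \opnorm{\Gram^\dagger}_2\abs{\mBasisVec(x)}_2^2 .
\]
Here we use that \(\Gram^\dagger\) is Hermitian, that \(\Gram^\dagger\Gram\Gram^\dagger=\Gram^\dagger\), and that \(\Gram^\dagger\succeq0\). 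Combining the two factors gives \eqref{eq: trace-like bound on lebesgue function}.

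For \eqref{eq: operator norm bound on lebesgue function} we compare \(A_\weight(\Omega_n)\) with \(A_\weight(\opnorm{\Omega_n}_2\identity)^2=\tfrac{\opnorm{\Omega_n}_2}{n}\sum_i\weight(x_i)^{-2}\). Summing the absolute values of the entries is too crude for this comparison. Instead we go back one step and bound \(\sup_\sigma\abs{D\sigma}_{\Omega_n}^2=\sup_\sigma\tfrac1n(D\sigma)^\conT\Omega_n D\sigma \le \tfrac1n\opnorm{\Omega_n}_2\abs{D\sigma}_2^2=\tfrac1n\opnorm{\Omega_n}_2\sum_i\weight(x_i)^{-2}\). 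That is, in the Cauchy--Schwarz step we use the Rayleigh-quotient bound in place of \eqref{eq: Dsigma bound with A_\weight(Omega_n)}; the rest of the argument is unchanged.

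\textbf{Main obstacle.} The only delicate point is the pseudoinverse bookkeeping for singular \(\Gram\). We must check that the Lagrange formula still represents \(\pi_x\circ\regOp\) for the operator \eqref{eq: definition least squares reg op}. This is immediate, since \(\regOp(y)(x)=\tfrac1n\mBasisVec(x)^\transpose\Gram^\dagger\modelVander^\conT\Omega_n y\) and we can take the conjugate transpose. We must also confirm that \(\Gram^\dagger\) is Hermitian and positive semi-definite, which follows because \(\Gram\) is. Conjugation conventions (\(\overline{\mBasisVec}\) versus \(\mBasisVec\)) do not affect the Euclidean norms involved.
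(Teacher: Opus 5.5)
Your proof is correct. For \eqref{eq: trace-like bound on lebesgue function} it follows the paper's argument step for step: phases, Cauchy--Schwarz in $\scp{\cdot}{\cdot}_{\Omega_n}$, the entrywise bound on $\sup_{\abs{\sigma_i}=1}\abs{D\sigma}_{\Omega_n}$ from the proof of Theorem~\ref{thm: leverage function bound}, and $\Gram^\dagger\Gram\Gram^\dagger=\Gram^\dagger$.

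For \eqref{eq: operator norm bound on lebesgue function} you take a different route. The paper starts over with the Euclidean Cauchy--Schwarz inequality $\sum_i \abs{\ell_i(x)}/\weight(x_i)\le(\sum_i\weight(x_i)^{-2})^{1/2}\abs{\ell(x)}_2$. It then bounds $\abs{\ell(x)}_2^2=\tfrac1{n^2}b^\conT\Gram^\dagger\modelVander^\conT\Omega_n^2\modelVander\Gram^\dagger b\le\tfrac{\opnorm{\Omega_n}_2}{n}b^\conT\Gram^\dagger b$ with $b=\overline{\mBasisVec(x)}$. So the paper extracts $\opnorm{\Omega_n}_2$ on the side of the Lagrange vector. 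You instead keep the single $\Omega_n$-Cauchy--Schwarz split and extract $\opnorm{\Omega_n}_2$ on the side of the phase vector via the Rayleigh quotient. Both routes give the same constant.

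What your version buys is that both bounds become two estimates of one quantity: $\lebfun{\weight}{\linProj}{x}\le\weight(x)\sqrt{\opnorm{\Gram^\dagger}_2}\abs{\mBasisVec(x)}_2\sup_{\abs{\sigma_i}=1}\abs{D\sigma}_{\Omega_n}$. The minimum of the two bounds, and indeed the sharper constant $\sup_{\abs{\sigma_i}=1}\abs{D\sigma}_{\Omega_n}$ itself, then come for free. The paper's version instead exposes the intermediate quantity $\abs{\ell(x)}_2$. This makes explicit the $\abs{\ell(x)}_1\le\sqrt{n}\,\abs{\ell(x)}_2$ interpretation it mentions for $\weight\equiv1$.
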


\begin{remark}[Degenerate Gram matrix]
    It is important to emphasize again, that if the Gram matrix is not invertible,
    the model space \(\modelSpace\) must be redefined to be the image of the
    regression projection \(\linProj\). If one selects \(\Omega_n=0\) for example,
    then \(\Gram^\dagger = 0\) and the bound may look great, but then
    \(\linProj\equiv 0\) and consequently \(\modelSpace = \set{0}\). This causes
    the metric projection error \(\error_\modelSpace^\infty[f]\) to be large
    instead! For the purposes of optimizing this bound it is therefore sensible
    to assume \(\Gram\) to be invertible as was done in Theorem \ref{thm: leverage function bound}.
\end{remark}

\begin{remark}[Quality of the bounds]
    Observe that in the weighted case, the first bound is smaller
    than the second
    \[
        A_\weight(\Omega_n)^2 = \frac1n\sum_{i=1}^n \frac{\omega_i}{\weight(x_i)^2}
        \le \frac1n\sum_{i=1}^n \frac{\opnorm{\Omega_n}_2}{\weight(x_i)^2}
        = A_\weight(\opnorm{\Omega_n}_2\identity)^2.
    \]
    since \(\opnorm{\Omega_n}_2=\max_i \omega_i\) is the largest eigenvalue of \(\Omega_n\).
    In the GLS case it is not as clear which bound is preferable, since
    the second bounds avoids taking the entrywise absolute value. It may therefore be preferable
    if the off-diagonal entries of \(\Omega_n\) exhibit substantial cancellation.
\end{remark}

\begin{proof}
    \begin{steps}
    \item \textbf{Absolute entries bound}
    By Example \ref{ex: lagrange function of least squares} 
    \(\ell(x)^\conT = \frac1n\mBasisVec^\transpose(x) \Gram^\dagger \modelVander^\conT \Omega_n\) and consequently
    \begin{align*}
        \lebfun{\weight}{\linProj}{x}
        &= \weight(x) \sum_{i=1}^n \frac{\abs{\ell_i(x)}}{\weight(x_i)}
        \\
        &= \frac{\weight(x)}n \sum_{i=1}^n \frac{\abs{\mBasisVec^\transpose(x) \Gram^\dagger \modelVander^\conT \Omega_n e_i}}{\weight(x_i)}
        = \sup_{\abs{\sigma_i}=1} \frac{\weight(x)}n
          \abs[\Big]{\mBasisVec^\transpose(x) \Gram^\dagger \modelVander^\conT \Omega_n D\sigma}.
    \end{align*}
    Using \(D = \diag[\frac1{\weight(x_1)}, \dots, \frac1{\weight(x_n)}]\) we can write this as
    \[
        \lebfun{\weight}{\linProj}{x}
        = \sup_{\abs{\sigma_i}=1} \weight(x)
          \abs[\big]{\scp[\big]{D \sigma}{\modelVander \Gram^\dagger \overline{\mBasisVec(x)}}_{\Omega_n}}
        \overset{\text{C.S.}}\le \sup_{\abs{\sigma_i}=1}\weight(x)
          \abs[\big]{\modelVander \Gram^\dagger \overline{\mBasisVec(x)}}_{\Omega_n} \abs{D\sigma}_{\Omega_n}.
    \]
    Using \(\sup_{\abs{\sigma_i}=1} \abs{D\sigma}_{\Omega_n} \le A_\weight(\Omega_n)\) from \eqref{eq: Dsigma bound with A_\weight(Omega_n)}
    and
    \[
        \abs[\big]{\modelVander \Gram^\dagger \overline{\mBasisVec(x)}}_{\Omega_n}^2
        = \mBasisVec^\transpose(x) \Gram^\dagger \underbrace{\tfrac1n\modelVander^\conT \Omega_n \modelVander}_{=\Gram} \Gram^\dagger \overline{\mBasisVec(x)}
        = \mBasisVec^\transpose(x) \Gram^\dagger \overline{\mBasisVec(x)}
        \le \opnorm{\Gram^\dagger}_2 \abs{\mBasisVec(x)}_2^2
    \]
    we obtain the claim.

    \item \textbf{Operator norm bound}
    Observe that by the Cauchy-Schwarz inequality
    \[
        \lebfun{\weight}{\linProj}{x}
        = \weight(x) \sum_{i=1}^n \frac{\abs{\ell_i(x)}}{\weight(x_i)}
        \overset{\text{C.S.}}\le \weight(x) \sqrt{\sum_{i=1}^n \frac1{\weight(x_i)^2}} \abs{\ell(x)}_2.
    \]
    Note that for \(\weight\equiv 1\) this is simply the bound \(\abs{\ell(x)}_1 \le \sqrt{n}\abs{\ell(x)}_2\). 
    By Example \ref{ex: lagrange function of least squares} 
    we have \(\ell(x)^\conT = \frac1n\mBasisVec^\transpose(x) \Gram^\dagger \modelVander^\conT \Omega_n\) and consequently
    \[
        \abs{\ell(x)}_2^2
        = \frac1{n^2}\underbrace{\mBasisVec^\transpose(x) \Gram^\dagger \modelVander^\conT \Omega_n^{\frac12}}_{\eqcolon v^\conT}\Omega_n \underbrace{\Omega_n^{\frac12} \modelVander \Gram^\dagger \overline{\mBasisVec(x)}}_{= v}
        \le \frac{\opnorm{\Omega_n}_2}{n^2} \abs{v}_2^2.
    \]
    Combining this with
    \[
        \abs{v}_2^2
        = \mBasisVec^\transpose(x) \Gram^\dagger \underbrace{\modelVander^\conT \Omega_n \modelVander}_{=n\Gram} \Gram^\dagger \overline{\mBasisVec(x)}
        = n\mBasisVec^\transpose(x) \Gram^\dagger \overline{\mBasisVec(x)}
        \le n\opnorm{\Gram^\dagger}_2\abs{\mBasisVec(x)}_2^2
    \]
    results in \(\abs{\ell(x)}_2 \le \sqrt{\frac{\opnorm{\Omega_n}_2}{n}}\sqrt{\opnorm{\Gram^\dagger}_2}\abs{\mBasisVec(x)}_2\) and consequently
    the claimed GLS bound.
\end{steps}
\end{proof}

\subsubsection{Absolute value around \texorpdfstring{\(\Omega_n\)}{Ω} is necessary}
\label{sec: absolute value around Omega is necessary}

Since \(A_\weight(\Omega_n) = \norm{\tfrac1\weight}_{\Omega_n}\) in the weighted
case, the reader might hypothesize as the authors did that
\(A_\weight(\Omega_n)\) should be replaceable by
\[
    \norm{\tfrac1\weight}_{\Omega_n}
    = \sqrt{\frac1n\sum_{i,j=1}^n \frac{(\Omega_n)_{ij}}{\weight(x_i)\weight(x_j)}}
\]
in the general GLS case. This is not possible however.
Specifically, there is already a counterexample in the unweighted case
with \(\weight=\ones\) and \(\norm{\cdot}_\weight = \norm{\cdot}_\infty\),
where
\(
    \norm{\tfrac1\weight}_{\Omega_n}
    = \norm{\ones}_{\Omega_n}.
\)
Indeed the following bound is \emph{false}
\begin{equation}
    \label{eq: false norm conjecture}
    \lebfun{\infty}{\linProj}{x}
    = \abs{\ell(x)}_1
    \le \norm{\ones}_{\Omega_n}\sqrt{\opnorm{\Gram^{-1}}_2} \abs{\mBasisVec(x)}_2.
\end{equation}
Since \(A_\weight(\Omega_n) = \sqrt{\frac1n\tr(\Omega_n)}\) in the unweighted WLS case, one
may alternatively conjecture the bound
\begin{equation}
    \label{eq: false trace conjecture}    
    \lebfun{\infty}{\linProj}{x}
    = \abs{\ell(x)}_1
    \le \sqrt{\frac{\tr(\Omega_n)}{n} \opnorm{\Gram^{-1}}_2} \abs{\mBasisVec(x)}_2
\end{equation}
for GLS regression, which is also \emph{false}.

\begin{example}[Counterexample to the norm conjecture \eqref{eq: false norm conjecture}]
    We need to construct an example, where
    \[
        \lebfun{\infty}{\linProj}{x}
        = \abs{\ell(x)}_1
        > \norm{\ones}_{\Omega_n}\sqrt{\opnorm{\Gram^{-1}}_2} \abs{\mBasisVec(x)}_2
    \]
    Let \(x_1\neq x_2 \in \domain \coloneq \real\), select any probability
    measure \(\mu\) on \(\real\), \(\modelSpace \coloneq \Span{\set{\ones}}\)
    with \(\ones(x) = 1\). Observe that since
    \(\mu\) is a probability measure \(\mBasisVec(x) = (\ones(x))\) is an orthonormal basis.
    Define
    \[
        \Omega_n = \begin{pmatrix}
            1 & -2
            \\
            -2 & 5
        \end{pmatrix}.
    \]
    such that 
    \[
        \Omega_n \begin{pmatrix}
            1\\1
        \end{pmatrix}
        = \begin{pmatrix}
            -1\\3
        \end{pmatrix}
    \]
    and therefore \(\norm{\ones}_{\Omega_n}^2 = 1 = \Gram\). This implies
    \[
        \norm{\ones}_{\Omega_n}\sqrt{\opnorm{\Gram^{-1}}_2} \abs{\mBasisVec(x)}_2
        = 1.
    \]
    On the other hand,
    \[
        \ell(x)^\conT = \tfrac1n\mBasisVec^\transpose(x) \Gram^{-1} \modelVander^\conT \Omega_n
        = \tfrac1{2} (1,1) \Omega_n
        = \tfrac1{2}(-1, 3)
    \]
    such that
    \[
        \abs{\ell(x)}_1
        = \tfrac12 + \tfrac32 = 2 > 1 =  
        \norm{\ones}_{\Omega_n}\sqrt{\opnorm{\Gram^{-1}}_2} \abs{\mBasisVec(x)}_2.
    \]
\end{example}

\begin{example}[Counterexample to the trace conjecture \eqref{eq: false trace conjecture}]
    We need to construct an example, where
    \[
        \lebfun{\infty}{\linProj}{x}
        = \abs{\ell(x)}_1
        > \sqrt{\frac{\tr(\Omega_n)}{n} \opnorm{\Gram^{-1}}_2} \abs{\mBasisVec(x)}_2
    \]
    Let \(x_1\neq x_2 \in \domain \coloneq \real\), select any probability
    measure \(\mu\) on \(\real\), \(\modelSpace \coloneq \Span{\set{\ones}}\)
    with \(\ones(x) = 1\) and define
    \[
        \Omega_{n,\epsilon} = \begin{pmatrix}
            1 & 1-\epsilon \\
            1-\epsilon & 1
        \end{pmatrix}
        \quad\text{with}\quad \tr(\Omega_{n,\epsilon}) = 2.
    \]
    Then
    \begin{alignat*}{3}
        \modelVander \overset{\text{def.}}&= (\ones(x_1), \ones(x_2))^\transpose 
        &&=(1, 1)^\transpose
        \\
        \Gram
        \overset{\text{def.}}&= \tfrac12\modelVander^\conT \Omega_{n,\epsilon} \modelVander
        &&= 2-\epsilon
        \\
        \abs{\mBasisVec(x)}_2^2
        \overset{\text{def.}}&= \ones(x)^2
        &&= 1,
        \\
        \ell(x)^\conT
        \overset{\text{def.}}&= \tfrac12\mBasisVec(x)^\transpose \Gram^{-1} \modelVander^\conT \Omega_{n,\epsilon}
        &&= \tfrac1{2}(1, 1)
    \end{alignat*}
    Due to
    \(\opnorm{\Gram^{-1}}_2 = \tfrac1{2-\epsilon}\)
    we consequently have
    for \(\epsilon \in (0,1)\)
    \[
        \sqrt{\frac{\tr(\Omega_{n,\epsilon})}{2} \opnorm{\Gram^{-1}}_2} \abs{\mBasisVec(x)}_2 = \tfrac1{\sqrt{2-\epsilon}} < 1 = \abs{\ell(x)}_1.
    \]
\end{example}

\subsubsection{Proof of Lemma~\ref{lem: reproducing kernel}: Reproducing kernel}

This is a standard result in the theory of reproducing kernel Hilbert spaces.
To allow the reader to skip the general theory of reproducing kernel Hilbert spaces in
infinite dimension we give a direct proof that avoids the use of the Riesz representation theorem.

We use \(\reprK_\modelSpace^*\) to denote the explicit kernel representation and show
that it has the reproducing property, thereby proving existence. Afterwards we
prove uniqueness. Let \(\model \in \modelSpace\) have the basis representation
\(\model = \sum_{j=0}^{m-1} c_j \mBasis_j\) for some \(c_0, \dots, c_{m-1}\in
\field\). Then
\[
    \scp{\model}{\reprK_\modelSpace^*(\cdot, y)}_*
    = \sum_{j,k=0}^{m-1} c_j\mBasis_k(y)\underbrace{\scp{\mBasis_j}{\mBasis_k}_*}_{=\delta_{jk}}
= \sum_{k=0}^{m-1} c_k \mBasis_k(y)
    = \model(y).
\]
For uniqueness, assume that \(K\) is another kernel with the desired properties.
Then, using the reproducing property of \(\reprK_\modelSpace^*\) and \(K\) and the Hermitian kernel
property of \(\reprK_\modelSpace^*\) we have
\begin{align*}
    K(x,y)
    \overset{\text{repr.}}&= \scp{K(\cdot, y)}{\reprK_\modelSpace^*(\cdot, x)}_*
    \\
    \overset{\text{Herm.}}&= \overline{\scp{\reprK_\modelSpace^*(\cdot, x)}{K(\cdot, y)}_*}
    \overset{\text{repr.}}= \overline{\reprK_\modelSpace^*(y, x)} \overset{\text{Herm.}}= \reprK_\modelSpace^*(x, y).
\end{align*}

\subsubsection{Proof of Lemma~\ref{lem: optimality of constant weighted leverage bound}: Leverage lower bound}

This result is well known \citep[e.g.][]{narayanChristoffelFunctionWeighted2016}. Since the
proof is very short and instructive we include it here for completeness:
Let \(\mBasis_1, \dots, \mBasis_m\) be an orthonormal basis of \(\modelSpace\) with respect to \(\scp{\cdot}{\cdot}_\mu\), then
\[
    \sup_{x\in \domain} \weight^2(x) K^\mu_m(x,x)
    \ge \frac1{\nu(\domain)}\int_\domain K^\mu_m(x,x) d\mu(x)
    = \frac1{\nu(\domain)}\sum_{k=1}^{m} \underbrace{\int_\domain \abs{\mBasis_k(x)}^2 d\mu(x)}_{=\norm{\mBasis_k}_{\mu}^2 = 1}
    = \frac{m}{\nu(\domain)}.
\]
If \(\weight^2(x)K^\mu_m(x,x)\) is constant in \(x\), then the first inequality is an equality.
Conversely, the first inequality is strict if \(\weight^2(x)K^\mu_m(x,x)\) is not \(\nu\)-almost surely constant in \(x\).

\subsection{Restricted range inequalities}

\subsubsection{Proof of Lemma \ref{lem: restricted lebesgue constant}: Restricted Lebesgue constant}

In the case where \(\linProj\) is an \(L^2(\mu)\)-projection
define \(\nu(dx) = \frac1{\weight^2(x)}\mu(dx)\). Since \(\mu\) is
\(\sigma\)-finite, we have that \(\nu\) is also \(\sigma\). Indeed, let \(E_n
\uparrow \domain\) with \(\mu(E_n) < \infty\). Define
\[
    E_{n,m} \coloneq \set{x\in E_n : \tfrac1{\weight^2(x)} \le m}
\]
then \(\nu(E_{n,m}) \le m \mu(E_n) < \infty\) and \(\bigcup_{n,m} E_{n,m} = \domain\)
such that an enumeration and partial union of the countable sets proves the \(\sigma\)-finiteness of \(\nu\).
We thus have \(\mu(dx) = \weight^2(x)\nu(dx)\) for some \(\sigma\)-finite \(\nu\)
and Theorem \ref{thm: L2 proj Lebesgue function and constant are operator norms}
therefore implies the operator theoretic characterization of the Lebesgue
function
\begin{equation}
    \label{eq: (restricted lebesgue const) operator norm characterization}    
    \lebfun{\weight}{\linProj}{x}
    = \opnorm{\eval_x\linProj}_{\weight} \weight(x)
    = \sup_{\substack{f \in \funSpace \\ \norm{f}_\weight \le 1}} \abs{\linProj f(x)}\weight(x).
\end{equation}
This identity then implies
\begin{align*}
    \lebconst{\weight}{\linProj}
    \overset{\text{def.}}= \sup_{x\in \domain} \lebfun{\weight}{\linProj}{x}
    \overset{\eqref{eq: (restricted lebesgue const) operator norm characterization}}&
    = \sup_{\substack{f \in \funSpace \\ \norm{f}_\weight \le 1}} \sup_{x\in \domain} \abs{\linProj f(x)}\weight(x)
    \\
    &\le \sup_{\substack{f \in \funSpace \\ \norm{f}_\weight \le 1}} C\sup_{x\in S_\modelSpace} \abs{\linProj f(x)}\weight(x)
    \overset{\eqref{eq: (restricted lebesgue const) operator norm characterization}}
    = C \sup_{x\in S_\modelSpace} \lebfun{\weight}{\linProj}{x},
\end{align*}
where the inequality is the application of the restricted range inequality (Definition \ref{def: restricted range inequality})
to \(\norm{\linProj f}_\weight = \sup_{x\in \domain}\abs{\linProj f(x)}\weight(x)\),
since \(\linProj f \in \modelSpace\). Equality in the case of \(C=1\) follows from
\(S_\modelSpace \subseteq \domain\) and the definition of the supremum.

If we were willing to assume the function space \(\funSpace\) is sufficiently rich and
the evaluation points \(X=(x_1, \dots, x_n)\) distinct, then Lemma \ref{lem: lebesgue function is operator norm}
would imply \eqref{eq: (restricted lebesgue const) operator norm characterization} for
the case where \(\linProj\) is a regression projection too and we would have finished the
proof. To avoid this assumption we will make a more direct argument. Observe that
by Definition \ref{def: lebesgue function and constant} we have
\begin{align*}
    \lebconst{\weight}{\linProj}
    \overset{\text{def.}}= \sup_{x\in \domain} \lebfun{\weight}{\linProj}{x}
    \overset{\text{def.}}&= \sup_{x\in \domain} \weight(x) \sum_{i=1}^n \frac{\abs{\ell_i(x)}}{\weight(x_i)}
    \\
    &= \sup_{\substack{\sigma\in\field^n\\\abs{\sigma_i}=1}} \sup_{x\in \domain}
    \weight(x) \abs[\Big]{\sum_{i=1}^n \frac{\sigma_i\ell_i(x)}{\weight(x_i)}}
    =  \sup_{\substack{\sigma\in\field^n\\\abs{\sigma_i}=1}} \norm{\psi_\sigma}_\weight
\end{align*}
with \(\psi_\sigma(x) \coloneq \sum_{i=1}^n \frac{\sigma_i\ell_i(x)}{\weight(x_i)}\).
Since \(\ell_i = \regOp(e_i)\in \modelSpace\) we have that \(\psi_\sigma \in \modelSpace\) as a linear combination.
Hence we may apply the restricted range inequality to \(\psi_\sigma\) and obtain
\[
    \lebconst{\weight}{\linProj}
    = \sup_{\substack{\sigma\in\field^n\\\abs{\sigma_i}=1}} \norm{\psi_\sigma}_\weight
    \le C \sup_{\substack{\sigma\in\field^n\\\abs{\sigma_i}=1}} \norm{\psi_\sigma\ind{S_\modelSpace}}_\weight
    \overset{\text{def.}}= C \sup_{x\in S_\modelSpace} \weight(x)\sum_{i=1}^n \frac{\abs{\ell_i(x)}}{\weight(x_i)}
    = C \sup_{x\in S_\modelSpace} \lebfun{\weight}{\linProj}{x}.
\]

\subsubsection{Proof of Theorem \ref{thm: restricted range inequalities for polynomials}: Restricted range inequalities for polynomials}

Except for the optimality this result follows directly from \citep[Thm.~6.2]{lubinskySurveyWeightedPolynomial2007}.
We will deduce the approximate optimality of the MRS number \(a_n\) from the following Lemma due
to \citet[Lem.~2]{szabadosWeightedLagrangeHermite1997}.
Indeed, assume that the restricted range inequality holds with \(a>0\).
Since \(p_k \in \poly_k\) as it has exactly degree \(k\), we then have that
\[
    \norm{p_k}_\weight = \norm{p_k\ind{[-a,a]}}_\weight = \sup_{\xi \in [-a,a]} \abs{p_k(\xi)}\weight(\xi).
\]
Since \([-a, a]\) is compact there must exist a maximizer \(\xi_k\in [-a,a]\). Since
\(Q\) and therefore \(\weight\) is even, we have that \(p_k(-x) = (-1)^k
p_k(x)\) \citep[Thm.~1.17]{gautschiOrthogonalPolynomialsComputation2004} and
thus we may assume \(\xi_k \ge 0\). By Lemma \ref{lem: szabados lemma} \citep[Lem.~2]{szabadosWeightedLagrangeHermite1997} we
therefore have
\[
    a_k\bigl(1-c_1k^{-\frac23}\bigr) \le \xi_k \le a
\]
for sufficiently large \(k\). After possibly replacing the constant \(c_1\)
for small \(k\) this proves the lower bound on any \(a\) that satisfies the restricted range inequality.
The upper bound on the infimum is trivial since \(a_k\) satisfies the restricted range inequality.

\begin{lemma}[{\citet[Lem.~2]{szabadosWeightedLagrangeHermite1997}}]
    \label{lem: szabados lemma}
    Assume the setting of the approximate-optimality statement of Theorem
    \ref{thm: restricted range inequalities for polynomials}.  Let
    \((p_k)_{k=0}^\infty\) be orthonormal polynomials with respect to
    \(L^2(\mu)\) with \(\mu(dx) = \weight^2(x) dx\) and \(p_k\) of degree \(k\).
    Let \(\xi_k \ge 0\) be such that
    \[
        \abs{p_k(\xi_k)}\weight(\xi_k) = \norm{p_k}_\weight.
    \]
    Then for some constant \(c_1>0\) independent of \(k\) we have for all sufficiently large \(k\in \nat\)
    \[
        a_k\bigl(1-c_1k^{-\frac23}\bigr) \le \xi_k \le a_k.
    \]
\end{lemma}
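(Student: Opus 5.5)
The upper bound and the lower bound need different tools. The upper bound $\xi_k\le a_k$ should follow directly from the exact restricted range inequality of Theorem~\ref{thm: restricted range inequalities for polynomials}~(i), applied with $M=k$. The lower bound is a quantitative statement about where $\abs{p_k}\weight$ is large, and I would get it by comparing a pointwise bulk bound with a global size bound for weighted orthonormal polynomials. These are the Levin--Lubinsky estimates for Freud-type weights \citep{levinOrthogonalPolynomialsExponential2001}. The growth condition $A\le (xQ'(x))'/Q'(x)\le B$ in the statement is exactly the hypothesis under which those estimates are available, so I would import them rather than reprove them.

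\textbf{Upper bound.} Since $p_k\in\poly_k$ and $p_k\neq 0$, the strict inequality $\norm{p_k\ind{\real\setminus[-a_k,a_k]}}_\weight<\norm{p_k\ind{[-a_k,a_k]}}_\weight$ shows that $\abs{p_k}\weight$ cannot attain its supremum outside $[-a_k,a_k]$. Hence every maximizer satisfies $\abs{\xi_k}\le a_k$. With $\xi_k\ge 0$ this gives $\xi_k\le a_k$. This part is immediate.

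\textbf{Lower bound.} I would use two estimates, with constants $C,c>0$ independent of $k$:
\begin{enumerate}[label=(\alph*)]
\item a pointwise bound valid on the whole line,
\[
\abs{p_k(x)}\weight(x)\,\Bigl(\bigl|1-\tfrac{\abs{x}}{a_k}\bigr|+k^{-2/3}\Bigr)^{1/4}\le C\,a_k^{-1/2};
\]
\item the matching lower bound on the sup-norm, $\norm{p_k}_\weight\ge c\,k^{1/6}a_k^{-1/2}$.
\end{enumerate}
Now suppose $0\le\xi_k\le a_k(1-c_1k^{-2/3})$. Then $\abs{1-\xi_k/a_k}\ge c_1k^{-2/3}$, and (a) gives
\[
\norm{p_k}_\weight=\abs{p_k(\xi_k)}\weight(\xi_k)\le C\,c_1^{-1/4}k^{1/6}a_k^{-1/2}.
\]
For $c_1>(C/c)^4$ this contradicts (b). Therefore $\xi_k\ge a_k(1-c_1k^{-2/3})$ for all $k$ large enough that (a) and (b) hold.

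\textbf{Main obstacle.} The only nontrivial ingredient is the pair of sharp estimates (a) and (b). In particular, the exponent $1/4$ and the $k^{-2/3}$ Airy-scale correction must match exactly to produce the $k^{-2/3}$ in the conclusion. I would first check that the hypotheses on $Q$ place the weight in the Levin--Lubinsky class $\mathcal F(C^2)$, and then quote their bound on orthonormal polynomials together with their lower bound on $\norm{p_k\weight}_{L^\infty}$. If (b) is not stated in exactly this form, my fallback is to derive it from the Christoffel-function asymptotics near $a_k$ combined with the Markov--Bernstein inequality.
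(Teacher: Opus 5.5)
Your argument is correct. The paper itself gives no proof of this lemma: it imports it from Szabados (Lem.~2) and only uses it afterwards to derive the approximate-optimality clause of Theorem~\ref{thm: restricted range inequalities for polynomials}.

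What you supply is the standard derivation behind such a statement. The upper bound comes from the exact restricted range inequality of part~(i); there is no circularity, since only the approximate-optimality clause rests on this lemma. The lower bound comes from playing the Levin--Lubinsky bulk estimate $\abs{p_k(x)}\weight(x)\le C a_k^{-1/2}\abs{1-x/a_k}^{-1/4}$ on $[0,a_k]$ against the edge estimate $\norm{p_k}_\weight\ge c\,k^{1/6}a_k^{-1/2}$. Both estimates hold for precisely the Freud class encoded by $A\le (xQ')'/Q'\le B$.

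Two small remarks:
\begin{itemize}
\item Your regularized form of (a) is more than you need. The unregularized bulk bound, in the form Levin and Lubinsky state it, already yields the contradiction once you know $\xi_k\le a_k$; your (a) is merely its combination with the upper half of (b).
\item The Levin--Lubinsky constants are uniform in $k\ge 1$, and the claim is vacuous when $c_1k^{-2/3}\ge 1$. So your argument gives the bound for every $k$, which makes the paper's later step of possibly enlarging $c_1$ for small $k$ automatic.
\end{itemize}

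The citation buys the paper brevity. Your route makes explicit which hypotheses are actually used and where the rate comes from: $k^{-2/3}=(k^{1/6})^{-4}$, the edge-to-bulk ratio of $\abs{p_k}\weight$ raised to the power $-4$ dictated by the exponent $\tfrac14$.
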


\subsection{\texorpdfstring{\(L^p(\mu)\)}{Lp(µ)} norm bounds}

\subsubsection{Proof of Theorem~\ref{thm: Lp approximation is hopeless}: Impossibility of adversarial \texorpdfstring{\(L^p(\mu)\)}{Lp(µ)} bounds}
\label{sec: lebesgue constant for Lp norms}

While the Lebesgue Lemma (Lemma \ref{lem: lebesgue}) applies to arbitrary
norms, the resulting bounds are not useful for \(L^p(\mu)\)-norms.
In fact, for atomless \(\mu\) the following Lemma shows that the generalized
Lebesgue constant \(\norm{\linProj}_{L^p(\mu)}\) and Lebesgue function
\(\opnorm{\eval_x \linProj}_{L^p(\mu)}\) are generally infinite. The technical
assumptions are that the function space is big enough to contain bump functions
and that the model space is not in the equivalence class of zero for the
semi-norm. We will consider this degenerate case in Lemma \ref{lem: model space
in equivalence class of zero}.

\begin{mdframed}[style=default]
\begin{lemma}[\(L^p(\mu)\)-operator norm of regression projections]
    \label{lem: Lp operator norm of regression projections}
    Assume \(\funSpace\subseteq \mathcal{L}^p(\mu)\) and let \(\linProj\colon \funSpace\to \modelSpace\) be a regression projection with distinct evaluation points
    \((x_1,\dots, x_n)\in \domain^n\) with \(\domain \subseteq \real^\dims\) and
    let \(\mu\) be a measure on \(\domain\) that is finite on bounded measurable sets with
    \(\mu(\set{x_1, \dots, x_n}) = 0\),
    assume \(C_c^\infty(\domain; \field) \subseteq \funSpace\) and \(p\in [1, \infty)\), then
    the following holds:
    \begin{enumerate}[label=(\roman*),noitemsep]
        \item If there exists \(\model \in \modelSpace\) such that \(\norm{\model}_* \neq
        0\), then 
        \[
            \opnorm{\linProj}_{L^p(\mu)}^* = \sup_{f\in \funSpace} \frac{\norm{\linProj f}_*}{\norm{f}_{L^p(\mu)}} = \infty.
        \]

        \item If there exists \(\model \in \modelSpace\) such that \(\abs{\model(x)} \neq 0\),
        then
        \[
            \opnorm{\eval_x \linProj}_{L^p(\mu)} = \sup_{f\in \funSpace} \frac{\abs{\linProj f(x)}}{\norm{f}_{L^p(\mu)}} = \infty.
        \]
    \end{enumerate}
\end{lemma}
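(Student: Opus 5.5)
The plan is to construct adversarial bump functions concentrated near the evaluation points. Since \(\mu(\set{x_1,\dots,x_n})=0\) and \(\mu\) is finite on bounded sets, continuity from above gives \(\mu(\ball{x_i}{r})\to\mu(\set{x_i})=0\) as \(r\downarrow 0\). For small radius \(r\), functions supported in the union of these balls therefore have arbitrarily small \(L^p(\mu)\)-norm. The regression projection, however, only sees the values at the points \(x_i\), so \(\linProj f\) is unchanged as \(r\) shrinks.

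\textbf{Step 1 (choosing the data).} Surjectivity of \(\linProj=\regOp\circ\eval_X\) onto \(\modelSpace\) (Definition~\ref{def: regression}) means every \(\model\in\modelSpace\) equals \(\regOp(y)\) for some \(y\in\field^n\); concretely \(\model=\linProj\model=\regOp(\eval_X[\model])\).
\begin{itemize}
\item In case (i), take \(\model\) with \(\norm{\model}_*\neq0\) and set \(y\coloneq\eval_X[\model]\). Then \(\regOp(y)=\model\) and \(\norm{\regOp(y)}_*\neq 0\).
\item In case (ii), take \(\model\) with \(\model(x)\neq0\) and set \(y\coloneq\eval_X[\model]\) in the same way, so \(\regOp(y)(x)=\model(x)\neq 0\).
\end{itemize}

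\textbf{Step 2 (bump construction).} Since the \(x_i\) are distinct, choose \(r_0>0\) so that the balls \(\ball{x_i}{r_0}\) are pairwise disjoint. For \(0<r\le r_0\), let \(\eta_r\in C_c^\infty\) with \(0\le\eta_r\le1\), \(\eta_r(0)=1\), and \(\supp\eta_r\subseteq\ball{0}{r}\). Define
\[
f_r\coloneq\sum_{i=1}^n y_i\,\eta_r(\cdot-x_i),
\]
restricted to \(\domain\). There is a technicality here: membership in \(C_c^\infty(\domain;\field)\) when \(\domain\) is not open. I will interpret \(C_c^\infty(\domain)\) as restrictions of \(C_c^\infty(\real^\dims)\) functions, which is what the hypothesis must mean for the statement to be meaningful. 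With this interpretation \(\eval_X[f_r]=y\), so \(\linProj f_r=\regOp(y)\) is independent of \(r\). Disjointness of the supports gives \(\abs{f_r}\le\abs{y}_\infty\) pointwise, and hence
\[
\norm{f_r}_{L^p(\mu)}^p\le \abs{y}_\infty^p\,\mu\Bigl(\textstyle\bigcup_i\ball{x_i}{r}\Bigr)\longrightarrow 0 \qquad (r\downarrow0).
\]
The only subtle point is that \(\mu(\ball{x_i}{r_0})<\infty\) is needed for continuity from above; this is exactly why \(\mu\) is assumed finite on bounded sets.

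\textbf{Step 3 (conclusion).} In case (i), \(\norm{\linProj f_r}_*/\norm{f_r}_{L^p(\mu)}=\norm{\regOp(y)}_*/\norm{f_r}_{L^p(\mu)}\to\infty\). In case (ii), the ratio \(\abs{\linProj f_r(x)}/\norm{f_r}_{L^p(\mu)}=\abs{\model(x)}/\norm{f_r}_{L^p(\mu)}\) likewise tends to \(\infty\).

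One edge case remains: \(\norm{f_r}_{L^p(\mu)}=0\) for some \(r\). Then the ratio is \(c/0\) with \(c\neq0\), which I interpret as \(+\infty\) in the supremum. This is consistent with the convention for \(\opnorm{\cdot}\) on semi-normed spaces, so the supremum is infinite in either case.

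I do not expect any step to be a serious obstacle. The main care point is verifying \(\mu(\ball{x_i}{r})\to0\), which combines the zero-atom assumption with local finiteness of \(\mu\).
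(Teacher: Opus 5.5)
Your proof is correct and follows the paper's argument: bump functions on shrinking disjoint balls around the \(x_i\) interpolate \(\eval_X[\model]\), so \(\linProj f = \model\) stays fixed while \(\norm{f}_{L^p(\mu)}\to 0\). The only differences are that you get the vanishing norm from continuity from above of \(\mu\) on the balls, where the paper uses dominated convergence, and that the paper proves a slightly more general version allowing atoms at evaluation points where \(\model\) vanishes.
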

\end{mdframed}
\begin{remark}[Possible generalization]
    \label{rem: generalization of Lp operator norm lemma}
    To avoid the assumptions that none of the \(x_i\) are atoms of \(\mu\),
    a sufficient condition for the same conclusion is that the following subset
    of the model space
    \[
        \tilde \modelSpace \coloneq \modelSpace \cap \set[\Big]{f\in \funSpace : f(x_i) = 0 \text{ for all } i \in \set{1,\dots, n} \text{ with }\mu(\set{x_i}) > 0}
    \]
    is not in the equivalence class of zero for the seminorm \(\norm{\cdot}_*\).
    That is, there exists \(\model \in \tilde\modelSpace\) such that
    \(\norm{\model}_* > 0\) or \(\abs{\model(x)} > 0\) respectively.
    Since \(\tilde \modelSpace = \modelSpace\) under the assumptions of the Lemma above,
    this is a generalization of the above Lemma.
\end{remark}

\begin{proof}
    We will prove the generalization explained in Remark \ref{rem: generalization of Lp operator norm lemma}.
    Let \(\model \in \tilde\modelSpace\) with \(\norm{\model}_* >0\) or \(\abs{\model(x)} > 0\) respectively, and let \(I = \set{i : \mu(x_i) = 0}\).
    For the euclidean distance \(\abs{x-x_i}\) use the bump functions
    \[
        g_i(x) \coloneq  g(\abs{x-x_i}/\epsilon)\in C_c^\infty(\domain;\real) \qquad g(x)\coloneq \exp(1-\tfrac{1}{1-x^2})\ind{\abs{x}< 1}(x) 
    \]
    to construct
    \[
        f_\epsilon(x) \coloneq \sum_{i\in I} \model(x_i) g_i(x) \in C_c^\infty(\domain; \field)
    \]
    with the properties
    \begin{enumerate}[label=(\roman*),noitemsep]
        \item \(f_\epsilon(x_i) = \model(x_i)\) for all  \(i\in\set{1,\dots, n}\) for sufficiently small \(\epsilon\)
        \item \(\abs{f_\epsilon(x)} \le h(x) \coloneq \sum_{i\in I} \abs{\model(x_i)} \ind{B_{\epsilon_0}(x_i) \cap \domain}(x)\) for \(\epsilon<\epsilon_0\) and \(x\in \domain\)
        \item\label{it: limit of f_epsilon} \(\lim_{\epsilon \to 0} f_\epsilon(x) = \sum_{i\in I} \model(x_i)\delta_{xx_i} \) for all \(x\in \domain\),
        where \(\delta_{xy}\) is the Kronecker delta.
    \end{enumerate}
    Since \(h\in \mathcal L^p(\mu)\) as the union of \(B_{\epsilon_0}(x_i) \cap \domain\) is a bounded measurable set, we thus have by dominated convergence
    \[
        \lim_{\epsilon\to 0}\norm{f_\epsilon}_{L^p(\mu)}^p
        \overset{\text{DCT}}= \int \lim_{\epsilon \to 0} \abs{f_\epsilon (x)}^p\mu(dx)
        \overset{\mathrm{\ref{it: limit of f_epsilon}}}= \sum_{i\in I} \abs{\model(x_i)}^p \mu(\set{x_i}) \overset{\text{def. }I}= 0.
    \]
    Whereas for all \(\epsilon\) we have \(\linProj f_\epsilon = \linProj \model = \model\) since \(f_\epsilon\) coincides with \(\model\) on the evaluation
    points \(X=(x_1, \dots, x_n)\) and \(\linProj = \regOp \circ \eval_X\). Consequently
    \[
        \sup_{f\in \funSpace} \frac{\norm{\linProj f}_*}{\norm{f}_{L^p(\mu)}}
        \ge \lim_{\epsilon \to 0} \frac{\norm{\linProj f_\epsilon}_*}{\norm{f_\epsilon}_{L^p(\mu)}}
        = \lim_{\epsilon \to 0} \frac{\norm{\model}_*}{\norm{f_\epsilon}_{L^p(\mu)}} = \infty,
    \]
    for the choice \(\model \in \tilde \modelSpace\) with \(\norm{\model}_* > 0\) and
    similarly
    \[
        \sup_{f\in \funSpace} \frac{\abs{\linProj f(x)}}{\norm{f}_{L^p(\mu)}}
        \ge \lim_{\epsilon \to 0} \frac{\abs{\linProj f_\epsilon(x)}}{\norm{f_\epsilon}_{L^p(\mu)}}
        = \lim_{\epsilon \to 0} \frac{\abs{\model(x)}}{\norm{f_\epsilon}_{L^p(\mu)}} = \infty,
    \]
    for the choice \(\model \in \tilde \modelSpace\) with \(\abs{\model(x)} > 0\).
\end{proof}

What happens if the model space is in the equivalence class of zero for
a semi-norm? In that case any map into the model space achieves the
metric projection error.

\begin{lemma}[Model space in equivalence class of zero]
    \label{lem: model space in equivalence class of zero}
   If \(\norm{\model}_* = 0\) for all \(\model \in \modelSpace\), then
   for all maps \(\linProj\colon \funSpace\to \modelSpace\) we have
   for all \(f\in \funSpace\)
   \[
        \norm{\linProj f - f}_* = \norm{f}_* = \error_\modelSpace^*[f].
   \]
\end{lemma}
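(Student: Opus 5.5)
The statement to prove is Lemma~\ref{lem: model space in equivalence class of zero}. Assume \(\norm{\model}_*=0\) for every \(\model\in\modelSpace\), and fix \(f\in\funSpace\) and an arbitrary map \(\linProj\colon\funSpace\to\modelSpace\). The plan is to sandwich \(\norm{\linProj f-f}_*\) and \(\error_\modelSpace^*[f]\) between \(\norm{f}_*\) from both sides, using only the triangle inequality for semi-norms. Neither linearity of \(\linProj\) nor any other structure is needed.

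\textbf{Key tool.} For any \(g\in\funSpace\) and any \(\model\in\modelSpace\), the triangle inequality in both directions gives
\[
    \norm{g}_* - \norm{\model}_* \le \norm{g-\model}_* \le \norm{g}_* + \norm{\model}_*.
\]
Since \(\norm{\model}_*=0\), this yields \(\norm{g-\model}_*=\norm{g}_*\). In words, adding an element of \(\modelSpace\) never changes the semi-norm.

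\textbf{First equality.} Apply the key tool with \(g=f\) and \(\model=\linProj f\in\modelSpace\). This gives \(\norm{f-\linProj f}_*=\norm{f}_*\). Since \(\norm{\cdot}_*\) is absolutely homogeneous, \(\norm{\linProj f-f}_*=\norm{f-\linProj f}_*\), which proves the first equality.

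\textbf{Second equality.} The key tool shows that \(\norm{\model-f}_*=\norm{f}_*\) for every \(\model\in\modelSpace\). The infimum defining \(\error_\modelSpace^*[f]\) in \eqref{eq: metric projection error} is therefore taken over a constant family, so it equals \(\norm{f}_*\). If one prefers inequalities: the choice \(\model=0\in\modelSpace\) gives \(\le\), and the reverse triangle inequality gives \(\ge\).

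There is no genuine obstacle here. The only point to watch is that one must use the reverse triangle inequality, because \(\norm{\cdot}_*\) is merely a semi-norm and we cannot argue via \(\model=0\).
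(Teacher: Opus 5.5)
Your proof is correct and follows the paper's own argument: the two-sided triangle inequality gives \(\norm{f-\model}_* = \norm{f}_*\) for every \(\model\in\modelSpace\), and both equalities follow by taking \(\model=\linProj f\) and then the infimum over \(\modelSpace\). Your closing remark reads oddly, since you do use \(\model=0\) for the \(\le\) direction; the actual point is only that \(\norm{\model}_*=0\) does not allow you to replace a general \(\model\) by \(0\).
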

\begin{proof}
    For any \(\model \in \modelSpace\) we have
    \[
        \norm{f}_* + \underbrace{\norm{\model}_*}_{=0}
        \ge \norm{f-\model}_*
        \ge \norm{f}_* - \underbrace{\norm{\model}_*}_{=0},
    \]
    Consequently, \(\norm{f}_* = \norm{f-\model}_*\) for all \(\model \in \modelSpace\) and 
    in particular
    \[
        \norm{f-\linProj f}_*
        = \norm{f}_*
        = \inf_{\model \in \modelSpace} \norm{f-\model}_*
        = \error_\modelSpace^*[f].
        \qedhere
    \]
\end{proof}

\paragraph*{Proof of Theorem \ref{thm: Lp approximation is hopeless}.}
    If \(\norm{\model}_{L^p(\mu)} = 0\) for all \(\model \in \modelSpace\), then this is Lemma \ref{lem: model space in equivalence class of zero}.
    For \(p=q\) the other case follows directly from Lemma \ref{lem: Lp operator norm of regression projections}
    and \eqref{eq: lebesgue lower bound} in Lemma \ref{lem: lebesgue}  and the measure \(\mu\) only needs to be
    finite on bounded measurable sets.
    In general, if there exists \(\model \in \modelSpace\) with \(\norm{\model}_{L^p(\mu)} > 0\), then by
    Lemma \ref{lem: Lp operator norm of regression projections}
    \[
        \opnorm{\linProj}_{L^q(\mu)}^{L^p(\mu)}
        = \sup_{f\in \funSpace} \frac{\norm{\linProj f}_{L^p(\mu)}}{\norm{f}_{L^q(\mu)}} = \infty.
    \]
    That is for any \(n \in \nat\) there exists \(f_n \in \funSpace\) such that
    \[
        \norm{\linProj f_n}_{L^p(\mu)} \ge n
        \quad \text{and} \quad
        \norm{f_n}_{L^q(\mu)} \le 1.
    \]
\begin{casebycase}
    \item \(p \le q\): Then by Lemma \ref{lem: Lp inclusion}
    we have
    \[
        -\norm{f_n}_{L^p(\mu)} \ge -\mu(\domain)^{\frac1p - \frac1q}\norm{f_n}_{L^q(\mu)}
    \]
    and consequently
    \begin{align*}
        \norm{\linProj f_n - f_n}_{L^p(\mu)}
        &\ge \norm{\linProj f_n }_{L^p(\mu)} - \norm{f_n}_{L^p(\mu)}
        \\
        &\ge \norm{\linProj f_n }_{L^p(\mu)} - \mu(\domain)^{\frac1p - \frac1q}\norm{f_n}_{L^q(\mu)} \ge n-1
        \\
        &\ge (n - \mu(\domain)^{\frac1p - \frac1q})\norm{f_n}_{L^q(\mu)}
        \\
        &\ge \underbrace{(n - \mu(\domain)^{\frac1p - \frac1q})}_{\to \infty}\error_\modelSpace^{L^q(\mu)}[f_n],
    \end{align*}
    where we used in the last inequality that \(\error_\modelSpace^{L^q(\mu)}[f_n] \le \norm{f_n-0}_{L^q(\mu)}\) since \(0\in \modelSpace\).
    This implies the claim
    \[
        \inf_{c>0} \set[\Big]{\forall f\in \funSpace: \norm{\linProj f - f}_{L^p(\mu)} \le c\, \error_\modelSpace^{L^q(\mu)}[f]}
        =\infty.
    \]
    \item \(p > q\) and let \(c\) be such that for all \(f\in \funSpace\)
    \[
        \norm{\linProj f - f}_{L^p(\mu)} \le c\, \error_\modelSpace^{L^q(\mu)}[f].
    \]
    Then by Lemma \ref{lem: Lp inclusion} we have for all \(f\in \funSpace\)
    \[
        \norm{\linProj f - f}_{L^q(\mu)}
        \le \mu(\domain)^{\frac1q - \frac1p}\norm{\linProj f - f}_{L^p(\mu)}
        \le c \mu(\domain)^{\frac1q - \frac1p} \error_\modelSpace^{L^q(\mu)}[f].
    \]
    But by the previous case this implies \(c \mu(\domain)^{\frac1q - \frac1p} = \infty\) and thus \(c = \infty\).
    \qed
\end{casebycase}

\subsubsection{Proof of Theorem~\ref{thm: mixed norm bound}: Mixed norm bound}

    We observe that \(\linProj\) is an orthogonal projection with respect to
    \(\norm{\cdot}_{\Omega_n}\). For any \(\model\in \modelSpace\) we therefore have
    \begin{equation}
        \label{eq: orthogonal projection Pythagorean} 
        \norm{f- \model}_{\Omega_n}^2
        = \norm{f - \linProj f}_{\Omega_n}^2
        + \norm{\linProj f - \model}_{\Omega_n}^2,
    \end{equation}
    since \(f - \linProj f \perp \modelSpace\) and \(\linProj f - \model \in \modelSpace\).
    This implies for any \(\model\in \modelSpace\) that
    \begin{align}
        \nonumber
        \norm{\linProj f - \model}_{\Omega_n}^2
        \overset{\eqref{eq: orthogonal projection Pythagorean}}\le \norm{f - \model}_{\Omega_n}^2
        \overset{\mathrm{def.}}&= \frac1n\sum_{i,j=1}^n (\Omega_n)_{ij} (f(x_i) - \model(x_i))(f(x_j) - \model(x_j))
        \\
        \label{eq: Omega norm bound by infinity norm}
        &\le \frac1n\sum_{i,j=1}^n \frac{\abs{(\Omega_n)_{ij}}}{\weight(x_i)\weight(x_j)}  \norm{f-\model}_\weight^2
    \end{align}
    Since \(\norm{\cdot}_{L^p(\mu)} \le \sqrt{\gamma} \norm{\cdot}_{\Omega_n}\) only holds on
    \(\modelSpace\) we cannot directly apply it to \(\linProj f -f\), but we
    may apply this assumption to \(\linProj f - \model\). With
    the above we thus have for any \(\model\in \modelSpace\)
    \begin{align*}
        \norm{\linProj f - f}_{L^p(\mu)}
        \overset{\Delta}&\le 
        \underbrace{\norm{\linProj f - \model}_{L^p(\mu)}}_{
            \le \sqrt{\gamma} \norm{\linProj f - \model}_{\Omega_n}
        }
        + \underbrace{\norm{\model - f}_{L^p(\mu)}}_{
            \le \norm[\big]{\tfrac1\weight}_{L^p(\mu)} \mathrlap{\norm{\model-f}_\weight \quad (\mathrm{Lem.~\ref{lem: Lp inclusion}})}
        }
        \\
        \overset{\eqref{eq: Omega norm bound by infinity norm}}&\le
        \Biggl(\norm[\big]{\tfrac1\weight}_{L^p(\mu)}  + \sqrt{\gamma} \sqrt{\frac1n\sum_{i,j=1}^n \frac{\abs{(\Omega_n)_{ij}}}{\weight(x_i)\weight(x_j)}}\Biggr) \norm{f - \model}_\weight
    \end{align*}
    Minimizing this upper bound over \(\model\in \modelSpace\) yields the desired claim.
    \qed

\subsection{Noise bounds}

\begin{lemma}[Bounds on the coefficients of the weighted regression of the noise]
    \label{lem: bounds on beta}
    Let \(\beta(x) \coloneq \weight(x)A^\conT\ell(x)\), then the weighted regression of the noise at \(x\) is a linear combination of the independent noise variables \(\noiseIndep_i\)
   \[
        S_n \coloneq \weight(x)\regOp(\noise)(x)
        = \sum_{i=1}^n \overline{\beta_i(x)} \noiseIndep_i.
   \]
   Moreover the coefficients \(\beta_i(x)\) satisfy the following bounds for any \(p\ge 2\)
   \begin{equation}
        \label{eq: bounds on beta}
        \abs{\beta(x)}_p \le \abs{\beta(x)}_2
        \le \frac1{\sqrt{n}}\sqrt{\opnorm{\Gram_*^{-1}}_2\opnorm{\Omega_n^{\frac12}\Sigma \Omega_n^{\frac12}}_2}\, \lever_\modelSpace^*(x)\weight(x) \eqqcolon c(x).
   \end{equation}
\end{lemma}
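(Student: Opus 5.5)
The plan is a direct computation from the definition of the Lagrange function (Def.~\ref{def: lagrange function}), followed by a quadratic-form bound that uses the explicit GLS representation of \(\ell(x)\) in Example~\ref{ex: lagrange function of least squares}. For the representation, I use \(\regOp(\noise)(x) = \ell(x)^\conT\noise = \ell(x)^\conT A\noiseIndep\). Writing \(\beta(x) = \weight(x)A^\conT\ell(x)\) gives \(\overline{\beta(x)}^\transpose = \weight(x)\,\ell(x)^\conT A\), using \(\overline{A^\conT}^\transpose = A\) and that \(\weight(x)\) is real. Hence \(S_n = \sum_i \overline{\beta_i(x)}\noiseIndep_i\) directly.

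For the bounds, the first inequality \(\abs{\beta}_p\le\abs{\beta}_2\) for \(p\ge2\) is the standard monotonicity of \(\ell^p\)-norms on \(\field^n\). For the second, I compute
\[
\abs{\beta(x)}_2^2 = \weight(x)^2\,\ell(x)^\conT \Sigma\,\ell(x).
\]
I choose an orthonormal basis \(\mBasisVec\) with respect to \(\scp{\cdot}{\cdot}_*\) and the associated invertible Gram matrix \(\Gram_*\). By Example~\ref{ex: lagrange function of least squares}, \(\ell(x) = \frac1n\Omega_n\modelVander\Gram_*^{-1}\overline{\mBasisVec(x)}\). The representation \eqref{eq: lagrange function least squares} is basis independent, since \(\ell\) is basis free, so this choice is allowed. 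Setting \(v\coloneq \Omega_n^{\frac12}\modelVander\Gram_*^{-1}\overline{\mBasisVec(x)}\), I get
\[
\ell(x)^\conT\Sigma\ell(x) = \tfrac1{n^2} v^\conT \Omega_n^{\frac12}\Sigma\Omega_n^{\frac12} v \le \tfrac1{n^2}\opnorm{\Omega_n^{\frac12}\Sigma\Omega_n^{\frac12}}_2 \abs{v}_2^2.
\]
As in the proof of Theorem~\ref{thm: L2 bounds on Lebesgue function}, \(\modelVander^\conT\Omega_n\modelVander = n\Gram_*\) gives
\[
\abs{v}_2^2 = n\,\mBasisVec(x)^\transpose\Gram_*^{-1}\overline{\mBasisVec(x)} \le n\opnorm{\Gram_*^{-1}}_2\abs{\mBasisVec(x)}_2^2 = n\opnorm{\Gram_*^{-1}}_2\,\lever_\modelSpace^*(x)^2,
\]
where \(\abs{\mBasisVec(x)}_2^2 = \reprK_\modelSpace^*(x,x)\) by Lemma~\ref{lem: reproducing kernel}. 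Combining these yields \(\abs{\beta(x)}_2^2 \le \frac1n\opnorm{\Gram_*^{-1}}_2\opnorm{\Omega_n^{\frac12}\Sigma\Omega_n^{\frac12}}_2\lever_\modelSpace^*(x)^2\weight(x)^2\), and taking square roots gives the claim.

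The main obstacle is bookkeeping with conjugates and transposes: the paper's convention defines \((\Gram_*)_{kl}\) with a conjugate, and I have to check that \(\frac1n\modelVander^\conT\Omega_n\modelVander\) equals \(\Gram_*\) or its transpose. Either way, \(\Gram_*\) is Hermitian positive definite, so the spectral norm of the inverse is the same. I also need \(\Omega_n^{\frac12}\) to be well defined, which holds because \(\Omega_n\) is positive semi-definite and Hermitian. None of these points is conceptually difficult.
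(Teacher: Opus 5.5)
Your proof is correct and follows the same skeleton as the paper's: you pull out \(\opnorm{\Omega_n^{\frac12}\Sigma\Omega_n^{\frac12}}_2\) from \(\weight(x)^2\ell(x)^\conT\Sigma\ell(x)\) and then bound the remaining quadratic form. The paper instead uses the \(\Omega_n\)-reproducing kernel representation of \(\ell\) (Thm.~\ref{thm: reproducing kernel representation}) to identify that form as \(\lever_\modelSpace^{\Omega_n}(x)^2\), and then applies the comparison \(\lever_\modelSpace^{\Omega_n}\le\sqrt{\opnorm{\Gram_*^{-1}}_2}\,\lever_\modelSpace^*\) from Thm.~\ref{thm: leverage function bound}; you do both steps at once in coordinates of a \(*\)-orthonormal basis, and your quantity \(\mBasisVec(x)^\transpose\Gram_*^{-1}\overline{\mBasisVec(x)}\) is exactly \(\lever_\modelSpace^{\Omega_n}(x)^2\), so the two computations coincide.
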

    Observe that \(c(x)\) is the factor in the pointwise error bound in the statement of the theorem.
    Also note that the first bound on \(\abs{\beta}_2\) is very sharp and an equality for
    \(\Omega_n\Sigma = \identity\). And the second bound is sharp for a well conditioned Gram matrix.

\begin{proof}
    By definition of the Lagrange function \(\ell\)
    our random variable of interest is of the form
    \begin{equation}
        S_n\coloneq \weight(x)\regOp(\noise)(x)
        = \weight(x)\ell(x)^\conT \noise
        = \underbrace{\weight(x)\ell(x)^\conT A}_{= \beta(x)^\conT} \noiseIndep
        = \sum_{i=1}^n \overline{\beta_i} \noiseIndep_i.
    \end{equation}
    We further have \(\abs{\beta(x)}_2 = \weight(x)\abs{\ell(x)}_\Sigma\) 
    by homogeneity of the norm and by the kernel representation of
    \(\ell(x)\) (Theorem \ref{thm: reproducing kernel representation})
    \[
        \abs{\ell(x)}_\Sigma^2
        = \frac1{n^2}\begin{pmatrix}
            \reprK_\modelSpace^{\Omega_n}(x, x_1)
            \\
            \vdots
            \\
            \reprK_\modelSpace^{\Omega_n}(x, x_n)
        \end{pmatrix}^\conT
        \Omega_n \Sigma \Omega_n \begin{pmatrix}
            \reprK_\modelSpace^{\Omega_n}(x, x_1)
            \\
            \vdots
            \\
            \reprK_\modelSpace^{\Omega_n}(x, x_n)
        \end{pmatrix}
        \le \frac1{n}\opnorm{\Omega_n^{\frac12} \Sigma \Omega_n^{\frac12}}_2 \abs[\Bigg]{
            \begin{pmatrix}
                \reprK_\modelSpace^{\Omega_n}(x, x_1)
                \\
                \vdots
                \\
                \reprK_\modelSpace^{\Omega_n}(x, x_n)
            \end{pmatrix}
        }_{\Omega_n}^2.
    \]
    Using the notation \(\reprK_\modelSpace^{\Omega_n}(x, x_{[1\inter n]}) = (\reprK_\modelSpace^{\Omega_n}(x, x_1), \dots, \reprK_\modelSpace^{\Omega_n}(x,x_n))^\transpose\)
    and an orthonormal basis \(\mBasis_1, \dots, \mBasis_m\) of \(\modelSpace\) with respect to \(\scp{\cdot}{\cdot}_{\Omega_n}\) we have
    \begin{align*}
        \abs{\reprK_\modelSpace^{\Omega_n}(x,x_{[1\inter n]})}_{\Omega_n}^2
        &= \frac1n\sum_{i,j=1}^n (\Omega_n)_{ij} \reprK_\modelSpace^{\Omega_n}(x, x_i) \overline{\reprK_\modelSpace^{\Omega_n}(x, x_j)}
        \\
        &= \frac1n\sum_{i,j=1}^n (\Omega_n)_{ij} \sum_{k=1}^m \mBasis_k(x)\overline{\mBasis_k(x_i)}
        \sum_{l=1}^m \overline{\mBasis_l(x)} \mBasis_l(x_j)
        \\
        &= \sum_{k,l=1}^m \mBasis_k(x)\overline{\mBasis_l(x)} \scp{\mBasis_l}{\mBasis_k}_{\Omega_n}
        =\sum_{k=1}^m \mBasis_k(x)\overline{\mBasis_k(x)}
        = \reprK_\modelSpace^{\Omega_n}(x,x).
    \end{align*}
    Put together this implies the claimed bound
    \begin{align*}
        \abs{\beta(x)}_2
        &= \weight(x)\abs{\ell(x)}_\Sigma
        \\
        \nonumber
        &\le \sqrt{\tfrac1n \opnorm{\Omega_n^{\frac12}\Sigma \Omega_n^{\frac12}}_2}\, \lever_\modelSpace^{\Omega_n}(x)\weight(x)
        \le  \tfrac1n\sqrt{\opnorm{\Gram_*^{-1}}_2\opnorm{\Omega_n^{\frac12}\Sigma \Omega_n^{\frac12}}_2}\, \lever_\modelSpace^*(x)\weight(x)\eqcolon c(x),
    \end{align*}
    where we recall \(\lever_\modelSpace^{\Omega_n}(x) \le \sqrt{\opnorm{\Gram_*^{-1}}_2}\lever_\modelSpace^*(x)\) from Lemma \ref{thm: leverage function bound}
    and the \(\ell^p\) norm inequality \(\abs{\beta}_p \le \abs{\beta}_2\) for \(p\ge 2\) (Lemma \ref{lem: Lp inclusion}).
\end{proof}

\begin{lemma}\label{lem: bound on operator norm of Omega Sigma}
    \(\opnorm{\Omega_n^{\frac12}\Sigma \Omega_n^{\frac12}}_2 \le \opnorm{\Omega_n\Sigma}_2\)
    for any positive semi-definite \(\Sigma\) and \(\Omega_n\).
\end{lemma}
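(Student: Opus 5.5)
The plan is to reduce the claim to a similarity transformation and an elementary spectral comparison. Write \(B \coloneq \Omega_n^{\frac12}\Sigma \Omega_n^{\frac12}\), which is positive semi-definite (Hermitian), so \(\opnorm{B}_2\) equals its spectral radius \(\rho(B)\), the largest eigenvalue. The key observation is that \(\Omega_n\Sigma\) and \(B\) share their nonzero eigenvalues. Indeed, with \(X \coloneq \Omega_n^{\frac12}\) and \(Y \coloneq \Sigma\Omega_n^{\frac12}\) we have \(XY = \Omega_n^{\frac12}\Sigma\Omega_n^{\frac12} = B\) and \(YX = \Sigma \Omega_n\). It is a standard fact that \(XY\) and \(YX\) have the same nonzero spectrum. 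Hence \(\rho(B) = \rho(\Sigma\Omega_n)\).

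Next we relate \(\Sigma\Omega_n\) to \(\Omega_n\Sigma\). Since both matrices are Hermitian, \(\Omega_n\Sigma = (\Sigma\Omega_n)^\conT\). Taking the conjugate transpose conjugates the eigenvalues and preserves both the spectral radius and the spectral norm, so \(\rho(\Sigma\Omega_n) = \rho(\Omega_n\Sigma)\).

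Finally, the spectral radius of any square matrix is bounded by any induced operator norm, in particular \(\rho(\Omega_n\Sigma)\le \opnorm{\Omega_n\Sigma}_2\). To see this, take an eigenpair \(Mv=\lambda v\) with \(v\neq0\); then \(\abs{\lambda}\abs{v}_2=\abs{Mv}_2\le\opnorm{M}_2\abs{v}_2\). Chaining the three steps gives
\[
\opnorm{\Omega_n^{\frac12}\Sigma\Omega_n^{\frac12}}_2=\rho(B)=\rho(\Omega_n\Sigma)\le\opnorm{\Omega_n\Sigma}_2 .
\]

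The only step requiring care is the equality of the nonzero spectra of \(XY\) and \(YX\). If needed, it can be proved directly: if \(XYv=\lambda v\) with \(\lambda\neq0\), then \(Yv\neq0\) and \(YX(Yv)=\lambda Yv\). This also covers the degenerate case \(B=0\), where the claim is trivial. The remaining steps, namely positive semi-definiteness of \(B\) and that the spectral norm of a Hermitian positive semi-definite matrix equals its largest eigenvalue, are routine.
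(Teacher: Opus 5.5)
Your proof is correct and follows the paper's argument. Both use three facts: $XY$ and $YX$ share their nonzero eigenvalues, the positive semi-definite matrix $\Omega_n^{\frac12}\Sigma\Omega_n^{\frac12}$ has spectral norm equal to its largest eigenvalue, and the spectral radius is bounded by the spectral norm. The only difference is cosmetic: the paper factors as $(\Omega_n^{\frac12}\Sigma)\cdot\Omega_n^{\frac12}$, so the reversed product is $\Omega_n\Sigma$ directly and you would not need the conjugate-transpose step from $\Sigma\Omega_n$.
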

\begin{proof}
    Let \(B=\Omega_n^{\frac12}\in \field^{n\times n}\) and \(A=B\Sigma \in \field^{n\times n}\). Then the eigenvalues of
    \(AB=\Omega_n^{\frac12} \Sigma \Omega_n^{\frac12}\) and \(BA=\Omega_n \Sigma\)
    coincide by \citet[Thm.~1.3.22]{hornMatrixAnalysis2017} and since
    \(\Omega_n^{\frac12} \Sigma \Omega_n^{\frac12}\) is positive semi-definite,
    its largest eigenvalue coincides with its spectral (operator)-norm such that
    \[
        \opnorm{\Omega_n^{\frac12} \Sigma \Omega_n^{\frac12}}_2
        = \lambda_{\max}(\Omega_n^{\frac12} \Sigma \Omega_n^{\frac12})
        = \lambda_{\max}(\Omega_n\Sigma)
        \le \opnorm{\Omega_n\Sigma}_2,
    \]
    where the last inequality uses \citet[Thm.~5.6.9]{hornMatrixAnalysis2017}.
\end{proof}

\subsubsection{Proof of Theorem \ref{thm: pointwise concentration}: Pointwise concentration}
\label{sec: proof of pointwise concentration}

We will now prove all the concentration bounds one by one using Lemma \ref{lem: bounds on beta}.
We will start with the easiest case of finite second moments.

    \begin{enumerate}[wide=0pt]
        \item[\ref{it: (concentration) finite second moment}] \textbf{Finite second moment.}
        If \(\E{\abs{\noiseIndep_i}^2} \le \sigma^2\), then independence and
        centeredness give
        \[
            \E{\abs{S_n}^2}
            = \sum_{i=1}^n \abs{\beta_i(x)}^2\E{\abs{\noiseIndep_i}^2}
            \le \sigma^2\abs{\beta(x)}_2^2
            \overset{\text{Lem.~\ref{lem: bounds on beta}}}\le \sigma^2 c(x)^2.
        \]
        Markov's inequality applied to \(\abs{S_n}^2\) then yields the first
        bound by \(\prob{\abs{S_n} \ge \varepsilon c(x)} \le
        (\varepsilon c(x))^{-2}\E{\abs{S_n}^2}\).
        
        \item[\ref{it: (concentration) subgaussian}] \textbf{Sub-Gaussian.}
        Here and in the sub-exponential and sub-Weibull cases below we assume
        \(\field=\real\), as required by the scalar concentration inequalities
        being applied.
        Assume the \(\noiseIndep_i\) are sub-Gaussian with variance proxy \(\sigma^2\).
        Then \(\beta_i \noiseIndep_i\) is sub-Gaussian with variance proxy \(\sigma^2 \beta_i^2\).
        And since the variance proxy of a sum of independent sub-Gaussian random variables is the
        sum of variance proxies, \(S_n = \beta^\transpose\noiseIndep\) is sub-Gaussian 
        with variance proxy \(\sigma^2\abs{\beta}_2^2\). Then by the standard Hoeffding concentration inequality for
        sub-Gaussian random variables, e.g.\ \citep[(2.9)]{wainwrightHighDimensionalStatisticsNonAsymptotic2019} or \citep[Thm.~2.6.3]{vershyninHighDimensionalProbabilityIntroduction2018}
        \[
            \prob{\abs{S_n} \ge \epsilon}
            \le 2\exp\bigl(- \tfrac{\epsilon^2}{2\sigma^2\abs{\beta(x)}_2^2}\bigr)
            \overset{\text{Lem.~\ref{lem: bounds on beta}}}\le
            2\exp\bigl(- \tfrac{\epsilon^2}{2\sigma^2 c(x)^2}\bigr).
        \]
        The choice \(\epsilon = \varepsilon c(x)\) then yields the claimed bound.
    
        \item[\ref{it: (concentration) subexponential}] \textbf{Sub-exponential.}
        Assume the \(\noiseIndep_i\) are sub-exponential with parameters \((\nu, \alpha)\) \citep[Def.~2.7]{wainwrightHighDimensionalStatisticsNonAsymptotic2019}.
        Then \(\beta(x)^\transpose \noiseIndep\) is sub-exponential with parameters \((\nu\abs{\beta(x)}_2, \alpha\abs{\beta(x)}_\infty)\). By
        Bernstein's inequality
        \citep[Prop.~2.9]{wainwrightHighDimensionalStatisticsNonAsymptotic2019} we thus have
        \begin{align*}
            \prob{\abs{S_n} \ge \epsilon}
            &\le \begin{cases}
                2\exp\bigl(-\tfrac{\epsilon^2}{2\nu^2\abs{\beta(x)}_2^2}\bigr)
                & 0\le \epsilon \le \frac{\nu^2\abs{\beta(x)}_2^2}{\alpha\abs{\beta(x)}_\infty}
                \\
                2\exp\bigl(-\tfrac{\epsilon}{2\alpha\abs{\beta(x)}_\infty}\bigr)
                & \epsilon > \frac{\nu^2\abs{\beta(x)}_2^2}{\alpha\abs{\beta(x)}_\infty}
            \end{cases}
            \\
            &= 2\exp\Bigl(-\min\set[\Big]{\frac{\epsilon}{\nu^2\abs{\beta(x)}_2^2}, \frac1{\alpha\abs{\beta(x)}_\infty}}\frac\epsilon2\Bigr)
            \\
            \overset{\text{Lem.~\ref{lem: bounds on beta}}}&\le 
            2\exp\Bigl(-\min\set[\Big]{\frac{\epsilon}{\nu^2c(x)^2}, \frac{1}{\alpha c(x)}}\frac\epsilon2\Bigr).
        \end{align*}
        The choice \(\epsilon = \varepsilon c(x)\) yields the claimed bound again.

        \item[\ref{it: (concentration) sub-Weibull}] \textbf{Sub-Weibull.}
        The goal is to apply the concentration inequality Theorem 3.1 of \citep{kuchibhotlaMovingSubGaussianityHighdimensional2022}.
        In their notation we choose \(X_i = \noiseIndep_i\) and \(a_i = \beta_i(x)\). 
        Their Theorem 3.1 (3.1) with \(b\coloneq (a_1\norm{X_1}_{\psi_\alpha}, \dots,
        a_n\norm{X_n}_{\psi_\alpha}) = (\beta_1(x)\norm{\noiseIndep_1}_{\psi_\alpha}, \dots, \beta_n(x)\norm{\noiseIndep_n}_{\psi_\alpha})\) then implies
        \begin{equation}
            \label{eq: sub-Weibull concentration} 
            \prob[\Big]{\abs{S_n} \ge 2e C(\alpha) \abs{b}_2 \sqrt{t} + 2eL_n^*(\alpha) t^{\frac1\alpha} \abs{b}_{p(\alpha)}} 
            \le 2e^{-t} \qquad \text{for all } t \ge 0,
        \end{equation}
        with \(p(\alpha) = \infty\) for \(\alpha \le 1\) and \(p(\alpha) = \alpha/(\alpha-1)\) for \(\alpha > 1\)
        and 
        \begin{align*}
            C(\alpha) &\coloneq \max\set{\sqrt{2}, 2^{\frac1\alpha}}\begin{cases}
                \sqrt{8} e^3 (2\pi)^{\frac14} e^{\frac1{24}} (\tfrac{e^{2/e}}{\alpha})^{\frac1\alpha} & \alpha < 1
                \\
                4e + 2(\ln 2)^{\frac1\alpha} & 1 \le \alpha
            \end{cases}
            \\
            L_n^*(\alpha) &\coloneq C(\alpha)\tfrac{4^{1/\alpha}}{\sqrt{2}}
            \begin{cases}
                1 & \alpha \le 1
                \\
                \tfrac{4e}{C(\alpha)} & 1 < \alpha
            \end{cases}
            \\
            &\le \tfrac{4^{1/\alpha}}{\sqrt{2}}C(\alpha) \qquad (C(\alpha) \ge 4e \text{ for } 1 < \alpha).
        \end{align*} 
        By \(\norm{\noiseIndep_i}_{\psi_\alpha} \le K\) and the definition of \(b\) we 
        thus have for \(\alpha \le 2\) such that \(p(\alpha) \ge 2\)
        \begin{equation}
            \label{eq: sub-Weibull bound on b}    
            \abs{b}_{p(\alpha)} \le \abs{b}_2 \le K \abs{\beta(x)}_2
            \overset{\text{Lem.~\ref{lem: bounds on beta}}}\le Kc(x).
        \end{equation}
        This implies
        \begin{align*}
            2e C(\alpha) \abs{b}_2 \sqrt{t} + 2eL_n^*(\alpha) t^{\frac1\alpha} \abs{b}_{p(\alpha)}
            &\le 2e C(\alpha) Kc(x) \bigl(\sqrt{t} + \tfrac{4^{1/\alpha}}{\sqrt{2}} t^{\frac1\alpha}\bigr)
            \\
            &\le \underbrace{2^{\frac2\alpha+\frac12}e C(\alpha)}_{\eqcolon C_2(\alpha)} Kc(x) \bigl(\sqrt{t} + t^{\frac1\alpha}\bigr)
            \overset{(*)}\le c(x) \varepsilon,
        \end{align*}
        where the inequality \((*)\) follows from the choice
        \(u\coloneq\frac{\varepsilon}{C_2(\alpha)K}\) and \(t \coloneq \min\set{(\frac{u}{2})^2, (\frac{u}{2})^\alpha}\)
        such that \(\sqrt{t} + t^{\frac1\alpha}\le u\).
        Put together with \eqref{eq: sub-Weibull concentration} this yields
        \begin{align*}
            \prob{\abs{S_n} \ge \varepsilon c(x)}
            &\le 2\exp(-\min\set{(\tfrac{u}{2})^2, (\tfrac{u}{2})^\alpha})
            \\
            &= 2\exp\Bigl(-\min\set[\Big]{\bigl(\tfrac{\varepsilon}{2C_2(\alpha)K}\bigr)^2, \bigl(\tfrac{\varepsilon}{2C_2(\alpha) K}\bigr)^\alpha}\Bigr)
            \\
            &\le 2\exp\Bigl(-\min\set{\tfrac1{(2C_2(\alpha))^{2}}, \tfrac1{(2C_2(\alpha))^{\alpha}}}\min\set[\big]{\bigl(\tfrac{\varepsilon}{K}\bigr)^2, \bigl(\tfrac{\varepsilon}{K}\bigr)^\alpha}\Bigr).
        \end{align*}
        With \(c_\alpha \coloneq \min\set{\tfrac1{(2C_2(\alpha))^{2}}, \tfrac1{(2C_2(\alpha))^{\alpha}}}\) we have the claim,
        where we recall
        \begin{equation}
            \label{eq: sub-Weibull constant}
            C_2(\alpha)
            = 2^{\frac3\alpha+\frac12}e \begin{cases}
                \sqrt{8} e^3 (2\pi)^{\frac14} e^{\frac1{24}} (\tfrac{e^{2/e}}{\alpha})^{\frac1\alpha} & \alpha < 1
                \\
                4e + 2(\ln 2)^{\frac1\alpha} & \alpha \in [1,2].
            \end{cases}
        \end{equation}

        \item[\ref{it: (concentration) finite p-th moment}] \textbf{Finite \(p\)-th moment.}
        We want to apply the Rosenthal inequality to
        the sum of indpendent random variables  \(X_i \coloneq \overline{\beta_i(x)}\noiseIndep_i\)
        to obtain 
        \begin{equation}
            \label{eq: Rosenthal bound}    
            \norm[\Big]{\sum_{i=1}^n X_i}_{L^p(\pSym)}
            \le C_{p, \field} \max\set[\Big]{
                \Bigl(\sum_{i=1}^n \norm{X_i}_{L^2(\pSym)}^2\Bigr)^{1/2},
                \Bigl(\sum_{i=1}^n \norm{X_i}_{L^p(\pSym)}^p\Bigr)^{1/p}}.
        \end{equation}
        \begin{casebycase}
            \item \textbf{\(\field=\real\):}
            Our \(X_i\) are only centered and not symmetrized. We 
            therefore apply Theorem 4.1 from
            \citet{johnsonBestConstantsMoment1985} together
            with their Remark on symmetrization below the theorem to get
            the claimed bound \eqref{eq: Rosenthal bound} with \(C_{p, \real} = \frac{14.7p}{\ln p}\) for \(p\ge 2\).
            Note that we also use
            \[
                \norm[\big]{\sum_{i=1}^n X_i}_{L^2(\pSym)} = \Bigl(\sum_{i=1}^n \norm{X_i}_{L^2(\pSym)}^2\Bigr)^{1/2},
            \]
            which follows from independence and centeredness of the \(X_i\).

            \item\textbf{\(\field=\complex\):}
            The goal is to apply the same bound separately to the real and imaginary
            parts of the independent centered variables \(X_i\).
            For this we use Lemma \ref{lem: convexity of p/2 power}
            to obtain
            \[
                \abs{X_i}^p\le 2^{p/2-1}(\abs{\Re X_i}^p+ \abs{\Im X_i}^p).
            \]
            This implies
            \begin{align*}
                \norm[\Big]{\sum_{i=1}^n X_i}_{L^p(\pSym)}^p
                &\le 2^{p/2-1}\Bigl(\norm[\Big]{\sum_{i=1}^n \Re X_i}_{L^p(\pSym)}^p
                + \norm[\Big]{\sum_{i=1}^n \Im X_i}_{L^p(\pSym)}^p\Bigr)
                \\
                &\le 2^{p/2} \Bigl(\frac{14.7p}{\ln p}\Bigr)^p \max\set[\Big]{
                    \Bigl(\sum_{i=1}^n \norm{X_i}_{L^2(\pSym)}^2\Bigr)^{p/2},
                    \Bigl(\sum_{i=1}^n \norm{X_i}_{L^p(\pSym)}^p\Bigr)
                }
            \end{align*}
            where we used \(\norm{\Re X_i}_{L^2(\pSym)}^2 \le \norm{X_i}_{L^2(\pSym)}^2\) and \(\norm{\Re X_i}_{L^p(\pSym)}^p \le \norm{X_i}_{L^p(\pSym)}^p\) (and similarly for the imaginary part)
            in the last inequality. Taking the \(p\)-th root and defining \(C_{p, \complex} \coloneq 2^{1/2}C_{p, \real}\) yields \eqref{eq: Rosenthal bound}.
        \end{casebycase}
        What is left to do is to bound the terms in the Rosenthal bound \eqref{eq: Rosenthal bound}.
        We have
        \[
            \sum_{i=1}^n \norm{X_i}_{L^2(\pSym)}^2
            = \sum_{i=1}^n \E{\abs{X_i}^2}
            = \sum_{i=1}^n \abs{\beta_i(x)}^2 \E{\abs{\noiseIndep_i}^2}
            \le \sigma^2 \abs{\beta(x)}_2^2.
        \]
        And, using \(\abs{x}_p \le \abs{x}_2\) for \(p\ge 2\), we have for the \(p\)-th moments
        \[
            \sum_{i=1}^n \norm{X_i}_{L^p(\pSym)}^p
            = \sum_{i=1}^n \E{\abs{X_i}^p}
            \le \kappa^p \abs{\beta(x)}_p^p \le \kappa^p \abs{\beta(x)}_2^p.
        \]
        Inserted into \eqref{eq: Rosenthal bound} this yields for \(\sum_{i=1}^n X_i = \beta(x)^\conT \noiseIndep\)
        \begin{equation}
            \label{eq: Rosenthal bound on noise}
            \norm{\beta(x)^\conT \noiseIndep}_{L^p(\pSym)}
            = \norm[\Big]{\sum_{i=1}^n X_i}_{L^p(\pSym)}
            \le C_{p,\field} \max\set{\kappa, \sigma}\abs{\beta(x)}_2
        \end{equation}
        Lemma \ref{lem: bounds on beta} implies \(\abs{\beta(x)}_2 \le c(x)\) and
        together with Markov's inequality we thereby get the claimed bound
        \[
            \prob{\abs{\beta(x)^\conT \noiseIndep} \ge \varepsilon c(x)}
            \le \frac{\E{\abs{\beta(x)^\conT \noiseIndep}^p}}{\varepsilon^p c(x)^p}
            \le \Bigl(C_{p,\field} \frac{\max\set{\kappa, \sigma}}{\varepsilon}\Bigr)^p.
            \qedhere
        \]
\end{enumerate}

\begin{lemma}\label{lem: convexity of p/2 power}
    For \(a,b \ge 0\) and \(p\ge 2\) we have \((a+b)^{p/2} \le 2^{p/2-1}(a^{p/2}+b^{p/2})\).
\end{lemma}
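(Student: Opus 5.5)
This reduces to a scalar inequality. Set \(x \coloneq a\) and \(y \coloneq b\), and let \(q \coloneq p/2 \ge 1\). The claim becomes \((x+y)^q \le 2^{q-1}(x^q + y^q)\) for all \(x,y\ge 0\), which is the standard power-mean (Jensen) inequality for a convex power. The approach is to use convexity of \(t\mapsto t^q\) on \([0,\infty)\); this function is convex precisely because \(q\ge 1\), i.e.\ \(p\ge 2\).

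First, I apply Jensen's inequality (equivalently, the definition of convexity) at the midpoint:
\[
    \Bigl(\tfrac{x+y}{2}\Bigr)^q \le \tfrac12 x^q + \tfrac12 y^q .
\]
Second, I multiply both sides by \(2^q\). This gives \((x+y)^q \le 2^{q-1}(x^q+y^q)\), and substituting \(q=p/2\) yields exactly the claimed bound \((a+b)^{p/2}\le 2^{p/2-1}(a^{p/2}+b^{p/2})\).

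For completeness I would justify convexity directly. The second derivative of \(t^q\) is \(q(q-1)t^{q-2}\ge 0\) for \(t>0\), and continuity at \(t=0\) extends convexity to the closed half-line. The case \(q=1\) is trivial, since then \(t^q=t\) is linear and the inequality holds with equality.

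None of these steps is a real obstacle. The only point needing care is the endpoint \(t=0\) when \(1<q<2\): there \(t^{q-2}\) blows up, so the second-derivative test applies only on \((0,\infty)\), and convexity on \([0,\infty)\) then follows from continuity. In the application with \(a=(\Re X_i)^2\) and \(b=(\Im X_i)^2\), this gives the pointwise bound \(\abs{X_i}^p\le 2^{p/2-1}(\abs{\Re X_i}^p+\abs{\Im X_i}^p)\) used in the complex Rosenthal step.
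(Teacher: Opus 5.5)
Your proof is correct and is exactly the paper's argument: midpoint convexity of \(t\mapsto t^{p/2}\) for \(p\ge 2\), followed by multiplying both sides by \(2^{p/2}\). Your additional remarks on convexity at the endpoint \(t=0\) and on applying the lemma with \(a=(\Re X_i)^2\), \(b=(\Im X_i)^2\) to get \(\abs{X_i}^p\le 2^{p/2-1}(\abs{\Re X_i}^p+\abs{\Im X_i}^p)\) are also correct.
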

\begin{proof}
    Convexity of \(x\mapsto x^{p/2}\) for \(p\ge 2\) implies
    \(
        \bigl(\frac{a+b}{2}\bigr)^{p/2} \le \frac{a^{p/2}+b^{p/2}}{2}
    \). Multiplying by \(2^{p/2}\) yields the claim.
\end{proof}

\subsubsection{Proof of Lemma \ref{lem: pointwise to uniform conversion}: Pointwise to uniform conversion}

This proof is based on a proof by \citet{matsudaPolynomialApproximationNoisy2025} specialized to
Chebyshev polynomials.

\begin{steps}
    \item \textbf{Sup-norm to finite maximum.}
    Let \(\linProj_\star\) be a regression projection with \(m\) distinct regression points \(Z=(z_1, \dots, z_m)\).
    We will show for every \(\model\in \modelSpace\) that
    \begin{equation}
        \label{eq: bound on w norm of regressed noise}
        \norm{\model}_\weight
        \le \lebconst{\weight}{\linProj_\star} \max_{i=1,\dots,m} \abs{\model(z_i)\weight(z_i)},
    \end{equation}
    Since \(\linProj_\star \model = \model\), we have
    using the Lagrange function \(\ell_\star\) of \(\linProj_\star\)
    \[
        \abs{\model(x)} = \abs{\linProj_\star \model(x)} = \abs{\ell_\star(x)^\conT\eval_Z[\model]}
        = \abs[\bigg]{\sum_{i=1}^m \overline{\ell_{\star, i}(x)}\model(z_i)}
        \le \max_{i=1,\dots, m} \abs{\model(z_i)\weight(z_i)} \sum_{i=1}^m \frac{\abs{\ell_{\star, i}(x)}}{\weight(z_i)}.
    \]
    By definition of the Lebesgue function (Def.~\ref{def: lebesgue function and constant}) this implies
    \[
        \abs{\model(x)\weight(x)}
        \le \lebfun{\weight}{\linProj_\star}{x}
        \max_{i=1,\dots, m} \abs{\model(z_i)\weight(z_i)}.
    \]
    Taking the supremum over \(x\in \domain\)  yields \eqref{eq: bound on w norm of regressed noise}
    by definition of the Lebesgue constant.

    \item\label{step: (pointwise to uniform) union bound} \textbf{Union bound.} Since \(\regOp(\noise) \in \modelSpace\)
    we thus have
    \begin{align*}
        \prob[\Big]{\norm{\regOp(\noise)}_\weight > \lebconst{\weight}{\linProj_\star} \epsilon}
        \overset{\eqref{eq: bound on w norm of regressed noise}}&\le \prob[\Big]{\max_{i=1,\dots, m} \abs{\regOp(\noise)(z_i)\weight(z_i)} > \epsilon}
        \\
        \overset{\text{union}}&\le \sum_{i=1}^m \prob[\Big]{\abs{\regOp(\noise)(z_i)\weight(z_i)} > \epsilon}
        \le m \delta.
    \end{align*}
    The last inequality follows from the assumption of the theorem.

    \item \textbf{Limit argument.}
    Let \(c_\star \coloneq \inf_{\linProj_\star}\lebconst{\weight}{\linProj_\star}\). Then
    by the continuity of probability measures we have
    \[
        \prob[\Big]{\norm{\regOp(\noise)}_\weight >  c_\star\epsilon}
        = \prob[\Big]{\bigcup_{c>c_\star}\norm{\regOp(\noise)}_\weight > c \epsilon}
        = \lim_{c \downarrow c_\star}\prob[\Big]{\norm{\regOp(\noise)}_\weight > c \epsilon} 
        \le m\delta,
    \]
    where the last inequality follows from \(c>c_\star\) such that there
    exists a regression \(\linProj_\star\) with \(\lebconst{\weight}{\linProj_\star} < c\) and the bound of \ref{step: (pointwise to uniform) union bound}.
\end{steps}

\subsubsection{Proof of Corollary \ref{cor: uniform noise bound}: Uniform noise bound}

This is a direct consequence of Theorem \ref{thm: pointwise concentration} and Lemma \ref{lem: pointwise to uniform conversion},
we only need to invert \(\delta(\epsilon)\) to obtain \(\epsilon(\delta)\) and replace \(\delta\) by \(\frac{\delta}{m}\)
to obtain the desired bound with probability at least \(1-\delta\). The inversion of \(\delta(\epsilon)\)
in the sub-Gaussian, and finite moment cases is straightforward. In the sub-exponential case it requires a bit more care.
Observe that
\begin{align*}
    &\varepsilon(\delta) \coloneq \max\set[\Big]{\sqrt{2\nu^2\ln(\tfrac{2}{\delta})}, 2\alpha \ln(\tfrac{2}{\delta})}
    \\
    &\implies 2\ln(\tfrac{2}{\delta}) \le \tfrac{\varepsilon(\delta)^2}{\nu^2}
    \quad\text{and}\quad
    2\ln(\tfrac{2}{\delta}) \le \tfrac{\varepsilon(\delta)}{\alpha}
    \\
    &\implies \delta \ge 2\exp(-\min\set{\tfrac{\varepsilon(\delta)^2}{2\nu^2}, \tfrac{\varepsilon(\delta)}{2\alpha}})
    = \delta(\varepsilon(\delta)).
\end{align*}
Consequently, this selection of \(\varepsilon(\delta)\) is sufficient to guarantee the \(\varepsilon(\delta)\)
deviation bound with probability at least \(1-\delta(\varepsilon(\delta)) \ge
1-\delta\). Replacing \(\delta\) by \(\frac{\delta}{m}\) then yields the desired
bound after applying Lemma \ref{lem: pointwise to uniform conversion}. 
In the sub-Weibull case we can improve the constants by opening up the proof of
the sub-Weibull concentration bound. This is because the original form of the
bound matches the \(\varepsilon(\delta)\) form of the bound and not the
\(\delta(\varepsilon)\) form of the bound. Specifically, recall
\(\abs{S_n} = \abs{\weight(x)\regOp(\noise)(x)}\) (Lemma \ref{lem: bounds on beta}). Then
the concentration bound of \eqref{eq: sub-Weibull concentration} together
with \(t=\ln(\frac{2}{\delta})\) and the bound \(\abs{b}_{p(\alpha)} \le \abs{b}_2 \le Kc(x)\) of \eqref{eq: sub-Weibull bound on b} yields
with probability at least \(1-\delta\)
\begin{align*}
    \abs{\weight(x)\regOp(\noise)(x)}
    &\le 2e Kc(x) \Bigl(C(\alpha) \sqrt{\ln(\tfrac{2}{\delta})} + L_n^*(\alpha) \ln(\tfrac{2}{\delta})^{\frac1\alpha}\Bigr),
    \\
    &= K c(x)  \Bigl(\underbrace{2eC(\alpha)}_{\eqcolon C_\alpha} \sqrt{\ln(\tfrac{2}{\delta})} + \underbrace{2eL_n^*(\alpha)}_{\eqcolon C_{\alpha}'} \ln(\tfrac{2}{\delta})^{\frac1\alpha}\Bigr)
\end{align*}
with \(c(x)\) defined in Lemma \ref{lem: bounds on beta} and \(C(\alpha), L_n^*(\alpha)\) defined after \eqref{eq: sub-Weibull concentration}
such that
\begin{equation}
    \label{eq: sub-Weibull constants}
\begin{aligned} 
    C_\alpha &\coloneq 2e\max\set{\sqrt{2}, 2^{\frac1\alpha}}\begin{cases}
        \sqrt{8} e^3 (2\pi)^{\frac14} e^{\frac1{24}} (\tfrac{e^{2/e}}{\alpha})^{\frac1\alpha} & \alpha < 1
        \\
        4e + 2(\ln 2)^{\frac1\alpha} & 1 \le \alpha
    \end{cases} 
    \\
    C_\alpha' &\coloneq C_\alpha\tfrac{4^{1/\alpha}}{\sqrt{2}}
    \begin{cases}
        1 & \alpha \le 1
        \\
        \tfrac{8e^2}{C_\alpha} & 1 < \alpha.
    \end{cases}
\end{aligned}
\end{equation}

\subsubsection{Proof of Theorem \ref{thm: L2 concentration}: \texorpdfstring{\(L^2(\mu)\)}{L2(µ)}-bounds on the noise term}

By definition of \(\regOp\) \eqref{eq: definition least squares reg op} and \(\noise= A\noiseIndep\) we have
\begin{equation}
    \label{eq: reg op noise} 
    \regOp(\noise)
    = \mBasisVec(\cdot)^\transpose B_\mu \noiseIndep
    \qquad \text{with}\qquad
    B_\mu \coloneq \tfrac1n\Gram_\mu^\dagger \modelVander^\conT \Omega_n A \in \field^{m\times n}
\end{equation}
Since the basis \(\mBasisVec = (\mBasis_1, \dots, \mBasis_m)^\transpose\) is
orthonormal with respect to \(\scp{\cdot}{\cdot}_\mu\) this implies 
\begin{equation}
    \label{eq: parseval identity}    
    \norm{\regOp(\noise)}_\mu^2
    = \norm{\mBasisVec(\cdot)^\transpose B_\mu \noiseIndep}_\mu^2
    \overset{\text{orth.}}= \abs{B_\mu \noiseIndep}_2^2
    = \tr(B_\mu \noiseIndep\noiseIndep^\conT B_\mu^\conT).
\end{equation}
The concentration bounds for \(\norm{\regOp(\noise)}_\mu^2\) thereby follow from
concentration bounds for the quadratic form \(\abs{B_\mu \noiseIndep}_2^2\),
which yield the following (sharper) bounds.

\begin{mdframed}[style=default]
\begin{theorem}[Sharper \(L^2\)-bounds on the noise term]
    \label{thm: L2 concentration sharper}
    Assume the setting of Theorem~\ref{thm: L2 concentration} and let \(H_\mu \coloneq B_\mu B_\mu^\conT \) with \(B_\mu\) as in \eqref{eq: reg op noise}.
    Then we have the following sharper bounds on the noise term:
    \begin{center}
    \makebox[\textwidth][c]{
    \begin{NiceTabular}{l m{6.15cm} m{5.4cm}}[
        tabularnote={\footnotesize
        \(C_\alpha>0\) is a constant that only depends on \(\alpha\) and \(C_{p,\real}=\frac{14.7p}{\ln p}\), \(C_{p,\complex}=\sqrt2\,C_{p,\real}\).
    }]
        \toprule
        & Assumption (decreasing strength \(\downarrow\)) & With probability at least \(1-\delta\)\newline \(\norm{\regOp(\noise)}_\mu \le \dots\)
        \\
        \midrule
        \newtag{\normalfont{(i)}}{it: (L2 concentration sharper) subgaussian}\tabularnote{Assume \(\field=\real\).\label{footnote: K=R, L2 stronger}}\!
        & \(\noiseIndep_i \sim \subG{\sigma^2}\)
        & \(\sigma\Bigl(\sqrt{\tr(H_\mu)} + \sqrt{2\opnorm{H_\mu}_2 \ln(\tfrac1\delta)}\Bigr)\)
        \\[1ex]
        \newtag{\normalfont{(ii)}}{it: (L2 concentration sharper) sub-Weibull}\ref*{footnote: K=R, L2 stronger}\!
        & \(\noiseIndep_i \sim \subWeibull{\alpha}\), \(\alpha\le 2\), \(\norm{\noiseIndep_i}_{\psi_\alpha}\le K\)
        & \(C_\alpha K \Bigl(\sqrt{\tr(H_\mu)} + \sqrt{\opnorm{H_\mu}_2} \ln(\tfrac2\delta)^{\frac1\alpha}\Bigr)\)
        \\[1ex]
        \newtag{\normalfont{(iii)}}{it: (L2 concentration sharper) finite p-th moment}
        & \(\E{\abs{\noiseIndep_i}^2}\le \sigma^2\) and \(\E{\abs{\noiseIndep_i}^p} \le \kappa^p\) for \(p> 2\)
        & \(C_{p,\field} \max\set{\kappa, \sigma}\bigl(\frac{1}{\delta}\bigr)^{\frac1p}\sqrt{\tr(H_\mu)}\)
        \\[1ex]
        \newtag{\normalfont{(iv)}}{it: (L2 concentration sharper) finite second moment}
        & \(\E{\abs{\noiseIndep_i}^2} \le \sigma^2\)
        & \(\sigma\sqrt{\tfrac{1}{\delta}\tr(H_\mu)}\).
        \\
        \bottomrule
    \end{NiceTabular}
    }
    \end{center}
    Moreover, subject to assumption \ref{it: (L2 concentration sharper) finite p-th moment}
    (resp.\ \ref{it: (L2 concentration sharper) finite second moment}) we have the moment bounds
    \[
        \E{\norm{\regOp(\noise)}_\mu^p}
        \le C_{p,\field}^p \max\set{\kappa^p, \sigma^p}
        \tr(H_\mu)^{\frac{p}2},
        \qquad\text{resp.}\qquad
        \E{\norm{\regOp(\noise)}_\mu^2}
        \le \sigma^2 \tr(H_\mu).
    \]
\end{theorem}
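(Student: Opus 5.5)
Everything runs through the representation \eqref{eq: parseval identity}: \(\norm{\regOp(\noise)}_\mu^2 = \abs{B_\mu\noiseIndep}_2^2\) with \(B_\mu\in\field^{m\times n}\) deterministic and \(\noiseIndep\) independent centered. We then apply off-the-shelf concentration bounds for the Euclidean norm of a linear image of an independent vector. The two scalars that enter are \(\tr(H_\mu)=\norm{B_\mu}_F^2\) and \(\opnorm{H_\mu}_2=\opnorm{B_\mu}_2^2\).

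\textbf{Moment cases (iii), (iv).} For (iv), independence and centering give \(\E{\abs{B_\mu\noiseIndep}_2^2}=\sum_{i}\abs{(B_\mu)_{\cdot i}}_2^2\E{\abs{\noiseIndep_i}^2}\le\sigma^2\tr(H_\mu)\). Markov's inequality applied to \(\abs{B_\mu\noiseIndep}_2^2\) then yields the tail bound.

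For (iii), we write \(\abs{B_\mu\noiseIndep}_2^2=\sum_k\abs{b_k^\conT\noiseIndep}^2\), where \(b_k^\conT\) are the rows of \(B_\mu\). By Minkowski's inequality in \(L^{p/2}(\pSym)\),
\[
\norm{\abs{B_\mu\noiseIndep}_2^2}_{L^{p/2}}\le\sum_k\norm{b_k^\conT\noiseIndep}_{L^p}^2 .
\]
Each term is controlled by the Rosenthal bound \eqref{eq: Rosenthal bound on noise} from the proof of Theorem~\ref{thm: pointwise concentration}, with \(\beta\) replaced by \(b_k\): \(\norm{b_k^\conT\noiseIndep}_{L^p}\le C_{p,\field}\max\set{\kappa,\sigma}\abs{b_k}_2\). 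Summing gives \(\E{\abs{B_\mu\noiseIndep}_2^p}\le C_{p,\field}^p\max\set{\kappa,\sigma}^p\tr(H_\mu)^{p/2}\), which is the claimed moment bound. Markov's inequality at order \(p\) then yields the tail bound with factor \(\delta^{-1/p}\).

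\textbf{Sub-Gaussian case (i).} We use the Hsu--Kakade--Zhang inequality for quadratic forms of sub-Gaussian vectors. With \(\Sigma_0=B_\mu^\conT B_\mu\) (note \(\tr\Sigma_0=\tr H_\mu\) and \(\opnorm{\Sigma_0}_2=\opnorm{H_\mu}_2\)), it gives
\[
\abs{B_\mu\noiseIndep}_2^2\le\sigma^2\bigl(\tr H_\mu+2\sqrt{\tr(H_\mu^2)t}+2\opnorm{H_\mu}_2t\bigr)
\]
with probability at least \(1-e^{-t}\). Using \(\tr(H_\mu^2)\le\opnorm{H_\mu}_2\tr H_\mu\), the right-hand side is at most \(\sigma^2\bigl(\sqrt{\tr H_\mu}+\sqrt{2\opnorm{H_\mu}_2t}\bigr)^2\), and we set \(t=\ln(1/\delta)\).

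An alternative route goes through Lipschitz concentration. The map \(\noiseIndep\mapsto\abs{B_\mu\noiseIndep}_2\) is \(\opnorm{B_\mu}_2\)-Lipschitz and \(\E{\abs{B_\mu\noiseIndep}_2}\le\sigma\sqrt{\tr H_\mu}\). However, Lipschitz concentration for general independent sub-Gaussian entries needs convexity plus Talagrand-type arguments, so we prefer the Hsu--Kakade--Zhang route.

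\textbf{Sub-Weibull case (ii)} is the main obstacle, since the exact form is delicate and \(C_\alpha\) is unspecified. First try: invoke a Hanson--Wright inequality for \(\alpha\)-sub-exponential variables (e.g.\ G\"otze--Sambale--Sinulis, \citet{gotzeConcentrationInequalitiesPolynomials2021}) for \(\noiseIndep^\conT\Sigma_0\noiseIndep\) around its mean. This gives deviations of order \(K^2\bigl(\norm{\Sigma_0}_F\sqrt t+\opnorm{\Sigma_0}_2t^{2/\alpha}\bigr)\); together with the mean bound \(\E{\abs{B_\mu\noiseIndep}_2^2}\lesssim K^2\tr H_\mu\), taking square roots and using \(\norm{\Sigma_0}_F\le\sqrt{\opnorm{H_\mu}_2\tr H_\mu}\) and AM--GM yields \(C_\alpha K\bigl(\sqrt{\tr H_\mu}+\sqrt{\opnorm{H_\mu}_2}\,t^{1/\alpha}\bigr)\). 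If that reference only covers \(\alpha\in(0,1]\), the fallback is to handle \(\alpha\in(1,2]\) by the sub-exponential (\(\alpha=1\)) bound, since \(\norm{\cdot}_{\psi_1}\lesssim\norm{\cdot}_{\psi_\alpha}\) and \(t\ge t^{1/\alpha}\) fails only for small \(t\); constants are absorbed into \(C_\alpha\). We set \(t=\ln(2/\delta)\).
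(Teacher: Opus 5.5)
Your cases (i), (iii) and (iv) are correct and match the paper's proof. In (iii) you apply the Rosenthal bound to each coordinate of \(B_\mu\noiseIndep\) and use Minkowski in \(L^{p/2}(\pSym)\) over those coordinates. The paper instead applies Rosenthal pointwise to \(\regOp(\noise)(x)\) and uses the integral Minkowski inequality over \(\mu(dx)\); by Parseval this is the same computation.

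The gap is in case (ii) for \(\alpha\in(1,2]\). The G\"otze--Sambale--Sinulis Hanson--Wright inequality is, as you suspected, only stated for \(\alpha\in(0,1]\), and your fallback does not close the gap. Running the \(\alpha=1\) inequality with \(\norm{\noiseIndep_i}_{\psi_1}\lesssim_\alpha K\) gives deviations of order \(K^2\bigl(\norm{\Sigma_0}_F\sqrt t+\opnorm{\Sigma_0}_2t^{2}\bigr)\). This yields only \(\norm{\regOp(\noise)}_\mu\le CK\bigl(\sqrt{\tr(H_\mu)}+\sqrt{\opnorm{H_\mu}_2}\,\ln(\tfrac2\delta)\bigr)\). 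The claim has \(\ln(\tfrac2\delta)^{1/\alpha}\) in place of \(\ln(\tfrac2\delta)\). The ratio \(\ln(\tfrac2\delta)^{1-1/\alpha}\) is unbounded as \(\delta\to0\), so it cannot be absorbed into \(C_\alpha\). Your remark that \(t\ge t^{1/\alpha}\) for large \(t\) points the wrong way: it says exactly that the \(\alpha=1\) bound is weaker than the target for small \(\delta\). For \(\alpha=2\) alone you could reduce to case (i), since a centered variable with \(\norm{\noiseIndep_i}_{\psi_2}\le K\) is sub-Gaussian with variance proxy \(\lesssim K^2\). Nothing like this works for \(\alpha\in(1,2)\).

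The missing ingredient is a Hanson--Wright inequality whose heavy-tail term \(\bigl(t/(K^2\opnorm{\Sigma_0}_2)\bigr)^{\alpha/2}\) is valid on the whole range \(\alpha\in(0,2]\). This is \citet[Thm.~2.1]{sambaleNotesConcentrationASubexponential2023}, which is what the paper invokes. With that reference, the rest of your argument for (ii) is the paper's:
\begin{itemize}
\item the mean is controlled by \(\E{\noiseIndep_i^2}\le 2\Gamma(\tfrac2\alpha+1)K^2\);
\item the Frobenius term is controlled by \(\norm{\Sigma_0}_F\le\sqrt{\opnorm{H_\mu}_2\tr(H_\mu)}\);
\item the choice \(t=\ln(\tfrac2\delta)\ge\ln 2\) lets you absorb \(t\) into \(\ln(2)^{1-2/\alpha}t^{2/\alpha}\) after AM--GM, before taking square roots.
\end{itemize}
The last step is where \(\alpha\le 2\) is used and where \(C_\alpha\) picks up its dependence on \(\alpha\).
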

\end{mdframed}

The more user friendly bounds in Theorem \ref{thm: L2 concentration}
follow directly from the following bounds on the trace and operator norm of
\(H_\mu\) in terms of the Gram matrix \(\Gram_\mu\) and the structure of the noise
\(\Sigma\).

\begin{lemma}
    \label{lem: (L2-noise bound) trace and op-norm bounds}
    We have for \(H_\mu \coloneq B_\mu B_\mu^\conT\)
    the spectral domination \(H_\mu \precsim \frac1n \opnorm{\Omega_n^{\frac12} \Sigma \Omega_n^{\frac12}}_2 \Gram_\mu^\dagger\) and
    therefore for \(Q_\mu \coloneq B_\mu^\conT B_\mu\)
    \begin{alignat}{3}
        \label{eq: (L2-noise bound) bound on trace}
        \tr(Q_\mu) &= \tr(H_\mu) &&\le \tfrac1n \opnorm{\Omega_n^{\frac12} \Sigma \Omega_n^{\frac12}}_2 \tr(\Gram_\mu^\dagger)
        \\
        \label{eq: (L2-noise bound) bound on operator norm}
        \opnorm{Q_\mu}_2 &= \opnorm{H_\mu}_2 &&\le \tfrac1n \opnorm{\Omega_n^{\frac12} \Sigma \Omega_n^{\frac12}}_2 \opnorm{\Gram_\mu^\dagger}_2.
    \end{alignat}
    If \(\Omega_n = \diag[\omega_1,\dots, \omega_n]\) we moreover have the alternative trace bound
    \[
        \tr(H_\mu) \le \frac{\opnorm{\Sigma}_2\opnorm{\Gram_\mu^\dagger}_2^2}{n} \frac1n\sum_{i=1}^n \omega_i^2 \lever_\modelSpace^\mu(x_i)^2.
    \]
    For \(\opnorm{\Gram_\mu - \identity}_2 \le \epsilon \in [0,1)\), \(\ones\in \modelSpace\) and \(\omega_i = \frac{m}{\lever_\modelSpace^\mu(x_i)^2}\) this further reduces to
    \[
        \tr(H_\mu)
        \le \opnorm{\Sigma}_2\tfrac{1+\epsilon}{(1-\epsilon)^2}\tfrac{m}{n}  \mu(\domain).
    \]
\end{lemma}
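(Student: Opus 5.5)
The plan is to write \(H_\mu\) explicitly and sandwich it using the orthonormality of the basis with respect to \(\mu\). From \eqref{eq: reg op noise},
\[
H_\mu = B_\mu B_\mu^\conT = \tfrac1{n^2}\Gram_\mu^\dagger \modelVander^\conT \Omega_n \Sigma \Omega_n \modelVander \Gram_\mu^\dagger .
\]
For the spectral domination, write \(\Omega_n\Sigma\Omega_n = \Omega_n^{\frac12}(\Omega_n^{\frac12}\Sigma\Omega_n^{\frac12})\Omega_n^{\frac12}\) and use \(\Omega_n^{\frac12}\Sigma\Omega_n^{\frac12}\preceq \rho_\Sigma^2\identity\). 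Congruence preserves the Loewner order, so
\[
H_\mu \preceq \tfrac{\rho_\Sigma^2}{n}\,\Gram_\mu^\dagger\bigl(\tfrac1n\modelVander^\conT\Omega_n\modelVander\bigr)\Gram_\mu^\dagger
= \tfrac{\rho_\Sigma^2}{n}\,\Gram_\mu^\dagger\Gram_\mu\Gram_\mu^\dagger
= \tfrac{\rho_\Sigma^2}{n}\,\Gram_\mu^\dagger .
\]
Here \(\Gram_\mu=\frac1n\modelVander^\conT\Omega_n\modelVander\) is the Gram matrix of the \(\mu\)-orthonormal basis, and the last step is the Penrose identity. The trace and operator-norm bounds \eqref{eq: (L2-noise bound) bound on trace} and \eqref{eq: (L2-noise bound) bound on operator norm} then follow from monotonicity of \(\tr\) and \(\lambda_{\max}\) under \(\preceq\). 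The equalities \(\tr Q_\mu=\tr H_\mu\) and \(\opnorm{Q_\mu}_2=\opnorm{H_\mu}_2\) hold because \(B^\conT B\) and \(BB^\conT\) share their nonzero eigenvalues.

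For the alternative diagonal bound, I would instead bound \(\Sigma\preceq\opnorm{\Sigma}_2\identity\) directly, which gives
\[
\tr H_\mu\le \tfrac{\opnorm{\Sigma}_2}{n^2}\tr\bigl(\Gram_\mu^\dagger\modelVander^\conT\Omega_n^2\modelVander\Gram_\mu^\dagger\bigr).
\]
Cyclicity of the trace together with \(\Gram_\mu^{\dagger 2}\preceq\opnorm{\Gram_\mu^\dagger}_2^2\identity\) reduces this to \(\frac{\opnorm{\Sigma}_2\opnorm{\Gram_\mu^\dagger}_2^2}{n^2}\tr(\modelVander^\conT\Omega_n^2\modelVander)\). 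Expanding the last trace with diagonal \(\Omega_n\) gives \(\sum_i\omega_i^2\sum_k\abs{\mBasis_k(x_i)}^2=\sum_i\omega_i^2\lever_\modelSpace^\mu(x_i)^2\), using Lemma~\ref{lem: reproducing kernel} for \(\reprK^\mu_\modelSpace(x,x)\). This is exactly the claimed bound.

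For the final specialization, insert \(\omega_i=m/\lever_\modelSpace^\mu(x_i)^2\). This gives \(\omega_i^2\lever_\modelSpace^\mu(x_i)^2=m\,\omega_i\), so \(\frac1n\sum_i\omega_i^2\lever_\modelSpace^\mu(x_i)^2 = m\cdot\frac1n\sum_i\omega_i=m\norm{\ones}_{\Omega_n}^2\). Since \(\ones\in\modelSpace\), Theorem~\ref{thm: approximately identity gram matrix} yields \(\norm{\ones}_{\Omega_n}^2\le(1+\epsilon)\norm{\ones}_\mu^2=(1+\epsilon)\mu(\domain)\). Lemma~\ref{lem: bounds on inverse gram matrix} gives \(\opnorm{\Gram_\mu^{-1}}_2\le\frac1{1-\epsilon}\), and combining the two produces the stated bound.

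The only step needing care is the Loewner congruence with the pseudoinverse. I would handle it by noting that \(\Gram_\mu\) is invertible here since \(\epsilon<1\), and that in general the identity \(\Gram^\dagger\Gram\Gram^\dagger=\Gram^\dagger\) suffices. The remaining steps are routine.
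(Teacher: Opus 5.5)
Your proof is correct and follows the paper's argument essentially step for step. For the spectral domination you use the same congruence with \(V=\Omega_n^{\frac12}\modelVander\Gram_\mu^\dagger\) and the Penrose identity \(\Gram_\mu^\dagger\Gram_\mu\Gram_\mu^\dagger=\Gram_\mu^\dagger\). For the diagonal case you use the same replacement \(\Sigma\preceq\opnorm{\Sigma}_2\identity\) followed by the leverage expansion of \(\tr(\modelVander^\conT\Omega_n^2\modelVander)\), and for the final specialization Theorem~\ref{thm: approximately identity gram matrix} together with Lemma~\ref{lem: bounds on inverse gram matrix}. The only cosmetic difference is that you pull out \(\opnorm{\Gram_\mu^\dagger}_2^2\) via cyclicity of the trace and \((\Gram_\mu^\dagger)^2\preceq\opnorm{\Gram_\mu^\dagger}_2^2\identity\), whereas the paper cites its trace inequality \(\tr(AB)\le\tr(A)\opnorm{B}_2\) (Lemma~\ref{lem: trace-schatten norm bound}); the two amount to the same thing.
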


\begin{proof}
    The equality \(\tr(Q_\mu) = \tr(H_\mu)\) follows directly from the cyclic property of the trace.
    For the spectral norm equality, observe that the largest eigenvalue of
    \(B_\mu B_\mu^\conT\) and \(B_\mu^\conT B_\mu\) are both
    the square of the largest singular value of \(B_\mu\).
    Since both \(Q_\mu\) and \(H_\mu\) are positive semi-definite and their largest eigenvalues
    coincide with their spectral norms, which implies
    \(\opnorm{Q_\mu}_2 = \opnorm{H_\mu}_2\).

    To get the spectral domination, observe that for any matrices \(V\in \field^{n\times m}\) and positive
    semi-definite \(B\in \field^{n\times n}\) we have
    \[
        v^\conT V^\conT B V v
        \le \opnorm{B}_2 v^\conT V^\conT V v \qquad \forall v\in \field^m.
    \]
    Consequently, \(V^\conT B V \precsim \opnorm{B}_2 V^\conT V\) and thus in our case
    \begin{align*}
        H_\mu = B_\mu B_\mu^\conT
        \overset{\text{def.}}&= \tfrac1{n^2}\underbrace{\Gram_\mu^\dagger \modelVander^\conT \Omega_n^{\frac12}}_{\eqcolon V^\conT}
        \Omega_n^{\frac12} \Sigma \Omega_n^{\frac12}
        \underbrace{\Omega_n^{\frac12} \modelVander \Gram_\mu^\dagger}_{\eqcolon V}
        \\
        &\precsim \tfrac1{n^2}\opnorm{\Omega_n^{\frac12} \Sigma \Omega_n^{\frac12}}_2 V^\conT V
        = \tfrac1{n}\opnorm{\Omega_n^{\frac12} \Sigma \Omega_n^{\frac12}}_2 \Gram_\mu^\dagger,
    \end{align*}
    where the last equation uses \(\Gram_\mu = \frac1n\modelVander^\conT \Omega_n \modelVander\) to obtain \(\frac1nV^\conT V = \Gram_\mu^\dagger\).

    Hence using the standard basis vectors \(e_1, \dots, e_m\) of \(\field^m\) we have
    the trace bound
    \begin{equation}
        \label{eq: bound on trace of BB transpose} 
        \tr(H_\mu)
        = \sum_{i=1}^m e_i^\conT H_\mu e_i
        \le \tfrac1{n}\opnorm{\Omega_n^{\frac12} \Sigma \Omega_n^{\frac12}}_2 \tr(\Gram_\mu^\dagger)
    \end{equation}
    and the operator norm bound follows directly from the spectral domination
    and positive semi-definiteness of \(H_\mu\) and \(\Gram_\mu^\dagger\).

    For the alternative trace bound in the WLS setting, we first move out
    \(\Sigma\) without the surrounding \(\Omega_n^{\frac12}\) matrices
    \[
        H_\mu = \tfrac1{n^2}\Gram_\mu^\dagger \modelVander^\conT \Omega_n \Sigma \Omega_n \modelVander \Gram_\mu^\dagger
        \precsim \tfrac{\opnorm{\Sigma}_2}{n^2} \Gram_\mu^\dagger \modelVander^\conT \Omega_n^2 \modelVander \Gram_\mu^\dagger.
    \]
    Using \(\tr(AB) \le \tr(A)\opnorm{B}_2\) for positive semi-definite \(A,B\) by Lemma \ref{lem: trace-schatten norm bound}
    we thus have
    \[
        \tr(H_\mu) \le \frac{\opnorm{\Sigma}_2}{n^2} \tr(\Gram_\mu^\dagger \modelVander^\conT \Omega_n^2 \modelVander \Gram_\mu^\dagger)
        \overset{\text{Lem.~\ref{lem: trace-schatten norm bound}}}\le
        \frac{\opnorm{\Sigma}_2\opnorm{\Gram_\mu^\dagger}_2^2}{n^2}  \tr(\modelVander^\conT \Omega_n^2 \modelVander).
    \]
    And we finish the proof using the basis representation of the reproducing kernel (Lem~\ref{lem: reproducing kernel})
    and the definition of the leverage function (Def.~\ref{def: leverage function}) to get
    \[
        \tr(\modelVander^\conT \Omega_n^2 \modelVander)
        = \sum_{i=1}^n \omega_i^2 \sum_{k=1}^m \overline{\mBasisVec_k(x_i)}\mBasisVec_k(x_i)
        = \sum_{i=1}^n \omega_i^2 \lever_\modelSpace^\mu(x_i)^2.
    \]
    If we assume the additional assumptions including \(\omega_i = \frac{m}{\lever_\modelSpace^\mu(x_i)^2}\), we further have
    \[
        \frac1n\sum_{i=1}^n \omega_i^2 \lever_\modelSpace^\mu(x_i)^2
        = m\norm{\ones}_{\Omega_n}^2
        \le m (1+\epsilon) \norm{\ones}_\mu^2
        = m(1+\epsilon) \mu(\domain).
    \]
    where inequality follows from \(\abs{\norm{\ones}_{\Omega_n}^2 - \norm{\ones}_{\mu}^2} \le \epsilon \norm{\ones}_{\mu}^2\)
    by Theorem \ref{thm: approximately identity gram matrix} combined with \(\ones\in \modelSpace\) and \(\opnorm{\Gram - \identity}_2 \le \epsilon\).
    Lemma \ref{lem: bounds on inverse gram matrix} further implies \(\opnorm{\Gram}_2 \le \frac{1}{1-\epsilon}\) such that put together
    \[
        \tr(H_\mu) \le \opnorm{\Sigma}_2\tfrac{1+\epsilon}{(1-\epsilon)^2}\tfrac{m}{n}  \mu(\domain).
        \qedhere
    \]
\end{proof}

What is left to do, is to prove the claimed sharper \(L^2\)-bounds on the noise
term of Theorem \ref{thm: L2 concentration sharper}.

\begin{proof}[Proof of Theorem \ref{thm: L2 concentration sharper}]
    \begin{enumerate}[wide=0pt]
        \item[\ref{it: (L2 concentration sharper) finite second moment}] \textbf{Finite second moment.}
        We have
        \[
            \E{\norm{\regOp(\noise)}_\mu^2}
            \overset{\eqref{eq: parseval identity}}= \E{\tr(B_\mu \noiseIndep\noiseIndep^\conT B_\mu^\conT)}
            = \tr(B_\mu \E{\noiseIndep\noiseIndep^\conT} B_\mu^\conT)
            \le \sigma^2 \tr(H_\mu)
        \]
        The concentration bound then follows directly from Markov's inequality.

        \item[\ref{it: (L2 concentration sharper) subgaussian}] \textbf{Sub-Gaussian.}
        We will apply the concentration inequality by \citet[Thm.~2.1]{hsuTailInequalityQuadratic2012}.
        This result is stated for real random vectors, which is why this case
        assumes \(\field=\real\).
        Since the \(\noiseIndep_i\) are independent sub-Gaussian random variables with
        variance proxy \(\sigma^2\) \citep[Def.~2.2]{wainwrightHighDimensionalStatisticsNonAsymptotic2019}, we have for any \(\alpha\in \real^n\)
        \[
            \E{\exp(\alpha^\transpose \noiseIndep)}
            \overset{\text{indep.}}= \prod_{i=1}^n \E{\exp(\alpha_i \noiseIndep_i)}
            \overset{\text{def.}}\le \prod_{i=1}^n \exp\Bigl(\frac{\sigma^2\alpha_i^2}{2}\Bigr)
            = \exp\Bigl(\frac{\sigma^2}{2}\abs{\alpha}_2^2\Bigr).
        \]
        Consequently, \(\noiseIndep\) is a sub-Gaussian random vector in the
        sense of \citet[Thm.~2.1]{hsuTailInequalityQuadratic2012}
        and for \(Q_\mu \coloneq B_\mu^\conT B_\mu\) we have
        \[
            \prob[\Big]{\abs{B_\mu \noiseIndep}_2^2 > \sigma^2\Bigl(\tr(Q_\mu) + 2\sqrt{\tr(Q_\mu^2)t} + 2\opnorm{Q_\mu}_2 t\Bigr)}
            \le e^{-t}.
        \]
        We can bound the contents using Lemma \ref{lem: (L2-noise bound) trace and op-norm bounds} and \ref{lem: trace-schatten norm bound} to get
        \begin{align*}
            \tr(Q_\mu) + 2\sqrt{\tr(Q_\mu^2)t} + 2\opnorm{Q_\mu}_2 t
            \overset{\text{Lem.~\ref{lem: trace-schatten norm bound}}}&\le \tr(Q_\mu) + 2\sqrt{ \tr(Q_\mu)\opnorm{Q_\mu}_2t} + 2\opnorm{Q_\mu}_2 t
            \\
            &\le \Bigl(\sqrt{\tr(Q_\mu)} + \sqrt{2\opnorm{Q_\mu}_2 t}\Bigr)^2
        \end{align*}
        With \(\tr(Q_\mu) = \tr(H_\mu)\) and \(\opnorm{Q_\mu}_2 = \opnorm{H_\mu}_2\) by Lemma \ref{lem: (L2-noise bound) trace and op-norm bounds}
        and the selection of \(t\coloneq\ln(\frac1\delta)\) and  \(\norm{\regOp(\noise)}_\mu = \abs{B_\mu \noiseIndep}_2\) as shown in \eqref{eq: parseval identity} we thus have
        \[
            \prob[\Big]{
                \norm{\regOp(\noise)}_\mu > \sigma 
                \Bigl(\sqrt{\tr(H_\mu)} + \sqrt{2\opnorm{H_\mu}_2 \ln(\tfrac1\delta)}\Bigr)
            }
            \le \delta.
        \]
        Or in other words, with probability \(1-\delta\) we have the claim.

        \item[\ref{it: (L2 concentration sharper) sub-Weibull}] \textbf{Sub-Weibull.} 
        To apply the generalized Hanson--Wright inequality by \citet{sambaleNotesConcentrationASubexponential2023},
        we need to bound the second moment of the sub-Weibull random variables \(\noiseIndep_i\) first.
        \begin{steps}
        \item \textbf{Bound on the second moment of sub-Weibull random variables.}
        We start with a proof that \(\noiseIndep_i\in \subWeibull{\alpha}\) with \(\norm{\noiseIndep_i}_{\psi_\alpha}\le K\)
        implies
        \begin{equation}
            \label{eq: sub-Weibull second moment bound}
            \E{\noiseIndep_i^2} \le c_\alpha K^2
            \qquad \text{for all } i=1,\dots, n
        \end{equation}
        with the constant \(c_\alpha \coloneq 2\Gamma(\frac{2}{\alpha}+1)\).
        For this, we follow \citet[Proof
        of Thm.~2.1]{vladimirovaSubWeibullDistributionsGeneralizing2020}.
        Specifically, their \(4\Rightarrow 1\) implies
        \[
            \prob{\abs{\noiseIndep_i} \ge t}
            = \prob[\Big]{\exp\Bigl(\bigl(\tfrac{\abs{\noiseIndep_i}}{K}\bigr)^\alpha\Bigr) \ge \exp\bigl(\tfrac{t^\alpha}{K^\alpha}\bigr)}
            \overset{\text{Markov}}\le \frac{\E{\exp\bigl(\bigl(\frac{\abs{\noiseIndep_i}}{K}\bigr)^\alpha\bigr)}}{\exp\bigl(\frac{t^\alpha}{K^\alpha}\bigr)}
            \le 2\exp(-\tfrac{t^\alpha}{K^\alpha}),
        \]
        where the last inequality is due to \(\norm{\noiseIndep_i}_{\psi_\alpha}
        \le K\) for all \(i=1,\dots, n\).
        And their \(1\Rightarrow 2\)
        implies
        \begin{align*}
            \E{\abs{\noiseIndep_i}^p}
            = \int_0^\infty \prob{\abs{\noiseIndep_i}^p \ge t} dt
            \overset{t=u^p}&= \int_0^\infty p u^{p-1} \prob{\abs{\noiseIndep_i} \ge u} du
            \\
            &\le \int_0^\infty 2p u^{p-1} \exp(-\tfrac{u^\alpha}{K^\alpha}) du
            \\
            \overset{s=(u/{K})^\alpha}&= 2 K^p \frac{p}{\alpha} \int_0^\infty s^{\frac{p}{\alpha}-1} e^{-s} ds = K^p \underbrace{2\Gamma(\tfrac{p}{\alpha}+1)}_{\eqcolon c_{\alpha,p}}.
        \end{align*}
        Note that \(K\) is raised to the power of \(p\), which was overlooked by \citet[Proof of Thm.~2.1]{vladimirovaSubWeibullDistributionsGeneralizing2020}.
        The case \(p=2\) with \(c_\alpha \coloneq c_{\alpha, 2}\) is the claim.
        \item\label{step: (L2 concentration) sub-Weibull, concentration} \textbf{Concentration bound.}
        The concentration bound is now essentially an application of Theorem~2.1
        of \citet{sambaleNotesConcentrationASubexponential2023}.
        Specifically for \(Q_\mu=B_\mu^\conT B_\mu\), their Theorem~2.1 implies
        \[
            \prob[\Big]{\abs[\Big]{\overbrace{\noiseIndep^\transpose Q_\mu \noiseIndep}^{=\abs{B_\mu \noiseIndep}_2^2} - \overbrace{\sum_{i=1}^n \E{\noiseIndep_i^2}(Q_\mu)_{ii}}^{=\E{\noiseIndep^\transpose Q_\mu \noiseIndep}}} \ge t}
            \le 2\exp\Bigl(-\frac1{C_\alpha} \min\set[\Big]{\frac{t^2}{K^4 \norm{Q_\mu}_F^2}, \Bigl(\frac{t}{K^2 \opnorm{Q_\mu}_2}\Bigr)^{\frac\alpha{2}}}\Bigr),
        \]
        where \(\norm{Q_\mu}_F^2=\tr(Q_\mu^2)\) and
        \(C_\alpha\) is a constant depending only on \(\alpha\).
        Now we have
        \begin{align}
            \label{eq: (sub-Weibull L2 concentration) u bound}
            &u\le \frac1{C_\alpha} \min\set[\Big]{\frac{t^2}{K^4 \norm{Q_\mu}_F^2}, \Bigl(\frac{t}{K^2 \opnorm{Q_\mu}_2}\Bigr)^{\frac\alpha{2}}}
            \\
            \label{eq: (sub-Weibull L2 concentration) t bound}
            &\iff t \ge K^2 \max\set[\Big]{\norm{Q_\mu}_F \sqrt{C_\alpha u}, \opnorm{Q_\mu}_2 (C_\alpha u)^{\frac2\alpha}}
        \end{align}
        For the choice \(\tilde{C}_\alpha \coloneq \max\set{C_\alpha^{1/2}, C_\alpha^{2/\alpha}, c_\alpha}\), we have
        that
        \[
            t_u \coloneq \tilde{C_\alpha} K^2 \max\set[\Big]{\norm{Q_\mu}_F \sqrt{u}, \opnorm{Q_\mu}_2 u^{\frac2\alpha}}
        \]
        implies \eqref{eq: (sub-Weibull L2 concentration) t bound} and thus \eqref{eq: (sub-Weibull L2 concentration) u bound}
        such that
        \[
            \prob[\Big]{\abs[\Big]{\abs{B_\mu \noiseIndep}_2^2 - \sum_{i=1}^n \E{\noiseIndep_i^2}(Q_\mu)_{ii}} \ge t_u}
            \overset{\eqref{eq: (sub-Weibull L2 concentration) u bound}}\le 2\exp(-u).
        \]
        Thus with probability \(1-2\exp(-u)\) we have
        \[
            \abs{B_\mu \noiseIndep}_2^2
            \le \sum_{i=1}^n \underbrace{\E{\noiseIndep_i^2}}_{\le c_\alpha K^2} (Q_\mu)_{ii} + t_u
            \le \tilde{C}_\alpha K^2 \Bigl(\tr(Q_\mu) + \max\set[\Big]{\norm{Q_\mu}_F \sqrt{u}, \opnorm{Q_\mu}_2 u^{\frac2\alpha}}\Bigr).
        \]
        Observe that \(u=\ln(\frac2\delta)\) has the property \(u\ge \ln(2)\) for \(\delta\in (0,1]\), such that for \(\alpha\in (0,2]\)
        \begin{equation}
            \label{eq: (sub-Weibull L2 concentration) u bound 2}    
            u = u^{1-\frac2\alpha} u^{\frac2\alpha} \le \ln(2)^{1-\frac2\alpha} u^{\frac2\alpha}.
        \end{equation}
        Using Lemma \ref{lem: trace-schatten norm bound} for
        \(\norm{Q_\mu}_F = \tr(Q_\mu^2)\le \sqrt{\opnorm{Q_\mu}_2 \tr(Q_\mu)}\)
        and Lemma \ref{lem: (L2-noise bound) trace and op-norm bounds} to substitute \(Q_\mu\) with \(H_\mu\)
        we thus have that with probability \(1-\delta\)
        and \(u=\ln(\frac2\delta)\) for some constant \(C_\alpha>0\)
        \begin{align*}
            \tr(Q_\mu) + \max\set[\Big]{\norm{Q_\mu}_F \sqrt{u}, \opnorm{Q_\mu}_2 u^{\frac2\alpha}}
            &\le \tr(H_\mu) + \max\set[\Big]{\sqrt{\opnorm{H_\mu}_2 \tr(H_\mu)u}, \opnorm{H_\mu}_2 u^{\frac2\alpha}} 
            \\
            \overset{\eqref{eq: (sub-Weibull L2 concentration) u bound 2}}&\le C_\alpha\Bigl(\tr(H_\mu) + 2\sqrt{\opnorm{H_\mu}_2\tr(H_\mu) u^{\frac2\alpha}} +  \opnorm{H_\mu}_2 u^{\frac2\alpha}\Bigr)
            \\
            &= C_\alpha\Bigl(\sqrt{\tr(H_\mu)} + \sqrt{\opnorm{H_\mu}_2} u^{\frac1\alpha}\Bigr)^2.
        \end{align*}
        Consequently, we have for some constant \(C_\alpha>0\) depending only on
        \(\alpha\) (but not necessarily the same as before), that with probability
        \(1-\delta\)
        \[
            \norm{\regOp(\noise)}_\mu
            \overset{\eqref{eq: parseval identity}}= \abs{B_\mu \noiseIndep}_2
            \le C_\alpha K \Bigl(\sqrt{\tr(H_\mu)} + \sqrt{\opnorm{H_\mu}_2} \ln(\tfrac2\delta)^{\frac1\alpha}\Bigr).
        \]
        \end{steps}

        \item[\ref{it: (L2 concentration sharper) finite p-th moment}] \textbf{Finite \(p\)-th moment.}
        Recall that \(\regOp(\noise)(x) = \mBasisVec(x)^\transpose B_\mu \noiseIndep\). For
        \(\beta(x) \coloneq B_\mu^\conT \overline{\mBasisVec(x)}\) we have
        \(\regOp(\noise)(x)=\beta(x)^\conT\noiseIndep\).
        \begin{equation}
            \label{eq: rosenthal bound for L2 norm}
            \E{\abs{\regOp(\noise)(x)}^p}^{\frac1p}
            = \norm{\beta(x)^\conT \noiseIndep}_{L^p(\pSym)}
            \overset{\eqref{eq: Rosenthal bound on noise}}\le C_{p,\field} \max\set{\kappa, \sigma}\abs{\beta(x)}_2
        \end{equation}
        with \(C_{p,\real}=\frac{14.7p}{\ln p}\) and \(C_{p,\complex}=\sqrt{2}\,C_{p,\real}\).
        We now want to apply the Integral Minkowski inequality \citep[6.19]{follandRealAnalysisModern1999}
with \(q=\frac{p}{2}\ge 1\) to get
        {
\begin{align*}
            \E{\norm{\regOp(\noise)}_\mu^p}^{\frac2p}
            &= \Bigl(\int \Bigl(\int \abs{\regOp(\noise)(x)}^2 \mu(dx)\Bigr)^{\frac{p}2} d\pSym \Bigr)^{\frac2p}
            \\
            \overset{\text{Mink.}}&\le
            \int \Bigl(\int \abs{\regOp(\noise)(x)}^p d\pSym \Bigr)^{\frac2p}\mu(dx)
            \begin{aligned}[t]
                &= \int \E{\abs{\regOp(\noise)(x)}^p}^{\frac2p} d\mu(x)
                \\
                \overset{\eqref{eq: rosenthal bound for L2 norm}}&\le
                C_{p,\field}^2 \max\set{\kappa, \sigma}^2 \int \abs{\beta(x)}_2^2 d\mu(x).
            \end{aligned}
        \end{align*}
        }
        Using linearity and the cyclic property of the trace, the remaining
        integral is given by
        \[
            \int \abs{\beta(x)}_2^2 d\mu(x)
            = \int \underbrace{\abs{B_\mu^\conT\overline{\mBasisVec(x)}}_2^2}_{=\tr\bigl(B_\mu^\conT \overline{\mBasisVec(x)} \mathrlap{\mBasisVec(x)^\transpose B_\mu\bigr)}} d\mu(x)
            \overset{\substack{\text{cyclic}\\\text{linear}}}= \tr\Bigl(B_\mu B_\mu^\conT \underbrace{\int \overline{\mBasisVec(x)}\mBasisVec(x)^\transpose d\mu(x)}_{=\identity}\Bigr)
            = \tr(B_\mu B_\mu^\conT).
        \]
        With the application of Lemma \ref{lem: (L2-noise bound) trace and op-norm bounds} we thus have
        \[
            \E{\norm{\regOp(\noise)}_\mu^p}^{\frac2p}
            \le C_{p,\field}^2 \max\set{\kappa, \sigma}^2 \tr(B_\mu B_\mu^\conT)
\]
        Raising both sides to the power \(\frac{p}2\) yields the claimed moment bound by \(H_\mu = B_\mu B_\mu^\conT\).
        Markov's inequality then implies the claimed concentration bound.
        \qedhere
    \end{enumerate}
\end{proof}

In the proof we used the following lemma to bound the Frobenius norm of \(H_\mu\) in terms of its trace and operator norm.

\begin{lemma}[Trace-Schatten norm inequality]
    \label{lem: trace-schatten norm bound}
    Let \(\norm{A}_{[p]} \coloneq \bigl(\sum_{i=1}^n \sigma_i(A)^p\bigr)^{\frac1p}\) be the Schatten \(p\)-norm
    of a matrix \(A\in \complex^{n\times n}\) where \(\sigma_1(A) \ge \dots \ge \sigma_n(A) \ge 0\) are its singular values.
    Then we have for \(\frac1p + \frac1q =1\) with \(p,q\in [1,\infty]\)
    \[
        \abs{\tr(AB)} \le \norm{A}_{[p]} \norm{B}_{[q]}
        \qquad \forall A,B\in \complex^{n\times n}.
    \]
    In particular, for \(A\) and \(B\) positive definite we have
    \[
        \abs{\tr(AB)} \le \tr(A) \opnorm{B}_2.
    \]
\end{lemma}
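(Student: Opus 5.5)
The plan is to reduce the general statement to von Neumann's trace inequality and then specialize. First I will invoke von Neumann's trace inequality: for \(A,B\in\complex^{n\times n}\),
\[
    \abs{\tr(AB)} \le \sum_{i=1}^n \sigma_i(A)\sigma_i(B),
\]
where the singular values of both matrices are sorted in decreasing order. This is a standard result (e.g.\ Horn and Johnson, Matrix Analysis). If a self-contained argument is preferred, I would derive it from the singular value decompositions \(A=U_1 D_1 V_1^\conT\) and \(B=U_2D_2V_2^\conT\). Writing \(\tr(AB)=\tr(D_1 W D_2 Z)\) with the unitaries \(W=V_1^\conT U_2\) and \(Z=V_2^\conT U_1\), we get \(\abs{\tr(AB)}\le\sum_{i,j}\sigma_i(A)\sigma_j(B)\abs{W_{ij}}\abs{Z_{ji}}\). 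The matrix \(\bigl(\abs{W_{ij}}\abs{Z_{ji}}\bigr)_{ij}\) is doubly substochastic by Cauchy--Schwarz. A Birkhoff/rearrangement argument then bounds this sum by \(\sum_i\sigma_i(A)\sigma_i(B)\). This rearrangement step is the only genuinely nontrivial part, so I would cite it rather than reprove it.

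Second, I will apply Hölder's inequality for finite sequences to the vectors \((\sigma_i(A))_i\) and \((\sigma_i(B))_i\) with conjugate exponents \(p,q\in[1,\infty]\):
\[
    \sum_i\sigma_i(A)\sigma_i(B)\le \Bigl(\sum_i\sigma_i(A)^p\Bigr)^{1/p}\Bigl(\sum_i\sigma_i(B)^q\Bigr)^{1/q}=\norm{A}_{[p]}\norm{B}_{[q]}.
\]
For \(q=\infty\) the right-hand factor is \(\max_i\sigma_i(B)\), interpreted as the usual limit.

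Third, for the special case I take \(p=1\) and \(q=\infty\). When \(A\) is positive semi-definite (in particular positive definite), its singular values coincide with its eigenvalues, so \(\norm{A}_{[1]}=\sum_i\lambda_i(A)=\tr(A)\). Likewise \(\norm{B}_{[\infty]}=\sigma_1(B)=\opnorm{B}_2\). Only positive semi-definiteness of \(A\) is used here, which matches how the lemma is applied earlier with possibly singular \(\Gram_\mu^\dagger\). This gives \(\abs{\tr(AB)}\le\tr(A)\opnorm{B}_2\).

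The main obstacle is the first step, the von Neumann inequality. Everything after it is routine bookkeeping, so I would rely on the standard reference for it.
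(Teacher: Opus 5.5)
Your proposal is correct and follows the paper's proof: von Neumann's trace inequality, then Hölder on the singular value vectors, then the case \(p=1\), \(q=\infty\). Your remark that only positive semi-definiteness of \(A\) is needed, since \(\norm{B}_{[\infty]}=\opnorm{B}_2\) holds for every \(B\), is also right and matches how the lemma is applied with the possibly singular \(\Gram_\mu^\dagger\).
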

\begin{proof}
    Von Neumann's trace inequality \citep{mirskyTraceInequalityJohn1975} implies 
    \(\abs{\tr(AB)} \le \sum_{i=1}^n \sigma_i(A)\sigma_i(B)\).
    Hölder's inequality then gives the claim for the Schatten \(p\)-norms.
    The special case for positive definite \(A\) and \(B\) follows from the fact
    that the singular values of a positive definite matrix are its eigenvalues
    and thereby \(\tr(A) = \norm{A}_{[1]}\) and \(\opnorm{B}_2 = \norm{B}_{[\infty]}\).
\end{proof}

\subsection{Spectral bounds on the Gram matrix}
\label{sec: spectral bounds on the gram matrix}

\subsubsection{Proof of Lemma \ref{lem: gram bound equivalence}: Lower spectral bounds}

Since \(\Gram_*\) is positive semidefinite, we have
for \(\lambda_{\min}(\Gram_*)\) the smallest eigenvalue of \(\Gram_*\),
\begin{align*}
    \opnorm{\Gram_*^{-1}}_2
    = \tfrac1{\lambda_{\min}(\Gram_*)}
    \le \gamma
    &\iff \tfrac1\gamma \le \lambda_{\min}(\Gram_*)
    \\
    &\iff \tfrac1\gamma \identity \preceq \Gram_*
    \iff \identity \preceq \gamma\Gram_*.
\end{align*}
where ``\(\Leftarrow\)'' in the penultimate equivalence follows from \citep[Cor.~7.7.4]{hornMatrixAnalysis2017}.
Let \(a\in \field^m\) be the coefficients of the basis representation
\(\model = \sum_{k=1}^{m} a_k \mBasis_k\), where \(\mBasis_k\) is the orthogonal
basis with respect to \(\scp{\cdot}{\cdot}_*\). Then we get 
the following isometries between \(\field^m\) and \(\modelSpace\):
\begin{equation}
    \label{eq: isometries between Rm and model space for gram bound equivalence}    
    \abs{a}_2^2
    = \norm{\model}_*^2,
    \qquad \text{and} \qquad
    \norm{\model}_{\Omega_n}^2
    = a^\conT \Gram_* a,
\end{equation}
where we used \((\Gram_*)_{ij} = \scp{\mBasis_j}{\mBasis_i}_{\Omega_n}\) and
sesquilinearity for the second equality. Consequently,
\begin{align*}
    \tfrac1\gamma \identity \preceq \Gram_*
    \overset{\text{def.}}&\iff a^\conT (\Gram_* - \tfrac1\gamma\identity) a \ge 0 \text{ for all } a\in \field^m
    \\
    \overset{\eqref{eq: isometries between Rm and model space for gram bound equivalence}}&\iff
    \norm{\model}_{\Omega_n}^2 - \tfrac1\gamma \norm{\model}_*^2 \ge 0 \text{ for all } \model \in \modelSpace
    \\
    &\iff \gamma\norm{\model}_{\Omega_n}^2 \ge \norm{\model}_*^2 \text{ for all } \model \in \modelSpace.
    \qedHere
\end{align*}

\subsubsection{Proof of Theorem \ref{thm: approximately identity gram matrix}: Symmetric spectral bounds}

    Let \(a, b\in \field^m\) be the coefficients of the basis representations
    \(\model = \sum_{k=1}^{m} a_k \mBasis_k\) and \(\tilde\model =
    \sum_{k=1}^{m} b_k \mBasis_k\).
    Then we get the following isometries between \(\field^m\) and \(\modelSpace\):
    \begin{equation}
        \label{eq: isometries between Rm and model space}    
        \scp{\model}{\tilde \model}_*
        = \scp{a}{b}
        \qquad \text{and} \qquad
        \scp{\model}{\tilde \model}_{\Omega_n} = b^\conT \Gram_* a.
    \end{equation}
    The first equality follows directly from the orthonormality of the basis \(\mBasis_k\),
    and the second equality follows from the definition of the Gram matrix \((\Gram_*)_{ij} = \scp{\mBasis_j}{\mBasis_i}_{\Omega_n}\).
\begin{enumerate}[wide=0pt, noitemsep]
    \item[\ref{it: approximately identity Gram matrix} \(\Rightarrow\) \ref{it: approximately equal inner product}:] 
    We have
    \begin{align*}
        \abs{\scp{\model}{\tilde\model}_{\Omega_n} - \scp{\model}{\tilde\model}_*}
        &= \abs{b^\conT (\Gram_* - \identity) a}
        \\
        \overset{\text{Cauchy-Schwarz}}&\le \abs{b}_2 \abs[\big]{(\Gram_* - \identity) a}_2
        \le \opnorm{\Gram_* - \identity}_2 \norm{\model}_* \norm{\tilde\model}_*,
    \end{align*}
    where we used the isometries \eqref{eq: isometries between Rm and model space} in the first equality and the last equality.
    \item[\ref{it: approximately equal inner product} \(\Rightarrow\) \ref{it: approximately equal norms}:] trivial by setting \(\model = \tilde\model\).
    \item[\ref{it: approximately equal norms} \(\Rightarrow\) \ref{it: approximately identity Gram matrix}:]
    Observe that \(M = \Gram_* - \identity\) is a Hermitian matrix such that the operator
    norm is given by the largest absolute eigenvalue and thus
    \[
        \opnorm{M}_2 = \sup_{a\in \field^m\setminus\set{0}} \frac{\abs{a^\conT M a}}{\abs{a}_2^2},
    \]
    However
    \[
        \abs{a^\conT M a}
        = \abs[\big]{a^\conT \Gram_* a - \abs{a}_2^2}
        \overset{\eqref{eq: isometries between Rm and model space}}=
        \abs[\big]{\norm{\model}_{\Omega_n}^2 - \norm{\model}_*^2}
        \overset{\mathrm{\ref{it: approximately equal norms}}}\le
        \epsilon \norm{\model}_*^2
        \overset{\eqref{eq: isometries between Rm and model space}}= \epsilon \abs{a}_2^2,
    \]
    Consequently, we have \(\opnorm{\Gram_* - \identity}_2 = \opnorm{M}_2 \le \epsilon\).
    \qedhere
\end{enumerate}

\subsubsection{Proof of Lemma \ref{lem: bounds on inverse gram matrix}: Bounds on the inverse Gram matrix}
    By Theorem \ref{thm: approximately identity gram matrix},
    the assumption implies for all \(\model \in \modelSpace\)
    \[
        \norm{\model}_*^2 - \norm{\model}_{\Omega_n}^2
        \le \epsilon \norm{\model}_*^2
        \quad\implies\quad
        \norm{\model}_*^2 \le \tfrac1{1-\epsilon} \norm{\model}_{\Omega_n}^2,
    \]
    Lemma \ref{lem: gram bound equivalence} then implies \(\opnorm{\Gram_*^{-1}}_2 \le \frac1{1-\epsilon}\).
    And from \(\Gram_*^{-1} - \identity = \Gram_*^{-1}(\identity - \Gram_*)\)
    the second claim follows by sub-multiplicativity of the operator norm, that is
    \[
        \opnorm{\Gram_*^{-1} - \identity}_2
        \le \opnorm{\Gram_*^{-1}}_2 \opnorm{\identity - \Gram_*}_2
        \le \tfrac{\epsilon}{1-\epsilon}.
        \qedhere
    \]

\subsubsection{Proof of Lemma \ref{lem: gram concentration}: Gram concentration}

    A proof of the unweighted case may be found in \citep[Thm.~1]{cohenStabilityAccuracyLeast2013}. 
    The weighted case was already considered in \citep{cohenOptimalWeightedLeastsquares2017}. Both papers
    originally contained a mistake in the constant \(c(\epsilon)\), which
    was corrected in the unweighted paper \citep{cohenCorrectionStabilityAccuracy2019}.
    The bound \(c(\epsilon) \ge C \epsilon^2\) for \(\epsilon \in [0,1)\) is our contribution.
    For completeness sake we also provide a corrected proof of the weighted case.

    \noindent Observe that \(\E{\Gram_\mu} = \identity\) by
    \begin{align*}
        \E{(\Gram_\mu)_{ij}} = \E{\scp{\mBasis_j^\mu}{\mBasis_i^\mu}_{\Omega_n}}
        &= \E[\Big]{\frac1{n}\sum_{k=1}^n \weight^2(x_k) \overline{\mBasis_i^\mu(x_k)}\mBasis_j^\mu(x_k)}
        \\
        &= \int \overline{\mBasis_i^\mu(x)}\mBasis_j^\mu(x)\weight^2(x)\nu(dx)
        = \int \overline{\mBasis_i^\mu(x)}\mBasis_j^\mu(x)\mu(dx)
        = \delta_{ij}.
    \end{align*}
    The Gram matrix is further of the form \((\Gram_\mu)_{ij} = \sum_{k=1}^n X_k\)
    with independent positive semi-definite rank-one matrices \((X_k)_{ij} \coloneq \frac{\omega_k}{n}
    \overline{\mBasis_i^\mu(x_k)}\mBasis_j^\mu(x_k)\)
    with the property
    \[
        \lambda_{\max}(X_k)
        = \opnorm{X_k}_2
        = \frac{\omega_k}n\sum_{i=1}^m \abs{\mBasis_i^\mu(x_k)}^2
        = \frac{\weight^2(x_k)}{n} K_\modelSpace^\mu(x_k, x_k)
        \le \frac{\norm{\lever_\modelSpace^\mu}_\weight^2}{n} \eqcolon R
    \]
    We may consequently apply the Matrix Chernoff bound \citep[Thm.~1.1]{troppUserFriendlyTailBounds2012} to obtain
    \begin{align*}
        \prob{\lambda_{\min}(\Gram_\mu) \le 1-\epsilon}
        &\le m \Bigl[\frac{e^{-\epsilon}}{(1-\epsilon)^{1-\epsilon}}\Bigr]^{1/R}
        && \epsilon \in [0,1],
        \\
        \prob{\lambda_{\max}(\Gram_\mu) \ge 1+\epsilon}
        &\le m \Bigl[\frac{e^{\epsilon}}{(1+\epsilon)^{1+\epsilon}}\Bigr]^{1/R}
        && \epsilon \ge 0,
    \end{align*}
    where we used that \(\lambda_{\min}(\E{\Gram_\mu}) = \lambda_{\max}(\E{\Gram_\mu}) = 1\).
    To unify the bounds observe that for \(g(\epsilon) \coloneq \epsilon - (1+\epsilon)\ln(1+\epsilon)\)
    we have \(g(\epsilon) \ge g(-\epsilon)\) for all \(\epsilon \in [0, 1)\), since \(h(\epsilon) \coloneq g(\epsilon) - g(-\epsilon)\) is
    zero at \(\epsilon=0\) and increasing for \(\epsilon \in (0,1)\). Consequently,
    \[
        \tfrac{e^{-\epsilon}}{(1-\epsilon)^{1-\epsilon}} = e^{g(-\epsilon)} \le e^{g(\epsilon)} = \tfrac{e^{\epsilon}}{(1+\epsilon)^{1+\epsilon}}
        \qquad \forall \epsilon \in [0,1)
    \]
    and therefore
    \[
        \prob{\opnorm{\Gram_\mu - \identity}_2 > \epsilon}
        \le 2m \Bigl[\frac{e^{\epsilon}}{(1+\epsilon)^{1+\epsilon}}\Bigr]^{1/R}
        = 2m \exp\bigl( - \tfrac{c(\epsilon)}{\norm{\lever_\modelSpace^\mu}_\weight^2}n\bigr)
    \]
    using \(c(\epsilon) = -g(\epsilon)\) and the definition of \(R\). By Lemma \ref{lem: slowly decreasing functions}
    the function \(h(\epsilon) \coloneq c(\epsilon)/\epsilon^2\) is non-increasing and thus for all \(\epsilon \in [0,1)\)
    \[
        c(\epsilon)
        = \epsilon^2 h(\epsilon) \ge \epsilon^2 h(1)
        = \epsilon^2 (2\ln 2 - 1)
        = C \epsilon^2 
    \]
    reinserted into the equation above yields the desired result. With
    \(\epsilon \coloneq 1- \frac1\eta\) we have \(\eta = \frac1{1-\epsilon}\) and the second claim follows from 
    first and Lemma \ref{lem: bounds on inverse gram matrix} due to
    \[
        \prob{\opnorm{\Gram_\mu^{-1}}_2 > \eta}
        = \prob{\opnorm{\Gram_\mu^{-1}}_2 > \tfrac1{1-\epsilon}}
        \overset{\text{Lem.~\ref{lem: bounds on inverse gram matrix}}}\le \prob{\opnorm{\Gram_\mu - \identity}_2 > \epsilon}.
    \]
    What is left is to prove the Lemma we used to bound \(c(\epsilon)\) by \(C\epsilon^2\).

\begin{lemma}
    \label{lem: slowly decreasing functions}
    The function \(h(t) = \frac{c(t)}{t^2}\) is decreasing in \(t\ge 0\) for
    \(c(t) = (1+t)\ln(1+t) - t\). In particular,
    \begin{align*}
        h(t) &\in [h(1), h(0)] = \bigl[2\ln 2-1, \tfrac12\bigr] \subseteq [0.38, 0.5]
        && t \in [0,1]
    \end{align*}
    where the functions \(h\) is continuously extended to \(t=0\).
\end{lemma}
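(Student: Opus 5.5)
Since \(c\) is smooth on \([0,\infty)\) with \(c(0)=c'(0)=0\), I would write \(h\) through an integral representation, which makes monotonicity essentially automatic. We have \(c'(t)=\ln(1+t)\) and \(c''(t)=\frac1{1+t}\). Taylor's formula with integral remainder gives
\[
    c(t)=\int_0^t (t-s)\,c''(s)\,ds = \int_0^t \frac{t-s}{1+s}\,ds .
\]
Substituting \(s=tu\) yields
\[
    h(t)=\frac{c(t)}{t^2}=\int_0^1 \frac{1-u}{1+tu}\,du \qquad (t>0).
\]
For each fixed \(u\in[0,1)\) the integrand \(\frac{1-u}{1+tu}\) is positive and non-increasing in \(t\ge 0\). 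Hence \(h\) is non-increasing on \([0,\infty)\), and strictly decreasing for \(t>0\) because the integrand is strictly decreasing for \(u\in(0,1)\).

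The same formula gives the continuous extension at \(t=0\): the right-hand side is continuous in \(t\ge0\) by dominated convergence, with value \(h(0)=\int_0^1(1-u)\,du=\frac12\). This agrees with \(\lim_{t\downarrow0}c(t)/t^2=\frac12 c''(0)\).

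The endpoint value is computed directly: \(h(1)=c(1)=2\ln2-1\approx0.386\). Monotonicity then gives \(h(t)\in[h(1),h(0)]=[2\ln2-1,\frac12]\) for \(t\in[0,1]\). The inclusion in \([0.38,0.5]\) only needs the numerical bound \(\ln 2\ge 0.69\), which implies \(2\ln2-1\ge 0.38\).

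The only step needing care is setting up the integral representation. The rest is routine, so I do not expect a real obstacle. If the integral route is unwanted, one could instead compute \(h'(t)=\frac{t c'(t)-2c(t)}{t^3}\) and show \(k(t):=t\ln(1+t)-2c(t)\le0\) from \(k(0)=0\) and \(k'(t)=\frac{t}{1+t}-\ln(1+t)\le0\).
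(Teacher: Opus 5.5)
Your proof is correct, and your main argument takes a different route from the paper. The paper works directly with the derivative. It writes $h'(t) = -H(t)/t^3$ with $H(t) = (t+2)\ln(1+t) - 2t$, shows $H(0)=0$ and $H'(t) = \ln(1+t) - \frac{t}{1+t} > 0$, and then obtains $h(0)=\frac12$ by applying L'Hôpital's rule twice. This is exactly your fallback, since your $k$ equals $-H$.

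Your Taylor-remainder representation $h(t) = \int_0^1 (1-u)\,c''(tu)\,du = \int_0^1 \frac{1-u}{1+tu}\,du$ gives monotonicity, strict decrease and the continuous extension at $t=0$ in one step. It needs no sign analysis of an auxiliary function and no L'Hôpital. It also shows the structural reason for the result: $c(t)/t^2$ is non-increasing for any $c$ with $c(0)=c'(0)=0$ and non-increasing $c''$. The paper's calculus argument is tailored to this specific $c$, so your version is more general.

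The paper's route, in exchange, stays entirely within first-year differential calculus. It needs neither an integral representation nor a continuity argument for a parameter integral.
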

\begin{proof}
    Since \(c'(t) = \ln(1+t)\) we have
    \[
        h'(t)
        = \frac{c'(t)}{t^2} - \frac{2c(t)}{t^3}
        = \frac{t\ln(1+t) - 2[(1+t)\ln(1+t) - t]}{t^3}
        = \frac{2t - (t+2)\ln(1+t)}{t^3}
    \]
    Consequently \(h'(t) = -H(t)/t^3\) with \(H(t) = (t+2)\ln(1+t) - 2t\) and it is sufficient to prove \(H(t) > 0\) for \(t>0\). Observe that \(H(0) = 0\) 
    such that \(H'(t) > 0\) for \(t>0\) implies \(H(t) > 0\) for \(t>0\). And indeed, we have for \(t>0\)
    \[
        H'(t) = \ln(1+t) + \tfrac{t+2}{1+t} - 2 = \ln(1+t) - \tfrac{t}{1+t} > 0,
    \]
    since
    \[
        \ln(1+t) = \int_0^t \frac{1}{1+s} ds > \int_0^t \frac{1}{1+t} ds = \frac{t}{1+t}.
    \]
    We consequently have proven that \(h'(t) < 0\) for all \(t > 0\).
    For the lower bounds we simply evaluate the functions \(h\) at \(t=1\). For the upper bound we need
    to compute the limit as \(t\to 0\). We have by L'Hôpital's rule
    \[
        \lim_{t \to 0}h(t) 
        \overset{\text{L'Hôpital}}= \lim_{t \to 0} \frac{c'(t)}{2t}
        = \lim_{t \to 0} \frac{\ln(1+t)}{2t}
        \overset{\text{L'Hôpital}}= \lim_{t \to 0} \frac{1}{2(1+t)}
        = \frac12.
        \qedhere
    \]
\end{proof}

\subsection{Exact quadrature}

\begin{lemma}\label{lem: A_w and A_1 for exact quadrature}
    Assume the WLS setting \(\Omega_n = \diag[\omega_1,\dots, \omega_n]\), let \(\mu\) be a measure on \(\domain\) 
    \(\Gram_\mu\) the associated Gram matrix.
    \begin{enumerate}[label={(\roman*)},nosep]
        \item \label{it: A_1 approximate Gram identity}
        If \(\ones \in \modelSpace\) and \(\opnorm{\Gram_\mu -\identity}_2 \le \epsilon\) for \(\epsilon\in [0,1)\), then
        \[
            A_\ones(\Omega_n)
            = \norm{\ones}_{\Omega_n}
            \le \sqrt{1+\epsilon}\norm{\ones}_\mu
            = \sqrt{(1+\epsilon)\mu(\domain)}.
        \]
        \item \label{it: A_w exact quadrature}
        If \(\weight(x) = \frac{\sqrt{m}}{\lever_\modelSpace^\mu(x)}\) and \(\Gram_\mu = \identity\), then
        \[
            A_\weight(\Omega_n)
            = \norm{\tfrac1\weight}_{\Omega_n}
            = \norm{\tfrac1\weight}_\mu
            = 1.
        \]
\end{enumerate}
\end{lemma}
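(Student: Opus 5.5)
Both parts come from the same two facts. First, in the WLS setting the quantity \(A_\weight(\Omega_n)\) collapses to a weighted seminorm. Since \(\Omega_n\) is diagonal with non-negative entries, only the terms \(i=j\) survive:
\[
A_\weight(\Omega_n)^2=\frac1n\sum_{i=1}^n\frac{\omega_i}{\weight(x_i)^2}=\norm{\tfrac1\weight}_{\Omega_n}^2 .
\]
Taking \(\weight\equiv\ones\) gives \(A_\ones(\Omega_n)=\norm{\ones}_{\Omega_n}\). This establishes the first equality in both \ref{it: A_1 approximate Gram identity} and \ref{it: A_w exact quadrature}. Second, we compare this discrete norm with the \(\mu\)-norm. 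For that we use Theorem~\ref{thm: approximately identity gram matrix} in case (i) and Corollary~\ref{cor: exact quadrature} in case (ii). The only thing to verify is that the function being compared lies in \(\modelSpace\), or at least in a space where the comparison is valid.

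For \ref{it: A_1 approximate Gram identity} we have \(\ones\in\modelSpace\) and \(\opnorm{\Gram_\mu-\identity}_2\le\epsilon\). The implication \ref{it: approximately identity Gram matrix}\(\Rightarrow\)\ref{it: approximately equal norms} of Theorem~\ref{thm: approximately identity gram matrix} then gives
\[
\norm{\ones}_{\Omega_n}^2\le(1+\epsilon)\norm{\ones}_\mu^2 .
\]
Taking square roots and using \(\norm{\ones}_\mu^2=\int 1\,d\mu=\mu(\domain)\) finishes this part.

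For \ref{it: A_w exact quadrature}, the function \(\frac1\weight=\lever_\modelSpace^\mu/\sqrt m\) is in general not in \(\modelSpace\), so we cannot argue through the norm on \(\modelSpace\). This is the main obstacle. The way around it is to work with \(1/\weight^2\) instead. The square
\[
\frac1{\weight^2}=\frac{(\lever_\modelSpace^\mu)^2}{m}=\frac1m\sum_{k=1}^m\mBasis_k\overline{\mBasis_k}
\]
lies in \(\modelSpace\cdot\overline{\modelSpace}\), where \(\mBasis_k\) is a \(\mu\)-orthonormal basis. Since \(\Gram_\mu=\identity\), Corollary~\ref{cor: exact quadrature} gives exact quadrature on \(\modelSpace\cdot\overline{\modelSpace}\). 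Hence
\[
\norm{\tfrac1\weight}_{\Omega_n}^2=\int\frac{1}{\weight^2}\,d\mu_n=\int\frac1{\weight^2}\,d\mu=\norm{\tfrac1\weight}_\mu^2 .
\]
It remains to evaluate the right-hand side:
\[
\norm{\tfrac1\weight}_\mu^2=\frac1m\sum_{k=1}^m\norm{\mBasis_k}_\mu^2=1 .
\]
An equivalent route is to use that \(\Gram_\mu=\identity\) means each \(\mBasis_k\) is also orthonormal for \(\scp{\cdot}{\cdot}_{\Omega_n}\). Then \(\frac1n\sum_i\omega_i\abs{\mBasis_k(x_i)}^2=1\), and summing over \(k\) and dividing by \(m\) gives the same conclusion.
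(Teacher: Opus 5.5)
Your proof is correct and takes the same approach as the paper. Part (i) is the identical application of Theorem~\ref{thm: approximately identity gram matrix}, combined with the collapse of \(A_\weight(\Omega_n)\) to \(\norm{\tfrac1\weight}_{\Omega_n}\) for diagonal \(\Omega_n\). For part (ii), the paper uses exactly your ``equivalent route'': it shows the \(\mu\)-orthonormal basis is also \(\scp{\cdot}{\cdot}_{\Omega_n}\)-orthonormal via Corollary~\ref{cor: exact quadrature} and sums \(\frac1m\sum_k\norm{\mBasis_k}^2\) in both norms, so your primary route, exact quadrature applied to \(1/\weight^2\in\modelSpace\cdot\overline{\modelSpace}\), is just a repackaging of the same identity.
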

\begin{proof}
    \ref{it: A_1 approximate Gram identity} By Theorem \ref{thm: approximately identity gram matrix} we have on the event \(\opnorm{\Gram_\mu -\identity}_2 \le \epsilon\)
    that \(\abs{\norm{\model}_{\Omega_n}^2 - \norm{\model}_\mu^2} \le \epsilon \norm{\model}_\mu^2\) for all \(\model \in \modelSpace\),
    consequently
    \begin{equation}
        \label{eq: bound on sum of omega_i}    
        A_\ones(\Omega_n)^2
        = \frac1n\sum_{i,j=1}^n \frac{\abs{(\Omega_n)_{ij}}}{1 \cdot 1}
        = \frac1n\sum_{i=1}^n \omega_i = \norm{\ones}_{\Omega_n}^2
        \overset{\text{Thm.~\ref{thm: approximately identity gram matrix}}}\le (1+\epsilon) \norm{\ones}_\mu^2.
    \end{equation}
    and we always have \(A_\weight(\Omega_n)= \norm{\tfrac1\weight}_{\Omega_n}\)
    in the case of WLS regression. 

    \ref{it: A_w exact quadrature} We use the definition of the weight and leverage function to get
    \[
        \frac1{\weight^2(x)}
        = \frac{\lever_\modelSpace^\mu(x)^2}{m}
        = \frac{\reprK_\modelSpace^\mu(x,x)}{m}
        = \frac1m \sum_{j=1}^m \abs{\mBasis_j(x)}^2
    \]
    for an orthonormal basis \(\mBasis_1, \dots, \mBasis_m\) of \(\modelSpace\) with respect to \(\scp{\cdot}{\cdot}_\mu\).
    By Corollary \ref{cor: exact quadrature} and \(\Gram_\mu =\identity\) this basis is also orthonormal with respect to \(\scp{\cdot}{\cdot}_{\Omega_n}\) and thus
    \[
        \norm{\tfrac1\weight}_{\Omega_n}
        \overset{\text{def.}}= \sum_{i=1}^n \frac{\omega_i}{\weight^2(x_i)}
        = \frac1m \sum_{j=1}^m \sum_{i=1}^n \omega_i \abs{\mBasis_j(x_i)}^2
        \overset{\text{def.}}= \frac1m \sum_{j=1}^m \norm{\mBasis_j}_{\Omega_n}^2
        = 1.
    \] 
    We similarly have
    \[
        \norm{\tfrac1\weight}_\mu^2
        = \int \frac1{\weight^2(x)} d\mu(x)
        = \frac1m \sum_{j=1}^m \int \abs{\mBasis_j(x)}^2 d\mu(x)
        = 1.\qedhere
    \]
\end{proof}

\subsubsection{Proof of Theorem \ref{thm: specialized L2 noise bounds for exact quadrature}: \texorpdfstring{\(L^2\)}{L2}-bounds for exact quadrature}

Recall that the proof of Theorem \ref{thm: L2 concentration} was
the application of Lemma~\ref{lem: (L2-noise bound) trace and op-norm bounds} to bound
\(\tr(H_\mu)\) and \(\opnorm{H_\mu}_2\) in Theorem \ref{thm: L2 concentration
sharper}. This Lemma \ref{lem: (L2-noise bound) trace and op-norm bounds} provides an alternative bound for the trace in the case of WLS
regression, specifically
\[
    \tr(H_\mu)
    \overset{\text{Lem.~\ref{lem: A_w and A_1 for exact quadrature}}}\le \frac{\opnorm{\Sigma}_2\opnorm{\Gram_\mu^\dagger}_2^2}{n}
    \frac1n \sum_{i=1}^n \omega_i^2\lever_\modelSpace^\mu(x_i)^2
    \overset{\substack{\Gram_\mu=\identity\\ \text{def. }\omega_i}}=\frac{\opnorm{\Sigma}_2}{n}
    \underbrace{\frac1n \sum_{i=1}^n \omega_i m}_{=m \norm{\ones}_{\Omega_n}}
\]
By exact quadrature \(\norm{\ones}_{\Omega_n} = \norm{\ones}_\mu = \sqrt{\mu(\domain)} = 1\), 
which consequently implies
\[
    \tr(H_\mu) \le \opnorm{\Sigma}_2 \frac{m}{n}.
\]
Using this trace bound instead of the general bound from Lemma~\ref{lem:
(L2-noise bound) trace and op-norm bounds} in Theorem \ref{thm: L2 concentration sharper}
together with the usual bound on the operator norm yields the claim.

\subsubsection{Proof of Theorem \ref{thm: feasibility of exact quadrature}: Feasibility of exact quadrature}
\begin{itemize}[wide=0pt,nosep]
    \item \(\dim \modelSpace \le n_\mathrm{exact}\): For \(X=(x_1,\dots, x_n)\) with \(n < \dim \modelSpace\) the
    evaluation map \(\eval_X:\modelSpace\to \field^n\) needs to have a
    non-trivial kernel, which implies that there exists \(0\neq \model
    \in \modelSpace\) with \(\model(x_i)=0\) for all \(i=1,\dots,n\). Exact quadrature
    on \(\modelSpace \cdot \overline{\modelSpace}\) 
    then implies the contradiction
    \[
        0 = \frac1n\sum_{i=1}^n \omega_i \abs{\model(x_i)}^2 = \norm{\model}_\mu^2 > 0.
    \]
    \item \(n_\mathrm{exact} \le \dim(\modelSpace \cdot \overline{\modelSpace})\)
    is the Richter-Tchakaloff theorem \citep[e.g.][Thm.~1.24]{schmudgenMomentProblem2017}.
    \item \(\dim(\modelSpace \cdot \overline{\modelSpace}) \le m^2\)
    follows from the number of all possible basis products.
    In the real case, symmetry removes half of the mixed products yielding the bound \(\dim(\modelSpace \cdot \overline{\modelSpace}) \le \frac{m(m+1)}2\).
\end{itemize}

\subsubsection{Proof of Example~\ref{ex: exact quadrature for monomials of degree one}: Exact quadrature for monomials of degree one}

For the dimension of the product space we need to prove that all products
\(x^{(i)} \overline{x^{(j)}}\) are linearly independent (in the real case up to the
symmetry \(x^{(i)} x^{(j)} = x^{(j)} x^{(i)}\)). Assume there exists a linear combination
\[
    0 = \sum_{i,j=1}^m c_{ij} x^{(i)} \overline{x^{(j)}}
\]
In the real case, we can assume \(c_{ij} = c_{ji}\).
We get that the diagonal elements must be zero, \(c_{kk}=0\), by inserting
standard basis vectors \(e_k\). For the off-diagonal elements, inserting
\(e_k + e_l\) yields
\[
    0 = \sum_{i,j=1}^m c_{ij} (e_k^{(i)} + e_l^{(i)}) \overline{(e_k^{(j)} + e_l^{(j)})} 
    = c_{kk} + c_{kl} + c_{lk} + c_{ll}
    = c_{kl} + c_{lk}.
\]
In the real case, \(x^{(i)} x^{(j)} = x^{(j)} x^{(i)}\) so we would be done at this point
since \(c_{kl} = c_{lk}\). In the complex case, we can insert \(e_k + i e_l\) to get
\[
    0 = \sum_{i,j=1}^m c_{ij} (e_k^{(i)} + i e_l^{(i)}) \overline{(e_k^{(j)} + i e_l^{(j)})}
    = c_{kk} - i c_{kl} + i c_{lk} + c_{ll}
    = - i c_{kl} + i c_{lk}.
\]
Adding the second equation multiplied by \(i\) to the first yields \(c_{kl} = c_{lk} = 0\).
To get exact quadrature, it is sufficient to get exact quadrature on the basis functions
\(x^{(i)} \overline{x^{(j)}}\) of the product space. In other words, we need to match the moment matrix
\[
    M = \int x x^\conT \,\mu(dx),
\]
which is positive definite by the assumption that \(\norm{\cdot}_\mu\) is positive definite on \(\modelSpace\).
Consequently, there exists a spectral decomposition
\[
    M = \frac1m\sum_{i=1}^m \omega_i x_i x_i^\conT
\]
with positive eigenvalues \(\frac{\omega_i}m>0\) and orthonormal eigenvectors
\(x_i\in \complex^m\).  WLS with evaluation points \(x_1,
\dots, x_m\) and weights \(\omega_1, \dots, \omega_m\) therefore yields
exact quadrature and thus \(n_\mathrm{exact} \le m\). The equality \(n_\mathrm{exact} = m\) then follows from
Theorem \ref{thm: feasibility of exact quadrature}.

\subsubsection{Proof of Example \ref{ex: exact quadrature for monomials of degree one, reduced domain}: Minimal number of samples is upper bound}

Since the points in \(\domain\) are the only points used in the proof that
\(\dim(\modelSpace\cdot\overline{\modelSpace}) = m^2\) in Example \ref{ex: exact quadrature for monomials of degree one},
we get the same result for the reduced domain. As \(\modelSpace\cdot\overline{\modelSpace}\) is a subspace
of the vector space of all functions from \(\domain\) to \(\complex\), i.e.~\(\complex^{\domain}\) and
\(\dim \complex^{\domain} = m^2\) since \(\domain\) has \(m^2\) elements, we have \(\modelSpace\cdot\overline{\modelSpace} = \complex^{\domain}\)
by a dimension argument. This implies that \(\ind*{x}\in \modelSpace\cdot\overline{\modelSpace}\) for all \(x \in \domain\).
Exact quadrature must therefore satisfy
\[
    \mu(\set{x}) = \int \ind*{x}\,d\mu = \frac1n\sum_{i=1}^n \weight_i \ind*{x}(x_i)
\]
for all \(x\in \domain\) and by assumption \(\mu(\set{x}) >0\) for all \(x\in \domain\).
Therefore, each \(x\in \domain\) must equal one of the evaluation points \(x_i\)
such that necessarily \(n \ge  \#\domain = m^2\).

\subsubsection{Proof of Proposition \ref{prop: interpolation case}: The interpolation case \texorpdfstring{\(n=m\)}{n=m}}

Since \(\linProj = \regOp \circ \eval_X\) is the identity on \(\modelSpace\), we know that
the restriction of \(\eval_X\) to \(\modelSpace\), \(\eval_X^\modelSpace \coloneq {\eval_X}_{\restriction_\modelSpace}\)
is injective and by \(\dim \modelSpace=n\) bijective. And since \(\linProj_{\restriction\modelSpace} = \id\) we have \(\regOp = (\eval_X^\modelSpace)^{-1}\). This
implies that
\[
    \ell_i(x) = \ell(x)^\conT e_i \overset{\text{def.}}= \regOp(e_i)(x) = (\eval_X^\modelSpace)^{-1}[e_i](x).
\]
Consequently, \(\ell_i = (\eval_X^\modelSpace)^{-1}[e_i]\) and since \(f(x_j) = \eval_X[f]_j\), we have
\[
    \ell_i(x_j) = (\eval_X^\modelSpace)^{-1}[e_i](x_j) = \eval_X[(\eval_X^\modelSpace)^{-1}[e_i]]_j = (e_i)_j = \delta_{ij}.
\]
\ref{it: (interpolation) exact quadrature} \(\Rightarrow\) \ref{it: (interpolation) lagrange}: Since \(\ell_i \in \modelSpace\) we have by exact quadrature and \(n=m\)
\[
    \scp{\ell_i}{\ell_j}_\mu
    = \scp{\ell_i}{\ell_j}_{\Omega_n}
    = \frac1n\sum_{k=1}^n \omega_k \ell_i(x_k) \overline{\ell_j(x_k)}
    = \frac1m\sum_{k=1}^m \omega_k \delta_{ik}\delta_{jk} = \frac{\omega_i}m \delta_{ij}.
\]
\ref{it: (interpolation) lagrange} \(\Rightarrow\) \ref{it: (interpolation) kernel, weights}:
Since \(\linProj \model = \model\) for all \(\model \in \modelSpace\), we have
due to \(\eval_X[f] = \sum_{i=1}^m f(x_i) e_i\) and linearity
\[
    \model
    = \regOp \circ \eval_X[\model]
    = \sum_{i=1}^m \regOp(e_i)\model(x_i)  = \sum_{i=1}^m \model(x_i) \ell_i.
\]
Due to \(\ell_i \neq 0\) as \(\ell_i(x_i) = 1\), we have \(\frac{\omega_i}m = \norm{\ell_i}_\mu^2 > 0\) by positive definiteness of the norm.
Orthogonality of the \(\ell_i\) then implies
\[
    \scp{\model}{\tfrac{m}{\omega_j}\ell_j}_\mu
    = \sum_{i=1}^m \model(x_i) \scp{\ell_i}{\tfrac{m}{\omega_j}\ell_j}_\mu
    = \sum_{i=1}^m \model(x_i)  \delta_{ij}
    = \model(x_j).
\]
Consequently, \(\frac{m}{\omega_j}\ell_j\) is the unique Riesz representer of the
evaluation functional \(\eval_{x_j}\). Since the defining reproducing property 
implies the same for \(\reprK_\modelSpace^\mu(\cdot, x_j)\), we have
\(\frac{m}{\omega_j}\ell_j = \reprK_\modelSpace^\mu(\cdot, x_j)\). Observe that
this also implies the additional claim 
\(\ell_j = \tfrac{\omega_j}{m}\reprK_\modelSpace^\mu(\cdot, x_j)\)
once we have shown the full equivalence.
To finish this part of the proof, we now observe
\[
    \reprK_\modelSpace^\mu(x_i, x_j)
    = \tfrac{m}{\omega_j}\ell_j(x_i)
    = \tfrac{m}{\omega_j}\delta_{ij}.
\]
This implies the claim \(\reprK_\modelSpace^\mu(x_i, x_j) = 0\) for all \(i\neq j\) and \(\omega_i = \frac{m}{\reprK_\modelSpace^\mu(x_i, x_i)} = \frac{m}{\lever_\modelSpace^\mu(x_i)^2}\).
\\
\ref{it: (interpolation) kernel, weights} \(\Rightarrow\) \ref{it: (interpolation) orthonormal kernel}: By the reproducing
property of \(\reprK_\modelSpace^\mu\) (Lemma \ref{lem: reproducing kernel}) we have
\[
    \scp{\reprK_\modelSpace^\mu(\cdot, x_j)}{\reprK_\modelSpace^\mu(\cdot, x_i)}_\mu
    = \reprK_\modelSpace^\mu(x_i, x_j) = \lever_\modelSpace^\mu(x_i)^2 \delta_{ij}.
\]
For the last inequality we used the definition of the leverage function (Def.~\ref{def: leverage function}) for \(i=j\) and
the assumption for \(i\neq j\). Thus, the functions
\[
    \mBasis_i \coloneq  \tfrac{1}{\lever_\modelSpace^\mu(x_i)}\reprK_\modelSpace^\mu(\cdot, x_i)
    \overset{\text{def.~}\omega_i}= \sqrt{\tfrac{\omega_i}{m}}\reprK_\modelSpace^\mu(\cdot, x_i)
\]
form an orthonormal basis of \(\modelSpace\) w.r.t.\ \(\scp{\cdot}{\cdot}_\mu\).
\\
\ref{it: (interpolation) orthonormal kernel} \(\Rightarrow\) \ref{it: (interpolation) exact quadrature}: 
Since \(\mBasis_i\) is an orthonormal basis (ONB) of \(\modelSpace\), the basis representation
of any \(\model, \tilde\model \in \modelSpace\) implies
\[
    \scp{\model}{\tilde\model}_\mu
    \overset{\text{ONB}}= \sum_{i=1}^m \scp{\model}{\mBasis_i}_\mu \overline{\scp{\tilde\model}{\mBasis_i}_\mu}
    \overset{\text{repr.}}= \sum_{i=1}^m \frac{\omega_i}{m}\model(x_i)\overline{\tilde\model(x_i)}
    \overset{\text{def.}}= \scp{\model}{\tilde\model}_{\Omega_n}.
\]
The second inequality uses the reproducing property of \(\reprK_\modelSpace^\mu\) and \(\mBasis_i = \sqrt{\frac{\omega_i}{m}}\reprK_\modelSpace^\mu(\cdot, x_i)\).

\subsection{Proof of Theorem \ref{thm: lebesgue constant concentration}: Lebesgue constant concentration}

    Let \(\mBasisVec = (\mBasis_1, \dots, \mBasis_m)^\transpose\) be a basis-function vector of \(\modelSpace\),
    that is orthonormal with respect to \(L^2(\mu)\). Let \(\Gram_\mu\) be the associated Gram matrix and
    \(\modelVander\) the design matrix. Then the Lagrange function
    \(\ell(x) = \frac1n\Omega_n \modelVander\Gram_\mu^{-1}\mBasisVec(x)\)
    has components \(\ell_i(x) = \frac{\omega_i}{n} \mBasisVec^\conT(x_i) \Gram_\mu^{-1} \mBasisVec(x)\) such that
    the Lebesgue function (Definition \ref{def: lebesgue function and constant}) may be expressed as
    \[
        \lebfun{\weight}{\linProj}{x}
        \overset{\text{def.}}= \weight(x) \sum_{i=1}^n \frac{\abs{\ell_i(x)}}{\weight(x_i)}
        = \frac1n\sum_{i=1}^n \weight(x) \weight(x_i) \abs{\mBasisVec^\conT(x_i) \Gram_\mu^{-1} \mBasisVec(x)},
    \]
    using \(\omega_i = \weight^2(x_i)\). On the continuous side we use
    \(\mu(dx) = \weight^2(x)\nu(dx)\) to get
    \[
        \lebfun{\weight}{\linProj_\mu}{x}
        \overset{\text{def.}}= \weight(x) \int \frac{\abs{\reprK_\modelSpace^\mu(x, y)}}{\weight(y)}\mu(dy)
        = \int \weight(x)\weight(y) \abs{\reprK_\modelSpace^\mu(x, y)}\nu(dy).
    \]
    Since the \(x_i\) are iid samples drawn from \(\nu\) and \(\reprK_\modelSpace^\mu(x, x_i) = \mBasisVec^\conT(x_i) \mBasisVec(x)\)
    this is almost a Monte Carlo approximation of the integral if not for the inverse Gram matrix. 

\begin{steps}
    \item\label{step: (random to continuous) first bound on difference}\textbf{A first bound on the difference.}
    The following arguments will let us assume that \(\weight\equiv 1\) without loss of generality.
    Define the weighted model space \(\modelSpaceW \coloneq \set{\weight \model : \model \in
    \modelSpace}\) with basis \(\mBasisW_i(x)\coloneq \weight(x)
    \mBasis_i(x)\) and basis-function vector \(\mBasisVecW(x) = (\mBasisW_1(x),
    \dots, \mBasisW_m(x))^\transpose\). Observe that the \(\mBasisW_i\) are
    orthogonal with respect to \(L^2(\nu)\) since \(\mu(dx) = \weight^2(x)\nu(dx)\).
    In particular, we have
    \[
        \weight(x)\weight(y)\reprK_\modelSpace^\mu(x,y)
        = \sum_{k=1}^m \weight(x)\mBasis_k(x)\overline{\weight(y)\mBasis_k(y)}
        = \sum_{k=1}^m \mBasisW_k(x)\overline{\mBasisW_k(y)}
        = \reprK_\modelSpaceW^\nu(x,y).
    \]
    and similarly due to \((\Gram_\mu)_{ij} = \scp{\mBasis_j}{\mBasis_i}_{\Omega_n} = \sum_{k=1}^n \frac{\weight^2(x_k)}{n} \overline{\mBasis_i(x_k)}\mBasis_j(x_k) = \scp{\mBasisW_j}{\mBasisW_i}_\identity \eqcolon (\Gram_\nu)_{ij}\)
    \[
        \weight(x) \weight(x_i) \mBasisVec^\conT(x_i) \Gram_\mu^{-1} \mBasisVec(x)
        = \mBasisVecW^\conT(x_i) \Gram_\mu^{-1} \mBasisVecW(x)
        = \mBasisVecW^\conT(x_i) \Gram_\nu^{-1} \mBasisVecW(x).
    \]
    Put together, we consequently have
    \[
        \lebfun{\weight}{\linProj}{x}
        = \frac1n\sum_{i=1}^n \abs{\mBasisVecW^\conT(x_i) \Gram_\nu^{-1} \mBasisVecW(x)}
        \qquad\text{and}\qquad
        \lebfun{\weight}{\linProj_\mu}{x}
        = \int \abs{\reprK_\modelSpaceW^\nu(x, y)}\nu(dy).
    \]
    
        Using \(\mBasisVecW^\conT(x_i)\mBasisVecW(x) = \reprK_\modelSpaceW^\nu(x, x_i)\), the triangle inequality and \(\abs[\big]{\abs{a}- \abs{b}}
        \le \abs{a-b}\) we have
    \begin{align}
        \label{eq: Gram inverse distortion}
        &\abs{\lebfun{\weight}{\linProj}{x}
        - \lebfun{\weight}{\linProj_\mu}{x}}
        \\
        \nonumber
        &\le \underbrace{\frac1n \sum_{i=1}^n
        \abs[\Big]{\mBasisVecW^\conT(x_i) \Gram_\nu^{-1} \mBasisVecW(x) - \mBasisVecW^\conT(x_i) \mBasisVecW(x)}}_{
            \eqcolon \Delta^{\Gram}(x)
        }
        + \underbrace{\abs[\Bigg]{\frac1n \sum_{i=1}^n \abs[\big]{\reprK_\modelSpaceW^\nu(x, x_i)} - \int \abs{\reprK_\modelSpaceW^\nu(x,y)}\nu(dy)}}_{
            \eqcolon \Delta^{\text{MC}}(x)
        }.
    \end{align}

    \item\label{step: (random to continuous) bound on gram distortion} \textbf{Bound on Gram distortion \(\Delta^\Gram\).}
    Observe that \(\Delta^\Gram(x)\) is essentially an \(L^1\)-norm of a discrete probability measure
    that is thus bounded by the \(L^2\)-norm
    \begin{align*}
        \Delta^\Gram(x)
        &\le \Bigl(\frac1n \sum_{i=1}^n \bigl[\mBasisVecW^\conT(x_i) (\Gram_\nu^{-1} -\identity)\mBasisVecW(x)\bigr]^2\Bigr)^{1/2}
        \\
        &= \Bigl(\mBasisVecW^\conT(x)(\Gram_\nu^{-1} -\identity)\underbrace{\frac1n \sum_{i=1}^n \mBasisVecW(x_i) \mBasisVecW^\conT(x_i)}_{=\Gram_\nu}(\Gram_\nu^{-1} -\identity)\mBasisVecW(x)\Bigr)^{1/2}
        \\
        \overset{\text{Cauchy-Schwarz}}&\le 
        \Bigl(\abs[\big]{\mBasisVecW(x)}_2 \abs[\big]{(\identity - \Gram_\nu)(\Gram_\nu^{-1}-\identity) \mBasisVecW(x)}_2 \Bigr)^{1/2}
        \\
        &\le \opnorm{\identity - \Gram_\nu}_2^{1/2} \opnorm{\Gram_\nu^{-1}-\identity}_2^{1/2} \abs{\mBasisVecW(x)}_2
    \end{align*}
    On the event \(\opnorm{\Gram_\nu - \identity}_2 \le \delta\) we also have by
    Lemma \ref{lem: bounds on inverse gram matrix} \(\opnorm{\Gram_\nu^{-1} - \identity}_2 \le\frac{\delta}{1-\delta}\) and therefore
    \[
        \Delta^\Gram(x)
        \le \tfrac{\delta}{\sqrt{1-\delta}} \lever_\modelSpaceW^\nu(x)
        \qquad\text{and}\qquad
        \sup_{x\in \domain} \Delta^\Gram(x)
        \le \tfrac{\delta}{\sqrt{1-\delta}} \norm{\lever_\modelSpace^\mu}_\weight,
    \]
    since \(\abs{\mBasisVecW(x)}_2 = \lever_\modelSpaceW^\nu(x) = \weight(x) \lever_\modelSpace^\mu(x)\).
    For \(\eta = \frac{\delta}{\sqrt{1-\delta}}\) the only positive solution is
    \begin{equation}
        \label{eq: def delta_eta} 
        \delta_\eta =\tfrac{\eta}2\bigl(\sqrt{\eta^2 + 4} - \eta\bigr)
        = \tfrac{2\eta}{\sqrt{\eta^2 + 4} + \eta}.
    \end{equation}
    and therefore by Lemma \ref{lem: gram concentration}
    \[
    \left.\begin{aligned}[c]
        \prob[\Big]{\Delta^\Gram(x) > \eta \lever_\modelSpaceW^\nu(x)} 
        \\
        \prob[\Big]{\sup_{x\in \domain}\Delta^\Gram(x) > \eta \norm{\lever_\modelSpace^\mu}_\weight}
    \end{aligned}
    \right\} \le \prob{\opnorm{\Gram_\nu - \identity}_2 > \delta_\eta}
    \overset{\text{Lem.~\ref{lem: gram concentration}}}\le 
    2m \exp\Bigl(-g(\eta)\frac{n}{\norm{\lever_\modelSpace^\mu}_\weight^2}\Bigr)
    \]
    with \(g(t) \coloneq c(\delta_t)\) where \(c(t) = (1+t)\ln(1+t) - t\) is the function from Lemma \ref{lem: gram concentration}.

    \begin{lemma}[Properties of the function \(g\)]
        \label{lem: property of g}
        The function \(g(t)\) is increasing in \(t\ge 0\) from \(g(0)= 0\)
        and sub-homogeneous on \([0,1]\), that is \(g(\alpha \epsilon) \ge g(\alpha)\epsilon^2\) for all \(\epsilon, \alpha \in [0,1]\).
    \end{lemma}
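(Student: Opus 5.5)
The plan is to study \(g = c\circ\delta\) as a composition, with \(\delta_t = \frac{2t}{\sqrt{t^2+4}+t}\) from \eqref{eq: def delta_eta} and \(c(t)=(1+t)\ln(1+t)-t\).

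\textbf{Monotonicity.} I first check that \(\delta_t\) is increasing. It is the unique positive solution of \(t = \delta/\sqrt{1-\delta}\), and the map \(\delta\mapsto\delta/\sqrt{1-\delta}\) is strictly increasing from \([0,1)\) onto \([0,\infty)\). Hence its inverse \(t\mapsto\delta_t\) is increasing from \(\delta_0=0\), with values in \([0,1)\). Next, \(c'(t)=\ln(1+t)>0\) for \(t>0\) and \(c(0)=0\). Therefore \(g=c\circ\delta\) is increasing with \(g(0)=0\).

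\textbf{Sub-homogeneity.} I intend to factor \(g(t)/t^2\) as
\[
\frac{g(t)}{t^2} = \frac{c(\delta_t)}{\delta_t^2}\cdot\Bigl(\frac{\delta_t}{t}\Bigr)^2 = h(\delta_t)\,\Bigl(\frac{2}{\sqrt{t^2+4}+t}\Bigr)^2 .
\]
By Lemma~\ref{lem: slowly decreasing functions}, \(h\) is non-increasing, and \(\delta_t\) is increasing in \(t\), so \(h(\delta_t)\) is non-increasing in \(t\). The second factor is positive and decreasing in \(t\). A product of non-negative non-increasing functions is non-increasing, so \(G(t)\coloneq g(t)/t^2\) is non-increasing on \((0,\infty)\).

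With this, for \(\alpha,\epsilon\in(0,1]\) we have \(\alpha\epsilon\le\alpha\), and therefore
\[
g(\alpha\epsilon) = (\alpha\epsilon)^2 G(\alpha\epsilon) \ge \alpha^2\epsilon^2 G(\alpha) = \epsilon^2 g(\alpha).
\]
The edge cases \(\alpha=0\) or \(\epsilon=0\) are trivial, since \(g(0)=0\) and \(g\ge0\).

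\textbf{Main obstacle.} The only non-trivial point is seeing that \(g(t)/t^2\) is monotone. Differentiating \(c(\delta_t)\) directly would be messy. The factorization through \(h(\delta)=c(\delta)/\delta^2\) together with the explicit ratio \(\delta_t/t\) avoids this entirely, because it reuses Lemma~\ref{lem: slowly decreasing functions}. I would also double-check that this holds on all of \([0,1]\), which it does, since \(G\) is non-increasing on the whole of \((0,\infty)\).
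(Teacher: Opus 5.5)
Your proof is correct and follows the paper's argument: $g=c\circ\delta$ is increasing because $\delta_t$ is increasing and $c'(t)=\ln(1+t)>0$. Sub-homogeneity comes from the same factorization $g(t)/t^2 = h(\delta_t)\bigl(\tfrac{2}{\sqrt{t^2+4}+t}\bigr)^2$ combined with Lemma~\ref{lem: slowly decreasing functions}. The only minor difference is that you get monotonicity of $\delta_t$ by viewing it as the inverse of the strictly increasing map $\delta\mapsto\delta/\sqrt{1-\delta}$ on $[0,1)$, whereas the paper differentiates $\delta_t$ explicitly; both routes are valid.
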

    \begin{remark}
        For \(\epsilon = 1\) we have \(g(\alpha \epsilon) =
        g(\alpha)\epsilon^2\) and \(g(\alpha)\) is therefore the largest
        possible constant \(a\) such that \(a\epsilon^2\) forms
        a lower bound for all \(\epsilon \in [0,1]\).
    \end{remark}
    \begin{proof}
        \(g(0)=0\) is an immediate consequence of the definition as \(\delta_0 = 0\). 

        Next, we will show that the function \(g\) is increasing. Observe that \(\delta_t\) is increasing, since the derivative of \(\delta_t\) is
        \[
            \tfrac{d}{dt}\delta_t
            = \tfrac{2}{\sqrt{t^2 + 4} + t} - \tfrac{2t^2}{(\sqrt{t^2 + 4} + t)^2\sqrt{t^2 + 4}}
            = \tfrac{2(t^2 + 4 + t\sqrt{t^2 + 4}) - 2t^2}{(\sqrt{t^2 + 4} + t)^2\sqrt{t^2 + 4}},
        \]
        and the numerator is greater than \(8\) using \(t\sqrt{t^2 + 4} \ge t^2\).
        Since \(c'(t) = \ln(1+t) > 0\), \(c\) is increasing and therefore \(g(t) = c(\delta_t)\) is increasing as well.

        Finally, we prove the sub-homogeneity property.
        By Lemma \ref{lem: slowly decreasing functions} we have that \(h(t) =
        \frac{c(t)}{t^2}\) is
        decreasing. This implies that \(h(\delta_t)\) is decreasing, as \(\delta_t\) is increasing
        and therefore \(g(t)/t^2\) is decreasing since
        \[
            \tfrac{g(t)}{t^2}
            \overset{\text{def.~}g}= \tfrac{c(\delta_t)}{t^2}
            \overset{\text{def.~}h}= h(\delta_t) \tfrac{\delta_t^2}{t^2}
            \overset{\text{\eqref{eq: def delta_eta}}}= h(\delta_t) \bigl(\tfrac{2}{\sqrt{t^2 + 4} + t}\bigr)^2.
        \]
        As \(\alpha \epsilon \le \alpha\) we have \(\frac{g(\alpha \epsilon)}{(\alpha \epsilon)^2} \ge \frac{g(\alpha)}{\alpha^2}\) and thus
        \(g(\alpha \epsilon) \ge g(\alpha)\epsilon^2\).
    \end{proof}

    \item\label{step: (random to continuous) pointwise bound on MC}
    \textbf{Hoeffding bound on pointwise Monte Carlo error \(\Delta^{\text{MC}}\).}
    For the pointwise Monte Carlo error we want to apply Hoeffding's inequality
    to the real valued random variables \(\abs{\reprK_\modelSpaceW^\nu(x, x_i)}\)
    and must therefore show that these summands are bounded. Cauchy-Schwarz implies
    \[
        \abs{\reprK_\modelSpaceW^\nu(x, x_i)} \le \lever_\modelSpaceW^\nu(x)\lever_\modelSpaceW^\nu(x_i) \le \lever_\modelSpaceW^\nu(x)\norm{\lever_\modelSpace^\mu}_\weight,
    \]
    using \(\lever_\modelSpaceW^\nu(x_i) = \weight(x_i) \lever_\modelSpace^\mu(x_i)\) in the last inequality.
    The summands in \(\Delta^{\text{MC}}\) are therefore contained in the
    interval \([0,
    \lever_\modelSpaceW^\nu(x)\norm{\lever_\modelSpace^\mu}_\weight]\).  As they
    are equal in expectation to the second term
    \(\E{\abs{\reprK_\modelSpaceW^\nu(x, x_i)}} = \int
    \abs{\reprK_\modelSpaceW^\nu(x,y)}\nu(dy)\) we may therefore apply
    Hoeffding's inequality \citep[Thm.~2]{hoeffdingProbabilityInequalitiesSums1963} to get
    \[
        \prob[\big]{\Delta^{\text{MC}} > \delta \lever_\modelSpaceW^\nu(x)}
        = \prob[\Big]{
            \abs[\Big]{\frac1n \sum_{i=1}^n \abs{\reprK_\modelSpaceW^\nu(x, x_i)} - \E{\abs{\reprK_\modelSpaceW^\nu(x, x_i)}}}
            > \delta\lever_\modelSpaceW^\nu(x)
        }
        \le 2\exp\bigl(-\tfrac{2n\delta^2}{\norm{\lever_\modelSpace^\mu}_\weight^2}\bigr)
    \]
    Using \(\alpha \in (0,1)\) we consequently have for all \(\epsilon \in [0,1]\)
    by the stochastic triangle inequality
    \begin{align*}
        \nonumber
        &\prob[\big]{\abs{\lebfun{\weight}{\linProj}{x} - \lebfun{\weight}{\linProj_\mu}{x}} > \epsilon\lever_\modelSpaceW^\nu(x)}
        \\
        \nonumber
        &\le \prob{\Delta^\Gram(x) > \alpha\epsilon \lever_\modelSpaceW^\nu(x)}
        + \prob{\Delta^{\text{MC}}(x) > (1-\alpha)\epsilon \lever_\modelSpaceW^\nu(x)}
        \\
        &\le 2m \exp\Bigl(-g(\alpha\epsilon)\frac{n}{\norm{\lever_\modelSpace^\mu}_\weight^2}\Bigr)
        + 2\exp\Bigl(-2(1-\alpha)^2\epsilon^2\frac{n}{\norm{\lever_\modelSpace^\mu}_\weight^2}\Bigr)
    \intertext{
        with \(g(t) \coloneq c(\delta_t)\) as defined in \ref{step: (random to continuous) bound on gram distortion}. 
        Using \(g(\alpha \epsilon) \ge g(\alpha)\epsilon^2\) from Lemma \ref{lem: property of g}
        and joining the two terms we get
    }
        &\le 2(m+1) \exp\Bigl(-\underbrace{\min\set[\big]{g(\alpha), 2(1-\alpha)^2}}_{\coloneq \kappa(\alpha)}\frac{n\epsilon^2}{\norm{\lever_\modelSpace^\mu}_\weight^2}\Bigr).
    \end{align*}
    We now want to maximize the rate \(\kappa(\alpha)\) over \(\alpha\).
    As \(g\) is increasing (Lemma \ref{lem: property of g}) and the second term decreasing in \(\alpha\),
    a unique solution exists and can be found by balancing the terms in
    \(\kappa(\alpha)\). Numerical optimization suggests a maximum of more than \(0.1177\in [\frac19, \frac18)\)
    at a point close to \(\alpha \coloneq 3/4\) (Figure \ref{fig: bound optimization}).
Consequently, \(\max_\alpha \kappa(\alpha) \ge 0.1177 \ge \frac19\) proves 
    for all \(\epsilon \in [0,1]\)
    \begin{equation}
        \label{eq: pointwise lebesgue function concentration}
        \prob[\big]{\abs{\lebfun{\weight}{\linProj}{x} - \lebfun{\weight}{\linProj_\mu}{x}} > \epsilon\lever_\modelSpaceW^\nu(x)}
        \le 2(m+1) \exp\Bigl(-\frac{n\epsilon^2}{9\norm{\lever_\modelSpace^\mu}_\weight^2}\Bigr).
    \end{equation}
    Remembering \(\lever_\modelSpaceW^\nu(x) = \weight(x)\lever_\modelSpace^\mu(x)\) and reordering
    proves the high probability bound. The condition on \(n\) stems from
    the assumption \(\epsilon\in [0,1]\).

    \item
    \textbf{Uniform bound on the Monte Carlo error \(\Delta^{\text{MC}}\).}
    The uniform bound requires arguments from the  `uniform law of large numbers' theory \citep[e.g.][Ch.~4]{wainwrightHighDimensionalStatisticsNonAsymptotic2019}.
    Using \(\reprK_\modelSpaceW^\nu(x,x_i) = \mBasisVecW^\conT(x_i)\mBasisVecW(x) = \scp{\mBasisVecW(x)}{\mBasisVecW(x_i)}\)
    and \(\abs{\mBasisVecW(x)}_2 \le \lever_\modelSpaceW^\nu(x) \le \norm{\lever_\modelSpace^\mu}_\weight\)
    we have with \(Y\sim\nu\)
    \begin{align*}
        \sup_{x\in \domain} \Delta^{\text{MC}}(x)
        &= \sup_{x\in \domain}
        \abs[\bigg]{\frac1n \sum_{i=1}^n \abs[\big]{\reprK_\modelSpaceW^\nu(x, x_i)} - \int \abs{\reprK_\modelSpaceW^\nu(x,y)}\nu(dy)}
        \\
        &\le
        \sup_{\abs{v}_2 \le \norm{\lever_\modelSpace^\mu}_\weight}
        \abs[\bigg]{\frac1n \sum_{i=1}^n \abs[\big]{\scp{v}{\mBasisVecW(x_i)}} - \E{\abs{\scp{v}{\mBasisVecW(Y)}}}}
        \eqcolon F(\mBasisVecW(x_1), \dots, \mBasisVecW(x_n)),
    \end{align*}
    With
    \begin{align*}
        F_+ &\coloneq \sup_{\abs{v}_2 \le \norm{\lever_\modelSpace^\mu}_\weight}Z_v
        &
        Z_v &\coloneq \frac1n \sum_{i=1}^n \abs[\big]{\scp{v}{\mBasisVecW(x_i)}} - \E{\abs{\scp{v}{\mBasisVecW(Y)}}}
        \\
        F_- &\coloneq \sup_{\abs{v}_2 \le \norm{\lever_\modelSpace^\mu}_\weight}(-Z_v)
    \end{align*}
    we observe using \(\max\set{a,-a} = \abs{a}\) that
    \[
        F\coloneq F(\mBasisVecW(x_1), \dots, \mBasisVecW(x_n))
        \overset{\text{def.}}= \sup_{\abs{v}_2 \le \norm{\lever_\modelSpace^\mu}_\weight}\abs{Z_v}
        = \max\set{F_+, F_-}.
    \]
    We will first bound the expectation of \(F_+\) and \(F_-\) and then apply McDiarmid's inequality to
    get a high probability bound.
    \begin{steps}
    \item\textbf{Bound on the expectation of \(F_\pm\).} The first step to tackle the
    expectation of \(F_\pm\) is to apply a symmetrization argument. This approach is standard,
    but we could not find a fitting result in the literature, e.g.\
    \citep[Lem.~11.4]{boucheronConcentrationInequalitiesNonasymptotic2013} assumes centered random variables
    and \citep[Prop.~4.11]{wainwrightHighDimensionalStatisticsNonAsymptotic2019} would result
    in an absolute value around the average value inside the sup. So we
    do this manually. Recall that \(x_1,\dots, x_n\) are iid sampled from
    \(\nu\). Let \(y_1,\dots, y_n\)
    be an independent copy of \(x_1,\dots, x_n\) and \(\epsilon_1,\dots,
    \epsilon_n \sim\unif{\set{\pm 1}}\) be independent Rademacher random variables. Since \(\sup\)
    is a convex function, we have by Jensen's inequality applied to
    \(\eSym_{y}\) that
    \begin{align*}
        \E{F_\pm}
        &= \E[\bigg]{\sup_{\abs{v}_2 \le \norm{\lever_\modelSpace^\mu}_\weight} \pm\frac1n \sum_{i=1}^n \abs[\big]{\scp{v}{\mBasisVecW(x_i)}} - \eSym_y[\abs[\big]{\scp{v}{\mBasisVecW(y_i)}}]}
        \\
        \overset{\text{Jensen}}&\le
        \E[\bigg]{\sup_{\abs{v}_2 \le \norm{\lever_\modelSpace^\mu}_\weight} \pm\frac1n \sum_{i=1}^n \abs[\big]{\scp{v}{\mBasisVecW(x_i)}} - \abs[\big]{\scp{v}{\mBasisVecW(y_i)}}}
        \\
        \overset{x_i\sim y_i}&=
        \E[\bigg]{\sup_{\abs{v}_2 \le \norm{\lever_\modelSpace^\mu}_\weight} \frac1n \sum_{i=1}^n \epsilon_i\bigl(\abs[\big]{\scp{v}{\mBasisVecW(x_i)}} - \abs[\big]{\scp{v}{\mBasisVecW(y_i)}}\bigr)}
        \\
        &\le
        \E[\bigg]{\sup_{\abs{v}_2 \le \norm{\lever_\modelSpace^\mu}_\weight} \frac1n \sum_{i=1}^n \epsilon_i\abs[\big]{\scp{v}{\mBasisVecW(x_i)}}
        + \sup_{\abs{v}_2 \le \norm{\lever_\modelSpace^\mu}_\weight} \frac1n \sum_{i=1}^n -\epsilon_i\abs[\big]{\scp{v}{\mBasisVecW(y_i)}}}
        \\
        &\le 2\E[\bigg]{\sup_{\abs{v}_2 \le \norm{\lever_\modelSpace^\mu}_\weight} \frac1n \sum_{i=1}^n \epsilon_i\abs[\big]{\scp{v}{\mBasisVecW(x_i)}}}.
    \end{align*}
    Observe that we do not have an absolute value around the average value which citing
    \citep[Prop~4.11]{wainwrightHighDimensionalStatisticsNonAsymptotic2019} would have caused.
    This is important for the application of the contraction principle in the next step.
    So far we have only worked with the real random variables \(\abs{\scp{v}{\mBasisVecW(x_i)}}\).
    However, as we are removing the absolute value in the following, we need to
    distinguish between the real and complex case.
    \begin{casebycase}
        \item \(\field = \real\): 
        We may use the standard contraction principle \citep[Thm.~11.6]{boucheronConcentrationInequalitiesNonasymptotic2013}
        as the absolute value is a \(1\)-Lipschitz function that vanishes
        at \(0\), such that
        \begin{align*}
            \E{F_\pm}
            &\le 2\E[\Big]{
                \sup_{\abs{v}_2 \le \norm{\lever_\modelSpace^\mu}_\weight}
                \abs[\Big]{
                    \frac1n \sum_{i=1}^n \epsilon_i \scp{v}{\mBasisVecW(x_i)}
                }
            }
            \\
            &= 2\E[\Big]{
                \sup_{\abs{v}_2 \le \norm{\lever_\modelSpace^\mu}_\weight}
                \abs[\Big]{
                    \scp[\Big]{v}{\frac1n \sum_{i=1}^n \epsilon_i\mBasisVecW(x_i)}
                }
            }
            \\
            &= 2\norm{\lever_\modelSpace^\mu}_\weight
            \E[\Big]{
                \abs[\Big]{\frac1n \sum_{i=1}^n \epsilon_i\mBasisVecW(x_i)}_2
            }
            \\
            \overset{\text{Jensen}}&\le \frac{2\norm{\lever_\modelSpace^\mu}_\weight}n
            \E[\Big]{
                \abs[\Big]{\sum_{i=1}^n \epsilon_i\mBasisVecW(x_i)}_2^2
            }^{\frac12}.
        \end{align*}
        Since the Rademacher random variables are independent with mean zero and
        variance \(\E{\epsilon_i^2} = 1\), the mixed terms vanish and we get
        \[
            \E{F_{\pm}}
            \le \frac{2\norm{\lever_\modelSpace^\mu}_\weight}n
            \Bigl(
                \sum_{i=1}^n \E[\big]{\abs[\big]{\mBasisVecW(x_i)}_2^2}
            \Bigr)^{\frac12}.
        \]

        \item \(\field = \complex\): In this case we treat
        \(\scp{v}{\mBasisVecW(x_i)} = \Re(\scp{v}{\mBasisVecW(x_i)}) +
        i\Im(\scp{v}{\mBasisVecW(x_i)})\) as a two-dimensional real vector and
        the absolute value \((a+ib) \mapsto \sqrt{a^2 + b^2}\) is the
        \(1\)-Lipschitz function on \(\real^2\) that vanishes at \(0\).
        This requires a vector-valued version of the contraction principle,
        which is provided by \citet[Cor.~1,
        p.~6]{maurerVectorContractionInequalityRademacher2016}. With it we get
        with \(\epsilon_{i,1}, \epsilon_{i,2}\) independent Rademacher random variables and \(\zeta_i = \epsilon_{i,1} + i\epsilon_{i,2}\).
        \begin{align*}
            \E{F_\pm}
            &\le 2\E[\bigg]{\sup_{\abs{v}_2 \le \norm{\lever_\modelSpace^\mu}_\weight} \frac1n \sum_{i=1}^n \epsilon_i\abs[\big]{\scp{v}{\mBasisVecW(x_i)}}}.
            \\
            &\le 2\sqrt{2}\E[\Big]{
                \sup_{\abs{v}_2 \le \norm{\lever_\modelSpace^\mu}_\weight}
                \abs[\Big]{
                    \frac1n \sum_{i=1}^n
                    \underbrace{
                    \epsilon_{i,1} \Re(\scp{v}{\mBasisVecW(x_i)})
                    + \epsilon_{i,2} \Im(\scp{v}{\mBasisVecW(x_i)})
                    }_{=\Re\bigl(\overline{\zeta_i} \scp{v}{\mBasisVecW(x_i)}\bigr)}
                }
            }
            \\
            &= 2\sqrt{2}\E[\Big]{
                \sup_{\abs{v}_2 \le \norm{\lever_\modelSpace^\mu}_\weight}
                \abs[\Big]{
                    \Re\Bigl(\scp[\Big]{v}{\frac1n \sum_{i=1}^n\zeta_i \mBasisVecW(x_i)}\Bigr)
                }
            }
            \\
            &= 2\sqrt{2}\norm{\lever_\modelSpace^\mu}_\weight\E[\Big]{
                \abs[\Big]{\frac1n \sum_{i=1}^n\zeta_i \mBasisVecW(x_i)}_2
            }
            \\
            \overset{\text{Jensen}}&\le \frac{2\sqrt{2}\norm{\lever_\modelSpace^\mu}_\weight}n
            \E[\Big]{\abs[\Big]{\sum_{i=1}^n \zeta_i \mBasisVecW(x_i)}_2^2}^{\frac12}.
        \end{align*}
        Since the \(\zeta_i\) are independent random variables in \(\complex\) with mean zero,
        the mixed terms vanish and we get
        \[
            \E[\Big]{\abs[\Big]{\sum_{i=1}^n \zeta_i \mBasisVecW(x_i)}_2^2}
            = \sum_{i=1}^n \E{\abs{\zeta_i \mBasisVecW(x_i)}_2^2}
            = \sum_{i=1}^n \underbrace{\E{\abs{\zeta_i}_2^2}}_{=2} \E{\abs{\mBasisVecW(x_i)}_2^2},
        \]
        using the scaling property of the complex two norm \(\abs{\zeta_i\mBasisVecW(x_i)}_2 = \abs{\zeta_i}\abs{\mBasisVecW(x_i)}_2\)
        and independence.
    \end{casebycase}
    In conclusion, we have for both the real and complex case
    \[
        \E{F_{\pm}}
        \le \frac{c_\field\norm{\lever_\modelSpace^\mu}_\weight}n
        \Bigl(
            \sum_{i=1}^n \E[\big]{\abs[\big]{\mBasisVecW(x_i)}_2^2}
        \Bigr)^{\frac12}.
    \]
    with \(c_\real= 2\) and \(c_\complex = 4\).
    And since \(\mBasisW_1, \dots, \mBasisW_m\) are orthonormal with respect to \(L^2(\nu)\) we have
    that each summand is equal to \(m\), indeed
    \[
        \E{\abs{\mBasisVecW(x_i)}_2^2}
        = \E[\Big]{\sum_{k=1}^m \abs{\mBasisW_k(x_i)}^2}
        = \sum_{k=1}^m \int \abs{\mBasisW_k(x)}^2 \nu(dx)
        = m.
    \]
    Put together, this implies
    \[
        \E{F_{\pm}}
        \le c_\field \norm{\lever_\modelSpace^\mu}_\weight \sqrt{\tfrac{m}{n}}.
    \]

    \item \textbf{Concentration with McDiarmid's inequality.}
    We will now show concentration of both \(F_+\) and \(F_-\) around their expectations using McDiarmid's inequality.
    To apply McDiarmid's inequality, we need to show that \(F_\pm=F_\pm(\mBasisVecW(x_1), \dots, \mBasisVecW(x_n))\) has the bounded difference property. Recall that
    any of its inputs \(\mBasisVecW(x_i)\) have the property \(\abs{\mBasisVecW(x_i)}_2 \le \norm{\lever_\modelSpace^\mu}_\weight\).
    We must therefore show that there exists a constant \(c > 0\) such that for all \(y_i \in V \coloneq \set{y \in \field^m : \abs{y}_2 \le \norm{\lever_\modelSpace^\mu}_\weight}\)
    \[
        \sup_{y_i' \in V}
        \!\abs[\big]{
            F(y_1, \dots, y_{i-1}, y_i, y_{i+1}, \dots, y_n)
            - F(y_1, \dots, y_{i-1}, y_i', y_{i+1}, \dots, y_n)
        } \le c.
    \]
    Due to the symmetry in the definition of \(F_\pm\), we may only consider \(i=1\) without loss of
    generality. Indeed, recall that for
    \[
        Z_v(y_1) = Z_v(y_1, \dots, y_n)
        \coloneq \frac1n \sum_{i=1}^n \abs[\big]{\scp{v}{y_i}} - \E{\abs{\scp{v}{\mBasisVecW(Y)}}}
    \]
    we have \(F_\pm(y_1) \coloneq F_\pm(y_1, \dots, y_n) = \sup_{v\in V} \pm Z_v(y_1, \dots, y_n)\). Then
    \[
        \sup_{y_1'\in V}
        \abs[\big]{F_+(y_1) - F_+(y_1')}
        = \sup_{y_1'\in V}
        \abs[\Big]{\sup_{v\in V} \inf_{v'\in V}Z_v(y_1) -  Z_{v'}(y_1')}
        \le \sup_{y_1'\in V} \sup_{v\in V}
        \abs[\big]{Z_v(y_1) - Z_v(y_1')}
    \]
    and similarly
    \[
        \sup_{y_1'\in V}
        \abs[\big]{F_-(y_1) - F_-(y_1')}
        = \sup_{y_1'\in V}
        \abs[\Big]{\sup_{v'\in V} \inf_{v\in V}Z_{v'}(y_1') -  Z_{v}(y_1)}
        \le \sup_{y_1'\in V} \sup_{v\in V}
        \abs[\big]{Z_v(y_1) - Z_v(y_1')}
    \]
    thus we have by \(\abs{\abs{a} - \abs{b}} \le \abs{a-b}\) and the Cauchy-Schwarz inequality
    \begin{align*}
        \abs[\big]{Z_v(y_1) -Z_v(y_1')}
        = \tfrac1n\abs[\big]{\abs{\scp{v}{y_1}} - \abs{\scp{v}{y_1'}}}
        &\le  \tfrac1n\abs{\scp{v}{y_1 - y_1'}}
        \\
        \overset{\smash{\text{C.S.}}}&\le \tfrac1n\abs{v}_2 \abs{y_1 - y_1'}_2,
        \le \tfrac2n \norm{\lever_\modelSpace^\mu}_\weight^2,
    \end{align*}
    where we use \(v, y_1, y_1' \in V\) and therefore \(\abs{v}_2, \abs{y_1}_2, \abs{y_1'}_2 \le \norm{\lever_\modelSpace^\mu}_\weight\)
    in the last inequality. Put together, this implies
    \[
        \sup_{y_1'\in V}
        \abs[\big]{F_\pm(y_1, \dots, y_n) - F_\pm(y_1', y_2, \dots, y_n)}
        \le \tfrac{2}n\norm{\lever_\modelSpace^\mu}_\weight^2.
    \]
    McDiarmid's bounded difference inequality \citep[e.g.][Thm.~6.2]{boucheronConcentrationInequalitiesNonasymptotic2013}
    then implies
    \[
        \prob[\big]{F_\pm(\mBasisVecW(x_1), \dots, \mBasisVecW(x_n)) - \E{F_\pm(\mBasisVecW(x_1), \dots, \mBasisVecW(x_n))} > t}
        \le \exp\Bigl(-\frac{nt^2}{2\norm{\lever_\modelSpace^\mu}_\weight^4}\Bigr).
    \]

    \begin{figure}
        \centering
        \resizebox{0.7\linewidth}{!}{ \begin{tikzpicture}
\begin{axis}[
    width=14cm,
    height=8cm,
    xmin=0, xmax=1.1,
    ymin=0, ymax=0.21,
    axis lines=left,
    xlabel={$x$},
    ylabel={},
    grid=both,
    major grid style={gray!45},
    minor grid style={gray!20},
    tick align=outside,
    xtick={0,0.2,0.4,0.6,0.8,1},
    ytick={0,0.05,0.10,0.15,0.20},
    yticklabel style={
        /pgf/number format/fixed,
        /pgf/number format/precision=2
    },
    samples=300,
    domain=0:1.2,
    enlargelimits=false,
    clip=true,
    legend style={
        at={(0.03,0.97)},
        anchor=north west,
        fill=white,
        draw=gray!50
    },
    declare function={
        delta(\x) = 2*\x/(sqrt(\x^2 + 4) + \x);
        cfun(\x) = (1+\x)*ln(1+\x) - \x;
        g(\x) = cfun(delta(\x));
    },
    every node/.style={font=\small},
    scale=1,
]

\addplot[
    very thick,
    red!75!black
]
{2*(1-x)^2};
\addlegendentry{$2(1-x)^2$}

\addplot[
    very thick,
    blue
]
{0.5*(1-x)^2};
\addlegendentry{$\frac12(1-x)^2$}

\addplot[
    very thick,
    green!90!white
]{g(x)};
\addlegendentry{$g(x)$}

\addplot[
    only marks,
    mark=*,
    mark size=2.2pt,
    red!75!black
]
coordinates {(0.757386,0.1177232)};

\addplot[
    only marks,
    mark=*,
    mark size=2.2pt,
    blue
]
coordinates {(0.58764,0.0850205)};

\node[
    anchor=west,
    text=red!75!black
] at (axis cs:0.757386,0.1177232) {$\,(0.757386,\,0.1177232)$};

\node[
    anchor=east,
    text=blue
] at (axis cs:0.58764,0.0850205) {$(0.58764,\,0.0850205)\,$};

\end{axis}
\end{tikzpicture} }
        \caption{Maximization of \(\kappa(\alpha) = \min\set{g(\alpha), 2(1-\alpha)^2}\) and \(\tilde\kappa(\theta) = \min\set{g(\theta), \frac12(1-\theta)^2}\).}
        \label{fig: bound optimization}
    \end{figure}

    \item\textbf{Assembly and optimization.}
    Put together we have
    \begin{align*}
        &\prob[\big]{\abs{\lebconst{\weight}{\linProj} - \lebconst{\weight}{\linProj_\mu}} > \epsilon \norm{\lever_\modelSpace^\mu}_\weight}
        \\
        &\le
        \prob[\big]{\abs{\lebconst{\weight}{\linProj} - \lebconst{\weight}{\linProj_\mu}} > \epsilon \norm{\lever_\modelSpace^\mu}_\weight}
        \\
        &\le \prob[\Big]{\sup_{x\in \domain} \abs{\lebfun{\weight}{\linProj}{x} - \lebfun{\weight}{\linProj_\mu}{x}} > \epsilon \norm{\lever_\modelSpace^\mu}_\weight}
        \\
        &\le \prob[\Big]{\sup_{x\in \domain}\Delta^\Gram(x) > \alpha\epsilon \norm{\lever_\modelSpace^\mu}_\weight}
        + \prob[\Big]{\sup_{x\in \domain}\Delta^{\text{MC}}(x) > (1-\alpha)\epsilon \norm{\lever_\modelSpace^\mu}_\weight}
        \\
        &\le 2m \exp\Bigl(-g(\alpha\epsilon)\frac{n}{\norm{\lever_\modelSpace^\mu}_\weight^2}\Bigr)
        + \prob[\Big]{F(\mBasisVecW(x_1), \dots, \mBasisVecW(x_n)) > (1-\alpha)\epsilon \norm{\lever_\modelSpace^\mu}_\weight}
    \end{align*}
    with \(g(t) = c(\delta_t)\), \(c(t) = (1+t)\ln(1+t) - t\) and \(\delta_t\) as defined in \eqref{eq: def delta_eta}.
    Using \(F=\max\set{F_+, F_-}\) we have
    \[
        \prob[\Big]{F > (1-\alpha)\epsilon \norm{\lever_\modelSpace^\mu}_\weight}
        \le 
        \prob[\Big]{F_+ > (1-\alpha)\epsilon \norm{\lever_\modelSpace^\mu}_\weight}
        + \prob[\Big]{F_- > (1-\alpha)\epsilon \norm{\lever_\modelSpace^\mu}_\weight}
    \]
    And with the bound on the expectations of \(F_\pm\) and McDiarmid's inequality we have
    \begin{align*}
        \prob[\Big]{F > (1-\alpha)\epsilon \norm{\lever_\modelSpace^\mu}_\weight}
        &\le \prob[\bigg]{F_\pm - \E{F_\pm} > (1-\alpha)\epsilon \norm{\lever_\modelSpace^\mu}_\weight - c_\field\norm{\lever_\modelSpace^\mu}_\weight\sqrt{\tfrac{m}{n}}}
        \\
        &\le \exp\Bigl(-\frac{n\bigl((1-\alpha)\epsilon - c_\field\sqrt{\frac{m}{n}}\bigr)_+^2}{2\norm{\lever_\modelSpace^\mu}_\weight^2}\Bigr),
    \end{align*}
    where \(x_+ = \max\set{x,0}\) denotes the positive part of \(x\). Note that the negative case
    results in the vacuous bound \(\exp(-0) = 1\).
    Put together we thus have
    \begin{align*}
        &\prob[\Big]{\abs{\lebconst{\weight}{\linProj} - \lebconst{\weight}{\linProj_\mu}} > \epsilon \norm{\lever_\modelSpace^\mu}_\weight}
        \\
        &\le 2m \exp\Bigl(-g(\alpha\epsilon)\frac{n}{\norm{\lever_\modelSpace^\mu}_\weight^2}\Bigr)
        + 2\exp\Bigl(-\frac{n\bigl((1-\alpha)\epsilon - c_\field\sqrt{\frac{m}{n}}\bigr)_+^2}{2\norm{\lever_\modelSpace^\mu}_\weight^2}\Bigr).
    \intertext{
        with \(g(t) \coloneq c(\delta_t)\) as defined in \ref{step: (random to continuous) bound on gram distortion}. 
        Using \(g(\alpha \epsilon) \ge g(\alpha)\epsilon^2\) from Lemma \ref{lem: property of g}
        and joining the two terms we get
    }
        &\le
        2(m+1) \exp\Bigl(-\underbrace{\min\set[\Big]{g(\alpha), \tfrac12\Bigl(1-\alpha - c_\field \sqrt{\tfrac{m}{n\epsilon^2}}\Bigr)_+^2}}_{=h(\alpha, \frac{n\epsilon^2}{m})} \frac{n\epsilon^2}{\norm{\lever_\modelSpace^\mu}_\weight^2}\Bigr)
    \end{align*}
    Ideally, we would maximize the rate function
    \[
        h(\alpha, \rho) \coloneq \min\set[\big]{g(\alpha), \tfrac12\bigl(1-\alpha - \tfrac{c_\field}{\sqrt{\rho}}\bigr)_+^2}
        \qquad \text{with}\qquad \rho \coloneq \tfrac{n\epsilon^2}{m}
    \]
    over \(\alpha \in (0,1)\). To avoid the second term being zero, which
    results in a vacuous bound, we need \(0<\alpha < 1
    -\frac{c_\field}{\sqrt{\rho}} \eqcolon a\). For this interval to not be empty, we
    need \(a>0\) and this requires \(\rho > c_\field^2\), which we will now assume
    without loss of generality since the case \(\rho<c_\field^2\) results in a vacuous
    bound in the theorem statement of \(\prob{\dots} \le \exp(0) = 1\). To ensure \(\alpha < a\), we reparametrize
    \(\alpha = \theta a\) with \(\theta \in [0,1]\) to get
    \[
        (1-\alpha - \tfrac{4}{\sqrt{\rho}})^2
        = (a- \theta a)^2 = a^2 (1-\theta)^2.
    \]
    Together with \(g(\alpha) = g(\theta a) \ge g(\theta)a^2\) using Lemma \ref{lem: property of g}, this implies
    \[
        h(\alpha, \rho) \ge a^2 \min\set[\big]{g(\theta), \tfrac12(1-\theta)^2} \eqcolon a^2 \tilde\kappa(\theta).
    \]
    Note that applying Lemma \ref{lem: property of g} twice loses no power,
    since \(g(\alpha \epsilon) = g(\theta a \epsilon) \ge g(\theta) (a \epsilon)^2\).
    We are left with a similar one parameter optimization problem as in the
    pointwise case. This time the second term is halved and not
    doubled. Numerical optimization yields a maximum of \(0.0850 \ge
    \frac1{12}\) (Figure \ref{fig: bound optimization}). Inserting this into the bound yields the uniform bound
    \begin{equation}
        \label{eq: uniform leb constant concentration}    
        \prob[\Big]{\abs{\lebconst{\weight}{\linProj} - \lebconst{\weight}{\linProj_\mu}} > \epsilon \norm{\lever_\modelSpace^\mu}_\weight}
        \le 2(m+1) \exp\Bigl(- \frac{a^2}{12} \frac{n\epsilon^2}{\norm{\lever_\modelSpace^\mu}_\weight^2}\Bigr),
    \end{equation}
    where we recall \(a^2 \coloneq (1- \frac{c_\field}{\sqrt{\rho}})^2\) and \(\rho\coloneq\frac{\epsilon^2n}{m}\) by definition with \(c_\real =2\) and \(c_\complex = 4\).
\end{steps}
    \item \textbf{Reordering for high-probability bound.} To get a high probability bound
    from Equation \eqref{eq: uniform leb constant concentration}, we set
    \[
        2(m+1) \exp\Bigl(- \frac{(1-\frac{c_\field}{\sqrt{\rho}})^2}{12} \frac{n\epsilon^2}{\norm{\lever_\modelSpace^\mu}_\weight^2}\Bigr)
        \overset{!}= \delta,
    \]
    and reorder to get, up to the requirement that \(\epsilon\in[0,1]\), that
    \begin{equation}
        \label{eq: (leb const concentration) epsilon equation}    
        \epsilon^2 = \tfrac{12}{(1-\frac{c_\field}{\sqrt{\rho}})^2}
        \tfrac{\norm{\lever_\modelSpace^\mu}_\weight^2}{n}
        \ln\bigl(2\tfrac{m+1}{\delta}\bigr) \overset{!}\in [0,1].
    \end{equation}
    The difficulty is that \(\rho\) itself is defined in terms of \(\epsilon\), which creates a circular definition. 
    To skip the pain of solving this fixed point equation, we begin by postulating
    \[
        \rho \coloneq \Bigl(c_\field + \sqrt{12\ln(2\tfrac{m+1}{\delta}) \tfrac{\norm{\lever_\modelSpace^\mu}_\weight^2}{m}}\Bigr)^2,
    \]
    which is not dependent on \(\epsilon\) such that we may take Equation
    \eqref{eq: (leb const concentration) epsilon equation} as definition. To see that
    this choice of \(\rho\) and \(\epsilon\) satisfies \(\rho = \frac{\epsilon^2n}{m}\)
    observe that
    \[
        \sqrt{\rho}(1-\tfrac{c_\field}{\sqrt{\rho}}) 
        = \sqrt{\rho} - c_\field
        \overset{\text{def.\ }\rho}= \sqrt{12\ln(2\tfrac{m+1}{\delta}) \tfrac{\norm{\lever_\modelSpace^\mu}_\weight^2}{m}}
    \]
    implies
    \[
        \rho
        = \frac{12\ln(2\tfrac{m+1}{\delta}) }{(1-\tfrac{c_\field}{\sqrt{\rho}})^2}\frac{\norm{\lever_\modelSpace^\mu}_\weight^2}{m}
        \overset{\text{\eqref{eq: (leb const concentration) epsilon equation}}}= \frac{\epsilon^2n}{m}.
    \]
    To get rid of \(\rho\) in \(\epsilon\)
    we define with \(c_\real = 2\) and \(c_\complex = 4\)
    \begin{equation}
        \label{eq: constant C_field,m definition}    
        C_{\field, m, \delta} \coloneq \sqrt{12}\Bigl(1-\tfrac{c_\field}{c_\field + \sqrt{12\ln(2\frac{m+1}{\delta})}}\Bigr)^{-1} \ge \sqrt{12}(1-\tfrac{c_\field}{\sqrt{\rho}})^{-1}
    \end{equation}
    using \(\sqrt{\rho} \ge c_\field + \sqrt{12\ln(2\tfrac{m+1}{\delta})}\) due to \(\norm{\lever_\modelSpace^\mu}_\weight^2 \ge m\).
    The assumption \(\epsilon \le 1\) is therefore satisfied if
    \[
        \epsilon^2 \le C_{\field, m, \delta}^2 \tfrac{\norm{\lever_\modelSpace^\mu}_\weight^2}{n} \ln(2\tfrac{m+1}{\delta}) \le 1,
    \]
    which yields the lower bound on \(n\). By definition of \(\epsilon\) we
    consequently have with probability at least \(1-\delta\)
    \[
        \abs{\lebconst{\weight}{\linProj} - \lebconst{\weight}{\linProj_\mu}}
        \le \epsilon \norm{\lever_\modelSpace^\mu}_\weight
        \le C_{\field,m, \delta} \tfrac{\norm{\lever_\modelSpace^\mu}_\weight^2}{\sqrt{n}} \sqrt{\ln(2\tfrac{m+1}{\delta})}.
    \]
    We have \(C_{\field, m, \delta}\to \sqrt{12}\) as \(m\to \infty\) almost by definition. To get
    universal bounds, observe that \(m\ge 1\) and \(\delta \in [0,1]\) implies
    \[
        12\ln(2\tfrac{m+1}{\delta})
        \ge 12 \ln(4) \approx 16.635... \ge 16
    \]
    Using \(c_\real = 2\) and \(c_\complex = 4\) we thus have
    \[
        C_{\field, m, \delta} \le \sqrt{12}\bigl(1-\tfrac{c_\field}{c_\field + 4}\bigr)^{-1} 
        = \begin{cases}
            \sqrt{4\cdot 12}=4\sqrt{3} = C_\complex
            & \field = \complex
            \\
            \sqrt{12}\frac{3}{2} = \sqrt{27} = 3\sqrt{3} = C_\real
            & \field = \real.
        \end{cases}
    \]
\end{steps}

\subsection{Metric projection error}

\begin{lemma}[Lipschitz approximation]
    \label{lem: lipschitz approximation}
    Assume that \(\domain\subseteq \real^\dims\) is a convex domain equipped with the sup-norm
    metric. For \(f\colon \domain \to \real\) define for \(h>0\)
    \[
        q(x) \coloneq \inf_{y\in \domain}\set[\big]{f(y) + \tfrac{\modCont(f; h)}{h} \abs{x-y}_\infty} + \tfrac{\modCont(f; h)}{2}
    \]
    Then \(\Lip(q) \le \frac{\modCont(f; h)}{h}\) and \(\norm{f-q}_\infty \le \frac12\modCont(f; h)\).
\end{lemma}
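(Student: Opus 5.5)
We use the standard inf-convolution (McShane extension) argument. Write \(L \coloneq \modCont(f;h)/h\) and \(\tilde q(x) \coloneq \inf_{y\in\domain}\set{f(y) + L\abs{x-y}_\infty}\), so that \(q = \tilde q + \tfrac12\modCont(f;h)\).

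We assume \(\modCont(f;h)<\infty\), since otherwise the statement is vacuous. Then \(f\) is bounded on bounded sets by the chaining argument below, so \(\tilde q\) is finite.

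\textbf{Lipschitz bound.} For fixed \(y\), the map \(x\mapsto f(y)+L\abs{x-y}_\infty\) is \(L\)-Lipschitz with respect to \(\abs{\cdot}_\infty\). An infimum of a family of \(L\)-Lipschitz functions is \(L\)-Lipschitz as soon as it is finite at a single point. This gives \(\Lip(q)=\Lip(\tilde q)\le L\).

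\textbf{Upper bound on \(\tilde q\).} Choosing \(y=x\) gives \(\tilde q(x)\le f(x)\). Hence \(q(x)-f(x)\le \tfrac12\modCont(f;h)\).

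\textbf{Lower bound on \(\tilde q\).} This is the main step: we show \(f(y)+L\abs{x-y}_\infty \ge f(x)-\modCont(f;h)\) for all \(y\). It uses the subadditivity-type estimate
\[
\modCont(f;t)\le \ceil{t/h}\,\modCont(f;h).
\]
This estimate holds on the convex domain \(\domain\): subdivide the segment from \(x\) to \(y\) into \(k=\ceil{\abs{x-y}_\infty/h}\) pieces, each of sup-norm length at most \(h\), and telescope. Setting \(t=\abs{x-y}_\infty\), we obtain
\[
f(x)-f(y)\le \ceil{t/h}\modCont(f;h)\le (t/h+1)\modCont(f;h) = Lt+\modCont(f;h).
\]
Therefore \(\tilde q(x)\ge f(x)-\modCont(f;h)\), and so \(q(x)-f(x)\ge -\tfrac12\modCont(f;h)\).

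Combining the upper and lower bounds gives \(\norm{f-q}_\infty\le\tfrac12\modCont(f;h)\). The only point requiring care is the chaining step, which is exactly where convexity of \(\domain\) is used. Note also that the bound \(\ceil{s}\le s+1\) is what produces the additive \(\modCont(f;h)\), which the shift by \(\tfrac12\modCont(f;h)\) then splits symmetrically.
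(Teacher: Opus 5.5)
Your proof is correct and follows essentially the same route as the paper: the Lipschitz bound comes from the infimum over the \(1\)-Lipschitz maps \(x\mapsto\abs{x-y}_\infty\), the upper bound from choosing \(y=x\), and the lower bound \(f(x)-\modCont(f;h)\le f(y)+\tfrac{\modCont(f;h)}{h}\abs{x-y}_\infty\) from chaining along the segment in the convex domain together with \(\ceil{s}\le s+1\). One small imprecision: finiteness of \(\tilde q\) does not follow from \(f\) being bounded on bounded sets, since \(\domain\) may be unbounded; it does follow from the bound you prove anyway, \(f(x)-\modCont(f;h)\le\tilde q(x)\le f(x)\), so the argument is unaffected.
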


\begin{proof}
    Since \(x\mapsto \abs{x-y}_\infty\) is \(1\)-Lipschitz continuous, we clearly have
    \(\Lip(q) \le \frac{\modCont(f; h)}{h}\). What is left to prove is the approximation
    property for which we assume without loss of generality \(\modCont(f;h) < \infty\).
    Since \(\domain\) is convex, we can find a chain of points starting in
    \(x\) and ending in \(y\) each with distance at most \(h\) to get with
    a telescoping sum and the definition of the modulus of continuity
    \[
        \abs{f(x) - f(y)}
        \le \ceil[\big]{\tfrac{\abs{x-y}_\infty}{h}}\modCont(f; h)
        \le \bigl(1+\tfrac{\abs{x-y}_\infty}{h}\bigr)\modCont(f; h).
    \]
    Since this inequality holds for all \(y\in \domain\), dropping the absolute
    value around the first term and rearranging yields
    \[
        f(x) - \modCont(f; h) \le \inf_{y\in \domain}\set[\big]{ f(y) + \tfrac{\modCont(f; h)}{h} \abs{x-y}_\infty}
        \le f(x),
    \]
    where the last inequality follows by choosing \(y=x\). The middle term becomes \(q(x)\) when we add \(\frac12\modCont(f; h)\) to all sides
    such that \(f(x) - \frac12\modCont(f; h) \le q(x) \le f(x) + \frac12\modCont(f; h)\) and since this holds for all \(x\in \domain\) we get \(\norm{f-q}_\infty \le \frac12\modCont(f; h)\).
\end{proof}

\subsubsection{Proof of Corollary~\ref{cor: close to optimal jackson bound}: Close to optimal Jackson bound}

We first use a variant of Trick 2 (Rem.~\ref{rem: derivative lifting}) to reduce the problem to the case \(k=0\):
\[
    \error_M^\infty[f]
    \overset{\text{Trick 2}}= \inf_{p \in \poly_M} \error_M^\infty[f-p]
    \overset{\eqref{eq: sinwell favard bound}}\le
    \inf_{p \in \poly_M}
    \favConst_k \tfrac{(b-a)^k}{2^k(m)_k}\norm{f^{(k)}-p^{(k)}}_\infty
    =
    \favConst_k \tfrac{(b-a)^k}{2^k(m)_k}\error_{M-k}^\infty[f^{(k)}].
\]
Since by \eqref{eq: sinwell favard bound} we have \(\error_{M-k}^\infty[q]
\le \eta_{M-k}\norm{q'}_\infty\) with \(\eta_{M-k} = \frac{\pi(b-a)}{4(m-k)}\) for \(q\) Lipschitz
continuous, we get with \(\delta = \frac{b-a}{m-k}\) in Rem.~\ref{rem: modulus of continuity from lipschitz approximation}
the bound 
\(
    \error_{M-k}^\infty[f^{(k)}]
    \le (\tfrac12 + \tfrac{\pi}{4})\modCont(f^{(k)}; \tfrac{b-a}{m-k})
\), which combined with the previous inequality yields the upper bound.
For the lower bound on \(c_k\) observe that the inequality implies for Lipschitz
continuous \(f^{(k)}\) with \(\modCont(f^{(k)}; \delta) \le \delta \norm{f^{(k+1)}}_\infty\) that
\[
    \error_M^\infty[f]
    \le \tfrac{c_k}{2^k} \tfrac{(b-a)^{k+1}}{(m)_{k+1}} \norm{f^{(k+1)}}_\infty
\]
\citet[Theorem~1]{sinwelUniformApproximationDifferentiable1981} shows this necessitates \(c_k \ge \frac{\favConst_{k+1}}{2}\).

\subsection{Fourier-Chebyshev regression}
\label{sec: fourier-chebyshev regression}

Schematically we obtain the Lebesgue constants like this: First, we calculate the orthonormal
basis of the spaces for the various measures. With these we get
the reproducing kernels. The reproducing kernel representation
of the Lebesgue function (Theorem~\ref{thm: reproducing kernel representation})
then gives a representation of the Lebesgue function
\begin{equation}
    \label{eq: leb fun rep}    
    \lebfun{\infty}{\linProj_{\mu_n}}{x}
    = \frac1n\sum_{i=1}^n \omega_i\abs{\reprK^{\mu_n}_\modelSpace(x,x_i)}
\end{equation}
To bound this representation we replace the reproducing kernel by a rectified
function essentially of the form \(\min\set{m, \frac1{\metric_\torus(x,x_i)}}\), where
\(\metric_\torus(x,x_i)\) is the distance of points. The sum over the \(1/{\metric_\torus(x,x_i)}\) terms gives the
logarithmic term as the points are equispaced such that the distances grow linearly.

The reproducing kernels for Fourier and Chebyshev regression are given in terms
of the (modified) Dirichlet kernel.
\begin{definition}[Dirichlet kernel]
    \label{def: dirichlet kernel}
    The Dirichlet kernel is a stationary kernel defined as
    \[
        \dirichlet_M(\ang) \coloneq \sum_{k=-M}^M e^{ik\ang}
        = 1+ 2\sum_{k=1}^M \cos(k\ang)
        = \frac{\sin((M+\frac12)\ang)}{\sin(\frac12\ang)}.
    \]
    The modified Dirichlet kernel is defined as
    \[
        \tilde\dirichlet_M(\ang)
        \coloneq \dirichlet_{M-1}(\ang) + \cos(M\ang)
        = 1+ 2\sum_{k=1}^{M-1} \cos(k\ang) + \cos(M\ang)
        = \frac{\sin(M\ang)}{\tan(\frac12\ang)}.
    \]
\end{definition}
The cosine representation follows directly from \(e^{ik\ang} + e^{-ik\ang} = 2\cos(k\ang)\)
and the quotient identities may be obtained from the geometric sum formula applied to
\(\omega = e^{i\ang}\), see also \citep[I, p.~2]{zygmundTrigonometricSeries2003} and \citep[II, (5.2)]{zygmundTrigonometricSeries2003}.

Using this Dirichlet kernel we can now give the orthonormal bases and
reproducing kernels for the Fourier, trigonometric, cosine and Chebyshev
polynomial spaces.

\begin{theorem}[ONBs and reproducing kernels]
    \label{thm: ONBs and reproducing kernels}
    For the measures defined by
    \[
        \textstyle
        \mu_n^\alpha
        = \frac1n \sum_{k=0}^{n-1} \delta_{\alpha + \frac{2\pi k}{n}},
        \qquad
        \mu_n^{(1)} = \frac1{n-1} {\sum''}_{k=0}^{n-1} \delta_{\tfrac{\pi k}{n-1}},
        \qquad
        \mu_n^{(2)} = \frac1n \sum_{k=0}^{n-1} \delta_{\frac{\pi}{2n} + k\frac{\pi}{n}}
    \]
    with \(\mu_n^{(e)} = \mu_n^{(1)} \circ \cos^{-1}\) and \(\mu_n^{(z)} = \mu_n^{(2)} \circ \cos^{-1}\) the pushforwards of the DCT-1 and DCT-2 measures via \(\cos\),
    we have the following orthonormal bases and reproducing kernels.
\begin{center}
    \makebox[\textwidth][c]{
    \resizebox{1\linewidth}{!}{
    \begin{NiceTabular}{c c m{6cm} m{5cm}}
        \toprule
        \(\modelSpace\)
        & \(\mu\)
        & \begin{center}ONB w.r.t. \(\scp{\cdot}{\cdot}_\mu\)\end{center}
        & \begin{center}\(\reprK^\mu_\modelSpace(x,x')\)\end{center}
        \\
        \specialrule{\lightrulewidth}{\aboverulesep}{1pt}\midrule
        \Block{2-1}{\(\modelSpace_{M}^\F\)}
        & \(\unif{\torus}\)
        & \Block{2-1}{\(\set{e^{ikx}}_{\abs{k} \le M}\) (\(k\in \integer\))}
        & \Block{2-1}{\(\dirichlet_M(x-x')\)}
        \\ \cmidrule{2-2}
        & \(\mu_n^\alpha\), \(n \ge m\)
\\ \cmidrule{1-4}
        \Block{2-1}{\(\modelSpace_{M}^\Trig\)}
        & \(\unif{\torus}\)
        & \Block{2-1}{\(\set{1} \cup \bigcup_{k=1}^M\set{\sqrt{2}\cos(kx), \sqrt{2}\sin(kx)}\)}
        & \Block{2-1}{\(\dirichlet_M(x-x')\)}
        \\ \cmidrule{2-2}
        & \(\mu_n^\alpha\), \(n \ge m\)
\\ \cmidrule{1-4}
        \Block{4-1}{\(\modelSpace_{M}^{\cos}\)}
        & \(\unif{[0, \pi]}\)
        & \Block{3-1}{\(\set{1} \cup \bigcup_{k=1}^M\set{\sqrt{2}\cos(kx)}\)}
        & \Block{3-1}{\(\frac12(\dirichlet_M(x-x') + \dirichlet_M(x+x'))\)}
        \\ \cmidrule{2-2}
        & \(\mu_n^{(2)}\), \(n \ge m\)
        \\ \cmidrule{2-2}
        & \(\mu_n^{(1)}\), \(n > m\)
        \\ \cmidrule{2-4}
        & \(\mu_n^{(1)}\), \(n = m\)
        & \(\set{1} \cup \bigcup_{k=1}^{M-1}\set{\sqrt{2}\cos(kx)} \cup \set{\cos(Mx)}\)
        & \(\frac12(\tilde\dirichlet_M(x-x') + \tilde\dirichlet_M(x+x'))\)
        \\ \cmidrule{1-4}
        \Block{4-1}{\(\poly_M\)}
        & \(\frac{dx}{\pi\sqrt{1-x^2}}\)& \Block{3-1}{\(\set{\cheb_0, \hat\cheb_1, \dots, \hat\cheb_M}\)
        \par\footnotesize with \(\hat\cheb_k = \sqrt{2}\cheb_k\) for \(k\ge 1\)
        \par\footnotesize and \(\cheb_k(x) = \cos(k\cos^{-1}(x))\)
        }
        & \Block{3-1}{\(\frac12(\dirichlet_M(\ang-\ang') + \dirichlet_M(\ang+\ang'))\),
        \par\footnotesize with \(\ang = \cos^{-1}(x), \ang'=\cos^{-1}(x')\)
        }
        \\ \cmidrule{2-2}
        & \(\mu_n^{(z)}\), \(n \ge m\)
        \\ \cmidrule{2-2}
        & \(\mu_n^{(e)}\), \(n > m\)
        \\ \cmidrule{2-4}
        & \(\mu_n^{(e)}\), \(n = m\)
        & \begin{center}
            \(\set{\cheb_0, \hat\cheb_1, \dots, \hat\cheb_{M-1}, \cheb_M}\)
        \end{center}
        & \(\frac12(\tilde\dirichlet_M(\ang-\ang') + \tilde\dirichlet_M(\ang+\ang'))\),
        \par \footnotesize with \(\ang = \cos^{-1}(x), \ang'=\cos^{-1}(x')\)
        \\
        \bottomrule
    \end{NiceTabular}
    }}
\end{center}
\end{theorem}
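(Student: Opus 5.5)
The plan is to verify orthonormality of the listed bases by direct computation and then read off the kernels from the basis representation of Lemma~\ref{lem: reproducing kernel}, $\reprK_\modelSpace^\mu(x,x')=\sum_k\mBasis_k(x)\overline{\mBasis_k(x')}$. For each model space I first treat the continuous measure and then reduce the discrete measures to it by exact quadrature (Corollary~\ref{cor: exact quadrature}). The reduction is this: once $\scp{\cdot}{\cdot}_{\mu_n}=\scp{\cdot}{\cdot}_\mu$ on $\modelSpace$, the same basis is orthonormal for both measures and the kernels coincide (Remark~\ref{rem: (exact quadrature) leverage and reproducing kernel}).

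\textbf{Fourier and trigonometric case.} Orthonormality of $e^{ikx}$ for $\unif{\torus}$ is standard. For $\mu_n^\alpha$ the key identity is the discrete orthogonality
\[
\tfrac1n\sum_{j=0}^{n-1}e^{i\ell(\alpha+2\pi j/n)}=0 \quad\text{for }\ell\not\equiv0 \pmod n .
\]
Products $e^{ikx}\overline{e^{ik'x}}$ have frequency $\ell=k-k'$ with $|\ell|\le2M=m-1<n$, so exact quadrature holds on $\modelSpace\cdot\overline{\modelSpace}$. The kernel is $\sum_{|k|\le M}e^{ik(x-x')}=\dirichlet_M(x-x')$. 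The real basis $\sqrt2\cos,\sqrt2\sin$ is obtained from the complex exponentials by a unitary change of basis, and since the kernel is basis-independent it is again $\dirichlet_M(x-x')$.

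\textbf{Cosine case.} I will use the reflection trick of Figure~\ref{fig: DCT node reflection}. For even $f$ on $\torus$, integrals against $\unif{[0,\pi]}$ coincide with integrals against $\unif{\torus}$. Reflecting the DCT-2 nodes gives $2n$ equispaced points, and the uniform measure on them restricts to $\mu_n^{(2)}$. Reflecting the DCT-1 nodes gives $2n-2$ equispaced points; the two endpoints are fixed by the reflection, which is exactly why they carry weight $\tfrac12$ in $\mu_n^{(1)}$. Products $\cos(kx)\cos(k'x)$ are even trigonometric polynomials of degree at most $2M=2m-2$, so by the Fourier case quadrature is exact provided $2M<N$, where $N$ is the number of reflected points. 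For DCT-2 this reads $2m-2<2n$, which always holds. For DCT-1 it reads $2m-2<2n-2$, i.e.\ $n>m$. The kernel then follows from $1+2\sum_{k=1}^M\cos kx\cos kx'=\tfrac12(\dirichlet_M(x-x')+\dirichlet_M(x+x'))$.

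\textbf{Main obstacle: DCT-1 with $n=m$.} Here $N=2M$ and the frequency $2M$ aliases to $0$. All inner products are still exact except $\norm{\cos Mx}^2_{\mu_n^{(1)}}$. Since $\cos^2(Mx)=\tfrac12(1+\cos 2Mx)$ and $\cos(2M\cdot\pi j/M)=1$ at every node, this norm equals $1$ rather than $\tfrac12$. Hence $\cos(Mx)$ without the $\sqrt2$ factor is the correct normalization. The orthogonality of $\cos(Mx)$ against lower-degree cosines still holds: the relevant frequencies $M\pm k'$ with $0\le k'<M$ are nonzero and not $\equiv0\pmod{2M}$. I will check this aliasing count carefully. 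The kernel becomes $1+2\sum_{k=1}^{M-1}\cos kx\cos kx'+\cos Mx\cos Mx'=\tfrac12(\tilde\dirichlet_M(x-x')+\tilde\dirichlet_M(x+x'))$.

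\textbf{Chebyshev case.} Push everything forward under $x=\cos\ang$. The Chebyshev measure is the pushforward of $\unif{[0,\pi]}$, $\cheb_k\circ\cos=\cos(k\,\cdot)$, and $\mu_n^{(e)},\mu_n^{(z)}$ are by definition the pushforwards of $\mu_n^{(1)},\mu_n^{(2)}$. Inner products therefore transfer verbatim from the cosine case, and the bases and kernels follow with $\ang=\cos^{-1}x$ and $\ang'=\cos^{-1}x'$.
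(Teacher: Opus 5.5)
Your proposal is correct and follows essentially the same route as the paper. The shared steps are: discrete orthogonality of exponentials on equispaced grids (the aliasing identity of Lemma~\ref{lem: fourier basis functions}); a unitary change of basis for the real trigonometric case; reflection of the DCT-1 and DCT-2 nodes to \(2n-2\) resp.\ \(2n\) equispaced points, which must number at least \(2M+1\); the special normalization of \(\cos(Mx)\) for DCT-1 with \(n=m\), because frequency \(2M\) aliases to \(0\); and transfer to \(\poly_M\) by pushforward under \(\cos\). Your phrasing via exact quadrature on \(\modelSpace\cdot\overline{\modelSpace}\) is only a repackaging of the same computation.
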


In view of the representation of the Lebesgue function in \eqref{eq: leb fun rep} 
the following Lemma essentially provides the necessary bounds. We recall that
this Lemma works by replacing the Dirichlet kernel with a rectified function.

\begin{lemma}[Dirichlet kernel bound]
    \label{lem: dirichlet kernel bound}
    Let \(\ang_k = \alpha + k\frac{2\pi}{n}\) for \(k\in \set{0,\dots, n-1}\). Then
    \begin{align*}
        \frac1n \sum_{k=0}^{n-1}\abs{\dirichlet_M(\ang_k - \ang)}
        &\le \tfrac{2}{\pi}\ln(\mathfrak{m}) + \tfrac{\mathfrak{m}}n + \epsilon(\mathfrak{m})
        & \mathfrak{m} &\coloneq 2M+1
        \\
        \frac1n \sum_{k=0}^{n-1}\abs{\tilde\dirichlet_M(\ang_k - \ang)}
        &\le \tfrac2\pi\ln(\mathfrak{m}) + \tfrac{\mathfrak{m}}n + \tilde\epsilon(\mathfrak{m})
        & \mathfrak{m} &\coloneq 2M
    \end{align*}
    where \(\epsilon(\mathfrak{m})\) and \(\tilde\epsilon(\mathfrak{m})\) are given explicitly in
    \eqref{eq: (dirichlet kernel bound) explicit epsilon(m) definition}
    and 
    \eqref{eq: (dirichlet kernel bound) explicit tilde epsilon(m) definition},
    \(\epsilon(\mathfrak{m})\) is strictly monotonically increasing, \(\tilde \epsilon(\mathfrak{m})\) strictly monotonically decreasing in \(\mathfrak{m}\) with
    \begin{alignat*}{4}
        \epsilon(1) &=1  & \lim_{\mathfrak{m} \to \infty} \epsilon(\mathfrak{m}) &= \tfrac2\pi(1+\ln(2)) &&\approx 1.0779
        \\
        \tilde\epsilon(2) &\approx 0.6614 \qquad\qquad\qquad & \lim_{\mathfrak{m} \to \infty} \tilde\epsilon(\mathfrak{m}) &= \tfrac2\pi&&\approx 0.6366
    \end{alignat*}
\end{lemma}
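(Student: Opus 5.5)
The plan is to bound the discrete sum by a Riemann-type sum of a rectified majorant, in the way sketched before Definition~\ref{def: dirichlet kernel}. I treat \(\dirichlet_M\) first; \(\tilde\dirichlet_M\) works the same way with minor changes.

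First I fix the two elementary bounds. The cosine representation gives \(\abs{\dirichlet_M(\ang)}\le 2M+1=\mathfrak{m}\). The quotient identity together with \(\abs{\sin(\ang/2)}\ge \abs{\ang}/\pi\) for \(\abs{\ang}\le\pi\) gives \(\abs{\dirichlet_M(\ang)}\le \pi/\abs{\ang}\), where \(\ang\) is taken as the representative in \([-\pi,\pi]\), i.e.\ \(\abs{\ang}=\metric_\torus(\ang,0)\). Combining them, \(\abs{\dirichlet_M(\ang)}\le \phi(\metric_\torus(\ang,0))\) with \(\phi(t)\coloneq\min\set{\mathfrak{m},\pi/t}\). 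The function \(\phi\) is non-increasing in \(t\), and the crossover is at \(t_*=\pi/\mathfrak{m}\).

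Next I arrange the nodes by distance. The distances \(\metric_\torus(\ang_k,\ang)\) from the \(n\) equispaced nodes \(\ang_k\) to \(\ang\) come in two families, one on each side of \(\ang\). On each side they are \(s+2\pi j/n\) for \(j=0,1,\dots\), with offsets \(s\) and \(2\pi/n-s\). Because \(\phi\) is non-increasing, I compare each term with an integral. For \(j\ge1\), \(\frac1n\phi(t_j)\le\frac1{2\pi}\int_{t_{j-1}}^{t_j}\phi\). This leaves only the nearest node on each side uncontrolled, and each of those contributes at most \(\mathfrak{m}/n\). A cleverer pairing of the two nearest nodes should let them share a single budget of \(\mathfrak{m}/n\), which is the \(\tfrac{\mathfrak{m}}n\) term in the statement. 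Summing both sides, the integral part is bounded by \(\frac{2}{2\pi}\int_0^\pi\phi(t)\,dt = \frac1\pi\bigl(\mathfrak{m}t_*+\pi\ln(\pi/t_*)\bigr)=1+\ln\mathfrak{m}\). That yields only \(\ln\mathfrak m\) and not \(\frac2\pi\ln\mathfrak m\), so the crude majorant \(\pi/t\) loses a factor \(\pi/2\).

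This is the main obstacle. To obtain the sharp constant \(\tfrac2\pi\), I would replace the pointwise majorant on the tail \(t>t_*\) by the average of \(\abs{\sin((M+\frac12)t)}\) over a period, which equals \(2/\pi\). The majorant \(\pi/t\) already uses \(\abs{\sin}\le 1\), so the numerator has to be averaged instead. Concretely, I would write \(\abs{\dirichlet_M(\ang_k-\ang)}=\abs{\sin((M+\tfrac12)(\ang_k-\ang))}/\abs{\sin(\tfrac12(\ang_k-\ang))}\) and split the sum into blocks. If \(n\) is large compared with \(\mathfrak{m}\), each block consists of the nodes within one period \(2\pi/\mathfrak{m}\) of the oscillation, and on a block \(1/\abs{\sin(t/2)}\) is almost constant. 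The discrete average of \(\abs{\sin}\) over a block is then \(\frac2\pi+O(\mathfrak m/n)\). Summing the blocks against \(1/\abs{\sin(t/2)}\) gives \(\frac2\pi\int_{t_*}^{\pi}\frac{dt}{2\pi\sin(t/2)}\cdot 2\approx\frac2\pi\ln\mathfrak m+\text{const}\). The errors from block discretisation and from the variation of \(1/\sin(t/2)\) should be absorbable into the \(\mathfrak m/n\) term and the constant. If \(n\) is close to \(\mathfrak m\), a block holds only about one node and this averaging fails. In that regime I would instead use the interpolation-type identity, namely that the nodes sample the phase of \(\sin((M+\frac12)\cdot)\) equispaced modulo \(\pi\) when \(n\ge\mathfrak m\). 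I would then use the known Fejér/Rivlin-style computation for the near-interpolation case. This step fixes \(\epsilon(\mathfrak m)\) explicitly, and I expect it to be the hardest part.

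Finally, I collect the constants into \(\epsilon(\mathfrak m)\). The monotonicity claims and the limit \(\tfrac2\pi(1+\ln 2)\) follow by evaluating the explicit expression. The \(\ln 2\) comes from \(\int dt/\sin(t/2)\) compared with \(\int 2dt/t\) near \(\pi\). For \(\tilde\dirichlet_M\) I use \(\abs{\tilde\dirichlet_M}\le 2M\) and \(1/\abs{\tan(t/2)}\le 2/t\), run the identical argument with \(\mathfrak m=2M\), and obtain \(\tilde\epsilon\) with limit \(\tfrac2\pi\).
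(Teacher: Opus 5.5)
\textbf{Genuine gap: the obstacle is misdiagnosed, and the proposed remedy is neither needed nor carried out.} You attribute the lost factor \(\pi/2\) to the numerator bound \(\abs{\sin((M+\tfrac12)\ang)}\le 1\). In fact it is lost in the \emph{denominator}. Jordan's inequality \(\abs{\sin(\ang/2)}\ge\abs{\ang}/\pi\) undershoots \(\sin(\ang/2)\sim\ang/2\) by exactly \(\pi/2\) near \(\ang=0\), and that is where the logarithm is generated. Keep the exact majorant \(\phi(t)=\min\set{\mathfrak m,1/\sin(t/2)}\) in your own Riemann comparison. Then \(\frac1\pi\int_0^\pi\phi\,dt\) already equals \(\tfrac2\pi\ln\mathfrak m+\epsilon(\mathfrak m)\) with the explicit \(\epsilon\) of the statement. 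The lemma's constant \(\tfrac2\pi\) is thus reached without ever improving the numerator bound.

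Averaging \(\abs{\sin}\) over oscillations is instead the mechanism behind the \emph{smaller} constant \(\tfrac4{\pi^2}\) of the continuous Fourier projection, which the lemma does not claim. Indeed, your block sum as written, with denominator \(1/\sin(t/2)\), evaluates to \(\tfrac4{\pi^2}\ln\mathfrak m+O(1)\), not \(\tfrac2\pi\ln\mathfrak m\). As an argument it is also incomplete:
\begin{itemize}
\item the discretisation errors are only asserted to be absorbable;
\item the regime \(n\approx\mathfrak m\) is deferred to an unspecified Fej\'er/Rivlin computation;
\item intermediate ratios \(n/\mathfrak m\) are not covered;
\item nothing produces an explicit \(\epsilon(\mathfrak m)\) with \(\epsilon(1)=1\), limit \(\tfrac2\pi(1+\ln 2)\), and the asserted monotonicity.
\end{itemize}

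Your treatment of \(\tilde\dirichlet_M\) contradicts your diagnosis. The bound \(1/\tan(t/2)\le 2/t\) is sharp at \(0\), so the plain pointwise argument already gives \(\tfrac2\pi\ln\mathfrak m\). However, it gives the constant \(\tfrac2\pi(1+\ln\tfrac\pi2)\approx 0.92\), not \(\tilde\epsilon\), which has \(\tilde\epsilon(2)\approx 0.661\) and limit \(\tfrac2\pi\). The stated constant requires integrating \(1/\tan\) exactly, because \(1/\tan y\) vanishes at \(y=\pi/2\) while \(1/y\) does not.

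\textbf{What is missing.} You need the exact majorants and a proper interleaving of the two sides.
\begin{itemize}
\item Sort the arc distances \(r_{(0)}\le\dots\le r_{(n-1)}\) of all nodes to \(\ang\). An arc of length \(2r\) contains at most \(1+\tfrac{n}{\pi}r\) nodes, so \(r_{(k)}\ge\tfrac{\pi}{n}k\). This is the pairing you were after: only the single nearest node escapes the comparison, and it costs \(\tfrac{\mathfrak m}{n}\).
\item With \(f=\min\set{\mathfrak m,1/\sin(\tfrac12\,\cdot)}\), which is non-increasing on \([0,\pi]\), this gives \(\frac1n\sum_k\abs{\dirichlet_M(\ang_k-\ang)}\le\frac{\mathfrak m}{n}+\frac1n\int_0^{n-1}f(\tfrac{\pi}{n}x)\,dx\).
\item Split the integral at \(\tfrac{\pi}{2n}x=\arcsin\tfrac1{\mathfrak m}\) and substitute \(y=\tfrac{\pi}{2n}x\). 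Then use \(\int\frac{dy}{\sin y}=\ln\tan\tfrac{y}{2}\) together with \(\tan(\tfrac12\arcsin\tfrac1{\mathfrak m})=\bigl(\mathfrak m(1+\sqrt{1-\mathfrak m^{-2}})\bigr)^{-1}\). This yields \(\epsilon(\mathfrak m)=\tfrac2\pi\bigl[\mathfrak m\arcsin\tfrac1{\mathfrak m}+\ln(1+\sqrt{1-\mathfrak m^{-2}})\bigr]\).
\item For \(\tilde\dirichlet_M\), use \(\min\set{\mathfrak m,1/\tan(\tfrac12\,\cdot)}\), split at \(\arctan\tfrac1{\mathfrak m}\), and use \(\int\frac{dy}{\tan y}=\ln\sin y\). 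This yields \(\tilde\epsilon(\mathfrak m)=\tfrac2\pi\bigl[\mathfrak m\arctan\tfrac1{\mathfrak m}+\tfrac12\ln(1+\mathfrak m^{-2})\bigr]\).
\item Monotonicity follows from \(\epsilon'(\mathfrak m)=\tfrac2\pi(\arcsin\tfrac1{\mathfrak m}-\tfrac1{\mathfrak m})>0\) and \(\tilde\epsilon'(\mathfrak m)=\tfrac2\pi(\arctan\tfrac1{\mathfrak m}-\tfrac1{\mathfrak m})<0\). The stated values follow by direct evaluation.
\end{itemize}
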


\subsubsection{Proof of Thm.~\ref{thm: lebesgue constant bounds for fourier and chebyshev regression}: Lebesgue constant for Fourier-Chebyshev regression}

Lemma \ref{lem: dirichlet kernel bound} combined with \eqref{eq: leb fun rep}
and \(\lebconst{\infty}{\linProj_\mu} = \sup_{x\in \torus} \lebfun{\infty}{\linProj_\mu}{x}\)
immediately yields the Lebesgue constants for the Fourier polynomials \(\modelSpace_M^\F\) and the
trigonometric polynomials \(\modelSpace_M^\Trig\) as the dimension of these
spaces is \(2M+1 = \mathfrak{m}\) and \(\epsilon(\mathfrak{m}) \le 1.08\).

Clearly the polynomial case works like the DCT since both the reproducing kernel and the
evaluation points are just mapped through a cosine
function. To turn the DCT-1 \(\ang_k^{(1)} = \frac{\pi k}{n-1}\) nodes and the DCT-2 nodes \(\ang_k^{(2)} = \frac{\pi}{2n} + k\frac{\pi}{n}\) into equispaced points
as in Lemma~\ref{lem: dirichlet kernel bound} we reflect them around
zero (cf.~Figure~\ref{fig: DCT node reflection}) to get
\begin{align*}
    S_1 &\coloneq \set{\ang_0^{(1)}, \dots, \ang_{n-1}^{(1)}} \cup \set{-\ang_1^{(1)}, \dots, -\ang_{n-2}^{(1)}} & \#S_1 &= 2n-2,
    \\
    S_2 &\coloneq \set{\ang_0^{(2)}, \dots, \ang_{n-1}^{(2)}} \cup \set{-\ang_0^{(2)}, \dots, -\ang_{n-1}^{(2)}} & \#S_2 &= 2n.
\end{align*}
Using \eqref{eq: leb fun rep}, we obtain in both cases (except for DCT-1 with \(n=m\))
\[
    \lebfun{\infty}{\linProj_{\mu_n}}{x}
    = \frac1n\sum_{i=1}^{n-1} \omega_i\abs{\reprK^{\mu_n^{(1)}}_\modelSpace(x,\ang_i)}
    \le \frac12\Bigl(\frac1n\sum_{i=1}^{n-1} \omega_i\abs{\dirichlet_M(x-\ang_i)} + \frac1n\sum_{i=1}^{n-1} \omega_i\abs{\dirichlet_M(x+\ang_i)}\Bigr)
\]
In the case of DCT-1 the weights are given by \(\omega_i =\frac{n}{n-1}\) for \(i\notin \{0, n-1\}\) and half these
weights for \(i\in \set{0,n-1}\) resulting in
\[
    \frac1n\sum_{i=1}^{n-1} \omega_i\abs{\dirichlet_M(x-\ang_i)}
    = \frac1{n-1}\sideset{}{''}\sum_{i=1}^{n-1}\abs{\dirichlet_M(x-\ang_i)}
    = \frac1{\#S_1} \sum_{\ang \in S_1} \abs{\dirichlet_M(x-\ang)}
\]
As the term with \(x+\ang_i\) is just the same with \(-x\) and symmetry of \(\dirichlet_M\)
we may apply Lemma~\ref{lem: dirichlet kernel bound} to get
\[
    \lebfun{\infty}{\linProj_{\mu_n}}{x}
    \le \tfrac2\pi \ln(\mathfrak{m}) + \tfrac{\mathfrak{m}}{\#S_1} + 1.08
    = \tfrac2\pi \ln(2m-1) + \tfrac{2m-1}{2n-2} + 1.08,
\]
where we use \(\#S_1 = 2n-2\) and \(\mathfrak{m} = 2M+1 = 2m-1\). In the case
\(n=m\) we simply use the bound on the modified Dirichlet kernel \(\tilde\dirichlet_M\) instead 
with \(2M\le 2M+1\) and \(\tilde{\epsilon}(\mathfrak m) \le 0.66\le 1.08\). The DCT-2 case is
analogous with \(\#S_2 = 2n\) and the same \(\mathfrak{m} = 2M +1 = 2m-1\).

\subsubsection{Proof of Theorem~\ref{thm: ONBs and reproducing kernels}: Orthonormal bases and reproducing kernels}
    \begin{enumerate}
        \item[\(\modelSpace_M^\F\):]
        The orthonormality of \(e^{ik\cdot}\) with respect to \(\unif{\torus}\) well known and
        straightforward to verify by integration, see e.g.\
        \citep[p.~77]{steinFourierAnalysisIntroduction2003}.
        The discrete orthogonality is 
        also well-known and proven by applying the geometric sum formula to \(\omega = e^{i\frac{2\pi}{n}}\), see e.g.\
        \citet{strangDiscreteCosineTransform1999} for
        the case \(\alpha=0\) and Lemma~\ref{lem: fourier basis functions} \ref{it: (fourier) n>2M} for the general case.
        The reproducing kernel formula follows directly from its
        basis representation in Lemma~\ref{lem: reproducing kernel} and the
        definition of \(\dirichlet_M\).

        \item[\(\modelSpace_M^\Trig\):] The orthonormality follows from the previous case. Indeed, since
        the subspaces
        \[
            U_k = \Span_\complex\set{e^{ik\cdot}, e^{-ik\cdot}} = \Span_\complex\set{\cos(k\cdot), \sin(k\cdot)}
        \]
        are orthogonal we only need to check orthonormality within each \(U_k\).
        In the case \(k=0\) the subspace is one dimensional with basis function
        \(1\). For \(k>0\) we use
        \[
            \begin{pmatrix}
                e^{ik\ang} \\ e^{-ik\ang}
            \end{pmatrix}
            = \underbrace{\frac1{\sqrt{2}}
            \begin{pmatrix}
                1 & i \\ 1 & -i
            \end{pmatrix}
            }_{\eqcolon V}
            \begin{pmatrix}
                \sqrt{2}\cos(k\ang) \\ \sqrt{2}\sin(k\ang)
            \end{pmatrix}
        \]
        Since \(V\) is a unitary basis change matrix that preserves inner products,
        we conclude the claimed orthonormality with respect to \(\mu \in \set{\unif{\torus}, \mu_n^\alpha}\).
        This orthonormality of course remains when the space is restricted to real-valued functions.
        Similarly, since the unique reproducing kernel of \(\modelSpace_M^\F\)
        is real-valued by its cosine sum representation (Def.~\ref{def:
        dirichlet kernel}) it satisfies the defining properties of a reproducing kernel
        on the real-valued subspace \(\modelSpace_M^\Trig\) (Lemma~\ref{lem: reproducing kernel}).
        Uniqueness then implies the Dirichlet kernel is the reproducing kernel of \(\modelSpace_M^\Trig\).

        \item[\(\poly_M\):] Before we cover the case \(\modelSpace_M^{\cos}\) let us
        quickly explain why this also implies the claims for \(\poly_M\).
        Since \(\poly_M = \set{\model \circ \cos^{-1} : \model
        \in \modelSpace_M^{\cos}}\) and \(\frac{dx}{\pi\sqrt{1-x^2}} =
        \frac1\pi\abs{\arccos'(x)}dx\) is the pushforward of \(\unif{[0,\pi]}\) via
        \(\cos\) just like \(\mu_n^{(e)}\) and \(\mu_n^{(o)}\), the
        orthonormality of \(\hat T_k = \sqrt{2}\cos(k \cdot) \circ \cos^{-1}\)
        is a direct consequence of those for \(\modelSpace_M^{\cos}\).
        Similarly, we get the reproducing kernel by a change of variables.
        See also \citep[Sec.~4.6.1]{masonChebyshevPolynomials2002}

        \item[\(\modelSpace_M^{\cos}\):] Since for even functions \(f,g\colon [-\pi, \pi] \to \real\) we have
        \[
            \scp{f}{g}_{\unif{[0,\pi]}} = \frac1\pi \int_0^\pi f(x)g(x) dx = \frac1{2\pi} \int_{-\pi}^\pi f(x)g(x) dx = \scp{f}{g}_{\unif{\torus}},
        \]
        the orthonormality with respect to \(\unif{[0,\pi]}\) follows from the orthonormality of the trigonometric basis.
        Let \(\ang_k^{(1)} = \frac{\pi k}{n-1}\) and \(\ang_k^{(2)} = \frac{\pi}{2n} + k\frac{\pi}{n}\) be the DCT-1 and DCT-2 nodes, respectively contained in \([0,\pi]\).
        Reflecting these nodes around zero yields the sets of equidistant points in \(\torus\) (cf.~Figure~\ref{fig: DCT node reflection})
        \begin{align*}
            S_1 &\coloneq \set{\ang_0^{(1)}, \dots, \ang_{n-1}^{(1)}} \cup \set{-\ang_1^{(1)}, \dots, -\ang_{n-2}^{(1)}} & \#S_1 &= 2n-2,
            \\
            S_2 &\coloneq \set{\ang_0^{(2)}, \dots, \ang_{n-1}^{(2)}} \cup \set{-\ang_0^{(2)}, \dots, -\ang_{n-1}^{(2)}} & \#S_2 &= 2n.
        \end{align*}
        The reflected set of nodes in the DCT-1 case is smaller since the first and last nodes are reflected to themselves.
        We then get for functions \(f,g\colon [0,\pi] \to \real\) extended to even functions on \(\torus\) that
        \begin{align}
            \label{eq: discrete inner product to reflected, DCT-1}
            \scp{f}{g}_{\mu_n^{\mathrm{(1)}}}
            &= \frac1{n-1}\sideset{}{''}\sum_{k=0}^{n-1} f(\ang_k^{\mathrm{(1)}})g(\ang_k^{\mathrm{(1)}})
            = \frac1{\# S_1}\sum_{\ang \in S_1} f(\ang)g(\ang)
            \\
            \label{eq: discrete inner product to reflected, DCT-2}
            \scp{f}{g}_{\mu_n^{\mathrm{(2)}}}
            &= \frac1n \sum_{k=0}^{n-1} f(\ang_k^{\mathrm{(2)}})g(\ang_k^{\mathrm{(2)}})
            = \frac1{\# S_2}\sum_{\ang \in S_2} f(\ang)g(\ang),
        \end{align}
        since the modified sum \(\sum''\) halves the duplicate points in \(S_1\).
        To deduce the orthonormality of \(\set{1} \cup \bigcup_{k=1}^M\set{\sqrt{2}\cos(kx)}\) with respect to \(\mu_n^{(1)}\) and \(\mu_n^{(2)}\),
        we may therefore use their orthonormality with respect to \(\mu_n^\alpha\) in \(\modelSpace_M^\Trig\) assuming
        \(\#S_i \ge \dim \modelSpace_M^\Trig = 2M+1\). Observe that the assumption \(n \ge m=M+1\) implies \(\#S_2 =2n \ge 2(M+1) \ge 2M+1\) such that
        the DCT-2 nodes always satisfy this assumption. For the DCT-1 nodes,
        we however get \(\#S_1 = 2M\) in the case \(n=m\). Lemma~\ref{lem: fourier basis functions} \ref{it: (fourier) n=2M}
        combined with the arguments for the space \(\modelSpace_M^\Trig\) then implies
        that \(\set{1} \cup \bigcup_{k=1}^{M-1}\set{\sqrt{2}\cos(kx)}\) are orthonormal and
        orthogonal to the subspace \(\Span_\complex\set{e^{iMx}, e^{-iMx}}\) that contains \(\cos(Mx)\), which
        is of length one by Lemma~\ref{lem: fourier basis functions} with \(\alpha=\beta=0\).
        This proves the proposed basis is orthonormal.

        By Lemma~\ref{lem: reproducing kernel}, we have that in all cases
        except for the DCT-1 and \(n=m\) case, that the reproducing kernel is given by
        \[
            \reprK_{\modelSpace_M^{\cos}}(x, x') 
            \overset{\text{Lem.~\ref{lem: reproducing kernel}}}= 1+ 2\sum_{k=1}^M \underbrace{\cos(kx)\cos(kx')
            }_{
                \frac12(\cos(k(x-x')) \mathrlap{+ \cos(k(x+x')))}
            }
            = \tfrac12\bigl(\dirichlet_M(x-x') + \dirichlet_M(x+x')\bigr),
        \]
        where we used a product identity for cosines that may be easily be verified using the
        exponential representation of the cosine and the cosine sum representation of the Dirichlet kernel.
        The case of the DCT-1 nodes with \(n=m\) is similar
        \begin{align*}
            &\reprK_{\modelSpace_M^{\cos}}(x, x') 
            \\
            &= \Bigl(1+ 2\sum_{k=1}^{M-1}\cos(kx)\cos(kx')\Bigr)
            + \cos(Mx)\cos(Mx')
            \\
            &= \tfrac12\bigl(\dirichlet_{M-1}(x-x') + \dirichlet_{M-1}(x+x')\bigr)
            + \tfrac12(\cos(M(x-x')) + \cos(M(x+x')))
            \\
            &= \tfrac12\bigl(\tilde\dirichlet_M(x-x') + \tilde\dirichlet_M(x+x')\bigr).
        \end{align*}
    \end{enumerate}

\subsubsection{Some Fourier analysis lemmas and remarks}

In the derivation of the orthonormal basis and reproducing kernels we used the
following Lemma.

\begin{lemma}[Discrete Fourier basis functions]
    \label{lem: fourier basis functions}
    For the discrete uniform measure \(\mu_n^\alpha\) on the circle \(\torus\)
    as defined in Theorem~\ref{thm: ONBs and reproducing kernels} we have for
    \(l, l' \in \integer\)
    \begin{equation}
        \label{eq: aliasing}
        \scp{e^{il\cdot}}{e^{il'\cdot}}_{\mu_n^\alpha}
        = e^{i(l-l')\alpha} \ind{l - l'\in n\integer}.
    \end{equation}
    In particular,
    \begin{enumerate}[label=(\roman*),nosep]
        \item\label{it: (fourier) n>2M} if \(n \ge 2M+1\), then \(\set{e^{ik\cdot}}_{\abs{k} \le M}\) is an orthonormal basis  w.r.t. \(\mu_n^\alpha\),
        \item\label{it: (fourier) n=2M} if \(n=2M\), then \(\set{e^{ik\cdot}}_{\abs{k} \le M-1}\) are orthonormal and orthogonal to the subspace
        \(\Span_\complex\set{e^{iM\cdot}, e^{-iM\cdot}}\) w.r.t. \(\mu_n^\alpha\).
        Moreover, for any \(\alpha, \beta\in \real\) and function variable \(x\)
        \[
            \norm{\cos(M(x-\beta))}_{\mu_n^\alpha}
            = \abs{\cos(M(\alpha-\beta))}
            \overset{\text{in general}}\neq \tfrac1{\sqrt{2}}
            = \norm{\cos(M(x-\beta))}_{\unif{\torus}}.
        \]
    \end{enumerate}
\end{lemma}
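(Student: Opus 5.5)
The plan is to compute the inner product \eqref{eq: aliasing} directly with a geometric sum, and then read off \ref{it: (fourier) n>2M} and \ref{it: (fourier) n=2M} from the resulting aliasing rule.

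By definition of \(\mu_n^\alpha\) and \(\ang_k^\alpha = \alpha + \tfrac{2\pi k}{n}\), setting \(d \coloneq l-l'\) we get
\[
    \scp{e^{il\cdot}}{e^{il'\cdot}}_{\mu_n^\alpha}
    = \frac1n\sum_{k=0}^{n-1} e^{i d \ang_k^\alpha}
    = e^{id\alpha}\,\frac1n\sum_{k=0}^{n-1} \omega^k,
    \qquad \omega \coloneq e^{i\frac{2\pi d}{n}}.
\]
There are two cases. If \(d\in n\integer\), then \(\omega=1\) and the average equals \(1\). Otherwise \(\omega\neq 1\) and \(\omega^n = 1\), so the geometric sum gives \((1-\omega^n)/(1-\omega) = 0\). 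Together these give \eqref{eq: aliasing}. The conjugation convention of \(\scp{\cdot}{\cdot}\) only swaps the sign of \(d\), so I will fix it to match the stated phase \(e^{i(l-l')\alpha}\).

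For \ref{it: (fourier) n>2M}, take \(\abs{l},\abs{l'}\le M\). Then \(\abs{l-l'}\le 2M < n\), so \(l-l'\in n\integer\) forces \(l=l'\). Hence the \(2M+1\) exponentials are orthonormal. They span \(\modelSpace_M^\F\), which has dimension \(2M+1\), so they form an orthonormal basis.

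For \ref{it: (fourier) n=2M}, consider first \(\abs{l},\abs{l'}\le M-1\). Then \(\abs{l-l'}\le 2M-2<n\), so these exponentials are orthonormal by the same argument. Next take \(\abs{l}\le M-1\) and \(l'=\pm M\). Then \(0<\abs{l-l'}\le 2M-1<n\), so the inner product vanishes and the two families are orthogonal.

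For the norm identity, the key observation is that \(\pm M\) alias when \(n=2M\): since \(2M\in n\integer\), \eqref{eq: aliasing} gives
\[
    \scp{e^{iM\cdot}}{e^{-iM\cdot}}_{\mu_n^\alpha} = e^{2iM\alpha}.
\]
Write \(\cos(M(x-\beta)) = \tfrac12\bigl(e^{-iM\beta}e^{iMx} + e^{iM\beta}e^{-iMx}\bigr)\) and expand the squared norm. The two diagonal terms contribute \(\tfrac14 + \tfrac14\). The two cross terms contribute \(\tfrac14\bigl(e^{2iM(\alpha-\beta)} + e^{-2iM(\alpha-\beta)}\bigr) = \tfrac12\cos(2M(\alpha-\beta))\). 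The total is \(\tfrac12\bigl(1+\cos(2M(\alpha-\beta))\bigr) = \cos^2(M(\alpha-\beta))\), which gives \(\abs{\cos(M(\alpha-\beta))}\).

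As a sanity check, this can also be verified directly: every node satisfies \(M(\ang_k^\alpha-\beta) = M(\alpha-\beta)+\pi k\), so \(\cos^2\) takes the same value at every node.

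For the uniform measure the cross terms vanish, which gives \(1/\sqrt2\). Choosing for example \(\alpha=\beta\) yields the value \(1\neq 1/\sqrt2\), which shows the inequality holds in general.

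The only delicate point is keeping the conjugation and phase conventions consistent in the cross terms; everything else is routine.
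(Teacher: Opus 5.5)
Your proof is correct and follows the paper's route: the aliasing identity via the geometric sum over $n$-th roots of unity, and for $n=2M$ the observation that $M(\theta_k^\alpha-\beta)=M(\alpha-\beta)+\pi k$, so $\cos^2$ is constant on the nodes. Your ``sanity check'' is exactly the paper's argument, and your main expansion using the aliased cross term $e^{2iM\alpha}$ is an equally valid alternative; you also spell out how (i) and (ii) follow from the aliasing rule, which the paper leaves implicit.
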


While the result is standard, at least for the case \(\alpha=0\) \citep[e.g.][]{strangDiscreteCosineTransform1999},
we state its short proof here for the readers convenience.

\begin{proof}
    Define \(\omega \coloneq e^{i\frac{2\pi}n}\).
    Then \(\omega^{l-l'} = 1\) if and only if \(l-l' \in n\integer\). This implies
    \begin{align*}
        \scp{e^{il\ang}}{e^{il'\ang}}_{\mu_n^\alpha}
        = e^{i(l-l')\alpha} \frac1n \sum_{k=0}^{n-1}  \omega^{k(l-l')}
        &= \begin{cases}
            e^{i(l-l')\alpha} & l-l' \in n\integer
            \\
            \frac{1- \omega^{n(l-l')}}{n(1-\omega^{l-l'})} = 0 & l-l' \notin n\integer
        \end{cases}
        \\
        &= e^{i(l-l')\alpha} \ind{l-l' \in n\integer}.
    \end{align*}
    And for \(n=2M\) we have 
    \[
        \norm{\cos(M(\ang - \beta))}_{\mu_{n}^\alpha}^2
        = \frac1{n}\sum_{k=0}^{n-1} \cos^2\bigl(M(\alpha - \beta + \tfrac{2\pi k}{n})\bigr)
        = \cos^2(M(\alpha - \beta)).
        \qedhere
    \]
\end{proof}

\begin{remark}[Aliasing]
    \label{rem: aliasing}
    Equation \eqref{eq: aliasing} means that for \(\alpha=0\), the function
    \(e^{i(l+n)x}\) in \(x\) is indistinguishable from \(e^{ilx}\) with respect
    to \(\norm{\cdot}_{\mu_n^0}\).  That is the basis functions
    \(e^{i(l+n)x}\) `aliases' to the basis function \(e^{ilx}\).  For
    \(\alpha\neq 0\) similar aliasing occurs up to a complex phase factor
    \(e^{i(l-l')\alpha}\in \complex\) of unit length.
\end{remark}
\begin{remark}[Nyquist frequency]
    For the basis function \(e^{ikx}\), \(k\) is the frequency
    of cycles in \([-\pi, \pi) \simeq \torus\). The frequency \(M=\frac{n}2\)
    for \(n\) equispaced samples is the largest frequency, where the basis
    function is not fully aliased to a lower frequency.  This frequency \(M\) is
    called the `Nyquist frequency'. The `Nyquist sampling rate' \(n=2M\)
    is necessary to avoid (fully) aliasing the frequency \(M\).
\end{remark}

Recall that our problems with the DCT-1 nodes occurred essentially due to
the fact that the dimension of the Fourier and trigonometric spaces must
necessarily be odd \(2M+1\), while only an even number of \(2M\) samples was
available. This motivates the following concept of half-degree spaces.

\begin{remark}[Half-degree spaces]
    \label{rem: half-degree spaces}
    If we want to define a half-degree space \(\modelSpace_{M-\frac12}^\F\) of even dimension \(2M\)
    with 
    \(
        \modelSpace_{M-1}^\F \subseteq \modelSpace_{M-\frac12}^\F \subseteq \modelSpace_{M}^\F,
    \)
    and additionally require that \(\modelSpace_{M-\frac12}^\F\) is invariant under complex conjugation (due to Lemma~\ref{lem: projection to spaces closed under complex conjugation}),
    then we necessarily must choose \(\beta\in \torus\) such that
    \[
        \modelSpace_{M-\frac12}^{\F} \coloneq \modelSpace_{M-\frac12}^{\F, \beta}
        \coloneq \Span_\complex \Bigl(\modelSpace_{M-1}^\F \cup \set{\cos(M(x-\beta))}\Bigr).
    \]
    The half-degree Trigonometric polynomials are then the real-valued restriction
    \[
        \modelSpace_{M-\frac12}^{\Trig, \beta} 
        = \Span_\real\Bigl(\set{1} \cup \bigcup_{k=1}^{M-1}\set{\cos(kx), \sin(kx)} \cup \set{\cos(M(x-\beta))}\Bigr).
    \]
    Let \(\model, \model'\in \modelSpace_M^\F\) be equivalent, if \(\norm{\model - \model'}_{\mu_{2M}^\alpha} = 0\).
    Then the spaces \(\modelSpace_{M-\frac12}^{\F, \alpha}\) and \(\modelSpace_{M-\frac12}^{\Trig, \alpha}\)
    are the minimum \(\unif{\torus}\)-norm representatives of the equivalence classes.
\end{remark}

The following lemma explains the need for invariance under complex conjugation used
in the definition of half-degree spaces and also implicitly in the symmetric
definition of the Fourier spaces \(\modelSpace_M^\F\).

\begin{lemma}
    \label{lem: projection to spaces closed under complex conjugation}
    A complex-linear orthogonal projection \(\linProj\) of complex functions
    onto a model space \(\modelSpace\) projects real-valued functions to
    real-valued functions if and only if \(\modelSpace\) is closed under complex
    conjugation.
\end{lemma}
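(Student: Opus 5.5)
We prove the two directions separately. Throughout, \(\modelSpace\) is a finite-dimensional subspace of complex functions, \(\linProj\) is the orthogonal projection with respect to a Hermitian inner product \(\scp{\cdot}{\cdot}\) that is compatible with conjugation. Concretely, \(\scp{\bar f}{\bar g} = \overline{\scp{f}{g}}\), which holds for every \(L^2(\mu)\) and WLS inner product \(\scp{\cdot}{\cdot}_{\mu_n}\) with nonnegative weights. Let \(C f \coloneq \bar f\) denote complex conjugation, an antilinear isometry.

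\emph{(\(\Leftarrow\)).} Assume \(C\modelSpace = \modelSpace\).
\begin{enumerate}
\item The key observation is that \(C\linProj C\) is again the orthogonal projection onto \(\modelSpace\). First, \(C\linProj C\) is complex linear, because two antilinear maps compose to a linear one.
\item It maps into \(C\modelSpace = \modelSpace\).
\item It fixes \(\modelSpace\): for \(\model \in \modelSpace\) we have \(\bar\model \in \modelSpace\), so \(C\linProj C\model = \overline{\bar\model} = \model\).
\item Its residual is orthogonal to \(\modelSpace\): for \(\model \in \modelSpace\),
\[
\scp{f - C\linProj C f}{\model} = \overline{\scp{\bar f - \linProj \bar f}{\bar\model}} = 0,
\]
since \(\bar\model \in \modelSpace\) and \(\bar f - \linProj\bar f \perp \modelSpace\).
\item By uniqueness of orthogonal projections (Rem.~\ref{rem: projections coincide in hilbert spaces}, or in the degenerate-seminorm case uniqueness on the level of \(\regOp\) via Theorem~\ref{thm: least squares solution}), \(C\linProj C = \linProj\).
\item Hence for real \(f\) we get \(\overline{\linProj f} = \linProj \bar f = \linProj f\), so \(\linProj f\) is real.
\end{enumerate}

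\emph{(\(\Rightarrow\)).} Assume \(\linProj\) maps real functions to real functions.
\begin{enumerate}
\item For arbitrary \(f = u + iv\) with \(u,v\) real, linearity gives \(\linProj f = \linProj u + i\linProj v\) with both pieces real. Therefore \(\overline{\linProj f} = \linProj u - i\linProj v = \linProj \bar f\).
\item Given \(\model \in \modelSpace\), note that \(\linProj\) maps onto \(\modelSpace\) and fixes it, so \(\bar\model = \overline{\linProj\model} = \linProj\bar\model \in \modelSpace\).
\item This proves closure under conjugation.
\end{enumerate}

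This direction requires that the domain of \(\linProj\) be closed under conjugation and under taking real and imaginary parts. That holds for the function spaces considered, e.g.\ \(L^2(\mu)\) or all functions on the node set.

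\emph{Main obstacle.} The delicate point is the degenerate case, where \(\scp{\cdot}{\cdot}\) is only a semi-inner product on the ambient space (the discrete \(\mu_n^\alpha\) case). There, "orthogonal projection" must be understood as the regression operator \(\regOp\) on data vectors, and uniqueness must be argued for \(\regOp\) rather than for functions. I would handle this by running the same argument with \(C\) acting on \(\field^n\). Surjectivity of \(\linProj\) onto \(\modelSpace\) (Def.~\ref{def: regression}) gives uniqueness of the minimizer by Theorem~\ref{thm: least squares solution}, and the identity \(C\regOp C = \regOp\) then follows exactly as above, via \(\regOp(\bar y)=\overline{\regOp(y)}\).
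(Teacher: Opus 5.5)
Your proof is correct and follows essentially the paper's route. The (\(\Rightarrow\)) direction is identical, and for (\(\Leftarrow\)) the paper applies \(\linProj\) to the conjugated decomposition \(\overline{f} = \overline{\linProj f} + \overline{r}\), with \(\overline{\linProj f}\in\modelSpace\) and \(\overline{r}\perp\modelSpace\); this is the same observation as your identity \(C\linProj C=\linProj\), which you obtain via uniqueness. Your explicit hypothesis \(\scp{\overline f}{\overline g}=\overline{\scp{f}{g}}\) is used only implicitly in the paper's (\(\Leftarrow\)) step, and it is genuinely needed there: for GLS with a non-real Hermitian \(\Omega_n\) both the hypothesis and the lemma fail.
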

\begin{proof}
    Assume \(\modelSpace\) is closed under complex conjugation
    and let \(f= \linProj f + r\). Since \(r\perp \modelSpace\) we have \(\overline{r} \perp \modelSpace\)
    since for all \(\model \in \modelSpace\)
    \[
        \scp{\overline{r}}{\model} = \overline{\scp{r}{\overline{\model}}} = 0,
    \]
    where we use that \(\overline{\model} \in \modelSpace\) by the closure under complex conjugation.
    Since \(\linProj f\in \modelSpace\) we have \(\overline{\linProj f} \in \modelSpace\) and as an orthogonal
    projection \(\linProj(\modelSpace^\perp) = \set{0}\). Thus
    \[
        \linProj\overline{f}
        = \linProj (\underbrace{\overline{\linProj f}}_{\in \modelSpace} + \underbrace{\overline{r}}_{\in \modelSpace^\perp})
        = \overline{\linProj f}
    \]
    For real-valued functions with \(f= \overline{f}\) this implies \(\linProj f
    = \overline{\linProj f}\) and thus \(\linProj f\) is real-valued.
    Assume \(\linProj\) projects real-valued functions to real-valued functions. We will
    show for any \(\model \in \modelSpace\) that \(\overline{\model} \in \modelSpace\).
    Since \(\linProj\) is complex-linear, we have for \(\model = u + iv\) with
    real-valued \(u, v\) that \(\linProj u\) and \(\linProj v\) are real-valued and thus
    \[
        \linProj \overline{\model} = \linProj(u - iv) = \linProj u - i \linProj v = \overline{\linProj u + i \linProj v} = \overline{\linProj \model} = \overline{\model},
    \]
    where we used that \(\model\in \modelSpace\) and therefore \(\linProj \model = \model\) in the last equation. Thus \(\overline{\model} = \linProj \overline{\model} \in \modelSpace\).
\end{proof}

\subsubsection{Proof of Lemma~\ref{lem: dirichlet kernel bound}: Dirichlet kernel bound}
The proofs for the Dirichlet and modified Dirichlet kernel are fairly similar. We will start with the modified Dirichlet kernel
since the bound \(\tan(x) \ge x\) for \(x\ge 0\) allows an easy shortcut (Remark~\ref{rem: shortcut dirichlet kernel bound}) if
one were only interested in the asymptotic behavior of the bound and not the constant.
Since the Dirichlet kernel differs from the modified Dirichlet kernel by at most
one, this would also yield the asymptotic behavior for the Dirichlet kernel.

\noindent\textbf{Step 1: Bound using a rectified function.}
Using the cosine representation of the modified Dirichlet kernel (Definition~\ref{def: dirichlet kernel})
\[
    \textstyle
    \abs{\tilde\dirichlet_M(\ang)}
    = \abs[\big]{1 + 2\sum_{k=1}^{M-1} \cos(k\ang) + \cos(M\ang)}
    \le 2M = \mathfrak{m},
\]
since \(\abs{\cos(x)} \le 1\). Using the sine and tangent representation
we have by \(\abs{\sin(x)} \le 1\)
\[
    \abs{\tilde\dirichlet_M(\ang)}
    = \abs[\big]{\tfrac{\sin(M\ang)}{\tan(\frac12\ang)}}
    \le \tfrac1{\abs{\tan(\frac12\ang)}}
    = \tfrac1{\tan(\frac12\abs{\ang})}
    \qquad \ang \in (-\pi, \pi).
\]
Put together we get
\[
    \abs{\tilde\dirichlet_M(\ang)}
    \le \min\set[\big]{\mathfrak{m}, \tfrac1{\tan(\frac12\abs{\ang})}}
    \eqcolon g(\ang)
    \qquad \ang \in (-\pi, \pi).
\]
Observe that \(\tilde{\dirichlet}_M\) and \(g\) are even functions defined on \([-\pi, \pi]\) with the continuous extension
of \(g\) to zero on the boundaries. As the Dirichlet kernel is \(2\pi\)-periodic, we consequently have
for the arc-distance \(\metric_\torus(\ang, \ang') \coloneq \min_{k\in \integer} \abs{\ang - \ang' + 2\pi k} \in [0, \pi]\)
\begin{equation}
    \label{eq: modified dirichlet kernel bound}
    \abs{\tilde\dirichlet_M(\ang - \ang')}
    \le \min\set[\big]{\mathfrak{m}, \tfrac1{\tan(\frac12\metric_\torus(\ang, \ang'))}}
    = g(\metric_\torus(\ang, \ang'))
    \qquad \ang, \ang' \in \real \text{ or } \torus.
\end{equation}
Since the points \(\ang_k\) are equally spaced at distance \(\frac{2\pi}{n}\), the number of points in a
radius \(r\) Ball in arc-distance around \(\ang\), or equivalently the number of points in a
circular arc of length \(2r\) is bounded by \(1+\frac{n}{\pi}r\), that is
\begin{equation}
    \label{eq: maximum equidistant point in ball}    
    \#\ball{\ang}{r} \coloneq \#\set{k: \metric_\torus(\ang, \ang_k) \le r} \le 1 + \tfrac{n}{\pi}r.
\end{equation}
Let \(r_k \coloneq \metric_\torus(\ang, \ang_k)\) and order these distances
\(
    r_{(0)} \le r_{(1)} \le \dots \le r_{(n-1)}.
\)
Since the rank \(k\) implies there are at least \(k\) additional points in the ball
of radius \(r_{(k)}\) around \(\ang\) we thus have
\(k+1 \le \#\ball{\ang}{r_{(k)}}\) and combined with \eqref{eq: maximum equidistant point in ball} we obtain
\begin{equation}
    \label{eq: arcdistance lower bound}        
    r_{(k)} \ge \tfrac{\pi}{n}k.
\end{equation}
Using the bound \eqref{eq: modified dirichlet kernel bound} we consequently have
by \(r_k = \metric_\torus(\ang, \ang_k)\) and the reordering \(r_{(k)}\)
\begin{equation}
    \label{eq: modified dirichlet kernel sum bound}    
    \frac1n\sum_{k=0}^{n-1}\abs{\tilde\dirichlet_M(\ang_k - \ang)}
    \overset{\eqref{eq: modified dirichlet kernel bound}}\le
    \frac1n\sum_{k=0}^{n-1} g(r_{(k)})
    \overset{\eqref{eq: arcdistance lower bound}}\le \frac1n\sum_{k=0}^{n-1} g(\tfrac{\pi}{n}k)
    \le \frac1n \int_{-1}^{n-1} g(\tfrac{\pi}{n}x) dx,
\end{equation}
where we used that \(g\) is non-increasing on \([0, \pi]\) in the last two inequalities.

\begin{remark}[Shortcut]
    \label{rem: shortcut dirichlet kernel bound}
    At the cost of replacing \(\epsilon(\mathfrak{m})\) by \(\tfrac2\pi[1+\ln(\tfrac\pi2)]\le 1\), the remaining proof may be simplified using
    the bound \(\tan(x) \ge x\) for \(x\ge 0\) to get \(g(\tfrac{\pi}n x) \le \min\set{\mathfrak{m}, \frac{2n}{\pi}\frac1x} \).
    The point is then that the integral of \(\frac1x\) is 
    the logarithm and thus yields the desired bound.
\end{remark}

\smallskip\noindent\textbf{Step 2: Integral bound.}
To get the sharper bounds, we observe that \(g\) is equal to the constant \(\mathfrak{m}\) if
\[
    \mathfrak{m} \le \tfrac1{\tan(\frac\pi{2n}x)}
    \iff x \le \tfrac{2n}{\pi}\arctan(\tfrac1{\mathfrak{m}}).
\]
Thus
\begin{align*}
    \frac1n \int_{-1}^{n-1} g(\tfrac{\pi}{n}x) dx
    &= \frac1n \int_{-1}^{\frac{2n}{\pi}\arctan\frac1{\mathfrak{m}}} \mathfrak{m} dx
    + \frac1n \int_{\frac{2n}{\pi}\arctan\frac1{\mathfrak{m}}}^{n-1} \frac1{\tan(\frac\pi{2n}x)} dx
    \\
    \overset{y=\frac\pi{2n}x}&= \mathfrak{m}[\tfrac{2}{\pi}\arctan(\tfrac1{\mathfrak{m}}) + \tfrac1n]
    + \frac{2}{\pi}\int_{\arctan\frac1{\mathfrak{m}}}^{\frac{\pi}{2n}(n-1)} \frac1{\tan y} dy
\end{align*}
Using \(\int \frac1{\tan y} dy = \ln(\sin y)\) we obtain
\begin{align*}
    \int_{\arctan\frac1{\mathfrak{m}}}^{\frac{\pi}{2}-\frac{\pi}{2n}} \frac1{\tan y} dy
    &= \ln(\cos(\tfrac{\pi}{2n})) - \ln(\sin(\arctan\tfrac1{\mathfrak{m}}))
    \\
    &\le - \ln\bigl(\sin(\arctan\tfrac1{\mathfrak{m}})\bigr) \qquad \bigl(\cos(\tfrac{\pi}2n) \in (0,1]\bigr)
\end{align*}
\begin{wrapfigure}{r}{0.3\textwidth}
    \centering
    \vspace{-0.3cm}
    \begin{tikzpicture}[scale=0.6, every node/.style={font=\small}]
\coordinate (A) at (0,0);
    \coordinate (B) at (4,0);
    \coordinate (C) at (4,2);

\draw[thick] (A) -- (B) -- (C) -- cycle;

\draw (3.7,0) -- (3.7,0.3) -- (4,0.3);

\draw (0.9,0) arc[start angle=0,end angle=26.565,radius=0.9];
    \node at (1.15,0.25) {$\gamma$};

\node[below] at (2,0) {$\mathfrak{m}$};
    \node[right] at (4,1) {$1$};
    \node[above left] at (2.3,1) {$\sqrt{\mathfrak{m}^2+1}$};
\end{tikzpicture}     \vspace{-0.2cm}
    \caption{\label{fig: sin arctan geometry} \(\gamma =\arctan\tfrac1{\mathfrak{m}}\)}
\end{wrapfigure}
For the angle \(\gamma \coloneq \arctan\tfrac1{\mathfrak{m}}\) we have by definition
\[
    \tan \gamma = \tfrac1{\mathfrak{m}} = \tfrac{\text{opposite}}{\text{adjacent}}
\]
and therefore by the Pythagorean theorem (Figure~\ref{fig: sin arctan geometry})
\[
    \sin(\arctan(\tfrac1{\mathfrak{m}})) = \tfrac{\text{opposite}}{\text{hypotenuse}} =  \tfrac1{\sqrt{1+\mathfrak{m}^2}}.
\]
This implies 
\[
    - \ln(\sin(\arctan(\tfrac1{\mathfrak{m}})))
    = \tfrac12\ln(1+\mathfrak{m}^2)
    = \tfrac12\ln(\mathfrak{m}^2(1+\tfrac1{\mathfrak{m}^2}))
    = \ln(\mathfrak{m}) + \tfrac12\ln(1+\tfrac1{\mathfrak{m}^2}).
\]
Reinserting our bounds on the integral into \eqref{eq: modified dirichlet kernel sum bound} we get the claim
\begin{align}
    \nonumber
    \frac1n \sum_{k=0}^{n-1}\abs{\tilde\dirichlet_M(\ang_k - \ang)}
    &\le \mathfrak{m}\bigl[\tfrac{2}{\pi}\arctan(\tfrac1{\mathfrak{m}}) + \tfrac1n\bigr] + \tfrac2\pi\bigl[ \ln(\mathfrak{m}) + \tfrac12\ln(1+\tfrac1{\mathfrak{m}^2})\bigr]
    \\
    \label{eq: (dirichlet kernel bound) explicit tilde epsilon(m) definition}
    &= \tfrac2\pi\ln(\mathfrak{m}) + \tfrac{\mathfrak{m}}n + \underbrace{\tfrac2\pi\bigl[\mathfrak{m}\arctan(\tfrac1{\mathfrak{m}}) + \tfrac12\ln(1+\tfrac1{\mathfrak{m}^2})\bigr]}_{\eqcolon \tilde\epsilon(\mathfrak{m})}
\end{align}
\textbf{Step 3: Properties of \(\tilde\epsilon(\mathfrak{m})\).}
We have
\begin{align*}
    \tfrac{d}{d\mathfrak{m}} \arctan(\tfrac1{\mathfrak{m}})
    &= \tfrac1{1+(\frac1{\mathfrak{m}})^2}(-\tfrac1{\mathfrak{m}^2})
    = -\tfrac1{\mathfrak{m}^2 + 1}
    \\
    \tfrac{d}{d\mathfrak{m}} \ln(1+\tfrac1{\mathfrak{m}^2})
    &= \tfrac1{1+\frac1{\mathfrak{m}^2}}(-\tfrac2{\mathfrak{m}^3})
    = - \tfrac2{\mathfrak{m}(\mathfrak{m}^2+1)}
\end{align*}
such that
\begin{align*}
    \tfrac{d}{d\mathfrak{m}} \tilde\epsilon(\mathfrak{m})
    &= \tfrac2\pi\bigl[\arctan(\tfrac1{\mathfrak{m}})-\tfrac{\mathfrak{m}}{\mathfrak{m}^2+1} - \tfrac1{\mathfrak{m}(\mathfrak{m}^2+1)}\bigr]
    \\
    &= \tfrac2\pi\bigl[\arctan(\tfrac1{\mathfrak{m}}) - \tfrac{\mathfrak{m}^2+1}{\mathfrak{m}(\mathfrak{m}^2+1)}\bigr]
    = \tfrac2\pi\bigl[\arctan(\tfrac1{\mathfrak{m}}) - \tfrac1{\mathfrak{m}}\bigr] < 0,
\end{align*}
where the inequality follows from \(\arctan(x) < x\) for \(x>0\) (\(\tan\) is increasing and \citep[4.3.80]{abramowitzHandbookMathematicalFunctions1964}).
Consequently, \(\tilde\epsilon(\mathfrak{m})\) is strictly decreasing with \(\tilde\epsilon(2) \approx 0.6614\).
The limit of \(\ln(1+\frac1{\mathfrak{m}^2})\) is zero and the limit of \(\mathfrak{m}\arctan\frac1{\mathfrak{m}}\) is
determined by the Taylor expansion of \(\arctan\) around zero
\[
    \mathfrak{m}\arctan \tfrac1{\mathfrak{m}} = \mathfrak{m} \underbrace{\arctan(0)}_{=0} + \mathfrak{m} \underbrace{\arctan'(0)}_{=1} \tfrac1{\mathfrak{m}} + \bigO(\tfrac1{\mathfrak{m}})
    \to 1 \qquad \text{as } \mathfrak{m}\to \infty.
\]
Consequently, \(\tilde\epsilon(\mathfrak{m}) \to \frac2\pi\) as \(\mathfrak{m}\to \infty\).

\smallskip\noindent\textbf{Step 4: Dirichlet kernel bound.}
The bound for the Dirichlet kernel is very similar. We start with
\[
    \dirichlet_M(\ang) \le \min\set[\big]{1+2M, \tfrac1{\abs{\sin(\frac12\ang)}}} \eqcolon f(\ang)
\]
stemming from the two representations of the Dirichlet kernel. Using that \(\sin\)
is a periodic even function that is positive on \([0, \frac{\pi}{2}]\) we obtain
\[
    \dirichlet_M(\ang - \ang') \le \min\set[\big]{1+2M, \tfrac1{\sin(\frac12\metric_\torus(\ang, \ang'))}}
    \eqcolon f(\metric_\torus(\ang, \ang')).
\]
And with the same arguments as for the modified Dirichlet kernel we can treat the
equidistant points \(\ang_k\) to obtain
\[
    \frac1n \sum_{k=0}^{n-1}\abs{\dirichlet_M(\ang - \ang_k)}
    \le \frac1n \sum_{k=0}^{n-1} f(r_{(k)})
    \overset{\eqref{eq: arcdistance lower bound}}\le \frac1n \sum_{k=0}^{n-1} f(\tfrac{\pi}{n}k)
    \le \frac1n \int_{-1}^{n-1} f(\tfrac{\pi}{n}x) dx.
\]
While the shortcut \(\tan(x) \ge x\) would yield the same asymptotic behavior
for the modified Dirichlet kernel, the Jordan inequality \(\sin(x) \ge
\frac{2}{\pi}x\) would cause the constant in front of the logarithm to be
one instead of \(\frac2\pi\) which forces us to do the sharper analysis of
the integral for the correct asymptotic behavior.  The analysis is very
similar to the modified Dirichlet kernel.
The function takes the constant value \(1+2M = \mathfrak{m}\) if
\[
    \mathfrak{m} \le \tfrac1{\sin(\frac\pi{2n}x)} \iff x \le \tfrac{2n}{\pi}\arcsin(\tfrac1{\mathfrak{m}}).
\]
This leads to
\[
    \frac1n \int_{-1}^{n-1} f(\tfrac{\pi}{n}x) dx
    = \mathfrak{m}\bigl[\tfrac{2}{\pi}\arcsin(\tfrac1{\mathfrak{m}}) + \tfrac1n\bigr] + \frac{2}{\pi}\int_{\arcsin\frac1{\mathfrak{m}}}^{\frac{\pi}{2n}(n-1)} \frac1{\sin y} dy,
\]
where we now use \(\int \frac1{\sin y} dy = \ln(\tan\frac{y}2)\), which follows from
\[
    \frac{d}{dy} \ln(\tan\tfrac{y}2)
    = \frac1{\tan\frac{y}2}\cdot \frac1{\cos^2(\frac{y}2)}\frac12
    = \frac1{2\sin(\frac{y}2)\cos(\frac{y}2)}
    = \frac1{\sin y}
\]
to obtain
\[
    \int_{\arcsin\frac1{\mathfrak{m}}}^{\frac{\pi}{2}\frac{n-1}{n}} \frac1{\sin y} dy
    = \ln\bigl(\underbrace{\tan(\tfrac{\pi}{4}\tfrac{n-1}{n})}_{\le \tan\tfrac{\pi}4 = 1}\bigr) - \ln\bigl(\tan(\tfrac12\arcsin\tfrac1{\mathfrak{m}})\bigr)
    \le - \ln\bigl(\tan(\tfrac12\arcsin\tfrac1{\mathfrak{m}})\bigr).
\]
Then with \(\gamma \coloneq \arcsin \frac1{\mathfrak{m}}\) we have by definition \(\sin \gamma = \frac1{\mathfrak{m}}\)
and \(\cos \gamma = \sqrt{1- \frac1{\mathfrak{m}^2}}\) such that with the
half-angle formula for \(\tan\) \citep[Eq.~4.3.22]{abramowitzHandbookMathematicalFunctions1964}
\[
    \tan \frac\gamma2
    = \frac{\sin \gamma}{1+\cos \gamma}
    = \frac{1}{\mathfrak{m}\Bigl(1 + \sqrt{1 - \frac1{\mathfrak{m}^2}}\Bigr)},
\]
which implies
\[
    - \ln\bigl(\tan(\tfrac12\arcsin\tfrac1{\mathfrak{m}})\bigr)
    = \ln(\mathfrak{m}) + \ln\Bigl(1 + \sqrt{1 - \tfrac1{\mathfrak{m}^2}}\Bigr).
\]
Put together we consequently have
\begin{align}
    \nonumber
    \frac1n \sum_{k=0}^{n-1}\abs{\dirichlet_M(\ang - \ang_k)}
    &\le \mathfrak{m}\bigl[\tfrac{2}{\pi}\arcsin(\tfrac1{\mathfrak{m}}) + \tfrac1n\bigr]
    + \tfrac{2}{\pi}\bigl[\ln(\mathfrak{m}) + \ln(1 + \sqrt{1 - \tfrac1{\mathfrak{m}^2}})\bigr]
    \\
    \label{eq: (dirichlet kernel bound) explicit epsilon(m) definition}
    &= \tfrac{2}{\pi}\ln(\mathfrak{m}) + \tfrac{\mathfrak{m}}n + \underbrace{\tfrac{2}{\pi}\bigl[\mathfrak{m}\arcsin(\tfrac1{\mathfrak{m}}) + \ln(1 + \sqrt{1 - \tfrac1{\mathfrak{m}^2}})\bigr]}_{\eqcolon \epsilon(\mathfrak{m})}
\end{align}
\noindent\textbf{Step 5: Properties of \(\epsilon(\mathfrak{m})\).}
We have
\begin{alignat*}{3}
    \tfrac{d}{d\mathfrak{m}} \arcsin(\tfrac1{\mathfrak{m}})
    &= \tfrac1{\sqrt{1-(\frac1{\mathfrak{m}})^2}}(-\tfrac1{\mathfrak{m}^2})
    = -\tfrac1{\mathfrak{m}\sqrt{\mathfrak{m}^2-1}}
    \\
    \tfrac{d}{d\mathfrak{m}} \ln(1 + \sqrt{1 - \tfrac1{\mathfrak{m}^2}})
    &= \tfrac1{1 + \sqrt{1 - \frac1{\mathfrak{m}^2}}}\cdot\cancel{\tfrac12}(1 - \tfrac1{\mathfrak{m}^2})^{-\frac12}\cdot\tfrac{\cancel{2}}{\mathfrak{m}^3}
    = \tfrac1{\mathfrak{m}(\mathfrak{m} + \sqrt{\mathfrak{m}^2 -1})\sqrt{\mathfrak{m}^2 - 1}}
\end{alignat*}
Since \((\mathfrak{m}-\sqrt{\mathfrak{m}^2-1})(\mathfrak{m}+\sqrt{\mathfrak{m}^2-1}) = 1\) we have \(\frac1{\mathfrak{m} + \sqrt{\mathfrak{m}^2 -1}} = \mathfrak{m} - \sqrt{\mathfrak{m}^2-1}\) and thus
\[
    \tfrac{d}{d\mathfrak{m}} \ln(1 + \sqrt{1 - \tfrac1{\mathfrak{m}^2}})
    = \tfrac{\mathfrak{m} - \sqrt{\mathfrak{m}^2-1}}{\mathfrak{m}\sqrt{\mathfrak{m}^2 - 1}} = \tfrac{1}{\sqrt{\mathfrak{m}^2-1}} - \tfrac1{\mathfrak{m}}.
\]
This implies
\[
    \tfrac{d}{d\mathfrak{m}} \epsilon(\mathfrak{m})
    = \tfrac{2}{\pi}\bigl[\arcsin(\tfrac1{\mathfrak{m}}) + \cancel{\mathfrak{m}(-\tfrac1{\mathfrak{m}\sqrt{\mathfrak{m}^2-1}})} + \cancel{\tfrac{1}{\sqrt{\mathfrak{m}^2-1}}} - \tfrac1{\mathfrak{m}}\bigr]
    = \tfrac{2}{\pi}\bigl[\arcsin(\tfrac1{\mathfrak{m}}) - \tfrac1{\mathfrak{m}}\bigr] \ge 0,
\]
where the last inequality holds for all \(\mathfrak{m} \ge 1\) and is a strict inequality
for \(\mathfrak{m} >1\) as \(\arcsin(x) > x\) for \(x \in (0,1)\) (\(\sin\) is increasing on \([0, \frac{\pi}2]\) \citep[4.3.79]{abramowitzHandbookMathematicalFunctions1964} and \citep[4.3.80]{abramowitzHandbookMathematicalFunctions1964}).
Consequently, \(\epsilon(\mathfrak{m})\) is strictly increasing in \(\mathfrak{m}\). \(\epsilon(1)\) may be calculated
by inserting \(\mathfrak{m}=1\) into \eqref{eq: (dirichlet kernel bound) explicit epsilon(m) definition}
and the limit follows from a Taylor expansion of \(\arcsin\) around zero.
\qed

 	\section{Auxiliary material}

\subsection{Notation for various semi-norms}
\label{sec: notation for norms}

In general we use the notation \(\abs{\cdot}\) for semi-norms in finite
dimension like \(\field^\dims\), \(\norm{\cdot}\) for semi-norms on function
spaces and \(\opnorm{\cdot}\) for operator norms.

\begin{itemize}
    \item The \emph{\(p\)-norm} on \(\field^\dims\) is defined as
    \[
        \abs{x}_p = \Biggl(\sum_{i=1}^\dims \abs{x_i}^p\Biggr)^{1/p}.
    \]
    \item The \emph{sup-norm} and \emph{weighted sup-norm} on function spaces are defined as
    \[
        \norm{f}_\infty = \sup_{x\in \domain} \abs{f(x)}
        \qquad\text{and}\qquad
        \norm{f}_w = \sup_{x\in \domain} \abs{f(x)} w(x),
    \]
    for a weight function \(w\colon \domain \to (0,\infty)\).
    \item The \emph{\(L^p(\mu)\)-norms} of some measure \(\mu\) on \(\domain\) are given by
    \[
        \norm{f}_{L^p(\mu)} \coloneq \begin{cases}
            \displaystyle
            \left(\int_\domain |f(x)|^p \mathrm{d}\mu(x)\right)^{1/p} & p < \infty \\
            \displaystyle \esssup_{x\in\domain; \mu} |f(x)| & p = \infty
         \end{cases},
    \]
    where \(\norm{\cdot}_\mu \coloneq \norm{\cdot}_{L^2(\mu)}\) is induced by the inner product \(\scp{f}{g}_{\mu} = \int f\overline{g}\, d\mu\).

    \item We define the \emph{operator norm} of a linear operator \(T\) as
    \[
        \opnorm{T}_*^\dagger
        \coloneq \opnorm{T}_{\funSpace, *}^\dagger
        \coloneq \sup_{\substack{f\in \funSpace\\ \norm{f}_* \le 1}} \norm{Tf}_\dagger
        = \sup_{\substack{f\in \funSpace\\ \norm{f}_* = 1}} \norm{Tf}_\dagger
        = \sup_{\substack{f \in \funSpace\\ \norm{f}_* \in (0, \infty)}}\frac{\norm{Tf}_\dagger}{\norm{f}_*}.
    \]
    In the typical case \(\dagger = *\)
    we omit \(\dagger\). If the operator \(T\) maps to \(\field\) we
    implicitly assume \(\norm{\cdot}_\dagger = \abs{\cdot}\) if
    we omit \(\dagger\).

    \item A positive semi-definite matrix \(\Omega_n \in \field^{n\times n}\) defines the semi-inner
    product and semi-norm
    \[
        \scp{a}{b}_{\Omega_n} = \frac1n b^\conT \Omega_n a \qquad \text{and} \qquad \abs{a}_{\Omega_n} = \sqrt{\scp{a}{a}_{\Omega_n}}.
    \]
    on the space \(\field^n\) and the semi-inner product and semi-norm
    \[
        \scp{f}{g}_{\Omega_n} = \scp{\eval_X[f]}{\eval_X[g]}_{\Omega_n}
        \qquad \text{and} \qquad
        \norm{f}_{\Omega_n} = \sqrt{\scp{f}{f}_{\Omega_n}}
    \]
    on the function space \(\funSpace\) using the evaluation functional
    \(\eval_X\) defined in Definition~\ref{def: regression setting}.
\end{itemize}

\begin{lemma}[Inclusion of \(L^p\) spaces]
    \label{lem: Lp inclusion}
    For \(1\le p \le q \le \infty\) we have
    \[
        \norm{f}_{L^p(\mu)} \le \mu(\domain)^{\frac1p - \frac1q}\norm{f}_{L^q(\mu)}
    \quad\text{and}\quad
        \norm{f}_{L^\infty(\mu)}
        \le \norm{f}_\infty,
    \]
    If \(\mu(\domain)\) is finite we thus have
    \(L^\infty(\mu) \subseteq L^q(\mu) \subseteq L^p(\mu)\) and the
    bounded (measurable) functions are contained in all \(L^p(\mu)\) spaces.
    Moreover for a weight function \(\weight \colon \domain \to (0,\infty)\) we have
    \[
        \norm{f}_{L^p(\mu)} \le \norm[\big]{\tfrac1\weight}_{L^p(\mu)} \norm[\big]{f}_\weight.
    \]
    Sequence spaces have the opposite inclusion, \(\ell^p \subseteq \ell^q\) for \(p\le q\), that is for \(x \in \ell^p\) and \(p\le q\) we have \(x\in \ell^q\) with
    \[
        \abs{x}_q \le \abs{x}_p.
    \]
\end{lemma}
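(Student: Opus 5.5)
The plan is to prove the three assertions of Lemma~\ref{lem: Lp inclusion} separately, each by a standard inequality.

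\emph{Finite-measure inclusion.} For \(1\le p<q<\infty\) I will apply Hölder's inequality to \(\abs{f}^p\cdot 1\) with the conjugate exponents \(r=q/p\) and \(r'=q/(q-p)\). This gives
\[
\int\abs{f}^p\,d\mu\le\Bigl(\int\abs{f}^q\,d\mu\Bigr)^{p/q}\mu(\domain)^{1-p/q},
\]
and taking \(p\)-th roots yields the factor \(\mu(\domain)^{\frac1p-\frac1q}\). The case \(q=\infty\) follows from \(\abs{f}\le\norm{f}_{L^\infty(\mu)}\) \(\mu\)-a.e., which gives \(\int\abs{f}^p\,d\mu\le\norm{f}_{L^\infty(\mu)}^p\mu(\domain)\). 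The case \(p=q\) is trivial. The estimate \(\norm{f}_{L^\infty(\mu)}\le\norm{f}_\infty\) holds because the essential supremum is at most the pointwise supremum. The inclusions of the spaces, and the fact that bounded measurable functions lie in every \(L^p(\mu)\) when \(\mu(\domain)<\infty\), follow directly from these norm bounds.

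\emph{Weighted estimate.} By definition of \(\norm{\cdot}_\weight\) we have \(\abs{f(x)}=\abs{f(x)}\weight(x)/\weight(x)\le\norm{f}_\weight/\weight(x)\) pointwise. Integrating the \(p\)-th power of this bound gives \(\norm{f}_{L^p(\mu)}\le\norm{\tfrac1\weight}_{L^p(\mu)}\norm{f}_\weight\). For \(p=\infty\) the same pointwise bound applies with the essential supremum in place of the integral.

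\emph{Sequence spaces.} If \(x=0\) there is nothing to prove. Otherwise, by homogeneity, normalize so that \(\abs{x}_p=1\). Then \(\abs{x_i}\le1\) for every \(i\), hence \(\abs{x_i}^q\le\abs{x_i}^p\) for \(q\ge p\), and summing gives \(\abs{x}_q^q\le 1\). This proves \(\abs{x}_q\le\abs{x}_p\); the case \(q=\infty\) is immediate from \(\abs{x_i}\le\abs{x}_p\). No step is a real obstacle. The only care needed is in the endpoint cases \(p=q\) and \(q=\infty\), and the convention \(\mu(\domain)^0=1\) also covers \(\mu(\domain)=\infty\) when \(p=q\).
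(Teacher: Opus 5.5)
Your proposal is correct and follows the paper's proof step by step: the essential-supremum bound for \(q=\infty\), the pointwise bound \(\abs{f}\le \norm{f}_\weight/\weight\) for the weighted estimate, and the normalization \(\abs{x}_p=1\) for sequences. The only difference is that you apply Hölder's inequality with exponents \(q/p\) and \(q/(q-p)\), where the paper applies Jensen's inequality to the normalized measure \(\mu/\mu(\domain)\). Your route avoids both the paper's separate treatment of \(\mu(\domain)=\infty\) and its implicit need for \(\mu(\domain)>0\), since Hölder holds in the extended sense with the convention \(0\cdot\infty=0\).
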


These inclusions are well-known but we provide a proof for completeness.

\begin{proof}
    If \(\mu(\domain) = \infty\) the first inequality is trivial. For \(\mu(\domain) <\infty\)
    and \(p\le q < \infty\) it follows from Jensen's inequality as \(x^{q/p}\)
    is convex such that
    \[
        \Bigl(\frac1{\mu(\domain)}\int \abs{f(x)}^p d\mu\Bigr)^{q/p}
        \le \frac1{\mu(\domain)}\int \abs{f(x)}^q d\mu.
    \]
    Raising both sides to the power of \(1/q\) and multiplying by
    \(\mu(\domain)^{1/p}\) yields the first inequality. 
    The case \(q=\infty\) and the second inequality then follow from 
    \[
        f \le \esssup_{x\in \domain; \mu} f(x)
        = \norm{f}_{L^\infty(\mu)}
        \le \sup_{x\in \domain} f(x)
        = \norm{f}_\infty,
    \]
    where the first inequality only holds \(\mu\)-almost everywhere, which is sufficient
    to apply monotonicity of the integral.  The weighted inequality follows from
    \[
        \norm{f}_{L^p(\mu)} = \Bigl(\int (\tfrac{1}{\weight})^p \abs{f\weight}^p \, d\mu\Bigr)^{\frac1p}
        \le \norm[\big]{\tfrac1\weight}_{L^p(\mu)} \norm[\big]{f\weight}_\infty
        = \norm[\big]{\tfrac1\weight}_{L^p(\mu)} \norm[\big]{f}_\weight.
    \]
    By the scaling property of \(p\)-norms we assume without loss of generality
    \(\abs{x}_p = 1\). Then the \(x_i\) are bounded by \(1\) and thus
    \(\abs{x_i}^q \le \abs{x_i}^p\) for all \(i\). Summing over \(i\) yields
    \(\abs{x}_q^q \le \abs{x}_p^p = 1\) and thus \(\abs{x}_q \le 1 =
    \abs{x}_p\).
\end{proof}

\subsection{Lagrange function}
\label{sec: lagrange function}

In general the Lagrange function has the following nice properties.

\begin{lemma}[Properties of the general Lagrange function]
    \label{lem: lagrange function properties}
    The Lagrange function \(\ell\) of a regression has the following properties:
    \begin{enumerate}[label=(\roman*),noitemsep]
        \item\label{it: lagrange basis} The Lagrange function components \(\ell_i\) are in the model space, \(\ell_i \in \modelSpace\) for all \(i=1,\dots, n\).
        \item\label{it: lagrange continuity}
        If \(\modelSpace \subseteq C(\domain; \field)\) then the Lagrange function
        \(\ell(x)\) is continuous in \(x\).

        \item\label{it: lagrange projection}
        Assume \(\Image(\regOp) = \Image(\linProj)\). Then
        \(\ell(X)^\conT = \eval_X \circ \regOp\) is a linear projection in \(\field^n\).
        A sufficient condition is that \(\eval_X[\funSpace] = \field^n\).
    \end{enumerate}
\end{lemma}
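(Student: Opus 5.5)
We prove the three items in order, working throughout from the defining identity \(\ell(x)^\conT y = \pi_x\circ\regOp(y)\) of Definition~\ref{def: lagrange function}. The plan is to identify the components \(\ell_i\) with concrete elements of \(\modelSpace\) and then read off continuity and the projection property from the regression structure.

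For \ref{it: lagrange basis}, we insert the standard basis vector \(y=e_i\) into the defining identity. This gives \(\overline{\ell_i(x)} = \regOp(e_i)(x)\) for every \(x\in\domain\), so \(\overline{\ell_i}=\regOp(e_i)\in\modelSpace\), and hence \(\ell_i = \overline{\regOp(e_i)}\) pointwise. In the real case the conjugation is harmless. In the complex case the earlier convention (used, e.g., in Proposition~\ref{prop: interpolation case}, where \(\ell_i=\regOp(e_i)\)) implicitly treats \(\ell(x)^\conT y\) as \(\sum_i \ell_i(x) y_i\), i.e.\ as the plain transpose of the conjugated vector. I would fix this convention explicitly and then conclude \(\ell_i=\regOp(e_i)\in\modelSpace\). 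This bookkeeping is the only real subtlety of the first item. If \(\modelSpace\) is not closed under conjugation, the statement should be read with \(\overline{\ell_i}\) in place of \(\ell_i\).

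Item \ref{it: lagrange continuity} then follows immediately. Each component is \(\regOp(e_i)\in\modelSpace\subseteq C(\domain;\field)\), possibly conjugated, and conjugation preserves continuity. A vector of continuous components is continuous into \(\field^n\).

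For \ref{it: lagrange projection}, we note that \(\eval_X\circ\regOp\colon\field^n\to\field^n\) has matrix \(\ell(X)^\conT\) with rows \(\ell(x_j)^\conT\), directly by the definition of \(\ell\). It remains to show idempotence. Take \(y\in\field^n\). Then \(\regOp(y)\in\Image(\regOp)=\Image(\linProj)\), so \(\regOp(y)=\linProj f\) for some \(f\). Since \(\linProj\) is a projection, \(\linProj\regOp(y)=\regOp(y)\), that is \(\regOp\eval_X\regOp(y)=\regOp(y)\). Applying \(\eval_X\) yields \((\eval_X\regOp)^2=\eval_X\regOp\). For the sufficient condition, if \(\eval_X[\funSpace]=\field^n\) then every \(y\) equals \(\eval_X[f]\) for some \(f\), so \(\regOp(y)=\linProj f\) and \(\Image(\regOp)\subseteq\Image(\linProj)\). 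The reverse inclusion \(\Image(\linProj)=\regOp(\eval_X[\funSpace])\subseteq\Image(\regOp)\) is trivial. I expect no obstacle beyond the conjugation convention in \ref{it: lagrange basis}.
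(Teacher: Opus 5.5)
Your proof is correct and follows the paper's argument essentially step for step: (i) comes from inserting \(e_i\) into the defining identity, (ii) follows immediately from it, and (iii) uses \(\linProj\circ\regOp=\regOp\) (from \(\Image(\regOp)=\Image(\linProj)\)) followed by applying \(\eval_X\), with the same two-inclusion argument for the sufficient condition. Your conjugation caveat is well founded and more careful than the paper: its proof writes \(\ell_i=\ell^\conT e_i=\regOp(e_i)\), but with \(\conT\) the conjugate transpose one only gets \(\overline{\ell_i}=\regOp(e_i)\in\modelSpace\), so item (i) as literally stated needs \(\modelSpace\) to be closed under conjugation (automatic for \(\field=\real\)).
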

\begin{proof}
    \ref{it: lagrange basis} We have \(\ell_i(\cdot) = \ell(\cdot)^\conT e_i = \regOp(e_i) \in \modelSpace\),
    which also directly implies \ref{it: lagrange continuity}.

    \noindent\ref{it: lagrange projection}
    Since \(\regOp(y) \in \Image(\regOp) = \Image(\linProj)\) and \(\linProj\) acts as the identity
    on its image, we have \(\linProj \circ \regOp = \regOp\). Thus
    \[
        (\eval_X \circ \regOp)^2
        = \eval_X \circ (\regOp \circ \eval_X) \circ \regOp
        = \eval_X \circ \linProj \circ \regOp
        = \eval_X \circ \regOp.
    \]
    For the sufficient condition observe that \(\Image(\regOp) \supseteq \Image(\linProj)\)
    by definition of \(\linProj = \regOp \circ \eval_X\). It is thus sufficient to show for any
    \(\model \in \Image(\regOp)\) that there exists \(f \in \funSpace\) such that \(\regOp(\eval_X[f]) = \model\).
    But this is true since there exists \(y\in \field^n\) such that \(\regOp(y) = \model\),
    and since \(\eval_X[\funSpace] = \field^n\) there exists \(f\in \funSpace\) such that \(\eval_X[f] = y\).
\end{proof}

Our motivation for the definition of the `Lagrange function' are the `Lagrange polynomials' 
from Approximation theory that coincide with the Lagrange function in the case
of polynomial interpolation as we can see in the following example.

\begin{example}[Lagrange polynomial]
    \label{ex: lagrange polynomial}
    If \(\domain = \real\) and \(\modelSpace=\poly_{n-1}\), where \(\poly_{n-1}\) is the space of polynomials of degree at most \(n-1\),
    and \(\linProj p = p\) for all \(p\in \poly_{n-1}\), then the components \(\ell_j\) of the Lagrange function are
    `Lagrange polynomials' \(\ell_j\in \poly_{n-1}\) that satisfy:
    \begin{enumerate}[label=(\roman*)]
        \item\label{it: langrange poly, dirac delta} \(\ell_j(x_i) = \delta_{ij}\) for all \(i,j =1\dots, n\).
        \item\label{it: lagrange poly, formula} For all \(x\) and all \(j=1,\dots, n\) we have
        \[
            \ell_j(x) = \prod_{i\neq j} \frac{x-x_i}{x_j-x_i}
            = \frac{\omega(x)}{(x-x_j)\omega'(x_j)}
        \]
        where \(\omega(x) = \prod_{i=1}^n (x-x_i)\) is the node polynomial and therefore
        \(\omega'(x_j) = \prod_{i\neq j} (x_j-x_i)\).
    \end{enumerate}
\end{example}
\begin{proof}
    Clearly the explicit formula in
    \ref{it: lagrange poly, formula} yields a polynomial \(p_j\) of degree
    \(n-1\) that has the property \ref{it: langrange poly, dirac delta}.
    What is left to show is that these polynomials \(p_j\) are indeed the components \(\ell_j\)
    of the Lagrange function. Observe that by \ref{it: langrange poly, dirac delta} we have
    \(\eval_X[p_j] = e_j\). Thus
    \[
        \ell_j(x)
        = \ell^\conT(x) \eval_X[p_j]
        = \eval_{x} \circ \regOp \circ \eval_X[p_j]
        = \eval_{x} \circ \linProj p_j = p_j(x),
    \]
    where we have used that \(\linProj\) is the identity on the space of polynomials \(\poly_{n-1}\)
    in the last step.
\end{proof}

\end{document}